\numberwithin{equation}{section}
\theoremstyle{plain}
\newtheorem{thm}{\protect\theoremname}[section]
\theoremstyle{definition}
\newtheorem{defn}[thm]{\protect\definitionname}
\theoremstyle{remark}
\newtheorem{rem}[thm]{\protect\remarkname}
\theoremstyle{definition}
\newtheorem{assumption}[thm]{\protect\assumptionname}
\theoremstyle{plain}
\newtheorem{prop}[thm]{\protect\propositionname}
\theoremstyle{plain}
\newtheorem{lem}[thm]{\protect\lemmaname}
\theoremstyle{plain}
\newtheorem{cor}[thm]{\protect\corollaryname}
\providecommand{\assumptionname}{Assumption}
\providecommand{\corollaryname}{Corollary}
\providecommand{\definitionname}{Definition}
\providecommand{\lemmaname}{Lemma}
\providecommand{\propositionname}{Proposition}
\providecommand{\remarkname}{Remark}
\providecommand{\theoremname}{Theorem}
\begin{document}
\global\long\def\bbC{\mathbb{C}}%
\global\long\def\bbN{\mathbb{N}}%
\global\long\def\bbQ{\mathbb{Q}}%
\global\long\def\bbS{\mathbb{S}}%
\global\long\def\bbR{\mathbb{R}}%
\global\long\def\bbZ{\mathbb{Z}}%

\global\long\def\bfD{{\bf D}}%
\global\long\def\bfF{{\bf F}}%
\global\long\def\bfG{{\bf G}}%

\global\long\def\bfd{{\bf d}}%
\global\long\def\bfv{{\bf v}}%

\global\long\def\rmp{\mathrm{p}}%

\global\long\def\calA{\mathcal{A}}%
\global\long\def\calB{\mathcal{B}}%
\global\long\def\calC{\mathcal{C}}%
\global\long\def\calD{\mathcal{D}}%
\global\long\def\calE{\mathcal{E}}%
\global\long\def\calF{\mathcal{F}}%
\global\long\def\calG{\mathcal{G}}%
\global\long\def\calH{\mathcal{H}}%
\global\long\def\calI{\mathcal{I}}%
\global\long\def\calJ{\mathcal{J}}%
\global\long\def\calK{\mathcal{K}}%
\global\long\def\calL{\mathcal{L}}%
\global\long\def\calM{\mathcal{M}}%
\global\long\def\calN{\mathcal{N}}%
\global\long\def\calU{\mathcal{U}}%
\global\long\def\calO{\mathcal{O}}%
\global\long\def\calP{\mathcal{P}}%
\global\long\def\calQ{\mathcal{Q}}%
\global\long\def\calR{\mathcal{R}}%
\global\long\def\calS{\mathcal{S}}%
\global\long\def\calT{\mathcal{T}}%
\global\long\def\calU{\mathcal{U}}%
\global\long\def\calV{\mathcal{V}}%
\global\long\def\calW{\mathcal{W}}%
\global\long\def\calX{\mathcal{X}}%
\global\long\def\calY{\mathcal{Y}}%
\global\long\def\calZ{\mathcal{Z}}%

\global\long\def\frkc{\mathfrak{c}}%
\global\long\def\frkd{\mathfrak{d}}%
\global\long\def\frkp{\mathfrak{p}}%
\global\long\def\frkr{\mathfrak{r}}%
\global\long\def\frks{\mathfrak{s}}%

\global\long\def\alp{\alpha}%
\global\long\def\bt{\beta}%
\global\long\def\dlt{\delta}%
\global\long\def\Dlt{\Delta}%
\global\long\def\eps{\epsilon}%
\global\long\def\gmm{\gamma}%
\global\long\def\Gmm{\Gamma}%
\global\long\def\kpp{\kappa}%
\global\long\def\tht{\theta}%
\global\long\def\lmb{\lambda}%
\global\long\def\Lmb{\Lambda}%
\global\long\def\vphi{\varphi}%
\global\long\def\omg{\omega}%
\global\long\def\Omg{\Omega}%

\global\long\def\rd{\partial}%
\global\long\def\aleq{\lesssim}%
\global\long\def\ageq{\gtrsim}%
\global\long\def\aeq{\simeq}%

\global\long\def\peq{\mathrel{\phantom{=}}}%
\global\long\def\To{\longrightarrow}%
\global\long\def\weakto{\rightharpoonup}%
\global\long\def\embed{\hookrightarrow}%
\global\long\def\impmi{\Longleftrightarrow}%
\global\long\def\Re{\mathrm{Re}}%
\global\long\def\Im{\mathrm{Im}}%
\global\long\def\chf{\mathbf{1}}%
\global\long\def\td#1{\widetilde{#1}}%
\global\long\def\br#1{\overline{#1}}%
\global\long\def\ul#1{\underline{#1}}%
\global\long\def\wh#1{\widehat{#1}}%
\global\long\def\rng#1{\mathring{#1}}%
\global\long\def\tint#1#2{{\textstyle \int_{#1}^{#2}}}%
\global\long\def\tsum#1#2{{\textstyle \sum_{#1}^{#2}}}%

\global\long\def\lan{\langle}%
\global\long\def\ran{\rangle}%
\global\long\def\blan{\big\langle}%
\global\long\def\bran{\big\rangle}%
\global\long\def\Blan{\Big\langle}%
\global\long\def\Bran{\Big\rangle}%

\global\long\def\Mod{\mathrm{Mod}}%
\global\long\def\rmex{\mathrm{ex}}%
\global\long\def\rmin{\mathrm{in}}%
\global\long\def\loc{\mathrm{loc}}%
\global\long\def\sg{\mathrm{sg}}%
\global\long\def\NL{\mathrm{NL}}%
\global\long\def\rad{\mathrm{rad}}%
\global\long\def\dec{\mathrm{d}}%
\global\long\def\dsc{\mathrm{d}}%
\global\long\def\coer{\mathrm{c}}%
\global\long\def\ext{\mathrm{ext}}%
\global\long\def\setJ{\llbracket J\rrbracket}%

\global\long\def\alptube{\alp_{\mathrm{tb}}}%
\global\long\def\Ttbexit{T^{\mathrm{tb}}}%
\global\long\def\alpbtstrap{\alp^{\ast}}%
\global\long\def\dltbtstrap{\dlt^{\ast}}%
\global\long\def\dltprofile{\dlt_{\rmp}}%
\global\long\def\etatube{\eta_{\mathrm{tb}}}%
\global\long\def\rough{\mathrm{rgh}}%
\global\long\def\secmax{\mathrm{max2}}%

\title[Rigidity in multi-bubble dynamics]{Rigidity results in multi-bubble dynamics for \\
non-radial energy-critical heat equation}
\author{Kihyun Kim}
\email{kihyun.kim@snu.ac.kr}
\address{Department of Mathematical Sciences and Research Institute of Mathematics,
Seoul National University, 1 Gwanak-ro, Gwanak-gu, Seoul 08826, Republic
of Korea}
\author{Frank Merle}
\email{merle@ihes.fr}
\address{IHES, 35 route de Chartres, 91440 Bures-sur-Yvette, France and AGM,
CY Cergy Paris Université, 2 av.\ Adolphe Chauvin, 95302 Cergy-Pontoise
Cedex, France}
\thanks{\emph{Acknowledgements.} K. Kim was supported by the New Faculty Startup
Fund from Seoul National University, the POSCO Science Fellowship
of POSCO TJ Park Foundation, and the National Research Foundation
of Korea (NRF) grant funded by the Korea government (MSIT) RS-2025-00523523.
This work was initiated when K. Kim was supported by Huawei Young
Talents Programme at IHES and part of this work was done when he was
visiting IHES. He gratefully acknowledges its kind hospitality.}
\subjclass[2020]{35K58, 35B40 (primary), 35K05, 35B33, 35B44}
\begin{abstract}
This paper concerns the classification of asymptotic behaviors in
multi-bubble dynamics for the energy-critical nonlinear heat equations
in large dimensions $N\geq7$ without symmetry. This multi-bubble
dynamics appears naturally at least for a sequence of times in view
of soliton resolution. We assume each bubble is given by the scalings
and translations of $\pm W$ with (localized) non-colliding conditions
for a sequence of times, where $W$ is the ground state. The case
of one soliton was previously established and in particular there
is no blow-up. We consider the case of $J\geq2$ solitons, where we
expect only infinite-time blow-up. 

We are able to identify three different scenarios, where we have a
continuous-in-time resolution with an unexpected universal blow-up
speed. The first one is when one scaling is much larger than the others.
In this case, one bubble does not concentrate (hence stabilize) and
the other bubbles concentrate with the universal blow-up speed $t^{-2/(N-6)}$
together with strong sign constraints. Next, assuming we are not in
the first scenario, we establish a non-degenerate condition on the
positions of bubbles to obtain that all bubbles concentrate with the
universal blow-up speed $t^{-1/(N-4)}$. The last case we consider
is a degenerate, but not too much degenerate, scenario. Here again,
we obtain that all bubbles concentrate with the universal blow-up
speed $t^{-1/(N-3)}$. This last rate has not been discovered before.
Our theorem covers the case of four or less bubbles and we provide
the construction of examples. To our knowledge, this is the first
classification result in the non-radial multi-bubble dynamics, where
both the scales, positions, and signs enter the dynamics nontrivially.
\end{abstract}

\maketitle
\tableofcontents{}

\section{Introduction}

In this paper, we consider the \emph{energy-critical nonlinear heat
equation} 
\begin{equation}
\left\{ \begin{aligned}\rd_{t}u & =\Dlt u+|u|^{\frac{4}{N-2}}u,\\
u(0,x) & =u_{0}(x)\in\bbR,
\end{aligned}
\quad(t,x)\in[0,T)\times\bbR^{N}\right.\tag{NLH}\label{eq:NLH}
\end{equation}
in dimensions $N\geq3$. This equation has \emph{scaling symmetry}:
if $u(t,x)$ is a solution to \eqref{eq:NLH}, then so is 
\begin{equation}
u_{\lmb}(t,x)=\lmb^{-\frac{N-2}{2}}u(\lmb^{-2}t,\lmb^{-1}x),\qquad\lmb\in(0,\infty),\label{eq:scaling}
\end{equation}
with initial data $\lmb^{-\frac{N-2}{2}}u_{0}(\lmb^{-1}x)$. \eqref{eq:NLH}
is a gradient flow associated to the\emph{ energy} 
\begin{equation}
E[u(t)]=\int_{\bbR^{N}}\Big(\frac{1}{2}|\nabla u(t,x)|^{2}-\frac{N-2}{2N}|u(t,x)|^{\frac{2N}{N-2}}\Big)dx.\label{eq:energy}
\end{equation}
Thus for any two times $t_{1}\geq t_{0}\geq0$ one formally has the
\emph{energy identity}
\begin{equation}
E[u(t_{0})]=E[u(t_{1})]+\int_{t_{0}}^{t_{1}}\int_{\bbR^{N}}|\rd_{t}u|^{2}dxdt.\label{eq:energy-identity}
\end{equation}
The fact that the energy is invariant under (the spatial part of)
the scaling transform \eqref{eq:scaling} manifests the \emph{energy-critical}
nature. It is known from \cite{Weissler1980,BrezisCazenave1996} (see
also \cite{CollotMerleRaphael2017CMP}) that \eqref{eq:NLH} is well-posed
in the \emph{energy space} $\dot{H}^{1}=\dot{H}^{1}(\bbR^{N})$.

In the description of asymptotic dynamics of solutions to \eqref{eq:NLH},
a key role is played by stationary solutions (we will often call them
\emph{solitons} or \emph{bubbles}), i.e., solutions to the elliptic
PDE 
\begin{equation}
\Dlt Q+|Q|^{\frac{4}{N-2}}Q=0\qquad\text{in }\bbR^{N}.\label{eq:static-eqn}
\end{equation}
There is a whole zoo of solutions to \eqref{eq:static-eqn}; see for
instance \cite{Ding1986CMP,delPinoMussoPacardPistoia2011JDE,delPinoMussoPacardPistoia2013Pisa}.
The complete classification of solutions to \eqref{eq:static-eqn}
is widely open. However, the following \emph{Aubin--Talenti bubble}
(or the \emph{ground state}) 
\begin{equation}
W(x)=\Big(1+\frac{|x|^{2}}{N(N-2)}\Big)^{-\frac{N-2}{2}}\label{eq:Aubin-Talenti-bubble}
\end{equation}
is a stationary solution distinguished from the others; (i) $W$ attains
the best constant of the critical Sobolev inequality $\|u\|_{L^{2^{\ast}}}\aleq\|u\|_{\dot{H}^{1}}$
($2^{\ast}=\frac{2N}{N-2}$) \cite{Aubin1976,Talenti1976}, (ii) it
is the unique \emph{radially symmetric} classical solution to \eqref{eq:static-eqn}
up to scaling and sign, and (iii) it is the unique \emph{nonnegative}
classical solution (hence for $\dot{H}^{1}$ by elliptic regularity)
to \eqref{eq:static-eqn} up to scaling and translation \cite{Obata1972,GidasNiNirenberg1979CMP,CaffarelliGidasSpruck1989CPAM}.

If a $\dot{H}^{1}$-solution $u(t)$ remains \emph{$\dot{H}^{1}$-bounded}
on its maximal lifespan $[0,T_{+})$ (note that any time-global $H^{1}$-solution
remains $\dot{H}^{1}$-bounded \cite{Ishiwata2025preprint}), then
the integral of $\int_{\bbR^{N}}|\rd_{t}u(t)|^{2}dx$ over $[0,T_{+})$
remains finite by the energy identity \eqref{eq:energy-identity}.
Without being rigorous, this integrability implies that $\rd_{t}u$
(after renormalized in space depending on time) goes to zero in time
averaged sense. As $\rd_{t}u=0$ means $u(t)$ is static, we may say
that $u(t)$ becomes ``almost static.'' However, $u(t)$ being almost
static does not necessarily imply that $u(t)$ is globally in space
close to a stationary solution because the sum of sufficiently decoupled
stationary solutions can also be almost static with bounded energy.
One only has that $u(t)$ asymptotically approaches a decoupled sum
of stationary solutions, depending on time, in time averaged sense.
This phenomenon is known as \emph{bubbling} in the context of critical
harmonic map heat flows; see for example \cite{Struwe1985,Qing1995CommAnalGeom,DingTian1995CommAnalGeom,Wang1996HoustonJM,QingTian1997CPAM,LinWangCVPDE,Topping2004MathZ},
and a recent work \cite{JendrejLawrieSchlag2025ForumPi} and references
therein.

Since a convergence in time averaged sense implies a convergence for
a sequence of times, one is motivated to \emph{sequential soliton
resolution} (or sequential bubble decomposition); any $\dot{H}^{1}$-bounded
solutions $u(t)$ admit a sequence $t_{n}\to T_{+}$ such that $u(t_{n})$
is asymptotically decomposed into a sum of ``decoupled'' sum of stationary
solutions depending on $t_{n}$ (and a ``radiation'' if $T_{+}<+\infty$).
Then, the celebrated \emph{soliton resolution conjecture} asks whether
this resolution into stationary solutions and a radiation holds \emph{pointwisely
in time} as $t\to T_{+}$, not sequentially in time $t_{n}\to T_{+}$. 

In fact, soliton resolution is not confined to critical parabolic
flows. It appears rather \emph{universally }for a wide range of evolutionary
PDEs, perhaps most famously in the context of integrable PDEs. To
mention only a few, see for instance \cite{FermiPastaUlamTsingou1955,ZabuskyKruskal1965}
(for numerical results) and \cite{EckhausSchuur1983,DeiftVenakidesZhou1994CPAM,BorgheseJenkinsMcLaughlin2018,JenkinsLiuPerrySulem2018CMP,ChenLiu2021AIHP,KimKwon2024arXiv,GassotGerardMiller2026arXiv},
where soliton resolution is established by integrable methods (except
\cite{KimKwon2024arXiv}). There are still very few non-integrable
examples where soliton resolution is resolved, but we may mention
some non-integrable critical PDEs (mostly heat or wave equations),
where soliton resolution is established within symmetry \cite{DuyckaertsKenigMerle2013CambJMath,DuyckaertsKenigMerle2023Acta,DuyckaertsKenigMartelMerle2022CMP,CollotDuyckaertsKenigMerle2024VietJM,JendrejLawrie2025JAMS,JendrejLawrie2023AnnPDE,KimKwonOh2025AJM,JendrejLawrie2023CVPDE,Aryan2024arXiv}.
Note that, for non-radial critical parabolic flows, a weaker version
of continuous-in-time soliton resolution is established in \cite{JendrejLawrieSchlag2025ForumPi,Aryan2025arXiv}
(see also \cite{Ishiwata2025preprint}). For the non-radial critical
wave equation, sequential soliton resolution is established in \cite{DuyckaertsJiaKenigMerle2017GAFA}.
See also \cite{CoteMartelYuan2021ARMA}.

We seek a classification result for critical PDEs, in a setting where
soliton resolution is settled, with the goal of proving rigidity in
the asymptotic behavior of the parameters of solitons. This classification
problem is extremely challenging in the non-radial case due to the
wide variety of possible situations. We decided to concentrate on
\eqref{eq:NLH}, where we proved in our previous work \cite{KimMerle2025CPAM}
the full classification theorem under radial symmetry in dimensions
$N\geq7$. 

As an intermediate goal before a potential full classification, we
assume that the soliton profiles appearing in \emph{sequential} resolution
are given by the \emph{ground state} $W$. This assumption in particular
covers both radial solutions and nonnegative solutions by the uniqueness
of $W$ within these classes. These considerations motivate the following
definition.
\begin{defn}[$W$-bubbling sequence]
\label{def:bubble-seq}Let $J\in\bbN$; let $\vec{\iota}=(\iota_{1},\dots\iota_{J})\in\{\pm\}^{J}$;
let $u_{n}\in\dot{H}^{1}(\bbR^{N})$, $\vec{\lmb}_{n}=(\lmb_{1,n},\dots,\lmb_{J,n})\in(0,\infty)^{J}$,
and $\vec{z}_{n}=(z_{1,n},\dots,z_{J,n})\in(\bbR^{N})^{J}$ be sequences
indexed by $n\in\bbN$. We call $\{u_{n},\vec{\iota},\vec{\lmb}_{n},\vec{z}_{n}\}$
a \emph{$W$-bubbling sequence} if 
\begin{equation}
\Big\| u_{n}-\sum_{i=1}^{J}\iota_{i}W_{\lmb_{i,n},z_{i,n}}\Big\|_{\dot{H}^{1}}+\sum_{1\leq i\neq i'\leq J}\Big\{\sqrt{\frac{\lmb_{i',n}}{\lmb_{i,n}}}+\sqrt{\frac{\lmb_{i,n}}{\lmb_{i',n}}}+\frac{|z_{i,n}-z_{i',n}|}{\sqrt{\lmb_{i,n}\lmb_{i',n}}}\Big\}^{-1}\to0\quad\text{as }n\to\infty.\label{eq:def-W-bubbling}
\end{equation}
Here, $\iota W_{\lmb,z}$ denotes 
\[
\iota W_{\lmb,z}(x)=\frac{\iota}{\lmb^{\frac{N-2}{2}}}W\Big(\frac{x-z}{\lmb}\Big).
\]
The latter term of \eqref{eq:def-W-bubbling} going to zero is called
as \emph{asymptotic orthogonality} of parameters.
\end{defn}

\begin{rem}
\label{rem:1.2}Let $u(t)$ be a $\dot{H}^{1}$-bounded solution to
\eqref{eq:NLH} with the maximal time of existence $T_{+}\in(0,+\infty]$.
If $T_{+}=+\infty$, we set $u_{n}=u(t_{n})$ for some $t_{n}\to+\infty$
in the definition above; if $T_{+}<+\infty$, we set $u_{n}=u(t_{n})-u^{\ast}$
for some $u^{\ast}\in\dot{H}^{1}$ and $t_{n}\to T_{n}$; and we call
such a solution $u(t)$ a \emph{$W$-bubbling solution}. In the global
case $T_{+}=+\infty$, we do not have a residual part because linear
solutions converge to $0$ in $\dot{H}^{1}$. Note also that such
$W$-bubbling solutions avoid stationary solutions constructed in
\cite{delPinoMussoPacardPistoia2011JDE} as $t_{n}\to T_{+}$, for
example.
\end{rem}

The near soliton dynamics in dimensions $N\geq7$ was completely classified
by Collot--Merle--Raphaël \cite{CollotMerleRaphael2017CMP}. In
particular, the dynamics of $W$-bubbling dynamics in the simplest
case $J=1$ is fully understood; any $W$-bubbling solution $u(t)$
is global and converges to $\iota W_{\lmb^{\ast},z^{\ast}}$ in $\dot{H}^{1}$
as $t\to+\infty$ for some $(\lmb^{\ast},z^{\ast})\in(0,\infty)\times\bbR^{N}$.
In particular, there is no type-II blow-up with only one $W$, and
the scale and spatial center stabilize. 

Thus for multi-bubble solutions ($J\geq2$) we can expect that the
asymptotic dynamics of parameters is governed solely by bubble interactions,
which can be computed explicitly and yield universal rates depending
on the strength of interaction. In this paper, we justify this heuristics
in ``not too much'' degenerate situations which will include the case
of small $J$. We also expect that this fact can be extended to other
equations (e.g., wave, Schrödinger, and so on) with similar picture.

As mentioned earlier, the non-radial multi-bubble dynamics is extremely
complex with a wide variety of asymptotic behaviors of the parameters.
To set up a reasonable intermediate goal for full classification,
we assume in addition that we are in the time-sequential, non-colliding,
and localized scenario for $W$-bubbles.
\begin{assumption}[Sequential assumptions on parameters]
\label{assumption:sequential-on-param}Let an integer $J\geq2$,
signs $\iota_{1},\dots,\iota_{J}\in\{\pm1\}$, and \emph{distinct}
points $z_{1}^{\ast},\dots,z_{J}^{\ast}\in\bbR^{N}$ be given. Let
$u(t)$ be a global $\dot{H}^{1}$-solution to \eqref{eq:NLH} together
with sequences $t_{n}\to+\infty$, $\vec{\lmb}_{n}\in(0,\infty)^{J}$,
and $\vec{z}_{n}\in(\bbR^{N})^{J}$ such that $\{u(t_{n}),\vec{\iota},\vec{\lmb}_{n},\vec{z}_{n}\}$
is a $W$-bubbling sequence,
\begin{align}
\lmb_{i,n}\aleq1 & \quad\text{for all }i\in\setJ,\quad\text{and}\label{eq:lmb_j,n-are-bdd}\\
z_{i,n}\to z_{i}^{\ast} & \quad\text{for all }i\in\setJ\quad\text{as }n\to\infty.\label{eq:z_j,n-converges-to-z_j^ast}
\end{align}
Here, $\setJ\coloneqq\{1,2,\dots,J\}$.
\end{assumption}

\begin{rem}
\label{rem:1.4}Using a scaling argument in the proof, one can deal
with slightly less restrictive assumptions that renormalizes to Assumption~\ref{assumption:sequential-on-param}
after rescaling. Without introducing $z_{1}^{\ast},\dots,z_{J}^{\ast}\in\bbR^{N}$,
one may consider scale invariant assumptions
\begin{align*}
\min_{1\leq i\neq j\leq J}|z_{i,n}-z_{j,n}| & \aeq\max_{1\leq i\neq j\leq J}|z_{i,n}-z_{j,n}|,\\
\min_{1\leq i\neq j\leq J}|z_{i,n}-z_{j,n}| & \ageq\lmb_{\max,n},
\end{align*}
where $\lmb_{\max,n}$ is the maximum of $\lmb_{i,n}$s. By rescaling
and taking a further subsequence, one can define $z_{1}^{\ast},\dots,z_{J}^{\ast}$
in a renormalized sense and almost recovers Assumption~\ref{assumption:sequential-on-param}.
\end{rem}

It is worthwhile to mention that finite-time blow-up of non-colliding
$W$-bubbles can be excluded easily by localizing the argument of
\cite{CollotMerleRaphael2017CMP}; see Proposition~\ref{prop:finite-time-non-existence}.
Henceforth, we focus on the dynamics of \emph{global} solutions.

With the above assumption and assuming $N\geq7$ as in our previous
work \cite{KimMerle2025CPAM}, we establish various scenarios, where
we have classification. 

In order to achieve classification for each scenario mentioned below,
we reduce PDE dynamics into finite-dimensional dynamics of parameters
and completely classify their asymptotic behaviors. This process is
delicate for several reasons; we begin with quite general time-sequential
non-colliding $W$-bubbling solutions, the reduction process introduces
errors, and we need to formulate appropriate a priori estimates for
the dynamics (or bootstrap hypotheses) and close all such assumed
estimates.

Assume $u(t)$ is a sum of asymptotically decoupled $\iota_{i}W_{\lmb_{i}(t),z_{i}(t)}$
for $1\leq i\leq J$, where asymptotic decoupling means \eqref{eq:def-W-bubbling}.
Denoting $\vec{\lmb}=(\lmb_{1},\dots,\lmb_{J})$ and $\vec{z}=(z_{1},\dots,z_{J})$,
the formal ODE system for $(\vec{\lmb},\vec{z})$ in the main context
becomes 
\begin{align}
\lmb_{i}\lmb_{i,t} & =\kpp_{0}\kpp_{\infty}\sum_{j\neq i}\iota_{i}\iota_{j}\frac{\lmb_{i}^{\frac{N-2}{2}}\lmb_{j}^{\frac{N-2}{2}}}{|z_{j}-z_{i}|^{N-2}}+\text{(error)},\label{eq:intro-lmb-eqn}\\
z_{i,t} & =\kpp_{1}\kpp_{\infty}\sum_{j\neq i}\iota_{i}\iota_{j}\frac{\lmb_{i}^{\frac{N-2}{2}}\lmb_{j}^{\frac{N-2}{2}}}{|z_{j}-z_{i}|^{N}}(z_{j}-z_{i})+\text{(error)},\label{eq:intro-z-eqn}
\end{align}
for some positive universal constants $\kpp_{0}$, $\kpp_{1}$, and
$\kpp_{\infty}$ (See \eqref{eq:def-kpp}--\eqref{eq:def-kpp_infty}).
It is easy to see that this dynamical system has formal conservation
laws (i.e., without error terms)
\begin{equation}
z_{1}+\cdots+z_{J}\label{eq:intro-formal-cons-1}
\end{equation}
and
\begin{equation}
(\lmb_{1}^{2}+\cdots+\lmb_{J}^{2})+\frac{2\kpp_{0}}{\kpp_{1}}(|z_{1}|^{2}+\cdots+|z_{J}|^{2}),\label{eq:intro-formal-cons-2}
\end{equation}
but these are unfortunately not well-controlled with error terms.

Our first result treats the scenario that one $\lmb_{i_{0}}$ is much
larger than the others for a sequence of times. We completely classify
this case; not only one has (i) continuous-in-time resolution with
$z_{i}(t)\to z_{i}^{\ast}$ for all $i$, we also conclude that (ii)
$\lmb_{i_{0}}$ converges to a \emph{nonzero} limit and the other
$\lmb_{i}(t)$s decay to zero with the rate $\lmb_{i}(t)\aeq t^{-2/(N-6)}$,
and (iii) the signs necessarily satisfy $\iota_{i}=-\iota_{i_{0}}$
for $i\neq i_{0}$; see Theorem~\ref{thm:main-one-bubble-tower-classification}.

We are now left with the situation where $\lmb_{\max}$ (the maximum
of $\lmb_{i}$s) and $\lmb_{\secmax}$ (the second largest among $\lmb_{i}$s)
have comparable size. Then, only using the information on $z_{i}^{\ast}$s,
more precisely, introducing the matrix 
\[
A_{ij}^{\ast}=\chf_{i\neq j}\kpp_{0}\kpp_{\infty}\frac{\iota_{i}\iota_{j}}{|z_{i}^{\ast}-z_{j}^{\ast}|^{N-2}},
\]
($\chf_{i\neq j}$ is $1$ if $i\neq j$ and is $0$ otherwise) and
assuming $z_{i}\approx z_{i}^{\ast}$, we can rewrite \eqref{eq:intro-lmb-eqn}
as 
\[
\lmb_{i}\lmb_{i,t}=\sum_{j\neq i}A_{ij}^{\ast}\lmb_{i}^{\frac{N-2}{2}}\lmb_{j}^{\frac{N-2}{2}}+\text{(error)}.
\]
If all $\iota_{i}$ are the same, $A^{\ast}$ corresponds to the matrix
formed by Green's functions (up to multiplication by a constant) in
the whole domain $\bbR^{N}$; see e.g., \cite{BahriCoron1988CPAM,BahriLiRey1995CVPDE,CortazarDelPinoMusso2020JEMS}.
We claim that in a less degenerate scenario, which will be called
``(totally) non-degenerate'' scenario, only the matrix $A^{\ast}$
encodes the blow-up speed in our classification theorem with the rate
$\lmb_{i}\aeq t^{-1/(N-4)}$; see the first item of Theorem~\ref{thm:main-lmb-to-zero-classification}. 

Although $z_{i}^{\ast}$s ``generically'' form a less degenerate configuration,
not all $z_{i}^{\ast}$ need to do so. Such a configuration begins
to exist when $J\geq4$. We then introduce the concept of ``minimal
degeneracy'' (to be defined later) of a configuration. In that situation,
simply replacing $z_{i}(t)$ by $z_{i}^{\ast}$ is not sufficient
to determine the rates; one further needs to investigate the time
variation of the matrix 
\begin{equation}
A_{ij}[\vec{z}]\coloneqq\chf_{i\neq j}\kpp_{0}\kpp_{\infty}\frac{\iota_{i}\iota_{j}}{|z_{i}-z_{j}|^{N-2}},\label{eq:def-A_jk}
\end{equation}
(or the first variation of $A[\vec{z}]$ with respect to $\vec{z}$)
to obtain a classification result with the rate $\lmb_{i}(t)\aeq t^{-1/(N-3)}$;
see the second item of Theorem~\ref{thm:main-lmb-to-zero-classification}
assuming a non-degenerate condition on the equations of $z_{i,t}$.

The rate \emph{$\lmb_{i}(t)\aeq t^{-1/(N-3)}$ is a completely new
rate} of different nature that can arise only in the degenerate regime. 

We expect \emph{all these regimes to exist} and we detailed the construction
of (a minimal example of) the last scenario, which is the most delicate.
See Proposition~\ref{prop:Sec9-main}. The other cases would follow
with a simpler proof. 

We also prove that the case of $J\leq4$ (only under $N\geq7$ and
Assumption~\ref{assumption:sequential-on-param}) is \emph{completely
classified} by our main theorems. 

Up to our knowledge, this work provides the first classification result
in the \emph{non-radial} multi-bubbles dynamics, where both the scales
and translations enter the dynamics crucially. All these types of
assumptions where classification can be done were not expected and
come to us as a surprise. 

Finally, let us mention some difficulties from the radial case to
the current non-radial case. First, the configuration of spatial centers
$z_{i}^{\ast}$ is important as we observe a posteriori different
blow-up rates that we did not expect. In fact, more exotic rates can
arise; see Remark~\ref{rem:optimality-non-degen} below. This complicates
very much the problem and the asymptotic behavior of solutions after
finite-dimensional reduction. Second, there might be some configuration
that gives rise to a stationary solution due to the delicate balance
within soliton interactions, which is not the case for radial bubble
towers. We avoid all such configurations in our main theorems. Note
also that stationary solutions such as in \cite{delPinoMussoPacardPistoia2011JDE}
are avoided by sequential $W$-bubbling assumption. Finally, analysis
with non-radial modified multi-bubble profile (if restricted to the
elliptic setting, this is similar to \cite{DengSunWei2025Duke}) adds
an extra layer of difficulty from the radial case to the non-radial
case.

\subsection{Main results}

We are now ready to state our classification results. As mentioned
before, we assume $N\geq7$ and Assumption~\ref{assumption:sequential-on-param}.
Our first result treats the case when one $\lmb_{i_{0}}$ are much
larger than the others, for some sequence of times.
\begin{thm}
\label{thm:main-one-bubble-tower-classification}Let $N\geq7$ and
assume Assumption~\ref{assumption:sequential-on-param}. If 
\begin{equation}
\lmb_{\max,n}\gg\lmb_{\secmax,n}\label{eq:main1-lmb_max>>lmb_secmax}
\end{equation}
for some subsequence in $n$, then the following hold. 
\begin{itemize}
\item (Continuous-in-time resolution) There exist continuous curves $\vec{\lmb}(t)\in(0,\infty)^{J}$
and $\vec{z}(t)\in(\bbR^{N})^{J}$ such that 
\begin{equation}
\Big\| u(t)-\sum_{i=1}^{J}\iota_{i}W_{\lmb_{i}(t),z_{i}(t)}\Big\|_{\dot{H}^{1}}+\sum_{i=1}^{J}|z_{i}(t)-z_{i}^{\ast}|\to0\qquad\text{as }t\to+\infty.\label{eq:main1-conti-time-res-with-z-conv}
\end{equation}
\item (Determination of signs) There exists $i_{0}\in\setJ$ such that $\iota_{i}=-\iota_{i_{0}}$
for all $i\neq i_{0}$.
\item (Asymptotics of scales) There exists $\lmb_{i_{0}}^{\ast}\in(0,\infty)$
such that as $t\to+\infty$ 
\begin{align}
\lmb_{i_{0}}(t) & =\lmb_{i_{0}}^{\ast}+o(1),\label{eq:main1-lmb_i_0}\\
\lmb_{i}(t) & =(\ell_{i}+o(1))\cdot t^{-2/(N-6)}\quad\text{for all }i\neq i_{0}.\label{eq:main1-lmb_i}
\end{align}
Here, the constants $\ell_{i}$ are uniquely determined by $\lmb_{i_{0}}^{\ast}$
and $z_{1}^{\ast},\dots,z_{J}^{\ast}$, whose explicit formula can
be found in \eqref{eq:case2-lmb-ratio}. 
\end{itemize}
\end{thm}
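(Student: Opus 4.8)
The plan is to reduce the PDE to the finite-dimensional ODE system \eqref{eq:intro-lmb-eqn}--\eqref{eq:intro-z-eqn} via modulation theory, and then analyze the resulting dynamics in the regime \eqref{eq:main1-lmb_max>>lmb_secmax}. First I would set up the modulation: decompose $u(t)=\sum_i\iota_iW_{\lmb_i(t),z_i(t)}+\eps(t)$ with orthogonality conditions killing the scaling and translation directions of each bubble, so that $\eps$ is controlled by the interaction size. Since we only assume a $W$-bubbling sequence $t_n\to+\infty$, the initial task is to propagate the decomposition to a continuous-in-time one on an interval $[T_0,+\infty)$; here one invokes the $J=1$ theory of \cite{CollotMerleRaphael2017CMP} away from collisions (as in Proposition~\ref{prop:finite-time-non-existence}) together with a bootstrap on the bubble parameters. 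The key a priori (bootstrap) estimates are: $\|\eps\|_{\dot H^1}$ is dominated by the interaction strength $\sum_{i\neq j}(\lmb_i\lmb_j)^{(N-2)/2}|z_i-z_j|^{-(N-2)}$ (this requires coercivity of the linearized energy modulo the kernel, and that the interaction dominates the error — valid precisely because $N\geq 7$ makes $W$-to-$W$ interactions short-range relative to $\eps$-self-interactions); and the modulation equations \eqref{eq:intro-lmb-eqn}--\eqref{eq:intro-z-eqn} hold with errors quadratically small in the bootstrap quantities.

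Granting the ODE reduction, the heart of the matter is the dynamical analysis under $\lmb_{\max}\gg\lmb_{\secmax}$ for a sequence of times. Let $i_0$ realize $\lmb_{\max}$. I would first show this ordering persists for all large $t$: from \eqref{eq:intro-lmb-eqn}, $\lmb_{i_0}\lmb_{i_0,t}=\kpp_0\kpp_\infty\sum_{j\neq i_0}\iota_{i_0}\iota_j(\lmb_{i_0}\lmb_j)^{(N-2)/2}|z_j-z_{i_0}|^{-(N-2)}+\text{err}$, and since $\lmb_j/\lmb_{i_0}\ll1$, the right side is small compared to $\lmb_{i_0}^2$ times a quantity that integrates; so $\lmb_{i_0}(t)$ is almost constant, i.e. $\lmb_{i_0}(t)=\lmb_{i_0}^\ast+o(1)$ with $\lmb_{i_0}^\ast>0$ — this gives \eqref{eq:main1-lmb_i_0}. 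For $i\neq i_0$, since $\lmb_{i_0}\aeq 1$ dominates, the dominant interaction felt by bubble $i$ is with bubble $i_0$, so $\lmb_i\lmb_{i,t}\approx\kpp_0\kpp_\infty\,\iota_i\iota_{i_0}\,\lmb_{i_0}^{(N-2)/2}\,|z_i^\ast-z_{i_0}^\ast|^{-(N-2)}\,\lmb_i^{(N-2)/2}$, i.e. $\frac{d}{dt}\lmb_i^{(6-N)/2}\approx -\frac{N-6}{2}\kpp_0\kpp_\infty\iota_i\iota_{i_0}\lmb_{i_0}^{(N-2)/2}|z_i^\ast-z_{i_0}^\ast|^{-(N-2)}$, a constant (using $N\geq 7$ so $(6-N)/2<0$ and the exponent makes sense). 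Integrating forces $\lmb_i(t)\to 0$ like $t^{-2/(N-6)}$, which is \eqref{eq:main1-lmb_i} with $\ell_i$ the explicit constant from this integration, recorded in \eqref{eq:case2-lmb-ratio}; and for $\lmb_i^{(6-N)/2}\to+\infty$ we need the right side negative, i.e. $\iota_i\iota_{i_0}=-1$, giving the sign constraint. The positions then satisfy, by \eqref{eq:intro-z-eqn}, $|z_{i,t}|\aleq(\lmb_i\lmb_{i_0})^{(N-2)/2}\aleq t^{-(N-2)/(N-6)}$ which is integrable in $t$, so $z_i(t)$ converges; a separate argument (using that the $z_i^\ast$ are prescribed as the limits along $t_n$ in Assumption~\ref{assumption:sequential-on-param}) identifies the limit with $z_i^\ast$, yielding \eqref{eq:main1-conti-time-res-with-z-conv}.

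The main obstacle, I expect, is not the ODE analysis (which is essentially a convergent-integral argument once the $\lmb_{i_0}$-dominance is exploited) but rather two more structural points. The first is establishing the ODE reduction with sharp enough error bounds uniformly over the whole regime $[T_0,+\infty)$ when the $\lmb_i$ for $i\neq i_0$ are wildly different from each other — one must handle bubble towers among the small bubbles, which requires the localized / non-radial modified profile analysis alluded to in the introduction (comparisons with \cite{DengSunWei2025Duke}), and one must make sure the error in \eqref{eq:intro-lmb-eqn} is genuinely negligible against the main term $\lmb_i^{N-2}\aeq t^{-2(N-2)/(N-6)}$ rather than merely against $\lmb_i^2$. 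The second is the bootstrap itself: a priori one only knows the $W$-bubbling condition along the sequence $t_n$, so one must open a bootstrap at $t_n$, run it forward, and show it does not exit before the asymptotic regime takes over; closing this requires that the interaction monotonicity (e.g. of $\lmb_{i_0}$, or of a Lyapunov-type quantity built from \eqref{eq:intro-formal-cons-1}--\eqref{eq:intro-formal-cons-2}) is strong enough to beat the errors, and that the non-colliding hypothesis \eqref{eq:z_j,n-converges-to-z_j^ast} together with the $z$-equation keeps $|z_i-z_j|\ageq 1$ throughout. Once these are in place, the three bulleted conclusions follow by the integration argument sketched above.
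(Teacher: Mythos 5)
Your overall architecture (modulation reduction to \eqref{eq:intro-lmb-eqn}--\eqref{eq:intro-z-eqn}, then an integration argument exploiting the dominance of the $i_{0}$-bubble) matches the paper's strategy at a coarse level, but two steps you treat as routine are exactly where the paper has to work hardest, and as written your argument has genuine gaps there. First, the sign determination is circular in your sketch: you obtain $\iota_{i}=-\iota_{i_{0}}$ by demanding that $\lmb_{i}^{(6-N)/2}\to+\infty$, i.e.\ by assuming the conclusion $\lmb_{i}\to0$. A priori some $i\neq i_{0}$ could have $\iota_{i}=\iota_{i_{0}}$; then the same ODE makes $\lmb_{i}$ \emph{grow}, the ordering $\lmb_{\max}\gg\lmb_{\secmax}$ (which you claim ``persists'') is destroyed in finite time, and the $i_{0}$-dominated approximation you integrate is no longer valid. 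The paper excludes this scenario by a separate bootstrap regime: an intermediate exit time $T_{n}'$ at which $\sum_{i\neq i_{0}}\lmb_{i}$ reaches a fixed fraction of $\lmb_{i_{0}}$, then on $[T_{n}',T_{n}'')$ a new set of modulation estimates (Proposition~\ref{prop:case2.2-ref-mod}) showing that \emph{all} same-sign scales keep increasing (\eqref{eq:case2.2-key-diff-ineq-1}--\eqref{eq:case2.2-key-diff-ineq-2}), which contradicts the asymptotic orthogonality $R^{-1}(t)=o_{n\to\infty}(1)$ forced by the sequential assumption (Corollary~\ref{cor:case2-T_n'-is-T_n}, Corollary~\ref{cor:case2-closing-bootstrap}). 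Without an argument of this kind the second bullet, and hence the persistence of the regime on which your rate computation rests, is unproven.

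Second, your error control is not the one available in this classification setting, and the one you propose would not close. A pointwise-in-time bound $\|\eps\|_{\dot H^{1}}\aleq$ (interaction strength) is a forward-construction type estimate; here one only knows smallness along the sequence $t_{n}$, and the paper never proves (nor needs) such a bound. Instead it uses the energy identity plus coercivity of the dissipation (\eqref{eq:coercivity-dissipation}) to get the spacetime bound $\int_{t_{n}}^{T_{n}}(\|g\|_{\dot H^{2}}^{2}+\sum\frkr_{a,i}^{2}/\lmb_{i}^{2})\,dt=o_{n\to\infty}(1)$, and then refined modulation identities (\eqref{eq:non-coll-refined-mod-est}) in which the error linear in $\|g\|_{\dot H^{2}}$ — which is \emph{not} time-integrable a priori — is absorbed into a total derivative $\frac{d}{dt}\lan[\Lmb W]_{\ul{;i}},g\ran$, leaving only quadratic (hence integrable) remainders. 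Moreover, the integrability of the interaction term itself ($\lmb_{\secmax}^{D-3/2}\lmb_{\max}^{D}$) is not automatic before the rate is known; the paper derives it a priori from the sign-independent monotonicity \eqref{eq:case2-key-diff-ineq}--\eqref{eq:case2-key-int-ineq}, and this is also what lets one prove $\lmb_{i_{0}}(t)\to\lmb_{i_{0}}^{\ast}$ and $z_{i}(t)\to z_{i}^{\ast}$ before the asymptotics are established. You flag these difficulties in your last paragraph, but flagging them is not the same as supplying the mechanism; as it stands, both the bootstrap closure and the sign rigidity are missing.
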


Next, we turn to the situation where Theorem~\ref{thm:main-one-bubble-tower-classification}
does not apply, i.e., $\lmb_{\max,n}\aeq\lmb_{\secmax,n}$. The matrix
$A[\vec{z}]$ introduced earlier comes into play and we need to give
precise definitions of (total) non-degeneracy and (minimal) degeneracy
of configurations to state our second result.
\begin{defn}[(Non-)degeneracy of configurations]
\label{def:non-degeneracy-of-configurations}Let $J\geq1$; let signs
$\iota_{1},\dots,\iota_{J}\in\{\pm\}$ and \emph{distinct} points
$z_{1}^{\ast},\dots,z_{J}^{\ast}\in\bbR^{N}$ be given. We define
a $J\times J$ matrix by 
\[
A_{ij}^{\ast}\coloneqq A_{ij}[\vec{z}^{\ast}]\coloneqq\chf_{i\neq j}\kpp_{0}\kpp_{\infty}\frac{\iota_{i}\iota_{j}}{|z_{i}^{\ast}-z_{j}^{\ast}|^{N-2}},\qquad i,j\in\setJ,
\]
where $\kpp_{0}$ and $\kpp_{\infty}$ are positive universal constants
defined in \eqref{eq:def-kpp} and \eqref{eq:def-kpp_infty}. For
a subset $\emptyset\neq\calI\subseteq\setJ$, we denote by $A_{\calI}^{\ast}$
the $|\calI|\times|\calI|$ submatrix of $A^{\ast}$ defined using
the indices in $\calI$. We use the same definition and notation for
$A[\vec{z}]$ defined by simply removing {*}.

Now let $J\geq2$. We call the configuration $\{(\iota_{1},z_{1}^{\ast}),\dots,(\iota_{J},z_{J}^{\ast})\}$
\emph{non-degenerate} if $A^{\ast}$ has no nonzero kernel element
in $[0,\infty)^{J}$. Otherwise, we call it \emph{degenerate}. We
call it \emph{totally non-degenerate} if every sub-configuration of
size at least two is non-degenerate. We call it \emph{minimally degenerate}
if it is degenerate but any proper sub-configurations of size at least
two are non-degenerate. Note that our definition of (non-)degeneracy
is invariant under symmetries, i.e., under scalings, translations,
and flipping all the signs $\iota_{i}$ simultaneously.
\end{defn}

\begin{thm}
\label{thm:main-lmb-to-zero-classification}Let $N\geq7$, assume
Assumption~\ref{assumption:sequential-on-param}, and suppose Theorem~\ref{thm:main-one-bubble-tower-classification}
does not apply.
\begin{itemize}
\item If the configuration $\{(\iota_{1},z_{1}^{\ast}),\dots,(\iota_{J},z_{J}^{\ast})\}$
is totally non-degenerate, then there exist continuous curves $\vec{\lmb}(t)\in(0,\infty)^{J}$
and $\vec{z}(t)\in(\bbR^{N})^{J}$ such that the resolution \eqref{eq:main1-conti-time-res-with-z-conv}
holds and 
\begin{equation}
\lmb_{1}(t)\aeq\dots\aeq\lmb_{J}(t)\aeq t^{-1/(N-4)}.\label{eq:main-thm-nondegen-lmb-rate}
\end{equation}
Moreover, the set 
\begin{equation}
\Big\{\vec{\mu}\in(0,\infty)^{J}:\sum_{j=1}^{J}A_{ij}^{\ast}\mu_{j}^{\frac{N-2}{2}}+\frac{1}{(N-4)\mu_{i}^{\frac{N-6}{2}}}=0\text{ for all }i\in\setJ\Big\}\label{eq:main-thm-nondegen-ratio-set}
\end{equation}
is non-empty and $t^{1/(N-4)}\vec{\lmb}(t)$ converges to a connected
component of this set.
\item If the configuration $\{(\iota_{1},z_{1}^{\ast}),\dots,(\iota_{J},z_{J}^{\ast})\}$
is minimally degenerate and satisfies \eqref{eq:case3-v_i-not-all-zero},
then there exist continuous curves $\vec{\lmb}(t)\in(0,\infty)^{J}$
and $\vec{z}(t)\in(\bbR^{N})^{J}$ such that the resolution \eqref{eq:main1-conti-time-res-with-z-conv}
holds and 
\begin{equation}
\lmb_{i}(t)=(\ell_{i}+o(1))\cdot t^{-1/(N-3)}\quad\text{as }t\to+\infty.\label{eq:main-thm-degenerate-lmb-rate}
\end{equation}
Here, the constants $\ell_{i}$ are uniquely determined by the configuration,
whose explicit formula can be found in \eqref{eq:case3-lmb-rate}.
\end{itemize}
\end{thm}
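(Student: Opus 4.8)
The plan is threefold: (i) upgrade the sequential $W$-bubbling at the times $t_n$ to a continuous-in-time modulated decomposition of $u(t)$ on some $[t_0,+\infty)$; (ii) extract from \eqref{eq:NLH} a closed, autonomous-up-to-small-errors finite-dimensional system for the parameters $(\vec\lmb,\vec z)$, essentially \eqref{eq:intro-lmb-eqn}--\eqref{eq:intro-z-eqn}; and (iii) perform a purely ODE asymptotic analysis, handling the totally non-degenerate and minimally degenerate configurations separately. For (i)--(ii): since $z_{i,n}\to z_i^\ast$, $\lmb_{i,n}\aleq 1$ and the bubbles are asymptotically orthogonal, the implicit function theorem on the approximate multi-bubble manifold lets us write, near each $t_n$,
\[
u(t)=\sum_{i=1}^{J}\iota_i W_{\lmb_i(t),z_i(t)}+\eps(t),
\]
with $\eps(t)$ subject to $J(N+1)$ orthogonality conditions (against $\Lmb W$ and $\rd_{x_k}W$, rescaled and translated to each bubble), which determine $\vec\lmb(t)\in(0,\infty)^J$ and $\vec z(t)\in(\bbR^N)^J$ uniquely so long as $u(t)$ stays close to the manifold. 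Projecting \eqref{eq:NLH} onto the generators produces the modulation equations, whose leading order is \eqref{eq:intro-lmb-eqn}--\eqref{eq:intro-z-eqn} with the universal constants \eqref{eq:def-kpp}--\eqref{eq:def-kpp_infty}, and whose errors are controlled by $\|\eps(t)\|_{\dot{H}^{1}}$ times the square root of the interaction strength $\Sigma(t):=\sum_{i\neq i'}(\lmb_i\lmb_{i'})^{(N-2)/2}|z_i-z_{i'}|^{-(N-2)}$, plus genuinely higher-order interaction terms; the $\eps$-equation reads $\rd_t\eps=-\calL_{\vec\lmb,\vec z}\eps+(\text{modulation})+(\text{interaction})+(\text{nonlinear in }\eps)$, with $\calL_{\vec\lmb,\vec z}$ the sum of the linearized operators $-\Dlt-(2^{\ast}-1)W_{\lmb_i,z_i}^{4/(N-2)}$.

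\emph{Closing the bootstrap and continuous-in-time resolution.} On an interval $[t_n,t_n']$ we run a bootstrap asserting: $\|\eps(t)\|_{\dot{H}^{1}}\ll\Sigma(t)^{1/2}$; $|z_i(t)-z_i^\ast|\ll 1$; and --- since Theorem~\ref{thm:main-one-bubble-tower-classification} does not apply, so $\lmb_{\max,n}\aeq\lmb_{\secmax,n}$ --- that all $\lmb_i(t)$ stay comparable. The core is the $\eps$-estimate: building, in the spirit of \cite{CollotMerleRaphael2017CMP, KimMerle2025CPAM} but localized around each bubble, an energy-type Lyapunov functional $\calE(t)$ coercive in $\eps$ modulo its controlled finite-dimensional kernel/unstable directions (for $N\geq 7$ this costs only a mild polynomial/logarithmic loss), whose time derivative is bounded by $\Sigma(t)$ plus the modulation speeds; combined with the global energy identity \eqref{eq:energy-identity}, which forces $\int_{t_n}^{\infty}\!\int|\rd_t u|^2<\infty$, this yields both the smallness of $\|\eps\|_{\dot{H}^{1}}$ and an integrated-in-time control. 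The ODE analysis below propagates the remaining hypotheses, and a continuity argument extends the decomposition to $t_n'=+\infty$. This proves \eqref{eq:main1-conti-time-res-with-z-conv} (continuous-in-time resolution with $z_i(t)\to z_i^\ast$) and reduces the theorem to the asymptotics of \eqref{eq:intro-lmb-eqn}--\eqref{eq:intro-z-eqn} with all $\lmb_i$ comparable.

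\emph{ODE analysis, totally non-degenerate case.} Substituting $z_i(t)=z_i^\ast+o(1)$, \eqref{eq:intro-lmb-eqn} becomes $\lmb_i\lmb_{i,t}=\sum_j A_{ij}^\ast\lmb_i^{(N-2)/2}\lmb_j^{(N-2)/2}+o(\Sigma)$, which --- ignoring errors --- is the gradient ascent flow $\dot\lmb_i=\tfrac{2}{N-2}\rd_{\lmb_i}V$ for $V(\vec\lmb)=\tfrac12\sum_{i,j}A_{ij}^\ast\lmb_i^{(N-2)/2}\lmb_j^{(N-2)/2}$. Passing to ratios $\sigma_i=\lmb_i/\lmb_{\max}\in(0,1]$ and renormalized time $ds/dt=\lmb_{\max}^{N-4}$, the $\sigma$-system is an autonomous gradient-like flow on a compact subset of $(0,1]^J$ (the lower bound on $\sigma_i$ coming from comparability of scales and from non-degeneracy of proper sub-configurations, which prevents a sub-cluster from collapsing at a different rate); its equilibria are exactly the elements of the set \eqref{eq:main-thm-nondegen-ratio-set} normalized to have largest entry $1$. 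A {\L}ojasiewicz--Simon--type argument forces $\vec\sigma(s)$ to converge to one equilibrium; non-degeneracy of $A^\ast$ (no kernel element in $[0,\infty)^J$) is precisely what makes the set \eqref{eq:main-thm-nondegen-ratio-set} non-empty and guarantees that the limiting normalization equation is $\lmb_{\max}\lmb_{\max,t}=-c\,\lmb_{\max}^{N-2}(1+o(1))$ with $c>0$, whence $\lmb_{\max}(t)\aeq t^{-1/(N-4)}$, \eqref{eq:main-thm-nondegen-lmb-rate} holds, and $t^{1/(N-4)}\vec\lmb(t)$ converges to a connected component of \eqref{eq:main-thm-nondegen-ratio-set}.

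\emph{ODE analysis, minimally degenerate case --- the main obstacle.} Now $A^\ast$ admits a nonzero kernel element $\vec v\in[0,\infty)^J$; by minimal degeneracy together with distinctness of the $z_i^\ast$ (restricting such a $\vec v$ to its support would otherwise make a proper sub-configuration degenerate, and a single active index is impossible since off-diagonal entries are nonzero), $\vec v$ is in fact strictly positive and, generically, spans $\ker A^\ast$. Writing $\lmb_i(t)=\lmb(t)(v_i^{2/(N-2)}+r_i(t))$ with $r_i$ small, the leading interaction in \eqref{eq:intro-lmb-eqn} cancels ($\sum_j A_{ij}^\ast v_j=0$), so the blow-up rate is fixed by the first correction, to be isolated by a genuine two-scale expansion --- this is the delicate part, the more so since the formal conservation laws \eqref{eq:intro-formal-cons-1}--\eqref{eq:intro-formal-cons-2} are not usable at this precision. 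One tracks: the center drift from \eqref{eq:intro-z-eqn}, $z_{i,t}=\lmb^{N-2}w_i(1+o(1))$ with $w_i\in\bbR^N$ explicit in the configuration, giving $z_i(t)-z_i^\ast=-w_i\int_t^{\infty}\lmb^{N-2}\,d\tau+(\text{l.o.t.})$; the expansion $A_{ij}[\vec z]=A_{ij}^\ast+\sum_k\rd_{z_k}A_{ij}[\vec z^\ast]\cdot(z_k-z_k^\ast)+\cdots$; and the slaved fluctuations $r_i$, themselves driven by the above. Projecting \eqref{eq:intro-lmb-eqn} onto $\vec v$ then closes into the integro-differential relation $\lmb\lmb_t=-c\,\lmb^{N-2}\big(\int_t^{\infty}\lmb^{N-2}\,d\tau\big)(1+o(1))$; condition \eqref{eq:case3-v_i-not-all-zero} says precisely that the effective coefficient does not vanish --- and, together with the energy monotonicity implicit in \eqref{eq:energy-identity}, has the sign $c>0$ forcing concentration rather than a ``frozen'' stationary regime (which, absent this condition, could harbor a stationary solution in the spirit of \cite{delPinoMussoPacardPistoia2011JDE}). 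Solving this relation forces $\lmb(t)=(\mu+o(1))t^{-1/(N-3)}$ with $\mu$ determined by $c$, and the transverse fluctuations $r_i$ converge, yielding \eqref{eq:main-thm-degenerate-lmb-rate} with $\ell_i\propto v_i^{2/(N-2)}$ as in \eqref{eq:case3-lmb-rate}. I expect the two genuine difficulties to be the coercive Lyapunov estimate for $\eps$ in this non-radial multi-bubble setting with a modified profile, and --- above all --- this last two-scale ODE reduction: correctly identifying the next-order driving term, and controlling it with the right sign down to the $t^{-1/(N-3)}$ rate without the help of conservation laws.
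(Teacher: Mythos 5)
Your overall mechanism is the right one: reduce to the parameter ODEs \eqref{eq:intro-lmb-eqn}--\eqref{eq:intro-z-eqn}, control the error through the dissipation in the energy identity \eqref{eq:energy-identity}, use the gradient-like structure in renormalized variables in the totally non-degenerate case, and in the degenerate case let the center drift feed back through $A[\vec z]$ to produce the relation $\lmb\lmb_t\sim-c\,\lmb^{N-2}\int_t^\infty\lmb^{N-2}$, which is exactly the $\alp$--$\nu$ system of Lemma~\ref{lem:case3-key-ineq}. However, there are two genuine gaps. The most serious is in the minimally degenerate case: you posit the ansatz $\lmb_i(t)=\lmb(t)(v_i^{2/(N-2)}+r_i(t))$ with ``slaved'' small $r_i$, i.e.\ you assume that $\vec\lmb^{D}/|\vec\lmb^{D}|$ stays near $\ker A^{\ast}$ for all time, but you give no mechanism for this. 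The transverse direction is not slaved --- the degenerate regime is dynamically unstable (this is why the example in Section~\ref{sec:Construction-of-minimal-examples} must be built with a Brouwer argument), and the sequential hypothesis only gives closeness to the kernel at the times $t_n$. In the paper this is the heart of Section~\ref{sec:Case3}: the bootstrap on $|A^{\ast}\vec\lmb^{D}|/\lmb^{D}$ does not propagate directly, and $T_n''=T_n$ is proved by a dedicated monotonicity functional (Lemma~\ref{lem:case3-ratio-key-ineq}, Lemma~\ref{lem:case3-ratio-geq-dlt_1}) together with a contradiction argument that invokes the full non-degenerate classification Proposition~\ref{prop:case1-main} (Corollary~\ref{cor:case3-T_n''-is-T_n}); in other words, the second item of the theorem logically uses the first item's analysis as an input. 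Without some substitute for this step your two-scale expansion has no reason to remain valid beyond a bounded time interval.

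The second gap concerns the precision of the modulation errors. The ``rough'' error you quote, of size $\|\eps\|_{\dot H^1}\Sigma^{1/2}$ (equivalently $\|g\|_{\dot H^2}$ per unit $\lmb_i^{-1}$), is not time-integrable, and in the degenerate regime it is \emph{larger} than the leading term of $\lmb_{i,t}/\lmb_i$, which is only $O(\lmb^{2D-2}|\vec z-\vec z^{\ast}|)$ rather than $O(\lmb^{2D-2})$. One needs refined estimates in which the non-integrable part of the error is a total time derivative plus $\|g\|_{\dot H^2}^2$ (cf.\ \eqref{eq:non-coll-refined-mod-est}, and the $o(\zeta^{2D-1})$ precision of \eqref{eq:case3-lmb_j-mod-est}), and the spacetime integrability of $\|g\|_{\dot H^2}^2$ comes from the coercivity of the dissipation \eqref{eq:coercivity-dissipation}; this is precisely why the paper decomposes around the modified profile $U$ of Proposition~\ref{prop:Modified-Profiles}, whose equation has right-hand side spanned by almost-kernel elements, rather than around the pure sum of bubbles with a generic Lyapunov functional. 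Two smaller points: with only $L^1$-in-time forcing, a {\L}ojasiewicz--Simon argument does not give convergence to a single equilibrium, only to the critical set --- which is all the statement requires and all the paper proves; and the sign $c>0$ in the degenerate relation does not come from energy monotonicity but falls out of the explicit constants ($\gmm,\kpp_0,\kpp_1>0$) once the system is integrated from $t=+\infty$, which again presupposes the global control you have not established.
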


\begin{rem}[New scenarios]
The minimally degenerate rate $\lmb_{i}(t)\aeq t^{-1/(N-3)}$ is
\emph{entirely new} and we also \emph{construct} a nonlinear solution
exhibiting this scenario in Section~\ref{sec:Construction-of-minimal-examples}.
The non-degenerate rate $\lmb_{i}(t)\aeq t^{-1/(N-4)}$ appeared in
a different setting \cite{CortazarDelPinoMusso2020JEMS}. Note that
both rates of Theorem~\ref{thm:main-lmb-to-zero-classification}
do not exist in the radial case. On the other hand, the rates in Theorem~\ref{thm:main-one-bubble-tower-classification}
have a bubble tower nature and essentially appeared in the radial
case \cite{delPinoMussoWei2021AnalPDE,KimMerle2025CPAM} (with different
constants in front). 
\end{rem}

\begin{rem}[On construction of scenarios]
All the scenarios of our main theorems exist, although we only provide
a detailed construction of a minimal $J=4$ example for the minimally
degenerate scenario. We chose this minimally degenerate scenario because
it was not discovered before and it is the most \emph{delicate} (unstable)
scenario. For the construction of minimal examples of other scenarios,
let us refer to Remark~\ref{rem:other-examples}. A similar construction
for general non-degenerate or minimally degenerate configurations
might be possible, but this might require additional technical assumptions
on the configuration. We do not pursue this direction further as constructions
are not the main focus of this paper.
\end{rem}

\begin{rem}[Complete classification for small $J$]
Our main results cover \emph{any distinct points} $z_{1}^{\ast},\dots,z_{J}^{\ast}$
if $J\leq4$, and hence provide a \emph{complete classification} of
the dynamics for $J\leq4$ under $N\geq7$ and Assumption~\ref{assumption:sequential-on-param}.

If $J\in\{2,3\}$, there can be only \emph{two scenarios}: either
the one of Theorem~\ref{thm:main-one-bubble-tower-classification}
or the totally non-degenerate scenario of Theorem~\ref{thm:main-lmb-to-zero-classification}.
This is due to Lemma~\ref{lem:almost-same-sign-implies-totally-non-degen};
every configuration is totally non-degenerate in this case.

If $J=4$, there can be only\emph{ three scenarios}: one scenario
from Theorem~\ref{thm:main-one-bubble-tower-classification} and
two scenarios from Theorem~\ref{thm:main-lmb-to-zero-classification}.
This is due to Lemma~\ref{lem:almost-same-sign-implies-totally-non-degen}
and Lemma~\ref{lem:J=00003D4-implies-not-too-degenerate}; every
configuration is either totally non-degenerate or minimally degenerate
satisfying \eqref{eq:case3-v_i-not-all-zero}. We also remark that
there exists a large family of minimally degenerate configurations
even in case of $J=4$ (Lemma~\ref{lem:large-family-degen}). In
Section~\ref{sec:Construction-of-minimal-examples}, we also construct
a nonlinear solution exhibiting this degenerate scenario.
\end{rem}

\begin{rem}[On optimality of total non-degeneracy]
\label{rem:optimality-non-degen}The optimality of total non-degeneracy
in the first item of Theorem~\ref{thm:main-lmb-to-zero-classification}
can be explained by another exotic example $J=5$ satisfying 
\[
\lmb_{1}\aeq\dots\aeq\lmb_{4}\aeq t^{-\frac{1}{N-3}}\gg\lmb_{5}\aeq t^{-\frac{N-4}{(N-3)(N-6)}}.
\]
This example might be constructed when $\{(\iota_{1},z_{1}^{\ast}),\dots,(\iota_{4},z_{4}^{\ast})\}$
is minimally degenerate and the last point $z_{5}^{\ast}$ is located
rather generically (but with the correct sign $\iota_{5}$ depending
on $z_{5}^{\ast}$). The rigorous proof might be done by extending
the argument in Section~\ref{sec:Construction-of-minimal-examples},
but it would more intricate due to the absence of symmetry and the
addition of the point $z_{5}^{\ast}$. The full configuration $\{(\iota_{1},z_{1}^{\ast}),\dots,(\iota_{5},z_{5}^{\ast})\}$
is generically non-degenerate, but it is not totally non-degenerate
due to the degenerate sub-configuration $\{(\iota_{1},z_{1}^{\ast}),\dots,(\iota_{4},z_{4}^{\ast})\}$.
This example explains the necessity of total non-degeneracy in the
first item of Theorem~\ref{thm:main-lmb-to-zero-classification}. 
\end{rem}

\begin{rem}[On assumption \eqref{eq:case3-v_i-not-all-zero} in the minimally
degenerate scenario]
The vectors $v_{i}^{\ast}\in\bbR^{N}$ appeared in the assumption
\eqref{eq:case3-v_i-not-all-zero} are related to the time variation
of $z_{i}(t)$. For $J\leq4$, \eqref{eq:case3-v_i-not-all-zero}
is always satisfied by Lemma~\ref{lem:J=00003D4-implies-not-too-degenerate}.
We do not have any example of minimally degenerate configurations
violating \eqref{eq:case3-v_i-not-all-zero}. Let us also mention
that for large $J$, there exists a configuration $\{z_{1}^{\ast},\dots,z_{J}^{\ast}\}$
such that $A^{\ast}$ has many linearly independent nonnegative kernel
elements (Lemma~\ref{lem:large-dim-kernel}). 
\end{rem}

\begin{rem}[Sign-changing solutions]
Our solutions are necessarily\emph{ sign-changing}; $\iota_{1},\dots,\iota_{J}$
cannot be all positive or all negative in every scenario of our main
theorems. For Theorem~\ref{thm:main-one-bubble-tower-classification},
this is clear. For Theorem~\ref{thm:main-lmb-to-zero-classification},
since every configuration with all the same signs is totally non-degenerate,
the first item of Theorem~\ref{thm:main-lmb-to-zero-classification}
applies, but the fact that the set \eqref{eq:main-thm-nondegen-ratio-set}
is non-empty (the left hand side is strictly positive) excludes this
scenario to occur. In view of soliton resolution for nonnegative data
(cf.~\cite{Aryan2025arXiv,Ishiwata2025preprint}), any nonnegative
global solution either escapes our Assumption~\ref{assumption:sequential-on-param}
(we hope to address this situation in the future), converges to $\iota W_{\lmb^{\ast},z^{\ast}}$,
or converges to $0$ in $\dot{H}^{1}$. 

If only one sign is positive and the others are all negative (say
$-\iota_{1}=\iota_{2}=\dots=\iota_{J}$), then the configuration $\{z_{1}^{\ast},\dots,z_{J}^{\ast}\}$
is always totally non-degenerate (Lemma~\ref{lem:almost-same-sign-implies-totally-non-degen}).
Therefore, our main theorems fully cover this case as well.
\end{rem}

\begin{rem}[On dimensions]
\label{rem:dimensions}We assume $N\geq7$ to have a classification
result for solutions whose initial data lies in the critical space
$\dot{H}^{1}$. As mentioned in \cite{CollotMerleRaphael2017CMP,KimMerle2025CPAM},
the classification for $\dot{H}^{1}$ solutions would change drastically
in lower dimensions. The dimension $N=6$ is critical in the sense
that one loses the stabilization property ($\lmb_{i_{0}}(t)\to\lmb_{i_{0}}^{\ast}\in(0,\infty)$
of Theorem~\ref{thm:main-one-bubble-tower-classification}); see
\cite{Harada2025arXiv,Harada2024arXiv}. We also note that there is
a type-II blow-up solution \cite{Harada2020AnnPDE} when $N=6$, which
is not $\dot{H}^{1}$-bounded. The dynamics in lower dimensions $N\leq5$
is much less rigid. There exists type-II blow-up \cite{Schweyer2012JFA,delPinoMussoWei2019ActaSinica,delPinoMussoWeiZhou2020DCDS,delPinoMussoWeiZhangZhou2020arXiv,Harada2020AIHP}
with earlier formal construction \cite{FilippasHerreroVelazquez2000}.
There is no rigidity for global solutions as well; see for example
\cite{FilaKing2012,delPinoMussoWei2020AnalPDE,WeiZhangZhou2024JDE,LiWeiZhangZhou2024NonlinearAnal}. 

Nevertheless, the construction of various scenarios discussed in this
paper needs less information compared to classification, so we expect
that some of them can be constructed in lower dimensions. For example,
the non-degenerate scenario of Theorem~\ref{thm:main-lmb-to-zero-classification}
in the whole domain $\bbR^{N}$ might be constructed for $N\geq5$
in analogy with \cite{CortazarDelPinoMusso2020JEMS,delPinoMussoWeiZheng2020Pisa}
and the construction of the degenerate scenario of Theorem~\ref{thm:main-lmb-to-zero-classification}
might be possible even for lower dimensions. 
\end{rem}

\begin{rem}[Finite-time blow-up case]
As mentioned before, finite-time blow-up of time-sequential non-colliding
$W$-bubbling solutions is easily excluded by localizing the argument
of \cite{CollotMerleRaphael2017CMP}; see Proposition~\ref{prop:finite-time-non-existence}
below. 
\end{rem}

\begin{rem}[On Assumption~\ref{assumption:sequential-on-param}]
First, we assume sequential boundedness of $\lmb_{i,n}$ and $z_{i,n}$.
(Note, however, that one can work with slightly weaker assumptions
as in Remark~\ref{rem:1.4} that can include unbounded $\lmb_{i,n}$
and $z_{i,n}$.) Such boundedness is suggested by the formal conservation
law \eqref{eq:intro-formal-cons-2}, but unfortunately we do not have
such controls globally in time for the full problem with error terms.
On the other hand, we also have the time-sequential non-colliding
assumption since $z_{i}^{\ast}$ are distinct. As we defined $W$-bubbling
in terms of scaling invariant $\dot{H}^{1}$-norm, the colliding regime
(two $z_{i,n}$ converging to the same point) and the unboundedness
of $\lmb_{\max,n}$ are linked directly in the proof through rescaling.
They have to be treated differently and this situation is beyond the
scope of this paper. We hope to address these issues in the future
building on the present work and \cite{KimMerle2025CPAM}. 
\end{rem}

\begin{rem}[Related constructions for the non-degenerate rate]
In a smooth bounded domain $\Omg\subset\bbR^{N}$ with $N\geq5$,
Cortázar, del Pino, and Musso~\cite{CortazarDelPinoMusso2020JEMS}
constructed positive multi-bubble solutions with the rates $\lmb_{i}(t)\aeq t^{-1/(N-4)}$.
The construction there utilizes the boundary effect on Green's function,
which enables the construction of positive solutions with $z_{i}^{\ast}$
sufficiently close to the boundary $\rd\Omg$. In $\bbR^{N}$, however,
positive solutions of this type are excluded by our classification.
We note that the matrix $A^{\ast}$ is never positive-definite, whereas
the construction in \cite{CortazarDelPinoMusso2020JEMS} is done for
$z_{i}^{\ast}$s whose corresponding matrix ($\calG(q)$ in \cite{CortazarDelPinoMusso2020JEMS})
is positive-definite. Another related construction is \cite{delPinoMussoWeiZheng2020Pisa},
where the authors used a different bubbling profile: the sign-changing
one of \cite{delPinoMussoPacardPistoia2011JDE}.
\end{rem}

\begin{rem}[Related constructions and classifications]
This type of problem is not limited to \eqref{eq:NLH}. Let us mention
some constructions and classification results (that were not mentioned
before) from other critical PDEs. As there is a huge literature especially
on constructions, we can mention only a few results.

For near-soliton dynamics and type-II single-bubble finite-time blow-up
(with radiation), see for example \cite{BourgainWang1997,MerleRaphael2005AnnMath,MerleRaphael2003GAFA,Raphael2005MathAnn,MerleRaphael2004InventMath,MerleRaphael2006JAMS,MerleRaphael2005CMP}
for $L^{2}$-critical NLS, \cite{MartelMerleRaphael2014Acta} for
$L^{2}$-critical gKdV, \cite{GustafsonKangTsai2008Duke,GustafsonNakanishiTsai2010CMP,MerleRaphaelRodnianski2013InventMath,RaphaelSchweyer2013CPAM,RaphaelSchweyer2014AnalPDE}
for 2D Landau--Lifshitz--Gilbert flows, \cite{KriegerSchlagTataru2008Invent,RaphaelRodnianski2012Publ.Math.,KriegerSchlagTataru2009Duke,HillairetRaphael2012AnalPDE,JendrejLawrieRodriguez2022ASENS}
for critical wave equations, \cite{CollotGhoulMasmoudiNguyen2022CPAM,Mizoguchi2020CPAM}
for 2D Keller--Segel model, \cite{Kim2025JEMS} for the equivariant
self-dual Chern--Simons--Schrödinger equation, and very recent \cite{JeongKimKimKwon2026arXiv}
for the Calogero--Moser derivative NLS. Here, the latter three results
\cite{Mizoguchi2020CPAM,Kim2025JEMS,JeongKimKimKwon2026arXiv} prove
classifications for large data in the single-bubble case. Many of
these results rely on the forward construction scheme more systematically
developed in \cite{MerleRaphaelRodnianski2015CambJMath,Collot2018MemAMS}.

With more constraints such as threshold mass or energy, see \cite{Merle1993Duke,RaphaelSzeftel2011JAMS,Dodson2023AnnPDE,Dodson2024AnalPDE,DuyckaertsMerle2008IMRP,DuyckaertsMerle2009GAFA,MartelMerleRaphael2015JEMS,JendrejLawrie2018Invent},
and also \cite{Martel2005AJM,JendrejLawrie2023CPAM}. 

For multi-solitons (or multi-bubbles), see \cite{Merle1990CMP,Martel2005AJM,MartelRaphael2018AnnSci,DaviladelPinoWei2020Invent,CollotGhoulMasmoudiNguyen2024arXiv}
for non-radial multi-solitons, and see \cite{Jendrej2017AnalPDE,Jendrej2019AJM,JendrejLawrie2023CPAM,JendrejKrieger2025arXiv,delPinoMussoWei2021AnalPDE}
for radial bubble towers. Let us also mention \cite{MerleZaag2012AJM,CoteZaag2013CPAM,JendrejLawrie2024arXiv}
for non-critical PDEs involving multi-solitons.
\end{rem}

\begin{rem}[Other types of classification results for semilinear heat equation]
Let us consider the nonlinear heat equation 
\[
\rd_{t}u=\Dlt u+|u|^{p-1}u
\]
with general $p>1$ in this remark. Our work is closely related to
those on type-II blow-up in the critical case $p=p_{s}=\frac{N+2}{N-2}$.
Note that there is a finite-time ODE blow-up for any $p>1$, called
\emph{type-I} ($\|u(t)\|_{L^{\infty}}\aeq(T_{+}-t)^{-\frac{1}{p-1}}$).
In the subcritical range $p<p_{s}$, every finite-time blow-up is
known to be of type-I \cite{GigaKohn1985CPAM,GigaKohn1987Indiana,GigaMatsuiSasayama2004Indiana}.
(A similar result holds for sub-conformal wave equations \cite{MerleZaag2003AJM,MerleZaag2005MathAnn}.)
On the other hand, when $p>p_{JL}$, where $p_{JL}$ is the Joseph--Lundgren
exponent, a type-II (single-bubble) blow-up was found by Herrero and
Velázquez \cite{HerreroVelazquez-preprint,HerreroVelazquezCRASP1994}
(see also \cite{Mizoguchi2004AdvDE,Collot2017AnalPDE}) together with
a sequence of polynomial rates for $\|u(t)\|_{L^{\infty}}$. Then,
Mizoguchi \cite{Mizoguchi2007MathAnn,Mizoguchi2011TAMS} proved the
following\emph{ }classification: radial nonnegative (with more assumptions
on $u_{0}$) type-II blow-up solutions should exhibit the previously
constructed rates. Finally, in the intermediate range of $p$, Matano
and Merle \cite{MatanoMerle2004CPAM} (see also \cite{Mizoguchi2011JDE})
excluded type-II blow-up solutions under radial symmetry in the range
$p_{s}<p<p_{JL}$, and under $u_{0}\geq0$ and radial symmetry in
the critical case $p=p_{s}$. In particular, any radial type-II blow-up
solutions in the critical case must be sign-changing. See also \cite{WangWei2021arXiv},
where type-II blow-up is excluded for $u_{0}\geq0$, non-radial, $N\geq7$
in the critical case.
\end{rem}

\begin{prop}[Non-existence of non-colliding finite-time blow-up]
\label{prop:finite-time-non-existence}Let $N\geq7$. There does
not exist a $\dot{H}^{1}$-solution $u(t)$ defined on $[0,T_{+})$
with the following property.
\begin{itemize}
\item $T_{+}<+\infty$ and $\limsup_{t\to T_{+}}\|\chi u(t)\|_{\dot{H}^{1}}<+\infty$.
\item There exists $u^{\ast}\in\dot{H}^{1}$ such that 
\begin{equation}
\lim_{t\to T_{+}}\|(\chi-\chi_{r})(u(t)-u^{\ast})\|_{\dot{H}^{1}}=0\qquad\forall r\in(0,1).\label{eq:finite-time-assumption-2}
\end{equation}
\item There exist sequences of times $t_{n}\to T_{+}$, scales $\lmb_{n}\to0$,
and spatial centers $z_{n}\to0$ in $\bbR^{N}$ such that 
\begin{equation}
\lim_{n\to\infty}\|\chi\{u(t_{n})-u^{\ast}-W_{\lmb_{n},z_{n}}\}\|_{\dot{H}^{1}}=0.\label{eq:finite-time-assumption-3}
\end{equation}
\end{itemize}
Here, $\chi$ denotes a smooth radially symmetric function such that
$0\leq\chi\leq1$, $\chi(x)=1$ if $|x|\leq1$, and $\chi(x)=0$ if
$|x|\geq2$. We denoted $\chi_{R}\coloneqq\chi(\frac{\cdot}{R})$
for $R\in(0,\infty)$.
\end{prop}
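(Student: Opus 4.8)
The plan is to argue by contradiction and reduce the hypothetical blow-up to the one-bubble near-soliton dynamics of \cite{CollotMerleRaphael2017CMP}, which for $N\geq7$ forbids finite-time concentration of a single $W$; concretely, I would rerun the analysis of that paper on a fixed ball, treating the background $u^{\ast}$ and the cutoff $\chi$ as perturbations. So suppose $u$, $u^{\ast}$, $t_{n}\to T_{+}$, $\lmb_{n}\to0$, $z_{n}\to0$ are as in the statement, and fix $\rho=\tfrac12$; then $\chi\equiv1$ on $B_{2\rho}$, and by \eqref{eq:finite-time-assumption-2} together with interior parabolic regularity, $u(t)$ stays uniformly smooth on a neighborhood of $\rd B_{\rho}$ as $t\to T_{+}$.

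First I would promote the one-bubble description, assumed only along $t_{n}$, to all $t$ near $T_{+}$ via a localized energy. The quantity $\calE(t)=\int_{B_{\rho}}\big(\tfrac12|\nabla u|^{2}-\tfrac{N-2}{2N}|u|^{\frac{2N}{N-2}}\big)$ obeys $\calE'(t)=-\int_{B_{\rho}}|\rd_{t}u|^{2}+\int_{\rd B_{\rho}}\rd_{\nu}u\,\rd_{t}u$ with boundary flux bounded uniformly as $t\to T_{+}$; since $\calE$ is bounded (first hypothesis) and $T_{+}<\infty$, $\calE(t)+C(T_{+}-t)$ is monotone for $C$ large, so $\calE(t)$ converges, and evaluating along $t_{n}$ (using $\lmb_{n},z_{n}\to0$ and the non-concentration of $u^{\ast}\in\dot{H}^{1}$ at $0$) identifies the limit as $\calE^{\ast}+E[W]$, with $\calE^{\ast}$ the same integral for $u^{\ast}$. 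Along any $t'_{m}\to T_{+}$, a profile decomposition of $u(t'_{m})-u^{\ast}$ on $B_{\rho}$ has all concentrating profiles at the origin (by \eqref{eq:finite-time-assumption-2}), and the finite dissipation $\int^{T_{+}}\!\!\int_{B_{\rho}}|\rd_{t}u|^{2}<\infty$ — equivalently, sequential soliton resolution for \eqref{eq:NLH} — forces these profiles to solve \eqref{eq:static-eqn}; since every nonzero solution of \eqref{eq:static-eqn} has energy $\geq E[W]$ with equality only for $\pm W$, the value $\calE^{\ast}+E[W]$ leaves exactly one bubble $\iota W_{\lmb(t'_{m}),z(t'_{m})}$ ($\iota\in\{\pm1\}$) with $\dot{H}^{1}$-small residual, and it must genuinely concentrate (else, by stability of \eqref{eq:NLH}, the solution would extend past $T_{+}$). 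Hence, for all $t$ near $T_{+}$, $u(t)=u^{\ast}+\iota W_{\lmb(t),z(t)}+\eps(t)$ on $\{|x|<2\}$ with $\lmb(t),z(t)\to0$ and $\|\eps(t)\|_{\dot{H}^{1}}\to0$, and a standard modulation argument makes $(\lmb,z)$ of class $C^{1}$ with orthogonality conditions on $\eps$.

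Then I would insert this into \eqref{eq:NLH} restricted to $\{\chi\equiv1\}$ and run the $J=1$ analysis of \cite{CollotMerleRaphael2017CMP}: the modulation equations for $(\lmb,z)$ and the coercivity/energy estimates for $\eps$ are those of that paper, the only new terms being the nonlinear interaction of the concentrating bubble with the fixed $u^{\ast}$ and the commutators with $\chi$; since the bubble sits at the vanishing scale $\lmb(t)$, where $\chi\equiv1$ and where $u^{\ast}$ has vanishing local $\dot{H}^{1}$-energy, both are negligible compared with the main terms. Feeding the finite dissipation into the resulting system then yields, exactly as in \cite{CollotMerleRaphael2017CMP} for $N\geq7$, that a solution trapped in the one-bubble tube cannot concentrate in finite time, i.e.\ $\liminf_{t\to T_{+}}\lmb(t)>0$, contradicting $\lmb(t_{n})\aeq\lmb_{n}\to0$. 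I expect the main obstacle to be precisely this last localization bookkeeping — quantifying, in the norms of \cite{CollotMerleRaphael2017CMP}, the interaction of a bubble at vanishing scale with a fixed $\dot{H}^{1}$ function and the cutoff commutators (supported away from the bubble), together with the boundary flux in the first step — but conceptually no new idea beyond \cite{CollotMerleRaphael2017CMP} is needed, which is why we only sketch the argument.
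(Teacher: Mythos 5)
Your overall route is the paper's: localize the $J=1$ analysis of \cite{CollotMerleRaphael2017CMP} to a small ball, treat $u^{\ast}$ and the cutoff as perturbations, and conclude that a trapped single bubble cannot concentrate as $t\to T_{+}<+\infty$; your final step (the dissipation makes $\lmb_{t}/\lmb$ integrable on the finite time interval, so $\lmb$ cannot reach zero) is exactly the mechanism of the paper's refined modulation estimate \eqref{eq:finite-time-ref-mod}. The genuine gap is in how you upgrade the hypothesis \eqref{eq:finite-time-assumption-3}, which holds only along the sequence $t_{n}$, to a one-bubble description for \emph{all} $t$ near $T_{+}$. You take an arbitrary sequence $t_{m}'\to T_{+}$, perform a profile decomposition of $u(t_{m}')-u^{\ast}$ on $B_{\rho}$, and assert that the finite dissipation $\int^{T_{+}}\!\!\int_{B_{\rho}}|\rd_{t}u|^{2}<\infty$ (``equivalently, sequential soliton resolution'') forces the concentrating profiles to solve \eqref{eq:static-eqn}. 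That inference fails: finite dissipation only makes $\rd_{t}u$ small in a time-averaged sense, hence small along \emph{some} well-chosen sequence, and gives no information along an arbitrary $t_{m}'$; without that smallness the profiles need not be stationary, and the energy-quantization count (``$\calE^{\ast}+E[W]$ leaves exactly one bubble'') collapses. Nor is there a localized, non-radial sequential resolution at a finite blow-up time available to quote as a black box. So precisely the step that converts sequential into continuous-in-time information --- the heart of the proposition --- is not justified as written.

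The paper earns this step by a bootstrap rather than by resolution: for large $n$ it decomposes $\chi_{2r_{0}}u(t)=W_{\lmb(t),z(t)}+\wh g(t)$ with localized orthogonality conditions on $[t_{n},\Ttbexit_{n})$, proves the localized energy/dissipation estimate \eqref{eq:finite-time-spacetime}, whose cutoff commutator errors are only $\calO_{r_{0}}(1)$ and require nothing beyond the assumed $\dot{H}^{1}$ bound --- in particular no boundary regularity or flux control of the kind you invoke (your uniform smoothness near $\rd B_{\rho}$ and the bound on $\int_{\rd B_{\rho}}\rd_{\nu}u\,\rd_{t}u$ would themselves need an $\eps$-regularity argument) --- and then uses localized coercivity together with rough modulation estimates to propagate $\lmb+|z|=o_{n\to\infty}(1)$, the unstable component $\lan[\chi_{R_{\circ}}\calY]_{\ul{;1}},g\ran=o_{n\to\infty}(1)$ (a mode your ``standard modulation argument'' must also control, since the $\dot{H}^{1}$ coercivity fails without it), and $\|\chi_{2r_{0}}g\|_{\dot{H}^{1}}=o_{r_{0}\to0}(1)$ forward in time, crucially exploiting that $T_{+}-t_{n}$ is finite. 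Once the bootstrap closes ($T_{n}=\Ttbexit_{n}=T_{+}$), the contradiction is the one you describe. If you replace your profile-decomposition step by such a bootstrap (or by a genuine trapping argument near the one-bubble manifold), the proposal becomes the paper's proof; as it stands, the continuous-in-time upgrade is unsupported.
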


We sketch the proof of this proposition (with many details) in Section~\ref{subsec:finite-time-non-existence}.

\subsection{Notation}

Unless stated otherwise, we assume throughout the paper 
\[
N\geq7.
\]

\noindent \emph{\uline{Basic notation}}.
\begin{itemize}
\item $\bbN$ is the set of positive integers, $\bbN_{0}\coloneqq\bbN\cup\{0\}$. 
\item For $A\in\bbR$ and $B\geq0$, we denote $A\aleq B$ or $A=\calO(B)$
if $|A|\leq CB$ for some universal constant $C>0$. If $A,B\geq0$,
we denote $A\aeq B$ if $A\aleq B$ and $A\ageq B$. The dependence
on parameters is written in subscripts. Dependence on $N$ or $J$
is ignored.
\item We use the small $o$-notation such as $o_{n\to\infty}(1)$ and $o_{R\to\infty}(1)$,
which denote quantities going to zero as $n\to\infty$ and $R\to\infty$,
respectively. Typically, $o(1)$ denotes a function in time of size
$o_{t\to+\infty}(1)$.
\item The integral sign $\int$ without any specification denotes $\int f=\int_{\bbR^{N}}f(x)dx$.
\item $\lan\cdot,\cdot\ran$ means the following. When $u$ and $v$ are
real-valued functions, $\lan u,v\ran=\int uv$. When $\vec{a},\vec{b}\in\bbR^{J}$
are vectors ($J\in\bbN$ denotes the number of bubbles), $\lan\vec{a},\vec{b}\ran=\sum_{i=1}^{J}a_{i}b_{i}$. 
\item For $x\in\bbR^{N}$, we use both the notation $x=(x_{1},x_{2},\dots,x_{N})=(x^{1},x^{2},\dots,x^{N})$. 
\item For $x,y\in\bbR^{N}$, the scalar product of $x$ and $y$ is denoted
by $x\cdot y=\sum_{a=1}^{N}x_{a}y_{a}$.
\item We use the notation $\chf_{A}$. If $A$ is a statement, it means
that it is $1$ if $A$ is true and $0$ otherwise. If $A$ is a set,
then it denotes the indicator function on $A$; $\chf_{A}(x)=1$ if
$x\in A$ and $0$ otherwise.
\item $f_{\lmb,z}(x)\coloneqq\lmb^{-\frac{N-2}{2}}f(\lmb^{-1}(x-z))$ for
a function $f$ on $\bbR^{N}$, $\lmb\in(0,\infty)$, and $z\in\bbR^{N}$.
$f_{\lmb}\coloneqq f_{\lmb,0}$.
\item For $s\in\bbR$, $\Lmb_{s}\coloneqq x\cdot\nabla+\frac{N}{2}-s$ denotes
the generator of $\dot{H}^{s}$-scaling. We define $\Lmb\coloneqq\Lmb_{1}$. 
\item For $x\in\bbR$, $\lan x\ran\coloneqq(1+|x|^{2})^{\frac{1}{2}}$.
\item We fix $\chi\in C_{c}^{\infty}(\bbR^{N})$ a smooth radially symmetric
function on $\bbR^{N}$ such that $0\leq\chi\leq1$, $\chi(x)=1$
if $|x|\leq1$, and $\chi(x)=0$ if $|x|\geq2$. We denote $\chi_{R}\coloneqq\chi(\frac{\cdot}{R})$
and $\chi_{R,z}\coloneqq\chi(\frac{\cdot-z}{R})$ for $R\in(0,\infty)$
and $z\in\bbR^{N}$. 
\item We denote $2^{\ast}=\frac{2N}{N-2}$, $2^{\ast\ast}=\frac{2N}{N-4}$,
$(2^{\ast})'=\frac{2N}{N+2}$, and $(2^{\ast\ast})'=\frac{2N}{N+4}$. 
\item We denote $p=\frac{N+2}{N-2}$, $f(u)=|u|^{p-1}u$, and $f'(u)=p|u|^{p-1}$
for the nonlinearity of \eqref{eq:NLH}.
\item $\calL_{U}\coloneqq\Dlt+f'(U)$ denotes a linearized operator around
a function $U$.
\item $\NL_{U}(g)\coloneqq f(U+g)-f(U)-f'(U)g$ denotes the nonlinear term
in $g$ in $f(U+g)$.
\item Motivated by our previous work \cite{KimMerle2025CPAM}, it is convenient
to introduce 
\[
D=\frac{N-2}{2}.
\]
\item The following universal constants naturally appear: 
\begin{align}
\kpp_{0} & \coloneqq\frac{N-2}{2}\frac{\int_{\bbR^{N}}W^{p}dx}{\int_{\bbR^{N}}(\Lmb W)^{2}dx}=-\frac{\lan f'(W),\Lmb W\ran}{\|\Lmb W\|_{L^{2}}^{2}}>0,\label{eq:def-kpp}\\
\kpp_{1} & \coloneqq(N-2)\frac{\int_{\bbR^{N}}W^{p}dx}{\int_{\bbR^{N}}(\rd_{1}W)^{2}dx}>0,\label{eq:def-kpp_1}\\
\kpp_{\infty} & \coloneqq\lim_{|x|\to\infty}|x|^{N-2}W(x)=(N(N-2))^{\frac{N-2}{2}}.\label{eq:def-kpp_infty}
\end{align}
\end{itemize}
\emph{\uline{Notation for multi-bubbles}}. 
\begin{itemize}
\item Let $J\in\bbN$ denote the number of bubbles. We denote $\setJ\coloneqq\{1,\dots,J\}$. 
\item The sign, scale, and spatial center of each bubble may be described
by $(\iota_{i},\lmb_{i},z_{i})\in\{\pm\}\times(0,\infty)\times\bbR^{N}$
for $i\in\setJ$; these can collectively be denoted by 
\[
(\vec{\iota},\vec{\lmb},\vec{z})\in\calP_{J}\coloneqq\{\pm\}^{J}\times(0,\infty)^{J}\times(\bbR^{N})^{J}.
\]
\item When $(\vec{\iota},\vec{\lmb},\vec{z})\in\calP_{J}$ is given, for
a function $f$ on $\bbR^{N}$ we denote 
\[
f_{;i}(x)\coloneqq\frac{\iota_{i}}{\lmb_{i}^{\frac{N-2}{2}}}f(y_{i}),\quad y_{i}\coloneqq\frac{x-z_{i}}{\lmb_{i}},\quad\text{and}\quad f_{\ul{;i}}\coloneqq\frac{1}{\lmb_{i}^{2}}f_{;i}.
\]
\item For $(\vec{\iota},\vec{\lmb},\vec{z})\in\calP_{J}$, the sum of $W$
bubbles is denoted by 
\[
\calW(\vec{\iota},\vec{\lmb},\vec{z})\coloneqq\sum_{i=1}^{J}\iota_{i}W_{\lmb_{i},z_{i}}=\sum_{i=1}^{J}W_{;i}.
\]
\item We denote $z_{i}^{0}=\lmb_{i}$, most frequently used in the expression
$\lmb_{i}\rd_{z_{i}^{a}}$ with $a\in\{0,1,\dots,N\}$. 
\item As in our $W$-bubbling assumption, we will work with bubbles that
are asymptotically orthogonal (at least sequentially in time). Let
$\dlt>0$ and introduce the set 
\[
\calP_{J}(\dlt)\coloneqq\{(\vec{\iota},\vec{\lmb},\vec{z})\in\calP_{J}:R^{-1}<\dlt\},
\]
where 
\begin{equation}
\begin{aligned}R_{ij} & \coloneqq\max\Big\{\sqrt{\frac{\lmb_{i}}{\lmb_{j}}},\sqrt{\frac{\lmb_{j}}{\lmb_{i}}},\frac{|z_{i}-z_{j}|}{\sqrt{\lmb_{i}\lmb_{j}}}\Big\}\quad\text{for }i\neq j,\\
R_{i} & \coloneqq\min_{j\in\setJ\setminus\{i\}}R_{ij},\quad\text{and}\quad R\coloneqq\min_{i\in\setJ}R_{i}.
\end{aligned}
\label{eq:def-R}
\end{equation}
The smaller $R^{-1}$ is, the smaller the interaction between bubbles
is. The asymptotic orthogonality of parameters $(\vec{\iota}_{n},\vec{\lmb}_{n},\vec{z}_{n})$
means $R_{n}^{-1}\to0$ as $n\to\infty$. Finally, we define their
$\dlt$-tubular neighborhood 
\begin{equation}
\calT_{J}(\dlt)\coloneqq\{u\in\dot{H}^{1}:\exists(\vec{\iota},\vec{\lmb},\vec{z})\in\calP_{J}(\dlt)\text{ such that }\|u-\calW(\vec{\iota},\vec{\lmb},\vec{z})\|_{\dot{H}^{1}}<\dlt\}.\label{eq:def-tube-J}
\end{equation}
\item Let $\vec{\lmb}=(\lmb_{1},\dots,\lmb_{J})\in(0,\infty)^{J}$. We denote
by $\lmb_{\max},\lmb_{\secmax},\lmb_{\min}$ the largest, second largest,
and the smallest $\lmb_{i}$ among $\lmb_{1},\dots,\lmb_{J}$, respectively.
For any $q\in\bbR$, we also use the notation $\vec{\lmb}^{q}\coloneqq(\lmb_{1}^{q},\dots,\lmb_{J}^{q})\in(0,\infty)^{J}$.
\end{itemize}
\emph{\uline{Some additional notations}}.
\begin{itemize}
\item For $\calZ_{a}$, $\calV_{a}$, $\calY$, see Section~\ref{subsec:Linear-coercivity-estimates}.
\item $f(a,b)=\min\{|a|^{p-1}|b|,|b|^{p-1}|a|\}$; see \eqref{eq:def-f(a,b)}.
\item For $U$, $\td U$, and $\frkr_{a,i}$, see Proposition~\ref{prop:Modified-Profiles}.
\item For $\vec{\frkc}$ and $v_{i}^{\ast}$ when $\{(\iota_{1},z_{1}^{\ast}),\dots,(\iota_{J},z_{J}^{\ast})\}$
is minimally degenerate, see Lemma~\ref{lem:min-degen-implies-unique-pos-vec}
and Lemma~\ref{lem:case3-non-degen-quantity}, respectively.
\item For $\Ttbexit_{n}(\alp)$ and $T_{n}(K_{0},\dlt_{2},\alp)$, see \eqref{eq:def-Ttbexit_n}
and \eqref{eq:def-T_n(K,delta,alpha)}, respectively.
\end{itemize}

\subsection{Strategy of the proof and organization of the paper}

We use \emph{modulation analysis}; we effectively reduce the PDE dynamics
into finite-dimensional dynamics of parameters and aim to completely
classify their asymptotic behaviors. One of the main challenges is
of course the classification of the asymptotic behaviors of scaling
parameters $\lmb_{i}(t)$ due to the large number of equations and
time-sequential assumptions. \textcolor{red}{\smallskip{}
}

\textbf{\emph{Step~1.}}\emph{ Modified multi-bubble profiles, modulation,
and spacetime control} (Section~\ref{sec:Modified-multi-bubble-profiles}--\ref{sec:Modulation-in-multi-bubble}).
The goal of this step is to set the ground for the modulation analysis
in the non-radial case, by generalizing the basic modulation setup
in the radial case \cite{KimMerle2025CPAM} to the non-radial case
(more precisely, for solutions close to $W$ multi-bubbles). We develop
this basic modulation theory \emph{in a general setting including
collisions}. Some parts (e.g., the modified multi-bubble profile construction)
might not be entirely new. We hope to use this basic but general setup
as preliminaries in our future studies.

One of the crucial observations in the radial case \cite{KimMerle2025CPAM}
is that, by constructing suitable (radial) modified multi-bubble profiles
$U$, one can obtain a \emph{spacetime (monotonicity) estimate} near
any (radial) multi-bubbles, justifying that the error term $g(t)=u(t)-U(t)$
does not contribute to the modulation dynamics. We hope to generalize
this to the non-radial setting for $u$ close to $W$ multi-bubbles.

The construction of modified multi-bubble profiles is an elliptic
problem (no time dependence). A very similar construction is done
in Deng--Sun--Wei \cite{DengSunWei2025Duke}, but we need some modifications
and additional estimates to be used in dynamical analysis. For $\dlt>0$
small and $(\vec{\iota},\vec{\lmb},\vec{z})\in\calP_{J}(\dlt)$, we
construct a \emph{modified multi-bubble profile} $U=U(\vec{\iota},\vec{\lmb},\vec{z})$
as a perturbation of $\calW(\vec{\iota},\vec{\lmb},\vec{z})$ such
that 
\begin{equation}
\Dlt U+f(U)=\sum_{j=1}^{J}\Big\{\frac{\frkr_{0,j}}{\lmb_{j}^{2}}[\Lmb W]_{;j}+\sum_{b=1}^{N}\frac{\frkr_{b,j}}{\lmb_{j}^{2}}[\rd_{b}W]_{;j}\Big\}\label{eq:strategy-U-eqn}
\end{equation}
for some constants $\frkr_{b,j}$, $b\in\{0,1,\dots,N\}$, $j\in\setJ$.
The specific choice of the profiles $[\Lmb W]_{;j}$ and $[\rd_{b}W]_{;j}$
is crucial in the monotonicity estimate (in the next paragraph) and
the construction of $U$ is done by a standard Lyapunov--Schmidt
reduction method as in \cite{DengSunWei2025Duke}. We will only estimate
$\|\td U\|_{\dot{H}^{1}}$, $\|\td U\|_{\dot{H}^{2}}$, and $\|\lmb_{i}\rd_{z_{i}^{a}}\td U\|_{\dot{H}^{1}}$,
where $\td U=U-\calW$, showcasing that these estimates are sufficient
to prove our classification results. Note that the estimate for $\lmb_{i}\rd_{z_{i}^{a}}\td U$
is necessary in the dynamical analysis due to the time variation of
parameters. 

Having constructed modified multi-bubble profiles, we obtain a spacetime
control by generalizing the radial case \cite{KimMerle2025CPAM}.
For a $\calW$-bubbling solution $u(t)$ with maximal lifespan $[0,T_{+})$,
the energy identity \eqref{eq:energy-identity} implies a \emph{spacetime
control} (or \emph{monotonicity estimate}) 
\begin{equation}
\int_{0}^{T_{+}}\|\Dlt u+f(u)\|_{L^{2}}^{2}dt<+\infty.\label{eq:strategy-dissipation}
\end{equation}
The goal is then to obtain a useful coercivity estimate for the dissipation
term $\|\Dlt u+f(u)\|_{L^{2}}$, at each time. As the bubbling assumption
suggests, we consider this when $u$ is sufficiently close to $W$
multi-bubbles, or $u\in\calT_{J}(\dlt)$ for some small $\dlt>0$.
Simply decomposing $u=U+g$ with suitable orthogonality conditions
on $g$, the fact that $[\Lmb W]_{;j}$ and $[\rd_{b}W]_{;j}$ in
the right hand side of \eqref{eq:strategy-U-eqn} are almost kernel
elements of the linearized operator $\calL_{\calW}=\Dlt+f'(\calW)$
implies 
\[
\|\Dlt u+f(u)\|_{L^{2}}^{2}\aeq\|g\|_{\dot{H}^{2}}^{2}+\sum_{b,j}\frac{\frkr_{b,j}^{2}}{\lmb_{j}^{2}}.
\]
Substituting this control into \eqref{eq:strategy-dissipation} gives
a spacetime control for $g$ and $\frkr_{b,j}/\lmb_{j}$ on time intervals
where they are defined.

It is worthwhile to mention that the modulation parameters for the
decomposition $u=U(\vec{\iota},\vec{\lmb},\vec{z})+g$ in $\calT_{J}(\dlt)$
are only determined up to permutation of indices; see Lemma~\ref{lem:static-modulation}. 

\smallskip{}

\textbf{\emph{Step~2.}}\emph{ Solutions under Assumption~\ref{assumption:sequential-on-param}
and introduction of bootstrap times} $T_{n}$. From now on, we consider
solutions $u(t)$ satisfying Assumption~\ref{assumption:sequential-on-param}.
Together with a decomposition lemma, we are in the situation where
$u(t)$ is a global solution with $t_{n}\to+\infty$ such that $\lmb_{\max}(t_{n})\aleq1$,
$z_{i}(t_{n})\to z_{i}^{\ast}$, and $\|g(t_{n})\|_{\dot{H}^{1}}+R^{-1}(t_{n})\to0$,
where $R$ is defined in \eqref{eq:def-R}. 

Our assumption is \emph{only sequential}. Thus we may introduce a
bootstrap for Assumption~\ref{assumption:sequential-on-param}; say
$\lmb_{\max}(t)\leq K_{0}$, $|z_{i}-z_{i}^{\ast}|\leq\dlt_{2}$,
$\|g(t_{n})\|_{\dot{H}^{1}}+R^{-1}(t_{n})\leq\alp$ for some large
constant $K_{0}$, small constants $\dlt_{2}$ and $\alp>0$. As these
controls are true at $t=t_{n}$, we can define the first exit time
$T_{n}=T_{n}(K_{0},\dlt_{2},\alp)$. To keep the presentation simple,
let us omit mentioning these parameters from now on. 

Now, we turn our attention to the ODE system of the parameters $\lmb_{i}$
and $z_{i}$ and hope to classify their asymptotic behaviors with
simple sequential assumptions on $u(t_{n})$ with $t_{n}\to+\infty$.
In various scenarios considered in this paper, (i) the primary goal
is to close this bootstrap (hence Assumption~\ref{assumption:sequential-on-param}
is justified continuous-in-time), and (ii) the next goal is to classify
the asymptotic behavior of $\lmb_{i}$ and $z_{i}$.

The first remarkable case is when one $\lmb_{i}$ is much larger than
the others, without additional conditions on $z_{1}^{\ast},\dots,z_{J}^{\ast}$.
This case is considered in the next step. After treating that case,
we assume some conditions on $z_{1}^{\ast},\dots,z_{J}^{\ast}$. We
first consider the totally non-degenerate case for the equation of
the scales $\lmb_{i}$. The last case corresponds to the situation
where the equation of the scales $\lmb_{i}$ degenerates but we still
have non-degeneracy for the equation of the spatial centers $z_{i}$.\smallskip{}

\textbf{\emph{Step~3.}}\emph{ Classification when $\lmb_{\max,n}\gg\lmb_{\secmax,n}$:
proof of Theorem~\ref{thm:main-one-bubble-tower-classification}}
(Section~\ref{sec:Case2}). Assume we are in the case $\lmb_{\max}(t_{n})\gg\lmb_{\secmax}(t_{n})$;
let $i_{0}\in\setJ$ be such that $\lmb_{i_{0}}(t_{n})=\lmb_{\max}(t_{n})$.
Let 
\[
\calI_{+}=\{i\in\setJ:\iota_{i}=\iota_{i_{0}}\}.
\]
We need to prove $\calI_{+}=\{i_{0}\}$ and the asymptotic dynamics
of $\lmb_{i}$ and $z_{i}$ are described as in Theorem~\ref{thm:main-one-bubble-tower-classification}\emph{.}
Roughly speaking, the dynamics in this situation is governed by the
influence from the bubble $W_{;i_{0}}$ of maximum scale (thanks to
the fact that $z_{i}^{\ast}$ are distinct and $\lmb_{\max}\aleq1$);
for $i\neq i_{0}$, $\lmb_{i}$ is decreasing if $i\notin\calI_{+}$,
and $\lmb_{i}$ is increasing if $i\in\calI_{+}$. 

Closely related to the fact that $\lmb_{i}$ is not static in time
is the observation that $\max_{b,j}|\frkr_{b,j}|/\lmb_{j}$ enjoys
a certain lower bound, which combined with the spacetime control $\int_{t_{n}}^{T_{n}}\{\tsum{b,j}{}\frkr_{b,j}^{2}/\lmb_{j}^{2}+\|g(t)\|_{\dot{H}^{2}}^{2}\}dt=o_{n\to\infty}(1)$
allows us to propagate the smallness $\|g(t_{n})\|_{\dot{H}^{1}}+R^{-1}(t_{n})=o_{n\to\infty}(1)$
forward in time via the energy identity \eqref{eq:energy-identity}.
This leads to $\|g(t)\|_{\dot{H}^{1}}+R^{-1}(t)=o_{n\to\infty}(1)$,
closing the bootstrap for $\|g(t)\|_{\dot{H}^{1}}+R^{-1}(t)$. Thus
we can focus on the dynamics of $\lmb_{i}$ and $z_{i}$, which is
finite-dimensional. 

If $\calI_{+}\supsetneq\{i_{0}\}$, we need to derive a contradiction.
For $i\in\calI_{+}$ not equal to $i_{0}$, as $W_{;i}$ and $W_{;i_{0}}$
have the same sign, both $\lmb_{i}$ and $\lmb_{i_{0}}$ are growing.
Note that the variation of $z_{i}$ can be controlled roughly by the
amount of change of $\lmb_{i}$, so we will not discuss $z_{i}$ further
in this step. One can justify that $\lmb_{i}$ can grow much so that
$\lmb_{i}(t)/\lmb_{i_{0}}(t)\ageq1$, contradicting to the asymptotic
orthogonality of parameters $R(t)=o_{n\to\infty}(1)$. Thus $\calI_{+}=\{i_{0}\}$.

Having established $\calI_{+}=\{i_{0}\}$, we need to show that $\lmb_{i_{0}}(t)\to\lmb_{i_{0}}^{\ast}$
and $\lmb_{i}(t)$ for $i\neq i_{0}$ satisfy the asymptotics in Theorem~\ref{thm:main-one-bubble-tower-classification}.
For $i\neq i_{0}$, as $\iota_{i}=-\iota_{i_{0}}$, $\lmb_{i}$ is
decreasing. This forces the ratio $\lmb_{i}/\lmb_{i_{0}}$ to decrease
as well, and we conclude $\lmb_{i}(t)/\lmb_{i_{0}}(t)=o(1)$. With
this information and the fact that $z_{i}^{\ast}$ are all distinct,
it is not difficult to conclude the asymptotic dynamics as in Theorem~\ref{thm:main-one-bubble-tower-classification}.

\smallskip{}

\textbf{\emph{Step~4.}}\emph{ Dynamical case separation} (Section~\ref{sec:Dynamical-case-separation}).
From now on, we consider the dynamics where Theorem~\ref{thm:main-one-bubble-tower-classification}
is not applicable. In particular, $\lmb_{\max,n}\aeq\lmb_{\secmax,n}$.
The almost orthogonality of parameters and $z_{i}\approx z_{i}^{\ast}$
force 
\[
\lmb_{\max,n}\aeq\lmb_{\secmax,n}\ll1.
\]
In this situation, the modulation dynamics is described by \eqref{eq:intro-lmb-eqn}
and \eqref{eq:intro-z-eqn} at the leading order. As mentioned in
Step~2, our primary goal is to close the bootstrap ($T_{n}=+\infty$),
which will be done when the configuration $\{z_{1}^{\ast},\dots,z_{J}^{\ast}\}$
is not too much degenerate as in the assumption of Theorem~\ref{thm:main-lmb-to-zero-classification}.

Perhaps of independent interest, we begin with a \emph{general} configuration
of \emph{distinct} points $z_{1}^{\ast},\dots,z_{J}^{\ast}$, and
prove that the dynamics of $u(t)$ satisfying Assumption~\ref{assumption:sequential-on-param}
falls into one of the three scenarios to be described in this paragraph.
It is important to introduce right \emph{dynamical assumptions} to
separate the cases. We consider the quantities 
\[
\min_{\emptyset\neq\calI\subseteq\setJ}\Big(\frac{|A_{\calI}^{\ast}\vec{\lmb}_{\calI}^{D}(t)|}{|\vec{\lmb}_{\calI}^{D}(t)|}+\sum_{j\notin\calI}\frac{\lmb_{j}(t)}{\lmb_{\max}(t)}\Big)\qquad\text{and}\qquad\frac{\lmb_{\secmax}(t)}{\lmb_{\max}(t)}.
\]
The motivation for the second one is clear from Theorem~\ref{thm:main-one-bubble-tower-classification};
if we have $\lmb_{\max}(t_{n}')\gg\lmb_{\secmax}(t_{n}')$ for some
sequence of times $t_{n}'\in[t_{n},T_{n})$, possibly different from
the original $t_{n}$, then we can still apply Theorem~\ref{thm:main-one-bubble-tower-classification}
to get a classification result. (Here, we do not discuss whether $z_{i}$,
$g$, and $R$ are well-controlled also at $t_{n}'$.) The first choice
is motivated by the fact (Lemma~\ref{lem:distance-from-degen}) that
this quantity being away from zero implies that the leading terms
in the equations for $(\lmb_{i})_{t}$ \eqref{eq:intro-lmb-eqn} \emph{do
not degenerate}. However, a lower bound on this quantity does not
propagate forward in time (recall that $z_{1}^{\ast},\dots,z_{J}^{\ast}$
here are general distinct points), so as the first case (\textbf{Case~1})
we assume it for all time in the following sense: 
\begin{equation}
\min_{\emptyset\neq\calI\subseteq\setJ}\Big(\frac{|A_{\calI}^{\ast}\vec{\lmb}_{\calI}^{D}(t)|}{|\vec{\lmb}_{\calI}^{D}(t)|}+\sum_{j\notin\calI}\frac{\lmb_{j}(t)}{\lmb_{\max}(t)}\Big)\ageq1\qquad\text{for all }t\in[t_{n}',T_{n})\label{eq:intro-case1}
\end{equation}
for some sequence of times $t_{n}'\in[t_{n},T_{n})$. The other case
(\textbf{Cases~2--3}) becomes 
\[
\frac{|A_{\calI}^{\ast}\vec{\lmb}_{\calI}^{D}(t_{n}')|}{|\vec{\lmb}_{\calI}^{D}(t_{n}')|}+\sum_{j\notin\calI}\frac{\lmb_{j}(t_{n}')}{\lmb_{\max}(t_{n}')}\to0
\]
for some $\emptyset\neq\calI\subseteq\setJ$. Choose \emph{minimal}
such $\calI$. If $|\calI|=1$ (\textbf{Case~2}), then $A_{\calI}^{\ast}=0$
so this is equivalent to $\lmb_{\secmax}(t_{n}')\ll\lmb_{\max}(t_{n}')$,
which is already classified by Theorem~\ref{thm:main-one-bubble-tower-classification}.
If $|\calI|\geq2$ (\textbf{Case~3}), then this implies that the
sub-configuration $\{\vec{z}_{i}:i\in\calI\}$ is \emph{degenerate}
(while keeping $\lmb_{\max}(t)\aeq\lmb_{\secmax}(t)$ for all $t$
by minimality of $\calI$). \smallskip{}

\textbf{\emph{Step~5.}}\emph{ Classification in Case 1: proof of
the first item of Theorem~\ref{thm:main-lmb-to-zero-classification}}
(Section~\ref{sec:Case1}). The dynamics in Case~1 (hence assuming
\eqref{eq:intro-case1}) can be classified. This case includes the
first item of Theorem~\ref{thm:main-lmb-to-zero-classification}. 

As mentioned above, an essential consequence of \eqref{eq:intro-case1}
is that the leading terms in the equations for $(\lmb_{i})_{t}$ \eqref{eq:intro-lmb-eqn}
do not degenerate. This fact allows us to close the bootstrap for
$\|g(t)\|_{\dot{H}^{1}}+R^{-1}(t)$ via the energy identity and the
spacetime control. Since $\lmb_{\max}(t)\aeq\lmb_{\secmax}(t)$, the
asymptotic orthogonality of parameters imply $\lmb_{\max}(t)=o_{n\to\infty}(1)$
as well. Using the non-degeneracy, one can also justify that the variation
of $z$ is controlled by $\lmb_{\max}$, and hence $z$ is also well-controlled.
This closes the bootstrap and in particular the continuous-in-time
resolution with $z_{i}(t)\to z_{i}^{\ast}$. 

It then remains to prove the asymptotic behavior of $\lmb_{i}$. This
is then achieved by several small steps: proving the lower bound for
$\lmb_{\max}$, sequential upper bound for $\lmb_{\max}$, upper bound
for $\lmb_{\max}$, and then for each $\lmb_{i}$. Some of these steps
utilize backward integration (from $t=+\infty$) of the arguments
used in closing bootstrap. From a sequential bound to the continuous-in-time
bound requires additional arguments to exclude oscillations. Having
$\lmb_{\max}(t)\aeq t^{-1/(N-4)}$, one can integrate the equation
for each $(\lmb_{i})_{t}$ to conclude $\lmb_{i}(t)\aeq t^{-1/(N-4)}$
as well. 

We also show that all $\lmb_{i}(t)\aeq t^{-1/(N-4)}$ can happen only
when the configuration $\{z_{1}^{\ast},\dots,z_{J}^{\ast}\}$ is non-degenerate.
Finally, we show that the ratio vector $t^{1/(N-4)}\lmb_{i}(t)$ converges
to a connected component of the set \eqref{eq:main-thm-nondegen-ratio-set}
by observing a monotonicity functional (in self-similar variable)
at the leading order, similar to the one in \cite{CortazarDelPinoMusso2020JEMS}.\smallskip{}

\textbf{\emph{Step~6.}}\emph{ Proof of the second item of Theorem~\ref{thm:main-lmb-to-zero-classification}}
(Section~\ref{sec:Case3}). We classify the dynamics of the minimally
degenerate case under \eqref{eq:case3-v_i-not-all-zero}. This case
exhibits \emph{the most delicate dynamics} in this paper. (Recall
also that this scenario gives a new blow-up rate of different nature
in our theorem.) 

As the configuration $\{z_{1}^{\ast},\dots,z_{J}^{\ast}\}$ degenerates,
the leading terms in the $(\lmb_{i})_{t}$ equation \eqref{eq:intro-lmb-eqn}
may degenerate. This forces us to look at the variation of $(z_{i})_{t}$
to search for another non-degeneracy in the parameter dynamics. Our
assumption of minimal degeneracy with \eqref{eq:case3-v_i-not-all-zero}
guarantees, by Lemma~\ref{lem:case3-non-degen-quantity}, that the
equations for $(z_{i})_{t}$ \eqref{eq:intro-z-eqn} do not degenerate
whenever the $(\lmb_{i})_{t}$ equations \eqref{eq:intro-lmb-eqn}
degenerate. 

Dealing with this scenario requires a further generalization of the
non-degenerate case. Due to the possibility that the equations for
$(\lmb_{i})_{t}$ can degenerate, one cannot replace $A[\vec{z}]$
by $A^{\ast}$ as in the non-degenerate case. We basically deal with
\eqref{eq:intro-lmb-eqn}--\eqref{eq:intro-z-eqn} as written without
further simplifications, use weaker non-degeneracy arising from Lemma~\ref{lem:case3-non-degen-quantity}
to close bootstrap. Again, by a monotonicity formula, we are able
to prove that the ratio vector $\vec{\lmb}^{D}/|\vec{\lmb}^{D}|$
converges to the kernel of $A^{\ast}$, which is one-dimensional by
minimal degeneracy.\smallskip{}

\textbf{\emph{Step~7.}}\emph{ Construction of a nonlinear solution
of minimally degenerate scenario} (Section~\ref{sec:Construction-of-minimal-examples}).
We construct a solution exhibiting the minimally degenerate scenario.
We take a minimal example $J=4$, where $\iota_{1}=\iota_{2}=+1$,
$\iota_{3}=\iota_{4}=-1$, and $z_{1}^{\ast},z_{2}^{\ast},z_{3}^{\ast},z_{4}^{\ast}$
are vertices of a rectangle. We impose reflection symmetry conditions
on solutions to simplify the presentation. The construction is done
by a standard forward construction method with a Brouwer argument
(to treat unstable directions arising after linearization; see e.g.,
\cite{MerleZaag1997Duke,CoteMartelMerle2011RMI}). 

\section{\label{sec:Modified-multi-bubble-profiles}Modified multi-bubble
profiles}

In this preliminary section, we record some properties (especially
coercivity estimates) of the linearized operators $\calL_{\calW}$
around multi-bubbles, estimate the multi-bubble interaction term $\Psi$,
and construct the modified multi-bubble profiles $U$. 

\subsection{\label{subsec:Linear-coercivity-estimates}Linearized operators around
multi-bubbles and their coercivity}

In this subsection, we discuss linearized operators around multi-bubbles
and their coercivity estimates. For a function $U$, we denote 
\[
\calL_{U}g\coloneqq\Dlt g+f'(U)g.
\]

\uline{One-bubble linearized operator \mbox{$\calL_{W}$}}. We
begin by recalling some well-known facts (see e.g., \cite[Section 2.2]{CollotMerleRaphael2017CMP})
on the one-bubble linearized operator $\calL_{W}$. First, $\calL_{W}:\dot{H}^{2}\to L^{2}$,
$\calL_{W}:\dot{H}^{1}\to(\dot{H}^{1})^{\ast}$, and $\calL_{W}$
is \emph{non-degenerate}, i.e., $\ker\calL_{W}=\mathrm{span}\{\calV_{a}:a=0,1,\dots,N\}$,
where the \emph{modulation vectors} $\calV_{a}$ are defined by 
\begin{equation}
\calV_{a}\coloneqq\begin{cases}
\Lmb W & \text{if }a=0,\\
\rd_{a}W & \text{if }a\in\{1,\dots,N\}.
\end{cases}\label{eq:def-calV}
\end{equation}
We also recall that $\calL_{W}$ has precisely one positive eigenvalue
$e_{0}>0$ of multiplicity $1$ with the $L^{2}$-normalized eigenfunction
$\calY$ that is positive, radially symmetric, and exponentially decaying
as well as its derivatives. 

Now, we fix the \emph{orthogonality profiles} 
\begin{equation}
\calZ=(\calZ_{0},\calZ_{1},\dots,\calZ_{N}),\qquad\calZ_{a}=\begin{cases}
\calZ_{0} & \text{if }a=0,\\
\calV_{a}/\|\rd_{1}W\|_{L^{2}}^{2} & \text{if }a\in\{1,\dots,N\},
\end{cases}\label{eq:def-calZ}
\end{equation}
where $\calZ_{0}\in C_{c}^{\infty}(\bbR^{N})$ is any radially symmetric
function such that $\lan\calZ_{0},\Lmb W\ran=1$ and $\lan\calZ_{0},\calY\ran=0$.
With these orthogonality profiles, we have \emph{transversality} (of
$\ker\calL_{W}$ and $\{\calZ_{0},\dots,\calZ_{N}\}^{\perp_{L^{2}}}$):
\begin{equation}
\lan\calZ_{a},\calV_{b}\ran=\chf_{a=b}\quad\text{and}\quad\lan\calV_{a},\calV_{b}\ran=\begin{cases}
\|\Lmb W\|_{L^{2}}^{2} & \text{if }a=b=0,\\
\|\rd_{1}W\|_{L^{2}}^{2} & \text{if }a=b\neq0,\\
0 & \text{otherwise}.
\end{cases}\label{eq:transversality}
\end{equation}
By $\calZ_{0},\dots,\calZ_{N}\in(\dot{H}^{2})^{\ast}$ and \eqref{eq:transversality},
we have the following \emph{$\dot{H}^{2}$-coercivity estimate}: there
exist $\dlt,C>0$ such that 
\begin{equation}
\|\calL_{W}g\|_{L^{2}}\geq\dlt\|g\|_{\dot{H}^{2}}-C\sum_{a=0}^{N}|\lan\calZ_{a},g\ran|,\qquad\forall g\in\dot{H}^{2}.\label{eq:calL-one-bubble-coer}
\end{equation}
By $\calZ_{0},\dots,\calZ_{N}\in(\dot{H}^{1})^{\ast}$, \eqref{eq:transversality},
and $\lan\calZ_{a},\calY\ran=0$, we have the following \emph{$\dot{H}^{1}$-coercivity
estimate}: there exist $\dlt,C>0$ such that 
\begin{equation}
-\lan\calL_{W}g,g\ran=\int(|\nabla g|^{2}-f'(W)g^{2})\geq\dlt\|g\|_{\dot{H}^{1}}^{2}-C\Big(\lan\calY,g\ran^{2}+\sum_{a=0}^{N}\lan\calZ_{a},g\ran^{2}\Big),\qquad\forall g\in\dot{H}^{1}.\label{eq:calL-one-bubble-coer-H1-quad-form}
\end{equation}
By duality and $\calL_{W}\calY=e_{0}\calY$, this also implies the
following: there exist $\dlt,C>0$ such that 
\begin{equation}
\|\calL_{W}g\|_{(\dot{H}^{1})^{\ast}}\geq\dlt\|g\|_{\dot{H}^{1}}-C\sum_{a=0}^{N}|\lan\calZ_{a},g\ran|,\qquad\forall g\in\dot{H}^{1}.\label{eq:calL-one-bubble-coer-H1}
\end{equation}

\uline{Multi-bubble linearized operator \mbox{$\calL_{\calW}$}}.
We turn to consider the linearized operator $\calL_{\calW}$ around
pure multi-bubbles $\calW=\calW(\vec{\iota},\vec{\lmb},\vec{z})$
with $(\vec{\iota},\vec{\lmb},\vec{z})\in\calP_{J}(\dlt)$ for some
small $\dlt>0$. When $\dlt>0$ is small, $\calL_{\calW}=\Dlt+f'(\calW)$
can be approximated by $-\Dlt+f'(W_{;i})$ on regions where $W_{;i}$
is dominant so that we can apply one-bubble coercivity estimates discussed
above. 
\begin{lem}
For $(\vec{\iota},\vec{\lmb},\vec{z})\in\calP_{J}$, we have 
\begin{align}
\lan\lmb_{i}^{-1}\calZ_{a;i},\lmb_{j}^{-1}\calV_{b;j}\ran & =\chf_{(a,i)=(b,j)}+\calO(\chf_{i\neq j}R_{ij}^{-(N-4)}),\label{eq:calZ-calV-inner-prod}\\
\lan\lmb_{i}^{-1}\calV_{a;i},\lmb_{j}^{-1}\calV_{b;j}\ran & =\chf_{(a,i)=(b,j)}\|\calV_{a}\|_{L^{2}}^{2}+\calO(\chf_{i\neq j}R_{ij}^{-(N-4)}).\label{eq:calV-calV-inner-prod}
\end{align}
\end{lem}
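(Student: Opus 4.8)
The plan is to reduce the multi-bubble inner products to one-bubble computations by splitting space into regions where a single bubble dominates. Concretely, for a pair $i \neq j$, I would bound $\lan \lmb_i^{-1}\calZ_{a;i}, \lmb_j^{-1}\calV_{b;j}\ran$ by exploiting that $\calZ_{a;i}$ is (essentially) concentrated at scale $\lmb_i$ around $z_i$ while $\calV_{b;j}$ is concentrated at scale $\lmb_j$ around $z_j$, and these are separated in the sense measured by $R_{ij}$. By scaling, we may normalize so that $\lmb_j = 1$, $z_j = 0$; then $\calV_{b;j} = \calV_b$ is a fixed smooth profile with $|\calV_b(y)| \lesssim \lan y\ran^{-(N-2)}$ for $b=0$ and $\lan y\ran^{-(N-1)}$ for $b \neq 0$, while $\lmb_i^{-1}\calZ_{a;i}$ is either compactly supported (for $a=0$, since $\calZ_0 \in C_c^\infty$) or equals $\lmb_i^{-1}[\calV_a]_{\lmb_i/1,\,(z_i-z_j)}$ up to constants, decaying like $|x|^{-(N-1)}$ or $|x|^{-(N-2)}$ at spatial infinity. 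The relative scale and separation are both encoded in $R_{ij}$, so $R_{ij}^{-1} = \max\{\sqrt{\lmb_i}, 1/\sqrt{\lmb_i}, |z_i - z_j|\}^{-1}$ after normalization.

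The key computation is then an elementary convolution-type estimate. Writing $\phi_i = \lmb_i^{-1}\calZ_{a;i}$ (or $\lmb_i^{-1}\calV_{a;i}$) and $\psi_j = \calV_b$, I would split $\int \phi_i \psi_j$ over the region $\{|x - z_i| \leq \tfrac12 |z_i - z_j|\}$ where $\psi_j$ is comparable to $|z_i-z_j|^{-(N-2)}$ or smaller (so the contribution is $\lesssim |z_i-z_j|^{-(N-2)} \|\phi_i\|_{L^1} \lesssim |z_i-z_j|^{-(N-2)} \lmb_i$ roughly, but one must track scales carefully), over the symmetric region near $z_j$, and over the far region. Carrying out the standard three-region splitting with the decay rates above yields a bound of the shape $R_{ij}^{-(N-4)}$: the exponent $N-4$ is exactly what comes out of pairing a profile decaying like $|x|^{-(N-2)}$ against one decaying like $|x|^{-(N-2)}$, which produces $|z_i - z_j|^{-(N-4)}$ after integration in $N$ dimensions (the integral $\int \lan x \ran^{-(N-2)} \lan x - w\ran^{-(N-2)} dx \simeq |w|^{-(N-4)}$ for $|w|$ large), combined with the scale mismatch factors $(\lmb_i/\lmb_j)^{\pm 1/2}$. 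For the diagonal term $i = j$ we simply get $\lan \calZ_a, \calV_b\ran = \chf_{a=b}$ and $\lan \calV_a, \calV_b\ran = \chf_{a=b}\|\calV_a\|_{L^2}^2$ from the transversality relations \eqref{eq:transversality}, which is exact with no error.

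The main obstacle, such as it is, is bookkeeping: ensuring that the three worst decay/scale combinations ($\sqrt{\lmb_i/\lmb_j}$ large, $\sqrt{\lmb_j/\lmb_i}$ large, $|z_i - z_j|/\sqrt{\lmb_i\lmb_j}$ large) all produce the \emph{same} power $R_{ij}^{-(N-4)}$ rather than different powers, so that the single quantity $R_{ij}^{-(N-4)}$ genuinely dominates the error. One checks this by noting that in each regime the integral $\int \phi_i \psi_j$ is governed by the behavior of one profile in the core of the other, and the resulting estimate is homogeneous of the correct degree under the joint scaling that defines $R_{ij}$; since $N \geq 7$ the exponent $N-4 \geq 3 > 0$ so all integrals converge and the tails are harmless. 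One also uses that $\calY$ and $\calZ_0$ are compactly supported or exponentially decaying, which only improves the estimates, so the worst case is indeed the slowly-decaying $\calV_a$ profiles. I expect the whole argument to be a short, routine estimate of the type already carried out in \cite{KimMerle2025CPAM} in the radial case, with the non-radial case differing only in carrying along the translation parameters $z_i$ inside $R_{ij}$.
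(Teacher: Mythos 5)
Your proposal is correct and follows essentially the same route as the paper: the diagonal case by scaling and the transversality relations \eqref{eq:transversality}, and the off-diagonal case by bounding all profiles by (the decay of) $W$, normalizing $\lmb_{j}=1$, and estimating the weighted overlap $\int\lmb_{i}^{-1}W_{\lmb_{i},z_{i}}\lmb_{j}^{-1}W_{\lmb_{j},z_{j}}\aeq R_{ij}^{-(N-4)}$ via the region splitting you describe. The paper simply states this two-bubble integral directly, so your explicit three-region bookkeeping is the same computation written out in more detail.
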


\begin{proof}
If $i=j$, then by scaling and \eqref{eq:transversality}, we get
\begin{align*}
\lan\lmb_{i}^{-1}\calZ_{a;i},\lmb_{i}^{-1}\calV_{b;i}\ran & =\lan\calZ_{a},\calV_{b}\ran=\chf_{a=b},\\
\lan\lmb_{i}^{-1}\calV_{a;i},\lmb_{i}^{-1}\calV_{b;i}\ran & =\lan\calV_{a},\calV_{b}\ran=\chf_{a=b}\|\calV_{a}\|_{L^{2}}^{2}.
\end{align*}
if $i\neq j$, as $|\calZ_{a}|+|\calV_{a}|\aleq W$, it suffices to
show $\int\lmb_{i}^{-1}W_{\lmb_{i},z_{i}}\lmb_{j}^{-1}W_{\lmb_{j},z_{j}}=\calO(R_{ij}^{-(N-4)})$.
We may assume $\lmb_{i}\leq\lmb_{j}$. Introducing $\lmb=\lmb_{i}/\lmb_{j}\leq1$
and $|z|=|z_{i}-z_{j}|/\lmb_{j}$, we obtain by scaling 
\[
\tint{}{}\lmb_{i}^{-1}W_{\lmb_{i},z_{i}}\lmb_{j}^{-1}W_{\lmb_{j},z_{j}}=\tint{}{}\lmb^{-1}W_{\lmb,z}W\aeq\lmb^{(N-4)/2}(1+|z|)^{-(N-4)}\aeq R_{ij}^{-(N-4)}.\qedhere
\]
\end{proof}
\begin{prop}[Linear coercivity]
\label{prop:linear-coer}Let $J\geq1$. There exist $\dlt,C>0$ with
the following property. Denote $\calW=\calW(\vec{\iota},\vec{\lmb},\vec{z})$
for $(\vec{\iota},\vec{\lmb},\vec{z})\in\calP_{J}$.
\begin{itemize}
\item For any $(\vec{\iota},\vec{\lmb},\vec{z})\in\calP_{J}$, we have 
\begin{align}
\|\calL_{\calW}g\|_{(\dot{H}^{1})^{\ast}} & \leq C\|g\|_{\dot{H}^{1}},\qquad\forall g\in\dot{H}^{1},\label{eq:calL-H1-bdd}\\
\|\calL_{\calW}g\|_{L^{2}} & \leq C\|g\|_{\dot{H}^{2}},\qquad\forall g\in\dot{H}^{2}.\label{eq:calL-H2-bdd}
\end{align}
\item For any $(\vec{\iota},\vec{\lmb},\vec{z})\in\calP_{J}(\dlt)$, we
have 
\begin{align}
-\lan\calL_{\calW}g,g\ran & \geq\dlt\|g\|_{\dot{H}^{1}}^{2}-C(\tsum i{}\lan\calY_{\ul{;i}},g\ran^{2}+\tsum{a,i}{}\lan\calZ_{a\ul{;i}},g\ran^{2}),\qquad\forall g\in\dot{H}^{1}.\label{eq:calL-H1-coer-quad-form}\\
\|\calL_{\calW}g\|_{(\dot{H}^{1})^{\ast}} & \geq\dlt\|g\|_{\dot{H}^{1}}-C\tsum{a,i}{}|\lan\calZ_{a\ul{;i}},g\ran|,\qquad\forall g\in\dot{H}^{1},\label{eq:calL-H1-coer}\\
\|\calL_{\calW}g\|_{L^{2}} & \geq\dlt\|g\|_{\dot{H}^{2}}-C\tsum{a,i}{}\lmb_{i}^{-1}|\lan\calZ_{a\ul{;i}},g\ran|,\qquad\forall g\in\dot{H}^{2}.\label{eq:calL-H2-coer}
\end{align}
\end{itemize}
\end{prop}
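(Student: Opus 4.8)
The plan is to prove the boundedness bounds \eqref{eq:calL-H1-bdd}--\eqref{eq:calL-H2-bdd} directly and the coercivity bounds \eqref{eq:calL-H1-coer-quad-form}--\eqref{eq:calL-H2-coer} by localizing to the individual bubbles and invoking the one-bubble estimates \eqref{eq:calL-one-bubble-coer-H1-quad-form}, \eqref{eq:calL-one-bubble-coer-H1}, \eqref{eq:calL-one-bubble-coer} after rescaling. For boundedness: since $N\geq7$ we have $p-1=\tfrac{4}{N-2}<1$, so by subadditivity $|f'(\calW)|=p|\calW|^{p-1}\aleq\sum_{i=1}^{J}|W_{;i}|^{p-1}$, and each $\||W_{;i}|^{p-1}\|_{L^{N/2}}=\|W^{p-1}\|_{L^{N/2}}$ is a finite scaling-invariant constant. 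Then \eqref{eq:calL-H1-bdd} follows from $\dot{H}^{1}\embed L^{2^{\ast}}$, the dual embedding $L^{(2^{\ast})'}\embed(\dot{H}^{1})^{\ast}$, and Hölder ($\tfrac{1}{(2^{\ast})'}=\tfrac{2}{N}+\tfrac{1}{2^{\ast}}$); and \eqref{eq:calL-H2-bdd} from $\dot{H}^{2}\embed L^{2^{\ast\ast}}$ and Hölder ($\tfrac12=\tfrac2N+\tfrac1{2^{\ast\ast}}$). Scaling invariance makes the constants uniform in $(\vec\iota,\vec\lmb,\vec z)$.

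For the coercivity estimates, fix $\dlt>0$ small and $(\vec\iota,\vec\lmb,\vec z)\in\calP_{J}(\dlt)$, and build smooth cutoffs with $\sum_{i=1}^{J}\phi_{i}^{2}+\phi_{\rmex}^{2}=1$ such that: (i) $\phi_{i}\equiv1$ on a "shell" about $z_{i}$ containing essentially all the $\dot{H}^{1}$- and $\dot{H}^{2}$-mass of the rescaled profiles $\calY_{\ul{;i}},\calZ_{a\ul{;i}},\calV_{a;i}$ (which live at scale $\lmb_{i}$ about $z_{i}$); (ii) $\phi_{i}\equiv0$ on the cores of every other bubble and on the exterior region; (iii) every transition region is logarithmically wide, so $|\nabla\phi_{i}|\aleq(\mathrm{dist}(x,z_{i})\log R)^{-1}$, $|\Delta\phi_{i}|\aleq(\mathrm{dist}(x,z_{i})^{2}\log R)^{-1}$ on $\mathrm{supp}\,\nabla\phi_{i}$, and $\|\nabla\phi_{i}\|_{L^{N}}=o_{\dlt\to0}(1)$. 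The key structural point is that this must be done recursively along the tree of the configuration (which bubble's core is nested inside which), since the ratios $\lmb_{i}/\lmb_{j}$ are unconstrained: bubbles may be separated in space (comparable scales) or nested (very different scales), so in general $\phi_{i}$ is supported on an annular region strictly between the inner bubbles and the outer structure, with transitions placed at the geometric-mean scales $\sqrt{\lmb_{i}\lmb_{j}}$ and the midpoints $|z_{i}-z_{j}|$. On $\mathrm{supp}\,\phi_{i}$ one then has $\||f'(\calW)-f'(W_{;i})|\|_{L^{N/2}(\mathrm{supp}\,\phi_{i})}=o_{\dlt\to0}(1)$, so $\calL_{\calW}$ is a small $L^{N/2}$-perturbation of $\calL_{W_{;i}}$; on $\mathrm{supp}\,\phi_{\rmex}$ one has $\|f'(\calW)\|_{L^{N/2}(\mathrm{supp}\,\phi_{\rmex})}=o_{\dlt\to0}(1)$, so there $\calL_{\calW}$ is a small perturbation of $\Delta$.

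With this in hand, \eqref{eq:calL-H1-coer-quad-form} follows from the Leibniz identity $\sum_{i}\int|\nabla(\phi_{i}g)|^{2}+\int|\nabla(\phi_{\rmex}g)|^{2}=\int|\nabla g|^{2}+\int g^{2}\big(\sum_{i}|\nabla\phi_{i}|^{2}+|\nabla\phi_{\rmex}|^{2}\big)$ together with $\int f'(\calW)g^{2}=\sum_{i}\int f'(\calW)(\phi_{i}g)^{2}+\int f'(\calW)(\phi_{\rmex}g)^{2}$: on each $\phi_{i}$-piece replace $f'(\calW)$ by $f'(W_{;i})$ up to $o_{\dlt\to0}(1)\|\phi_{i}g\|_{\dot{H}^{1}}^{2}$ and apply the rescaled \eqref{eq:calL-one-bubble-coer-H1-quad-form}; the exterior piece contributes $\geq(1-o_{\dlt\to0}(1))\|\phi_{\rmex}g\|_{\dot{H}^{1}}^{2}\geq0$; the Leibniz error is $o_{\dlt\to0}(1)\|g\|_{\dot{H}^{1}}^{2}$ by Hardy; one replaces $\lan\calY_{\ul{;i}},\phi_{i}g\ran,\lan\calZ_{a\ul{;i}},\phi_{i}g\ran$ by $\lan\calY_{\ul{;i}},g\ran,\lan\calZ_{a\ul{;i}},g\ran$ up to $o_{\dlt\to0}(1)\|g\|_{\dot{H}^{1}}$ (exponential decay of $\calY$; for the power-decaying $\calZ_{a}$, $a\neq0$, the tail $\|\,|y|^{-(N-1)}\|_{L^{(2^{\ast})'}(\{|y|\geq\sqrt{R}\})}\to0$ because $(N-1)(2^{\ast})'>N$ for $N\geq7$); and $\sum_{i}\|\phi_{i}g\|_{\dot{H}^{1}}^{2}+\|\phi_{\rmex}g\|_{\dot{H}^{1}}^{2}\geq\|g\|_{\dot{H}^{1}}^{2}$ lets one absorb the errors into the main term after shrinking $\dlt$. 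The dual bound \eqref{eq:calL-H1-coer} is obtained the same way from \eqref{eq:calL-one-bubble-coer-H1} applied to $\phi_{i}g$, using that multiplication by $\phi_{i}$ is bounded on $(\dot{H}^{1})^{\ast}$ with constant $\aleq1+\|\nabla\phi_{i}\|_{L^{N}}$, that $[\Delta,\phi_{i}]g$ has small $(\dot{H}^{1})^{\ast}$-norm, that $\|\calL_{\calW}g\|_{(\dot{H}^{1})^{\ast}}\ageq\|\phi_{i}\calL_{\calW}g\|_{(\dot{H}^{1})^{\ast}}$, and $\|\Delta\psi\|_{(\dot{H}^{1})^{\ast}}=\|\psi\|_{\dot{H}^{1}}$ on the exterior piece, then maximizing over the finitely many pieces. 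Finally \eqref{eq:calL-H2-coer} follows from \eqref{eq:calL-one-bubble-coer} applied to $\phi_{i}g$, via $\calL_{W_{;i}}(\phi_{i}g)=\phi_{i}\calL_{\calW}g+\phi_{i}(f'(W_{;i})-f'(\calW))g+[\Delta,\phi_{i}]g$ with the last two $L^{2}$-errors bounded by $o_{\dlt\to0}(1)\|g\|_{\dot{H}^{2}}$ — here the cutoff pieces $(\Delta\phi_{i})(\phi_{i}g)$ and $\nabla\phi_{i}\cdot\nabla(\phi_{i}g)$ are controlled by the Hardy and Hardy--Rellich inequalities (valid for $N\geq7$) together with (iii) — and reconstruction $\|g\|_{\dot{H}^{2}}\aleq\sum_{i}\|\phi_{i}g\|_{\dot{H}^{2}}+\|\phi_{\rmex}g\|_{\dot{H}^{2}}$ up to errors of the same kind.

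The hard part is the construction and bookkeeping of the adapted partition of unity: because the scales $\lmb_{i}$ are unconstrained one faces a whole hierarchy of nested cores and halos, and the cutoffs must be placed at the right geometric-mean and midpoint scales so that each $\phi_{i}$ at once (a) carries the $i$-th profile's mass, (b) avoids every other bubble's core, and (c) has logarithmically wide transitions making all commutator and Leibniz errors vanish as $\dlt\to0$; in the radial bubble-tower setting of \cite{KimMerle2025CPAM} this tree degenerates, and the new feature here is the simultaneous presence of comparable and widely separated scales. A less quantitative alternative for \eqref{eq:calL-H1-coer-quad-form} is a contradiction argument: if it failed there would be $(\vec\iota_{n},\vec\lmb_{n},\vec z_{n})\in\calP_{J}(1/n)$ and $g_{n}$ with $\|g_{n}\|_{\dot{H}^{1}}=1$, $-\lan\calL_{\calW_{n}}g_{n},g_{n}\ran\to0$, and $\lan\calY_{\ul{;i}},g_{n}\ran,\lan\calZ_{a\ul{;i}},g_{n}\ran\to0$; rescaling around each bubble, extracting weak $\dot{H}^{1}$-limits, using that the only obstructions to \eqref{eq:calL-one-bubble-coer-H1-quad-form} are $\calY$ and the $\calV_{a}$, and a concentration-compactness accounting of the $\dot{H}^{1}$-mass yields a contradiction; then \eqref{eq:calL-H1-coer} and \eqref{eq:calL-H2-coer} would follow abstractly using that $\calL_{\calW}$ has exactly $J$ positive eigenvalues. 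I would still prefer the localization proof, as it is quantitative and in the spirit of the modulation estimates needed later.
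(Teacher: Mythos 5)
Your overall strategy is sound and most of the individual technical claims check out (the IMS-type identity under $\sum_i\phi_i^2+\phi_{\rmex}^2=1$, Hardy/Rellich control of the commutators $[\Dlt,\phi_i]g$, the tail estimates that let you replace $\lan\calZ_{a\ul{;i}},\phi_i g\ran$ by $\lan\calZ_{a\ul{;i}},g\ran$, boundedness of multiplication by $\phi_i$ on $(\dot H^1)^{\ast}$), but your route is genuinely different from the paper's, and the step you yourself flag as the hard part is exactly the one the paper's argument is designed to avoid. The paper proves \eqref{eq:calL-H1-coer-quad-form} and \eqref{eq:calL-H2-coer} by induction on $J$: at each stage it peels off only the bubble of smallest scale $\lmb_J$ with a single cutoff $\chi_K(y_J)$, where the radius $K=K(g)\in[\dlt_J^{-1/20},\dlt_J^{-1/10}]$ is chosen by a pigeonhole argument so that $g$ has little $\dot H^1$- resp.\ $\dot H^2$-mass on the (one-octave) transition annulus; on the interior piece $\calL_{\calW}$ is a small perturbation of $\calL_{W_{;J}}$, while on the exterior piece it is a small perturbation of $\calL_{\calW'}$ with $\calW'=\sum_{i<J}W_{;i}$, which is handled by the induction hypothesis. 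No configuration-adapted global partition of unity is ever constructed: the whole nested/separated hierarchy of scales is absorbed into the induction. In your proposal that hierarchy must instead be encoded in the cutoffs $\phi_1,\dots,\phi_J,\phi_{\rmex}$ themselves, recursively along the nesting tree, with $g$-independent logarithmically wide transitions placed at geometric-mean and midpoint scales, and with each $\phi_i$ simultaneously carrying the $i$-th profile's mass while avoiding every other core. This can be made to work (the available room between adjacent structures is $\aeq\log(1/\dlt)$ octaves and the tree has at most $J$ levels), but it is precisely the bookkeeping you acknowledge and do not carry out, so as written the proposal specifies the partition rather than constructs it; this is the one genuine piece of work that remains, and the paper's induction plus $g$-dependent pigeonhole choice of a single cutoff is the cleaner device that removes it.

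Two smaller points of comparison. Your derivation of \eqref{eq:calL-H1-coer} by localizing the one-bubble dual estimate \eqref{eq:calL-one-bubble-coer-H1} directly is somewhat more streamlined than the paper's, which first establishes the quadratic-form bound and then removes the $\calY$-directions by hand, decomposing $g=\wh g+\sum_j\lan\calY_{\ul{;j}},g\ran\calY_{;j}$ and using $\calL_{W_{;j}}\calY_{;j}=e_0\calY_{\ul{;j}}$ together with smallness of the cross terms. Your boundedness proof (H\"older against $\|W^{p-1}\|_{L^{N/2}}$ plus Sobolev embedding) is a valid alternative to the paper's, which instead bounds $|f'(\calW)|\aleq\sum_i|x-z_i|^{-2}$ and uses Hardy and Rellich; both give constants uniform over $\calP_J$.
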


\begin{proof}
See Appendix~\ref{subsec:proof-of-lin-coer}.
\end{proof}

\subsection{Interaction between bubbles}

Recall the nonlinearity $f(u)=|u|^{p-1}u$. We denote the \emph{multi-bubble
interaction term} by 
\begin{equation}
\Psi\coloneqq f(\calW)-\tsum{i=1}Jf(W_{;i}).\label{eq:def-Psi}
\end{equation}
To estimate this interaction term and other remainder terms that appear
later, we record useful pointwise estimates regarding the nonlinearity
$f(u)$. It is convenient to introduce 
\begin{equation}
f(a,b)\coloneqq\min\{|a|^{p-1}|b|,|a||b|^{p-1}\},\qquad\forall a,b\in\bbR.\label{eq:def-f(a,b)}
\end{equation}

\begin{lem}[Pointwise estimates]
\label{lem:f(a,b)-est}Let $p\in(1,2]$. We have the following pointwise
estimates: 
\begin{align}
|f(\tsum{i=1}Ka_{i})-\tsum{i=1}Kf(a_{i})| & \aleq_{K,p}\tsum{i,j:i\neq j}{}f(a_{i},a_{j}),\qquad K\in\bbN,\label{eq:f(a,b)-1}\\
|a\{f'(a+b)-f'(a)\}| & \aleq_{p}f(a,b),\label{eq:f(a,b)-2}\\
|f(a+b)-f(a)-f'(a)b| & \aleq_{p}\min\{|b|^{p},\chf_{a\neq0}|a|^{p-2}|b|^{2}\},\label{eq:f(a,b)-3}\\
|a\{f(a+b+c)-f(a+b)-f'(a)c\}| & \aleq_{p}\{f(a,b)+f(a,c)\}|c|.\label{eq:f(a,b)-4}
\end{align}
\end{lem}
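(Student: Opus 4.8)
The plan is to prove the four pointwise inequalities of Lemma~\ref{lem:f(a,b)-est} one at a time by elementary calculus, exploiting that $f(u)=|u|^{p-1}u$ with $p\in(1,2]$ is $C^{1}$ and that $f'(u)=p|u|^{p-1}$ is $(p-1)$-H\"older continuous (since $p-1\in(0,1]$). The basic estimates I would repeatedly invoke are: $|f(u)|\aleq_{p}|u|^{p}$; $|f(u)-f(v)|\aleq_{p}(|u|+|v|)^{p-1}|u-v|$ from the mean value theorem applied to $f$; and, most importantly, the H\"older bound $|f'(u)-f'(v)|\aleq_{p}|u-v|^{p-1}$, which follows because $t\mapsto|t|^{p-1}$ is $(p-1)$-H\"older on $\bbR$ with constant depending only on $p$. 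Also I would note that $f(a,b)=\min\{|a|^{p-1}|b|,|a||b|^{p-1}\}$ is symmetric in $a,b$ and, by the sub-multiplicativity of the minimum, $f(a,b)\aeq\min\{|a|,|b|\}^{p-1}\max\{|a|,|b|\}$ up to constants, so without loss of generality I can assume $|a|\le|b|$ in each argument and reduce $f(a,b)$ to $|a|^{p-1}|b|$.

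For \eqref{eq:f(a,b)-1} the strategy is induction on $K$; it suffices to handle $K=2$, i.e.\ $|f(a+b)-f(a)-f(b)|\aleq_{p}f(a,b)$, then telescope. For $K=2$ I would assume $|a|\le|b|$ (by symmetry) and write $f(a+b)-f(b)=\int_{0}^{1}f'(b+sa)\,a\,ds$, so that the left side is $|a|\cdot|\int_{0}^{1}(f'(b+sa)-?)ds|$ plus $f(a)$; bounding $|f(a+b)-f(b)|\aleq_{p}(|b|+|a|)^{p-1}|a|\aleq_{p}|b|^{p-1}|a|$ and $|f(a)|\aleq_{p}|a|^{p}\le|b|^{p-1}|a|$ (using $|a|\le|b|$) gives the claim. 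For \eqref{eq:f(a,b)-2}, again take the two cases; if $|a|\le|b|$ then $|a||f'(a+b)-f'(a)|\aleq_{p}|a|\cdot(|a|+|b|)^{p-1}\aleq_{p}|a||b|^{p-1}=f(a,b)$ after simplification, and if $|b|\le|a|$ then the H\"older bound directly gives $|a||f'(a+b)-f'(a)|\aleq_{p}|a||b|^{p-1}=f(a,b)$. For \eqref{eq:f(a,b)-3}, the bound $|f(a+b)-f(a)-f'(a)b|\aleq_{p}|b|^{p}$ comes from writing the remainder as $\int_{0}^{1}(f'(a+sb)-f'(a))b\,ds$ and using the H\"older bound $|f'(a+sb)-f'(a)|\aleq_{p}(s|b|)^{p-1}\aleq|b|^{p-1}$; the alternative bound $\chf_{a\neq0}|a|^{p-2}|b|^{2}$ comes from the same integral but now using the mean value theorem on $f'$, which is $C^{1}$ away from $0$ with $|f''(u)|\aleq_{p}|u|^{p-2}$, provided $|b|\lesssim|a|$ (the regime $|b|\gtrsim|a|$ being already covered by the first bound $|b|^{p}$, since then $|a|^{p-2}|b|^{2}\aeq|b|^{p}$). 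Finally \eqref{eq:f(a,b)-4} is proved by writing $f(a+b+c)-f(a+b)-f'(a)c=\int_{0}^{1}(f'(a+b+sc)-f'(a))c\,ds$, splitting $f'(a+b+sc)-f'(a)=[f'(a+b+sc)-f'(a+b)]+[f'(a+b)-f'(a)]$, and applying \eqref{eq:f(a,b)-2}-type estimates (bounding $|a|$ times each bracket by $f(a,c)$ and $f(a,b)$ respectively, after dividing and multiplying by $|a|$ and using the H\"older bound with $|c|$ extracted).

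The main obstacle, and the only place demanding genuine care, is the case analysis dictated by the $\min$ in the definition of $f(a,b)$ together with the need to keep all implied constants depending only on $p$ (equivalently on $N$, as $p=\frac{N+2}{N-2}\in(1,2]$ for $N\ge7$)—in particular one must be careful that the restriction $p\le 2$ is exactly what makes $f'$ H\"older rather than merely continuous, so that $f''$ may blow up at the origin and the second bound in \eqref{eq:f(a,b)-3} genuinely needs the indicator $\chf_{a\neq 0}$. I would organize the write-up by first proving \eqref{eq:f(a,b)-2} (the workhorse), then deducing \eqref{eq:f(a,b)-1} and \eqref{eq:f(a,b)-4} from it, and treating \eqref{eq:f(a,b)-3} separately with its two-regime split. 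None of the individual computations is deep; the value of the lemma is purely that it packages these estimates in the exact form needed later for controlling the interaction term $\Psi$ and the nonlinear remainders.
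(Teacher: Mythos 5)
Your overall plan starts the same way as the paper for \eqref{eq:f(a,b)-4} (splitting off the term $a\{f'(a+b)-f'(a)\}c$, which is controlled by \eqref{eq:f(a,b)-2}), but the treatment of the remaining piece has a genuine gap. You claim that $|a|\,|f'(a+b+sc)-f'(a+b)|\aleq_{p}f(a,c)$, and this is false: take $b=-a$, $0<|c|\ll|a|$ and $s=1$; then $|f'(c)-f'(0)|=p|c|^{p-1}$, so the left-hand side is $\aeq|a||c|^{p-1}$, while $f(a,c)=|a|^{p-1}|c|$, and the ratio $(|a|/|c|)^{2-p}$ blows up since $p<2$ (for $p=2$ adjust the example, but the point persists for the relevant range). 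In this regime \eqref{eq:f(a,b)-4} survives only because $|b|\ageq|a|$ forces $f(a,b)\aeq|a|^{p}$, so the offending contribution has to be absorbed into $f(a,b)|c|$, not $f(a,c)|c|$; deciding which of the two terms absorbs it requires a case analysis on the relative sizes of $|a+b|$, $|c|$, $|a|$. This is exactly what the paper does: it writes the second piece as $a\,\NL_{a+b}(c)$ with $\NL_{a+b}(c)=f(a+b+c)-f(a+b)-f'(a+b)c$ and distinguishes three cases, namely $|a+b|>\tfrac12|a|$ (where \eqref{eq:f(a,b)-3} and $p-2\le0$ give $|a|\min\{|c|^{p},|a|^{p-2}|c|^{2}\}\aeq f(a,c)|c|$), $|c|>\tfrac12|a|$ (where $|a||c|^{p}\aeq f(a,c)|c|$), and $\max\{|a+b|,|c|\}\le\tfrac12|a|$ (where $|b|\ge\tfrac12|a|$ and the bound lands on $f(a,b)|c|$). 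Your write-up omits this trichotomy, so the proof of \eqref{eq:f(a,b)-4} --- the only estimate the paper proves in detail --- does not close as written.

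A secondary issue: your preliminary identification $f(a,b)\aeq\min\{|a|,|b|\}^{p-1}\max\{|a|,|b|\}$ is backwards. Since $p\le2$, for $|a|\le|b|$ the minimum in the definition is $|a||b|^{p-1}$, i.e.\ $f(a,b)=\min\{|a|,|b|\}\cdot\max\{|a|,|b|\}^{p-1}$. The slip is harmless in your proofs of \eqref{eq:f(a,b)-1} and of the case $|a|\le|b|$ of \eqref{eq:f(a,b)-2}, where your computations land on the correct (smaller) quantity anyway, but it bites in the case $|b|\le|a|$ of \eqref{eq:f(a,b)-2}: there $f(a,b)=|a|^{p-1}|b|$, whereas the raw H\"older bound only yields $|a||b|^{p-1}$, which is strictly larger when $|b|\ll|a|$. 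To close that case you need the second-derivative bound $|f''(u)|\aleq_{p}|u|^{p-2}$ on the segment $[a,a+b]$ when, say, $|b|\le\tfrac12|a|$ (the segment then stays at distance $\aeq|a|$ from the origin), reserving the H\"older bound for $|b|\aeq|a|$; this matters because \eqref{eq:f(a,b)-2} is also your workhorse for \eqref{eq:f(a,b)-4}. Your arguments for \eqref{eq:f(a,b)-1} and \eqref{eq:f(a,b)-3} are fine, and these (together with \eqref{eq:f(a,b)-2}) are precisely the parts the paper omits as elementary.
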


\begin{proof}
Let us omit the elementary proofs of \eqref{eq:f(a,b)-1}, \eqref{eq:f(a,b)-2},
and \eqref{eq:f(a,b)-3}, and only prove \eqref{eq:f(a,b)-4}. The
implicit constants here depend on $p$. Write 
\[
a\{f(a+b+c)-f(a+b)-f'(a)c\}=a\{f'(a+b)-f'(a)\}c+a\NL_{a+b}(c).
\]
where $\NL_{a+b}(c)=f(a+b+c)-f(a+b)-f'(a+b)c$. Applying \eqref{eq:f(a,b)-2}
to the first term of the right hand side gives the bound $\calO(f(a,b)|c|)$,
which is acceptable. It remains to estimate the second term $a\NL_{a+b}(c)$.
If $|a+b|>\frac{1}{2}|a|$, then \eqref{eq:f(a,b)-3} and $p-2\leq0$
give $|a\NL_{a+b}(c)|\aleq|a|\min\{|c|^{p},|a+b|^{p-2}|c|^{2}\}\aleq|a|\min\{|c|^{p},|a|^{p-2}|c|^{2}\}\aeq f(a,c)|c|$.
If $|c|>\frac{1}{2}|a|$, then we use \eqref{eq:f(a,b)-3} to have
$|a\NL_{a+b}(c)|\aleq|a||c|^{p}\aeq f(a,c)|c|$. The remaining case
is when $\max\{|a+b|,|c|\}\leq\frac{1}{2}|a|$, we use $|b|\geq\frac{1}{2}|a|$
and \eqref{eq:f(a,b)-3} to have $|a\NL_{a+b}(c)|\aleq|a||c|^{p}\aleq|a|^{p}|c|\aleq|b|^{p-1}|a||c|\aeq f(a,b)|c|$.
\end{proof}
By \eqref{eq:f(a,b)-1}, we have $|\Psi|\aleq\tsum{i\neq j}{}f(W_{;i},W_{;j})$.
We need integral estimates adapted to $f(W_{;i},W_{;j})$. 
\begin{lem}[Estimates for $f(W_{;1},W_{;2})$]
\label{lem:f(W1,W2)-est}We have the following estimates. 
\begin{itemize}
\item ($(\dot{H}^{1})^{\ast}$-estimate for $f(W_{;1},W_{;2})$) For $\eps\in\bbR$,
we have 
\begin{equation}
\int_{\{|W_{;1}|\geq|W_{;2}|\}}|W_{;1}|^{\frac{N-\eps}{N-2}}|W_{;2}|^{\frac{N+\eps}{N-2}}\aleq_{\eps}\begin{cases}
R_{12}^{-N} & \text{if }\eps>0,\\
R_{12}^{-N}(1+\log R_{12}) & \text{if }\eps=0,\\
R_{12}^{-(N+\eps)} & \text{if }\eps<0.
\end{cases}\label{eq:W1W2-Hdot1}
\end{equation}
In particular, we have 
\begin{equation}
\|f(W_{;1},W_{;2})\|_{L^{(2^{\ast})'}}\aleq R_{12}^{-\frac{N+2}{2}}.\label{eq:f(W1,W2)-H1dual-est}
\end{equation}
\item (Estimates for $f(W_{;1},W_{;2})$ adapted to other scalings) Assume
$\lmb_{1}\leq\lmb_{2}$. We have 
\begin{align}
\|f(W_{;1},W_{;2})\|_{L^{2}} & \aleq\lmb_{1}^{-1}\{\chf_{N=7}R_{12}^{-(N-2)}+\chf_{N=8}R_{12}^{-(N-2)}\lan\log R_{12}\ran^{\frac{1}{2}}+\chf_{N\geq9}R_{12}^{-\frac{N}{2}-2}\},\label{eq:f(W1,W2)-L2-est}\\
\|f(W_{;1},W_{;2})\|_{L^{(2^{\ast\ast})'}} & \aleq\chf_{N=7}\lmb_{1}^{\frac{3}{4}}\lmb_{2}^{\frac{1}{4}}R_{12}^{-\frac{N}{2}}+\chf_{N=8}\lmb_{1}R_{12}^{-\frac{N}{2}}\lan\log(\tfrac{\lmb_{2}}{\lmb_{1}})\ran^{\frac{1}{(2^{\ast\ast})'}}+\chf_{N\geq9}\lmb_{1}R_{12}^{-\frac{N}{2}},\label{eq:f(W1,W2)-H2dual-est}
\end{align}
where we recall $(2^{\ast\ast})'=\frac{2N}{N+4}$.
\end{itemize}
\end{lem}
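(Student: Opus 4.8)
The plan is to reduce every quantity in the statement to weighted integrals of the form $\int_{\{|W_{;1}|\geq|W_{;2}|\}}|W_{;1}|^{\alpha}|W_{;2}|^{\beta}$ and its mirror over $\{|W_{;1}|<|W_{;2}|\}$, and then to evaluate these by an explicit region decomposition using only the profile bound $W(x)\aeq\lan x\ran^{-2D}$. Since $p=\frac{N+2}{N-2}<2$ for $N\geq7$, the map $t\mapsto t^{p-2}$ is decreasing, so on $\{|W_{;1}|\geq|W_{;2}|\}$ we have $f(W_{;1},W_{;2})=|W_{;1}|^{p-1}|W_{;2}|$, while on the complement $f(W_{;1},W_{;2})=|W_{;1}||W_{;2}|^{p-1}$. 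Hence for any exponent $q$,
\[
\|f(W_{;1},W_{;2})\|_{L^{q}}^{q}=\int_{\{|W_{;1}|\geq|W_{;2}|\}}|W_{;1}|^{(p-1)q}|W_{;2}|^{q}+\int_{\{|W_{;1}|<|W_{;2}|\}}|W_{;1}|^{q}|W_{;2}|^{(p-1)q},
\]
and relabelling the two bubbles (which swaps $\lmb_{1}\leftrightarrow\lmb_{2}$, $z_{1}\leftrightarrow z_{2}$ and fixes $R_{12}$) shows the second integral has the same structure as the first. For \eqref{eq:W1W2-Hdot1} one has $\alpha+\beta=2^{\ast}$, which makes the integral scale invariant, and \eqref{eq:f(W1,W2)-H1dual-est} is the case $(\alpha,\beta)=((p-1)(2^{\ast})',(2^{\ast})')$, where $\alpha+\beta=p(2^{\ast})'=2^{\ast}$ and $\eps\coloneqq N-(N-2)\alpha=\frac{N(N-6)}{N+2}>0$; for \eqref{eq:f(W1,W2)-L2-est} and \eqref{eq:f(W1,W2)-H2dual-est} the total homogeneity $\alpha+\beta$ differs from $2^{\ast}$ and powers of $\lmb_{1},\lmb_{2}$ survive.

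For \eqref{eq:W1W2-Hdot1} I would use scale invariance to normalise $\lmb_{1}=1\leq\lmb_{2}$ and split $\bbR^{N}$ into the core of bubble $1$ ($|x-z_{1}|\lesssim\lmb_{1}$, where $|W_{;1}|\aeq\lmb_{1}^{-D}$), the ``bridge'' annulus towards bubble $2$ (where $|W_{;1}|\aeq\lmb_{1}^{D}|x-z_{1}|^{-2D}$ and $|W_{;2}|$ is essentially $|W_{;2}(z_{1})|$), the core of bubble $2$, and the far field, distinguishing the \emph{nested} geometry $|z_{1}-z_{2}|\lesssim\lmb_{2}$ from the \emph{separated} geometry $|z_{1}-z_{2}|\gtrsim\lmb_{2}$. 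In both, up to subsets where the integrand is lower order, $\{|W_{;1}|\geq|W_{;2}|\}$ reduces to the ball $B(z_{1},\lmb_{1}R_{12})$, so that
\[
\int_{\{|W_{;1}|\geq|W_{;2}|\}}|W_{;1}|^{\alpha}|W_{;2}|^{\beta}\aeq|W_{;2}(z_{1})|^{\beta}\Big(\lmb_{1}^{N-D\alpha}+\lmb_{1}^{D\alpha}\int_{\lmb_{1}}^{\lmb_{1}R_{12}}\rho^{\,N-1-(N-2)\alpha}\,d\rho\Big).
\]
In the scale-invariant case the radial exponent equals $\eps-1$, so the integral is controlled by its outer endpoint when $\eps>0$, by its inner endpoint when $\eps<0$, and produces the logarithm $\int\rho^{-1}d\rho\aeq\lan\log R_{12}\ran$ when $\eps=0$; using $R_{12}\aeq(\lmb_{2}/\lmb_{1})^{1/2}\lan|z_{1}-z_{2}|/\lmb_{2}\ran$ and $|W_{;2}(z_{1})|\aeq\lmb_{2}^{-D}\lan|z_{1}-z_{2}|/\lmb_{2}\ran^{-2D}$, the whole expression collapses to $R_{12}^{-(N+\eps)}$ times ($R_{12}^{\eps}$, $\lan\log R_{12}\ran$, or $1$), which is exactly \eqref{eq:W1W2-Hdot1}; the contributions of the other three regions are checked to be no larger. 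Then \eqref{eq:f(W1,W2)-H1dual-est} follows by taking the $(2^{\ast})'$-th root in the case $\eps=\frac{N(N-6)}{N+2}>0$.

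For \eqref{eq:f(W1,W2)-L2-est} and \eqref{eq:f(W1,W2)-H2dual-est} I would run the same decomposition with $(\alpha,\beta)=(2(p-1),2)$, resp.\ $((p-1)(2^{\ast\ast})',(2^{\ast\ast})')$, and their relabelled partners, now keeping the $\lmb_{1},\lmb_{2}$ factors. The radial exponent $N-1-(N-2)\alpha$ equals $N-9$ in the $L^{2}$ case (and a quantity crossing $-1$ exactly when $N$ crosses $8$ in the other), so the bridge integral is governed by its inner endpoint for $N=7$, produces a logarithm for $N=8$, and is governed by its outer endpoint for $N\geq9$; this trichotomy, combined with the residual homogeneity $\alpha+\beta-2^{\ast}$ and the bounds $\lmb_{1}\leq\lmb_{2}$, $R_{12}\geq\max\{(\lmb_{2}/\lmb_{1})^{1/2},1\}$, reproduces the advertised prefactors $\lmb_{1}^{-1}$, $\lmb_{1}^{3/4}\lmb_{2}^{1/4}$, $\lmb_{1}$ and the stated powers of $R_{12}$. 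One must also verify that the cores, a neighbourhood of $z_{2}$, and the far field never dominate (a routine comparison of powers of $R_{12}$ and $\lmb_{1}/\lmb_{2}$); for instance, in the nested $L^{(2^{\ast\ast})'}$ case the dominant contribution actually comes from the part of the bridge on the bubble-$2$ side of the crossover.

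The computations themselves are elementary; the main obstacle is organisational---keeping the nested and separated geometries apart, pinning down the exact shape of $\{|W_{;1}|\geq|W_{;2}|\}$ in each, correctly identifying the dominant region among the several candidates, and tracking the $\lmb_{1}^{\#}\lmb_{2}^{\#}$ prefactors so that the three dimensional cases and the borderline logarithms ($\eps=0$ in \eqref{eq:W1W2-Hdot1}, $N=8$ in \eqref{eq:f(W1,W2)-L2-est}--\eqref{eq:f(W1,W2)-H2dual-est}) come out with the right exponents.
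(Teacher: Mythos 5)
Your proposal is correct and follows essentially the same route as the paper's Appendix~\ref{subsec:Proof-of-lemma-integrals}: identify the crossover set $\{|W_{;1}|\geq|W_{;2}|\}$ with a ball of radius $\sim\lmb_{1}R_{12}$ about $z_{1}$, note that $|W_{;2}|$ is comparable to a constant $\aeq\lmb_{1}^{-D}R_{12}^{-2D}$ there, and do region-by-region power counting (core, bridge, bubble-$2$ side, far field) with borderline logarithms at $\eps=0$ and $N=8$, deducing \eqref{eq:f(W1,W2)-H1dual-est} from \eqref{eq:W1W2-Hdot1} with $\eps=\tfrac{N(N-6)}{N+2}>0$ exactly as the paper does. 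The only differences are organizational (the paper symmetrizes via $g_{\eps}$ so that $\lmb_{1}\leq\lmb_{2}$ is genuinely WLOG, and works with the four explicit regions $\Omg_{1}$--$\Omg_{4}$ instead of your nested/separated case split), and your observation that the bubble-$2$ side dominates the $L^{(2^{\ast\ast})'}$ bound for $N=7$ matches the paper's $\Omg_{3}$ computation.
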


\begin{proof}
See Appendix~\ref{subsec:Proof-of-lemma-integrals}.
\end{proof}
With the help of the above lemma, let us record the estimates for
$\Psi$ and $E[\calW]-JE[W]$.
\begin{lem}[{Estimates for $\Psi$ and $E[\calW]-JE[W]$}]
We have the following estimates. 
\begin{itemize}
\item (Rough estimate for $E[\calW]-JE[W]$) We have 
\begin{equation}
|E[\calW]-JE[W]|\aleq R^{-(N-2)}.\label{eq:E(calW)-JE(W)-est}
\end{equation}
\item (Size estimates for $\Psi$) We have 
\begin{align}
\|\Psi\|_{L^{(2^{\ast})'}} & \aleq R^{-\frac{N+2}{2}},\label{eq:Psi-Hdot1-dual}\\
\|\Psi\|_{L^{2}} & \aleq\tsum i{}\lmb_{i}^{-1}\frkp(R_{i}),\label{eq:Psi-L2}
\end{align}
where 
\begin{equation}
\frkp(R)\coloneqq\chf_{N=7}R^{-5}+\chf_{N=8}R^{-6}(1+\log R)^{\frac{1}{2}}+\chf_{N\geq9}R^{-\frac{N}{2}-2}.\label{eq:def-frkp}
\end{equation}
\item (Inner product estimates for $\Psi$) For any $\eps>0$, we have 
\begin{align}
\lan\calV_{a;i},\Psi\ran & =\calO_{\eps}(R^{-\eps}R_{i}^{-(N-2-\eps)}),\label{eq:calV-Psi-estimate}\\
\lan\calV_{a;i},\Psi\ran & =\tsum{j\neq i}{}\lan\calV_{a;i},f'(W_{;i})W_{;j}\ran+\calO_{\eps}(R^{-(2+\eps)}R_{i}^{-(N-2-\eps)}).\label{eq:calV-Psi-leading}
\end{align}
\item (The inner products $\lan\calV_{a;i},f'(W_{;i})W_{;j}\ran$) We have
\begin{align}
 & \lan[\Lmb W]_{;i},f'(W_{;i})W_{;j}\ran\nonumber \\
 & =\begin{cases}
{\displaystyle -\tfrac{N-2}{2}\tint{}{}f(W)\cdot\iota_{i}\iota_{j}(\tfrac{\lmb_{i}}{\lmb_{j}})^{\frac{N-2}{2}}W(\tfrac{z_{j}-z_{i}}{\lmb_{j}})+\calO\big(R_{ij}^{-N}\tfrac{\lmb_{i}}{\lmb_{j}}\lan\log(\tfrac{\lmb_{j}+|z_{i}-z_{j}|}{\lmb_{i}})\ran\big)} & \text{if }\lmb_{i}\leq\lmb_{j},\\
{\displaystyle \tint{}{}f(W)\cdot\iota_{i}\iota_{j}(\tfrac{\lmb_{j}}{\lmb_{i}})^{\frac{N-2}{2}}\Lmb W(\tfrac{z_{i}-z_{j}}{\lmb_{i}})+\calO\big(R_{ij}^{-N}\tfrac{\lmb_{j}}{\lmb_{i}}\lan\log(\tfrac{\lmb_{i}+|z_{i}-z_{j}|}{\lmb_{j}})\ran\big)} & \text{if }\lmb_{i}\geq\lmb_{j},
\end{cases}\label{eq:2.48}
\end{align}
and 
\begin{align}
 & \lan[\nabla W]_{;i},f'(W_{;i})W_{;j}\ran\nonumber \\
 & =\begin{cases}
{\displaystyle -\tint{}{}f(W)\cdot\iota_{i}\iota_{j}(\tfrac{\lmb_{i}}{\lmb_{j}})^{\frac{N}{2}}[\nabla W](\tfrac{z_{i}-z_{j}}{\lmb_{j}})+\calO\big(R_{ij}^{-(N+1)}(\tfrac{\lmb_{i}}{\lmb_{j}})^{\frac{3}{2}}\lan\log(\tfrac{\lmb_{j}+|z_{i}-z_{j}|}{\lmb_{i}})\ran\big)} & \text{if }\lmb_{i}\leq\lmb_{j},\\
{\displaystyle \tint{}{}f(W)\cdot\iota_{i}\iota_{j}(\tfrac{\lmb_{j}}{\lmb_{i}})^{\frac{N-2}{2}}[\nabla W](\tfrac{z_{j}-z_{i}}{\lmb_{i}})+\calO\big(R_{ij}^{-(N+1)}(\tfrac{\lmb_{j}}{\lmb_{i}})^{\frac{1}{2}}\lan\log(\tfrac{\lmb_{i}+|z_{i}-z_{j}|}{\lmb_{j}})\ran\big)} & \text{if }\lmb_{i}\geq\lmb_{j}.
\end{cases}\label{eq:2.49}
\end{align}
\end{itemize}
\end{lem}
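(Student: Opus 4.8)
The plan is to derive all four items from three inputs already on hand: the pointwise algebraic bounds of Lemma~\ref{lem:f(a,b)-est}, the integral bounds of Lemma~\ref{lem:f(W1,W2)-est}, and the pointwise domination $|\calV_{a;i}|\aleq W_{;i}$ (obtained by scaling from $|\calV_{a}|\aleq W$), so that $|\lan\calV_{a;i},h\ran|\aleq\int W_{;i}|h|$ for every $h$. Only the last two displays need an additional ingredient: an exact change of variables, a Taylor/far-field expansion, and the Pohozaev-type identity $\lan f'(W),\Lmb W\ran=-D\int f(W)$, which follows from $\int W^{p-1}(x\cdot\nabla W)=-\tfrac{N}{p}\int W^{p}$ together with $\tfrac{(N-2)p}{2}-N=-D$.

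For \eqref{eq:E(calW)-JE(W)-est}: since $f$ is odd, $-\Dlt W_{;i}=f(W_{;i})$, so integrating by parts gives $\int|\nabla\calW|^{2}=\tsum i{}\int W^{2^{\ast}}+\tsum{i\neq j}{}\int f(W_{;i})W_{;j}$, while an elementary inequality bounds the difference $|\calW|^{2^{\ast}}-\tsum i{}|W_{;i}|^{2^{\ast}}$, in absolute value, by $\aleq\tsum{i\neq j}{}\min\{|W_{;i}|^{2^{\ast}-1}|W_{;j}|,|W_{;i}||W_{;j}|^{2^{\ast}-1}\}$. Because $2^{\ast}-1=p$, on $\{|W_{;i}|\geq|W_{;j}|\}$ both $|f(W_{;i})W_{;j}|$ and this minimum are $\aleq|W_{;i}|^{\frac{N+2}{N-2}}|W_{;j}|$, which is the $\eps=-2$ instance of \eqref{eq:W1W2-Hdot1}, hence $\aleq R_{ij}^{-(N-2)}$; the region $\{|W_{;i}|<|W_{;j}|\}$ is handled symmetrically. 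Using $E[W_{;i}]=E[W]$ and summing yields \eqref{eq:E(calW)-JE(W)-est}. For \eqref{eq:Psi-Hdot1-dual}--\eqref{eq:Psi-L2}, \eqref{eq:f(a,b)-1} gives $|\Psi|\aleq\tsum{i\neq j}{}f(W_{;i},W_{;j})$; then \eqref{eq:f(W1,W2)-H1dual-est} summed over pairs gives \eqref{eq:Psi-Hdot1-dual}, and \eqref{eq:f(W1,W2)-L2-est} gives, for each pair with (say) $\lmb_{i}\leq\lmb_{j}$, the bound $\aleq\lmb_{i}^{-1}\frkp(R_{ij})\leq\lmb_{i}^{-1}\frkp(R_{i})$ since $\frkp$ is decreasing (for $R$ large) and $R_{i}\leq R_{ij}$; attributing each such term to the index of smaller scale and summing a bounded number of pairs gives \eqref{eq:Psi-L2}.

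For the inner product estimates I would decompose $\lan\calV_{a;i},\Psi\ran=\int\calV_{a;i}\Psi$ according to which bubble dominates. One has the identity $\Psi=f'(W_{;i})\tsum{j\neq i}{}W_{;j}+\NL_{W_{;i}}(\tsum{j\neq i}{}W_{;j})-\tsum{j\neq i}{}f(W_{;j})$. Near the $i$-th center, pairing the first term with $\calV_{a;i}$ gives exactly $\tsum{j\neq i}{}\lan\calV_{a;i},f'(W_{;i})W_{;j}\ran$, of size $\aleq R_{i}^{-(N-2)}$ by \eqref{eq:W1W2-Hdot1}, while the $\NL$ term (quadratic, estimated via \eqref{eq:f(a,b)-3}) and the $\tsum{j\neq i}{}f(W_{;j})$ term contribute only $\aleq\tsum{j\neq i}{}R_{ij}^{-N}$ there. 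Near another center $z_{j}$, the large piece $f(W_{;j})$ produced inside $\NL_{W_{;i}}$ cancels against the $-f(W_{;j})$ in $\Psi$, and what remains, together with $f'(W_{;i})W_{;j}$ restricted to that region, is again $\aleq R_{ij}^{-N}$; the far region is negligible. Since $R_{ij}^{-N}\leq R^{-2}R_{i}^{-(N-2)}$ and any borderline logarithm (from the $\eps=0$ case of \eqref{eq:W1W2-Hdot1}, or from the triple products $\int W_{;i}f(W_{;k},W_{;l})$ with $k,l\neq i$) is absorbed into a small power $R^{-\eps}$, this yields \eqref{eq:calV-Psi-leading}, and \eqref{eq:calV-Psi-estimate} follows a fortiori.

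For the explicit asymptotics \eqref{eq:2.48}--\eqref{eq:2.49}, I would rescale so as to zoom into the narrower of the two bubbles. If $\lmb_{i}\leq\lmb_{j}$, the change of variables $y=(x-z_{i})/\lmb_{i}$ turns $\lan[\Lmb W]_{;i},f'(W_{;i})W_{;j}\ran$ into $\iota_{i}\iota_{j}(\lmb_{i}/\lmb_{j})^{D}\int(\Lmb W)(y)f'(W)(y)\,W(w+\tfrac{\lmb_{i}}{\lmb_{j}}y)\,dy$ with $w=(z_{i}-z_{j})/\lmb_{j}$ (the scaling powers collapse because $(p-1)D=2$); Taylor expanding $W(w+\cdot)$, the zeroth-order term gives $W(w)\lan f'(W),\Lmb W\ran=-D\int f(W)\cdot W(w)$, the first-order term vanishes by radial parity of $(\Lmb W)f'(W)$, and the remainder is controlled by cutting the $y$-integral at scale $\aeq(\lmb_{j}+|z_{i}-z_{j}|)/\lmb_{i}$ — the inner part producing the stated error with a logarithm (since $\int_{|y|\aleq L}(\Lmb W)f'(W)|y|^{2}\aeq\lan\log L\ran$), the outer part bounded crudely. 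If instead $\lmb_{i}\geq\lmb_{j}$, I would change variables $y=(x-z_{j})/\lmb_{j}$; now the far field $W(y)\aeq\kpp_{\infty}|y|^{-(N-2)}$ dominates, and after extending the integral to $\bbR^{N}$ (up to a log-controlled error) it becomes a convolution with the Newtonian potential $|\cdot|^{-(N-2)}$ of $(\Lmb W)f'(W)=-\Dlt(\Lmb W)$, which reproduces $\Lmb W$ at the rescaled center and gives the second branch of \eqref{eq:2.48}. Formula \eqref{eq:2.49} is proved identically with $\Lmb W$ replaced by $\rd_{a}W$; the difference is that $(\rd_{a}W)f'(W)=\rd_{a}(W^{p})$ has vanishing integral and vanishing second moments, so in the regime $\lmb_{i}\leq\lmb_{j}$ the leading term becomes the first-order Taylor term $-\int f(W)\cdot\iota_{i}\iota_{j}(\lmb_{i}/\lmb_{j})^{D+1}[\nabla W](w)$ (note $D+1=\tfrac N2$) and the error drops by one further order. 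The main obstacle will be this last item: keeping the two scaling regimes separate and, in each, truncating the rescaled integrals at exactly the right scale so that the logarithmic factors come out sharp; everything else reduces mechanically to Lemmas~\ref{lem:f(a,b)-est}--\ref{lem:f(W1,W2)-est}.
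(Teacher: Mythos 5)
Your treatment of the first two items coincides with the paper's: expand $E[\calW]-JE[W]$ into the cross terms $\int f(W_{;i})W_{;j}$ plus the $(p+1)$-power difference and apply \eqref{eq:W1W2-Hdot1} with $\eps=-2$ region by region, then get \eqref{eq:Psi-Hdot1-dual}--\eqref{eq:Psi-L2} from \eqref{eq:f(a,b)-1} and \eqref{eq:f(W1,W2)-H1dual-est}--\eqref{eq:f(W1,W2)-L2-est}, attributing each pair to its smaller scale. (One harmless slip: the elementary inequality you quote with the $\min$ is false --- take $a_{1}=1$, $a_{2}=\eps$, where the left side is $\aeq\eps$ but the $\min$ is $\eps^{p}$; the correct bound $\aleq\sum_{i\neq j}|a_{i}|^{p}|a_{j}|$ is what your region-split estimate actually uses.) For \eqref{eq:2.48}--\eqref{eq:2.49} your route genuinely differs from the paper's in the wide-bubble branches: the paper integrates by parts onto $f(W_{;j})$ (resp.\ $f(W_{;i})$) and Taylor-expands the rescaled kernel, whereas you replace $W$ by its far field and use $\int(-\Dlt u)|y|^{-(N-2)}dy=(N-2)|\bbS^{N-1}|u(0)$; this reproduces the correct constant since $\int f(W)=(N-2)|\bbS^{N-1}|\kpp_{\infty}$, and your use of the vanishing zeroth and second moments of $\rd_{a}(W^{p})$ is a clean substitute for the paper's preliminary integration by parts in the narrow-bubble branch of \eqref{eq:2.49}. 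One caution: in the branch $\lmb_{i}\geq\lmb_{j}$ of \eqref{eq:2.49}, a crude absolute-value bound on the replacement error $W-\kpp_{\infty}|\cdot|^{-(N-2)}=\calO(|\cdot|^{-N})$ over the region carrying the mass of the rescaled $\rd_{a}(W^{p})$ only gives $\calO(R_{ij}^{-N}\tfrac{\lmb_{j}}{\lmb_{i}})$, which exceeds the claimed $R_{ij}^{-(N+1)}(\tfrac{\lmb_{j}}{\lmb_{i}})^{1/2}$ by a factor $\aeq|z_{i}-z_{j}|/\lmb_{i}$; you must invoke the vanishing mean of $\rd_{a}(W^{p})$ (and the smoothness of the far-field error away from the origin) in that replacement step as well, so ``up to a log-controlled error'' conceals a real computation, though one that your moment-cancellation idea does handle.

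The genuine gap is in your proof of \eqref{eq:calV-Psi-leading} when $J\geq3$. The claim that near another center $z_{j}$ ``what remains \dots is again $\aleq R_{ij}^{-N}$'' fails as soon as a third bubble is present: in the region where $W_{;i}$ is not dominant, the remainder $\Psi-\sum_{j\neq i}f'(W_{;i})W_{;j}$ contains the mutual interaction of the other bubbles, and pairing it with $|\calV_{a;i}|\aleq W_{;i}$ produces the triple products $\int_{\{|W_{;j}|\geq|W_{;k}|\geq|W_{;i}|\}}|W_{;j}|^{p-1}|W_{;k}||W_{;i}|$ with $j,k\neq i$. These are not a borderline logarithm to be absorbed: they are the dominant error, of size up to $\aeq R^{-(2+\eps)}R_{i}^{-(N-2-\eps)}$, which can be far larger than $R_{i}^{-N}$ or $R_{ij}^{-N}$ when $R_{i}\gg R$, while a plain H\"older bound via \eqref{eq:f(W1,W2)-H1dual-est} only yields $R^{-\frac{N+2}{2}}$, which carries no $R_{i}$-power at all. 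The missing ingredient is the three-exponent splitting of $|W_{;j}|^{\frac{4}{N-2}}|W_{;k}||W_{;i}|$ into a product of powers of pairwise quantities with exponents $\alp,\beta,\gmm$ satisfying $\beta+\gmm=\frac{N-2}{N+\eps'}$, so that \eqref{eq:W1W2-Hdot1} gives $R^{-N\alp}R_{i}^{-N(\beta+\gmm)}=R^{-(2+\eps)}R_{i}^{-(N-2-\eps)}$ (this is \eqref{eq:triple} in the paper). Without this step the specific error structure of \eqref{eq:calV-Psi-leading} --- an $R$-power times an $R_{i}$-power, which is what later feeds \eqref{eq:frkr/lmb-est} --- is not obtained; the rest of your item, and \eqref{eq:calV-Psi-estimate} as a corollary, is fine.
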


\begin{rem}
The estimate \eqref{eq:calV-Psi-leading} in a slightly weaker form
appears in \cite[Lemma 2.1]{DengSunWei2025Duke}. The term $R_{i}^{-(N-2-\eps)}$
should not be replaced by $R^{-(N-2-\eps)}$. Nevertheless, the proof
is similar to \cite[Lemma 2.1]{DengSunWei2025Duke}. See also \cite{Bahri1989book}.
\end{rem}

\begin{rem}
The error $\calO_{\eps}(R^{-(2+\eps)}R_{i}^{-(N-2-\eps)})$ of \eqref{eq:calV-Psi-leading}
can be replaced by $\calO(R^{-2}(1+\log R)^{\frac{2}{N}}\cdot R_{i}^{-(N-2)}(1+\log R_{i})^{\frac{N-2}{N}})$
if one expresses the triple product \eqref{eq:triple} with the choice
$\eps'=0$, $\alp=\frac{2}{N}$, $\beta=\frac{2}{N}$, and $\gmm=\frac{N-4}{N}$
in the proof. The error $\calO_{\eps}(R^{-\eps}R_{i}^{-(N-2-\eps)})$
of \eqref{eq:calV-Psi-estimate} can then be replaced by $\calO(R_{i}^{-(N-2)}(1+\log R_{i})^{\frac{N-2}{N}}(1+\log R)^{-\frac{N-2}{N}})$.
\end{rem}

\begin{proof}
\uline{Proof of \mbox{\eqref{eq:E(calW)-JE(W)-est}}}. Using $\Dlt W_{;i}+f(W_{;i})=0$
for each $i$, observe 
\begin{align*}
E[\calW]-JE[W] & =\tfrac{1}{2}\tsum{i,j:i\neq j}{}\tint{}{}\nabla W_{;i}\cdot\nabla W_{;j}-\tfrac{1}{p+1}\tint{}{}(|\calW|^{p+1}-\tsum i{}|W_{;i}|^{p+1})\\
 & =\tfrac{1}{2}\tsum{i,j:i\neq j}{}\tint{}{}f(W_{;i})W_{;j}-\tfrac{1}{p+1}\tint{}{}(|\calW|^{p+1}-\tsum i{}|W_{;i}|^{p+1}).
\end{align*}
Applying $||\sum_{i}a_{i}|^{p+1}-\sum_{i}|a_{i}|^{p+1}|\aleq\sum_{i,j:i\neq j}|a_{i}|^{p}|a_{j}|$
and \eqref{eq:W1W2-Hdot1} with $\eps=-2$, we obtain 
\[
|E[\calW]-JE[W]|\aleq\tsum{i,j:i\neq j}{}\tint{}{}|W_{;i}|^{p}|W_{;j}|\aleq\tsum{i,j:i\neq j}{}R_{ij}^{-(N-2)}\aleq R^{-(N-2)}.
\]

\uline{Proof of \mbox{\eqref{eq:Psi-Hdot1-dual}} and \mbox{\eqref{eq:Psi-L2}}}.
By \eqref{eq:f(a,b)-1}, we have $|\Psi|\aleq\tsum{i\neq j}{}f(W_{;i},W_{;j})$.
\eqref{eq:Psi-Hdot1-dual} then follows from \eqref{eq:f(W1,W2)-H1dual-est}.
Next, \eqref{eq:Psi-L2} follows from \eqref{eq:f(W1,W2)-L2-est}
and \eqref{eq:def-frkp}: 
\[
\|\Psi\|_{L^{2}}\aleq\sum_{i,j:i\neq j}\frac{\frkp(R_{ij})}{\min\{\lmb_{i},\lmb_{j}\}}\aleq\sum_{i}\frac{\frkp(R_{i})}{\lmb_{i}}.
\]

\uline{Proof of \mbox{\eqref{eq:calV-Psi-estimate}} assuming \mbox{\eqref{eq:calV-Psi-leading}}}.
By \eqref{eq:W1W2-Hdot1} with $\eps=-2$, $\lan\calV_{a;i},f'(W_{;i})W_{;j}\ran=\calO(R_{ij}^{-(N-2)})$
for $j\neq i$. Substituting this into \eqref{eq:calV-Psi-leading}
gives \eqref{eq:calV-Psi-estimate}.

\uline{Proof of \mbox{\eqref{eq:calV-Psi-leading}}}. We may assume
$\eps>0$ is sufficiently small. Define $\eps'>0$ by the relation
$N\cdot\frac{N-2}{N+\eps'}=N-2-\eps$. We begin with 
\begin{align*}
 & \big|\lan\calV_{a;i},\Psi\ran-\tsum{j\neq i}{}\lan\calV_{a;i},f'(W_{;i})W_{;j}\ran\big|\\
 & \leq\Big\{\int_{\{|W_{;i}|\geq\max_{j\neq i}|W_{;j}|\}}+\int_{\{|W_{;i}|<\max_{j\neq i}|W_{;j}|\}}\Big\}|\calV_{a;i}||\Psi-\tsum{j\neq i}{}f'(W_{;i})W_{;j}|.
\end{align*}
In the region $|W_{;i}|\geq\max_{j\neq i}|W_{;j}|$, we have 
\[
\Psi-\tsum{j\neq i}{}f'(W_{;i})W_{;j}=\NL_{W_{;i}}(\tsum{j\neq i}{}W_{;j})-\tsum{j\neq i}{}f(W_{;j})=\calO(\tsum{j\neq i}{}|W_{;j}|^{p}),
\]
so 
\[
\chf_{\{|W_{;i}|\geq\max_{j\neq i}|W_{;j}|\}}|\Psi-\tsum{j\neq i}{}f'(W_{;i})W_{;j}|\aleq\tsum{j\neq i}{}\chf_{\{|W_{;i}|\geq|W_{;j}|\}}|W_{;j}|^{p}.
\]
By \eqref{eq:W1W2-Hdot1}, we get 
\begin{equation}
\int_{\{|W_{;i}|\geq\max_{j\neq i}|W_{;j}|\}}|\calV_{a;i}||\Psi-\tsum{j\neq i}{}f'(W_{;i})W_{;j}|\aleq\sum_{j\neq i}\int_{\{|W_{;i}|\geq|W_{;j}|\}}|W_{;i}||W_{;j}|^{p}\aleq R_{i}^{-N}.\label{eq:2.47-1}
\end{equation}
In the region $|W_{;i}|<\max_{j\neq i}|W_{;j}|$, we simply bound
\begin{equation}
|\Psi-\tsum{j\neq i}{}f'(W_{;i})W_{;j}|\aleq\tsum{j,k:j\neq k}{}f(W_{;j},W_{;k})+\tsum{j\neq i}{}|W_{;i}|^{p-1}|W_{;j}|.\label{eq:2.32-1}
\end{equation}
Now, we note an inequality for scalars $a_{1},\dots,a_{J}\geq0$ and
any $i\in\setJ$ with $a_{i}<\max_{j\neq i}a_{j}$: 
\begin{equation}
\tsum{j,k:j\neq k}{}f(a_{j},a_{k})+\tsum{j\neq i}{}a_{i}^{p-1}a_{j}\aleq\tsum{j,k:j,k,i\text{ distinct}}{}\chf_{a_{j}\geq a_{k}\geq a_{i}}a_{j}^{p-1}a_{k}+\tsum{j\neq i}{}\chf_{a_{j}\geq a_{i}}a_{j}a_{i}^{p-1}.\label{eq:2.32-2}
\end{equation}
This inequality is obvious when $J=2$. When $J\geq3$, choose $a_{j_{0}}=\max_{j\neq i}a_{j}>a_{i}$
and $k_{0}\neq i,j_{0}$ by $a_{k_{0}}=\max_{k\neq i,j_{0}}a_{k}$.
Then, 
\begin{align*}
\tsum{j,k:j\neq k}{}f(a_{j},a_{k})+\tsum{j\neq i}{}a_{i}^{p-1}a_{j} & \aleq a_{j_{0}}^{p-1}\max\{a_{i},a_{k_{0}}\}+a_{j_{0}}a_{i}^{p-1}\\
 & \aleq\chf_{a_{k_{0}}\geq a_{i}}a_{j_{0}}^{p-1}a_{k_{0}}+a_{j_{0}}a_{i}^{p-1}\aleq\text{RHS}\eqref{eq:2.32-2}
\end{align*}
 as desired. Substituting $a_{j}=|W_{;j}|$ into \eqref{eq:2.32-2}
and using \eqref{eq:2.32-1}, we get
\begin{align*}
 & \chf_{\{|W_{;i}|<\max_{j\neq i}|W_{;j}|\}}|\Psi-\tsum{j\neq i}{}f'(W_{;i})W_{;j}|\\
 & \aleq\tsum{j,k:j,k,i\text{ distinct}}{}\chf_{\{|W_{;j}|\geq|W_{;k}|\geq|W_{;i}|\}}|W_{;j}|^{p-1}|W_{;k}|+\tsum{j\neq i}{}\chf_{\{|W_{;j}|\geq|W_{;i}|\}}|W_{;j}||W_{;i}|^{p-1}.
\end{align*}
This implies 
\begin{align*}
 & \int_{\{|W_{;i}|<\max_{j\neq i}|W_{;j}|\}}|\calV_{a;i}||\Psi-\tsum{j\neq i}{}f'(W_{;i})W_{;j}|\\
 & \aleq\sum_{j,k:j,k,i\text{ distinct}}\int_{\{|W_{;j}|\geq|W_{;k}|\geq|W_{;i}|\}}|W_{;j}|^{p-1}|W_{;k}||W_{;i}|+\sum_{j\neq i}\int_{\{|W_{;j}|\geq|W_{;i}|\}}|W_{;j}||W_{;i}|^{p}.
\end{align*}
For the second term, applying \eqref{eq:W1W2-Hdot1} gives 
\[
\sum_{j\neq i}\int_{\{|W_{;j}|\geq|W_{;i}|\}}|W_{;j}||W_{;i}|^{p}\aleq R_{i}^{-N}.
\]
For the first term, as $\eps'>0$ is sufficiently small, we can express
\begin{equation}
|W_{;j}|^{\frac{4}{N-2}}|W_{;k}||W_{;i}|=(|W_{;j}|^{\frac{N-\eps'}{N-2}}|W_{;k}|^{\frac{N+\eps'}{N-2}})^{\alp}(|W_{;j}|^{\frac{N-\eps'}{N-2}}|W_{;i}|^{\frac{N+\eps'}{N-2}})^{\beta}(|W_{;k}|^{\frac{N-\eps'}{N-2}}|W_{;i}|^{\frac{N+\eps'}{N-2}})^{\gmm}\label{eq:triple}
\end{equation}
for some $0\leq\alp,\beta,\gmm\leq1$ with $\beta+\gmm=\frac{N-2}{N+\eps'}$
and $\alp=1-\beta-\gmm$. Applying Hölder's inequality, \eqref{eq:W1W2-Hdot1},
and the relation $N\cdot\frac{N-2}{N+\eps'}=N-2-\eps$, we get 
\begin{align*}
 & \int_{\{|W_{;j}|\geq|W_{;k}|\geq|W_{;i}|\}}|W_{;j}|^{p-1}|W_{;k}||W_{;i}|\\
 & \aleq_{\eps}R_{jk}^{-N\alp}R_{ji}^{-N\beta}R_{ki}^{-N\gmm}\aleq_{\eps}R^{-N\alp}R_{i}^{-N(\beta+\gmm)}\aeq_{\eps}R^{-(2+\eps)}R_{i}^{-(N-2-\eps)}.
\end{align*}
Therefore, we have proved 
\begin{equation}
\int_{\{|W_{;i}|<\max_{j\neq i}|W_{;j}|\}}|\calV_{a;i}||\Psi-\tsum{j\neq i}{}f'(W_{;i})W_{;j}|\aleq_{\eps}R^{-(2+\eps)}R_{i}^{-(N-2-\eps)}.\label{eq:2.47-2}
\end{equation}
Gathering \eqref{eq:2.47-1} and \eqref{eq:2.47-2} completes the
proof of \eqref{eq:calV-Psi-leading}.

\uline{Proof of \mbox{\eqref{eq:2.48}} and \mbox{\eqref{eq:2.49}}}.
See Appendix~\ref{sec:Proof-of-inner-prod}.
\end{proof}

\subsection{Construction of modified multi-bubble profiles}

The goal of this subsection is to construct modified multi-bubble
profiles. This is done by solving a nonlinear elliptic PDE (see \eqref{eq:U-eqn}
below) using a standard Lyapunov--Schmidt method. See \cite{DengSunWei2025Duke}
for a very similar construction with $C^{0}$-estimates.
\begin{prop}[Modified multi-bubble profiles]
\label{prop:Modified-Profiles}Let $J\in\bbN$. There exists a constant
$\dlt_{\rmp}>0$ with the following property. For each $(\vec{\iota},\vec{\lmb},\vec{z})\in\calP_{J}(\dlt_{\rmp})$,
there exist unique profile $U=\calW+\td U\in\dot{H}^{1}$ and constants
$\frkr_{b,j}\in\bbR$ that satisfy the equation 
\begin{equation}
\Dlt U+f(U)=\sum_{b,j}\frkr_{b,j}\calV_{b\ul{;j}},\label{eq:U-eqn}
\end{equation}
the orthogonality conditions 
\[
\lan\calZ_{a;j},\td U\ran=0,\qquad\forall a\in\{0,1,\dots,N\},\ i\in\setJ,
\]
and smallness 
\[
\|\td U\|_{\dot{H}^{1}}\leq\dlt_{\rmp}.
\]
Moreover, $U\in\dot{H}^{1}\cap\dot{H}^{2}$ and $U$ is differentiable
in $\vec{\lmb}$ and $\vec{z}$ with the estimates 
\begin{align}
\|\td U\|_{\dot{H}^{1}} & \aleq R^{-\frac{N+2}{2}},\label{eq:tdU-Hdot1-est}\\
\|\td U\|_{\dot{H}^{2}} & \aleq\tsum i{}\lmb_{i}^{-1}\frkp(R_{i}),\label{eq:tdU-Hdot2-est}\\
\|\lmb_{k}\rd_{z_{k}^{c}}\td U\|_{\dot{H}^{1}} & \aleq\min\{R^{-\frac{N+2}{2}},R_{k}^{-\frac{N+2}{2}}+\lmb_{k}\|\td U\|_{\dot{H}^{2}}+\max_{b}|\frkr_{b,k}|\},\label{eq:tdU-lmb-deriv-est}
\end{align}
where we recall $\frkp(R_{i})$ from \eqref{eq:def-frkp}. The coefficients
$\frkr_{b,j}$ satisfy 
\begin{align}
\max_{b,j}|\frkr_{b,j}| & \aleq R^{-(N-2)},\label{eq:frkr-est}\\
\max_{b,j}|\lmb_{j}^{-1}\frkr_{b,j}| & \aleq\tsum i{}\lmb_{i}^{-1}R_{i}^{-\frac{N+2}{2}}R^{-\frac{N-6}{2}}.\label{eq:frkr/lmb-est}
\end{align}
\end{prop}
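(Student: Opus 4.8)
The plan is to construct $U$ by a Lyapunov--Schmidt reduction. Writing $U=\calW+\td U$ and using $\Dlt W_{;i}+f(W_{;i})=0$ for each $i$, equation \eqref{eq:U-eqn} is equivalent to
\[
\calL_{\calW}\td U=-\Psi-\NL_{\calW}(\td U)+\tsum{b,j}{}\frkr_{b,j}\calV_{b\ul{;j}},
\]
where $\Psi=f(\calW)-\sum_{i}f(W_{;i})$ is the interaction term of Section~\ref{sec:Modified-multi-bubble-profiles} and $\NL_{\calW}(\td U)=f(\calW+\td U)-f(\calW)-f'(\calW)\td U$. First I would combine the coercivity estimate \eqref{eq:calL-H1-coer} with the near-orthonormality \eqref{eq:calZ-calV-inner-prod} to show that, for $(\vec{\iota},\vec{\lmb},\vec{z})\in\calP_{J}(\dlt)$ with $\dlt$ small, $\calL_{\calW}$ restricted to the orthogonal complement $\{g\in\dot{H}^{1}:\lan\calZ_{a;j},g\ran=0\ \forall a,j\}$ is an isomorphism onto $(\dot{H}^{1})^{\ast}$ modulo $\mathrm{span}\{\calV_{b\ul{;j}}\}$, uniformly in the parameters; equivalently, for each $F\in(\dot{H}^{1})^{\ast}$ there is a unique pair $(g,\vec{\frkr})$ with $g$ in the complement, $\calL_{\calW}g-\sum\frkr_{b,j}\calV_{b\ul{;j}}=F$, and $\|g\|_{\dot{H}^{1}}+\max_{b,j}|\frkr_{b,j}|\aleq\|F\|_{(\dot{H}^{1})^{\ast}}$.

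Next I would solve for $\td U$ by a contraction mapping: let $\Phi(\td U)$ be the $g$-component of the pair associated with $F=-\Psi-\NL_{\calW}(\td U)$. By \eqref{eq:Psi-Hdot1-dual}, $\|\Psi\|_{L^{(2^{\ast})'}}\aleq R^{-\frac{N+2}{2}}$, while \eqref{eq:f(a,b)-3} and Sobolev embedding (recall $p<2$ for $N\geq7$) give $\|\NL_{\calW}(g)\|_{L^{(2^{\ast})'}}\aleq\|g\|_{\dot{H}^{1}}^{p}$ together with the Lipschitz bound $\|\NL_{\calW}(g)-\NL_{\calW}(g')\|_{L^{(2^{\ast})'}}\aleq(\|g\|_{\dot{H}^{1}}^{p-1}+\|g'\|_{\dot{H}^{1}}^{p-1})\|g-g'\|_{\dot{H}^{1}}$. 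Hence $\Phi$ is a contraction on $\{\|g\|_{\dot{H}^{1}}\leq C_{0}R^{-\frac{N+2}{2}}\}$ for $R$ large, producing the unique $\td U$ with $\|\td U\|_{\dot{H}^{1}}\aleq R^{-\frac{N+2}{2}}$, which is \eqref{eq:tdU-Hdot1-est}; uniqueness in the class $\{\|\td U\|_{\dot{H}^{1}}\leq\dlt_{\rmp}\}$ follows from the same argument. The coefficients $\frkr_{b,j}$ are then forced to be the $\mathrm{span}\{\calV_{b\ul{;j}}\}$-component of $\Psi+\NL_{\calW}(\td U)$; reading this off via pairing against $\calZ_{a;i}$ and inverting the near-identity matrix \eqref{eq:calZ-calV-inner-prod} gives $\max_{b,j}|\frkr_{b,j}|\aleq\max_{a,i}|\lan\calZ_{a;i},\Psi\ran|+\calO(\|\td U\|_{\dot{H}^{1}}^{p})+(\text{commutator errors of size interaction}\times\|\td U\|_{\dot{H}^{1}})$. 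Since $|\calZ_{a;i}|\aleq W_{;i}$, estimate \eqref{eq:calV-Psi-estimate} yields $|\lan\calZ_{a;i},\Psi\ran|\aleq_{\eps}R^{-\eps}R_{i}^{-(N-2-\eps)}\aleq R^{-(N-2)}$, the dominant contribution, proving \eqref{eq:frkr-est}. For the weighted bound \eqref{eq:frkr/lmb-est} I would pair instead with $\calZ_{a\ul{;i}}$, so each coefficient acquires a factor $\lmb_{i}$, and exploit that $\lan\calZ_{a;i},\Psi\ran$ localizes near the $i$-th bubble, so the commutator and nonlinear errors there are genuinely $\aleq\sum_{k}\lmb_{k}^{-1}R_{k}^{-\frac{N+2}{2}}R^{-\frac{N-6}{2}}$; the scaling-direction coefficient $\frkr_{0,j}$ needs extra care because $\calZ_{0}$ is not a kernel element of $\calL_{W}$, and is handled by using that $\td U$ restricted to $B(z_{i},C\lmb_{i})$ is controlled by the (weaker) local interaction rather than by its global $\dot{H}^{1}$-norm.

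For the $\dot{H}^{2}$-estimate, elliptic regularity (bootstrapping $\Dlt U=-f(U)+\sum\frkr_{b,j}\calV_{b\ul{;j}}$, using $p<2$) gives $\td U\in\dot{H}^{2}$, and the $\dot{H}^{2}$-coercivity \eqref{eq:calL-H2-coer}, together with $\lan\calZ_{a\ul{;i}},\td U\ran=\lmb_{i}^{-2}\lan\calZ_{a;i},\td U\ran=0$, gives $\|\td U\|_{\dot{H}^{2}}\aleq\|\Psi\|_{L^{2}}+\|\NL_{\calW}(\td U)\|_{L^{2}}+\|\sum\frkr_{b,j}\calV_{b\ul{;j}}\|_{L^{2}}$. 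Here $\|\Psi\|_{L^{2}}\aleq\sum_{i}\lmb_{i}^{-1}\frkp(R_{i})$ by \eqref{eq:Psi-L2}; the $\frkr$-term is $\aleq\max_{b,j}\lmb_{j}^{-1}|\frkr_{b,j}|\aleq\sum_{i}\lmb_{i}^{-1}\frkp(R_{i})$ by \eqref{eq:frkr/lmb-est} (using $R_{i}^{-\frac{N+2}{2}}R^{-\frac{N-6}{2}}\leq R_{i}^{-(N-2)}\leq\frkp(R_{i})$, cf.\ \eqref{eq:def-frkp}); and, interpolating $L^{2p}$ between $\dot{H}^{1}$ and $\dot{H}^{2}$ (note $2^{\ast}<2p<2^{\ast\ast}$, and scaling forces the interpolation exponent so that $\|\NL_{\calW}(\td U)\|_{L^{2}}\aleq\|\td U\|_{\dot{H}^{1}}^{p-1}\|\td U\|_{\dot{H}^{2}}$), the nonlinear term is absorbed into the left-hand side since $\|\td U\|_{\dot{H}^{1}}$ is small. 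This proves \eqref{eq:tdU-Hdot2-est}.

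Finally, differentiability in $\vec{\lmb},\vec{z}$ follows from the implicit function theorem applied to the fixed-point equation (its linearization is invertible by the same coercivity, Proposition~\ref{prop:linear-coer}). Differentiating the defining identity \eqref{eq:U-eqn} in $z_{k}^{c}$ (with the convention $z_{k}^{0}=\lmb_{k}$), the function $\lmb_{k}\rd_{z_{k}^{c}}\td U$ solves $\calL_{\calW}(\lmb_{k}\rd_{z_{k}^{c}}\td U)=(\text{derivative of }\Psi)+(\text{derivative of }f'(\calW))\cdot\td U+(\text{derivatives of the }\calZ\text{ and }\calV_{b\ul{;j}}\text{ terms})$, the orthogonality defect being corrected by a finite-dimensional term; applying \eqref{eq:calL-H1-coer} and \eqref{eq:calL-H1-bdd} and estimating the differentiated right-hand side as above gives the crude bound $R^{-\frac{N+2}{2}}$ for $\|\lmb_{k}\rd_{z_{k}^{c}}\td U\|_{\dot{H}^{1}}$, while the refined bound in \eqref{eq:tdU-lmb-deriv-est} follows because, upon differentiating in the $k$-th bubble's parameters, only the terms touching that bubble survive: these are controlled by $R_{k}^{-\frac{N+2}{2}}$, by $\lmb_{k}\|\td U\|_{\dot{H}^{2}}$ (from $\rd_{z_{k}^{c}}$ hitting $\calW$ inside $f'(\calW)\td U$, costing $\lmb_{k}^{-1}$ after normalization), and by $\max_{b}|\frkr_{b,k}|$ (from $\rd_{z_{k}^{c}}$ hitting the profiles $\calV_{b\ul{;k}}$). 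The main obstacle is precisely the last pair of sharp estimates, \eqref{eq:frkr/lmb-est} and \eqref{eq:tdU-lmb-deriv-est}: one must track the exact $R$ versus $R_{i}$ dependence and the scaling weights $\lmb_{i}$ (cruder bounds will not suffice downstream), avoid the circularity between the $\dot{H}^{2}$-estimate and the $\frkr$-estimate, and correctly treat the non-kernel scaling direction $\calZ_{0}$.
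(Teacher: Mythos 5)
Your overall architecture (Lyapunov--Schmidt: linear solvability with orthogonality against the $\calZ$-directions, contraction in $\dot{H}^{1}$ for $\td U$, then reading off the coefficients by testing) is the same as the paper's, and your $\dot{H}^{1}$ step and the crude estimate \eqref{eq:frkr-est} are fine. The genuine gap is in how you organize the $\dot{H}^{2}$ bound and the weighted coefficient bound -- it is exactly the circularity you warn against, and your proposal does not break it. You prove \eqref{eq:tdU-Hdot2-est} by bounding $\|\tsum{b,j}{}\frkr_{b,j}\calV_{b\ul{;j}}\|_{L^{2}}\aleq\max_{b,j}\lmb_{j}^{-1}|\frkr_{b,j}|$ and invoking \eqref{eq:frkr/lmb-est}; but any proof of \eqref{eq:frkr/lmb-est} along the available lines (testing the equation and controlling the quadratic term $\lan\calV_{a;i},f(\calW+\td U)-f(\calW)-f'(W_{;i})\td U\ran$ after dividing by $\lmb_{i}$, as in \eqref{eq:2.54-2}) needs $\|\td U\|_{\dot{H}^{2}}$ as an input. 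Your escape route -- pairing with $\calZ_{a\ul{;i}}$ and asserting that $\td U$ near the $i$-th bubble is controlled ``by the local interaction'' -- is not substantiated by anything constructed so far (no local or pointwise estimate on $\td U$ is available at that stage), and for the scaling direction it is genuinely problematic: since $\calZ_{0}$ is not a kernel element of $\calL_{W}$, the term $\lan\calL_{W_{;i}}\calZ_{0\ul{;i}},\td U\ran$ is not small and does not deliver the extra factor of $\lmb_{i}$ you need for $\frkr_{0,i}/\lmb_{i}$. Moreover the auxiliary inequality you use to close the $\dot{H}^{2}$ step, $R_{i}^{-\frac{N+2}{2}}R^{-\frac{N-6}{2}}\leq R_{i}^{-(N-2)}$, is false in that direction: $R=\min_{k}R_{k}\leq R_{i}$, so $R^{-\frac{N-6}{2}}\geq R_{i}^{-\frac{N-6}{2}}$, and the right-hand side of \eqref{eq:frkr/lmb-est} is in general not dominated by $\tsum i{}\lmb_{i}^{-1}\frkp(R_{i})$.

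The paper breaks the circle in a different place, and you should too: the inversion Lemma~\ref{lem:inversion} is proved with an $L^{2}\to\dot{H}^{2}$ statement in which the coefficients are estimated by $\max_{b,j}\lmb_{j}^{-1}|c_{b,j}|\aleq o_{R\to\infty}(1)\,\|g\|_{\dot{H}^{2}}+\|h\|_{L^{2}}$, the key input being $\lmb_{i}^{-1}\|\calL_{\calW}\calV_{a;i}\|_{L^{(2^{\ast\ast})'}}=o_{R\to\infty}(1)$ from \eqref{eq:f(W1,W2)-H2dual-est}; thus the coefficient contribution is absorbed into $\|g\|_{\dot{H}^{2}}$ rather than estimated through $\frkp$. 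Running the contraction simultaneously in $\dot{H}^{1}\cap\dot{H}^{2}$ then yields \eqref{eq:tdU-Hdot1-est} and \eqref{eq:tdU-Hdot2-est} with no reference to \eqref{eq:frkr/lmb-est}; only afterwards does one obtain \eqref{eq:frkr-est} and \eqref{eq:frkr/lmb-est}, by testing \eqref{eq:U-eqn} against $\calV_{a;i}$ (not $\calZ_{a;i}$), using \eqref{eq:calV-Psi-estimate}--\eqref{eq:calV-Psi-leading}, \eqref{eq:f(a,b)-4}, the already-proved $\dot{H}^{1}$ and $\dot{H}^{2}$ bounds of $\td U$, and the invertibility of the weighted matrix $\lan\lmb_{i}^{-1}\calV_{a;i},\lmb_{j}^{-1}\calV_{b;j}\ran$. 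With this reordering, your sketches of existence/uniqueness and of the parameter-derivative estimate \eqref{eq:tdU-lmb-deriv-est} (differentiate the equation, correct the orthogonality defect by a finite-dimensional term, invert) are in line with the paper's argument.
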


\begin{rem}
By the uniqueness statement, the profile $U$ remains the same under
the permutation of indices in $(\vec{\iota},\vec{\lmb},\vec{z})$.
Moreover, if $(\vec{\iota},\vec{\lmb},\vec{z})$ is symmetric (with
respect to reflections or rotations, for example), then $U$ is also
symmetric accordingly.
\end{rem}

\begin{rem}
The $\dot{H}^{1}$-estimate \eqref{eq:tdU-Hdot1-est} recovers the
same $\dot{H}^{1}$-bound of \cite[Proposition 6.1]{DengSunWei2025Duke},
where very similar modified multi-bubble profiles are constructed
with a different choice of profiles in the right hand side of \eqref{eq:U-eqn}
and in the case of $\iota_{1}=\dots=\iota_{J}=+1$. For the modulation
analysis, the choice of profiles in the right hand side of \eqref{eq:U-eqn}
is crucial. We also remark that \cite[Proposition 5.4]{DengSunWei2025Duke}
obtains a pointwise estimate for $\td U$ (which is expected to hold
in our setting as well) and it might be a substitute of our $\dot{H}^{2}$-estimate
for $\td U$. For dynamical estimates, we also need additional estimates
for the parameter-derivatives $\lmb_{k}\rd_{z_{k}^{c}}\td U$ \eqref{eq:tdU-lmb-deriv-est}
and $\dot{H}^{2}$-estimate \eqref{eq:tdU-Hdot2-est} for $\td U$.
It is essential to\emph{ }keep certain powers of $R_{i}$ and $\lmb_{i}$
(in other words, one should not replace every $R_{i}^{-1}$ by $R^{-1}$).
\end{rem}

\begin{rem}
In each dynamical scenario of our main theorems, we will compute the
leading term of $\frkr_{a,i}$. See for example Lemma~\ref{lem:case1-leading-frkr_a,i}.
\end{rem}

We begin with an inversion lemma for the associated linear problem
formulated in $\dot{H}^{1}$ and $\dot{H}^{2}$ topologies.
\begin{lem}[Inversion of $\calL_{\calW}$]
\label{lem:inversion}Let $J\in\bbN$. There exists $\dlt>0$ with
the following property. For any $(\vec{\iota},\vec{\lmb},\vec{z})\in\calP(\dlt)$
and $h\in(\dot{H}^{1})^{\ast}$, there exist unique $g\in\dot{H}^{1}$
and scalars $c_{b,j}\in\bbR$ such that 
\begin{equation}
\calL_{\calW}g=h+\sum_{b,j}c_{b,j}\calV_{b\ul{;j}}\quad\text{and}\quad\lan\calZ_{a\ul{;i}},g\ran=0\text{ for all }a,i.\label{eq:inversion-eqn}
\end{equation}
Moreover, the mapping $h\mapsto(g,c_{a,i})$ is linear and satisfies
\begin{equation}
\|g\|_{\dot{H}^{1}}\aleq\|h\|_{(\dot{H}^{1})^{\ast}}\qquad\text{and}\qquad\max_{a,i}|c_{a,i}|\aleq R^{-\frac{N+2}{2}}\|h\|_{(\dot{H}^{1})^{\ast}}+\max_{b,j}|h(\calV_{b;j})|.\label{eq:inversion-H1-bound}
\end{equation}
If in addition $h\in L^{2}$, then $g\in\dot{H}^{2}$ and 
\begin{equation}
\|g\|_{\dot{H}^{2}}\aleq\|h\|_{L^{2}}.\label{eq:inversion-H2-bound}
\end{equation}
\end{lem}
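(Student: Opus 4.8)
The plan is to set this up as a Fredholm-type problem on the finite-codimension closed subspace
\[
\calX\coloneqq\{g\in\dot{H}^{1}:\lan\calZ_{a\ul{;i}},g\ran=0\text{ for all }a\in\{0,\dots,N\},\ i\in\setJ\}
\]
together with the finite-dimensional space $\mathrm{span}\{\calV_{b\ul{;j}}\}$ that we allow on the right-hand side. Concretely, I would introduce the projection $P$ onto $\mathrm{span}\{\calV_{b\ul{;j}}\}$ (using the $\calZ_{a\ul{;i}}$ as a dual system, which is legitimate by \eqref{eq:calZ-calV-inner-prod} since the Gram matrix $\lan\calZ_{a\ul{;i}},\calV_{b\ul{;j}}\ran$ is $\chf_{(a,i)=(b,j)}+\calO(\chf_{i\neq j}R_{ij}^{-(N-4)})$, hence invertible for $\dlt$ small), and recast \eqref{eq:inversion-eqn} as: find $g\in\calX$ with $(1-P)\calL_{\calW}g=(1-P)h$, after which the coefficients $c_{b,j}$ are \emph{defined} by $\sum c_{b,j}\calV_{b\ul{;j}}=P(\calL_{\calW}g-h)$, i.e. by pairing $\calL_{\calW}g-h$ against the $\calZ_{a\ul{;i}}$.

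The core is to show $(1-P)\calL_{\calW}:\calX\to(1-P)(\dot{H}^{1})^{\ast}$ is a Banach-space isomorphism with norms independent of $(\vec{\iota},\vec{\lmb},\vec{z})\in\calP_J(\dlt)$. First I would note $\calL_{\calW}:\dot{H}^{1}\to(\dot{H}^{1})^{\ast}$ is bounded uniformly by \eqref{eq:calL-H1-bdd}, and $P$ is bounded uniformly (again by the near-orthonormality of the $\calZ$--$\calV$ system and $\|\calV_{b\ul{;j}}\|_{(\dot{H}^{1})^{\ast}}\aleq1$, $\|\calZ_{a\ul{;i}}\|_{(\dot{H}^{1})^{\ast}}\aleq1$ after scaling). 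For injectivity with a lower bound: for $g\in\calX$, the coercivity \eqref{eq:calL-H1-coer} gives $\|\calL_{\calW}g\|_{(\dot{H}^{1})^{\ast}}\geq\dlt\|g\|_{\dot{H}^{1}}-C\sum_{a,i}|\lan\calZ_{a\ul{;i}},g\ran|=\dlt\|g\|_{\dot{H}^{1}}$ since the orthogonality kills the correction term; combining with $\|\calL_{\calW}g\|_{(\dot{H}^{1})^{\ast}}\leq\|(1-P)\calL_{\calW}g\|_{(\dot{H}^{1})^{\ast}}+\|P\calL_{\calW}g\|_{(\dot{H}^{1})^{\ast}}$ and estimating $\|P\calL_{\calW}g\|$ by the pairings $|\lan\calZ_{a\ul{;i}},\calL_{\calW}g\ran|$, one moves the $P$-part to the other side using that $\calL_{\calW}\calZ$ is close to $\calL_{W}\calZ$ which is controlled — more cleanly, I would instead test $\calL_{\calW}g$ against $\calV_{b\ul{;j}}$: $\lan\calL_{\calW}g,\calV_{b\ul{;j}}\ran=\lan g,\calL_{\calW}\calV_{b\ul{;j}}\ran$ and $\calL_{\calW}\calV_{b\ul{;j}}=\calL_{W_{;j}}\calV_{b\ul{;j}}+(f'(\calW)-f'(W_{;j}))\calV_{b\ul{;j}}=(f'(\calW)-f'(W_{;j}))\calV_{b\ul{;j}}$, whose $(\dot{H}^{1})^{\ast}$-norm is $\calO(R^{-\frac{N+2}{2}})$ by \eqref{eq:f(W1,W2)-H1dual-est}-type bounds (pointwise $|f'(\calW)-f'(W_{;j})|\,|\calV_{b\ul{;j}}|\aleq\sum_{k\neq j}f(W_{;j},W_{;k})\cdot\lmb_j^{-1}$-scaled). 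Hence $\|P\calL_{\calW}g\|_{(\dot{H}^{1})^{\ast}}\aleq R^{-\frac{N+2}{2}}\|g\|_{\dot{H}^{1}}$, which is $\leq\frac{\dlt}{2}\|g\|_{\dot{H}^{1}}$ for $\dlt$ small, giving $\|(1-P)\calL_{\calW}g\|_{(\dot{H}^{1})^{\ast}}\geq\frac{\dlt}{2}\|g\|_{\dot{H}^{1}}$. For surjectivity I would run a continuity/homotopy argument: deform $\calW$ to a single bubble (or use that $\calL_{\calW}=-\Dlt+(\text{relatively compact perturbation})$ so that $(1-P)\calL_{\calW}$ is a compact perturbation of the invertible $(1-P)(-\Dlt)$ on $\calX$, hence Fredholm of index $0$, and injectivity already shown forces surjectivity). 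This yields existence, uniqueness, and $\|g\|_{\dot{H}^{1}}\aleq\|h\|_{(\dot{H}^{1})^{\ast}}$. The bound on $c_{a,i}$ follows from $\sum c_{b,j}\calV_{b\ul{;j}}=P(\calL_{\calW}g-h)$ by pairing against $\calZ_{a\ul{;i}}$ and inverting the Gram matrix: $|c_{a,i}|\aleq\sum_{b,j}|\lan\calZ_{a\ul{;i}},\calL_{\calW}g\ran|+|h(\calV_{b;j})|\cdot(\text{Gram}^{-1})$, where $|\lan\calZ_{a\ul{;i}},\calL_{\calW}g\ran|=|\lan\calL_{\calW}\calZ_{a\ul{;i}},g\ran|\aleq(\|\calL_{W_{;i}}\calZ_{a\ul{;i}}\|_{(\dot{H}^{1})^{\ast}}+R^{-\frac{N+2}{2}})\|g\|_{\dot{H}^{1}}\aleq R^{-\frac{N+2}{2}}\|h\|_{(\dot{H}^{1})^{\ast}}$ using $\calZ_a\in\ker\calL_W$ modulo the $\calZ_0$ choice (and absorbing the $\calZ_0$ contribution, which is harmless as it contributes an $\calO(1)$ constant times $\|g\|$ only when $i=j$, already accounted for — one should double check that the $\calZ_0$ term doesn't spoil the $R^{-\frac{N+2}{2}}$ gain, in which case the correct statement is the one written, $|c_{a,i}|\aleq R^{-\frac{N+2}{2}}\|h\|_{(\dot{H}^1)^\ast}+\max_{b,j}|h(\calV_{b;j})|$).

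For the $\dot{H}^{2}$-regularity statement when $h\in L^{2}$: bootstrap from the $\dot{H}^{1}$-solution. Write $-\Dlt g=f'(\calW)g-h-\sum c_{b,j}\calV_{b\ul{;j}}$; the right-hand side lies in $L^{2}$ (the term $f'(\calW)g$ is in $L^2$ by Sobolev, $\|f'(\calW)g\|_{L^2}\aleq\|\calW\|_{L^{2^{\ast\ast}}}^{p-1}\|g\|_{\dot H^1}$-type, and $\|\calV_{b\ul{;j}}\|_{L^2}=\lmb_j^{-1}\|\calV_b\|_{L^2}$ is finite), so $g\in\dot{H}^{2}$. To get the \emph{uniform} bound $\|g\|_{\dot{H}^{2}}\aleq\|h\|_{L^{2}}$, apply the $\dot{H}^{2}$-coercivity \eqref{eq:calL-H2-coer}: $\|\calL_{\calW}g\|_{L^{2}}\geq\dlt\|g\|_{\dot{H}^{2}}-C\sum_{a,i}\lmb_i^{-1}|\lan\calZ_{a\ul{;i}},g\ran|=\dlt\|g\|_{\dot{H}^{2}}$ since $g\in\calX$; and $\|\calL_{\calW}g\|_{L^{2}}\leq\|h\|_{L^{2}}+\|\sum c_{b,j}\calV_{b\ul{;j}}\|_{L^{2}}$. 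The danger here is the factor $\lmb_j^{-1}$ in $\|\calV_{b\ul{;j}}\|_{L^2}$: one needs $\sum_{b,j}|c_{b,j}|\lmb_j^{-1}\|\calV_b\|_{L^2}\aleq\|h\|_{L^2}$. This forces re-examining the $c_{b,j}$ bound in the $L^2$ setting — pairing $\calL_{\calW}g-h$ against $\calZ_{a\ul{;i}}$ and using $|\lan\calZ_{a\ul{;i}},\calL_{\calW}g\ran|\aleq\lmb_i^{-1}\|\calL_{\calW}\calZ_{a\ul{;i}}\|_{L^2}'\cdot(\ldots)$ — so that one obtains $\lmb_i^{-1}|c_{a,i}|\aleq\|h\|_{L^2}$ directly (the $\lmb^{-1}$ weights are designed to match, as the normalization $\calZ_{a\ul{;i}}=\lmb_i^{-2}\calZ_{a;i}$ carries exactly the right scaling).

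\emph{Main obstacle.} The genuinely delicate point is tracking the $\lmb_i$-weights uniformly: proving the $\dot H^2$ bound $\|g\|_{\dot H^2}\aleq\|h\|_{L^2}$ with a constant independent of the (possibly very disparate) scales $\lmb_1,\dots,\lmb_J$, which requires the coefficient estimates to come out with precisely matched powers of $\lmb_i$ so that the $\calV_{b\ul{;j}}$-correction term on the right is absorbable. The Fredholm/surjectivity step is routine once the uniform lower bound is in hand, and the $\dot H^1$ estimates are a direct consequence of Proposition~\ref{prop:linear-coer}; but the scale-uniformity of the $L^2$-level estimate, together with verifying that the non-canonical choice of $\calZ_0$ (which is the only part of $\calZ$ not in $\ker\calL_W$) does not destroy the smallness gains, is where the care is needed.
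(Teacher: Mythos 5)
Your overall architecture (Lyapunov--Schmidt splitting of the weak formulation into the part on $\calZ^{\perp}$ and the finite-dimensional part, Fredholm alternative for existence, coercivity \eqref{eq:calL-H1-coer}/\eqref{eq:calL-H2-coer} for the a priori bounds) is the same as the paper's, and the existence/uniqueness and the bound $\|g\|_{\dot{H}^{1}}\aleq\|h\|_{(\dot{H}^{1})^{\ast}}$ would go through as you describe. The genuine gap is in how you determine and estimate the coefficients $c_{b,j}$: you define them by pairing $\calL_{\calW}g-h$ against the $\calZ_{a\ul{;i}}$, and this route does not give \eqref{eq:inversion-H1-bound} or the absorption needed for \eqref{eq:inversion-H2-bound}. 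The obstruction is exactly the one you flag but do not resolve: $\calZ_{0}$ is \emph{not} a kernel element of $\calL_{W}$, so $\lan\calZ_{0\ul{;i}},\calL_{\calW}g\ran=\lan\calL_{\calW}\calZ_{0\ul{;i}},g\ran$ carries no smallness in $R$ (it is of size $\lmb_{i}^{-2}\|g\|_{\dot{H}^{1}}$, resp.\ $\lmb_{i}^{-1}\|g\|_{\dot{H}^{2}}$ with an $\calO(1)$ constant), and the $h$-contribution comes out as $h(\calZ_{0;i})$, which is \emph{not} controlled by $R^{-\frac{N+2}{2}}\|h\|_{(\dot{H}^{1})^{\ast}}+\max_{b,j}|h(\calV_{b;j})|$ (take $h$ supported where all $\calV_{b;j}$ pairings vanish). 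Your displayed coefficient estimate is in fact internally inconsistent: it pairs $\calL_{\calW}g$ with $\calZ_{a\ul{;i}}$ but $h$ with $\calV_{b;j}$, which corresponds to two different projections.

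The fix is the paper's mechanism: determine the $c_{b,j}$ by testing the equation against $\calV_{a;i}$ (not $\calZ_{a\ul{;i}}$), i.e.\ solve $\sum_{b,j}\lan\calV_{a;i},\calV_{b\ul{;j}}\ran c_{b,j}=\lan\calL_{\calW}\calV_{a;i},g\ran-h(\calV_{a;i})$, using the diagonal dominance from \eqref{eq:calV-calV-inner-prod}. Then the key smallness is $\calL_{\calW}\calV_{a;i}=\{f'(\calW)-f'(W_{;i})\}\calV_{a;i}$, estimated via \eqref{eq:f(a,b)-2} and \eqref{eq:f(W1,W2)-H1dual-est} by $R_{i}^{-\frac{N+2}{2}}$ in $L^{(2^{\ast})'}$, which yields exactly \eqref{eq:inversion-H1-bound}. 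For the $\dot{H}^{2}$ bound one rearranges the same $\calV$-tested system with matched weights, $\sum_{b,j}\lan\lmb_{i}^{-1}\calV_{a;i},\lmb_{j}^{-1}\calV_{b;j}\ran\frac{c_{b,j}}{\lmb_{j}}=\frac{1}{\lmb_{i}}\{\lan\calL_{\calW}\calV_{a;i},g\ran-\lan\calV_{a;i},h\ran\}$, and uses $\lmb_{i}^{-1}\|\calL_{\calW}\calV_{a;i}\|_{L^{(2^{\ast\ast})'}}=o_{R\to\infty}(1)$ from \eqref{eq:f(W1,W2)-H2dual-est} to get $\max_{b,j}\lmb_{j}^{-1}|c_{b,j}|\aleq o_{R\to\infty}(1)\|g\|_{\dot{H}^{2}}+\|h\|_{L^{2}}$, which is what allows absorbing the correction term into $\dlt\|g\|_{\dot{H}^{2}}$ in \eqref{eq:calL-H2-coer}; with your $\calZ$-pairing the analogous term is $\calO(1)\|g\|_{\dot{H}^{2}}$ and the absorption fails. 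You correctly identified this scale-uniform $L^{2}$-level coefficient estimate as the main obstacle, but the proposal does not supply the argument, and the route it sketches would not produce it.
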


\begin{proof}
We proceed similarly as in the proof of \cite[Proposition 4.1]{delPinoFelmerMusso2003CVPDE}
and \cite[Proposition 5.3]{DengSunWei2025Duke}. Let us denote 
\[
\calZ^{\perp}\coloneqq\{g\in\dot{H}^{1}:\lan\calZ_{a\ul{;i}},g\ran=0\quad\forall a,i\}
\]
so that \eqref{eq:calZ-calV-inner-prod} implies the decomposition
\begin{equation}
\dot{H}^{1}=\calZ^{\perp}\oplus\mathrm{span}\{\calV_{a;i}\}_{a,i}.\label{eq:Hdot1-direct-sum-decom}
\end{equation}

\uline{Step 1: Unique existence of \mbox{$g\in\calZ^{\perp}$}
and \mbox{$c_{a,i}$}s}. The weak formulation of \eqref{eq:inversion-eqn}
is 
\begin{equation}
-\lan\nabla\psi,\nabla g\ran+\lan\psi,f'(\calW)g\ran=h(\psi)+\tsum{b,j}{}\lan\psi,\calV_{b\ul{;j}}\ran c_{b,j},\qquad\forall\psi\in\dot{H}^{1}.\label{eq:3.3}
\end{equation}
By \eqref{eq:Hdot1-direct-sum-decom}, this weak formulation is equivalent
to 
\begin{align}
-\lan\nabla\psi,\nabla g\ran+\lan\psi,f'(\calW)g\ran & =h(\psi)+\tsum{b,j}{}\lan\psi,\calV_{b\ul{;j}}\ran c_{b,j},\qquad\forall\psi\in\calZ^{\perp},\label{eq:3.4}\\
\tsum{b,j}{}\lan\calV_{a;i},\calV_{b\ul{;j}}\ran c_{b,j} & =\lan\calL_{\calW}\calV_{a;i},g\ran-h(\calV_{a;i}),\qquad\forall a,i.\label{eq:3.5}
\end{align}

We first investigate \eqref{eq:3.5}. For the matrix in the left hand
side, \eqref{eq:calV-calV-inner-prod} implies 
\[
\lan\calV_{a;i},\calV_{b\ul{;j}}\ran=\chf_{(a,i)=(b,j)}\|\calV_{a}\|_{L^{2}}^{2}+\calO(\chf_{i\neq j}\frac{\lmb_{i}}{\lmb_{j}}R_{ij}^{-(N-4)})=\chf_{(a,i)=(b,j)}\|\calV_{a}\|_{L^{2}}^{2}+\calO(R^{-(N-6)}),
\]
so the matrix $\lan\calV_{a;i},\calV_{b\ul{;j}}\ran$ is diagonally
dominant with uniformly bounded inverse. Hence, the vector $(c_{b,j})_{b,j}$
is uniquely determined and takes the form 
\begin{equation}
c_{b,j}=c_{b,j}'(g)+c_{b,j}''(h),\label{eq:3.6}
\end{equation}
where $c_{b,j}'$ and $c_{b,j}''$ are linear functionals satisfying
the estimates 
\begin{align}
\max_{b,j}|c_{b,j}'(g)| & \aleq\max_{a,i}\|\calL_{\calW}\calV_{a;i}\|_{(\dot{H}^{1})^{\ast}}\|g\|_{\dot{H}^{1}}\aleq R^{-\frac{N+2}{2}}\|g\|_{\dot{H}^{1}},\label{eq:3.9}\\
\max_{b,j}|c_{b,j}''(h)| & \aleq\max_{a,i}|h(\calV_{a;i})|\aleq\|h\|_{(\dot{H}^{1})^{\ast}}.\label{eq:3.10}
\end{align}
Here, in the last inequality of \eqref{eq:3.9} we used \eqref{eq:f(a,b)-2}
and \eqref{eq:f(W1,W2)-H1dual-est}: 
\[
\|\calL_{\calW}\calV_{a;i}\|_{L^{(2^{\ast})'}}=\|\{f'(\calW)-f'(W_{;i})\}\calV_{a;i}\|_{L^{(2^{\ast})'}}\aleq\tsum{j\neq i}{}\|f(W_{;i},W_{;j})\|_{L^{(2^{\ast})'}}\aleq R_{i}^{-\frac{N+2}{2}}.
\]

Substituting \eqref{eq:3.6} into \eqref{eq:3.4}, the weak formulation
\eqref{eq:3.3} becomes
\begin{equation}
-\lan\nabla\psi,\nabla g\ran+\lan\psi,f'(\calW)g\ran-\tsum{b,j}{}\lan\psi,\calV_{b\ul{;j}}\ran c_{b,j}'(g)=h'(\psi),\qquad\forall\psi\in\calZ^{\perp},\label{eq:3.7}
\end{equation}
where $h'\in(\dot{H}^{1})^{\ast}$ is defined by $h'(\psi)=h(\psi)+\tsum{b,j}{}\lan\psi,\calV_{b\ul{;j}}\ran c_{b,j}''(h)$.

It suffices to prove unique solvability of \eqref{eq:3.7} for any
given $h'\in(\calZ^{\perp})^{\ast}$. By the standard argument with
Fredholm's alternative, it suffices to show that any solution $g\in\calZ^{\perp}$
to \eqref{eq:3.7} with $h'=0$ is trivial. Let $g\in\calZ^{\perp}$
be a solution to 
\begin{equation}
-\lan\nabla\psi,\nabla g\ran+\lan\psi,f'(\calW)g\ran-\tsum{b,j}{}\lan\psi,\calV_{b\ul{;j}}\ran c_{b,j}'(g)=0\qquad\forall\psi\in\calZ^{\perp}.\label{eq:3.8}
\end{equation}
Due to the definition of $c_{b,j}'(g)$s, \eqref{eq:3.8} holds for
any $\psi=\calV_{a;i}$ as well. By the decomposition $\dot{H}^{1}=\calZ^{\perp}\oplus\mathrm{span}\{\calV_{a;i}\}_{a,i}$,
\eqref{eq:3.8} holds for all $\psi\in\dot{H}^{1}$. In other words,
$\calL_{\calW}g=\tsum{b,j}{}c_{b,j}'(g)\calV_{b\ul{;j}}$ in the weak
sense. Thus
\[
\|\calL_{\calW}g\|_{(\dot{H}^{1})^{\ast}}=\|\tsum{b,j}{}c_{b,j}'(g)\calV_{b\ul{;j}}\|_{(\dot{H}^{1})^{\ast}}\aleq\tsum{b,j}{}|c_{b,j}'(g)|.
\]
Applying \eqref{eq:calL-H1-coer} with $g\in\calZ^{\perp}$ to the
left hand side and \eqref{eq:3.9} to the right hand side, we obtain
\[
\|g\|_{\dot{H}^{1}}\aleq\|\calL_{\calW}g\|_{(\dot{H}^{1})^{\ast}}\aleq\tsum{b,j}{}|c_{b,j}'(g)|\aleq R^{-\frac{N+2}{2}}\|g\|_{\dot{H}^{1}}.
\]
Since $R^{-1}<\dlt$ is small, we conclude $g=0$ as desired. 

\uline{Step 2: Proof of \mbox{\eqref{eq:inversion-H1-bound}}}.
Let $g$ and $c_{b,j}$ satisfy \eqref{eq:inversion-eqn}. By \eqref{eq:calL-H1-coer},
we have 
\[
\|g\|_{\dot{H}^{1}}\aleq\|\calL_{\calW}g\|_{(\dot{H}^{1})^{\ast}}\aleq\|h\|_{(\dot{H}^{1})^{\ast}}+\tsum{b,j}{}|c_{b,j}|.
\]
Recall $c_{b,j}=c_{b,j}'(g)+c_{b,j}''(h)$ from \eqref{eq:3.6}. Applying
the estimates \eqref{eq:3.9} and \eqref{eq:3.10}, we get 
\[
\tsum{b,j}{}|c_{b,j}|\aleq R^{-\frac{N+2}{2}}\|g\|_{\dot{H}^{1}}+\tsum{a,i}{}|h(\calV_{a;i})|.
\]
The previous two displays give \eqref{eq:inversion-H1-bound}.

\uline{Step 3: Proof of \mbox{\eqref{eq:inversion-H2-bound}} when
\mbox{$h\in L^{2}$}}. Assume in addition $h\in L^{2}$ and let $g$
and $c_{a,i}$ be the solution to \eqref{eq:inversion-eqn}. By elliptic
regularity, $g\in\dot{H}^{2}$. It remains to prove \eqref{eq:inversion-H2-bound}.

By \eqref{eq:calL-H2-coer}, we have 
\[
\|g\|_{\dot{H}^{2}}\aleq\|\calL_{\calW}g\|_{L^{2}}\aleq\|h\|_{L^{2}}+\tsum{b,j}{}\frac{|c_{b,j}|}{\lmb_{j}}.
\]
We investigate \eqref{eq:3.5} again to estimate the scalars $c_{b,j}$s.
We rearrange \eqref{eq:3.5} as 
\[
\tsum{b,j}{}\lan\lmb_{i}^{-1}\calV_{a;i},\lmb_{j}^{-1}\calV_{b;j}\ran\frac{c_{b,j}}{\lmb_{j}}=\frac{1}{\lmb_{i}}\{\lan\calL_{\calW}\calV_{a;i},g\ran-\lan\calV_{a;i},h\ran\}.
\]
In view of \eqref{eq:calV-calV-inner-prod}, the matrix $\lan\lmb_{i}^{-1}\calV_{a;i},\lmb_{j}^{-1}\calV_{b;j}\ran$
has uniformly bounded inverse, so 
\begin{align*}
\max_{b,j}\frac{|c_{b,j}|}{\lmb_{j}} & \aleq\max_{a,i}\frac{1}{\lmb_{i}}|\lan\calL_{\calW}\calV_{a;i},g\ran-\lan\calV_{a;i},h\ran|\\
 & \aleq\max_{a,i}\frac{1}{\lmb_{i}}\|\calL_{\calW}\calV_{a;i}\|_{L^{(2^{\ast\ast})'}}\|g\|_{L^{2^{\ast\ast}}}+\|h\|_{L^{2}}\aleq o_{R\to\infty}(1)\|g\|_{\dot{H}^{2}}+\|h\|_{L^{2}}.
\end{align*}
Here, in the last inequality we used \eqref{eq:f(W1,W2)-H2dual-est}.
This gives \eqref{eq:inversion-H2-bound}.
\end{proof}
With the above inversion lemma, we are now ready to prove Proposition~\ref{prop:Modified-Profiles}.
\begin{proof}[Proof of Proposition~\ref{prop:Modified-Profiles}]
Let $\dlt>0$ be a small constant that can shrink in the course of
the proof; the proposition will hold by setting $\dlt_{\rmp}=\dlt$
at the end of the proof.

\uline{Step 1: Proof of unique existence and \mbox{\eqref{eq:tdU-Hdot1-est}}}. 

The profile $\td U$ and the constants $\frkr_{b,j}$ need to solve
\begin{equation}
\calL_{\calW}\td U=-\Psi-\NL_{\calW}(\td U)+\sum_{b,j}\frkr_{b,j}\calV_{b\ul{;j}}\quad\text{and}\quad\lan\calZ_{a;i},\td U\ran=0,\label{eq:2.52}
\end{equation}
where $\Psi=f(\calW)-\tsum{i=1}Jf(W_{;i})$ and $\NL_{\calW}(\td U)=f(\calW+\td U)-f(\calW)-f'(\calW)\td U$. 

We formulate a fixed point problem to prove the existence. With the
notation in Lemma~\ref{lem:inversion}, let us denote the solution
operator associated to \eqref{eq:inversion-eqn} by $g=g(h)$ and
$c_{b,j}=c_{b,j}(h)$. Then, problem \eqref{eq:2.52} is equivalent
to 
\begin{equation}
\td U=g(-\Psi-\NL_{\calW}(\td U)),\label{eq:tdU-fixed-point-prob}
\end{equation}
and once $\td U$ is determined, the coefficients $\frkr_{b,j}$ of
\eqref{eq:2.52} are determined by $\frkr_{b,j}=c_{b,j}(-\Psi-\NL_{\calW}(\td U))$.

We solve \eqref{eq:tdU-fixed-point-prob} for $\td U$ within the
class $\|\td U\|_{\dot{H}^{1}}\leq\dlt$ by contraction principle.
First, applying \eqref{eq:Psi-Hdot1-dual} for $\Psi$ and \eqref{eq:f(a,b)-3}
for $\NL_{\calW}(\td U)$, we get 
\begin{align*}
\|\Psi\|_{(\dot{H}^{1})^{\ast}} & \aleq R^{-\frac{N+2}{2}}\aleq\dlt^{\frac{N+2}{2}},\\
\|\NL_{\calW}(\td U)\|_{(\dot{H}^{1})^{\ast}} & \aleq\||\td U|^{p}\|_{L^{(2^{\ast})'}}=\|\td U\|_{L^{2^{\ast}}}^{p}\aleq\dlt^{p}.
\end{align*}
Combining these bounds with \eqref{eq:inversion-H1-bound} gives 
\[
\|g(-\Psi-\NL_{\calW}(\td U))\|_{\dot{H}^{1}}\leq C(\dlt^{\frac{N+2}{2}}+\dlt^{p})\leq\frac{1}{2}\dlt.
\]
Next, for any $\td U,\td V\in\dot{H}^{1}$ with $\|\td U\|_{\dot{H}^{1}},\|\td V\|_{\dot{H}^{1}}\leq\dlt$,
we have 
\begin{align*}
\NL_{\calW}(\td U)-\NL_{\calW}(\td V) & =\{f(\calW+\td U)-f(\calW+\td V)\}-f'(\calW)(\td U-\td V)\\
 & =\tint 01\{f'(\calW+t\td U+(1-t)\td V)-f'(\calW)\}dt\cdot(\td U-\td V)\\
 & =\calO(|\td U|^{p-1}+|\td V|^{p-1})\cdot(\td U-\td V),
\end{align*}
so 
\[
\|\NL_{\calW}(\td U)-\NL_{\calW}(\td V)\|_{L^{(2^{\ast})'}}\aleq(\|\td U\|_{L^{2^{\ast}}}^{p-1}+\|\td V\|_{L^{2^{\ast}}}^{p-1})\|\td U-\td V\|_{L^{2^{\ast}}}\aleq\dlt^{p-1}\|\td U-\td V\|_{\dot{H}^{1}}.
\]
This implies 
\[
\|g(\NL_{\calW}(\td U)-\NL_{\calW}(\td V))\|_{\dot{H}^{1}}\leq C\dlt^{p-1}\|\td U-\td V\|_{\dot{H}^{1}}\leq\frac{1}{2}\|\td U-\td V\|_{\dot{H}^{1}}.
\]
By contraction principle, there exists a unique solution $\td U\in\dot{H}^{1}$
with $\|\td U\|_{\dot{H}^{1}}\leq\dlt$ to problem \eqref{eq:tdU-fixed-point-prob}.
This concludes the unique existence of $\td U$ and $\frkr_{b,j}$
within the class $\|\td U\|_{\dot{H}^{1}}\leq\dlt$. 

Note that the estimates in the previous paragraph give 
\[
\|\td U\|_{\dot{H}^{1}}\aleq\|\Psi\|_{(\dot{H}^{1})^{\ast}}+\|\NL_{\calW}(\td U)\|_{(\dot{H}^{1})^{\ast}}\aleq R^{-\frac{N+2}{2}}+\dlt^{p-1}\|\td U\|_{\dot{H}^{1}}.
\]
As $\dlt$ is small, we conclude $\|\td U\|_{\dot{H}^{1}}\aleq R^{-\frac{N+2}{2}}$
as in \eqref{eq:tdU-Hdot1-est}. 

\uline{Step 2: Proof of \mbox{$\td U\in\dot{H}^{2}$} and \mbox{\eqref{eq:tdU-Hdot2-est}}}. 

We revisit contraction principle applied to the fixed point problem
\eqref{eq:tdU-fixed-point-prob}, with additional $\dot{H}^{2}$ controls
this time. Applying \eqref{eq:Psi-L2} for $\Psi$ and proceeding
similarly (as before) for $\NL_{\calW}(\td U)$, we get 
\begin{align*}
\|\Psi\|_{L^{2}} & \aleq\tsum i{}\lmb_{i}^{-1}\frkp(R_{i}),\\
\|\NL_{\calW}(\td U)\|_{L^{2}} & \aleq\||\td U|^{p}\|_{L^{2}}\aleq\|\td U\|_{L^{2^{\ast}}}^{p-1}\|\td U\|_{L^{2^{\ast\ast}}}\aleq\dlt^{p-1}\|\td U\|_{\dot{H}^{2}}.
\end{align*}
Combining these bounds with \eqref{eq:Psi-L2} gives 
\[
\|g(-\Psi-\NL_{\calW}(\td U))\|_{\dot{H}^{2}}\aleq\tsum i{}\lmb_{i}^{-1}\frkp(R_{i})+\dlt^{p-1}\|\td U\|_{\dot{H}^{2}}.
\]
On the other hand, for $\td U,\td V\in\dot{H}^{1}\cap\dot{H}^{2}$
with $\|\td U\|_{\dot{H}^{1}},\|\td V\|_{\dot{H}^{1}}\leq\dlt$, we
have 
\begin{align*}
\|g(\NL_{\calW}(\td U)-\NL_{\calW}(\td V))\|_{\dot{H}^{2}} & \aleq\|\NL_{\calW}(\td U)-\NL_{\calW}(\td V)\|_{L^{2}}\\
 & \aleq(\|\td U\|_{L^{2^{\ast}}}^{p-1}+\|\td V\|_{L^{2^{\ast}}}^{p-1})\|\td U-\td V\|_{L^{2^{\ast\ast}}}\aleq\dlt^{p-1}\|\td U-\td V\|_{\dot{H}^{2}}.
\end{align*}
By contraction principle, we obtain $\td U\in\dot{H}^{1}\cap\dot{H}^{2}$
with the additional estimate \eqref{eq:tdU-Hdot2-est}. Note that
this $\td U$ agrees with the one constructed in Step~1 by the uniqueness
of $\td U$ in $\dot{H}^{1}$.

\uline{Step 3: Proof of \mbox{$\frkr_{b,j}$} estimates \mbox{\eqref{eq:frkr-est}}
and \mbox{\eqref{eq:frkr/lmb-est}}}. Testing \eqref{eq:2.52} against
$\calV_{a;i}$, we have 
\begin{equation}
\sum_{b,j}\lan\calV_{a;i},\calV_{b\ul{;j}}\ran\frkr_{b,j}=\lan\calV_{a;i},\Psi\ran+\lan\calV_{a;i},f(\calW+\td U)-f(\calW)-f'(W_{;i})\td U\ran.\label{eq:2.54-1}
\end{equation}
We estimate the first term of the right hand side using \eqref{eq:calV-Psi-estimate}:
\begin{equation}
|\lan\calV_{a;i},\Psi\ran|\aleq\min\{R^{-(N-2)},R_{i}^{-\frac{N+2}{2}}R^{-\frac{N-6}{2}}\}.\label{eq:2.54-3}
\end{equation}
We estimate the second term of the right hand side using \eqref{eq:f(a,b)-4}:
\begin{align}
 & |\lan\calV_{a;i},f(\calW+\td U)-f(\calW)-f'(W_{;i})\td U\ran|\nonumber \\
 & \aleq\|\{\tsum{j\neq i}{}f(W_{;i},W_{;j})+f(W_{;i},\td U)\}\td U\|_{L^{1}}\nonumber \\
 & \aleq\tsum{j\neq i}{}\|f(W_{;i},W_{;j})\|_{L^{(2^{\ast})'}}\|\td U\|_{L^{2^{\ast}}}+\|\lmb_{i}^{-2}\lan y_{i}\ran^{-4}|\td U|^{2}\|_{L^{1}}\nonumber \\
 & \aleq R_{i}^{-\frac{N+2}{2}}\|\td U\|_{\dot{H}^{1}}+\min\{\|\td U\|_{\dot{H}^{1}}^{2},\lmb_{i}\|\td U\|_{\dot{H}^{1}}\|\td U\|_{\dot{H}^{2}},\lmb_{i}^{2}\|\td U\|_{\dot{H}^{2}}^{2}\}.\label{eq:2.54-2}
\end{align}

For the proof of \eqref{eq:frkr-est}, the first bounds of \eqref{eq:2.54-3}
and \eqref{eq:2.54-2} and \eqref{eq:tdU-Hdot1-est} imply 
\[
\tsum{b,j}{}\lan\calV_{a;i},\calV_{b\ul{;j}}\ran\frkr_{b,j}=\calO(R^{-(N-2)})+\calO(R^{-N-2})=\calO(R^{-(N-2)}).
\]
As the matrix $\lan\calV_{a;i},\calV_{b\ul{;j}}\ran$ has uniformly
bounded inverse, we get \eqref{eq:frkr-est}. 

For the proof of \eqref{eq:frkr/lmb-est}, the second bounds of \eqref{eq:2.54-3}
and \eqref{eq:2.54-2} and \eqref{eq:tdU-Hdot1-est} imply 
\[
\tsum{b,j}{}\lan\lmb_{i}^{-1}\calV_{a;i},\lmb_{j}^{-1}\calV_{b;j}\ran\frac{\frkr_{b,j}}{\lmb_{j}}=\calO\Big(\frac{R_{i}^{-\frac{N+2}{2}}}{\lmb_{i}}R^{-\frac{N-6}{2}}+\|\td U\|_{\dot{H}^{2}}R^{-\frac{N+2}{2}}\Big).
\]
As the matrix $\lan\lmb_{i}^{-1}\calV_{a;i},\lmb_{j}^{-1}\calV_{b;j}\ran$
has uniformly bounded inverse, we get
\[
\max_{b,j}\frac{|\frkr_{b,j}|}{\lmb_{j}}\aleq\sum_{i}\frac{R_{i}^{-\frac{N+2}{2}}}{\lmb_{i}}R^{-\frac{N-6}{2}}+\|\td U\|_{\dot{H}^{2}}R^{-\frac{N+2}{2}}\aleq\sum_{i}\frac{R_{i}^{-\frac{N+2}{2}}}{\lmb_{i}}R^{-\frac{N-6}{2}},
\]
where in the last inequality we used \eqref{eq:tdU-Hdot2-est} and
$\frkp(R_{i})\aleq R_{i}^{-\frac{N+2}{2}}$. 

\uline{Step 4: Proof of \mbox{$\lmb_{k}\rd_{z_{k}^{c}}\td U\in\dot{H}^{1}$}
and \mbox{\eqref{eq:tdU-lmb-deriv-est}}}. Denote $\td U_{c,k}\coloneqq\lmb_{k}\rd_{z_{k}^{c}}\td U$.
We first derive an equation for $\td U_{c,k}$ by taking $\lmb_{k}\rd_{z_{k}^{c}}$
to the equation $\Dlt\td U+f(U)-\tsum j{}f(W_{;j})=\tsum{b,j}{}\frkr_{b,j}\calV_{b\ul{;j}}$
for $\td U$ and using $\lmb_{k}\rd_{z_{k}^{c}}W_{;k}=-\calV_{c;k}$:
\begin{equation}
\{\Dlt+f'(U)\}\td U_{c,k}-\{f'(U)-f(W_{;k})\}\calV_{c;k}=\tsum{b,j}{}(\lmb_{k}\rd_{z_{k}^{c}}\frkr_{b,j})\calV_{b\ul{;j}}+\tsum b{}\frkr_{b,k}\lmb_{k}\rd_{z_{k}^{c}}\calV_{b\ul{;k}}.\label{eq:2.43-1}
\end{equation}
On the other hand, differentiating the orthogonality condition $\lan\calV_{a\ul{;i}},\td U\ran=0$
gives $\lan\calV_{a\ul{;i}},\td U_{c,k}\ran=-\chf_{i=k}\lan\lmb_{k}\rd_{z_{k}^{c}}\calV_{a\ul{;k}},\td U\ran$.
Since this inner product is nonzero in general, we further decompose
\begin{equation}
\td U_{c,k}=\tsum{b,j}{}\frks_{c,k,b,j}\calV_{b;j}+\wh U_{c,k}\quad\text{so that}\quad\lan\calV_{a\ul{;i}},\wh U_{c,k}\ran=0\text{ for all }a,i,\label{eq:2.43-2}
\end{equation}
where the constants $\frks_{c,k,b,j}$ are uniquely determined by
the relation 
\begin{equation}
\tsum{b,j}{}\lan\calV_{a\ul{;i}},\calV_{b;j}\ran\frks_{c,k,b,j}=-\chf_{i=k}\lan\lmb_{k}\rd_{z_{k}^{c}}\calV_{a\ul{;k}},\td U\ran.\label{eq:2.43-3}
\end{equation}
Substituting the decomposition \eqref{eq:2.43-2} into \eqref{eq:2.43-1}
and using $\Dlt\calV_{b;j}=-f'(W_{;j})\calV_{b;j}$ as well, we obtain
the equation for $\wh U_{c,k}$:
\begin{align}
\calL_{\calW}\wh U_{c,k} & =-\{f'(U)-f'(\calW)\}\wh U_{c,k}+\wh{\Psi}_{c,k}+\tsum{b,j}{}(\lmb_{k}\rd_{z_{k}^{c}}\frkr_{b,j})\calV_{b\ul{;j}},\label{eq:2.43-4}\\
\wh{\Psi}_{c,k} & \coloneqq\{f'(U)-f(W_{;k})\}\calV_{c;k}-\tsum{b,j}{}\frks_{c,k,b,j}\{f'(U)-f'(W_{;j})\}\calV_{b;j}+\tsum b{}\frkr_{b,k}\lmb_{k}\rd_{z_{k}^{c}}\calV_{b\ul{;k}}.\label{eq:2.43-7}
\end{align}

The first term of the right hand side of \eqref{eq:2.43-3} can be
estimated by 
\[
\|\{f'(U)-f'(\calW)\}\wh U_{c,k}\|_{L^{(2^{\ast})'}}\aleq\||U-\calW|^{p-1}|\wh U_{c,k}|\|_{L^{(2^{\ast})'}}\aleq\|\td U\|_{L^{2^{\ast}}}^{p-1}\|\wh U_{c,k}\|_{L^{2^{\ast}}}\aleq R^{-\frac{2(N+2)}{N-2}}\|\wh U_{c,k}\|_{\dot{H}^{1}}.
\]
Since $\dlt>0$ is small, this estimate combined with Lemma~\ref{lem:inversion}
implies that \eqref{eq:2.43-4} has a unique solution $(\wh U_{c,k},\lmb_{k}\rd_{z_{k}^{c}}\frkr_{b,j})$
and $\wh U_{c,k}$ satisfies 
\begin{equation}
\|\wh U_{c,k}\|_{\dot{H}^{1}}\aleq\|\wh{\Psi}_{c,k}\|_{(\dot{H}^{1})^{\ast}}.\label{eq:2.43-6}
\end{equation}
By the derivation of \eqref{eq:2.43-4} and the uniqueness of $\td U$,
we conclude that $\td U$ is differentiable in $\vec{\lmb}$ and $\vec{z}$
and $\lmb_{k}\rd_{z_{k}^{c}}\td U=\tsum{b,j}{}\frks_{c,k,b,j}\calV_{b;j}+\wh U_{c,k}$
as well. 

The rest is devoted to the proof of the quantitative estimate \eqref{eq:tdU-lmb-deriv-est}
for $\td U_{c,k}=\lmb_{k}\rd_{z_{k}^{c}}\td U$. In view of \eqref{eq:2.43-2},
we first investigate the coefficients $\frks_{c,k,a,i}$. Substituting
\eqref{eq:calV-calV-inner-prod} and \eqref{eq:tdU-Hdot1-est} into
\eqref{eq:2.43-3}, we get 
\[
\frks_{c,k,a,i}+\tsum{b,j}{}\calO(\chf_{i\neq j}\frac{\lmb_{j}}{\lmb_{i}}R_{ij}^{-(N-4)})\frks_{c,k,b,j}=\calO(\chf_{i=k}\min\{R^{-\frac{N+2}{2}},\lmb_{k}\|\td U\|_{\dot{H}^{2}}\}).
\]
By the mapping properties 
\begin{align*}
\tsum{b,j}{}\chf_{i\neq j}\frac{\lmb_{j}}{\lmb_{i}}R_{ij}^{-(N-4)}\cdot1 & \aleq\tsum{b,j}{}\chf_{i\neq j}R_{ij}^{-(N-6)}\aleq R^{-(N-6)},\\
\tsum{b,j}{}\chf_{i\neq j}\frac{\lmb_{j}}{\lmb_{i}}R_{ij}^{-(N-4)}\cdot\frac{1}{\lmb_{j}} & \aleq\tsum{b,j}{}\chf_{i\neq j}R_{ij}^{-(N-4)}\cdot\frac{1}{\lmb_{i}}\aleq R^{-(N-4)}\cdot\frac{1}{\lmb_{i}},
\end{align*}
we conclude that 
\begin{equation}
|\frks_{c,k,a,i}|\aleq\min\{R^{-\frac{N+2}{2}},\lmb_{k}\|\td U\|_{\dot{H}^{2}}\}.\label{eq:2.43-5}
\end{equation}

Next, we claim 
\begin{equation}
\|\wh{\Psi}_{c,k}\|_{(\dot{H}^{1})^{\ast}}\aleq\min\{R^{-\frac{N+2}{2}},R_{k}^{-\frac{N+2}{2}}+\lmb_{k}\|\td U\|_{\dot{H}^{2}}+\max_{b}|\frkr_{b,k}|\}.\label{eq:2.43-8}
\end{equation}
We estimate each term of $\wh{\Psi}_{c,k}$ given in \eqref{eq:2.43-7}.
First, by \eqref{eq:f(a,b)-2}, \eqref{eq:f(W1,W2)-H1dual-est}, and
\eqref{eq:tdU-Hdot1-est}, we have 
\begin{align*}
\|\{f'(U)-f(W_{;k})\}\calV_{c;k}\|_{L^{(2^{\ast})'}} & \aleq\tsum{\ell\neq k}{}\|f(W_{;k},W_{;\ell})\|_{L^{(2^{\ast})'}}+\||W_{;k}|^{p-1}\td U\|_{L^{(2^{\ast})'}}\\
 & \aleq\tsum{\ell\neq k}{}R_{k\ell}^{-\frac{N+2}{2}}+\min\{\|\td U\|_{L^{2^{\ast}}},\lmb_{k}\|\td U\|_{L^{2^{\ast\ast}}}\}\\
 & \aleq\min\{R^{-\frac{N+2}{2}},R_{k}^{-\frac{N+2}{2}}+\lmb_{k}\|\td U\|_{\dot{H}^{2}}\}.
\end{align*}
Combining this with \eqref{eq:2.43-5}, the second term of \eqref{eq:2.43-7}
can be bounded by
\[
\|\tsum{b,j}{}\frks_{c,k,b,j}\{f'(U)-f'(W_{;j})\}\calV_{b;j}\|_{L^{(2^{\ast})'}}\aleq\min\{R^{-\frac{N+2}{2}},\lmb_{k}\|\td U\|_{\dot{H}^{2}}\}\cdot R^{-\frac{N+2}{2}}.
\]
Finally, we simply have 
\[
\|\tsum b{}\frkr_{b,k}\lmb_{k}\rd_{z_{k}^{c}}\calV_{b\ul{;k}}\|_{(\dot{H}^{1})^{\ast}}\aleq\max_{b}|\frkr_{b,k}|.
\]
Gathering the previous three estimates and using \eqref{eq:frkr-est}
with $N-2>\frac{N+2}{2}$ for the $\frkr_{b,k}$, we obtain \eqref{eq:2.43-8}. 

We are now ready to complete the proof of \eqref{eq:tdU-lmb-deriv-est}.
Recall \eqref{eq:2.43-2} so that 
\[
\|\td U_{c,k}\|_{\dot{H}^{1}}\aleq\tsum{b,j}{}|\frks_{c,k,b,j}|+\|\wh U_{c,k}\|_{\dot{H}^{1}}.
\]
For the first term, we apply \eqref{eq:2.43-5}. For the second term,
we apply \eqref{eq:2.43-6} and \eqref{eq:2.43-8}. This completes
the proof of \eqref{eq:tdU-lmb-deriv-est}. 
\end{proof}

\section{\label{sec:Modulation-in-multi-bubble}Modulation in multi-bubble
tube}

This section sets the ground for the dynamical analysis of solutions
$u(t)$. In particular, for solutions $u(t)$ lying in a neighborhood
of multi-bubbles (say, in a multi-bubble tube $\calT_{J}(\dlt)$),
we will decompose 
\[
u(t)=U(\vec{\iota},\vec{\lmb}(t),\vec{z}(t))+g(t)\qquad\text{with}\qquad\lan\calZ_{a\ul{;i}}(t),g(t)\ran=0,
\]
where $U(\vec{\iota},\vec{\lmb},\vec{z})$ is the modified multi-bubble
profile constructed in Proposition~\ref{prop:Modified-Profiles}.
We then prove a crucial coercivity estimate for the dissipation term
$\|\Dlt u+f(u)\|_{L^{2}}^{2}$ that leads to the \emph{monotonicity
estimate}. We also prove rough modulation estimates for $\lmb_{i,t}$,
$z_{i,t}$ and some basic properties for time-global $W$-bubbling
solutions $u(t)$. Finally, we sketch the proof of the non-existence
of non-colliding finite-time blow-up $W$-bubbling solutions (Proposition~\ref{prop:finite-time-non-existence}).

\subsection{Distance between pure multi-bubbles}

Let $J\in\bbN$ and $\dlt>0$.
\begin{align*}
B(\vec{\iota},\vec{\lmb},\vec{z};\dlt) & \coloneqq\{(\vec{\iota}',\vec{\lmb}',\vec{z}')\in\calP_{J}:d_{\calP}((\vec{\iota},\vec{\lmb},\vec{z}),(\vec{\iota}',\vec{\lmb}',\vec{z}'))<\dlt\},\\
d_{\calP}((\vec{\iota},\vec{\lmb},\vec{z}),(\vec{\iota}',\vec{\lmb}',\vec{z}')) & \coloneqq\max_{i\in\setJ}\Big(\min\Big\{1,|\iota_{i}-\iota_{i}'|+\Big|\log\Big(\frac{\lmb_{i}}{\lmb_{i}'}\Big)\Big|+\frac{|z_{i}-z_{i}'|}{\lmb_{i}}+\frac{|z_{i}-z_{i}'|}{\lmb_{i}'}\Big\}\Big).
\end{align*}
Note that $d_{\calP}$ is a metric on $\calP_{J}$. This can be used
to measure distances between pure multi-bubbles in view of the following
proposition. 
\begin{prop}[Distance between pure multi-bubbles]
\label{prop:distance-multi-bubbles}Let $J\geq1$. 
\begin{itemize}
\item For $(\vec{\iota},\vec{\lmb},\vec{z}),(\vec{\iota}',\vec{\lmb}',\vec{z}')\in\{\pm\}^{J}\times(0,\infty)^{J}\times(\bbR^{N})^{J}$,
we have 
\begin{equation}
\|\calW(\vec{\iota},\vec{\lmb},\vec{z})-\calW(\vec{\iota}',\vec{\lmb}',\vec{z}')\|_{\dot{H}^{1}}\aleq d_{\calP}((\vec{\iota},\vec{\lmb},\vec{z}),(\vec{\iota}',\vec{\lmb}',\vec{z}')).\label{eq:2.4}
\end{equation}
\item There exist $\dlt>0$ and $C\geq1$ with the following property. If
$(\vec{\iota},\vec{\lmb},\vec{z}),(\vec{\iota}',\vec{\lmb}',\vec{z}')\in\calP_{J}(\dlt)$
are such that $\|\calW(\vec{\iota},\vec{\lmb},\vec{z})-\calW(\vec{\iota}',\vec{\lmb}',\vec{z}')\|_{L^{2^{\ast}}}\leq\dlt$,
then there exists unique permutation $\pi:\setJ\to\setJ$ satisfying
\begin{equation}
d_{\calP}((\vec{\iota},\vec{\lmb},\vec{z}),(\vec{\iota}'\circ\pi,\vec{\lmb}'\circ\pi,\vec{z}'\circ\pi))\leq C\|\calW(\vec{\iota},\vec{\lmb},\vec{z})-\calW(\vec{\iota}',\vec{\lmb}',\vec{z}')\|_{L^{2^{\ast}}}.\label{eq:2.5}
\end{equation}
\end{itemize}
\end{prop}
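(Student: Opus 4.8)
The plan is to prove the two items in turn, the first by direct estimation and the second by a compactness-and-uniqueness argument built on the first.

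For the first item, I would write the difference as a sum of one-bubble differences, $\calW(\vec\iota,\vec\lmb,\vec z)-\calW(\vec\iota',\vec\lmb',\vec z')=\sum_{i=1}^{J}(\iota_iW_{\lmb_i,z_i}-\iota_i'W_{\lmb_i',z_i'})$, and estimate each term separately. If $\iota_i\neq\iota_i'$, then $d_\calP$ already dominates a constant, so the bound $\|\iota_iW_{\lmb_i,z_i}-\iota_i'W_{\lmb_i',z_i'}\|_{\dot H^1}\leq2\|W\|_{\dot H^1}\aleq 1\aleq d_\calP$ is trivial. If $\iota_i=\iota_i'$, by scaling invariance of $\dot H^1$ I may normalize $\lmb_i=1$, $z_i=0$, and I must estimate $\|W-W_{\mu,w}\|_{\dot H^1}$ where $\mu=\lmb_i'/\lmb_i$, $w=(z_i'-z_i)/\lmb_i$; the relevant quantity on the $d_\calP$ side is $\min\{1,|\log\mu|+|w|(1+\mu^{-1})\}$. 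Since $(\mu,w)\mapsto W_{\mu,w}$ is smooth into $\dot H^1$ and its differential at $(1,0)$ spans $\mathrm{span}\{\Lmb W,\rd_1W,\dots,\rd_NW\}$, a first-order Taylor expansion gives $\|W-W_{\mu,w}\|_{\dot H^1}\aleq|\log\mu|+|w|$ when $|\log\mu|+|w|$ is small, while the bound is trivially $\aleq 1$ otherwise; replacing $|w|$ by $|w|\min\{1,\mu^{-1}\}$ (i.e.\ using $|w|/\lmb_i'$ as well) is harmless since $\min\{1,\mu^{-1}\}\aeq\min\{1,\mu^{-1}\}$ loses nothing on the regime where the bound is nontrivial. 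Summing over $i$ and taking the max gives \eqref{eq:2.4}.

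For the second item I would argue by contradiction and compactness. Suppose no such $\dlt,C$ exist; then there are sequences $(\vec\iota_n,\vec\lmb_n,\vec z_n),(\vec\iota_n',\vec\lmb_n',\vec z_n')\in\calP_J(1/n)$ with $\eps_n\coloneqq\|\calW(\vec\iota_n,\vec\lmb_n,\vec z_n)-\calW(\vec\iota_n',\vec\lmb_n',\vec z_n')\|_{L^{2^\ast}}\to0$ but $d_\calP((\vec\iota_n,\vec\lmb_n,\vec z_n),(\vec\iota_n'\circ\pi,\vec\lmb_n'\circ\pi,\vec z_n'\circ\pi))>n\,\eps_n$ for every permutation $\pi$. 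Using scaling and translation symmetry I normalize, say, $\lmb_{1,n}=1$ and $z_{1,n}=0$. By the asymptotic-orthogonality hypothesis $R^{-1}<1/n$ the bubbles in each family decouple completely; extracting a subsequence, the parameters of the unprimed family converge (in the decoupled sense: ratios $\lmb_{i,n}/\lmb_{j,n}$ and rescaled separations going to $0$ or $\infty$) and likewise for the primed family. The condition $\eps_n\to0$ forces a matching: by testing $\calW-\calW'$ against suitably rescaled bump functions concentrated at each scale/location, one recovers that each primed bubble must sit at the same scale and location (up to factors tending to $1$) as a unique unprimed bubble with the same sign — this defines the permutation $\pi$, and its uniqueness follows from the decoupling (two distinct bubbles in the same family are $\dot H^1$-orthogonal in the limit, so a primed bubble cannot be matched to two of them). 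This is the \textbf{main obstacle}: making the ``profile-decomposition / bubble-matching'' rigorous with only an $L^{2^\ast}$ (not $\dot H^1$) smallness assumption, since $L^{2^\ast}$ is weaker and one must be careful that mass cannot leak. I would handle it by a standard concentration-compactness bookkeeping: order the bubbles of the unprimed family by scale, peel them off one at a time by rescaling to unit scale and passing to the weak limit (which must be $\pm W$ or $0$), and use $\eps_n\to0$ together with $\|W\|_{L^{2^\ast}}\neq0$ to force each limit to be nonzero and matched.

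Finally, once the matching permutation $\pi$ is identified, I reduce to comparing $\calW(\vec\iota_n,\vec\lmb_n,\vec z_n)$ and $\calW(\vec\iota_n'\circ\pi,\vec\lmb_n'\circ\pi,\vec z_n'\circ\pi)$, both of which are now close bubble-by-bubble. Here I invert the estimate of the first item at the linear level: on each bubble, after normalizing, $\|W-W_{\mu_n,w_n}\|_{\dot H^1}\geq c(|\log\mu_n|+|w_n|)$ for $(\mu_n,w_n)$ near $(1,0)$, because the map has injective differential there; summing, $d_\calP\aleq\|\calW(\vec\iota_n,\vec\lmb_n,\vec z_n)-\calW(\vec\iota_n'\circ\pi,\vec\lmb_n'\circ\pi,\vec z_n'\circ\pi)\|_{\dot H^1}$, and by decoupling and the interaction estimate \eqref{eq:W1W2-Hdot1} (cross terms are $\calO(R^{-(N-2)})=o(1)$) the $\dot H^1$ norm of the difference is controlled by $\eps_n$ plus lower-order cross terms, hence $d_\calP\aleq\eps_n=o(n\eps_n)$, contradicting the assumption. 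This yields \eqref{eq:2.5}, and uniqueness of $\pi$ is exactly the decoupling statement used above. \qed
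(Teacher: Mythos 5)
Your first item is fine and essentially the paper's route (one-bubble estimate plus triangle inequality), and your coarse bubble-matching step for the second item, including the uniqueness of $\pi$ from the separation of bubbles within one family, is in the same spirit as the paper's inductive localization argument. The genuine gap is in your final quantitative step. You claim that, once the matching is made, $d_{\calP}\aleq\|\calW-\calW'\circ\pi\|_{\dot H^{1}}$ and that this $\dot H^{1}$ norm is ``controlled by $\eps_{n}$ plus lower-order cross terms'' with cross terms $\calO(R^{-(N-2)})=o(1)$. But $\eps_{n}$ is an $L^{2^{\ast}}$ norm, and there is no inequality $\|\cdot\|_{\dot H^{1}}\aleq\|\cdot\|_{L^{2^{\ast}}}$ (Sobolev goes the other way); the only way to pass from $L^{2^{\ast}}$ back to the parameters is through the very equivalence you are trying to prove, so as written the step is circular. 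Moreover, \emph{additive} errors of size $o(1)$ or $R^{-(N-2)}$ are useless here: in the contradiction setup $d_{\calP}$ may be far smaller than the interaction size $R^{-(N-2)}$ (that is exactly the regime where \eqref{eq:2.5} has content), so an estimate of the form $d_{\calP}\aleq\eps_{n}+\calO(R^{-(N-2)})$ does not yield $d_{\calP}\aleq\eps_{n}$ and no contradiction with $d_{\calP}>n\eps_{n}$ follows.

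What is needed, and what the paper does, is a direct \emph{lower} bound on the $L^{2^{\ast}}$ norm in which every error term carries a factor of $d_{\calP}$: pick an index $j_{0}$ realizing (a fraction of) $d_{\calP}$ with the smallest scale among such indices, localize to $\{|y_{j_{0}}|\leq R_{0}\}$, use the one-bubble equivalence $\|W_{\lmb_{j_0},z_{j_0}}-W_{\lmb'_{j_0},z'_{j_0}}\|_{L^{2^{\ast}}}\ageq d_{\calP}$ there, and bound the contributions of all the other matched pairs on that region by $d_{\calP}\cdot(R_{0}^{-(N-2)/2}+o_{n\to\infty}(1))$, using the pointwise bound $|W_{\lmb_{j},z_{j}}-W_{\lmb'_{j},z'_{j}}|\aleq d_{\calP}\,W_{\lmb_{j},z_{j}}$ valid once the coarse matching $d_{\calP}\leq\tfrac{1}{10}$ is in hand. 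With this multiplicative structure the errors are absorbed into the main term and one gets $\|\calW-\calW'\circ\pi\|_{L^{2^{\ast}}}\ageq d_{\calP}$ directly, which is the missing content of your last paragraph.
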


\begin{proof}
See Appendix~\ref{sec:Proof-of-Proposition-3.1}.
\end{proof}

\subsection{Decomposition lemmas}

The next lemma allows us to decompose a function $u\in\dot{H}^{1}$
near pure multi-bubbles $\calW$ (or, $u\in\calT_{J}(\dlt)$) into
$u=U(\vec{\iota},\vec{\lmb},\vec{z})+g$ with the modified multi-bubble
profile $U$ and suitable orthogonality conditions on $g$. 
\begin{lem}[Static modulation lemma]
\label{lem:static-modulation}Let $J\geq1$. There exist a constant
$C_{J}\geq1$ and a small $\dlt_{J}>0$ with the following property. 
\begin{itemize}
\item For any $u\in\dot{H}^{1}$ and $(\vec{\td{\iota}},\vec{\td{\lmb}},\vec{\td z})\in\calP_{J}(\dlt_{J})$
such that $\|u-\calW(\vec{\td{\iota}},\vec{\td{\lmb}},\vec{\td z})\|_{\dot{H}^{1}}<\dlt_{J}$,
there exists unique $(\vec{\iota},\vec{\lmb},\vec{z})\in B(\vec{\td{\iota}},\vec{\td{\lmb}},\vec{\td z};C_{J}\dlt_{J})$
such that 
\begin{equation}
\lan\calZ_{a\ul{;i}},u-U(\vec{\iota},\vec{\lmb},\vec{z})\ran=0\qquad\forall a\in\{0,1,\dots,N\},\ i\in\setJ.\label{eq:static-orthog}
\end{equation}
Moreover, $\vec{\iota}=\vec{\td{\iota}}$, $(\vec{\iota},\vec{\lmb},\vec{z})\in\calP_{J}(2\dlt_{J})$,
and the map $u\mapsto(\vec{\iota},\vec{\lmb},\vec{z})$ is $C^{1}$
with 
\begin{equation}
\|u-U(\vec{\iota},\vec{\lmb},\vec{z})\|_{\dot{H}^{1}}+d_{\calP}((\vec{\iota},\vec{\lmb},\vec{z}),(\vec{\td{\iota}},\vec{\td{\lmb}},\vec{\td z}))\leq\tfrac{1}{2}C_{J}\|u-U(\vec{\td{\iota}},\vec{\td{\lmb}},\vec{\td z})\|_{\dot{H}^{1}}.\label{eq:static-lipschitz}
\end{equation}
\item For any $\dlt\in(0,\dlt_{J}]$ and $u\in\calT_{J}(\dlt)$, there exists
$(\vec{\iota},\vec{\lmb},\vec{z})\in\calP_{J}(2\dlt)$, unique up
to permutation, such that the orthogonality conditions \eqref{eq:static-orthog}
and $\|u-U(\vec{\iota},\vec{\lmb},\vec{z})\|_{\dot{H}^{1}}<C_{J}\dlt$
hold. 
\end{itemize}
\end{lem}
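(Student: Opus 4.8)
The plan is to prove the static modulation lemma by a standard implicit function theorem argument, working at the level of the modified profiles $U(\vec{\iota},\vec{\lmb},\vec{z})$ rather than the pure bubbles $\calW$, and then deduce the "unique up to permutation" statement from Proposition~\ref{prop:distance-multi-bubbles}. First I would fix the sign vector: since $U(\vec{\iota},\vec{\lmb},\vec{z}) = \calW(\vec{\iota},\vec{\lmb},\vec{z}) + \td U$ with $\|\td U\|_{\dot H^1}\aleq R^{-(N+2)/2}$ small (Proposition~\ref{prop:Modified-Profiles}), and signs are discrete, the closeness hypothesis $\|u - \calW(\vec{\td\iota},\vec{\td\lmb},\vec{\td z})\|_{\dot H^1} < \dlt_J$ forces any nearby decomposition to use $\vec{\iota} = \vec{\td\iota}$; so from now on $\vec{\iota}$ is frozen and only $(\vec{\lmb},\vec{z})$ vary.

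Next I would set up the map $\Phi = \Phi_{u}: (\vec{\lmb},\vec{z}) \mapsto \big(\lan \calZ_{a\ul{;i}}, u - U(\vec{\iota},\vec{\lmb},\vec{z})\ran\big)_{a,i} \in \bbR^{J(N+1)}$ on a small $d_{\calP}$-ball around $(\vec{\td\lmb},\vec{\td z})$, and aim to solve $\Phi = 0$. At the reference point one computes $\Phi(\vec{\td\lmb},\vec{\td z}) = \big(\lan\calZ_{a\ul{;i}}, u-\calW(\vec{\td\iota},\vec{\td\lmb},\vec{\td z})\ran - \lan\calZ_{a\ul{;i}},\td U\ran\big)_{a,i}$; the orthogonality normalization in Proposition~\ref{prop:Modified-Profiles} kills the $\td U$ term, and by Cauchy–Schwarz with $\|\calZ_{a\ul{;i}}\|_{(\dot H^1)^\ast}\aeq 1$ the whole vector has size $\aleq \|u-\calW(\vec{\td\iota},\vec{\td\lmb},\vec{\td z})\|_{\dot H^1} < \dlt_J$. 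For the Jacobian, I differentiate: $\lmb_i \rd_{z_i^a}\Phi$ has leading contribution $-\lan \calZ_{a\ul{;i}}, \lmb_i\rd_{z_i^a}\calW\ran = \lan \calZ_{a\ul{;i}}, \calV_{a\ul{;i}}\ran + (\text{off-diagonal})$, which by the transversality relation \eqref{eq:transversality} and the inner-product asymptotics \eqref{eq:calZ-calV-inner-prod} is a matrix that is diagonal (in the $L^2$-rescaled normalization) up to $\calO(R^{-(N-4)})$ plus $\calO(\|\td U\|)$-type corrections from $\lmb_i\rd_{z_i^a}\td U$, estimated by \eqref{eq:tdU-lmb-deriv-est}. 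Hence for $\dlt_J$ small the Jacobian is invertible with uniformly bounded inverse, and the quantitative implicit function theorem (or a contraction mapping argument on $\Phi^{-1}$) yields a unique $(\vec{\lmb},\vec{z})$ in a ball of radius $\aleq \|u-\calW(\vec{\td\iota},\vec{\td\lmb},\vec{\td z})\|_{\dot H^1}$ with $\Phi(\vec{\lmb},\vec{z})=0$; choosing $C_J$ appropriately gives \eqref{eq:static-lipschitz} and $(\vec\iota,\vec\lmb,\vec z)\in\calP_J(2\dlt_J)$, and $C^1$-dependence on $u$ is automatic from the implicit function theorem since $u\mapsto\Phi_u$ is affine and smooth.

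For the second bullet, given $u\in\calT_J(\dlt)$ with $\dlt\le\dlt_J$, by definition there is some $(\vec{\td\iota},\vec{\td\lmb},\vec{\td z})\in\calP_J(\dlt)$ with $\|u-\calW(\vec{\td\iota},\vec{\td\lmb},\vec{\td z})\|_{\dot H^1}<\dlt$, so the first bullet applies and produces a decomposition $u = U(\vec\iota,\vec\lmb,\vec z)+g$ with the orthogonality \eqref{eq:static-orthog} and $\|g\|_{\dot H^1}<C_J\dlt$. Uniqueness up to permutation then follows by taking two such decompositions $u = U(\vec\iota^{(1)},\vec\lmb^{(1)},\vec z^{(1)})+g^{(1)} = U(\vec\iota^{(2)},\vec\lmb^{(2)},\vec z^{(2)})+g^{(2)}$: subtracting and using $\|\td U^{(k)}\|_{\dot H^1}\aleq R^{-(N+2)/2}$ and $\|g^{(k)}\|_{\dot H^1}\aleq\dlt$ gives $\|\calW(\vec\iota^{(1)},\vec\lmb^{(1)},\vec z^{(1)}) - \calW(\vec\iota^{(2)},\vec\lmb^{(2)},\vec z^{(2)})\|_{L^{2^\ast}}\aleq\dlt$, so by the second item of Proposition~\ref{prop:distance-multi-bubbles} there is a permutation $\pi$ with $d_{\calP}$ between the two parameter tuples (after composing with $\pi$) $\aleq\dlt$; then the local uniqueness from the first bullet, applied in the $d_{\calP}$-ball around $(\vec\iota^{(1)},\vec\lmb^{(1)},\vec z^{(1)})$, forces $(\vec\iota^{(2)},\vec\lmb^{(2)},\vec z^{(2)})\circ\pi = (\vec\iota^{(1)},\vec\lmb^{(1)},\vec z^{(1)})$.

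The main obstacle I anticipate is controlling the Jacobian of $\Phi$ uniformly over the full degenerate regime $\calP_J(\dlt)$ — in particular verifying that the parameter-derivatives $\lmb_i\rd_{z_i^a}\td U$ of the modified profile genuinely stay subleading compared to the diagonal $\lan\calZ_{a\ul{;i}},\calV_{a\ul{;i}}\ran$ terms, including near collisions where several $\lmb_i$ are comparable and the $z_i$ are close; this is where the sharp estimate \eqref{eq:tdU-lmb-deriv-est} (with the correct powers of $R_i$ versus $R$) is essential, and one must be careful that the off-diagonal blocks of the Jacobian, weighted by the $L^2$-rescaling $\lmb_i^{-1}$ as in \eqref{eq:calZ-calV-inner-prod}, remain $o_{R\to\infty}(1)$. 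Everything else is routine functional analysis once this diagonal-dominance is in hand.
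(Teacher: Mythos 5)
Your proposal is correct and follows essentially the same route as the paper: an implicit function theorem argument for the orthogonality map $(\vec{\lmb},\vec{z})\mapsto\big(\lan\calZ_{a\ul{;i}},u-U(\vec{\iota},\vec{\lmb},\vec{z})\ran\big)_{a,i}$, with diagonal dominance of the Jacobian coming from \eqref{eq:transversality}, \eqref{eq:calZ-calV-inner-prod}, and \eqref{eq:tdU-lmb-deriv-est}, followed by Proposition~\ref{prop:distance-multi-bubbles} plus local uniqueness to get uniqueness up to permutation. The only (harmless) omission is the Jacobian contribution from differentiating the test functions $\calZ_{a\ul{;i}}$ in the parameters, which is $\calO(\|u-U\|_{\dot{H}^{1}})$ and absorbed exactly as in the paper.
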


\begin{rem}
Similar results within symmetry are established in \cite[Lemma B.1]{DuyckaertsKenigMerle2023Acta}
and \cite[Lemma 2.25]{JendrejLawrie2025JAMS}. Further complications
arise here because there is no natural ordering of scales; we could
only state our second item in terms of unique up to permutation.
\end{rem}

\begin{proof}
Let $0<\dlt\ll1$ be a small parameter that can shrink in the course
of the proof. 

\uline{Step 1: Application of the implicit function theorem}. In
this step, we claim a slightly stronger version of the first item,
written in the next paragraph.

There exist $\dlt'\in(0,\frac{1}{10})$ and $C_{J}\geq1$ such that
for any $u\in\dot{H}^{1}$ and $(\vec{\td{\iota}},\vec{\td{\lmb}},\vec{\td z})\in\calP_{J}(\dlt_{J})$
with $\|u-\calW(\vec{\td{\iota}},\vec{\td{\lmb}},\vec{\td z})\|_{\dot{H}^{1}}<\dlt_{J}$,
there exists unique $(\vec{\iota},\vec{\lmb},\vec{z})\in B(\vec{\td{\iota}},\vec{\td{\lmb}},\vec{\td z};\dlt')$
such that \eqref{eq:static-orthog} holds. Moreover, $\vec{\iota}=\vec{\td{\iota}}$
(since $\dlt'<\frac{1}{10}$), $(\vec{\iota},\vec{\lmb},\vec{z})\in\calP_{J}(2\dlt_{J})$,
and the map $u\mapsto(\vec{\iota},\vec{\lmb},\vec{z})$ is $C^{1}$
with the Lipschitz estimate \eqref{eq:static-lipschitz}. 

To show this, define 
\begin{align*}
\bfF & :B_{\dot{H}^{1}}(\calW(\vec{\td{\iota}},\vec{\td{\lmb}},\vec{\td z});1)\times B_{\calP}((\vec{\td{\iota}},\vec{\td{\lmb}},\vec{\td z});1)\to\bbR^{(N+1)J}\\
\bfF & =(F_{a,i})_{a,i},\quad a\in\{0,1,\dots,N\},\ i\in\setJ,
\end{align*}
by 
\[
F_{a,i}(u,\vec{\td{\iota}},\vec{\lmb},\vec{z})=\lan\calZ_{a\ul{;i}},u-U(\vec{\td{\iota}},\vec{\lmb},\vec{z})\ran.
\]
Note that $\bfF(U(\vec{\td{\iota}},\vec{\td{\lmb}},\vec{\td z}),\vec{\td{\iota}},\vec{\td{\lmb}},\vec{\td z})=0$.
Next, using $\|\lmb_{j}\rd_{z_{j}^{b}}U\|_{\dot{H}^{1}}\aleq1$ and
\eqref{eq:tdU-Hdot1-est}, we have 
\begin{align*}
\|u-U(\vec{\td{\iota}},\vec{\lmb},\vec{z})\|_{\dot{H}^{1}} & \leq\|u-U(\vec{\td{\iota}},\vec{\td{\lmb}},\vec{\td z})\|_{\dot{H}^{1}}+\calO\big(d_{\calP}((\vec{\td{\iota}},\vec{\lmb},\vec{z}),(\vec{\td{\iota}},\vec{\td{\lmb}},\vec{\td z}))\big)\\
 & \leq\|u-\calW(\vec{\td{\iota}},\vec{\td{\lmb}},\vec{\td z})\|_{\dot{H}^{1}}+\calO\big(\dlt_{J}^{\frac{N+2}{2}}+d_{\calP}((\vec{\td{\iota}},\vec{\lmb},\vec{z}),(\vec{\td{\iota}},\vec{\td{\lmb}},\vec{\td z}))\big).
\end{align*}
This together with \eqref{eq:calZ-calV-inner-prod} and \eqref{eq:tdU-lmb-deriv-est}
implies that $\bfF$ is $C^{1}$ and 
\begin{align*}
\lmb_{j}\rd_{z_{j}^{b}}F_{a,i}(u,\vec{\lmb},\vec{z}) & =\lan\calZ_{a\ul{;i}},\calV_{b;j}\ran+\lan\calZ_{a\ul{;i}},-\lmb_{j}\rd_{z_{j}^{b}}\td U\ran+\chf_{i=j}\lan\lmb_{i}\rd_{z_{i}^{b}}(\calZ_{a\ul{;i}}),u-U(\vec{\td{\iota}},\vec{\lmb},\vec{z})\ran\\
 & =\{\chf_{(a,i)=(b,j)}+\calO(\tfrac{\lmb_{j}}{\lmb_{i}}R_{ij}^{-(N-4)})\}+\calO(R^{-\frac{N+2}{2}}+\|u-U(\vec{\td{\iota}},\vec{\lmb},\vec{z})\|_{\dot{H}^{1}})\\
 & =\chf_{(a,i)=(b,j)}+\calO\big(\dlt_{J}^{N-6}+\dlt_{J}^{\frac{N+2}{2}}+d_{\calP}((\vec{\td{\iota}},\vec{\lmb},\vec{z}),(\vec{\td{\iota}},\vec{\td{\lmb}},\vec{\td z}))\big).
\end{align*}
Applying the implicit function theorem (more precisely a quantitative
version \cite[Remark 2.26]{JendrejLawrie2025JAMS}) in a neighborhood
of $(\calW(\vec{\td{\iota}},\vec{\td{\lmb}},\vec{\td z}),\vec{\td{\iota}},\vec{\td{\lmb}},\vec{\td z})$,
we get the claim. In particular, the first item of this lemma is proved.

\uline{Step 2: Proof of the second item}. By the definition of
$u\in\calT_{J}(\dlt)$, we can choose $(\vec{\td{\iota}},\vec{\td{\lmb}},\vec{\td z})\in\calP_{J}(\dlt)$
such that $\|u-\calW(\vec{\td{\iota}},\vec{\td{\lmb}},\vec{\td z})\|_{\dot{H}^{1}}<\dlt$.
The existence of $(\vec{\iota},\vec{\lmb},\vec{z})\in\calP_{J}(2\dlt)$
as in the second item follows the first item and $\|u-U(\vec{\td{\iota}},\vec{\td{\lmb}},\vec{\td z})\|_{\dot{H}^{1}}<\dlt+\calO(\dlt^{\frac{N+2}{2}})<2\dlt$.

We turn to the proof of uniqueness up to permutation. Let $(\vec{\iota},\vec{\lmb},\vec{z})\in\calP_{J}(2\dlt_{J})$
be such that $\|u-U(\vec{\iota},\vec{\lmb},\vec{z})\|_{\dot{H}^{1}}<C_{J}\dlt_{J}$
and \eqref{eq:static-orthog} hold. By the uniqueness statement of
Step~1, it suffices to show that $(\vec{\iota},\vec{\lmb},\vec{z})\circ\pi\in B(\vec{\td{\iota}},\vec{\td{\lmb}},\vec{\td z};\dlt')$
for some permutation $\pi:\setJ\to\setJ$. Note that 
\begin{align*}
\|\calW(\vec{\iota},\vec{\lmb},\vec{z})-\calW(\vec{\td{\iota}},\vec{\td{\lmb}},\vec{\td z})\|_{\dot{H}^{1}} & \leq\|(\calW-U)(\vec{\iota},\vec{\lmb},\vec{z})\|_{\dot{H}^{1}}+\|(\calW-U)(\vec{\td{\iota}},\vec{\td{\lmb}},\vec{\td z})\|_{\dot{H}^{1}}\\
 & \quad+\|u-U(\vec{\iota},\vec{\lmb},\vec{z})\|_{\dot{H}^{1}}+\|u-U(\vec{\td{\iota}},\vec{\td{\lmb}},\vec{\td z})\|_{\dot{H}^{1}}=\calO(\dlt_{J}).
\end{align*}
Since $\dlt_{J}>0$ is small, we can apply Proposition~\ref{prop:distance-multi-bubbles}
to find a permutation $\pi:\setJ\to\setJ$ satisfying $d_{\calP}((\vec{\iota}',\vec{\lmb}',\vec{z}')\circ\pi,(\vec{\td{\iota}},\vec{\td{\lmb}},\vec{\td z}))=\calO(\dlt_{J})$.
Further shrinking $\dlt_{J}>0$ if necessary, we get $(\vec{\iota}',\vec{\lmb}',\vec{z}')\circ\pi\in B(\vec{\iota},\vec{\lmb},\vec{z};\dlt')$
as desired. This completes the proof.
\end{proof}
In dynamical applications, we need to decompose a solution $u(t)$,
which is a $C^{1}$-curve in $\calT_{J}(\dlt)$, into $U(\vec{\iota},\vec{\lmb}(t),\vec{z}(t))+g(t)$.
We may apply the static modulation Lemma~\ref{lem:static-modulation}
for each time $t$, but this requires an extra care (a path lifting
argument) because $(\vec{\iota},\vec{\lmb},\vec{z})$ are determined
only up to permutation of indices.
\begin{lem}[Modulation of a curve in $\calT_{J}(\dlt_{J})$]
\label{lem:curve-modulation}Let $J\geq1$. Let the constants $\dlt_{J}$
and $C_{J}$ be as in Lemma~\ref{lem:static-modulation}. Let $u:I\to\calT_{J}(\dlt_{J})$,
where $I$ is an interval, be a continuous (resp., $C^{1}$) curve
and let $t_{0}\in I$ and $(\vec{\iota},\vec{\td{\lmb}},\vec{\td z})\in\calP_{J}(\dlt_{J})$
be such that $\|u(t_{0})-\calW(\vec{\iota},\vec{\td{\lmb}},\vec{\td z})\|_{\dot{H}^{1}}<\dlt_{J}$.
Then, there exists unique continuous (resp., $C^{1}$) curve $(\vec{\iota},\vec{\lmb},\vec{z}):I\to\calP_{J}(2\dlt_{J})$
such that 
\begin{align}
d_{\calP}((\vec{\iota},\vec{\lmb}(t_{0}),\vec{z}(t_{0})),(\vec{\iota},\vec{\td{\lmb}},\vec{\td z})) & <C_{J}\dlt_{J},\label{eq:2.14}\\
\|u(t)-U(\vec{\iota},\vec{\lmb}(t),\vec{z}(t))\|_{\dot{H}^{1}} & <C_{J}\dlt_{J}\qquad\forall t\in I,\label{eq:2.15}
\end{align}
and (here $\calZ_{a\ul{;i}}(t)$ is defined using $\vec{\iota},\vec{\lmb}(t),\vec{z}(t)$)
\begin{equation}
\lan\calZ_{a\ul{;i}}(t),u(t)-U(\vec{\iota},\vec{\lmb}(t),\vec{z}(t))\ran=0\qquad\forall a\in\{0,1,\dots,N\},\ i\in\setJ,\ t\in I.\label{eq:curve-orthog}
\end{equation}
Moreover, if $u(t)\in\calT_{J}(\dlt)$ for some $t\in I$ and $\dlt\in(0,\dlt_{J}]$,
then $(\vec{\iota},\vec{\lmb}(t),\vec{z}(t))\in\calP_{J}(2\dlt)$
and 
\begin{align*}
\|u(t)-U(\vec{\iota},\vec{\lmb}(t),\vec{z}(t))\|_{\dot{H}^{1}} & <C_{J}\dlt.
\end{align*}
If $u(t_{0})\in\calT_{J}(\dlt)$ for some $\dlt\in(0,\dlt_{J}]$,
then 
\begin{equation}
d_{\calP}((\vec{\iota},\vec{\lmb}(t_{0}),\vec{z}(t_{0})),(\vec{\iota},\vec{\td{\lmb}},\vec{\td z}))<C_{J}\dlt.\label{eq:2.17}
\end{equation}
\end{lem}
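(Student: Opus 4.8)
The plan is to reduce the curve statement to the static modulation Lemma~\ref{lem:static-modulation} via a path-lifting argument, handling the permutation ambiguity by continuity. First I would fix $t_0$ and apply the first item of Lemma~\ref{lem:static-modulation} to $u(t_0)$ with the chosen reference $(\vec{\iota},\vec{\td{\lmb}},\vec{\td z})$, obtaining a uniquely determined $(\vec{\iota},\vec{\lmb}(t_0),\vec{z}(t_0))\in B(\vec{\iota},\vec{\td{\lmb}},\vec{\td z};C_J\dlt_J)$ satisfying the orthogonality \eqref{eq:static-orthog} and the Lipschitz bound \eqref{eq:static-lipschitz}; this gives \eqref{eq:2.14}. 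Then I would define the set $S$ of times $t\in I$ for which there exists a continuous curve of modulation parameters on the subinterval of $I$ between $t_0$ and $t$ satisfying \eqref{eq:2.15}--\eqref{eq:curve-orthog} and agreeing with $(\vec{\iota},\vec{\lmb}(t_0),\vec{z}(t_0))$ at $t_0$. The goal is to show $S=I$ by a standard connectedness argument: $S$ is nonempty (contains $t_0$), and I would show it is both open and closed in $I$.

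For openness and local existence: at any $t_1\in S$ with parameters $(\vec{\iota},\vec{\lmb}(t_1),\vec{z}(t_1))$, since $\|u(t_1)-U(\vec{\iota},\vec{\lmb}(t_1),\vec{z}(t_1))\|_{\dot H^1}$ is small and $U$ is close to $\calW$ by \eqref{eq:tdU-Hdot1-est}, we have $\|u(t_1)-\calW(\vec{\iota},\vec{\lmb}(t_1),\vec{z}(t_1))\|_{\dot H^1}<\dlt_J$ after shrinking $\dlt_J$; applying the $C^1$ (resp.\ continuous) dependence in the first item of Lemma~\ref{lem:static-modulation} with reference $(\vec{\iota},\vec{\lmb}(t_1),\vec{z}(t_1))$ and using continuity of $t\mapsto u(t)$ in $\dot H^1$, one extends the modulation curve to a neighborhood of $t_1$ with the same sign vector $\vec{\iota}$; the Lipschitz estimate \eqref{eq:static-lipschitz} propagates \eqref{eq:2.15} on this neighborhood (after possibly shrinking $\dlt_J$ so that $C_J\dlt_J$ stays within the valid range). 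Regularity ($C^1$ when $u$ is $C^1$) transfers from the implicit function theorem. For closedness: if $t_n\to t_*\in I$ with $t_n\in S$, the bounds \eqref{eq:2.14}--\eqref{eq:2.15} confine $(\vec{\iota},\vec{\lmb}(t_n),\vec{z}(t_n))$ to a compact subset of $\calP_J(2\dlt_J)$ (the sign vector is constant, the scales and centers stay in a bounded region by $d_\calP$-control), so a subsequential limit exists, satisfies \eqref{eq:curve-orthog} and \eqref{eq:2.15} at $t_*$ by continuity, and then the local existence step extends past $t_*$; uniqueness (next paragraph) forces the whole curve to converge, so $t_*\in S$.

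Uniqueness: suppose $(\vec{\iota},\vec{\lmb},\vec{z})$ and $(\vec{\iota}',\vec{\lmb}',\vec{z}')$ are two continuous curves satisfying the conclusions with the same value at $t_0$. At $t_0$ they coincide by the uniqueness in the first item of Lemma~\ref{lem:static-modulation}. The set of $t$ where they coincide is closed by continuity; it is open because, locally around any such $t_1$, both curves lie in $B\big((\vec{\iota},\vec{\lmb}(t_1),\vec{z}(t_1));\dlt'\big)$ for the small $\dlt'$ from Step~1 of Lemma~\ref{lem:static-modulation}'s proof (using \eqref{eq:static-lipschitz} and continuity), so the local uniqueness there forces equality. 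Hence the curves coincide on all of $I$; in particular this rules out jumps by permutations — the continuity pins down a single branch. The remaining assertions are bookkeeping: if $u(t)\in\calT_J(\dlt)$ for some $\dlt\le\dlt_J$, apply the second item of Lemma~\ref{lem:static-modulation} together with the uniqueness-up-to-permutation there and the fact that our curve is already a valid decomposition, to conclude $(\vec{\iota},\vec{\lmb}(t),\vec{z}(t))\in\calP_J(2\dlt)$ and the sharper $\dlt$-bound on the $\dot H^1$ error; \eqref{eq:2.17} follows likewise from \eqref{eq:static-lipschitz} with $\dlt$ in place of $\dlt_J$ when $u(t_0)\in\calT_J(\dlt)$.

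The main obstacle I expect is not any single estimate but the careful bookkeeping of how small $\dlt_J$ must be, and ensuring the constant $C_J$ from Lemma~\ref{lem:static-modulation} is not degraded under the iteration implicit in the continuation argument — i.e.\ that \eqref{eq:2.15} holds with the \emph{same} $C_J\dlt_J$ along the whole curve rather than a constant that grows as one re-applies the static lemma. This is handled by observing that the static lemma is re-applied with varying reference points but always yields the error bound $\tfrac12 C_J\|u-\calW(\text{ref})\|_{\dot H^1}$, and by choosing $\dlt_J$ small enough that $\|u(t)-\calW(\vec{\iota},\vec{\lmb}(t),\vec{z}(t))\|_{\dot H^1}$ stays below $2\dlt_J$ uniformly (using \eqref{eq:tdU-Hdot1-est} to pass between $U$ and $\calW$), so the bound closes without constant inflation.
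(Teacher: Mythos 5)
Your overall strategy is the same as the paper's: reduce everything to the static modulation lemma, build the curve by a path-lifting/connectedness argument, get uniqueness from local uniqueness, and read off the final assertions from the second item of Lemma~\ref{lem:static-modulation}. However, your key local-continuation (openness) step has a genuine gap: you apply the first item of Lemma~\ref{lem:static-modulation} with the curve's own value $(\vec{\iota},\vec{\lmb}(t_{1}),\vec{z}(t_{1}))$ as the reference, but that lemma requires the reference to lie in $\calP_{J}(\dlt_{J})$ and to satisfy $\|u(t_{1})-\calW(\mathrm{ref})\|_{\dot{H}^{1}}<\dlt_{J}$. Your curve is only guaranteed to lie in $\calP_{J}(2\dlt_{J})$, and the available bound is $\|u(t_{1})-U(\vec{\iota},\vec{\lmb}(t_{1}),\vec{z}(t_{1}))\|_{\dot{H}^{1}}<C_{J}\dlt_{J}$, hence $\|u(t_{1})-\calW(\vec{\iota},\vec{\lmb}(t_{1}),\vec{z}(t_{1}))\|_{\dot{H}^{1}}\leq C_{J}\dlt_{J}+\calO(\dlt_{J}^{\frac{N+2}{2}})$ with $C_{J}\geq1$. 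This cannot be forced below $\dlt_{J}$ by ``shrinking $\dlt_{J}$'', since the obstruction $C_{J}\dlt_{J}<\dlt_{J}$ scales linearly in $\dlt_{J}$; so the hypotheses of the static lemma are simply not met and the bootstrap you describe at the end does not close. The same admissibility issue appears in your uniqueness step, where you invoke the local uniqueness ball of radius $\dlt'$ centered at the curve's own value rather than at an admissible reference.

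The fix is exactly the device the paper uses: at each time $\tau$, draw the reference pure multi-bubble from the standing hypothesis $u(\tau)\in\calT_{J}(\dlt_{J})$ (so it lies in $\calP_{J}(\dlt_{J})$ with $\dot{H}^{1}$-distance $<\dlt_{J}$), apply the first item of Lemma~\ref{lem:static-modulation} around that reference on a small interval, and then reconcile the new local branch with the already-constructed curve via the second item's uniqueness \emph{up to permutation}, the permutation being locally constant by continuity. With this modification your openness and uniqueness arguments become the paper's. Two further remarks: the paper avoids your ``closedness'' step entirely by first reducing to compact subintervals and covering them by finitely many such reference intervals (Lebesgue number lemma), then gluing the branches by permutations; as written, your closedness claim that \eqref{eq:2.14}--\eqref{eq:2.15} confine the parameters to a compact subset of $\calP_{J}(2\dlt_{J})$ is not justified without again invoking the tube membership together with Proposition~\ref{prop:distance-multi-bubbles} or the second item of the static lemma. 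Finally, the constant $C_{J}$ is never ``iterated'': the bound \eqref{eq:2.15} comes each time from a fresh application of \eqref{eq:static-lipschitz} relative to a tube reference, not from feeding the previous error back into the lemma, so no constant inflation occurs once the references are chosen correctly.
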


\begin{proof}
This is a standard consequence of Lemma~\ref{lem:static-modulation}
and a path lifting argument.

\uline{Step 1: Uniqueness}. Assume $(\vec{\iota},\vec{\lmb},\vec{z})$
and $(\vec{\iota},\vec{\lmb}',\vec{z}')$ are two such continuous
curves in $\calP_{J}(2\dlt_{J})$. Let $\wh I\coloneqq\{t\in I:\vec{\lmb}(t)=\vec{\lmb}'(t)$
and $\vec{z}(t)=\vec{z}'(t)\}$; we need to show $I=\wh I$. By connectivity,
it suffices to show that $\wh I$ is non-empty, open, and closed in
$I$. First, $t_{0}\in\wh I$ due to \eqref{eq:2.14} (and similarly
for $\vec{\lmb}'(t_{0}),\vec{z}'(t_{0})$) and the first item of Lemma~\ref{lem:static-modulation}.
Next, $\wh I$ is closed in $I$ by continuity. Finally, we show that
$\wh I$ is open in $I$. Let $\tau\in\wh I$. As $u(\tau)\in\calT_{J}(\dlt_{J})$,
there is $(\vec{\td{\iota}}^{(\tau)},\vec{\td{\lmb}}^{(\tau)},\vec{\td z}^{(\tau)})\in\calP_{J}(\dlt_{J})$
such that $\|u(\tau)-\calW(\vec{\td{\iota}}^{(\tau)},\vec{\td{\lmb}}^{(\tau)},\vec{\td z}^{(\tau)})\|_{\dot{H}^{1}}<\dlt_{J}$.
By Lemma~\ref{lem:static-modulation}, there exists a permutation
$\pi^{(\tau)}$ such that $(\vec{\iota},\vec{\lmb}(\tau),\vec{z}(\tau))\circ\pi^{(\tau)}=(\vec{\iota},\vec{\lmb}'(\tau),\vec{z}'(\tau))\circ\pi^{(\tau)}\in B(\vec{\td{\iota}}^{(\tau)},\vec{\td{\lmb}}^{(\tau)},\vec{\td z}^{(\tau)};C_{J}\dlt_{J})$.
Now, choose an interval $I^{(\tau)}\ni\tau$ open in $I$ such that
$\|u(t)-\calW(\vec{\td{\iota}}^{(\tau)},\vec{\td{\lmb}}^{(\tau)},\vec{\td z}^{(\tau)})\|_{\dot{H}^{1}}<\dlt_{J}$
and $(\vec{\iota},\vec{\lmb}(t),\vec{z}(t))\circ\pi^{(\tau)},(\vec{\iota},\vec{\lmb}'(t),\vec{z}'(t))\circ\pi^{(\tau)}\in B(\vec{\td{\iota}}^{(\tau)},\vec{\td{\lmb}}^{(\tau)},\vec{\td z}^{(\tau)};C_{J}\dlt_{J})$
for all $t\in I^{(\tau)}$. By the uniqueness statement of the first
item of Lemma~\ref{lem:static-modulation}, we have $(\vec{\iota},\vec{\lmb}(t),\vec{z}(t))\circ\pi^{(\tau)}=(\vec{\iota},\vec{\lmb}'(t),\vec{z}'(t))\circ\pi^{(\tau)}$
and hence $(\vec{\iota},\vec{\lmb}(t),\vec{z}(t))=(\vec{\iota},\vec{\lmb}'(t),\vec{z}'(t))$
for $t\in I^{(\tau)}$. This says $I^{(\tau)}\subseteq\wh I$, so
$\wh I$ is open in $I$. This completes the proof of uniqueness.

It remains to show the existence of a curve $(\vec{\iota},\vec{\lmb}(t),\vec{z}(t))$.

\uline{Step 2: Reduction to the case of compact intervals}. We
show that the case of compact intervals implies the general case.
Indeed, for a general interval $I$, choose compact subintervals $t_{0}\in I_{1}\subset I_{2}\subset\dots$
of $I$ such that $I=\bigcup_{n}I_{n}$. On each compact interval
$I_{n}$, we apply the existence result to obtain $(\vec{\iota}^{(I_{n})},\vec{\lmb}^{(I_{n})},\vec{z}^{(I_{n})})$.
By the uniqueness result, we have $(\vec{\iota}^{(I_{n})},\vec{\lmb}^{(I_{n})},\vec{z}^{(I_{n})})|_{I_{m}}=(\vec{\iota}^{(I_{m})},\vec{\lmb}^{(I_{m})},\vec{z}^{(I_{m})})$
whenever $I_{m}\subset I_{n}$; the family $\{(\vec{\iota}^{(I_{n})},\vec{\lmb}^{(I_{n})},\vec{z}^{(I_{n})})\}_{n\in\bbN}$
is compatible. Taking the union defines a desired curve $\{(\vec{\iota},\vec{\lmb},\vec{z})\}_{n\in\bbN}$
on $I$.

\uline{Step 3: Existence of a continuous curve on a compact interval}.
Let $I=[a,b]$ be compact. In this step, we show that there is a continuous
curve $(\vec{\iota},\vec{\lmb},\vec{z}):I\to\calP_{J}(2\dlt_{J})$
satisfying \eqref{eq:2.14}, \eqref{eq:2.15}, and \eqref{eq:curve-orthog}.
The case of $a=b$ easily follows from the first item of Lemma~\ref{lem:static-modulation}.
Now assume $b>a$. Scaling in time, we may assume $I=[0,1]$. For
each $t\in I$, since $u(t)\in\calT_{J}(\dlt_{J})$, we can choose
$(\vec{\iota}^{(t)},\vec{\td{\lmb}}^{(t)},\vec{\td z}^{(t)})\in\calP_{J}(\dlt_{J})$
and an interval $I^{(t)}\ni t$ open in $I$ such that $\|u(\tau)-\calW(\vec{\iota}^{(t)},\vec{\td{\lmb}}^{(t)},\vec{\td z}^{(t)})\|_{\dot{H}^{1}}<\dlt_{J}$
for all $\tau\in I^{(t)}$. When $t=t_{0}$, fix $(\vec{\iota}^{(t_{0})},\vec{\td{\lmb}}^{(t_{0})},\vec{\td z}^{(t_{0})})=(\vec{\iota},\vec{\td{\lmb}},\vec{\td z})$.
By the compactness of $I$ (by the Lebesgue number lemma for example),
there exists a partition $a=\tau_{0}<\tau_{1}<\dots<\tau_{K}=b$ such
that each $I_{n}\coloneqq[\tau_{n-1},\tau_{n}]$, $n=1,\dots,K$,
is contained in some $I^{(t_{n})}$. We may assume $\{t_{1},\dots,t_{K}\}\ni t_{0}$;
otherwise, we may add at most two $\tau$s into the partition $\{\tau_{0},\dots,\tau_{K}\}$
so that $I^{(t_{0})}$ contains an interval in the new partition.
Fix $n_{0}\in\{1,\dots,K\}$ such that $t_{n_{0}}=t_{0}$. On each
$I_{n}$, we apply the first item of Lemma~\ref{lem:static-modulation}
to obtain a continuous curve $(\vec{\iota}^{(t_{n})},\vec{\lmb}^{(t_{n})},\vec{z}^{(t_{n})}):I_{n}\to B(\vec{\iota}^{(t_{n})},\vec{\td{\lmb}}^{(t_{n})},\vec{\td z}^{(t_{n})};C_{J}\dlt_{J})$
satisfying \eqref{eq:2.15} and \eqref{eq:curve-orthog} on each $I_{n}$.
Note that \eqref{eq:2.14} is satisfied for $(\vec{\iota}^{(t_{n_{0}})},\vec{\lmb}^{(t_{n_{0}})},\vec{z}^{(t_{n_{0}})})$.
At this moment, the curves $\{(\vec{\iota}^{(t_{n})},\vec{\lmb}^{(t_{n})},\vec{z}^{(t_{n})})\}_{n}$
are not necessarily compatible at endpoints of each $I_{n}$, but
they are compatible up to permutation by the second part of Lemma~\ref{lem:static-modulation}.
Keeping $(\vec{\iota}^{(t_{n_{0}})},\vec{\lmb}^{(t_{n_{0}})},\vec{z}^{(t_{n_{0}})})$
and gluing permutations of other curves $(\vec{\iota}^{(t_{n})},\vec{\lmb}^{(t_{n})},\vec{z}^{(t_{n})})$,
$n\neq n_{0}$, we obtain a continuous curve $(\vec{\iota},\vec{\lmb},\vec{z}):I\to\calP_{J}(2\dlt_{J})$
satisfying \eqref{eq:2.14}, \eqref{eq:2.15}, and \eqref{eq:curve-orthog}.

\uline{Step 4: Existence of a \mbox{$C^{1}$} curve when \mbox{$u:I\to\calT_{J}(\dlt_{J})$}
is \mbox{$C^{1}$}}. Assume $u:I\to\dot{H}^{1}$ is $C^{1}$ and let
$(\vec{\iota},\vec{\lmb},\vec{z}):I\to\calP_{J}(2\dlt_{J})$ be the
unique continuous curve found in Step~3. Let $t\in I$. Choose $(\vec{\iota}^{(t)},\vec{\td{\lmb}}^{(t)},\vec{\td z}^{(t)})\in\calP_{J}(\dlt_{J})$
and an interval $I^{(t)}\ni t$ open in $I$ as in Step~3. It suffices
to show that $(\vec{\iota},\vec{\lmb},\vec{z})|_{I^{(t)}}$ is $C^{1}$.
Applying the first item of Lemma~\ref{lem:static-modulation}, we
obtain a $C^{1}$-curve $(\vec{\iota},\vec{\lmb}^{(t)},\vec{z}^{(t)}):I^{(t)}\to\calP_{J}(2\dlt_{J})$
around $(\vec{\iota}^{(t)},\vec{\td{\lmb}}^{(t)},\vec{\td z}^{(t)})$.
By the second item of Lemma~\ref{lem:static-modulation}, for each
$\tau\in I^{(t)}$, there is a permutation $\pi(\tau)$ such that
$(\vec{\iota},\vec{\lmb}(\tau),\vec{z}(\tau))=(\vec{\iota},\vec{\lmb}^{(t)}(\tau),\vec{z}^{(t)}(\tau))\circ\pi(\tau)$.
By continuity, $\pi(\tau)$ must be independent of $\tau$, so $(\vec{\iota},\vec{\lmb},\vec{z})|_{I^{(t)}}=(\vec{\iota},\vec{\lmb}^{(t)},\vec{z}^{(t)})\circ\pi$.
Hence $(\vec{\iota},\vec{\lmb},\vec{z})|_{I^{(t)}}$ is $C^{1}$ as
desired.

\uline{Step 5: Proof of the further property}. The last sentence
of this lemma easily follows from the second part of Lemma~\ref{lem:static-modulation}.
For the proof of \eqref{eq:2.17}, use \eqref{eq:static-lipschitz}
as well. This completes the proof.
\end{proof}

\subsection{Coercivity of dissipation}

Having established the decomposition $u=U(\vec{\iota},\vec{\lmb},\vec{z})+g$
with $g$ satisfying the orthogonality conditions \eqref{eq:static-orthog},
we prove a crucial coercivity estimate for the dissipation term $\|\Dlt u+f(u)\|_{L^{2}}^{2}$.
This provides a monotonicity estimate for the remainder $g$ (and
also on $\frkr_{a,i}/\lmb_{i}$), which will allow us to control the
error terms involving $g$ in modulation estimates.
\begin{prop}[Coercivity of dissipation near multi-bubbles]
Let $J\geq1$. There exists a small $\dlt>0$ such that if $u=U(\vec{\iota},\vec{\lmb},\vec{z})+g$
for some $(\vec{\iota},\vec{\lmb},\vec{z})\in\calP_{J}(\dlt)$ and
$g\in\dot{H}^{1}\cap\dot{H}^{2}$ satisfying the orthogonality conditions
\eqref{eq:static-orthog} and smallness $\|g\|_{\dot{H}^{1}}<\dlt$,
then 
\begin{equation}
\|\Dlt u+f(u)\|_{L^{2}}^{2}\aeq\|g\|_{\dot{H}^{2}}^{2}+\sum_{a,i}\frac{\frkr_{a,i}^{2}}{\lmb_{i}^{2}}.\label{eq:coercivity-dissipation}
\end{equation}
\end{prop}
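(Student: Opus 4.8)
The plan is to expand $\Dlt u + f(u)$ around the modified multi-bubble profile $U$ and show that the dominant contribution comes precisely from the linearized operator $\calL_{\calW}$ acting on $g$, together with the source term on the right-hand side of the profile equation \eqref{eq:U-eqn}. First I would write $u = U + g$ and use the profile equation \eqref{eq:U-eqn} to get
\[
\Dlt u + f(u) = \Dlt U + f(U) + \Dlt g + f'(U)g + \NL_U(g) = \sum_{b,j}\frkr_{b,j}\calV_{b\ul{;j}} + \calL_U g + \NL_U(g).
\]
Then I would replace $\calL_U g$ by $\calL_{\calW} g$ at the cost of $\{f'(U) - f'(\calW)\}g$; since $|U - \calW| = |\td U|$ and $\|\td U\|_{\dot H^1}$ is small by \eqref{eq:tdU-Hdot1-est}, the pointwise bound \eqref{eq:f(a,b)-2} and Sobolev/Hölder give $\|\{f'(U)-f'(\calW)\}g\|_{L^2} \lesssim \|\td U\|_{L^{2^\ast}}^{p-1}\|g\|_{L^{2^{\ast\ast}}} = o(1)\|g\|_{\dot H^2}$, which is absorbable. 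Likewise $\|\NL_U(g)\|_{L^2} \lesssim \||g|^p\|_{L^2} \lesssim \|g\|_{L^{2^\ast}}^{p-1}\|g\|_{L^{2^{\ast\ast}}} \lesssim \dlt^{p-1}\|g\|_{\dot H^2}$, again absorbable for $\dlt$ small. So modulo negligible errors,
\[
\Dlt u + f(u) = \calL_{\calW} g + \sum_{b,j}\frkr_{b,j}\calV_{b\ul{;j}} + (\text{error of size } o(1)\|g\|_{\dot H^2}).
\]

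For the \textbf{upper bound} in \eqref{eq:coercivity-dissipation}, I would use \eqref{eq:calL-H2-bdd} to get $\|\calL_{\calW}g\|_{L^2} \lesssim \|g\|_{\dot H^2}$, and estimate $\|\sum_{b,j}\frkr_{b,j}\calV_{b\ul{;j}}\|_{L^2} \lesssim \sum_{b,j}|\frkr_{b,j}|/\lmb_j$ (since $\|\calV_{b\ul{;j}}\|_{L^2} \simeq \lmb_j^{-1}$ by scaling, and the cross terms are lower order by the almost-orthogonality of the $\calV_{b\ul{;j}}$ with different $j$, which is quantified by \eqref{eq:calV-calV-inner-prod}), giving the desired $\lesssim$ direction. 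For the \textbf{lower bound}, the key is the $\dot H^2$-coercivity \eqref{eq:calL-H2-coer}: it gives $\|\calL_{\calW}g\|_{L^2} \gtrsim \|g\|_{\dot H^2}$ because the orthogonality conditions \eqref{eq:static-orthog} kill the $\sum_{a,i}\lmb_i^{-1}|\lan\calZ_{a\ul{;i}},g\ran|$ correction term. The remaining point is to show that $\calL_{\calW} g$ and $\sum_{b,j}\frkr_{b,j}\calV_{b\ul{;j}}$ do not cancel each other in $L^2$: here I would test against the (rescaled, almost orthogonal) family $\{\calV_{b\ul{;j}}\}$, using that $\calL_{\calW}\calV_{b\ul{;j}}$ is small — precisely, $\calL_{\calW}\calV_{b;j} = \{f'(\calW)-f'(W_{;j})\}\calV_{b;j}$, which is $O(R^{-(N+2)/2})$ in $L^{(2^\ast)'}$ by \eqref{eq:f(a,b)-2} and \eqref{eq:f(W1,W2)-H1dual-est} — so $\lan \calL_{\calW}g, \calV_{b\ul{;j}}\ran$ is controlled by $o(1)\|g\|_{\dot H^1}\cdot\lmb_j^{-1}$ while $\lan \sum_{b',j'}\frkr_{b',j'}\calV_{b'\ul{;j'}}, \calV_{b\ul{;j}}\ran \simeq \lmb_j^{-2}\|\calV_b\|_{L^2}^2\,\frkr_{b,j} + (\text{small})$ by \eqref{eq:calV-calV-inner-prod}. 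This separates the two pieces: either $\|g\|_{\dot H^2}$ dominates (then $\|\calL_{\calW}g\|_{L^2} \gtrsim \|g\|_{\dot H^2}$ and the $\frkr$-contribution is controlled), or $\sum_{a,i}\frkr_{a,i}^2/\lmb_i^2$ dominates (then testing against $\calV_{b\ul{;j}}$ extracts it). Combining the two regimes via a standard splitting (or a contradiction/compactness argument) yields $\|\Dlt u + f(u)\|_{L^2}^2 \gtrsim \|g\|_{\dot H^2}^2 + \sum_{a,i}\frkr_{a,i}^2/\lmb_i^2$.

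The \textbf{main obstacle} I anticipate is making the ``separation'' in the lower bound fully quantitative without circular reasoning: one must simultaneously control $\|g\|_{\dot H^2}$ and the $\frkr_{a,i}/\lmb_i$ using only the single scalar $\|\Dlt u + f(u)\|_{L^2}$, and the coupling between them (through $\calL_{\calW}g$ having a small but nonzero component along the $\calV_{b\ul{;j}}$, and through the error terms $\{f'(U)-f'(\calW)\}g$ and $\NL_U(g)$ being bounded by $\|g\|_{\dot H^2}$ rather than by something strictly smaller in an absolute sense) must be handled by absorbing everything into the coercive lower bound with a small constant. The cleanest route is probably: set $h := \Dlt u + f(u)$, decompose $h = h_\parallel + h_\perp$ with $h_\parallel \in \mathrm{span}\{\calV_{b\ul{;j}}\}$ and $h_\perp$ in the $L^2$-orthogonal complement (or the $\calZ$-orthogonal complement adapted to the coercivity), show $\|h_\perp\|_{L^2} \gtrsim \|g\|_{\dot H^2} - o(1)(\sum |\frkr_{b,j}|/\lmb_j)$ and $\|h_\parallel\|_{L^2} \gtrsim \sum|\frkr_{b,j}|/\lmb_j - o(1)\|g\|_{\dot H^2}$, then solve the resulting $2\times 2$ ``almost diagonal'' system of inequalities. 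For the upper bound, no such care is needed — it is immediate from the triangle inequality and \eqref{eq:calL-H2-bdd}. I would defer the routine pointwise and Hölder estimates and the final $2\times 2$ absorption to the detailed proof.
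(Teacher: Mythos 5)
Your overall strategy coincides with the paper's: expand $\Dlt u+f(u)$ using the profile equation \eqref{eq:U-eqn}, absorb $\NL_U(g)$ and $\{f'(U)-f'(\calW)\}g$ as $o(1)\|g\|_{\dot H^2}$ errors, use the coercivity \eqref{eq:calL-H2-coer} with the orthogonality conditions for the $g$-part, the almost-orthogonality \eqref{eq:calV-calV-inner-prod} for the $\frkr$-part, and a near-orthogonality between $\calL_\calW g$ and $\mathrm{span}\{\calV_{b\ul{;j}}\}$ to prevent cancellation. (The paper does not set up your $2\times2$ system explicitly; it simply expands $\|\Dlt u+f(u)\|_{L^2}^2$ into the two diagonal terms plus a cross term and shows the cross term is negligible, using $\Dlt\calV_{a;i}=-f'(W_{;i})\calV_{a;i}$ and the pointwise bound \eqref{eq:f(a,b)-4} — this is the same idea as your identity $\calL_\calW\calV_{b;j}=\{f'(\calW)-f'(W_{;j})\}\calV_{b;j}$, only packaged differently.)

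There is, however, a concrete quantitative gap in your key near-orthogonality step. You estimate $\lan\calL_\calW g,\calV_{b\ul{;j}}\ran$ by putting $\{f'(\calW)-f'(W_{;j})\}\calV_{b;j}$ in $L^{(2^\ast)'}$ via \eqref{eq:f(W1,W2)-H1dual-est} and pairing with $\|g\|_{L^{2^\ast}}\aleq\|g\|_{\dot H^1}$, claiming a bound $o(1)\,\lmb_j^{-1}\|g\|_{\dot H^1}$. Two problems: (i) the scaling is off — this route gives $\lmb_j^{-2}R_j^{-\frac{N+2}{2}}\|g\|_{\dot H^1}$, and $\lmb_j^{-1}R_j^{-\frac{N+2}{2}}$ is \emph{not} small in general (e.g.\ two bubbles of equal, arbitrarily small scale $\lmb$ at renormalized distance $R\aeq\dlt^{-1}$ fixed); and (ii) even the intended bound is in the wrong norm, since $\|g\|_{\dot H^1}$ is not controlled by $\|g\|_{\dot H^2}$, so an error of size $o(1)\|g\|_{\dot H^1}$ (or the absolute quantity $o(1)\dlt$) cannot be absorbed into the right-hand side of the homogeneous two-sided bound \eqref{eq:coercivity-dissipation}. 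The fix is exactly what the paper does: measure the interaction in $L^{(2^{\ast\ast})'}$ via \eqref{eq:f(W1,W2)-H2dual-est}, whose crucial feature is the extra factor $\min\{\lmb_i,\lmb_j\}$, and pair with $\|g\|_{L^{2^{\ast\ast}}}\aleq\|g\|_{\dot H^2}$ (together with \eqref{eq:f(a,b)-4} and \eqref{eq:tdU-Hdot1-est} for the $\td U$- and $g$-quadratic pieces). This yields $\lmb_j\,|\lan\calL_\calW g+\text{errors},\calV_{b\ul{;j}}\ran|\aleq\{o_{R\to\infty}(1)+\dlt^{p-1}\}\|g\|_{\dot H^2}$, which is what your $2\times2$ absorption actually needs; with that correction your argument closes and is equivalent to the paper's proof.
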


\begin{proof}
Since $\Dlt U+f(U)=\sum_{a,i}\frkr_{a,i}\calV_{a\ul{;i}}$, we have
\[
\Dlt u+f(u)=\Dlt U+\Dlt g+f(U+g)=\tsum{a,i}{}\frkr_{a,i}\calV_{a\ul{;i}}+\Dlt g+f(U+g)-f(U).
\]
Taking the $L^{2}$-norm, we obtain 
\begin{equation}
\begin{aligned}\|\Dlt u+f(u)\|_{L^{2}}^{2} & =\|\tsum{a,i}{}\frkr_{a,i}\calV_{a\ul{;i}}\|_{L^{2}}^{2}+\|\Dlt g+f(U+g)-f(U)\|_{L^{2}}^{2}\\
 & \quad+2\lan\tsum{a,i}{}\frkr_{a,i}\calV_{a\ul{;i}},\Dlt g+f(U+g)-f(U)\ran.
\end{aligned}
\label{eq:2.40}
\end{equation}

For the first term of RHS\eqref{eq:2.40}, we use \eqref{eq:calV-calV-inner-prod}
to get 
\begin{equation}
\|\tsum{a,i}{}\frkr_{a,i}\calV_{a\ul{;i}}\|_{L^{2}}^{2}=\sum_{a,i}\frac{\frkr_{a,i}^{2}}{\lmb_{i}^{2}}\|\calV_{a}\|_{L^{2}}^{2}+\sum_{(a,i)\neq(b,j)}\frac{\frkr_{a,i}\frkr_{b,j}}{\lmb_{i}\lmb_{j}}\calO(R_{ij}^{-(N-4)})\aeq\sum_{a,i}\frac{\frkr_{a,i}^{2}}{\lmb_{i}^{2}}.\label{eq:2.41-1}
\end{equation}
For the second term of RHS\eqref{eq:2.40}, we write 
\begin{align*}
\Dlt g+f(U+g)-f(U) & =\calL_{\calW}g+\{f(U+g)-f(U)-f'(\calW)g\}\\
 & =\calL_{\calW}g+\NL_{U}(g)+\{f'(U)-f'(\calW)\}g\\
 & =\calL_{\calW}g+\calO(|g|^{p}+|\td U|^{p-1}|g|),
\end{align*}
where in the last equality we used \eqref{eq:f(a,b)-3} for $\NL_{U}(g)$
and $\{f'(U)-f'(\calW)\}=\calO(|U-\calW|^{p-1})$. Applying the coercivity
estimate \eqref{eq:calL-H2-coer} for $\calL_{\calW}g$, observe 
\begin{align*}
\|\calL_{\calW}g\|_{L^{2}} & \aeq\|g\|_{\dot{H}^{2}},\\
\||g|^{p}+|\td U|^{p-1}|g|\|_{L^{2}} & \aleq(\|g\|_{L^{2^{\ast}}}^{p-1}+\|\td U\|_{L^{2^{\ast}}}^{p-1})\|g\|_{L^{2^{\ast\ast}}}\aleq(\dlt^{p-1}+R^{-2p})\|g\|_{\dot{H}^{2}}.
\end{align*}
Thus the second term of RHS\eqref{eq:2.40} is of size 
\begin{equation}
\|\Dlt g+f(U+g)-f(U)\|_{L^{2}}^{2}\aeq\|g\|_{\dot{H}^{2}}^{2}.\label{eq:2.41-2}
\end{equation}

In view of \eqref{eq:2.40}, \eqref{eq:2.41-1}, and \eqref{eq:2.41-2},
it remains to show that the last term of RHS\eqref{eq:2.40} is much
smaller than $\sum_{a,i}\lmb_{i}^{-2}|\frkr_{a,i}|^{2}+\|g\|_{\dot{H}^{2}}^{2}$.
Using $\Dlt\calV_{a;i}=-f'(W_{;i})\calV_{a;i}$, it suffices to show
\[
|\lmb_{i}^{-1}\lan\calV_{a;i},f(U+g)-f(U)-f'(W_{;i})g\ran|\aleq\{o_{R\to\infty}(1)+\dlt^{p-1}\}\|g\|_{\dot{H}^{2}}.
\]
Using $|\calV_{a;i}|\aleq|W_{;i}|$ and \eqref{eq:f(a,b)-4}, we have
\begin{align*}
 & \lmb_{i}^{-1}|\calV_{a;i}\{f(U+g)-f(U)-f'(W_{;i})g\}|\\
 & \aleq\lmb_{i}^{-1}\{\tsum{j\neq i}{}f(W_{;i},W_{;j})+f(W_{;i},\td U)+f(W_{;i},g)\}|g|\\
 & \aleq\{\tsum{j\neq i}{}\lmb_{i}^{-1}f(W_{;i},W_{;j})+\lmb_{i}^{-1}|W_{;i}|(|\td U|^{p-1}+|g|^{p-1})\}|g|.
\end{align*}
Using \eqref{eq:f(W1,W2)-H2dual-est} for $f(W_{;i},W_{;j})$ and
\eqref{eq:tdU-Hdot1-est} for $\td U$, we get 
\begin{align*}
 & \lmb_{i}^{-1}|\lan\calV_{a;i},f(U+g)-f(U)-f'(W_{;i})g\ran|\\
 & \aleq\{\tsum{j\neq i}{}\lmb_{i}^{-1}\|f(W_{;i},W_{;j})\|_{L^{(2^{\ast\ast})'}}+\|\lmb_{i}^{-1}W_{;i}\|_{L^{2}}(\|\td U\|_{L^{2^{\ast}}}^{p-1}+\|g\|_{L^{2^{\ast}}}^{p-1})\}\|g\|_{L^{2^{\ast\ast}}}\\
 & \aleq\{o_{R_{i}\to\infty}(1)+(R^{-2p}+\dlt^{p-1})\}\|g\|_{\dot{H}^{2}}
\end{align*}
as desired. This completes the proof.
\end{proof}

\subsection{\label{subsec:finite-time-non-existence}Non-existence of non-colliding
finite-time blow-up}

In this subsection, we sketch the proof of Proposition~\ref{prop:finite-time-non-existence}.
As mentioned in the introduction, one can simply localize the argument
of \cite{CollotMerleRaphael2017CMP} to prove this.

Suppose $u(t)$ as in Proposition~\ref{prop:finite-time-non-existence}
exists. As the assumption suggests, we set $J=1$ and $\iota=+1$.
Introduce small parameters with the following parameter dependence:
\[
0<\dlt_{2}\ll r_{0}\ll\dlt_{0}\ll1.
\]
We introduce the first sequence of times $\Ttbexit_{n}$ defined by
(here, we are implicitly assuming that $n$ is sufficiently large
depending on $\dlt_{2},r_{0},\dlt_{0}$) 
\[
\Ttbexit_{n}\coloneqq\sup\{\tau\in[t_{n},T_{+}):\chi_{2r_{0}}u(t)\in\calT_{1}(\dlt_{0})\}\in(t_{n},T_{+}].
\]
As $\dlt_{0}$ is small, we can uniquely decompose $\chi_{2r_{0}}u(t)=W_{\lmb(t),z(t)}+\wh g(t)$
such that 
\[
\lan[\chi_{R_{\circ}}\calV_{a}]_{;1}(t),\wh g(t)\ran=0\quad\text{and}\quad\|\wh g(t)\|_{\dot{H}^{1}}\aleq\dlt_{0},
\]
where $R_{\circ}>10$ is some fixed large constant (such that the
matrix $\{\lan\chi_{R_{\circ}}\calV_{a},\calV_{b}\ran\}_{ab}$ is
invertible, in particular). Defining $g(t)=\wh g(t)+(1-\chi_{2r_{0}})u(t)$,
we have a decomposition 
\[
u(t)=W_{\lmb(t),z(t)}+g(t).
\]
Next, we introduce another sequence of times $T_{n}$ defined by 
\[
T_{n}\coloneqq\sup\{\tau\in[t_{n},\Ttbexit_{n}):\lmb(t)+|z(t)|<\dlt_{2}\}.
\]
Since $\dlt_{2}$ is much smaller than $r_{0}$ and $[\chi_{R_{\circ}}\calV_{a}]_{;1}$
is supported in the region $|x-z(t)|\leq2R_{\circ}\lmb(t)$, the previous
orthogonality conditions for $\wh g(t)$ transfer to $g(t)$ for all
$t\in[t_{n},T_{n})$: 
\begin{equation}
\lan[\chi_{R_{\circ}}\calV_{a}]_{\ul{;1}},g\ran=0.\label{eq:finite-time-orthog}
\end{equation}
As a consequence of \eqref{eq:finite-time-assumption-3}, we have
\[
\lmb(t_{n})+|z(t_{n})|=o_{n\to\infty}(1)\quad\text{and}\quad\|\chi_{2r_{0}}g(t_{n})\|_{\dot{H}^{1}}=o_{r_{0}\to0}(1).
\]
As a consequence of \eqref{eq:finite-time-assumption-2} (and $\dot{H}^{1}$-boundedness
of $\chi u(t)$), we have 
\[
\sup_{t\in[t_{n},T_{n})}\|(\chi_{4r_{0}}-\chi_{r_{0}/2})g(t)\|_{\dot{H}^{1}}=o_{r_{0}\to0}(1)\quad\text{and}\quad\sup_{t\in[t_{n},T_{n})}\|\chi_{4r_{0}}g(t)\|_{\dot{H}^{1}}=\calO(1).
\]

\begin{lem}[Localized spacetime estimate]
Consider the localized energy 
\[
E_{2r_{0}}[u(t)]\coloneqq\int\Big\{\frac{1}{2}|\nabla u(t)|^{2}-\frac{1}{p+1}|u(t)|^{p+1}\Big\}\chi_{2r_{0}}^{2}.
\]
Then, we have 
\begin{equation}
\sup_{t\in[t_{n},T_{n})}\big|E_{\chi_{2r_{0}}}[u(t)]-E_{\chi_{2r_{0}}}[u(t_{n})]\big|+\int_{t_{n}}^{T_{n}}\|\chi_{2r_{0}}g(t)\|_{\dot{H}^{2}}^{2}dt=o_{n\to\infty}(1).\label{eq:finite-time-spacetime}
\end{equation}
\end{lem}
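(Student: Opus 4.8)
The plan is to localize the energy identity \eqref{eq:energy-identity} with the cutoff $\chi_{2r_0}^2$ and then absorb all the commutator error terms generated by the cutoff. First I would compute the time derivative of $E_{\chi_{2r_0}}[u(t)]$ along the flow. Writing $\partial_t u = \Dlt u + f(u)$ and integrating by parts, one finds
\[
\frac{d}{dt}E_{\chi_{2r_0}}[u(t)] = -\int (\Dlt u + f(u))\,\partial_t u\,\chi_{2r_0}^2 + \int (\nabla u\cdot\nabla(\chi_{2r_0}^2))\,\partial_t u,
\]
so that the ``good'' term is $-\int |\partial_t u|^2\chi_{2r_0}^2$ (up to a further commutator from moving $\chi_{2r_0}^2$ inside, which is again of the same commutator type) and the error is supported on the annulus $r_0 \leq |x| \leq 2r_0$ where $|\nabla\chi_{2r_0}|\aleq r_0^{-1}$. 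The key point is that on this annulus $u(t)$ is close to $u^\ast$ by \eqref{eq:finite-time-assumption-2}, the bubble $W_{\lmb(t),z(t)}$ is negligible there (since $\lmb(t)+|z(t)|<\dlt_2 \ll r_0$), so $\partial_t u = \partial_t g$ there and $\|(\chi_{4r_0}-\chi_{r_0/2})g(t)\|_{\dot H^1} = o_{r_0\to 0}(1)$ uniformly in $t$.

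Next I would integrate in time over $[t_n, T_n)$. The main term yields $\int_{t_n}^{T_n}\int|\partial_t u|^2\chi_{2r_0}^2$, which controls $\int_{t_n}^{T_n}\|\chi_{2r_0}g\|_{\dot H^2}^2$ after using the coercivity of the dissipation: more precisely $\partial_t u = \Dlt u + f(u)$ and, localized by $\chi_{2r_0}$ near the single bubble, the localized version of \eqref{eq:coercivity-dissipation} gives $\|\chi_{2r_0}(\Dlt u + f(u))\|_{L^2}^2 \ageq \|\chi_{2r_0}g\|_{\dot H^2}^2$ up to errors supported in the annulus (again absorbed by the smallness of $g$ there) and up to the $\frkr_{a,i}^2/\lmb_i^2$ term which only helps. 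The error term from the cutoff commutator, integrated in time, is bounded by
\[
\int_{t_n}^{T_n}\Big(\int_{r_0\le|x|\le 2r_0}|\nabla u|\,|\partial_t g|\,r_0^{-1}\Big)\,dt \aleq \Big(\sup_{[t_n,T_n)}\|(\chi_{4r_0}-\chi_{r_0/2})g\|_{\dot H^1} + \text{(bubble contribution)}\Big)\cdot\Big(\int_{t_n}^{T_n}\|(\chi_{4r_0}-\chi_{r_0/2})\partial_t g\|^2\Big)^{1/2}\cdot(\cdots),
\]
and the bubble contribution on that annulus is $O(\lmb(t)^{(N-2)/2}r_0^{-(N-2)})$ which is tiny. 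One then runs a bootstrap/absorption: the commutator error involving $\int|\partial_t g|^2$ on the annulus is controlled by the good term times a factor $o_{r_0\to 0}(1)$, hence absorbed, leaving $\sup_t|E_{\chi_{2r_0}}[u(t)]-E_{\chi_{2r_0}}[u(t_n)]| + \int_{t_n}^{T_n}\|\chi_{2r_0}g\|_{\dot H^2}^2 \aleq o_{r_0\to 0}(1) + (\text{something}\to 0 \text{ as } n\to\infty)$. Since $r_0$ is fixed once and for all while $n\to\infty$, I need to be slightly careful: the $o_{r_0\to 0}(1)$ pieces that do not also go to zero in $n$ must actually be shown to be zero or to vanish in $n$; here the structural point is that the only genuinely $t$-uniform contribution is the annulus term, and \eqref{eq:finite-time-assumption-2} says $(\chi-\chi_r)(u(t)-u^\ast)\to 0$ in $\dot H^1$ \emph{as $t\to T_+$}, so on $[t_n,T_n)$ with $n$ large this is $o_{n\to\infty}(1)$, not merely $o_{r_0\to 0}(1)$ — this upgrade is what makes the right-hand side genuinely $o_{n\to\infty}(1)$.

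The main obstacle I expect is the bookkeeping of the cutoff commutators combined with the two-scale structure ($\dlt_2 \ll r_0 \ll \dlt_0 \ll 1$): one must check that every error term is supported either where $g$ is small (the annulus, controlled by localized \eqref{eq:finite-time-assumption-2}) or where the bubble is concentrated (the ball $|x-z(t)|\aleq\dlt_2$, disjoint from the annulus), and that no term is both large and unlocalized. A secondary subtlety is that $\chi_{2r_0}u(t) = W_{\lmb,z} + \wh g$ but $u(t) = W_{\lmb,z}+g$ with $g = \wh g + (1-\chi_{2r_0})u$, so when localizing with $\chi_{2r_0}$ one has $\chi_{2r_0}g = \chi_{2r_0}\wh g$ exactly, which is convenient, but one must use the transferred orthogonality \eqref{eq:finite-time-orthog} rather than the orthogonality for $\wh g$; since $[\chi_{R_\circ}\calV_a]_{\ul{;1}}$ is supported in $|x-z|\leq 2R_\circ\lmb \ll r_0$, the two orthogonalities agree on $[t_n,T_n)$, so the localized coercivity applies. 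Modulo these localization checks, the argument is a routine adaptation of the monotonicity/spacetime estimate from \cite{CollotMerleRaphael2017CMP} and the global coercivity of dissipation proved above.
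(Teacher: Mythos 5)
There is a genuine gap, and it lies in how you treat the cutoff boundary term and in where the final $o_{n\to\infty}(1)$ is supposed to come from. After writing $\tfrac{d}{dt}E_{2r_{0}}[u]=-\int\chi_{2r_{0}}^{2}(\rd_{t}u)^{2}-\int(\nabla u\cdot\nabla(\chi_{2r_{0}}^{2}))\rd_{t}u$, you keep $\rd_{t}u$ (equivalently $\rd_{t}g$) in the annulus error and try to absorb it into the good term after Cauchy--Schwarz. This cannot work: the good term carries the weight $\chi_{2r_{0}}^{2}$, which degenerates precisely on the support of $\nabla\chi_{2r_{0}}$, so $\int_{t_{n}}^{T_{n}}\|(\chi_{4r_{0}}-\chi_{r_{0}/2})\rd_{t}g\|_{L^{2}}^{2}dt$ is not controlled by the localized dissipation, and no hypothesis of Proposition~\ref{prop:finite-time-non-existence} gives any pointwise-in-time or spacetime bound on $\rd_{t}u=\Dlt u+f(u)$ on that annulus (it involves two derivatives of $u$, whereas the assumptions are purely $\dot{H}^{1}$-level). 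The paper's proof avoids this entirely by integrating by parts a second time, so that the cutoff errors become $\int\rd_{ab}(\chi_{2r_{0}}^{2})\rd_{a}u\rd_{b}u-\int\{\tfrac{1}{2}|\nabla u|^{2}-\tfrac{1}{p+1}|u|^{p+1}\}\Dlt(\chi_{2r_{0}}^{2})$, i.e.\ first-order quantities bounded by $\calO_{r_{0}}(1)$ uniformly in time using only the $\dot{H}^{1}$-boundedness of $\chi u(t)$; no smallness of these errors is needed or claimed.

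The second missing ingredient is the mechanism that produces $o_{n\to\infty}(1)$: since $T_{+}<+\infty$ and $t_{n}\to T_{+}$, the interval $[t_{n},T_{n})$ has length $\leq T_{+}-t_{n}\to0$, so time-bounded errors $\calO_{r_{0}}(1)$ integrate to $o_{n\to\infty}(1)$, and the monotonicity of $E_{2r_{0}}[u(t)]-C_{r_{0}}t$ together with its boundedness yields the smallness of the tail dissipation integral $\int_{t_{n}}^{T_{n}}\|\chi_{2r_{0}}\{\Dlt u+f(u)\}\|_{L^{2}}^{2}dt$ itself. You never invoke this; instead you try to upgrade annulus smallness from $o_{r_{0}\to0}(1)$ to $o_{n\to\infty}(1)$ via \eqref{eq:finite-time-assumption-2}. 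That upgrade is false for the terms that actually occur: \eqref{eq:finite-time-assumption-2} only says $u(t)-u^{\ast}$ is small on the annulus as $t\to T_{+}$, while $u^{\ast}$ (hence $g\approx u^{\ast}$ there, and $\nabla u$ there) does not decay in $n$ for fixed $r_{0}$, and $\|\chi_{4r_{0}}g\|_{\dot{H}^{1}}$ is only $\calO(1)$. The same issue affects your coercivity step: the commutator $[\chi_{2r_{0}},\Dlt]g$ is merely $\calO_{r_{0}}(1)$ in $L^{2}$ (it carries inverse powers of $r_{0}$), not absorbable by smallness of $g$; it too is handled in the paper only because its square integrates over a shrinking interval. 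So the overall architecture (localized energy identity plus localized coercivity with the orthogonality \eqref{eq:finite-time-orthog}) is right, but without the second integration by parts and the shrinking-interval argument the stated conclusion \eqref{eq:finite-time-spacetime} does not follow from your estimates.
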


\begin{proof}
We begin with the identity 
\begin{align*}
\tfrac{d}{dt}E_{2r_{0}}[u] & =-\tint{}{}\chi_{2r_{0}}^{2}(\rd_{t}u)^{2}-\tint{}{}(\nabla u\cdot\nabla(\chi_{2r_{0}}^{2}))(\rd_{t}u)\\
 & =-\tint{}{}\chi_{2r_{0}}^{2}\{\Dlt u+f(u)\}^{2}+\tint{}{}\tsum{a,b}{}\rd_{ab}(\chi_{2r_{0}}^{2})\rd_{a}u\rd_{b}u-\tint{}{}\{\tfrac{1}{2}|\nabla u|^{2}-\tfrac{1}{p+1}|u|^{p+1}\}\Dlt(\chi_{2r_{0}}^{2}).\\
 & =-\|\chi_{2r_{0}}\{\Dlt u+f(u)\}\|_{L^{2}}^{2}+\calO_{r_{0}}(1).
\end{align*}
Using $T_{+}<+\infty$, $\dot{H}^{1}$-boundedness of $\chi u(t)$
as $t\to T_{+}$, and $t_{n}\to T_{+}$, the above display gives 
\begin{equation}
\sup_{t\in[t_{n},T_{n})}\big|E_{\chi_{2r_{0}}}[u(t)]-E_{\chi_{2r_{0}}}[u(t_{n})]\big|+\int_{t_{n}}^{T_{n}}\|\chi_{2r_{0}}\{\Dlt u+f(u)\}\|_{L^{2}}^{2}dt=o_{n\to\infty}(1).\label{eq:finite-time-integrability-o_n(1)}
\end{equation}
To get \eqref{eq:finite-time-spacetime} from the above, we need to
show that $\|\chi_{2r_{0}}\{\Dlt u+f(u)\}\|_{L^{2}}$ controls $\|\chi_{2r_{0}}g(t)\|_{\dot{H}^{2}}$
with an acceptable error. We substitute the decomposition $u(t)=W_{;1}(t)+g(t)$
to get 
\[
\chi_{2r_{0}}\{\Dlt u+f(u)\}=\calL_{W_{;1}}(\chi_{2r_{0}}g)+\chi_{2r_{0}}\NL_{W_{;1}}(g)+[\chi_{2r_{0}},\Dlt]g.
\]
By a modified version of the linear coercivity estimate \eqref{eq:calL-one-bubble-coer}
adapted to the current orthogonality conditions \eqref{eq:finite-time-orthog},
we have 
\begin{align*}
\|\calL_{W_{;1}}(\chi_{2r_{0}}g)\|_{L^{2}} & \aeq\|\chi_{2r_{0}}g\|_{\dot{H}^{2}}.
\end{align*}
The remaining terms are acceptable errors: 
\begin{align*}
\|[\chi_{2r_{0}},\Dlt]g\|_{L^{2}} & \aleq r_{0}^{-1}\|\chf_{2r_{0}\leq|x|\leq4r_{0}}|x|^{-1}g\|_{L^{2}}\aleq\calO_{r_{0}}(1),\\
\|\chi_{2r_{0}}\NL_{W_{;1}}(g)\|_{L^{2}} & \aleq\|\chi_{4r_{0}}g\|_{L^{2^{\ast}}}^{p-1}\|\chi_{2r_{0}}g\|_{L^{2^{\ast\ast}}}\aleq o_{r_{0}\to0}(\|\chi_{2r_{0}}g\|_{\dot{H}^{2}}).
\end{align*}
The previous two displays imply 
\[
\|\chi_{2r_{0}}g\|_{\dot{H}^{2}}^{2}\aleq\|\chi_{2r_{0}}(\Dlt u+f(u))\|_{L^{2}}^{2}+\calO_{r_{0}}(1).
\]
Substituting this into \eqref{eq:finite-time-integrability-o_n(1)}
gives \eqref{eq:finite-time-spacetime}.
\end{proof}
\begin{lem}[Rough modulation estimates]
We have 
\begin{equation}
|\lmb_{t}|+|z_{t}|+\lmb|\lan[\chi_{R_{\circ}}\calY]_{\ul{;1}},g\ran_{t}|\aleq\|\chi_{2r_{0}}g\|_{\dot{H}^{2}}.\label{eq:finite-time-rough-mod}
\end{equation}
\end{lem}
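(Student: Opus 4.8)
The plan is a routine modulation computation; the only real subtlety is that $g(t)$ does not lie in $\dot{H}^{2}$ globally (only $\chi_{2r_{0}}g(t)$ does), so every pairing has to be localized near the bubble. Since the decomposition $u(t)=W_{\lmb(t),z(t)}+g(t)$ is $C^{1}$ in $t$, one differentiates the orthogonality relations \eqref{eq:finite-time-orthog}. Writing $z^{0}\coloneqq\lmb$ and using $\lmb\rd_{z^{b}}W_{;1}=-\calV_{b;1}$ together with $\Dlt W_{;1}+f(W_{;1})=0$, the equation $\rd_{t}u=\Dlt u+f(u)$ gives
\[
\rd_{t}g=\calL_{W_{;1}}g+\NL_{W_{;1}}(g)+\sum_{b=0}^{N}\frac{\rd_{t}z^{b}}{\lmb}\calV_{b;1}.
\]
Testing $\frac{d}{dt}\lan[\chi_{R_{\circ}}\calV_{a}]_{\ul{;1}},g\ran=0$ against this identity and moving the term produced by $\rd_{t}[\chi_{R_{\circ}}\calV_{a}]_{\ul{;1}}$ to the left, one arrives at a linear system $\sum_{b}\widetilde{M}_{ab}\,\rd_{t}z^{b}/\lmb=-\lan[\chi_{R_{\circ}}\calV_{a}]_{\ul{;1}},\calL_{W_{;1}}g+\NL_{W_{;1}}(g)\ran$ for $a=0,1,\dots,N$, whose coefficient matrix is $\widetilde{M}_{ab}=\lmb^{-2}\lan\chi_{R_{\circ}}\calV_{a},\calV_{b}\ran+\lan\lmb\rd_{z^{b}}[\chi_{R_{\circ}}\calV_{a}]_{\ul{;1}},g\ran$.

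The next step is localization. The profiles $[\chi_{R_{\circ}}\calV_{a}]_{\ul{;1}}$, $\lmb\rd_{z^{b}}[\chi_{R_{\circ}}\calV_{a}]_{\ul{;1}}$ and $[\chi_{R_{\circ}}\calY]_{\ul{;1}}$ are all supported in $\{|x-z|\leq2R_{\circ}\lmb\}$, which sits inside the open region where $\chi_{2r_{0}}\equiv1$ because $\lmb,|z|<\dlt_{2}\ll r_{0}$. Hence in every pairing one may replace $g$ by $\chi_{2r_{0}}g$, and (using that $\chi_{2r_{0}}$ is locally constant there, so does not interact with $\Dlt$) $\calL_{W_{;1}}g$ and $\NL_{W_{;1}}(g)$ by $\calL_{W_{;1}}(\chi_{2r_{0}}g)$ and $\NL_{W_{;1}}(\chi_{2r_{0}}g)$. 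Using the scaling identity $\|[\chi_{R_{\circ}}\calV_{a}]_{\ul{;1}}\|_{L^{2}}\aeq\lmb^{-1}$ (implicit constants here depend on the fixed $R_{\circ}$), Cauchy--Schwarz, the embedding $\dot{H}^{2}\embed L^{2^{\ast\ast}}$, the $\lmb$-independent bound $\||W_{;1}|^{p-1}\|_{L^{N/2}}=\|W\|_{L^{2^{\ast}}}^{p-1}$, and $|\NL_{W_{;1}}(h)|\aleq|h|^{p}$ together with the a priori control $\|\chi_{4r_{0}}g\|_{\dot{H}^{1}}=\calO(1)$ (whence also $\|\chi_{2r_{0}}g\|_{\dot{H}^{1}}=\calO(1)$), one gets $|\lan[\chi_{R_{\circ}}\calV_{a}]_{\ul{;1}},\calL_{W_{;1}}g+\NL_{W_{;1}}(g)\ran|\aleq\lmb^{-1}\|\chi_{2r_{0}}g\|_{\dot{H}^{2}}$; and, since the small support of $\lmb\rd_{z^{b}}[\chi_{R_{\circ}}\calV_{a}]_{\ul{;1}}$ supplies a factor $\lmb^{2}$ that cancels the $\lmb^{-2}$ in the profile, $|\lan\lmb\rd_{z^{b}}[\chi_{R_{\circ}}\calV_{a}]_{\ul{;1}},g\ran|\aleq\|\chi_{2r_{0}}g\|_{\dot{H}^{1}}=\calO(1)$.

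It follows that $\widetilde{M}=\lmb^{-2}(M_{0}+\calO(\lmb^{2}))$ with $M_{0}=\{\lan\chi_{R_{\circ}}\calV_{a},\calV_{b}\ran\}_{ab}$ invertible by the choice of $R_{\circ}$; as $\lmb$ is small, $\widetilde{M}$ is invertible with $\|\widetilde{M}^{-1}\|\aleq\lmb^{2}$. Hence $|\rd_{t}z^{b}|/\lmb\aleq\lmb^{2}\cdot\lmb^{-1}\|\chi_{2r_{0}}g\|_{\dot{H}^{2}}$, i.e.\ $|\lmb_{t}|+|z_{t}|\aleq\lmb^{2}\|\chi_{2r_{0}}g\|_{\dot{H}^{2}}\aleq\|\chi_{2r_{0}}g\|_{\dot{H}^{2}}$. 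For the remaining term one differentiates $\lan[\chi_{R_{\circ}}\calY]_{\ul{;1}},g\ran$ and inserts the same $\rd_{t}g$ identity: by the same localization the contribution of $\calL_{W_{;1}}g+\NL_{W_{;1}}(g)$ is $\aleq\lmb^{-1}\|\chi_{2r_{0}}g\|_{\dot{H}^{2}}$, while the contributions of $\sum_{b}\tfrac{\rd_{t}z^{b}}{\lmb}\calV_{b;1}$ and of $\rd_{t}[\chi_{R_{\circ}}\calY]_{\ul{;1}}$ are controlled using $\lan[\chi_{R_{\circ}}\calY]_{\ul{;1}},\calV_{b;1}\ran=\calO(\lmb^{-2})$, $\lan\lmb\rd_{z^{b}}[\chi_{R_{\circ}}\calY]_{\ul{;1}},g\ran=\calO(1)$ and the bound on $|\lmb_{t}|+|z_{t}|$ just proved, and are again $\aleq\lmb^{-1}\|\chi_{2r_{0}}g\|_{\dot{H}^{2}}$; multiplying by $\lmb$ yields $\lmb|\lan[\chi_{R_{\circ}}\calY]_{\ul{;1}},g\ran_{t}|\aleq\|\chi_{2r_{0}}g\|_{\dot{H}^{2}}$, which closes the estimate.

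I expect the one delicate point to be the bookkeeping in the localization step: one must check that on the support of the orthogonality profiles the replacement of $g$ by $\chi_{2r_{0}}g$ (and correspondingly of $\calL_{W_{;1}}g$ by $\calL_{W_{;1}}(\chi_{2r_{0}}g)$ and of $\NL_{W_{;1}}(g)$ by $\NL_{W_{;1}}(\chi_{2r_{0}}g)$) is \emph{exact}, which rests on $\dlt_{2}\ll r_{0}$ and on $\chi_{2r_{0}}$ being locally constant there, and then that the powers of $\lmb$ coming from the scalings and from the small supports combine so that the near-diagonal matrix $\widetilde{M}\sim\lmb^{-2}M_{0}$ is invertible with inverse of size $\lmb^{2}$. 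Everything else is the standard diagonal-dominance inversion and Hölder/Sobolev bookkeeping.
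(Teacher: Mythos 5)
Your route is the same as the paper's: differentiate the localized orthogonality conditions \eqref{eq:finite-time-orthog}, insert the evolution equation for $g$, localize every pairing onto $\chi_{2r_{0}}g$ using $\lmb,|z|<\dlt_{2}\ll r_{0}$, and invert a near-diagonal Gram matrix. However, the key inversion step is mis-scaled. Since $f_{\ul{;1}}=\lmb^{-2}f_{;1}$ and $\lan f_{;1},h_{;1}\ran=\lmb^{2}\lan f,h\ran$, the coefficient produced by the term $\sum_{b}\frac{(z^{b})_{t}}{\lmb}\calV_{b;1}$ in $\rd_{t}g$ is $\lan[\chi_{R_{\circ}}\calV_{a}]_{\ul{;1}},\calV_{b;1}\ran=\lan\chi_{R_{\circ}}\calV_{a},\calV_{b}\ran$ \emph{exactly}, an $\calO(1)$ matrix — not $\lmb^{-2}\lan\chi_{R_{\circ}}\calV_{a},\calV_{b}\ran$. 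Consequently $\widetilde{M}$ has an $\calO(1)$ inverse, not one of size $\lmb^{2}$, and the inversion gives $|\lmb_{t}|+|z_{t}|\aleq\|\chi_{2r_{0}}g\|_{\dot{H}^{2}}$; your intermediate claim $|\lmb_{t}|+|z_{t}|\aleq\lmb^{2}\|\chi_{2r_{0}}g\|_{\dot{H}^{2}}$ is unfounded (you then discard the factor $\lmb^{2}$, so the inequality you finally state is the correct one, but the derivation does not support the stronger bound). The same scaling slip occurs in the $\calY$-step: $\lan[\chi_{R_{\circ}}\calY]_{\ul{;1}},\calV_{b;1}\ran=\lan\chi_{R_{\circ}}\calY,\calV_{b}\ran=\calO(1)$, not $\calO(\lmb^{-2})$; there your two errors cancel, which is why the final numerology comes out right.

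The mis-scaling also hides a second, more substantive point. With the correct $\calO(1)$ normalization, the perturbation $\lan\lmb\rd_{z^{b}}([\chi_{R_{\circ}}\calV_{a}]_{\ul{;1}}),g\ran=\calO(\|\chi_{2r_{0}}g\|_{\dot{H}^{1}})$ is of the \emph{same order} as the main matrix $\{\lan\chi_{R_{\circ}}\calV_{a},\calV_{b}\ran\}_{ab}$, so invertibility of $\widetilde{M}$ requires the \emph{smallness} of $\|\chi_{2r_{0}}g\|_{\dot{H}^{1}}$ — which comes from $\chi_{2r_{0}}u(t)\in\calT_{1}(\dlt_{0})$ on $[t_{n},\Ttbexit_{n})$, i.e.\ the bootstrap — and not merely the a priori bound $\|\chi_{4r_{0}}g\|_{\dot{H}^{1}}=\calO(1)$ that you invoke. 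In your write-up this is invisible only because the spurious $\lmb^{-2}$ main term appears to dwarf an $\calO(1)$ correction. Once the Gram-matrix scaling is corrected and the smallness of $\|\chi_{2r_{0}}g\|_{\dot{H}^{1}}$ is used for the diagonal dominance, your argument coincides with the paper's proof; the localization bookkeeping (support of the profiles inside $\{\chi_{2r_{0}}=1\}$, integration by parts for the $\Dlt g$ pairing, and the Hölder/Sobolev bound for $\NL_{W_{;1}}$) is fine as you have it.
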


\begin{proof}
We begin with the evolution equation for $g(t)$: 
\[
\rd_{t}g=\sum_{b=0}^{N}\frac{(z^{b})_{t}}{\lmb}\calV_{b;1}+\Dlt g+f(W+g)-f(W).
\]
Differentiating the orthogonality conditions \eqref{eq:finite-time-orthog}
in time, we get 
\[
0=\frac{d}{dt}\lan[\chi_{R_{\circ}}\calV_{a}]_{\ul{;1}},g\ran=\sum_{b}\frac{(z^{b})_{t}}{\lmb}\{\lan\chi_{R_{\circ}}\calV_{a},\calV_{b}\ran+\calO(\|\chi_{2r_{0}}g\|_{\dot{H}^{1}})\}+\calO(\lmb^{-1}\|\chi_{2r_{0}}g\|_{\dot{H}^{2}}).
\]
As $\lan\chi_{R_{\circ}}\calV_{a},\calV_{b}\ran$ is invertible and
$\|\chi_{2r_{0}}g\|_{\dot{H}^{1}}$ is small by the bootstrap hypothesis,
we conclude $|\lmb_{t}|+|z_{t}|\aleq\|\chi_{2r_{0}}g\|_{\dot{H}^{2}}$.
For $\lmb|\lan[\chi_{R_{\circ}}\calY]_{\ul{;1}},g\ran_{t}|$, simply
write the equation for $\lan[\chi_{R_{\circ}}\calY]_{\ul{;1}},g\ran_{t}$
and estimate all terms.
\end{proof}
\begin{lem}[Control of $\lmb$, $z$, and $\|g\|_{\dot{H}^{1}}$]
We have 
\begin{align}
\sup_{t\in[t_{n},T_{n})}\{\lmb(t)+|z(t)|+|\lan[\chi_{R_{\circ}}\calY]_{\ul{;1}},g\ran|\} & =o_{n\to\infty}(1),\label{eq:finite-time-lmb-z-control}\\
\sup_{t\in[t_{n},T_{n})}\|\chi_{2r_{0}}g(t)\|_{\dot{H}^{1}} & =o_{r_{0}\to0}(1).\label{eq:finite-time-control-g}
\end{align}
\end{lem}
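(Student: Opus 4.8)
The argument combines a direct integration for $\lmb,z$ with a coupled continuity argument for $\|\chi_{2r_{0}}g\|_{\dot{H}^{1}}$ and the unstable-mode coefficient $b(t)\coloneqq\lan[\chi_{R_{\circ}}\calY]_{\ul{;1}},g(t)\ran$, using as inputs the localized spacetime estimate \eqref{eq:finite-time-spacetime} and the rough modulation estimate \eqref{eq:finite-time-rough-mod}. First I would dispose of $\lmb$ and $z$ \emph{unconditionally}: integrating $|\lmb_{t}|+|z_{t}|\aleq\|\chi_{2r_{0}}g\|_{\dot{H}^{2}}$ from \eqref{eq:finite-time-rough-mod}, applying Cauchy--Schwarz, and using $T_{+}<+\infty$ (hence $T_{n}-t_{n}\leq T_{+}-t_{n}\to0$) together with $\int_{t_{n}}^{T_{n}}\|\chi_{2r_{0}}g\|_{\dot{H}^{2}}^{2}dt=o_{n\to\infty}(1)$, one gets $\sup_{t\in[t_{n},T_{n})}\big(|\lmb(t)-\lmb(t_{n})|+|z(t)-z(t_{n})|\big)=o_{n\to\infty}(1)$; combined with $\lmb(t_{n})+|z(t_{n})|=o_{n\to\infty}(1)$ this already gives the $\lmb$- and $z$-parts of \eqref{eq:finite-time-lmb-z-control}.

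For $\|\chi_{2r_{0}}g\|_{\dot{H}^{1}}$ I would use a localized coercive energy expansion. Taylor-expanding $E_{2r_{0}}[u]=E_{2r_{0}}[W_{;1}+g]$ around $W_{;1}$: since the bubble concentrates at scale $\lmb\ll r_{0}$ deep inside $\{\chi_{2r_{0}}=1\}$, the identity $\Dlt W_{;1}+f(W_{;1})=0$ makes the term linear in $g$ negligible (only small cutoff commutators against $W_{;1}$ survive), and the quadratic term equals $\tfrac{1}{2}\lan-\calL_{W_{;1}}(\chi_{2r_{0}}g),\chi_{2r_{0}}g\ran$ up to higher order. The one-bubble coercivity \eqref{eq:calL-one-bubble-coer-H1-quad-form}, adapted to the orthogonality conditions \eqref{eq:finite-time-orthog} (note $[\chi_{R_{\circ}}\calV_{a}]_{\ul{;1}}$ and $[\chi_{R_{\circ}}\calY]_{\ul{;1}}$ are supported inside $\{\chi_{2r_{0}}=1\}$), then gives, keeping only the unstable defect,
\[
\|\chi_{2r_{0}}g(t)\|_{\dot{H}^{1}}^{2}\aleq E_{2r_{0}}[u(t)]-E_{2r_{0}}[W_{;1}(t)]+b(t)^{2}+(\text{higher order}).
\]
I then invoke $E_{2r_{0}}[u(t)]=E_{2r_{0}}[u(t_{n})]+o_{n\to\infty}(1)$ from \eqref{eq:finite-time-spacetime}, $E_{2r_{0}}[u(t_{n})]=E[W]+o_{r_{0}\to0}(1)+o_{n\to\infty}(1)$ (from the initial smallness of $\|\chi_{2r_{0}}g(t_{n})\|_{\dot{H}^{1}}$ and $\lmb(t_{n})\to0$), and $E_{2r_{0}}[W_{;1}(t)]=E[W]+o_{n\to\infty}(1)$ (from the control of $\lmb,z$ just obtained), concluding $\|\chi_{2r_{0}}g(t)\|_{\dot{H}^{1}}^{2}\aleq o_{r_{0}\to0}(1)+o_{n\to\infty}(1)+b(t)^{2}$.

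It remains to control $b(t)$, which is the crux. From $\calL_{W}\calY=e_{0}\calY$, $\lan\calY,\calV_{a}\ran=0$ (so the modulation directions drop out at leading order) and $\calL_{W}(\chi_{R_{\circ}}\calY)=e_{0}\chi_{R_{\circ}}\calY+\calO(e^{-R_{\circ}})$, one derives a scalar ODE $\lmb^{2}b_{t}=e_{0}b+\calO\big(\|\chi_{2r_{0}}g\|_{\dot{H}^{1}}^{p}+\lmb\|\chi_{2r_{0}}g\|_{\dot{H}^{2}}+e^{-R_{\circ}}\big)$. Two a priori facts are available: $|b(t)|\aleq\dlt_{0}$ automatically for $t<T_{n}\leq\Ttbexit_{n}$ (tube membership, since $[\chi_{R_{\circ}}\calY]_{\ul{;1}}$ is supported where $\chi_{2r_{0}}=1$), and $|b(t_{n})|=o_{n\to\infty}(1)$ (from \eqref{eq:finite-time-assumption-3}, using $\lmb(t_{n})\to0$ and that $[\chi_{R_{\circ}}\calY]_{\ul{;1}}(t_{n})$ pairs with the fixed profile $u^{\ast}$ to give $\calO(\lmb(t_{n})^{(N-2)/2})$). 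Since $e_{0}>0$ the mode is unstable in the rescaled time $ds=\lmb^{-2}dt$, so smallness at $t_{n}$ does not propagate by itself: one argues by trapping (whenever $|b|$ exceeds a fixed multiple of the error term, $|b|$ is strictly increasing and $b$ keeps its sign, hence cannot return below that level), combined with the a priori cap $|b|\aleq\dlt_{0}$ and with the refinement $|\lmb(t)b(t)|=o_{n\to\infty}(1)$ obtained by integrating $\tfrac{d}{dt}(\lmb b)$, which satisfies $|\tfrac{d}{dt}(\lmb b)|\aleq\|\chi_{2r_{0}}g\|_{\dot{H}^{2}}$ by \eqref{eq:finite-time-rough-mod} and $|b|\aleq\dlt_{0}$. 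This forces $\sup_{[t_{n},T_{n})}|b|=o_{n\to\infty}(1)$, exactly as in the localization of \cite{CollotMerleRaphael2017CMP}; feeding it back into the previous paragraph closes \eqref{eq:finite-time-control-g}, the two independent small parameters being handled by sending $r_{0}\to0$ first and then $n\to\infty$.

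The hard part is this control of $b$: because of the positive eigenvalue $e_{0}$ the unstable mode wants to grow exponentially in rescaled time, so one must carefully interface the trapping argument for the ODE with the a priori tube bound and with the energy coercivity, which only controls $g$ modulo the $\calY$-direction. Everything else — the bounds on $\lmb,z$, the localized energy expansion, and the bookkeeping of the two small parameters — is a routine localization of the one-bubble theory of \cite{CollotMerleRaphael2017CMP}.
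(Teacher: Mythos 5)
Your handling of $\lmb$, $z$, and the localized energy-coercivity step for $\|\chi_{2r_{0}}g\|_{\dot{H}^{1}}$ is essentially the paper's argument, but the step you yourself flag as the crux --- the control of $b(t)=\lan[\chi_{R_{\circ}}\calY]_{\ul{;1}},g(t)\ran$ --- is where the proposal has a genuine gap. The trapping/no-return mechanism for the ODE $\lmb^{2}b_{t}=e_{0}b+\mathrm{err}$ only says that once $|b|$ dominates the error it keeps growing; it does not prevent $b$ from growing from $o_{n\to\infty}(1)$ at $t_{n}$ up to size comparable to $\dlt_{0}$ before $T_{n}$. That scenario is consistent with every ingredient you list: the forcing is small only in an $L^{2}_{t}$-averaged sense (so "a fixed multiple of the error term" is not a pointwise small threshold), the a priori cap $|b|\aleq\dlt_{0}$ is exactly the size you need to beat, and the refinement $|\lmb(t)b(t)|=o_{n\to\infty}(1)$ carries no information because $\lmb(t)=o_{n\to\infty}(1)$ anyway. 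Since exit at $T_{n}$ is permitted (the interval simply ends there), exponential growth of the unstable mode up to tube size produces no contradiction, and the available rescaled time $\int_{t_{n}}^{T_{n}}\lmb^{-2}dt$ is in no way bounded, so the instability genuinely can act; hence $\sup_{[t_{n},T_{n})}|b|=o_{n\to\infty}(1)$ does not follow from your scheme.

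The missing ingredient --- which makes the whole issue elementary, with no use of the sign of $e_{0}$ and no trapping --- is the pointwise-in-time bound $|b(t)|\aleq\lmb(t)\|\chi_{2r_{0}}g(t)\|_{\dot{H}^{2}}$, valid because $[\chi_{R_{\circ}}\calY]_{\ul{;1}}$ is supported at scale $R_{\circ}\lmb$ (pair $|x-z|^{2}[\chi_{R_{\circ}}\calY]_{\ul{;1}}$ against $|x-z|^{-2}g$ and use Rellich). Combined with the derivative bound $\lmb|\lan[\chi_{R_{\circ}}\calY]_{\ul{;1}},g\ran_{t}|\aleq\|\chi_{2r_{0}}g\|_{\dot{H}^{2}}$ from \eqref{eq:finite-time-rough-mod}, the factors of $\lmb$ cancel and one gets $|\rd_{t}(b^{2})|\aleq\|\chi_{2r_{0}}g\|_{\dot{H}^{2}}^{2}$, whose time integral is $o_{n\to\infty}(1)$ by \eqref{eq:finite-time-spacetime}; together with $b(t_{n})=o_{n\to\infty}(1)$ this yields the claimed bound directly, and this is exactly how the paper argues. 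Once $b$ is controlled this way, your coercivity paragraph closes \eqref{eq:finite-time-control-g} just as in the paper.
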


\begin{proof}
\uline{Proof of \mbox{\eqref{eq:finite-time-lmb-z-control}}}.
By \eqref{eq:finite-time-rough-mod} and \eqref{eq:finite-time-spacetime},
we get 
\begin{align*}
\sup_{t\in[t_{n},T_{n})}\{\lmb(t)+|z(t)|\} & \leq\{\lmb(t_{n})+|z(t_{n})|\}+\int_{t_{n}}^{T_{n}}\calO(\|\chi_{2r_{0}}g(t)\|_{\dot{H}^{2}})dt\\
 & \leq o_{n\to\infty}(1)+o_{n\to\infty}(1)\cdot\sqrt{T_{n}-t_{n}}\leq o_{n\to\infty}(1).
\end{align*}
Next, we use $\lan[\chi_{R_{\circ}}\calY]_{\ul{;1}},g\ran=\calO(\lmb\|\chi_{2r_{0}}g\|_{\dot{H}^{2}})$,
\eqref{eq:finite-time-rough-mod}, and \eqref{eq:finite-time-spacetime}
to get 
\[
\sup_{t\in[t_{n},T_{n})}|\lan[\chi_{R_{\circ}}\calY]_{\ul{;1}},g\ran^{2}(t)|\leq|\lan[\chi_{R_{\circ}}\calY]_{\ul{;1}},g\ran^{2}(t_{n})|+\int_{t_{n}}^{T_{n}}\calO(\|\chi_{2r_{0}}g(t)\|_{\dot{H}^{2}}^{2})dt=o_{n\to\infty}(1).
\]
\uline{Proof of \mbox{\eqref{eq:finite-time-control-g}}}. We use
\eqref{eq:finite-time-spacetime}; we expand the localized energy
as 
\begin{align*}
E_{2r_{0}}[u(t)] & =\tint{}{}\{\tfrac{1}{2}|\nabla W_{;1}|^{2}-\tfrac{1}{p+1}|W_{;1}|^{p+1}\}\chi_{2r_{0}}^{2}-\tint{}{}\{2\nabla\chi_{2r_{0}}\cdot\nabla W_{;1}\}\chi_{2r_{0}}g\\
 & \quad+\tint{}{}\{\tfrac{1}{2}|\nabla g|^{2}-f'(W_{;1})g^{2}\}\chi_{2r_{0}}^{2}+\tint{}{}\calO(|g|^{p+1}\chi_{2r_{0}}^{2})\\
 & =\{E[W]+\calO_{r_{0}}(\lmb^{2D})\}+\calO_{r_{0}}(\lmb^{D}\|\chi_{2r_{0}}g\|_{\dot{H}^{1}})\\
 & \quad+\{-\tfrac{1}{2}\lan\calL_{W_{;1}}(\chi_{2r_{0}}g),\chi_{2r_{0}}g\ran+\calO(\|\chf_{2r_{0}\leq|x|\leq4r_{0}}|x|^{-1}g\|_{L^{2}}^{2})\}+\calO(\|\chi_{4r_{0}}g\|_{\dot{H}^{1}}^{p-1}\|\chi_{2r_{0}}g\|_{\dot{H}^{1}}^{2}).
\end{align*}
Therefore, we obtain 
\[
-\tfrac{1}{2}\lan\calL_{W_{;1}}(\chi_{2r_{0}}g(t)),\chi_{2r_{0}}g(t)\ran=E_{2r_{0}}[u(t)]-E[W]+o_{r_{0}\to0}(1)\cdot(1+\|\chi_{2r_{0}}g(t)\|_{\dot{H}^{1}}^{2}).
\]
Combining this with \eqref{eq:finite-time-spacetime} and $E_{2r_{0}}[u(t_{n})]-E[W]=o_{r_{0}\to0}(1)$,
we obtain 
\[
-\lan\calL_{W_{;1}}(\chi_{2r_{0}}g(t)),\chi_{2r_{0}}g(t)\ran=o_{r_{0}\to0}(1)\cdot(1+\|\chi_{2r_{0}}g(t)\|_{\dot{H}^{1}}^{2}).
\]
By a modified version of \eqref{eq:calL-one-bubble-coer-H1-quad-form}
adapted to the current orthogonality conditions \eqref{eq:finite-time-orthog}
and $\lan[\chi_{R_{\circ}}\calY]_{\ul{;1}},g\ran=o_{n\to\infty}(1)$,
we conclude 
\[
\|\chi_{2r_{0}}g(t)\|_{\dot{H}^{1}}^{2}=o_{r_{0}\to0}(1)\cdot(1+\|\chi_{2r_{0}}g(t)\|_{\dot{H}^{1}}^{2}).
\]
This gives \eqref{eq:finite-time-control-g}.
\end{proof}
\begin{cor}[Closing bootstrap]
\label{cor:finite-time-closing-bootstrap}For all sufficiently large
$n$, we have $T_{n}=\Ttbexit_{n}=T_{+}$. Moreover, we have 
\begin{align*}
\lmb(t)+|z(t)|+\tint t{T_{+}}\|\chi_{2r_{0}}g(\tau)\|_{\dot{H}^{2}}^{2}d\tau & =o_{t\to T_{+}}(1),\\
\|\chi_{2r_{0}}g(t)\|_{\dot{H}^{1}} & =o_{r_{0}\to0}(1),
\end{align*}
as $t\to T_{+}$.
\end{cor}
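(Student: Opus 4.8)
The plan is to close the bootstrap by a routine continuity argument, exploiting that the a priori bounds furnished by the three preceding lemmas are \emph{strictly} stronger than the defining thresholds $\dlt_2$ and $\dlt_0$ of the exit times $T_n$ and $\Ttbexit_n$. All the analytic content has already been extracted: \eqref{eq:finite-time-spacetime} controls the localized dissipation, \eqref{eq:finite-time-rough-mod} controls $\lmb_t,z_t$, and \eqref{eq:finite-time-lmb-z-control}--\eqref{eq:finite-time-control-g} provide, on the bootstrap interval $[t_n,T_n)$, the improved controls $\sup(\lmb+|z|)=o_{n\to\infty}(1)$ and $\sup\|\chi_{2r_0}g\|_{\dot{H}^1}=o_{r_0\to0}(1)$. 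Together with the initial-time smallness $\lmb(t_n)+|z(t_n)|=o_{n\to\infty}(1)$ and $\chi_{2r_0}u(t_n)\in\calT_1(\dlt_0)$ coming from \eqref{eq:finite-time-assumption-3}, this already makes $T_n>t_n$ and $\Ttbexit_n>t_n$ for $n$ large.

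First I would show $T_n=\Ttbexit_n$. Suppose not; then $T_n<\Ttbexit_n\le T_+$, so the modulation decomposition extends continuously past $T_n$ and, by the definition of $T_n$ as an exit time, $\lmb(T_n)+|z(T_n)|=\dlt_2$. But \eqref{eq:finite-time-lmb-z-control} and continuity force $\lmb(T_n)+|z(T_n)|\le o_{n\to\infty}(1)<\dlt_2$ once $n$ is large (recall $\dlt_2$ is a fixed parameter), a contradiction. Hence $T_n=\Ttbexit_n$, and the improved controls now hold on all of $[t_n,\Ttbexit_n)$. Next I would show $\Ttbexit_n=T_+$. Suppose $\Ttbexit_n<T_+$; since $\calT_1(\dlt_0)$ is open, $\chi_{2r_0}u(\Ttbexit_n)\notin\calT_1(\dlt_0)$. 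On the other hand, writing $\chi_{2r_0}u(t)-W_{\lmb(t),z(t)}=\wh g(t)$ one bounds $\|\wh g(t)\|_{\dot{H}^1}$ by $\|\chi_{2r_0}g(t)\|_{\dot{H}^1}$ plus tail terms supported in $|x|\gtrsim r_0$; the latter are $o_{r_0\to0}(1)$ by \eqref{eq:finite-time-assumption-2} together with the concentration $\lmb(t),|z(t)|=o(1)$, which makes $(1-\chi_{2r_0})W_{\lmb(t),z(t)}$ negligible. Thus $\sup_{[t_n,\Ttbexit_n)}\|\wh g(t)\|_{\dot{H}^1}=o_{r_0\to0}(1)<\dlt_0$, and by continuity $\|\wh g(\Ttbexit_n)\|_{\dot{H}^1}<\dlt_0$, i.e.\ $\chi_{2r_0}u(\Ttbexit_n)\in\calT_1(\dlt_0)$ with room to spare, contradicting maximality. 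Hence $T_n=\Ttbexit_n=T_+$ for all large $n$.

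For the ``moreover'' part I would upgrade the sequential-in-$n$ bounds to genuine $o_{t\to T_+}(1)$ statements in the standard way: given $\eps>0$, fix $n$ large enough that the $o_{n\to\infty}(1)$ quantities in \eqref{eq:finite-time-spacetime} and \eqref{eq:finite-time-lmb-z-control} are $<\eps$; then for every $t\in[t_n,T_+)$ one has $\lmb(t)+|z(t)|<\eps$ and $\int_t^{T_+}\|\chi_{2r_0}g(\tau)\|_{\dot{H}^2}^2\,d\tau\le\int_{t_n}^{T_+}\|\chi_{2r_0}g(\tau)\|_{\dot{H}^2}^2\,d\tau<\eps$, which is exactly $\lmb(t)+|z(t)|+\int_t^{T_+}\|\chi_{2r_0}g(\tau)\|_{\dot{H}^2}^2\,d\tau=o_{t\to T_+}(1)$; and $\|\chi_{2r_0}g(t)\|_{\dot{H}^1}=o_{r_0\to0}(1)$ follows directly from \eqref{eq:finite-time-control-g} because that bound is uniform over $[t_n,T_+)$. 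I do not expect a real obstacle here, since all the work lives in the three preceding lemmas; the only point that needs care is the transfer in the second step from the interior control $\|\chi_{2r_0}g\|_{\dot{H}^1}$ to the membership $\chi_{2r_0}u\in\calT_1(\dlt_0)$ — that is, verifying that neither the tail of $u$ outside $|x|\le 2r_0$ nor the tail of the bubble $W_{\lmb,z}$ spoils the smallness — which is where \eqref{eq:finite-time-assumption-2}, \eqref{eq:finite-time-assumption-3} and the already-established concentration of $(\lmb,z)$ enter.
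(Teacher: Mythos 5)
Your proposal is correct and follows the same route as the paper, whose proof of this corollary is simply to cite \eqref{eq:finite-time-lmb-z-control}, \eqref{eq:finite-time-spacetime}, and \eqref{eq:finite-time-control-g}; you merely spell out the standard exit-time/continuity argument (the strict improvement over the thresholds $\dlt_{2}$ and $\dlt_{0}$, using $\lmb+|z|\ll r_{0}$ to absorb the bubble tail $(1-\chi_{2r_{0}})W_{\lmb,z}$) that the paper leaves implicit.
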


\begin{proof}
This follows from \eqref{eq:finite-time-lmb-z-control}, \eqref{eq:finite-time-spacetime},
and \eqref{eq:finite-time-control-g}.
\end{proof}
In order to derive a contradiction, we show that \emph{$\lmb(t)$
cannot go to zero} as $t\to T_{+}<+\infty$, contradicting to $\lmb(t)\to0$
above. For this purpose, we need a refined modulation estimate for
$\lmb(t)$. 
\begin{lem}[Refined modulation estimate]
We have 
\begin{equation}
\Big|\frac{\lmb_{t}}{\lmb}+\frac{d}{dt}o_{r_{0}\to0}(1)\Big|\aleq_{r_{0}}\lmb^{2(D-2)}+\|\chi g\|_{\dot{H}^{2}}^{2}.\label{eq:finite-time-ref-mod}
\end{equation}
\end{lem}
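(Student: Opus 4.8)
The plan is to derive a refined modulation equation for $\lmb_t/\lmb$ by testing the $g$-equation against the suitably scaled profile $[\chi_{R_\circ}\calZ_0]_{\ul{;1}}$ (the orthogonality profile dual to $\Lmb W$), and then to absorb the main error terms into a total time derivative of size $o_{r_0\to0}(1)$, keeping only the two acceptable error contributions $\lmb^{2(D-2)}$ and $\|\chi g\|_{\dot H^2}^2$ on the right-hand side. Concretely, I would start from
\[
\rd_t g = \sum_{b=0}^N \frac{(z^b)_t}{\lmb}\calV_{b;1} + \Dlt g + f(W_{;1}+g) - f(W_{;1}),
\]
write $f(W_{;1}+g)-f(W_{;1}) = f'(W_{;1})g + \NL_{W_{;1}}(g)$, and differentiate the orthogonality condition $\lan[\chi_{R_\circ}\calZ_0]_{\ul{;1}},g\ran = 0$ in time. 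The $\Dlt g + f'(W_{;1})g = \calL_{W_{;1}}g$ term pairs with $[\chi_{R_\circ}\calZ_0]_{\ul{;1}}$ via integration by parts to produce $\lan \calL_{W_{;1}}[\chi_{R_\circ}\calZ_0]_{\ul{;1}}, g\ran$, which (since $\calZ_0$ is compactly supported and $\calL_W\calZ_0$ is bounded) is $\calO_{r_0}(\lmb^{-1}\|\chi g\|_{\dot H^2})$-controlled but whose \emph{leading} part must be isolated: by the transversality $\lan\calZ_0,\Lmb W\ran = 1$ and $\calL_W\Lmb W = 0$ (so $\calL_W\calZ_0$ is "almost" orthogonal to the modulation directions), the dominant contribution is the bubble self-interaction piece coming from the fact that $W_{;1}$ solves $\Dlt W_{;1} + f(W_{;1}) = 0$ only on the support; the localization commutator $[\chi_{2r_0},\Dlt]W_{;1}$ and the cutoff mismatch produce a term of size $\calO_{r_0}(\lmb^{2D})$ after scaling, hence $\calO_{r_0}(\lmb^{2(D-2)})\cdot\lmb^2$ — wait, one must be careful with the powers; the correct bookkeeping is that the $\Lmb W$-direction projection of the equation gives $\lmb_t/\lmb$ at leading order with the bubble-interaction error entering at order $\lmb^{2(D-2)}$ after dividing by $\lmb$.

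**Next I would** organize the terms appearing after differentiating the orthogonality condition into three groups: (i) the $\lmb_t/\lmb$ term itself, arising from $\lmb\rd_\lmb W_{;1} = -[\Lmb W]_{;1} = -\calV_{0;1}$ and the pairing $\lan[\chi_{R_\circ}\calZ_0]_{\ul{;1}}, \lmb_t\lmb^{-1}\calV_{0;1}\ran = \lmb^{-2}\lmb_t\lmb^{-1}\cdot\lmb^2\lan\chi_{R_\circ}\calZ_0,\calV_0\ran = \lmb_t/\lmb$ up to the normalization; (ii) the genuine error terms $\calO_{r_0}(\lmb^{2(D-2)} + \|\chi g\|_{\dot H^2}^2)$ coming from the nonlinear term $\NL_{W_{;1}}(g)$ (estimated via \eqref{eq:f(a,b)-3} as $\calO(|g|^p + |\Dlt u + f(u)|\cdot\ldots)$, ultimately $\calO(\|\chi g\|_{\dot H^1}^{p-1}\|\chi g\|_{\dot H^2}) \aleq o_{r_0\to 0}(1)\|\chi g\|_{\dot H^2}$, which with Cauchy–Schwarz against the bounded profile and using the spacetime smallness of $\|\chi g\|_{\dot H^2}$ fits into the stated right-hand side) and from the localization commutators $[\chi_{2r_0},\Dlt]g$ and $[\chi_{2r_0},\Dlt]W_{;1}$; and (iii) the "borderline" terms of size $o_{r_0\to0}(1)$ that are \emph{not} small enough to discard but \emph{can} be written as $\frac{d}{dt}$ of something of the same size $o_{r_0\to0}(1)$ — these are typically the terms of the form $\lan \partial_t(\text{cutoff-dependent profile}), g\ran$ that are $\lmb^{-1}\lmb_t$ times a small factor, which by \eqref{eq:finite-time-control-g} and Corollary~\ref{cor:finite-time-closing-bootstrap} are controlled, and the "interaction" term $\lan[\chi_{R_\circ}\calZ_0]_{\ul{;1}}, [\chi_{2r_0},\Dlt]W_{;1}\ran$ which is a function of $\lmb, z$ alone, hence its contribution to $\lmb_t/\lmb$ is $\frac{d}{dt}(\text{function of }\lmb,z)$ plus lower order. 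The key algebraic move is recognizing that the cross term $\lan 2\nabla\chi_{2r_0}\cdot\nabla W_{;1},\chi_{2r_0}g\ran$-type quantities, which appeared already in the energy expansion in the proof of \eqref{eq:finite-time-control-g}, have time derivatives that are $o_{r_0\to0}(1)$ and can be moved to the left as $\frac{d}{dt}o_{r_0\to0}(1)$.

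**The main obstacle** I expect is the careful tracking of which borderline terms of magnitude $o_{r_0\to0}(1)$ are "exact derivatives up to acceptable error" versus which are genuinely dangerous. In particular, the interaction between the outer cutoff $\chi_{2r_0}$ (which creates an artificial "boundary" at scale $r_0$) and the shrinking bubble $W_{\lmb,z}$ must be handled: one needs that $\lan \nabla\chi_{2r_0}\cdot\nabla W_{;1}, [\chi_{R_\circ}\calZ_0]_{\ul{;1}}\ran$ and similar terms are \emph{exactly} zero or super-polynomially small because $[\chi_{R_\circ}\calZ_0]_{\ul{;1}}$ is supported in $\{|x-z| \le 2R_\circ\lmb\} \subset \{|x| < r_0/2\}$ (using $\lmb + |z| < \dlt_2 \ll r_0$) while $\nabla\chi_{2r_0}$ is supported in $\{2r_0 \le |x| \le 4r_0\}$ — these supports are disjoint, so those cross terms vanish identically. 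This support-separation observation is what makes the localized profile $[\chi_{R_\circ}\calZ_0]_{\ul{;1}}$ genuinely "see only" the inner bubble dynamics and cleanly decouples the $\lmb_t/\lmb$ extraction from the outer cutoff errors; once this is in hand, the remaining estimates are routine applications of the pointwise bounds in Lemma~\ref{lem:f(a,b)-est}, the coercivity \eqref{eq:calL-one-bubble-coer}, and the bootstrap bounds from Corollary~\ref{cor:finite-time-closing-bootstrap}, and one then integrates \eqref{eq:finite-time-ref-mod} in time from $t$ to $T_+$ using $\int_t^{T_+}\|\chi g\|_{\dot H^2}^2 = o(1)$, $\int_t^{T_+}\lmb^{2(D-2)}\,d\tau < +\infty$ (since $D \ge 5/2$ for $N \ge 7$ so $2(D-2) \ge 1 > 0$ and $\lmb$ is bounded — actually one needs $\lmb$ integrable in the right power, which follows once $\lmb\to 0$ monotonically-ish), to conclude $\log\lmb(t)$ stays bounded, contradicting $\lmb(t)\to 0$.
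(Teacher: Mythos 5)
There is a genuine gap, and it lies in your very first move: you test the $g$-equation against the bubble-scale orthogonality profile $[\chi_{R_{\circ}}\calZ_{0}]_{\ul{;1}}$ and differentiate the orthogonality condition. That computation only reproduces the \emph{rough} modulation estimate. The obstruction is the linear-in-$g$ term $\lan\calL_{W_{;1}}[\chi_{R_{\circ}}\calZ_{0}]_{\ul{;1}},g\ran$: since $\calZ_{0}$ is \emph{not} a kernel element of $\calL_{W}$, this term has no structural smallness and is genuinely of size $\lmb^{-1}\|\chi_{2r_{0}}g\|_{\dot{H}^{2}}$; appealing to transversality $\lan\calZ_{0},\Lmb W\ran=1$ or to $\calL_{W}\Lmb W=0$ does not make $\calL_{W}\calZ_{0}$ small against $g$, because it is paired with $g$ (which only satisfies $\|\chi_{2r_{0}}g\|_{\dot{H}^{1}}=o_{r_{0}\to0}(1)$), not with the modulation directions. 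Hence your scheme yields $|\lmb_{t}/\lmb|\aleq\lmb^{-1}\|\chi_{2r_{0}}g\|_{\dot{H}^{2}}$, which is not time-integrable (only $\|\chi_{2r_{0}}g\|_{\dot{H}^{2}}^{2}$ is, and $\lmb^{-1}$ blows up), so it cannot produce the right-hand side $\lmb^{2(D-2)}+\|\chi g\|_{\dot{H}^{2}}^{2}$ nor the contradiction argument. Your support-disjointness remark ($\mathrm{supp}\,[\chi_{R_{\circ}}\calZ_{0}]_{\ul{;1}}$ versus $\mathrm{supp}\,\nabla\chi_{2r_{0}}$) and the ``bubble self-interaction of size $\lmb^{2D}$'' bookkeeping are beside the point here: in this single-bubble setting there is no bubble interaction, and the decisive error does not come from the outer cutoff.

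The missing idea is the choice of test function: one must pair with $\chi_{r_{0}}[\Lmb W]_{\ul{;1}}$, i.e.\ the \emph{exact} zero mode $\Lmb W$ cut off at the fixed scale $r_{0}$ (not at scale $\lmb$). Then $\calL_{W_{;1}}[\Lmb W]_{\ul{;1}}=0$ reduces the dangerous linear term to the commutator $\lan[\Dlt,\chi_{r_{0}}][\Lmb W]_{\ul{;1}},g\ran$, supported where $|x|\aeq r_{0}$, where $[\Lmb W]_{\ul{;1}}$ has decayed, giving $\calO_{r_{0}}(\lmb^{D-2}\|\chi_{2r_{0}}g\|_{\dot{H}^{2}})$, which Young's inequality turns into $\lmb^{2(D-2)}+\|\chi_{2r_{0}}g\|_{\dot{H}^{2}}^{2}$; the nonlinear term is quadratic as you say. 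The price is that $\lan\chi_{r_{0}}[\Lmb W]_{\ul{;1}},g\ran$ is no longer zero (the orthogonality uses the localized profiles), and this is exactly where the $\frac{d}{dt}o_{r_{0}\to0}(1)$ in the statement comes from: after computing $\frac{d}{dt}\lan\chi_{r_{0}}[\Lmb W]_{\ul{;1}},g\ran=\frac{\lmb_{t}}{\lmb}\|\Lmb W\|_{L^{2}}^{2}+(D-2)\frac{\lmb_{t}}{\lmb}\lan\chi_{r_{0}}[\Lmb W]_{\ul{;1}},g\ran+\calO_{r_{0}}(\lmb^{D-2}\|\chi_{2r_{0}}g\|_{\dot{H}^{2}}+\|\chi_{2r_{0}}g\|_{\dot{H}^{2}}^{2})$, one divides by $\|\Lmb W\|_{L^{2}}^{2}+(D-2)\lan\chi_{r_{0}}[\Lmb W]_{\ul{;1}},g\ran$ and recognizes the left side as $\frac{\lmb_{t}}{\lmb}+\frac{d}{dt}h(\lan\chi_{r_{0}}[\Lmb W]_{\ul{;1}},g\ran)$ for an explicit primitive $h$, with $\lan\chi_{r_{0}}[\Lmb W]_{\ul{;1}},g\ran=\calO(\|\chi_{2r_{0}}g\|_{\dot{H}^{1}})=o_{r_{0}\to0}(1)$. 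Your proposal never identifies either of these two mechanisms (the exact-kernel test function and the division/primitive trick), and without them the stated bound is out of reach.
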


\begin{proof}
We begin with the identity 
\begin{align*}
\frac{d}{dt}\lan\chi_{r_{0}}[\Lmb W]_{\ul{;1}},g\ran & =\frac{\lmb_{t}}{\lmb}\lan\chi_{r_{0}}[\Lmb W]_{\ul{;1}},[\Lmb W]_{;1}\ran-\frac{\lmb_{t}}{\lmb}\lan\chi_{r_{0}}[\Lmb_{-1}\Lmb W]_{\ul{;1}},g\ran-\frac{z_{t}}{\lmb}\cdot\lan\chi_{r_{0}}[\nabla\Lmb W]_{\ul{;1}},g\ran\\
 & \quad+\lan\chi_{r_{0}}[\Lmb W]_{\ul{;1}},\calL_{W_{;1}}g+\NL_{W_{;1}}(g)\ran.
\end{align*}
We treat each term of the right hand side as follows. First, with
\eqref{eq:finite-time-rough-mod}, 
\[
\frac{\lmb_{t}}{\lmb}\lan\chi_{r_{0}}[\Lmb W]_{\ul{;1}},[\Lmb W]_{;1}\ran=\frac{\lmb_{t}}{\lmb}\{\|\Lmb W\|_{L^{2}}^{2}+\calO_{r_{0}}(\lmb^{N-4})\}=\frac{\lmb_{t}}{\lmb}\|\Lmb W\|_{L^{2}}^{2}+\calO_{r_{0}}(\lmb^{2D-3}\|\chi_{2r_{0}}g\|_{\dot{H}^{2}}).
\]
Next, we express $\Lmb_{-1}\Lmb W(y)=-(D-2)\Lmb W(y)+\calO(\lan y\ran^{-N})$
and use \eqref{eq:finite-time-rough-mod} to obtain 
\[
\frac{\lmb_{t}}{\lmb}\lan\chi_{r_{0}}[\Lmb_{-1}\Lmb W]_{\ul{;1}},g\ran=-(D-2)\frac{\lmb_{t}}{\lmb}\lan\chi_{r_{0}}[\Lmb W]_{\ul{;1}},g\ran+\calO(\|\chi_{2r_{0}}g\|_{\dot{H}^{2}}^{2}).
\]
Next, we use $\nabla\Lmb W(y)=\calO(\lan y\ran^{-(N-1)})$, $N>6$,
and \eqref{eq:finite-time-rough-mod} to obtain 
\[
\frac{z_{t}}{\lmb}\cdot\lan\chi_{r_{0}}[\nabla\Lmb W]_{\ul{;1}},g\ran=\calO(\|\chi_{2r_{0}}g\|_{\dot{H}^{2}}^{2}).
\]
Finally, using $\calL_{W}\Lmb W=0$ and \eqref{eq:f(a,b)-3}, we get
\begin{align*}
\lan\chi_{r_{0}}[\Lmb W]_{\ul{;1}},\calL_{W_{;1}}g\ran & =\lan[\Dlt,\chi_{r_{0}}][\Lmb W]_{\ul{;1}},g\ran=\calO_{r_{0}}(\lmb^{D-2}\|\chi_{2r_{0}}g\|_{\dot{H}^{2}}),\\
\lan\chi_{r_{0}}[\Lmb W]_{\ul{;1}},\NL_{W_{;1}}(g)\ran & =\|\calO(|W_{;1}|^{p-1}|\chi_{2r_{0}}g|^{2})\|_{L^{1}}=\calO(\|\chi_{2r_{0}}g\|_{\dot{H}^{2}}^{2}).
\end{align*}
Gathering the previous observations gives 
\[
\frac{d}{dt}\lan\chi[\Lmb W]_{\ul{;1}},g\ran=\frac{\lmb_{t}}{\lmb}\|\Lmb W\|_{L^{2}}^{2}+(D-2)\frac{\lmb_{t}}{\lmb}\lan\chi_{r_{0}}[\Lmb W]_{\ul{;1}},g\ran+\calO_{r_{0}}(\lmb^{D-2}\|\chi_{2r_{0}}g\|_{\dot{H}^{2}}+\|\chi_{2r_{0}}g\|_{\dot{H}^{2}}^{2}).
\]
Dividing both sides by $\|\Lmb W\|_{L^{2}}^{2}+(D-2)\lan\chi_{r_{0}}[\Lmb W]_{\ul{;1}},g\ran$,
we get 
\[
\frac{\lmb_{t}}{\lmb}+\frac{d}{dt}h(\lan\chi_{r_{0}}[\Lmb W]_{\ul{;1}},g\ran)=\calO_{r_{0}}(\lmb^{D-2}\|\chi_{2r_{0}}g\|_{\dot{H}^{2}}+\|\chi_{2r_{0}}g\|_{\dot{H}^{2}}^{2}),
\]
where $h(x)\coloneqq-\int_{0}^{x}(\|\Lmb W\|_{L^{2}}^{2}+(D-2)y)^{-1}dy$.
Since $\lan\chi_{r_{0}}[\Lmb W]_{\ul{;1}},g\ran=\calO(\|\chi_{2r_{0}}g\|_{\dot{H}^{1}})=o_{r_{0}\to0}(1)$,
we conclude \eqref{eq:finite-time-ref-mod}.
\end{proof}
\begin{proof}[Proof of the non-existence of $u(t)$]
By $\lmb\aleq1$ and \eqref{eq:finite-time-control-g}, the right
hand side of \eqref{eq:finite-time-ref-mod} is integrable in time
as $t\to T_{+}<+\infty$. This says that $\log\lmb+o_{r_{0}\to0}(1)$
is convergent as $t\to T_{+}$. In particular, $|\log\lmb(t)|$ cannot
go to infinity, contradicting to $\lmb(t)\to0$ of Corollary~\ref{cor:finite-time-closing-bootstrap}.
Therefore, $u(t)$ satisfying the properties of Proposition~\ref{prop:finite-time-non-existence}
cannot exist.
\end{proof}

\section{Several facts regarding (non-)degenerate configurations}

This section collects several facts regarding the (non-)degeneracy
of configurations. Here, \emph{
\[
z_{1}^{\ast},\dots,z_{J}^{\ast}\in\bbR^{N}\text{ are distinct and }\iota_{1},\dots,\iota_{J}\in\{\pm\}.
\]
}The analysis here is independent of time and the decomposition $u=U+g$.
We only deal with $\vec{\iota}$, $\vec{\lmb}$, and $\vec{z}$. Note
also that this section is only relevant to our second main theorem
(Theorem~\ref{thm:main-lmb-to-zero-classification}). Recall Definition~\ref{def:non-degeneracy-of-configurations}.
\begin{lem}
\label{lem:non-degen}Let $J\geq2$ and $\{(\iota_{1},z_{1}^{\ast}),\dots,(\iota_{J},z_{J}^{\ast})\}$
be non-degenerate. Then, we have 
\begin{equation}
|A^{\ast}\vec{\lmb}^{D}|\aeq\lmb_{\max}^{D},\qquad\forall\vec{\lmb}\in(0,\infty)^{J}.\label{eq:A*lmb^D-is-lmb_max^D}
\end{equation}
\end{lem}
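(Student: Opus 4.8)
The plan is to prove the two bounds $|A^{\ast}\vec{\lmb}^{D}|\aleq\lmb_{\max}^{D}$ and $|A^{\ast}\vec{\lmb}^{D}|\ageq\lmb_{\max}^{D}$ separately. The upper bound is immediate: since $z_{1}^{\ast},\dots,z_{J}^{\ast}$ are fixed distinct points, every entry of $A^{\ast}$ is bounded by a constant depending only on the configuration, hence $|A^{\ast}\vec{\lmb}^{D}|\leq C\|\vec{\lmb}^{D}\|_{\infty}=C\lmb_{\max}^{D}$ with $C=C(\vec{\iota},\vec{z}^{\ast})$.

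For the lower bound I would exploit homogeneity. The ratio $|A^{\ast}\vec{\lmb}^{D}|/\lmb_{\max}^{D}$ is invariant under the rescaling $\vec{\lmb}\mapsto t\vec{\lmb}$, $t>0$, because $A^{\ast}$ is linear and both the numerator and denominator are homogeneous of degree $D$ in $\vec{\lmb}$. Writing $\mu_{i}\coloneqq(\lmb_{i}/\lmb_{\max})^{D}$, we get $\vec{\mu}\in(0,1]^{J}$ with $\max_{i}\mu_{i}=1$ and $|A^{\ast}\vec{\lmb}^{D}|=\lmb_{\max}^{D}|A^{\ast}\vec{\mu}|$, so it suffices to establish $|A^{\ast}\vec{\mu}|\ageq1$ uniformly over the set $S\coloneqq\{\vec{\mu}\in(0,1]^{J}:\max_{i}\mu_{i}=1\}$.

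Consider the continuous function $\vec{\mu}\mapsto|A^{\ast}\vec{\mu}|$ on the compact set $\overline{S}=\{\vec{\mu}\in[0,1]^{J}:\max_{i}\mu_{i}=1\}$, the closure of $S$ obtained by allowing coordinates to vanish. If this function vanished at some $\vec{\mu}_{0}\in\overline{S}$, then $\vec{\mu}_{0}$ would be a nonzero element of $[0,\infty)^{J}$ (nonzero since $\max_{i}\mu_{0,i}=1$) in the kernel of $A^{\ast}$, contradicting the assumed non-degeneracy of $\{(\iota_{1},z_{1}^{\ast}),\dots,(\iota_{J},z_{J}^{\ast})\}$. Hence $|A^{\ast}\vec{\mu}|>0$ on $\overline{S}$, and by compactness it is bounded below by some $c>0$; in particular $|A^{\ast}\vec{\mu}|\geq c$ for all $\vec{\mu}\in S$, which yields $|A^{\ast}\vec{\lmb}^{D}|\geq c\lmb_{\max}^{D}$ and finishes the proof. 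There is no serious obstacle here; the only point worth emphasizing is that the non-degeneracy hypothesis is stated precisely over the closed cone $[0,\infty)^{J}$, which is exactly what is needed to control the behavior on the boundary of $\overline{S}$, i.e., the degenerate limits where some $\lmb_{i}/\lmb_{\max}\to0$.
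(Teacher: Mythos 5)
Your proof is correct, but it follows a different route from the paper. You normalize by $\lmb_{\max}^{D}$ and run a compactness argument: the function $\vec{\mu}\mapsto|A^{\ast}\vec{\mu}|$ is continuous and strictly positive on the compact set $\{\vec{\mu}\in[0,1]^{J}:\max_{i}\mu_{i}=1\}$, because any zero would be a nonzero nonnegative kernel element of $A^{\ast}$, which is exactly what non-degeneracy forbids; hence a uniform lower bound $c>0$ exists. This is sound (note that you do not even need the set to be the exact closure of the normalized $\vec{\lmb}$'s — only that it is compact, contains them, and avoids the kernel), and it has the virtue of being elementary and of not using the symmetry of $A^{\ast}$ at all. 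The paper instead produces a structural certificate: by the hyperplane separation theorem applied to $\ker(A^{\ast})$ and the compact set $\bbS_{\geq0}^{J-1}$, together with the symmetry of $A^{\ast}$ (so that the separating functional lies in $\mathrm{ran}(A^{\ast})$), it finds $\vec{\beta}$ with $A^{\ast}\vec{\beta}\in(0,\infty)^{J}$, and then pairs: $|A^{\ast}\vec{\lmb}^{D}|\ageq|\lan\vec{\lmb}^{D},A^{\ast}\vec{\beta}\ran|\aeq\lmb_{\max}^{D}$. The trade-off is that your argument is shorter and more robust (it would apply verbatim to non-symmetric matrices with no nonnegative nonzero kernel element), while the paper's argument exhibits an explicit dual vector $\vec{\beta}$ witnessing the coercivity, which is a slightly stronger structural statement even though the resulting implicit constants are non-explicit in both approaches. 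Either proof is acceptable here.
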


\begin{proof}
First, we claim that there exists a vector $\vec{\beta}\in\bbR^{J}$
satisfying 
\begin{equation}
A^{\ast}\vec{\beta}\in(0,\infty)^{J}.\label{eq:gmm-04}
\end{equation}
This follows from the hyperplane separation theorem. By the non-degeneracy,
\[
\ker(A^{\ast})\cap\bbS_{\geq0}^{J-1}=\emptyset,\qquad\text{where }\bbS_{\geq0}^{J-1}\coloneqq\{\vec{c}\in[0,\infty)^{J}:|\vec{c}|=1\}.
\]
Applying the hyperplane separation theorem to the (closed) linear
subspace $\ker(A^{\ast})$ and the compact set $\bbS_{\geq0}^{J-1}$,
there exists $\vec{\gmm}\in\bbR^{J}$ such that 
\begin{align*}
\lan\vec{\gmm},\vec{c}\ran & =0\qquad\forall\vec{c}\in\ker(A^{\ast}),\\
\lan\vec{\gmm},\vec{c}\ran & \geq1\qquad\forall\vec{c}\in\bbS_{\geq0}^{J-1}.
\end{align*}
The first property implies $\vec{\gmm}\in\ker(A^{\ast})^{\perp}$.
As $A^{\ast}$ is symmetric, we obtain $\vec{\gmm}\in\mathrm{ran}(A^{\ast})$,
so $\vec{\gmm}=A^{\ast}\vec{\beta}$ for some $\vec{\beta}\in\bbR^{J}$.
The second property implies $A^{\ast}\vec{\beta}=\vec{\gmm}\in(0,\infty)^{J}$,
completing the proof of \eqref{eq:gmm-04}.

We are ready to show \eqref{eq:A*lmb^D-is-lmb_max^D}. As $A^{\ast}$
is symmetric and $A^{\ast}\vec{\beta}\in(0,\infty)^{J}$, we have
\[
|A^{\ast}\vec{\lmb}^{D}|\ageq|\lan A^{\ast}\vec{\lmb}^{D},\vec{\beta}\ran|=|\lan\vec{\lmb}^{D},A^{\ast}\vec{\beta}\ran|\aeq\lmb_{\max}^{D}.
\]
The other inequality $|A^{\ast}\vec{\lmb}^{D}|\aleq\lmb_{\max}^{D}$
is obvious. This completes the proof.
\end{proof}
\begin{lem}
\label{lem:distance-from-degen}Let $J\geq2$. For any $\dlt_{1}>0$
and $\vec{\lmb}\in(0,\infty)^{J}$ with 
\begin{equation}
\min_{\emptyset\neq\calI\subseteq\setJ}\bigg(\frac{|A_{\calI}^{\ast}\vec{\lmb}_{\calI}^{D}|}{|\vec{\lmb}_{\calI}^{D}|}+\sum_{i\notin\calI}\frac{\lmb_{i}}{\lmb_{\max}}\bigg)\geq\dlt_{1},\label{eq:dist-from-degen}
\end{equation}
we have 
\begin{align}
\sum_{i=1}^{J}(A^{\ast}\vec{\lmb}^{D})_{i}^{2}\lmb_{i}^{2D-2} & \aeq_{\dlt_{1}}\lmb_{\max}^{4D-2},\label{eq:case1-calD-non-degen}\\
\lmb_{\secmax} & \ageq\dlt_{1}\lmb_{\max}.\label{eq:case1-secmax=00003Dmax}
\end{align}
\end{lem}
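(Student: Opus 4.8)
The plan is to treat the three assertions separately, taking advantage of homogeneity: both sides of \eqref{eq:case1-calD-non-degen} are $(4D-2)$-homogeneous in $\vec\lmb$ (since $(A^{\ast}(c\vec\lmb)^{D})_{i}=c^{D}(A^{\ast}\vec\lmb^{D})_{i}$ and $(c\lmb_{i})^{2D-2}=c^{2D-2}\lmb_{i}^{2D-2}$), and every quotient appearing in \eqref{eq:dist-from-degen} is scale invariant, so throughout I may normalize $\lmb_{\max}=1$. With this normalization, \eqref{eq:case1-secmax=00003Dmax} is immediate: applying \eqref{eq:dist-from-degen} to the singleton $\calI=\{i_{0}\}$ with $\lmb_{i_{0}}=\lmb_{\max}$, the matrix $A_{\calI}^{\ast}$ is the $1\times1$ zero matrix, so the first term drops out and we are left with $\sum_{i\neq i_{0}}\lmb_{i}/\lmb_{\max}\geq\dlt_{1}$; bounding each $\lmb_{i}\leq\lmb_{\secmax}$ for $i\neq i_{0}$ yields $\lmb_{\secmax}\ageq\dlt_{1}\lmb_{\max}$. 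The upper bound in \eqref{eq:case1-calD-non-degen} is equally elementary and needs no use of \eqref{eq:dist-from-degen}: from $|A_{ij}^{\ast}|\aleq1$ one gets $|(A^{\ast}\vec\lmb^{D})_{i}|\aleq\sum_{j}\lmb_{j}^{D}\aleq\lmb_{\max}^{D}$, and since $2D-2=N-4>0$ we have $\lmb_{i}^{2D-2}\leq\lmb_{\max}^{2D-2}$, so the whole sum is $\aleq\lmb_{\max}^{4D-2}$.

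The only substantive point is the lower bound in \eqref{eq:case1-calD-non-degen}, which I would prove by a compactness–contradiction argument. Suppose it fails: then there is a sequence $\vec\lmb^{(n)}\in(0,\infty)^{J}$, normalized by $\lmb_{\max}^{(n)}=1$, satisfying \eqref{eq:dist-from-degen} with the same $\dlt_{1}$ but with $\sum_{i}(A^{\ast}(\vec\lmb^{(n)})^{D})_{i}^{2}(\lmb_{i}^{(n)})^{2D-2}\to0$. Passing to a subsequence, $\vec\lmb^{(n)}\to\vec\mu\in[0,1]^{J}$; choosing a further subsequence along which the maximizing index is fixed shows $\mu_{\max}=1$, so the set $\calI\coloneqq\{i\in\setJ:\mu_{i}>0\}$ is non-empty.

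The crux is to show that $\vec\mu$ then lands exactly on the degenerate locus. For $i\in\calI$ we have $(\lmb_{i}^{(n)})^{2D-2}\to\mu_{i}^{2D-2}>0$ — this is where $2D-2>0$ enters — so the vanishing of the full sum forces $(A^{\ast}(\vec\lmb^{(n)})^{D})_{i}\to0$, and by continuity $(A^{\ast}\vec\mu^{D})_{i}=0$ for all $i\in\calI$. Since $\vec\mu^{D}$ is supported on $\calI$, this says exactly that $A_{\calI}^{\ast}\vec\mu_{\calI}^{D}=0$, with $\vec\mu_{\calI}^{D}$ a nonzero vector with positive entries. Together with $\mu_{i}=0$ for $i\notin\calI$, we get that the $\calI$-term in \eqref{eq:dist-from-degen} tends to zero: $|A_{\calI}^{\ast}(\vec\lmb_{\calI}^{(n)})^{D}|/|(\vec\lmb_{\calI}^{(n)})^{D}|\to0$ because $A_{\calI}^{\ast}(\vec\lmb_{\calI}^{(n)})^{D}\to A_{\calI}^{\ast}\vec\mu_{\calI}^{D}=0$ while $|(\vec\lmb_{\calI}^{(n)})^{D}|\to|\vec\mu_{\calI}^{D}|>0$, and $\sum_{i\notin\calI}\lmb_{i}^{(n)}/\lmb_{\max}^{(n)}=\sum_{i\notin\calI}\lmb_{i}^{(n)}\to\sum_{i\notin\calI}\mu_{i}=0$. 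This contradicts the assumption that this quantity is $\geq\dlt_{1}>0$ for every $n$.

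I do not expect any genuine obstacle: the argument is just the observation that \eqref{eq:dist-from-degen} is a quantitative version of being bounded away from the closed set of configurations $\vec\lmb$ for which some sub-matrix $A_{\calI}^{\ast}$ has a nonzero nonnegative kernel vector supported exactly on $\calI$. The points requiring care — that $\calI$ is non-empty, that the normalization is legitimate by homogeneity, that $|(\vec\lmb_{\calI}^{(n)})^{D}|$ is bounded below so the ratio passes to the limit, and that $2D-2>0$ is used to pass from vanishing of the weighted sum to vanishing of $(A^{\ast}\vec\mu^{D})_{i}$ on $\calI$ — are all routine once the structure above is set up.
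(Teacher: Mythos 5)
Your proposal is correct and follows essentially the same route as the paper: the singleton choice $\calI=\{i_{0}\}$ for \eqref{eq:case1-secmax=00003Dmax}, and for the lower bound in \eqref{eq:case1-calD-non-degen} a normalization-plus-compactness contradiction in which the limit $\vec\mu$ supplies a subset $\calI=\{i:\mu_{i}>0\}$ with $A_{\calI}^{\ast}\vec\mu_{\calI}^{D}=0$ and $\mu_{i}=0$ off $\calI$, violating \eqref{eq:dist-from-degen}. The only (harmless) difference is that you derive the contradiction along the sequence rather than at the limit, which is if anything slightly cleaner.
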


\begin{rem}
\eqref{eq:case1-calD-non-degen} implies that the equations of $\lmb_{i,t}$
(see \eqref{eq:intro-lmb-eqn}) do not degenerate. 
\end{rem}

\begin{proof}
\uline{Proof of \mbox{\eqref{eq:case1-secmax=00003Dmax}}}. Let
$i_{0}\in\setJ$ be such that $\lmb_{\max}=\lmb_{i_{0}}$. Taking
$\calI=\{i_{0}\}$ in \eqref{eq:dist-from-degen} gives 
\[
\dlt_{1}\lmb_{\max}\leq\tsum{i\neq i_{0}}{}\lmb_{i}\aleq\lmb_{\secmax}.
\]

\uline{Proof of \mbox{\eqref{eq:case1-calD-non-degen}}}. As the
$(\aleq)$-inequality is obvious, we prove the $(\ageq_{\dlt_{1}})$-inequality.
Suppose not. Then, there exist $\dlt_{1}>0$ and a sequence $\vec{\lmb}_{n}\in(0,\infty)^{J}$
such that 
\[
\min_{\emptyset\neq\calI\subseteq\setJ}\bigg(\frac{|A_{\calI}^{\ast}\vec{\lmb}_{\calI,n}^{D}|}{|\vec{\lmb}_{\calI,n}^{D}|}+\sum_{i\notin\calI}\frac{\lmb_{i,n}}{\lmb_{\max,n}}\bigg)\geq\dlt_{1},\quad\sum_{i=1}^{J}(A^{\ast}\vec{\lmb}_{n}^{D})_{i}^{2}\lmb_{i,n}^{2D-2}\to0,\quad\text{and}\quad\lmb_{\max,n}=1.
\]
Passing to a subsequence, $\vec{\lmb}_{n}\to\exists\vec{\lmb}_{\infty}\in[0,1]^{J}$
with 
\[
\min_{\emptyset\neq\calI\subseteq\setJ}\bigg(\frac{|A_{\calI}^{\ast}\vec{\lmb}_{\calI,\infty}^{D}|}{|\vec{\lmb}_{\calI,\infty}^{D}|}+\sum_{i\notin\calI}\frac{\lmb_{i,\infty}}{\lmb_{\max,\infty}}\bigg)\geq\dlt_{1},\quad\sum_{i=1}^{J}(A^{\ast}\vec{\lmb}_{\infty}^{D})_{i}^{2}\lmb_{i,\infty}^{2D-2}=0,\quad\text{and}\quad\lmb_{\max,\infty}=1.
\]
Take $\calI=\{i\in\setJ:\lmb_{i,\infty}\neq0\}$. Note that $\calI\neq\emptyset$
by $\lmb_{\max,\infty}=1$. By the definition of $\calI$, we have
$\lmb_{i,\infty}=0$ for all $i\notin\calI$ and $A_{\calI}^{\ast}\vec{\lmb}_{\calI,\infty}^{D}=0$.
This implies 
\[
\frac{|A_{\calI}^{\ast}\vec{\lmb}_{\calI,n}^{D}|}{|\vec{\lmb}_{\calI,n}^{D}|}+\sum_{i\notin\calI}\frac{\lmb_{i,n}}{\lmb_{\max,n}}\to0,
\]
which is a contradiction. This completes the proof.
\end{proof}
When the signs are all the same or only one sign is different from
the others, we show that the configuration is always totally non-degenerate.
\begin{lem}
\label{lem:almost-same-sign-implies-totally-non-degen}Let $J\geq2$.
If $\#\{i\in\setJ:\iota_{i}=+1\}\leq1$ or $\#\{i\in\setJ:\iota_{i}=-1\}\leq1$,
then the configuration $\{(\iota_{1},z_{1}^{\ast}),\dots,(\iota_{J},z_{J}^{\ast})\}$
is totally non-degenerate. In particular, any degenerate configuration
has at least two positive signs and two negative signs so that $J\geq4$.
\end{lem}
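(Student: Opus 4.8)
The plan is to reduce the assertion to non-degeneracy of the full configuration and then run an elementary kernel computation. Note first that the hypothesis $\#\{i\in\setJ:\iota_{i}=+1\}\leq1$ (resp.\ $\#\{i\in\setJ:\iota_{i}=-1\}\leq1$) is inherited by every sub-configuration, since passing to a subset of indices cannot increase the number of $+$ (resp.\ $-$) signs. Moreover, by Definition~\ref{def:non-degeneracy-of-configurations}, (non-)degeneracy is invariant under flipping all the signs $\iota_{i}$ simultaneously, so the case $\#\{i:\iota_{i}=-1\}\leq1$ reduces to the case $\#\{i:\iota_{i}=+1\}\leq1$. Hence it suffices to prove: \emph{if $J\geq2$ and at most one index has $\iota_{i}=+1$, then $A^{\ast}$ has no nonzero kernel element in $[0,\infty)^{J}$}; applying this to each sub-configuration of size at least two then gives total non-degeneracy.

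So assume at most one index carries $+1$. I would choose a distinguished index $i_{0}\in\setJ$: take $i_{0}$ to be the unique index with $\iota_{i_{0}}=+1$ if such an index exists, and an arbitrary index otherwise. The key structural point is that then all the off-diagonal entries $A^{\ast}_{i_{0}j}=\iota_{i_{0}}\iota_{j}\kpp_{0}\kpp_{\infty}|z_{i_{0}}^{\ast}-z_{j}^{\ast}|^{-(N-2)}$ with $j\neq i_{0}$ have one and the same sign: indeed, every $j\neq i_{0}$ satisfies $\iota_{j}=-\iota_{i_{0}}$ (this is exactly where the hypothesis that $i_{0}$ carries the only $+$ sign, or that there is no $+$ sign at all, is used), so these entries are all strictly negative if $\iota_{i_{0}}=+1$ and all strictly positive if $\iota_{i_{0}}=-1$, and in particular all nonzero.

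Now suppose $\vec{\mu}\in[0,\infty)^{J}$ satisfies $A^{\ast}\vec{\mu}=0$. Reading the $i_{0}$-th coordinate of this equation gives $\sum_{j\neq i_{0}}A^{\ast}_{i_{0}j}\mu_{j}=0$; since all coefficients $A^{\ast}_{i_{0}j}$ are nonzero of one sign and all $\mu_{j}\geq0$, this forces $\mu_{j}=0$ for every $j\neq i_{0}$. Picking any index $i_{1}\neq i_{0}$ (which exists because $J\geq2$) and reading the $i_{1}$-th coordinate then yields $A^{\ast}_{i_{1}i_{0}}\mu_{i_{0}}=0$, and since $A^{\ast}_{i_{1}i_{0}}\neq0$ we get $\mu_{i_{0}}=0$. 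Hence $\vec{\mu}=0$, so the configuration is non-degenerate, and by the reduction it is totally non-degenerate. Finally, the last sentence is the contrapositive: a degenerate configuration can have neither $\#\{i:\iota_{i}=+1\}\leq1$ nor $\#\{i:\iota_{i}=-1\}\leq1$, hence has at least two positive and at least two negative signs, forcing $J\geq4$. There is no real obstacle here; the only points requiring any care are the reduction to sub-configurations together with the sign-flip normalization, and the choice of the distinguished row $i_{0}$ whose off-diagonal entries are sign-definite — after that the computation is purely linear-algebraic.
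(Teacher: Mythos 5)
Your proof is correct and follows essentially the same route as the paper: reduce by the sign-flip symmetry (and inheritance of the hypothesis by sub-configurations) to the case of at most one $+$ sign, then use the row of $A^{\ast}$ at a distinguished index $i_{0}$, whose off-diagonal entries are all nonzero and of a single sign, to force a nonnegative kernel element to vanish at every $j\neq i_{0}$, and a second row to kill the remaining coordinate. One cosmetic slip: in the all-negative case one has $\iota_{j}=\iota_{i_{0}}$ (not $-\iota_{i_{0}}$) for $j\neq i_{0}$, but the sign conclusion you actually use ($A^{\ast}_{i_{0}j}>0$ when $\iota_{i_{0}}=-1$, $A^{\ast}_{i_{0}j}<0$ when $\iota_{i_{0}}=+1$) is stated correctly, so the argument is unaffected.
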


\begin{proof}
Without loss of generality, it suffices to show that the configuration
is non-degenerate when $\iota_{1}=\dots=\iota_{J-1}=\iota_{J}$ or
$\iota_{1}=\dots=\iota_{J-1}=-\iota_{J}$. Suppose $A^{\ast}\vec{\frkc}=0$
for some $\vec{\frkc}\in[0,\infty)^{J}$. 

Consider first the case $\iota_{1}=\dots=\iota_{J-1}=\iota_{J}$.
Since $J\geq2$ and $A_{ij}^{\ast}>0$ whenever $i\neq j$, we easily
have $\vec{\frkc}=0$. Thus the configuration is non-degenerate.

We turn to the case $\iota_{1}=\dots=\iota_{J-1}=-\iota_{J}$. Observe
$0=(A^{\ast}\vec{\frkc})_{J}=\sum_{j\neq J}A_{Jj}\frkc_{j}$. Since
$\vec{\frkc}\in[0,\infty)^{J}$ and $A_{Jj}>0$ for all $j\neq J$,
we conclude that $\frkc_{j}=0$ for all $j\neq J$. Next, observe
$0=(A\vec{\frkc})_{1}=\sum_{j\neq1}A_{1j}\frkc_{j}=A_{1J}\frkc_{J}$.
Since $A_{1J}\neq0$, we conclude $\frkc_{J}=0$, so $\vec{\frkc}=0$.
Thus the configuration is non-degenerate.
\end{proof}
We turn our attention to minimally degenerate configurations. By degeneracy,
$A^{\ast}$ has a nonzero kernel element in $[0,\infty)^{J}$.
\begin{lem}
\label{lem:min-degen-implies-unique-pos-vec}Let $\{(\iota_{1},z_{1}^{\ast}),\dots,(\iota_{J},z_{J}^{\ast})\}$
be minimally degenerate. Then, we have the following.
\begin{itemize}
\item $\ker A^{\ast}$ is one-dimensional and there exists a unique positive
vector $\vec{\frkc}\in(0,\infty)^{J}$ with $|\vec{\frkc}|=1$ such
that $\ker A^{\ast}=\bbR\vec{\frkc}$. 
\item We have an equivalence of distances: 
\begin{equation}
\Big|\frac{\vec{\ell}}{|\vec{\ell}|}-\vec{\frkc}\Big|\aeq\frac{|A^{\ast}\vec{\ell}|}{|\vec{\ell}|}\qquad\text{for any }0\neq\vec{\ell}\in[0,\infty)^{J}.\label{eq:case3-equiv-dist}
\end{equation}
\item For any $\dlt_{1}>0$ and $\vec{\lmb}\in(0,\infty)^{J}$ such that
$\lmb_{\secmax}\geq\dlt_{1}\lmb_{\max}$, we have 
\begin{equation}
\sum_{i=1}^{J}\lmb_{i}^{2D-2}(A^{\ast}\vec{\lmb}^{D})_{i}^{2}\aeq_{\dlt_{1}}\lmb_{\max}^{2D-2}|A^{\ast}\vec{\lmb}^{D}|^{2}.\label{eq:case3-dist-from-kernel}
\end{equation}
\end{itemize}
\end{lem}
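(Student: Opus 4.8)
The three assertions are linear-algebraic consequences of minimal degeneracy, and the plan is to prove them in the stated order, each building on the previous.

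\emph{Item 1.} The key preliminary observation is that \emph{every} nonzero $\vec{w}\in\ker A^{\ast}$ lying in $[0,\infty)^{J}$ is automatically strictly positive. Indeed, put $\calI=\{i\in\setJ:w_{i}>0\}\neq\emptyset$; reading the $k$-th coordinate of $A^{\ast}\vec{w}=0$ for $k\notin\calI$ together with $A_{ij}^{\ast}\neq0$ for $i\neq j$ excludes $|\calI|=1$, while for $|\calI|\geq2$ the vanishing of the coordinates indexed by $\calI$ yields $A_{\calI}^{\ast}\vec{w}_{\calI}=0$ with $\vec{w}_{\calI}\in(0,\infty)^{\calI}$; if $\calI$ were a proper subset this would contradict minimal degeneracy, so $\calI=\setJ$. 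Since the configuration is degenerate there is at least one such vector, hence one lying in $(0,\infty)^{J}$; call it $\vec{\frkc}_{0}$. For one-dimensionality, suppose $\vec{w}\in\ker A^{\ast}$ is not a multiple of $\vec{\frkc}_{0}$. The set $\{t\in\bbR:\vec{\frkc}_{0}+t\vec{w}\in[0,\infty)^{J}\}$ is a closed interval that is a neighbourhood of $0$; invoking the preliminary observation for $\pm\vec{w}$ in case $\vec{w}$ is sign-definite, this interval has a finite endpoint $t_{\ast}\neq0$, at which $\vec{\frkc}_{0}+t_{\ast}\vec{w}$ is a kernel vector in $[0,\infty)^{J}$ with a vanishing coordinate; by the preliminary observation such a vector must be $0$, forcing $\vec{w}\in\bbR\vec{\frkc}_{0}$, a contradiction. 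Normalizing $\vec{\frkc}_{0}$ produces the unique unit $\vec{\frkc}\in(0,\infty)^{J}$ with $\ker A^{\ast}=\bbR\vec{\frkc}$.

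\emph{Item 2.} By $0$-homogeneity in $\vec{\ell}$ it suffices to treat $|\vec{\ell}|=1$ and prove $|\vec{\ell}-\vec{\frkc}|\aeq|A^{\ast}\vec{\ell}|$. The bound $\aleq$ is immediate from $A^{\ast}\vec{\frkc}=0$ and boundedness of $A^{\ast}$. For $\ageq$, take the orthogonal decomposition $\vec{\ell}=s\vec{\frkc}+\vec{p}$ with $\vec{p}\perp\vec{\frkc}$; since $\vec{\ell}\geq0$, $\vec{\frkc}>0$ and $\sum_{i}\ell_{i}\geq|\vec{\ell}|=1$ one gets $s\geq\min_{i}\frkc_{i}>0$, while $s^{2}+|\vec{p}|^{2}=1$. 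Because $A^{\ast}$ restricts to an isomorphism of $\vec{\frkc}^{\perp}=\mathrm{ran}(A^{\ast})$ (symmetry plus one-dimensional kernel) we have $|A^{\ast}\vec{\ell}|=|A^{\ast}\vec{p}|\ageq|\vec{p}|$, and from $(1-s)^{2}=|\vec{p}|^{4}/(1+s)^{2}\leq|\vec{p}|^{2}$ we conclude $|\vec{\ell}-\vec{\frkc}|^{2}=(1-s)^{2}+|\vec{p}|^{2}\leq2|\vec{p}|^{2}\aleq|A^{\ast}\vec{\ell}|^{2}$.

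\emph{Item 3.} After rescaling so that $\lmb_{\max}=1$, the bound $\aleq$ is trivial since $\lmb_{i}^{2D-2}\leq1$ (recall $2D-2=N-4>0$ as $N\geq7$). For $\ageq_{\dlt_{1}}$ I would argue by contradiction and compactness. A failing sequence provides $\vec{\lmb}_{n}\in(0,1]^{J}$ with $\lmb_{\max,n}=1$, $\lmb_{\secmax,n}\geq\dlt_{1}$, $A^{\ast}\vec{\lmb}_{n}^{D}\neq0$ (the inequality holds trivially when $A^{\ast}\vec{\lmb}^{D}=0$), and $\sum_{i}\lmb_{i,n}^{2D-2}(A^{\ast}\vec{\lmb}_{n}^{D})_{i}^{2}/|A^{\ast}\vec{\lmb}_{n}^{D}|^{2}\to0$. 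Passing to subsequences, $\vec{\lmb}_{n}\to\vec{\lmb}_{\infty}$ (still $\lmb_{\max,\infty}=1$, $\lmb_{\secmax,\infty}\geq\dlt_{1}$ by continuity of $\lmb_{\max},\lmb_{\secmax}$), $\vec{\ell}_{n}\coloneqq\vec{\lmb}_{n}^{D}/|\vec{\lmb}_{n}^{D}|\to\vec{\ell}_{\infty}\in\{\vec{c}\in[0,\infty)^{J}:|\vec{c}|=1\}$, and $w_{n}\coloneqq A^{\ast}\vec{\ell}_{n}/|A^{\ast}\vec{\ell}_{n}|\to w_{\infty}$ with $|w_{\infty}|=1$, $w_{\infty}\in\mathrm{ran}(A^{\ast})=\vec{\frkc}^{\perp}$. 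In the limit $\sum_{i}\lmb_{i,\infty}^{2D-2}(w_{\infty})_{i}^{2}=0$. If $A^{\ast}\vec{\ell}_{\infty}=0$ then item~1 gives $\vec{\ell}_{\infty}=\vec{\frkc}>0$, so every $\lmb_{i,\infty}>0$ and the sum is bounded away from $0$, a contradiction. If $A^{\ast}\vec{\ell}_{\infty}\neq0$ then $w_{\infty}=A^{\ast}\vec{\ell}_{\infty}/|A^{\ast}\vec{\ell}_{\infty}|$, so with $\calI\coloneqq\{i:\lmb_{i,\infty}>0\}$ (which has size $\geq2$ since it contains the argmax and arg-second-max of $\vec{\lmb}_{\infty}$) the vanishing of $(A^{\ast}\vec{\ell}_{\infty})_{i}$ for $i\in\calI$ gives $A_{\calI}^{\ast}\vec{\ell}_{\calI,\infty}=0$ with $\vec{\ell}_{\calI,\infty}>0$; the case $\calI=\setJ$ is excluded since $A^{\ast}\vec{\ell}_{\infty}\neq0$, and $\calI\subsetneq\setJ$ is excluded by minimal degeneracy, a contradiction. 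The main obstacle is precisely this lower bound: it is the only part requiring compactness together with the structural input of item~1 and minimal degeneracy, and it is important to extract the limit of the \emph{normalized image vectors} $w_{n}$ (rather than of $\vec{\ell}_{n}$ alone), so that both regimes---$A^{\ast}\vec{\ell}_{n}$ bounded below, and $A^{\ast}\vec{\ell}_{n}$ degenerating toward the kernel direction $\vec{\frkc}$---are handled uniformly.
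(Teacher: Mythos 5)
Your proof is correct. Items~1 and~2 are essentially the paper's argument (the paper proves positivity of nonnegative kernel vectors and one-dimensionality by the same boundary-touching trick, and dismisses \eqref{eq:case3-equiv-dist} as an easy consequence of one-dimensionality, which your orthogonal decomposition $\vec{\ell}=s\vec{\frkc}+\vec{p}$ with the spectral gap of $A^{\ast}$ on $\vec{\frkc}^{\perp}$ makes explicit); note only that in Item~2 your labels are swapped — the ``immediate from boundedness'' direction is $|A^{\ast}\vec{\ell}|\aleq|\vec{\ell}-\vec{\frkc}|$, and the decomposition argument supplies the genuinely needed bound $|\vec{\ell}-\vec{\frkc}|\aleq|A^{\ast}\vec{\ell}|$. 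For Item~3 you take a genuinely different route from the paper within the same contradiction/compactness framework: the paper extracts only the limit ratio vector $\vec{\ell}$, shows \emph{all} its entries are positive by applying the quantitative non-degeneracy estimate of Lemma~\ref{lem:non-degen} to the support set $\calI$ (deriving $\sum_{i}\lmb_{i,n}^{2D-2}(A^{\ast}\vec{\lmb}_{n}^{D})_{i}^{2}\aeq\lmb_{\max,n}^{4D-2}$ when $\calI\subsetneq\setJ$), and then concludes at once; you instead also extract the normalized image direction $w_{n}=A^{\ast}\vec{\ell}_{n}/|A^{\ast}\vec{\ell}_{n}|$ and split according to whether $A^{\ast}\vec{\ell}_{\infty}$ vanishes, handling the degenerate limit via Item~1 and the non-degenerate limit by exhibiting a positive kernel vector of $A_{\calI}^{\ast}$ for a proper $\calI$ of size $\geq2$, contradicting minimal degeneracy. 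Your version is purely qualitative (it never invokes Lemma~\ref{lem:non-degen}) and is a bit more self-contained, at the cost of a case distinction; the paper's version reuses the quantitative lemma and avoids tracking $w_{n}$. Both are complete.
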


\begin{proof}
\uline{Proof of the first item}. First, we show that any $0\neq\vec{\frkc}\in[0,\infty)^{J}\cap\ker A^{\ast}$
is a positive vector. Otherwise, without loss of generality, $\frkc_{J}=0$.
Then, the reduced vector $0\neq(\frkc_{1},\dots,\frkc_{J-1})\in[0,\infty)^{J-1}$
belongs to the kernel of $(A_{ij}^{\ast})_{1\leq i,j\leq J-1}$. Note
also that $J-1\geq2$ due to Lemma~\ref{lem:almost-same-sign-implies-totally-non-degen}.
Therefore, $\{(\iota_{1},z_{1}^{\ast}),\dots(\iota_{J-1},z_{J-1}^{\ast})\}$
is degenerate, contradicting to the minimal degeneracy of $\{(\iota_{1},z_{1}^{\ast}),\dots,(\iota_{J},z_{J}^{\ast})\}$.

Now, pick any $0\neq\vec{\frkc}\in[0,\infty)^{J}\cap\ker A^{\ast}$
and renormalize so that $|\vec{\frkc}|=1$. By the previous paragraph,
$\vec{\frkc}$ is a positive vector. It suffices to show that $\ker A^{\ast}=\bbR\vec{\frkc}$.
Let $0\neq\vec{\frkc}'\in\ker A^{\ast}$ be arbitrary. Then, there
exists $a\neq0$ such that $\vec{\frkc}-a\vec{\frkc}'$ touches the
boundary of $[0,\infty)^{J}$. Since $\vec{\frkc}-a\vec{\frkc}'\in\ker A^{\ast}$
as well, it must be zero by the first paragraph. Hence, $\vec{\frkc}'=a^{-1}\vec{\frkc}\in\bbR\vec{\frkc}$
as desired. 

\uline{Proof of \mbox{\eqref{eq:case3-equiv-dist}}}. This easily
follows from the one-dimensionality of $\ker A^{\ast}$. 

\uline{Proof of \mbox{\eqref{eq:case3-dist-from-kernel}}}. As
the $(\aleq)$-inequality is obvious, it suffices to show the $(\ageq)$-inequality.
Suppose not. Then, there exists a sequence $\vec{\lmb}_{n}\in(0,\infty)^{J}$
such that $\lmb_{\secmax,n}\aeq\lmb_{\max,n}$ and 
\begin{equation}
\sum_{i\in\setJ}\lmb_{i,n}^{2D-2}(A^{\ast}\vec{\lmb}_{n}^{D})_{i}^{2}=o_{n\to\infty}(1)\cdot\lmb_{\max,n}^{2D-2}|A^{\ast}\vec{\lmb}_{n}^{D}|^{2}.\label{eq:3.9-1}
\end{equation}
Passing to a subsequence, we may assume $\vec{\lmb}_{n}^{D}/|\vec{\lmb}_{n}^{D}|\to\exists\vec{\ell}\in[0,1]^{J}$. 

We claim that $\ell_{i}>0$  for all $i\in\setJ$. Suppose not. Then,
$\calI\coloneqq\{j\in\setJ:\ell_{j}>0\}$ is a proper subset of $\setJ$.
Note that $|\calI|\geq2$ due to $\lmb_{\max,n}\aeq\lmb_{\secmax,n}$.
Since $\{(\iota_{1},z_{1}^{\ast}),\dots,(\iota_{J},z_{J}^{\ast})\}$
is minimally degenerate, $\{(\iota_{i},z_{i}^{\ast}):i\in\calI\}$
is non-degenerate. Hence $|A_{\calI}^{\ast}\vec{\lmb}_{\calI,n}|\aeq|\vec{\lmb}_{\calI,n}|$
by Lemma~\ref{lem:non-degen}. Since $\lmb_{i,n}\aeq\lmb_{\max,n}$
for all $i\in\calI$ by the definition of $\calI$, we obtain 
\[
\sum_{i\in\calI}\lmb_{i,n}^{2D-2}(A_{\calI}^{\ast}\vec{\lmb}_{\calI,n}^{D})_{i}^{2}\aeq\lmb_{\max,n}^{4D-2}.
\]
Now, observe 
\[
\sum_{i\in\setJ}\lmb_{i,n}^{2D-2}(A^{\ast}\vec{\lmb}_{n}^{D})_{i}^{2}=\sum_{i\in\calI}\lmb_{i,n}^{2D-2}(A_{\calI}^{\ast}\vec{\lmb}_{\calI,n}^{D})_{i}^{2}+o_{n\to\infty}(1)\cdot\lmb_{\max,n}^{4D-2}\aeq\lmb_{\max,n}^{4D-2}.
\]
This contradicts to \eqref{eq:3.9-1}. This completes the proof of
the claim.

By the previous claim, $\lmb_{i,n}\aeq\lmb_{\max,n}$ for all $i\in\setJ$.
Thus we get 
\[
\sum_{i\in\setJ}\lmb_{i,n}^{2D-2}(A^{\ast}\vec{\lmb}_{n}^{D})_{i}^{2}\aeq\lmb_{\max,n}^{2D-2}\sum_{i\in\setJ}(A^{\ast}\vec{\lmb}_{n}^{D})_{i}^{2}=\lmb_{\max,n}^{2D-2}|A^{\ast}\vec{\lmb}_{n}^{D}|^{2},
\]
contradicting to \eqref{eq:3.9-1}. This completes the proof of \eqref{eq:case3-dist-from-kernel}.
\end{proof}
\begin{lem}
\label{lem:case3-non-degen-quantity}Let $\{(\iota_{1},z_{1}^{\ast}),\dots,(\iota_{J},z_{J}^{\ast})\}$
be minimally degenerate; let $\vec{\frkc}=(\frkc_{1},\dots,\frkc_{J})\in(0,\infty)^{J}$
be the unique positive vector with $|\vec{\frkc}|=1$ such that $\ker A^{\ast}=\bbR\vec{\frkc}$,
given by Lemma~\ref{lem:min-degen-implies-unique-pos-vec}. Assume
\begin{equation}
\sum_{i=1}^{J}|v_{i}^{\ast}|^{2}>0,\quad\text{where}\quad v_{i}^{\ast}\coloneqq\sum_{j=1}^{J}\frac{A_{ij}^{\ast}\frkc_{i}\frkc_{j}}{|z_{i}^{\ast}-z_{j}^{\ast}|^{2}}(z_{j}^{\ast}-z_{i}^{\ast})\in\bbR^{N}.\label{eq:case3-v_i-not-all-zero}
\end{equation}
Then, for any $K_{0}>0$ and $\dlt_{1}>0$, there exists $\dlt_{2}>0$
such that 
\begin{equation}
\sum_{i=1}^{J}\lmb_{i}^{2D-2}(A[\vec{z}]\vec{\lmb}^{D})_{i}^{2}+\sum_{i=1}^{J}\lmb_{i}^{2D}\Big|\sum_{j=1}^{J}\frac{A[\vec{z}]_{ij}\lmb_{j}^{D}(z_{i}-z_{j})}{|z_{i}-z_{j}|^{2}}\Big|^{2}\aeq_{K_{0},\dlt_{1}}\lmb_{\max}^{2D-2}|A[\vec{z}]\vec{\lmb}^{D}|^{2}+\lmb_{\max}^{4D}\label{eq:case3-non-degen-quantity}
\end{equation}
for any $\vec{\lmb}\in(0,\infty)^{J}$ and $\vec{z}\in(\bbR^{N})^{J}$
satisfying $\lmb_{\max}\leq K_{0}$, $\lmb_{\secmax}\geq\dlt_{1}\lmb_{\max}$,
and $|\vec{z}-\vec{z}^{\ast}|<\dlt_{2}$.
\end{lem}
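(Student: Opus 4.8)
The plan is to prove the two estimates of \eqref{eq:case3-non-degen-quantity} separately. For the $(\aleq)$-direction: when $\dlt_{2}$ is small the points $z_{1},\dots,z_{J}$ stay distinct with $|z_{i}-z_{j}|\aeq1$, so $|A[\vec{z}]_{ij}|\aeq1$ and $|A[\vec{z}]_{ij}|/|z_{i}-z_{j}|\aleq1$; since $2D-2=N-4>0$ one has $\lmb_{i}^{2D-2}\leq\lmb_{\max}^{2D-2}$ and $\lmb_{j}^{D}\leq\lmb_{\max}^{D}$, hence the first sum in \eqref{eq:case3-non-degen-quantity} is $\leq\lmb_{\max}^{2D-2}|A[\vec{z}]\vec{\lmb}^{D}|^{2}$ and the second is $\aleq\lmb_{\max}^{4D}$. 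For the $(\ageq)$-direction, write $L$ for the left side of \eqref{eq:case3-non-degen-quantity}; it suffices to establish the two lower bounds $\mathrm{(i)}$ $L\ageq_{K_{0},\dlt_{1}}\lmb_{\max}^{4D}$ and $\mathrm{(ii)}$ $\sum_{i}\lmb_{i}^{2D-2}(A[\vec{z}]\vec{\lmb}^{D})_{i}^{2}\ageq_{\dlt_{1}}\lmb_{\max}^{2D-2}|A[\vec{z}]\vec{\lmb}^{D}|^{2}$ under the hypotheses $\lmb_{\max}\leq K_{0}$, $\lmb_{\secmax}\geq\dlt_{1}\lmb_{\max}$, $|\vec{z}-\vec{z}^{\ast}|<\dlt_{2}$: then $L=\tfrac12 L+\tfrac12 L$, bounding the first half below by $\mathrm{(i)}$ and the second half below by (the first sum of $L$, then) $\mathrm{(ii)}$, gives $L\ageq_{K_{0},\dlt_{1}}\lmb_{\max}^{4D}+\lmb_{\max}^{2D-2}|A[\vec{z}]\vec{\lmb}^{D}|^{2}$, which is \eqref{eq:case3-non-degen-quantity}.

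Estimate $\mathrm{(ii)}$ is scale-invariant in $\vec{\lmb}$, so I would normalize $\lmb_{\max}=1$ and argue by contradiction–compactness: if it failed there would be sequences $\vec{\lmb}_{n}\to\vec{\lmb}_{\infty}$ (with $\lmb_{\max,\infty}=1$ and $\lmb_{\secmax,\infty}\geq\dlt_{1}$, so $|\calI|\geq2$ below) and $\vec{z}_{n}\to\vec{z}^{\ast}$ (hence $A[\vec{z}_{n}]\to A^{\ast}$) with $A[\vec{z}_{n}]\vec{\lmb}_{n}^{D}\neq0$ and $\sum_{i}\lmb_{i,n}^{2D-2}(A[\vec{z}_{n}]\vec{\lmb}_{n}^{D})_{i}^{2}=o_{n\to\infty}(1)\cdot|A[\vec{z}_{n}]\vec{\lmb}_{n}^{D}|^{2}$. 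Split according to whether $A^{\ast}\vec{\lmb}_{\infty}^{D}=0$. If $A^{\ast}\vec{\lmb}_{\infty}^{D}\neq0$, then $|A[\vec{z}_{n}]\vec{\lmb}_{n}^{D}|$ is bounded below, so passing to the limit gives $\sum_{i}\lmb_{i,\infty}^{2D-2}(A^{\ast}\vec{\lmb}_{\infty}^{D})_{i}^{2}=0$; with $\calI\coloneqq\{i:\lmb_{i,\infty}>0\}$ this forces $(A^{\ast}\vec{\lmb}_{\infty}^{D})_{i}=(A_{\calI}^{\ast}\vec{\lmb}_{\calI,\infty}^{D})_{i}=0$ for $i\in\calI$, i.e.\ $A_{\calI}^{\ast}\vec{\lmb}_{\calI,\infty}^{D}=0$ with $\vec{\lmb}_{\calI,\infty}^{D}$ a nonzero positive vector; if $\calI\subsetneq\setJ$ this makes a proper sub-configuration degenerate, contradicting minimal degeneracy, and if $\calI=\setJ$ then $A^{\ast}\vec{\lmb}_{\infty}^{D}=0$, contrary to assumption. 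If instead $A^{\ast}\vec{\lmb}_{\infty}^{D}=0$, then by Lemma~\ref{lem:min-degen-implies-unique-pos-vec} (every nonzero element of $\ker A^{\ast}\cap[0,\infty)^{J}$ is strictly positive) one has $\lmb_{i,\infty}>0$ for all $i$, so $\min_{i}\lmb_{i,n}^{2D-2}\ageq1$ for large $n$ and $\sum_{i}\lmb_{i,n}^{2D-2}(A[\vec{z}_{n}]\vec{\lmb}_{n}^{D})_{i}^{2}\ageq|A[\vec{z}_{n}]\vec{\lmb}_{n}^{D}|^{2}$, a contradiction. (This is the perturbation of \eqref{eq:case3-dist-from-kernel} allowing $\vec{z}$ near $\vec{z}^{\ast}$ in place of $\vec{z}=\vec{z}^{\ast}$.)

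For $\mathrm{(i)}$ — the step where \eqref{eq:case3-v_i-not-all-zero} is used — I would first prove it at $\lmb_{\max}=1$ and then pass to $\lmb_{\max}\leq K_{0}$ via the elementary inequality $\lmb_{\max}^{4D-2}p+\lmb_{\max}^{4D}q\geq\min(1,\lmb_{\max}^{-2})\,\lmb_{\max}^{4D}(p+q)\ageq_{K_{0},\dlt_{1}}\lmb_{\max}^{4D}$, where $\vec{\nu}\coloneqq\vec{\lmb}/\lmb_{\max}$ and $p\coloneqq\sum_{i}\nu_{i}^{2D-2}(A[\vec{z}]\vec{\nu}^{D})_{i}^{2}$, $q\coloneqq\sum_{i}\nu_{i}^{2D}\big|\sum_{j}\tfrac{A[\vec{z}]_{ij}\nu_{j}^{D}(z_{i}-z_{j})}{|z_{i}-z_{j}|^{2}}\big|^{2}$ are the two sums of $L$ evaluated at $(\vec{\nu},\vec{z})$, with $p+q\ageq_{\dlt_{1}}1$ being precisely the claim at $\lmb_{\max}=1$. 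That claim again goes by compactness: if $p(\vec{\lmb}_{n},\vec{z}_{n})+q(\vec{\lmb}_{n},\vec{z}_{n})\to0$ with $\lmb_{\max,n}=1$, $\lmb_{\secmax,n}\geq\dlt_{1}$, $\vec{z}_{n}\to\vec{z}^{\ast}$, then in the limit both $\sum_{i}\lmb_{i,\infty}^{2D-2}(A^{\ast}\vec{\lmb}_{\infty}^{D})_{i}^{2}=0$ and $\sum_{i}\lmb_{i,\infty}^{2D}\big|\sum_{j}\tfrac{A_{ij}^{\ast}\lmb_{j,\infty}^{D}(z_{i}^{\ast}-z_{j}^{\ast})}{|z_{i}^{\ast}-z_{j}^{\ast}|^{2}}\big|^{2}=0$. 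From the first identity, the argument of the previous paragraph (using Lemma~\ref{lem:non-degen} on proper sub-configurations) forces $\lmb_{i,\infty}>0$ for all $i$ and hence $A^{\ast}\vec{\lmb}_{\infty}^{D}=0$; by Lemma~\ref{lem:min-degen-implies-unique-pos-vec} this gives $\vec{\lmb}_{\infty}^{D}=c\vec{\frkc}$ for some $c>0$. Substituting $\lmb_{j,\infty}^{D}=c\frkc_{j}$ into the second identity and recalling the definition of $v_{i}^{\ast}$ in \eqref{eq:case3-v_i-not-all-zero}, the $i$-th summand equals $c^{2}\lmb_{i,\infty}^{2D}\frkc_{i}^{-2}|v_{i}^{\ast}|^{2}$, so $v_{i}^{\ast}=0$ for every $i$, contradicting \eqref{eq:case3-v_i-not-all-zero}.

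The main obstacle is the rigidity shared by $\mathrm{(i)}$ and $\mathrm{(ii)}$: ruling out that a limiting scale vector has vanishing components. This is exactly where \emph{minimal} degeneracy is indispensable, since a nonzero positive element of $\ker A_{\calI}^{\ast}$ would force a proper sub-configuration to be degenerate; once all components of $\vec{\lmb}_{\infty}$ are known positive, the one-dimensionality $\ker A^{\ast}=\bbR\vec{\frkc}$ and the condition \eqref{eq:case3-v_i-not-all-zero} close the argument. A secondary, purely bookkeeping point is the coexistence of the two homogeneity degrees $4D-2$ and $4D$ in $\vec{\lmb}$ in \eqref{eq:case3-non-degen-quantity}; the splitting into $\mathrm{(i)}$–$\mathrm{(ii)}$ together with the scaling inequality above handles it without any rate chasing.
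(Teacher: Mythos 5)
Your proof is correct and follows essentially the same compactness-and-contradiction strategy as the paper, relying on the identical key ingredients: minimal degeneracy forcing all components of the limiting scale vector to be strictly positive (via degeneracy of a proper sub-configuration otherwise), the one-dimensionality $\ker A^{\ast}=\bbR\vec{\frkc}$, and the hypothesis \eqref{eq:case3-v_i-not-all-zero}. The only difference is organizational: you split \eqref{eq:case3-non-degen-quantity} into two separate compactness claims (a perturbed form of \eqref{eq:case3-dist-from-kernel} with $A[\vec{z}]$, and a normalized lower bound rescaled using $\lmb_{\max}\leq K_{0}$), whereas the paper runs a single sequence extraction contradicting the full inequality at once; both routes are equally valid.
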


\begin{rem}
\eqref{eq:case3-non-degen-quantity} says that the equations of $z_{i,t}$
(see \eqref{eq:intro-z-eqn}) do not degenerate whenever the equations
of $\lmb_{i,t}$ degenerate.
\end{rem}

\begin{proof}
As the $(\aleq)$-inequality is obvious, it suffices to show the $(\ageq)$-inequality.
Suppose not. Then, there exist sequences $\vec{\lmb}_{n}\in(0,\infty)^{J}$
and $\vec{z}_{n}\in(\bbR^{N})^{J}$ such that $\lmb_{\max,n}\aleq1$,
$\lmb_{\secmax,n}\aeq\lmb_{\max,n}$, $\vec{z}_{n}\to\vec{z}^{\ast}$,
and 
\begin{equation}
\begin{aligned}\sum_{i}\lmb_{i,n}^{2D-2}(A[\vec{z}_{n}]\vec{\lmb}_{n}^{D})_{i}^{2}+\sum_{i}\lmb_{i,n}^{2D}\Big|\sum_{j}\frac{A[\vec{z}_{n}]_{ij}\lmb_{j,n}^{D}(z_{i,n}-z_{j,n})}{|z_{i,n}-z_{j,n}|^{2}}\Big|^{2}\\
=o_{n\to\infty}(1)\cdot(\lmb_{\max,n}^{2D-2}|A[\vec{z}_{n}]\vec{\lmb}_{n}^{D}|^{2} & +\lmb_{\max,n}^{4D}).
\end{aligned}
\label{eq:7.30-1}
\end{equation}
Passing to a subsequence, we may assume that $|A[\vec{z}_{n}]\vec{\lmb}_{n}^{D}|/|\vec{\lmb}_{n}^{D}|\to\exists c\in[0,\infty)$
and $\vec{\lmb}_{n}^{D}/|\vec{\lmb}_{n}^{D}|\to\exists\vec{\ell}\in[0,1]^{J}$. 

We note that $\ell_{i}>0$ for all $i\in\setJ$; the same argument
in the proof of \eqref{eq:case3-dist-from-kernel} gives $\ell_{i}>0$
for all $i\in\setJ$, so this will not be repeated here. By $\ell_{i}>0$
for all $i\in\setJ$, we have $\lmb_{i,n}\aeq\lmb_{\max,n}$ for all
$i\in\setJ$. This together with \eqref{eq:7.30-1} gives 
\begin{equation}
\lmb_{\max,n}^{2D-2}|A[\vec{z}_{n}]\vec{\lmb}_{n}^{D}|^{2}+\sum_{i}\lmb_{i,n}^{2D}\Big|\sum_{j}\frac{A[\vec{z}_{n}]_{ij}\lmb_{j,n}^{D}(z_{i,n}-z_{j,n})}{|z_{i,n}-z_{j,n}|^{2}}\Big|^{2}=o_{n\to\infty}(1)\cdot\lmb_{\max,n}^{4D}.\label{eq:7.30-3}
\end{equation}
In particular, $|A[\vec{z}_{n}]\vec{\lmb}_{n}^{D}|=o_{n\to\infty}(1)\cdot\lmb_{\max,n}^{D+1}=o_{n\to\infty}(1)\cdot\lmb_{\max,n}^{D}$.
This together with $\vec{z}_{n}\to\vec{z}^{\ast}$ gives $|A^{\ast}\vec{\lmb}_{n}^{D}|=o_{n\to\infty}(1)\cdot\lmb_{\max,n}^{D}$.
By \eqref{eq:case3-equiv-dist}, we conclude 
\[
\frac{\vec{\lmb}_{n}^{D}}{|\vec{\lmb}_{n}^{D}|}=\vec{\frkc}+o_{n\to\infty}(1).
\]
Using this and \eqref{eq:case3-v_i-not-all-zero}, we see that the
second term of LHS\eqref{eq:7.30-3} becomes 
\[
\sum_{i}\lmb_{i,n}^{2D}\Big|\sum_{j}\frac{A[\vec{z}_{n}]_{ij}\lmb_{j,n}^{D}(z_{i,n}-z_{j,n})}{|z_{i,n}-z_{j,n}|^{2}}\Big|^{2}=|\vec{\lmb}_{n}^{D}|^{4}\cdot\Big\{\sum_{i}|v_{i}^{\ast}|^{2}+o_{n\to\infty}(1)\Big\}\aeq\lmb_{\max,n}^{4D},
\]
contradicting to \eqref{eq:7.30-3}. This completes the proof.
\end{proof}
The following lemma says that the assumption \eqref{eq:case3-v_i-not-all-zero}
is always satisfied when $J=4$, allowing us to say that our main
theorems give a complete classification result for $J\leq4$. 
\begin{lem}
\label{lem:J=00003D4-implies-not-too-degenerate}If $J=4$, then every
degenerate configuration is minimally degenerate and satisfies \eqref{eq:case3-v_i-not-all-zero}.
\end{lem}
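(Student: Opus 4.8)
The statement has two parts: (i) every degenerate configuration with $J=4$ is minimally degenerate; (ii) every such configuration satisfies \eqref{eq:case3-v_i-not-all-zero}, i.e. $\sum_i |v_i^\ast|^2 > 0$. For part (i), suppose $\{(\iota_1,z_1^\ast),\dots,(\iota_4,z_4^\ast)\}$ is degenerate. By Lemma~\ref{lem:almost-same-sign-implies-totally-non-degen}, a degenerate configuration needs at least two positive and two negative signs, so with $J=4$ we must have exactly two of each sign, say (up to relabelling) $\iota_1=\iota_2=+$ and $\iota_3=\iota_4=-$. To check minimal degeneracy I must verify that every proper sub-configuration of size $\geq 2$ is non-degenerate. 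Any sub-configuration of size $2$ has matrix $\begin{pmatrix} 0 & A_{ij}^\ast \\ A_{ij}^\ast & 0\end{pmatrix}$ with $A_{ij}^\ast \neq 0$, whose only nonnegative kernel element is $0$, hence non-degenerate. Any sub-configuration of size $3$ has (after relabelling) at most one sign different from the other two — indeed three signs from $\{+,+,-,-\}$ always have a majority — so Lemma~\ref{lem:almost-same-sign-implies-totally-non-degen} applies and gives non-degeneracy. Thus every degenerate $J=4$ configuration is automatically minimally degenerate.

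**The main obstacle: part (ii).** This is the substantive part. By minimal degeneracy and Lemma~\ref{lem:min-degen-implies-unique-pos-vec}, $\ker A^\ast = \bbR\vec{\frkc}$ for a unique positive unit vector $\vec{\frkc}$, and I must show the vectors
\[
v_i^\ast = \sum_{j} \frac{A_{ij}^\ast \frkc_i \frkc_j}{|z_i^\ast - z_j^\ast|^2}(z_j^\ast - z_i^\ast), \qquad i \in \{1,2,3,4\},
\]
are not all zero. The plan is to argue by contradiction: assume $v_i^\ast = 0$ for all $i$. Write $B_{ij} := \dfrac{A_{ij}^\ast \frkc_i\frkc_j}{|z_i^\ast - z_j^\ast|^2}$ for $i \neq j$ (and $B_{ii}=0$), so $B_{ij}$ has the sign of $\iota_i\iota_j$ and $B_{ij} = B_{ji}$. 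The condition $v_i^\ast = 0$ reads $\sum_j B_{ij}(z_j^\ast - z_i^\ast) = 0$, i.e. $z_i^\ast \sum_j B_{ij} = \sum_j B_{ij} z_j^\ast$ for each $i$. Summing $\langle v_i^\ast, z_i^\ast\rangle$ over $i$ and symmetrizing gives $\sum_{i<j} B_{ij}|z_i^\ast - z_j^\ast|^2 = 0$; but with $\iota_1=\iota_2=+,\ \iota_3=\iota_4=-$ the positive-sign pairs are $(1,2),(3,4)$ and the negative-sign pairs are $(1,3),(1,4),(2,3),(2,4)$, so this identity becomes a genuine linear relation among the six squared distances with prescribed signs — not an immediate contradiction. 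The cleaner route is to exploit the kernel relation $A^\ast \vec{\frkc} = 0$ together with $v_i^\ast = 0$ simultaneously.

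**Completing part (ii).** I would combine the two sets of equations as follows. The relations $\sum_j B_{ij}(z_j^\ast - z_i^\ast) = 0$ say that each point $z_i^\ast$ is a ``$B$-weighted average'' of the others (with signed weights). Meanwhile, $A^\ast\vec{\frkc}=0$ means $\sum_j A_{ij}^\ast \frkc_j = 0$, i.e. $\sum_j B_{ij}/(\frkc_i \cdot |z_i^\ast-z_j^\ast|^{-2})\cdot \dots$ — more usefully, $\sum_{j} \frac{A_{ij}^\ast\frkc_i\frkc_j}{\frkc_j} |z_i^\ast - z_j^\ast|^2 \cdot |z_i^\ast-z_j^\ast|^{-2}= 0$, so $\sum_j B_{ij}\,|z_i^\ast - z_j^\ast|^2 / \frkc_j \cdot \frkc_j = \frkc_i\sum_j A_{ij}^\ast \frkc_j = 0$, hence $\sum_j B_{ij}|z_i^\ast - z_j^\ast|^2 = 0$ for each fixed $i$ (using $|z_i^\ast-z_j^\ast|^{-2}$ cancels). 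So for every $i$ we have both $\sum_j B_{ij} = 0$ (this needs care: it is $\sum_j A_{ij}^\ast\frkc_j=0$ scaled) — let me instead use: for each $i$, $\sum_j B_{ij}(z_j^\ast - z_i^\ast)=0$ and $\sum_j B_{ij}|z_i^\ast - z_j^\ast|^2 = 0$ and $\sum_j A_{ij}^\ast \frkc_j = 0$. Taking the inner product of the first with $z_i^\ast$ and using the second kills the linear terms and forces $\big(\sum_j B_{ij}\big)|z_i^\ast|^2 = 2\langle z_i^\ast, \sum_j B_{ij}z_j^\ast\rangle - \sum_j B_{ij}|z_j^\ast|^2$; iterating this across the four equations, together with the affine span of $\{z_1^\ast,\dots,z_4^\ast\}$ and the explicit $J=4$ structure (two $+$, two $-$), pins down a contradiction with the $z_i^\ast$ being distinct. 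Concretely, I expect the cleanest argument uses that for $J=4$ with this sign pattern, $A^\ast$ has rank $3$ (rank drops by exactly one, by minimal degeneracy), and that the simultaneous vanishing $v_i^\ast = 0$ would force an additional independent linear constraint on the four distinct points $z_i^\ast$ that, combined with $A^\ast\vec{\frkc}=0$ and the explicit formula for $A_{ij}^\ast = \kpp_0\kpp_\infty \iota_i\iota_j|z_i^\ast - z_j^\ast|^{-(N-2)}$, is impossible — one checks this by writing the $4$ degeneracy equations plus the $4$ (vector) equations $v_i^\ast=0$ and observing overdetermination. I expect this final overdetermination count, carried out with the exact $J=4$ geometry, to be the only genuinely delicate step; everything before it is bookkeeping with Lemma~\ref{lem:almost-same-sign-implies-totally-non-degen} and Lemma~\ref{lem:min-degen-implies-unique-pos-vec}.
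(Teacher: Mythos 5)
Your part (i) is fine and matches the paper: by Lemma~\ref{lem:almost-same-sign-implies-totally-non-degen} a degenerate $J=4$ configuration has sign pattern $\{+,+,-,-\}$, and every proper sub-configuration of size $2$ or $3$ has at most one sign of one type, hence is non-degenerate, so the configuration is minimally degenerate. Your intermediate identities in part (ii) are also correct: with $B_{ij}=A_{ij}^{\ast}\frkc_{i}\frkc_{j}|z_{i}^{\ast}-z_{j}^{\ast}|^{-2}$ one has, for each $i$, both $\sum_{j}B_{ij}(z_{j}^{\ast}-z_{i}^{\ast})=0$ (the assumption $v_{i}^{\ast}=0$) and $\sum_{j}B_{ij}|z_{i}^{\ast}-z_{j}^{\ast}|^{2}=\frkc_{i}(A^{\ast}\vec{\frkc})_{i}=0$.

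However, the contradiction itself is never established: the final step is an appeal to ``overdetermination'' of the combined system $A^{\ast}\vec{\frkc}=0$, $v_{i}^{\ast}=0$, which you only ``expect'' to be impossible. That is a genuine gap, and counting equations cannot close it. The equations are invariant under similarities and carry built-in dependencies (e.g.\ $\sum_{i}v_{i}^{\ast}=0$ automatically by symmetry of $B_{ij}$), the four points live in an at most $3$-dimensional affine span, and Lemma~\ref{lem:large-family-degen} shows the degeneracy equations alone already admit a codimension-one family of solutions; whether the additional constraints $v_{i}^{\ast}=0$ can be met is a delicate question about the specific sign pattern and distance geometry, not a dimension count. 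The paper's proof has to do real work here: it splits into the collinear case, where the three possible orderings of $z_{1}^{\ast},\dots,z_{4}^{\ast}$ are treated separately (the hardest ordering yields the identity $|z_{1}^{\ast}-z_{2}^{\ast}||z_{3}^{\ast}-z_{4}^{\ast}|=|z_{1}^{\ast}-z_{3}^{\ast}||z_{2}^{\ast}-z_{4}^{\ast}|$, incompatible with the ordering), and the non-collinear case, where projecting $v_{1}^{\ast}=0$ and $v_{4}^{\ast}=0$ onto a direction orthogonal to $z_{1}^{\ast}-z_{4}^{\ast}$ produces a $2\times2$ homogeneous system whose nontrivial solvability forces the distance relation \eqref{eq:0.03}, which combined with $(A^{\ast}\vec{\frkc})_{2}=0$ and $(A^{\ast}\vec{\frkc})_{3}=0$ gives the two mutually contradictory strict inequalities \eqref{eq:0.01} and \eqref{eq:0.02}. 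Some argument of this kind, exploiting positivity of $\vec{\frkc}$ and the explicit sign/distance structure, is what your proposal is missing.
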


\begin{proof}
Let $\{(\iota_{i},z_{i}^{\ast})\}_{i=1}^{4}$ be a degenerate configuration.
It is minimally degenerate by Lemma~\ref{lem:almost-same-sign-implies-totally-non-degen}.
To show that \eqref{eq:case3-v_i-not-all-zero} is satisfied, suppose
not, i.e., $v_{1}^{\ast}=v_{2}^{\ast}=v_{3}^{\ast}=v_{4}^{\ast}=0$.
By Lemma~\ref{lem:almost-same-sign-implies-totally-non-degen} again,
we may assume $\iota_{1}=\iota_{2}=-\iota_{3}=-\iota_{4}$. Let $\vec{\frkc}\in(0,\infty)^{4}$
be as in Lemma~\ref{lem:min-degen-implies-unique-pos-vec}.

\uline{Case 1: All \mbox{$z_{i}^{\ast}$} are collinear}. Without
loss of generality, we may assume $z_{i}^{\ast}\in\bbR=\bbR\times\{0\}^{N-1}$
so that there is an ordering of $z_{i}^{\ast}$. It suffices to consider
the following three cases:
\begin{enumerate}
\item[(a)] $z_{1}^{\ast}<z_{2}^{\ast}<z_{3}^{\ast}<z_{4}^{\ast}$,
\item[(b)] $z_{1}^{\ast}<z_{3}^{\ast}<z_{4}^{\ast}<z_{2}^{\ast}$,
\item[(c)] $z_{1}^{\ast}<z_{3}^{\ast}<z_{2}^{\ast}<z_{4}^{\ast}$.
\end{enumerate}
In Case (a), the equation $v_{2}^{\ast}=0$ gives 
\[
\frac{\frkc_{1}}{|z_{1}^{\ast}-z_{2}^{\ast}|^{2D+1}}+\frac{\frkc_{3}}{|z_{2}^{\ast}-z_{3}^{\ast}|^{2D+1}}+\frac{\frkc_{4}}{|z_{3}^{\ast}-z_{4}^{\ast}|^{2D+1}}=0.
\]
Since $\frkc_{1},\frkc_{3},\frkc_{4}>0$, we get a contradiction.

In Case (b), we rewrite the equations of $v_{3}^{\ast}=0$ and $(A^{\ast}\vec{\frkc})_{3}=0$
as 
\begin{align*}
\frac{\frkc_{2}}{|z_{2}^{\ast}-z_{3}^{\ast}|^{2D+1}} & =\frac{\frkc_{1}}{|z_{1}^{\ast}-z_{3}^{\ast}|^{2D+1}}+\frac{\frkc_{4}}{|z_{3}^{\ast}-z_{4}^{\ast}|^{2D+1}},\\
\frac{\frkc_{4}}{|z_{3}^{\ast}-z_{4}^{\ast}|^{2D}} & =\frac{\frkc_{1}}{|z_{1}^{\ast}-z_{3}^{\ast}|^{2D}}+\frac{\frkc_{2}}{|z_{2}^{\ast}-z_{3}^{\ast}|^{2D}}.
\end{align*}
Substituting the former into the latter, we get 
\[
\frac{\frkc_{4}}{|z_{3}^{\ast}-z_{4}^{\ast}|^{2D}}=\Big(\frac{1}{|z_{1}^{\ast}-z_{3}^{\ast}|^{2D}}+\frac{|z_{2}^{\ast}-z_{3}^{\ast}|}{|z_{1}^{\ast}-z_{3}^{\ast}|^{2D+1}}\Big)\frkc_{1}+\frac{|z_{2}^{\ast}-z_{3}^{\ast}|}{|z_{3}^{\ast}-z_{4}^{\ast}|^{2D+1}}\frkc_{4}>\frac{|z_{2}^{\ast}-z_{3}^{\ast}|}{|z_{3}^{\ast}-z_{4}^{\ast}|^{2D+1}}\frkc_{4}.
\]
Since $\frkc_{4}>0$ and $|z_{2}^{\ast}-z_{3}^{\ast}|>|z_{3}^{\ast}-z_{4}^{\ast}|$
in Case (b), we get a contradiction.

We turn to the most delicate Case (c). We rewrite the equations $v_{4}^{\ast}=0$
and $(A^{\ast}\vec{\frkc})_{4}=0$ as 
\begin{align*}
\frac{\frkc_{1}}{|z_{1}^{\ast}-z_{4}^{\ast}|^{2D+1}}+\frac{\frkc_{2}}{|z_{2}^{\ast}-z_{4}^{\ast}|^{2D+1}} & =\frac{\frkc_{3}}{|z_{3}^{\ast}-z_{4}^{\ast}|^{2D+1}},\\
\frac{\frkc_{1}}{|z_{1}^{\ast}-z_{4}^{\ast}|^{2D}}+\frac{\frkc_{2}}{|z_{2}^{\ast}-z_{4}^{\ast}|^{2D}} & =\frac{\frkc_{3}}{|z_{3}^{\ast}-z_{4}^{\ast}|^{2D}}.
\end{align*}
Multiplying the former by $|z_{1}^{\ast}-z_{4}^{\ast}|$, subtracting
the latter, and using $z_{1}^{\ast}<z_{3}^{\ast}<z_{2}^{\ast}<z_{4}^{\ast}$,
we obtain 
\begin{equation}
\frac{|z_{1}^{\ast}-z_{2}^{\ast}|}{|z_{2}^{\ast}-z_{4}^{\ast}|^{2D+1}}\frkc_{2}=\frac{|z_{1}^{\ast}-z_{3}^{\ast}|}{|z_{3}^{\ast}-z_{4}^{\ast}|^{2D+1}}\frkc_{3}.\label{eq:0.04}
\end{equation}
On the other hand, we rewrite the equations $v_{1}^{\ast}=0$ and
$(A^{\ast}\vec{\frkc})_{1}=0$ as 
\begin{align*}
\frac{\frkc_{3}}{|z_{1}^{\ast}-z_{3}^{\ast}|^{2D+1}}+\frac{\frkc_{4}}{|z_{1}^{\ast}-z_{4}^{\ast}|^{2D+1}} & =\frac{\frkc_{2}}{|z_{1}^{\ast}-z_{2}^{\ast}|^{2D+1}},\\
\frac{\frkc_{3}}{|z_{1}^{\ast}-z_{3}^{\ast}|^{2D}}+\frac{\frkc_{4}}{|z_{1}^{\ast}-z_{4}^{\ast}|^{2D}} & =\frac{\frkc_{2}}{|z_{1}^{\ast}-z_{2}^{\ast}|^{2D}}.
\end{align*}
Multiplying the former by $|z_{1}^{\ast}-z_{4}^{\ast}|$ and subtracting
the latter, we similarly obtain 
\begin{equation}
\frac{|z_{3}^{\ast}-z_{4}^{\ast}|}{|z_{1}^{\ast}-z_{3}^{\ast}|^{2D+1}}\frkc_{3}=\frac{|z_{2}^{\ast}-z_{4}^{\ast}|}{|z_{1}^{\ast}-z_{2}^{\ast}|^{2D+1}}\frkc_{2}.\label{eq:0.05}
\end{equation}
Solving \eqref{eq:0.04} and \eqref{eq:0.05}, one concludes the identity
\[
|z_{1}^{\ast}-z_{2}^{\ast}||z_{3}^{\ast}-z_{4}^{\ast}|=|z_{1}^{\ast}-z_{3}^{\ast}||z_{2}^{\ast}-z_{4}^{\ast}|.
\]
This is impossible under $z_{1}^{\ast}<z_{3}^{\ast}<z_{2}^{\ast}<z_{4}^{\ast}$,
so we get a contradiction.

\uline{Case 2: \mbox{$z_{1}^{\ast},z_{2}^{\ast},z_{3}^{\ast},z_{4}^{\ast}$}
are not collinear}. First, we claim 
\begin{equation}
\frac{|z_{1}^{\ast}-z_{3}^{\ast}|}{|z_{1}^{\ast}-z_{2}^{\ast}|}=\frac{|z_{3}^{\ast}-z_{4}^{\ast}|}{|z_{2}^{\ast}-z_{4}^{\ast}|}.\label{eq:0.03}
\end{equation}
To show this, we use the equations $v_{1}^{\ast}=v_{4}^{\ast}=0$
to obtain 
\begin{align*}
\frac{\frkc_{2}(z_{1}^{\ast}-z_{2}^{\ast})}{|z_{1}^{\ast}-z_{2}^{\ast}|^{2D+2}}-\frac{\frkc_{3}(z_{1}^{\ast}-z_{3}^{\ast})}{|z_{1}^{\ast}-z_{3}^{\ast}|^{2D+2}}-\frac{\frkc_{4}(z_{1}^{\ast}-z_{4}^{\ast})}{|z_{1}^{\ast}-z_{4}^{\ast}|^{2D+2}} & =0,\\
-\frac{\frkc_{1}(z_{4}^{\ast}-z_{1}^{\ast})}{|z_{1}^{\ast}-z_{4}^{\ast}|^{2D+2}}-\frac{\frkc_{2}(z_{4}^{\ast}-z_{2}^{\ast})}{|z_{2}^{\ast}-z_{4}^{\ast}|^{2D+2}}+\frac{\frkc_{3}(z_{4}^{\ast}-z_{3}^{\ast})}{|z_{3}^{\ast}-z_{4}^{\ast}|^{2D+2}} & =0.
\end{align*}
Since $z_{1}^{\ast},z_{2}^{\ast},z_{3}^{\ast},z_{4}^{\ast}$ are not
collinear, there exists a vector $n\in\bbR^{N}$ with $(z_{1}^{\ast}-z_{4}^{\ast})\cdot n=0$
such that $(z_{1}^{\ast}-z_{2}^{\ast})\cdot n\neq0$ or $(z_{1}^{\ast}-z_{3}^{\ast})\cdot n\neq0$.
Taking the inner product of each of the above equations with $n$,
we conclude 
\begin{align*}
\frac{\frkc_{2}}{|z_{1}^{\ast}-z_{2}^{\ast}|^{2D+2}}(z_{1}^{\ast}-z_{2}^{\ast})\cdot n-\frac{\frkc_{3}}{|z_{1}^{\ast}-z_{3}^{\ast}|^{2D+2}}(z_{1}^{\ast}-z_{3}^{\ast})\cdot n & =0,\\
-\frac{\frkc_{2}}{|z_{2}^{\ast}-z_{4}^{\ast}|^{2D+2}}(z_{1}^{\ast}-z_{2}^{\ast})\cdot n+\frac{\frkc_{3}}{|z_{3}^{\ast}-z_{4}^{\ast}|^{2D+2}}(z_{1}^{\ast}-z_{3}^{\ast})\cdot n & =0.
\end{align*}
Viewing this as a $2\times2$ linear system admitting a nonzero solution,
we conclude that
\[
\frkc_{2}\frkc_{3}\Big\{\frac{1}{|z_{1}^{\ast}-z_{2}^{\ast}|^{2D+2}|z_{3}^{\ast}-z_{4}^{\ast}|^{2D+2}}-\frac{1}{|z_{1}^{\ast}-z_{3}^{\ast}|^{2D+2}|z_{2}^{\ast}-z_{4}^{\ast}|^{2D+2}}\Big\}=0.
\]
As $\frkc_{2},\frkc_{3}\neq0$, we get \eqref{eq:0.03}. 

Having established the claim \eqref{eq:0.03}, we are ready to derive
a contradiction. On one hand, the equation $(A^{\ast}\vec{\frkc})_{2}=0$
gives 
\begin{equation}
\frac{\frkc_{1}}{|z_{2}^{\ast}-z_{1}^{\ast}|^{2D}}-\frac{\frkc_{4}}{|z_{2}^{\ast}-z_{4}^{\ast}|^{2D}}=\frac{\frkc_{3}}{|z_{2}^{\ast}-z_{3}^{\ast}|^{2D}}>0.\label{eq:0.01}
\end{equation}
On the other hand, the equation $(A^{\ast}\vec{\frkc})_{3}=0$ gives
\[
\frac{\frkc_{1}}{|z_{3}^{\ast}-z_{1}^{\ast}|^{2D}}-\frac{\frkc_{4}}{|z_{3}^{\ast}-z_{4}^{\ast}|^{2D}}=-\frac{\frkc_{2}}{|z_{3}^{\ast}-z_{2}^{\ast}|^{2D}}.
\]
Multiplying the above by $\frac{|z_{1}^{\ast}-z_{3}^{\ast}|^{2D}}{|z_{1}^{\ast}-z_{2}^{\ast}|^{2D}}$
and using \eqref{eq:0.03}, we get 
\begin{equation}
\frac{\frkc_{1}}{|z_{1}^{\ast}-z_{2}^{\ast}|^{2D}}-\frac{\frkc_{4}}{|z_{2}^{\ast}-z_{4}^{\ast}|^{2D}}=-\frac{\frkc_{2}}{|z_{2}^{\ast}-z_{3}^{\ast}|^{2D}}\frac{|z_{1}^{\ast}-z_{3}^{\ast}|^{2D}}{|z_{1}^{\ast}-z_{2}^{\ast}|^{2D}}<0.\label{eq:0.02}
\end{equation}
Equations \eqref{eq:0.01} and \eqref{eq:0.02} contradict each other. 
\end{proof}
The following two lemmas are not directly used in the proof of our
main results, but these are included due to independent interests.
Lemma~\ref{lem:large-family-degen} says that there is a large family
of degenerate configurations (as a codimension-one manifold of all
configurations, at least when $J=4$). Lemma~\ref{lem:large-dim-kernel}
says that the nonnegative kernel of $A^{\ast}$ (the number of linearly
independent nonnegative kernel elements) can be large.
\begin{lem}[Families of $J=4$ degenerate configurations]
\label{lem:large-family-degen}Let $J=4$ and $\iota_{1}=\iota_{2}=-\iota_{3}=-\iota_{4}$. 
\begin{itemize}
\item (A family including lozenge) For any $d_{3}\in\bbR$, there exists
$d_{4}<d_{3}$ such that the configuration $z_{1}=(-1,0,\dots,0)$,
$z_{2}=(1,0,\dots,0)$, $z_{3}=(0,d_{3},0,\dots,0)$, $z_{4}=(0,d_{4},0,\dots,0)$
becomes degenerate.
\item (Codimension-one existence of degenerate configurations near lozenge)
Let $d>0$ be the unique number such that $d^{2}+1=2^{2+\frac{1}{D}}d$.
Let $z_{1}^{\ast}=(-1,0,\dots,0)$, $z_{2}^{\ast}=(1,0,\dots,0)$,
$z_{3}^{\ast}=(0,d,0,\dots,0)$, and $z_{4}^{\ast}=(0,-d,0,\dots,0)$
form a degenerate configuration. For any $z_{1},z_{2},z_{3},(z_{4}^{1},z_{4}^{3},z_{4}^{4},\dots,z_{4}^{N})$
close to $z_{1}^{\ast},z_{2}^{\ast},z_{3}^{\ast},0$, there exists
$z_{4}^{2}$ close to $-d$ such that $\{z_{1},z_{2},z_{3},z_{4}\}$
becomes degenerate.
\end{itemize}
\end{lem}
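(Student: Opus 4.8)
The plan is to reduce the degeneracy of each of these configurations to a single scalar equation, $\det A^{\ast}=0$, which for these highly symmetric configurations is an explicit polynomial identity in the coordinates, and then to solve for the free coordinate: directly for the first item, and by the implicit function theorem for the second. Throughout, we use the sign pattern $\iota_{1}=\iota_{2}=-\iota_{3}=-\iota_{4}=+$, so that the entries of $A^{\ast}$ corresponding to the pair $(1,2)$ and to the pair $(3,4)$ carry a $+$ sign and all other off-diagonal entries a $-$ sign.

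For the first item, abbreviate $m\coloneqq r_{12}^{-(N-2)}=2^{-(N-2)}$, $s\coloneqq r_{13}^{-(N-2)}=r_{23}^{-(N-2)}=(1+d_{3}^{2})^{-(N-2)/2}$, $u\coloneqq r_{14}^{-(N-2)}=r_{24}^{-(N-2)}=(1+d_{4}^{2})^{-(N-2)/2}$, and $w\coloneqq r_{34}^{-(N-2)}=|d_{3}-d_{4}|^{-(N-2)}$. Exploiting the reflection symmetry $z_{1}\leftrightarrow z_{2}$ (subtract the second row of $A^{\ast}$ from the first), a short $4\times4$ determinant computation gives $\det A^{\ast}\propto mw\,(mw-4su)$, so $\det A^{\ast}=0$ iff $mw=4su$; and when $mw=4su$ one checks directly that $\vec{\frkc}=(1,1,2u/w,2s/w)\in(0,\infty)^{4}$ lies in $\ker A^{\ast}$, so the configuration is degenerate (conversely, $\det A^{\ast}\neq0$ rules out any nonzero kernel element). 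Using $N-2=2D$, the relation $mw=4su$ is equivalent to
\[
(1+d_{3}^{2})(1+d_{4}^{2})=2^{2+2/D}(d_{3}-d_{4})^{2}.
\]
Viewed as a quadratic in $d_{4}$, its discriminant equals a positive multiple of $(2^{2+2/D}-1)(1+d_{3}^{2})^{2}>0$, so it has two real roots; since the value of the quadratic at $d_{4}=d_{3}$ is $(1+d_{3}^{2})^{2}>0$, a short sign analysis (comparing this value with the leading coefficient and using that the product of the roots is negative) shows one root lies strictly below $d_{3}$, and this $d_{4}$ is the desired one, producing mutually distinct points since $d_{4}\neq d_{3}$. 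Finally, substituting $d_{3}=d$ with $1+d^{2}=2^{2+1/D}d$ shows $d_{4}=-d$ is a root, so the lozenge belongs to this family.

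For the second item, first note that $d^{2}-2^{2+1/D}d+1=0$ has a positive root $d$ (its discriminant is $2^{4+2/D}-4>0$), and for such $d$ the computation above with $(d_{3},d_{4})=(d,-d)$ gives $m^{\ast}w^{\ast}=4s^{\ast}u^{\ast}$, so the lozenge $(z_{1}^{\ast},z_{2}^{\ast},z_{3}^{\ast},z_{4}^{\ast})$ is degenerate with $\ker A^{\ast}=\bbR\vec{\frkc}^{\ast}$, $\vec{\frkc}^{\ast}=(1,1,\mu,\mu)$, $\mu\coloneqq2(1+d^{2})^{-D}(2d)^{2D}>0$; moreover some $3\times3$ minor of $A^{\ast}$ is nonzero there, so $\mathrm{rank}\,A^{\ast}=3$. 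Now view $F\coloneqq\det A^{\ast}$ as a smooth function of the (non-colliding) configuration near the lozenge. At a rank-$3$ point the adjugate is rank one and, $A^{\ast}$ being symmetric, equals $\beta\,\vec{\frkc}^{\ast}(\vec{\frkc}^{\ast})^{\top}$ with $\beta\neq0$, so Jacobi's formula gives $\rd_{z_{4}^{2}}F=\mathrm{tr}\bigl(\mathrm{adj}(A^{\ast})\,\rd_{z_{4}^{2}}A^{\ast}\bigr)=\beta\,(\vec{\frkc}^{\ast})^{\top}(\rd_{z_{4}^{2}}A^{\ast})\,\vec{\frkc}^{\ast}$ at the lozenge. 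Only the entries of $A^{\ast}$ carrying the index $4$ depend on $z_{4}^{2}$; computing $\rd_{z_{4}^{2}}A^{\ast}_{4j}=-(N-2)\kpp_{0}\kpp_{\infty}\iota_{4}\iota_{j}|z_{4}-z_{j}|^{-N}(z_{4}^{2}-z_{j}^{2})$ at the lozenge and summing against $\vec{\frkc}^{\ast}$, the quantity $(\vec{\frkc}^{\ast})^{\top}(\rd_{z_{4}^{2}}A^{\ast})\vec{\frkc}^{\ast}$ reduces to a nonzero constant times $1-d^{2}$, and $d\neq1$ (since $d=1$ would force $2=2^{2+1/D}$, which is impossible). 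Hence $\rd_{z_{4}^{2}}F\neq0$ at the lozenge, and the implicit function theorem yields $z_{4}^{2}$ depending smoothly on $z_{1},z_{2},z_{3},z_{4}^{1},z_{4}^{3},z_{4}^{4},\dots,z_{4}^{N}$ near $z_{1}^{\ast},z_{2}^{\ast},z_{3}^{\ast},0$ with $\det A^{\ast}=0$; for such configurations $A^{\ast}$ still has rank $3$ with one-dimensional kernel spanned by a vector close to $\vec{\frkc}^{\ast}\in(0,\infty)^{4}$, hence still a positive vector, so the configuration is degenerate, while distinctness of the four points is automatic by proximity to the lozenge. The main obstacle is the bookkeeping in the two explicit computations --- establishing the clean dichotomy $\det A^{\ast}=0\Leftrightarrow mw=4su$ for the planar symmetric configuration, and reducing the first variation $(\vec{\frkc}^{\ast})^{\top}(\rd_{z_{4}^{2}}A^{\ast})\vec{\frkc}^{\ast}$ at the lozenge to a multiple of $1-d^{2}$; a sign slip in either would break the implicit function theorem step, whereas the remaining points (positivity of $\vec{\frkc}$, reality of the roots, distinctness) are routine.
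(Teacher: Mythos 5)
Your treatment of the second item is correct and takes a genuinely different route from the paper. The paper applies the implicit function theorem to the full system $\bfF(\vec z,\vec\frkc)=0$ (with $F_{i}=\sum_{j\neq i}\iota_{i}\iota_{j}|z_{i}-z_{j}|^{-2D}\frkc_{j}$) in the four unknowns $(\frkc_{1},\frkc_{2},\frkc_{3},z_{4}^{2})$, checking by a brute-force $4\times4$ determinant that the Jacobian is nonsingular, the computation ending in a multiple of $d^{D-2}-1\neq0$. You instead apply the scalar implicit function theorem to $\det A=0$, using that at the lozenge $A^{\ast}$ has rank $3$ so $\mathrm{adj}(A^{\ast})=\beta\,\vec\frkc^{\ast}(\vec\frkc^{\ast})^{\top}$ with $\beta\neq0$, and Jacobi's formula to reduce the nondegeneracy check to $(\vec\frkc^{\ast})^{\top}(\rd_{z_{4}^{2}}A^{\ast})\vec\frkc^{\ast}\neq0$. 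I verified this quantity: with $\vec\frkc^{\ast}=(1,1,d^{D},d^{D})$ (which agrees with your $(1,1,\mu,\mu)$, since $\mu=d^{D}$ under $1+d^{2}=2^{2+1/D}d$) it equals $2D\kpp_{0}\kpp_{\infty}2^{-2D}\,(1-d^{2})/(d(1+d^{2}))$, nonzero since $d\neq1$; your argument that the perturbed kernel direction stays positive via continuity of the adjugate is also sound. So both proofs hinge on the same arithmetic fact ($d\neq1$, resp.\ $d^{D-2}\neq1$); yours trades the paper's explicit $4\times4$ Jacobian for the adjugate/Jacobi machinery and a single scalar derivative, while the paper's version has the side benefit of producing the kernel vector directly as a smooth function of the configuration.

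For the first item your reduction is essentially the paper's: your factorization $\det A^{\ast}\propto mw(mw-4su)$ and the kernel vector $(1,1,2u/w,2s/w)$ reproduce exactly the paper's $c_{3},c_{4}$ in \eqref{eq:4.18} and its equation $F(d_{3},d_{4})=0$, and your quadratic form of the equation, the discriminant $(1+d_{3}^{2})^{2}(2^{2+2/D}-1)$, and the value $(1+d_{3}^{2})^{2}$ at $d_{4}=d_{3}$ are all correct. The gap is in the concluding "short sign analysis": the claim that one root lies strictly below $d_{3}$ is false when $d_{3}<-\sqrt{K-1}$, $K=2^{2+2/D}$. Indeed, writing the equation as $(1+d_{3}^{2}-K)d_{4}^{2}+2Kd_{3}d_{4}+(1+d_{3}^{2}-Kd_{3}^{2})=0$, in that regime the leading coefficient is positive and the quadratic is positive at $d_{4}=d_{3}$, so $d_{3}$ lies outside the interval between the roots; since the product of the roots is negative, one root is positive and hence both roots exceed $d_{3}<0$. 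So no $d_{4}<d_{3}$ exists there (and since $\det A\neq0$ off this curve, no other kernel structure can rescue degeneracy), i.e.\ the argument cannot deliver the conclusion for all $d_{3}\in\bbR$.

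To be fair, the paper's own proof shares this blind spot: its claimed limit $\lim_{d_{4}\to-\infty}F(d_{3},d_{4})=2^{-2D}-4$ is actually $2^{-2D}-4(1+d_{3}^{2})^{-D}$, which is negative only when $d_{3}^{2}<K-1$, so its intermediate value argument also only covers that range (and the statement with the normalization $d_{4}<d_{3}$ is genuinely problematic for $d_{3}$ very negative; it does hold for every $d_{3}\geq0$, and after using the relabeling symmetry $3\leftrightarrow4$ one always gets some $d_{4}\neq d_{3}$). So your proposal matches the paper's level here, but if you want a complete argument you should either restrict to $d_{3}^{2}<2^{2+2/D}-1$ (which suffices to exhibit a one-parameter family through the lozenge) or handle the regime $|d_{3}|>\sqrt{K-1}$ by the sign case analysis above, where the conclusion holds for $d_{3}>0$ but not for $d_{3}<0$.
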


\begin{proof}
\uline{Proof of the first item}. With a vector of the form $\vec{c}=(1,1,c_{3},c_{4})$,
consider the equation $A\vec{c}=0$. The equations $(A\vec{c})_{1}=0$,
$(A\vec{c})_{3}=0$, $(A\vec{c})_{4}=0$ give 
\begin{align*}
2^{-2D}-(d_{3}^{2}+1)^{-D}c_{3}-(d_{4}^{2}+1)^{-D}c_{4} & =0,\\
(d_{3}^{2}+1)^{-D}\cdot2-(d_{3}-d_{4})^{-2D}c_{4} & =0,\\
(d_{4}^{2}+1)^{-D}\cdot2-(d_{3}-d_{4})^{-2D}c_{3} & =0,
\end{align*}
respectively. The latter two equations give 
\begin{equation}
\begin{aligned}c_{3} & =2(d_{3}-d_{4})^{2D}(d_{4}^{2}+1)^{-D},\\
c_{4} & =2(d_{3}-d_{4})^{2D}(d_{3}^{2}+1)^{-D}.
\end{aligned}
\label{eq:4.18}
\end{equation}
Substituting these into the first equation gives the relation 
\[
F(d_{3},d_{4})\coloneqq2^{-2D}-4(d_{3}-d_{4})^{2D}(d_{3}^{2}+1)^{-D}(d_{4}^{2}+1)^{-D}=0.
\]
Observe that $\lim_{d_{4}\nearrow d_{3}}F(d_{3},d_{4})=2^{-2D}>0$
and $\lim_{d_{4}\to-\infty}F(d_{3},d_{4})=2^{-2D}-4<0$. Thus there
exists $d_{4}=d_{4}(d_{3})\in(-\infty,d_{3})$ such that $F(d_{3},d_{4})=0$.
With this $d_{4}$, the matrix $A[\vec{z}]$ has a nonnegative kernel
element $\vec{c}=(1,1,c_{3},c_{4})$ with $c_{3},c_{4}$ defined by
\eqref{eq:4.18}, so $\{z_{1},z_{2},z_{3},z_{4}\}$ is degenerate.

\uline{Proof of the second item}. Consider a function $\bfF=(F_{1},F_{2},F_{3},F_{4})\in\bbR^{4}$,
where each $F_{i}$ is defined by 
\[
F_{i}(\vec{z},\vec{c})=\sum_{j\neq i}\iota_{i}\iota_{j}|z_{i}-z_{j}|^{-2D}c_{j}.
\]
Note that $\bfF=0$ if and only if $A[\vec{z}]\vec{c}=0$. Let $\vec{c}^{\ast}=(1,1,d^{D},d^{D})$
so that $A[\vec{z}^{\ast}]\vec{c}^{\ast}=0$. By the implicit function
theorem, it suffices to check the non-singularity of the matrix 
\[
\frac{\rd\bfF}{\rd(c_{1},c_{2},c_{3},z_{4}^{2})}\bigg|_{(\vec{z},\vec{c})=(\vec{z}^{\ast},\vec{c}^{\ast})}.
\]
This matrix is (using $d^{2}+1=2^{2+\frac{1}{D}}d$) 
\[
\begin{pmatrix}0 & 2^{-2D} & -2^{-2D-1}d^{-D} & -2D\cdot2^{-(D+1)(2+\frac{1}{D})}d^{-D}\\
2^{-2D} & 0 & -2^{-2D-1}d^{-D} & -2D\cdot2^{-(D+1)(2+\frac{1}{D})}d^{-D}\\
-2^{-2D-1}d^{-D} & -2^{-2D-1}d^{-D} & 0 & 2D\cdot2^{-2D-1}d^{-D-1}\\
-2^{-2D-1}d^{-D} & -2^{-2D-1}d^{-D} & 2^{-2D}d^{-2D} & -4D\cdot2^{-(D+1)(2+\frac{1}{D})}d^{-D}+2D\cdot2^{-2D-1}d^{-D-1}
\end{pmatrix}.
\]
It has a nonzero determinant: 
\begin{align*}
 & 2D\cdot2^{-8D-4}d^{-5D-1}\det\begin{pmatrix}0 & 2d^{D} & -d^{D} & -2^{-(2+\frac{1}{D})}d\\
2d^{D} & 0 & -d^{D} & -2^{-(2+\frac{1}{D})}d\\
-1 & -1 & 0 & 1\\
-1 & -1 & 2 & -2^{-(1+\frac{1}{D})}d+1
\end{pmatrix}\\
 & =2D\cdot2^{-8D-4}d^{-5D-1}\cdot(-2^{1-\frac{1}{D}}d^{2D+1}-2^{1-\frac{1}{D}}d^{D+1}+8d^{2D})\\
 & =2D\cdot2^{-8D-3-\frac{1}{D}}d^{-4D}\cdot(-d^{D}-1+2^{2+\frac{1}{D}}d^{D-1})\\
 & =2D\cdot2^{-8D-3-\frac{1}{D}}d^{-4D}\cdot(d^{D-2}-1)\neq0.\qedhere
\end{align*}
\end{proof}

\begin{lem}[Multi-dimensional kernel]
\label{lem:large-dim-kernel}Let $1\leq L\leq N-1$ be an integer.
Consider the following $J=4L$-point configuration 
\[
\left|\begin{aligned}\iota_{4k-3} & =\iota_{4k-2}=+1, & \iota_{4k-1} & =\iota_{4k}=-1,\\
z_{4k-3}^{\ast} & =(\tfrac{1}{2},0,\dots,\tfrac{q_{0}}{2},\dots,0), & z_{4k-1}^{\ast} & =(\tfrac{1}{2},0,\dots,-\tfrac{q_{0}}{2},\dots,0),\\
z_{4k-2}^{\ast} & =(-\tfrac{1}{2},0,\dots,\tfrac{q_{0}}{2},\dots,0), & z_{4k}^{\ast} & =(-\tfrac{1}{2},0,\dots,-\tfrac{q_{0}}{2},\dots,0),
\end{aligned}
\right.
\]
where $k\in\llbracket L\rrbracket$, $q_{0}\in(0,\infty)$ is the
unique number satisfying $q_{0}^{-2D}+(q_{0}^{2}+1)^{-D}=1$, and
$\frac{q_{0}}{2}$ is placed at the $k+1$-th coordinate. Then, this
configuration is degenerate and the matrix $A^{\ast}$ has at least
$L$ linearly independent nonnegative kernel elements.
\end{lem}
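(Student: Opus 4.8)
The plan is to exhibit, for each $k\in\llbracket L\rrbracket$, an explicit nonnegative element of $\ker A^{\ast}$ supported on the four indices $\{4k-3,4k-2,4k-1,4k\}$ of the $k$-th block, and then to note that these $L$ vectors have pairwise disjoint supports, hence are linearly independent. First I would record that $q\mapsto q^{-2D}+(q^{2}+1)^{-D}$ is continuous and strictly decreasing on $(0,\infty)$ with limits $+\infty$ at $0^{+}$ and $0$ at $+\infty$, so the number $q_{0}$ with $q_{0}^{-2D}+(q_{0}^{2}+1)^{-D}=1$ exists and is unique; and that the $4L$ points are distinct, since distinct blocks use distinct coordinate axes among $2,\dots,N$ (this is where $L\leq N-1$ is used) and within a block the four sign patterns of the $1$st and $(k+1)$-th coordinates are distinct.

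Next I would verify the \emph{within-block} relation. Ordering the indices of block $k$ as $(4k-3,4k-2,4k-1,4k)$, the pairwise distances are $|z_{4k-3}^{\ast}-z_{4k-2}^{\ast}|=|z_{4k-1}^{\ast}-z_{4k}^{\ast}|=1$, $|z_{4k-3}^{\ast}-z_{4k-1}^{\ast}|=|z_{4k-2}^{\ast}-z_{4k}^{\ast}|=q_{0}$, and $|z_{4k-3}^{\ast}-z_{4k}^{\ast}|=|z_{4k-2}^{\ast}-z_{4k-1}^{\ast}|=\sqrt{q_{0}^{2}+1}$, so the submatrix $A_{\{4k-3,\dots,4k\}}^{\ast}$ equals $\kpp_{0}\kpp_{\infty}$ times the symmetric $4\times4$ matrix with zero diagonal, with $(1,2)$- and $(3,4)$-entries equal to $1$, $(1,3)$- and $(2,4)$-entries equal to $-q_{0}^{-2D}$, and $(1,4)$- and $(2,3)$-entries equal to $-(q_{0}^{2}+1)^{-D}$ (using $N-2=2D$ and tracking the signs $\iota$). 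Applying this matrix to $(1,1,1,1)$ gives in every row the number $1-q_{0}^{-2D}-(q_{0}^{2}+1)^{-D}$, which is $0$ by the choice of $q_{0}$. Hence the vector $\vec{c}^{(k)}\in[0,\infty)^{4L}$ that equals $1$ on the indices of block $k$ and $0$ elsewhere satisfies $(A^{\ast}\vec{c}^{(k)})_{i}=0$ for all $i$ in block $k$.

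The remaining point, and the only one with any subtlety, is the \emph{cross-block cancellation}: $(A^{\ast}\vec{c}^{(k)})_{i}=0$ also for $i$ lying in a block $m\ne k$. Writing the first coordinate of $z_{i}^{\ast}$ as $\tfrac{\epsilon}{2}$ with $\epsilon\in\{\pm1\}$, and using that $z_{i}^{\ast}$ has vanishing $(k+1)$-th coordinate while every point $z_{j}^{\ast}$ of block $k$ has vanishing $(m+1)$-th coordinate, one computes $|z_{i}^{\ast}-z_{j}^{\ast}|^{2}=\tfrac14(\epsilon-\delta)^{2}+\tfrac{q_{0}^{2}}{2}$ for $j$ in block $k$, where $\tfrac{\delta}{2}$ is the first coordinate of $z_{j}^{\ast}$. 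Thus this distance depends only on whether $\delta=\epsilon$: it takes one common value for the two points of block $k$ with first coordinate $+\tfrac12$ (namely $z_{4k-3}^{\ast},z_{4k-1}^{\ast}$, which carry opposite signs) and another common value for the two with first coordinate $-\tfrac12$ (namely $z_{4k-2}^{\ast},z_{4k}^{\ast}$, again opposite signs). Since $(A^{\ast}\vec{c}^{(k)})_{i}=\kpp_{0}\kpp_{\infty}\,\iota_{i}\sum_{j\in\text{block }k}\iota_{j}|z_{i}^{\ast}-z_{j}^{\ast}|^{-2D}$ and the factors $\iota_{j}$ cancel in pairs inside each of the two equidistant groups, the whole sum vanishes.

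Finally I would assemble the conclusion: each $\vec{c}^{(k)}$ belongs to $[0,\infty)^{4L}\cap\ker A^{\ast}$, so in particular (taking $k=1$) the configuration is degenerate; and since $\vec{c}^{(1)},\dots,\vec{c}^{(L)}$ have pairwise disjoint supports they are linearly independent, giving the claimed $L$ linearly independent nonnegative kernel elements. The main obstacle is really just the distance bookkeeping underlying the cross-block cancellation; everything else is a direct verification.
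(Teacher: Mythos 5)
Your proposal is correct and follows essentially the same route as the paper: the block indicator vectors are shown to lie in $\ker A^{\ast}$, with the within-block row sums vanishing by the defining equation for $q_{0}$ and the cross-block contributions cancelling because the two opposite-sign points of a block sharing the same first coordinate are equidistant from any point of another block (the paper phrases this via $|z_{1}^{\ast}-z_{5}^{\ast}|=|z_{1}^{\ast}-z_{7}^{\ast}|$ and $|z_{1}^{\ast}-z_{6}^{\ast}|=|z_{1}^{\ast}-z_{8}^{\ast}|$ after a symmetry reduction). Your extra verifications (existence and uniqueness of $q_{0}$, distinctness of the points) are harmless additions, and the disjoint-support argument for linear independence matches the intended conclusion.
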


\begin{proof}
It suffices to show that $(0,\dots,0,1,1,1,1,0,\dots,0)\in\bbR^{4L}$,
where $1$s are placed at $4k-3,\dots,4k$-th coordinates ($1\leq k\leq L$),
is a kernel element of $A^{\ast}$. This is equivalent to checking
that 
\[
\frac{\chf_{i\neq4k-3}}{|z_{i}^{\ast}-z_{4k-3}^{\ast}|^{2D}}+\frac{\chf_{i\neq4k-2}}{|z_{i}^{\ast}-z_{4k-2}^{\ast}|^{2D}}-\frac{\chf_{i\neq4k-1}}{|z_{i}^{\ast}-z_{4k-1}^{\ast}|^{2D}}-\frac{\chf_{i\neq4k}}{|z_{i}^{\ast}-z_{4k}^{\ast}|^{2D}}=0,\qquad\forall i\in\setJ,\ \forall k\in\llbracket L\rrbracket.
\]
By symmetry, we may assume $i=4k_{0}-3$ for some $k_{0}\in\llbracket L\rrbracket$.
By symmetry of indices, we may assume $k_{0}=1$ and $k\in\{1,2\}$.
If $k=1$, this amounts to checking that $1-q_{0}^{-2D}-(q_{0}^{2}+1)^{-D}=0$,
which is the definition of $q_{0}$. If $k=2$, then this follows
from $|z_{1}^{\ast}-z_{5}^{\ast}|=|z_{1}^{\ast}-z_{7}^{\ast}|$ and
$|z_{1}^{\ast}-z_{6}^{\ast}|=|z_{1}^{\ast}-z_{8}^{\ast}|$.
\end{proof}

\section{\label{sec:Dynamical-case-separation}Dynamical case separation}

From now on, we consider a global $\dot{H}^{1}$-solution $u(t)$
as in Assumption~\ref{assumption:sequential-on-param}. Recall the
notation therein. The goal of this section is twofold: (i) to prove
basic properties (including modulation estimates) for $u(t)$ and
(ii) to separate dynamical scenarios of $u(t)$ depending on some
quantities.

First, we introduce two important \emph{time sequences} satisfying
\[
t_{n}<T_{n}(K_{0},\dlt_{2},\alp)\leq\Ttbexit_{n}(\alp)\leq+\infty
\]
for all large $n$. We fix $K_{0}\geq1$ such that $\lmb_{\max,n}\leq\frac{1}{2}K_{0}$
for all $n$. Let $\alp\in(0,\alptube)$ and $\dlt_{2}>0$, where
$\alptube>0$ denotes the constant $\dlt_{J}$ of Lemma~\ref{lem:curve-modulation}.
First, since $\{u(t_{n}),\vec{\iota},\vec{\lmb}_{n},\vec{z}_{n}\}$
is a $W$-bubbling sequence, we have $u(t_{n})\in\calT_{J}(o_{n\to\infty}(1))$.
In particular, for all large $n$, we have a well-defined time 
\begin{equation}
\Ttbexit_{n}(\alp)\coloneqq\{\tau\in[t_{n},+\infty):u(t)\in\calT_{J}(\alp)\}.\label{eq:def-Ttbexit_n}
\end{equation}
Next, since $\alp<\alptube$, we can find a unique curve $(\vec{\iota},\vec{\lmb},\vec{z}):\br{[t_{n},\Ttbexit_{n}(\alp))}\to\br{\calP_{J}(2\alp)}$
according to Lemma~\ref{lem:curve-modulation} (applied with $t_{0}=t_{n}$,
$\vec{\td{\lmb}}=\vec{\lmb}_{n}$, $\vec{\td z}=\vec{z}_{n}$). By
$u(t_{n})\in\calT_{J}(o_{n\to\infty}(1))$ and \eqref{eq:2.17}, we
have $d_{\calP}((\vec{\lmb}(t_{n}),\vec{z}(t_{n})),(\vec{\lmb}_{n},\vec{z}_{n}))=o_{n\to\infty}(1)$.
This combined with our initial assumptions on $\{u(t_{n}),\vec{\iota},\vec{\lmb}_{n},\vec{z}_{n}\}$
implies that 
\begin{align}
 & \{u(t_{n}),\vec{\iota},\vec{\lmb}(t_{n}),\vec{z}(t_{n})\}\text{ is again a \ensuremath{W}-bubbling sequence,}\label{eq:5.2}\\
 & \lmb_{\max}(t_{n})=(1+o_{n\to\infty}(1))\lmb_{\max,n}\leq(\tfrac{1}{2}+o_{n\to\infty}(1))K_{0},\quad\text{and}\label{eq:5.3}\\
 & \vec{z}(t_{n})\to\vec{z}^{\ast}.\label{eq:5.4}
\end{align}
Therefore, for all large $n$, we have a well-defined time 
\begin{equation}
T_{n}(K_{0},\dlt_{2},\alp)\coloneqq\sup\{\tau\in[t_{n},\Ttbexit_{n}(\alp)):\lmb_{\max}(t)<K_{0}\text{ and }|\vec{z}(t)-\vec{z}^{\ast}|<\dlt_{2}\ \forall t\in[t_{n},\tau]\}.\label{eq:def-T_n(K,delta,alpha)}
\end{equation}

\subsection{\label{subsec:Basic-estimates-for-R,U,frkr}Basic estimates for $R$,
$\protect\td U$, and $\protect\frkr_{a,i}$}

Here, we collect more quantitative estimates in the regime $\lmb_{\max}(t)<K_{0}$
and $|\vec{z}(t)-\vec{z}^{\ast}|<\dlt_{2}$, where $\dlt_{2}$ is
sufficiently small. As $K_{0}$ is fixed, implicit constants depending
on $K_{0}$ will be ignored. 
\begin{lem}[$R_{ij}$ in terms of $\vec{\lmb}$]
\label{lem:non-coll-R_ij}We have 
\begin{align}
R^{-1} & \aeq\sqrt{\lmb_{\max}\lmb_{\secmax}},\label{eq:non-coll-R}\\
R_{i}^{-1} & \aeq\chf_{\lmb_{i}=\lmb_{\max}}\sqrt{\lmb_{\secmax}\lmb_{\max}}+\chf_{\lmb_{i}<\lmb_{\max}}\sqrt{\lmb_{i}\lmb_{\max}},\label{eq:non-coll-R_i}\\
R_{ij}^{-1} & \aeq\sqrt{\lmb_{i}\lmb_{j}}\quad\text{if }i\neq j.\label{eq:non-coll-R_ij}
\end{align}
\end{lem}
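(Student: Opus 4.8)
The plan is to reduce everything to the elementary observation that, in the regime $\lmb_{\max}(t)<K_0$ and $|\vec z(t)-\vec z^{\ast}|<\dlt_2$ with $\dlt_2$ small, the spatial centers are mutually separated by distances comparable to $1$ while all scales are bounded by $K_0$; then the three quantities in the definition \eqref{eq:def-R} of $R_{ij}$ can be compared directly. First I would record the two inputs. Since $z_1^{\ast},\dots,z_J^{\ast}$ are distinct and $|\vec z(t)-\vec z^{\ast}|<\dlt_2$ with $\dlt_2$ chosen smaller than $\tfrac12\min_{k\neq l}|z_k^{\ast}-z_l^{\ast}|$, the triangle inequality gives $|z_i(t)-z_j(t)|\aeq1$ for all $i\neq j$, with implicit constants depending only on the configuration $\vec z^{\ast}$. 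Second, $\lmb_i(t)\leq\lmb_{\max}(t)<K_0$ for every $i$, and the $K_0$-dependence is absorbed into the implicit constants.

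Next I would prove \eqref{eq:non-coll-R_ij}. Fix $i\neq j$. For the third term in \eqref{eq:def-R} one has $|z_i-z_j|/\sqrt{\lmb_i\lmb_j}\aeq(\lmb_i\lmb_j)^{-1/2}$ by the first input. For the first two terms, multiplying by $\sqrt{\lmb_i\lmb_j}$ turns $\sqrt{\lmb_i/\lmb_j}$ into $\lmb_i$ and $\sqrt{\lmb_j/\lmb_i}$ into $\lmb_j$, both of which are $\aleq1$ by the second input; hence both are $\aleq(\lmb_i\lmb_j)^{-1/2}$. Therefore the maximum defining $R_{ij}$ is attained, up to constants, by the third term, i.e.\ $R_{ij}^{-1}\aeq\sqrt{\lmb_i\lmb_j}$.

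Then I would deduce \eqref{eq:non-coll-R_i} and \eqref{eq:non-coll-R}. By \eqref{eq:non-coll-R_ij},
\[
R_i^{-1}=\max_{j\neq i}R_{ij}^{-1}\aeq\sqrt{\lmb_i}\cdot\sqrt{\,\max_{j\neq i}\lmb_j\,}.
\]
If $\lmb_i=\lmb_{\max}$, then $\max_{j\neq i}\lmb_j=\lmb_{\secmax}$ (whether or not the maximum is attained more than once), giving $R_i^{-1}\aeq\sqrt{\lmb_{\max}\lmb_{\secmax}}$; if $\lmb_i<\lmb_{\max}$, then an index attaining $\lmb_{\max}$ differs from $i$, so $\max_{j\neq i}\lmb_j=\lmb_{\max}$ and $R_i^{-1}\aeq\sqrt{\lmb_i\lmb_{\max}}$. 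This is \eqref{eq:non-coll-R_i}. Finally $R^{-1}=\max_i R_i^{-1}$; for indices with $\lmb_i<\lmb_{\max}$ one has $\lmb_i\leq\lmb_{\secmax}$, hence $\sqrt{\lmb_i\lmb_{\max}}\leq\sqrt{\lmb_{\secmax}\lmb_{\max}}$, so the maximum over $i$ equals, up to constants, $\sqrt{\lmb_{\max}\lmb_{\secmax}}$, attained at any $i$ with $\lmb_i=\lmb_{\max}$. This gives \eqref{eq:non-coll-R}.

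There is no serious obstacle here; the computation is entirely elementary once the separation $|z_i-z_j|\aeq1$ is in hand. The only point requiring a little care is the bookkeeping of possible ties among the $\lmb_i$ when passing from $\max_{j\neq i}\lmb_j$ to $\lmb_{\secmax}$ or $\lmb_{\max}$, which is exactly what the case split on $\lmb_i=\lmb_{\max}$ versus $\lmb_i<\lmb_{\max}$ takes care of.
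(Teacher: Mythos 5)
Your proof is correct and follows essentially the same route as the paper: the separation $|z_i-z_j|\aeq1$ (from distinctness of the $z_i^{\ast}$ and smallness of $\dlt_2$) together with $\lmb_{\max}\aleq1$ forces the term $|z_i-z_j|/\sqrt{\lmb_i\lmb_j}$ to dominate in \eqref{eq:def-R}, giving \eqref{eq:non-coll-R_ij}, and the other two estimates follow by taking maxima over $j\neq i$ and over $i$. The paper states these last steps as "simple consequences"; your explicit handling of ties among the $\lmb_i$ is the same bookkeeping, just written out.
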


\begin{proof}
As $z_{1}^{\ast},\dots,z_{J}^{\ast}$ are distinct and $\dlt_{2}$
is small, $|z_{i}-z_{j}|\aeq1$. This together with $\lmb_{\max}<K_{0}\aleq1$
gives $R_{ij}^{-1}\aeq\sqrt{\lmb_{i}\lmb_{j}}$, which is \eqref{eq:non-coll-R_ij}.
The remaining estimates are simple consequences of \eqref{eq:non-coll-R_ij}. 
\end{proof}
\begin{lem}[Estimates for $\td U$ and $\frkr_{a,i}$]
\label{lem:non-coll-tdU-est}We have 
\begin{gather}
\tsum{j\neq i}{}\|f(W_{;i},W_{;j})\|_{L^{(2^{\ast\ast})'}}\aleq\lmb_{i}^{2}\lmb_{\secmax}^{\frac{N-5}{4}}\lmb_{\max}^{\frac{N+1}{4}}\qquad\forall i\in\setJ,\label{eq:non-coll-f(W_i,W_j)-H2dual}\\
\|\td U\|_{\dot{H}^{1}}\aleq\lmb_{\max}^{\frac{N+2}{4}}\lmb_{\secmax}^{\frac{N+2}{4}},\quad\|\td U\|_{\dot{H}^{2}}\aleq\lmb_{\secmax}^{\frac{N-2}{4}}\lmb_{\max}^{\frac{N+2}{4}},\quad\|\lmb_{i}\rd_{z_{i}^{a}}\td U\|_{\dot{H}^{1}}\aleq\lmb_{i}\lmb_{\secmax}^{\frac{N-2}{4}}\lmb_{\max}^{\frac{N+2}{4}},\label{eq:non-coll-tdU-est}\\
\max_{a,i}\frac{|\frkr_{a,i}|}{\lmb_{i}}\aleq\lmb_{\secmax}^{\frac{N-4}{2}}\lmb_{\max}^{\frac{N-2}{2}},\label{eq:non-coll-frkr_a,i-est-0}
\end{gather}
and 
\begin{equation}
\frkr_{a,i}=\sum_{j\neq i}\frac{\lan\calV_{a;i},f'(W_{;i})W_{;j}\ran}{\|\calV_{a}\|_{L^{2}}^{2}}+\calO(\chf_{\lmb_{i}=\lmb_{\max}}\lmb_{i}^{2}\lmb_{\secmax}^{\frac{N-2}{2}}\lmb_{\max}^{\frac{N-2}{2}}+\chf_{\lmb_{i}<\lmb_{\max}}\lmb_{i}^{2}\lmb_{\secmax}^{\frac{N-4}{2}}\lmb_{\max}^{\frac{N}{2}}).\label{eq:non-coll-frkr_a,i-est}
\end{equation}
\end{lem}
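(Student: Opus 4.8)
The plan is to derive every assertion of this lemma by substituting the non-colliding dictionary of Lemma~\ref{lem:non-coll-R_ij}---together with $|z_i-z_j|\aeq1$ for $i\neq j$---into the general bounds of Lemma~\ref{lem:f(W1,W2)-est} and Proposition~\ref{prop:Modified-Profiles}, and, for \eqref{eq:non-coll-frkr_a,i-est}, into the derivation of the coefficients $\frkr_{a,i}$ from Step~3 of the proof of that proposition. The only device needed beyond pure substitution is the elementary fact that in the low dimensions $N\in\{7,8\}$ a logarithmic factor $\lan\log R_{ij}\ran^{c}$ is absorbed at the cost of an arbitrarily small power of $R_{ij}^{-1}\aeq\sqrt{\lmb_i\lmb_j}$; the exponents in the statement leave a margin of size $\tfrac14$, which is more than enough. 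The single auxiliary estimate that organizes everything is
\[
\frkp(R_i)\aleq R_i^{-\frac{N+2}{2}}\qquad\text{and}\qquad\sum_{i=1}^{J}\frac{R_i^{-\frac{N+2}{2}}}{\lmb_i}\aleq\lmb_{\secmax}^{\frac{N-2}{4}}\lmb_{\max}^{\frac{N+2}{4}},
\]
both following from \eqref{eq:def-frkp} and \eqref{eq:non-coll-R_i} by splitting on whether $\lmb_i=\lmb_{\max}$ or $\lmb_i<\lmb_{\max}$ (in the former case one uses $\lmb_{\secmax}\le\lmb_{\max}$ to trade one factor $\lmb_{\secmax}/\lmb_{\max}\le1$; in the latter one uses $\lmb_i\le\lmb_{\secmax}$ with the positive exponent $\tfrac{N-2}{4}$).

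Granting this, I would argue as follows. Estimate \eqref{eq:non-coll-f(W_i,W_j)-H2dual} is \eqref{eq:f(W1,W2)-H2dual-est} with $R_{ij}^{-1}\aeq\sqrt{\lmb_i\lmb_j}$ inserted, followed by the two-case comparison ($\lmb_i\le\lmb_j$ versus $\lmb_i>\lmb_j$) together with the observation that $\min\{\lmb_i,\lmb_j\}\le\lmb_{\secmax}$ whenever $\max\{\lmb_i,\lmb_j\}=\lmb_{\max}$; the sum over $j\neq i$ has boundedly many terms and is harmless. The bound $\|\td U\|_{\dot{H}^{1}}\aleq(\lmb_{\max}\lmb_{\secmax})^{\frac{N+2}{4}}$ is \eqref{eq:tdU-Hdot1-est} combined with \eqref{eq:non-coll-R}; the bound for $\|\td U\|_{\dot{H}^{2}}$ is \eqref{eq:tdU-Hdot2-est} combined with the auxiliary estimate above. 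Then \eqref{eq:non-coll-frkr_a,i-est-0} follows from \eqref{eq:frkr/lmb-est}, the auxiliary estimate, and $R^{-\frac{N-6}{2}}\aeq(\lmb_{\secmax}\lmb_{\max})^{\frac{N-6}{4}}$: multiplying out the exponents produces exactly $\lmb_{\secmax}^{\frac{N-4}{2}}\lmb_{\max}^{\frac{N-2}{2}}$. Finally, the bound for $\|\lmb_i\rd_{z_i^a}\td U\|_{\dot{H}^{1}}$ uses the second argument of the minimum in \eqref{eq:tdU-lmb-deriv-est}: the term $R_i^{-\frac{N+2}{2}}$ is handled as for $\|\td U\|_{\dot{H}^{2}}$, the term $\lmb_i\|\td U\|_{\dot{H}^{2}}$ is already in the desired form, and $\max_b|\frkr_{b,i}|$ is controlled by the already-proven \eqref{eq:non-coll-frkr_a,i-est-0}---so this estimate must be proved last among those of \eqref{eq:non-coll-tdU-est}.

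For the leading-order identity \eqref{eq:non-coll-frkr_a,i-est} I would revisit \eqref{eq:2.54-1}, which after the rescaling used there reads $\sum_{b,j}\lan\lmb_i^{-1}\calV_{a;i},\lmb_j^{-1}\calV_{b;j}\ran\tfrac{\frkr_{b,j}}{\lmb_j}=\tfrac1{\lmb_i}\{\lan\calV_{a;i},\Psi\ran+\lan\calV_{a;i},f(\calW+\td U)-f(\calW)-f'(W_{;i})\td U\ran\}$. By \eqref{eq:calV-calV-inner-prod} the matrix on the left is $\mathrm{diag}(\|\calV_a\|_{L^2}^2)$ plus an off-diagonal perturbation of size $\calO(R_{ij}^{-(N-4)})=\calO((\lmb_i\lmb_j)^{\frac{N-4}{2}})$; inverting it, the diagonal part of the inverse yields precisely the claimed leading term $\sum_{j\neq i}\|\calV_a\|_{L^2}^{-2}\lan\calV_{a;i},f'(W_{;i})W_{;j}\ran$ once $\lan\calV_{a;i},\Psi\ran$ is expanded via \eqref{eq:calV-Psi-leading}, while all remaining contributions---the error in \eqref{eq:calV-Psi-leading}, the $\td U$-term bounded by \eqref{eq:2.54-2}, and the off-diagonal part of the inverse acting on the right-hand side---are bounded, after inserting \eqref{eq:non-coll-R}--\eqref{eq:non-coll-R_ij} and the already-established bounds \eqref{eq:non-coll-tdU-est}, by $\calO(\chf_{\lmb_i=\lmb_{\max}}\lmb_i^2\lmb_{\secmax}^{\frac{N-2}{2}}\lmb_{\max}^{\frac{N-2}{2}}+\chf_{\lmb_i<\lmb_{\max}}\lmb_i^2\lmb_{\secmax}^{\frac N2-2}\lmb_{\max}^{\frac N2})$; here one repeatedly uses $N\ge7$ (so that $\tfrac{N-2}{2}>2$, $\tfrac{N-6}{2}>0$) and $\lmb_i\le\lmb_{\secmax}\le\lmb_{\max}$ to dominate each monomial by one of equal or larger total degree. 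I expect the routine but lengthy exponent bookkeeping in this final step---tracking which piece lands in which of the two cases of the error term and checking the comparisons uniformly for $7\le N$ (the value $N=7$ occasionally needing a slightly different handling of the lowest-order factors than $N\ge8$)---to be the only real obstacle; there is no new analytic content beyond the cited estimates.
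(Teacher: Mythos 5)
Your proposal is correct and follows essentially the same route as the paper: substitute the non-colliding relations $R_{ij}\aeq(\lmb_i\lmb_j)^{-1/2}$ into Lemma~\ref{lem:f(W1,W2)-est} and Proposition~\ref{prop:Modified-Profiles} (with $\frkp(R_i)\aleq R_i^{-\frac{N+2}{2}}$ and the same index-splitting on $\lmb_i=\lmb_{\max}$ vs.\ $\lmb_i<\lmb_{\max}$), prove the estimates in the same order so that \eqref{eq:non-coll-frkr_a,i-est-0} is available for the parameter-derivative bound, and extract \eqref{eq:non-coll-frkr_a,i-est} from \eqref{eq:2.54-1} using \eqref{eq:calV-Psi-leading}, \eqref{eq:2.54-2}, and the a priori bound on $\frkr_{b,j}/\lmb_j$ for the off-diagonal contribution. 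The only (harmless) deviations are that you invert the rescaled matrix $\lan\lmb_i^{-1}\calV_{a;i},\lmb_j^{-1}\calV_{b;j}\ran$ instead of isolating the exact diagonal of the unrescaled system as the paper does, and that you leave the exponent in \eqref{eq:calV-Psi-leading} unspecified, where any $\eps\in(0,N-6]$ (the paper takes $\eps=N-6$) makes the bookkeeping close.
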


\begin{proof}
\uline{Proof of \mbox{\eqref{eq:non-coll-f(W_i,W_j)-H2dual}}}.
To simplify the notation, denote $\lmb_{ij}^{+}\coloneqq\max\{\lmb_{i},\lmb_{j}\}$,
$\lmb_{ij}^{-}\coloneqq\min\{\lmb_{i},\lmb_{j}\}$ for $i\neq j$.
Recall $R_{ij}\aeq\sqrt{\lmb_{i}\lmb_{j}}$ from \eqref{eq:non-coll-R_ij}.
Using \eqref{eq:f(W1,W2)-H2dual-est}, we obtain 
\begin{align*}
\chf_{N=7}\|f(W_{;i},W_{;j})\|_{L^{(2^{\ast\ast})'}} & \aleq(\lmb_{ij}^{-})^{3/4}(\lmb_{ij}^{+})^{1/4}R_{ij}^{-7/2}\aeq(\lmb_{ij}^{-})^{5/2}(\lmb_{ij}^{+})^{2},\\
\chf_{N=8}\|f(W_{;i},W_{;j})\|_{L^{(2^{\ast\ast})'}} & \aleq_{\eps}(\lmb_{ij}^{-})^{1-\eps}(\lmb_{ij}^{+})^{\eps}R_{ij}^{-4}\aeq(\lmb_{ij}^{-})^{3-\eps}(\lmb_{ij}^{+})^{2+\eps},\\
\chf_{N=9}\|f(W_{;i},W_{;j})\|_{L^{(2^{\ast\ast})'}} & \aleq\lmb_{ij}^{-}R_{ij}^{-N/2}\aeq(\lmb_{ij}^{-})^{(N+4)/4}(\lmb_{ij}^{+})^{N/4},
\end{align*}
where $\eps>0$ can be chosen arbitrarily small when $N=8$. It is
easy to check that these are bounded by \eqref{eq:non-coll-f(W_i,W_j)-H2dual}.

\uline{Proof of \mbox{\eqref{eq:non-coll-tdU-est}} for \mbox{$\|\td U\|_{\dot{H}^{1}}$}
and \mbox{$\|\td U\|_{\dot{H}^{2}}$}}. This easily follows from \eqref{eq:tdU-Hdot1-est},
\eqref{eq:tdU-Hdot2-est}, and $\frkp(R_{i})\aleq R_{i}^{-\frac{N+2}{2}}$
(due to this, this $\dot{H}^{2}$-estimate even has a room). 

\uline{Proof of \mbox{\eqref{eq:non-coll-frkr_a,i-est-0}}}. This
follows from \eqref{eq:frkr/lmb-est}: 
\[
\max_{a,i}\frac{|\frkr_{a,i}|}{\lmb_{i}}\aleq\tsum k{}\lmb_{k}^{-1}R_{k}^{-\frac{N+2}{2}}R^{-\frac{N-6}{2}}\aleq\lmb_{\secmax}^{\frac{N-4}{2}}\lmb_{\max}^{\frac{N-2}{2}}.
\]

\uline{Proof of \mbox{\eqref{eq:non-coll-tdU-est}} for \mbox{$\|\lmb_{i}\rd_{z_{i}^{a}}\td U\|_{\dot{H}^{1}}$}}.
This follows from \eqref{eq:tdU-lmb-deriv-est}, \eqref{eq:non-coll-tdU-est}
for $\|\td U\|_{\dot{H}^{2}}$, and \eqref{eq:non-coll-frkr_a,i-est-0}.

\uline{Proof of \mbox{\eqref{eq:non-coll-frkr_a,i-est}}}. By \eqref{eq:2.54-1}
and \eqref{eq:calV-calV-inner-prod}, we have 
\[
\|\calV_{a}\|_{L^{2}}^{2}\frkr_{a,i}=\lan\calV_{a;i},\Psi\ran+\lan\calV_{a;i},f(\calW+\td U)-f(\calW)-f'(W_{;i})\td U\ran+\tsum{b,j}{}\calO(\chf_{i\neq j}R_{ij}^{-(N-4)}\frac{\lmb_{i}}{\lmb_{j}})\frkr_{b,j}.
\]
The first term of the right hand side can be estimated using \eqref{eq:calV-Psi-leading}
with $\eps=N-6>0$: 
\begin{align*}
\lan\calV_{a;i},\Psi\ran & =\tsum{j\neq i}{}\lan\calV_{a;i},f'(W_{;i})W_{;j}\ran+\calO(R^{-(N-4)}R_{i}^{-4})\\
 & =\tsum{j\neq i}{}\lan\calV_{a;i},f'(W_{;i})W_{;j}\ran+\calO(\chf_{\lmb_{i}=\lmb_{\max}}\lmb_{i}^{2}\lmb_{\secmax}^{\frac{N}{2}}\lmb_{\max}^{\frac{N-4}{2}}+\chf_{\lmb_{i}<\lmb_{\max}}\lmb_{i}^{2}\lmb_{\secmax}^{\frac{N-4}{2}}\lmb_{\max}^{\frac{N}{2}}).
\end{align*}
The second term can be estimated using \eqref{eq:2.54-2}, \eqref{eq:tdU-Hdot1-est},
\eqref{eq:tdU-Hdot2-est}: 
\[
|\lan\calV_{a;i},f(\calW+\td U)-f(\calW)-f'(W_{;i})\td U\ran|\aleq R_{i}^{-\frac{N+2}{2}}R^{-\frac{N+2}{2}}+\lmb_{i}^{2}\|\td U\|_{\dot{H}^{2}}^{2}\aleq\lmb_{i}^{2}\lmb_{\secmax}^{\frac{N-2}{2}}\lmb_{\max}^{\frac{N+2}{2}}.
\]
The last term can be estimated using \eqref{eq:non-coll-frkr_a,i-est-0}:
\begin{align*}
\tsum{b,j}{}\calO(\chf_{i\neq j}R_{ij}^{-(N-4)}\frac{\lmb_{i}}{\lmb_{j}})\frkr_{b,j} & =\calO(\lmb_{i}R_{i}^{-(N-4)})\cdot\calO(\max_{b,j}\frac{|\frkr_{b,j}|}{\lmb_{j}})\\
 & =\calO(\chf_{\lmb_{i}=\lmb_{\max}}\lmb_{\secmax}^{\frac{N-4}{2}}\lmb_{i}^{\frac{N-2}{2}}+\chf_{\lmb_{i}<\lmb_{\max}}\lmb_{i}^{\frac{N-2}{2}}\lmb_{\max}^{\frac{N-4}{2}})\cdot\calO(\lmb_{\secmax}^{\frac{N-4}{2}}\lmb_{\max}^{\frac{N-2}{2}})\\
 & =\calO(\chf_{\lmb_{i}=\lmb_{\max}}\lmb_{i}^{2}\lmb_{\secmax}^{N-4}\lmb_{\max}^{N-4}+\chf_{\lmb_{i}<\lmb_{\max}}\lmb_{i}^{2}\lmb_{\secmax}^{N-5}\lmb_{\max}^{N-3}).
\end{align*}
Gathering the above three estimates concludes the proof of \eqref{eq:non-coll-frkr_a,i-est}.
\end{proof}

\subsection{\label{subsec:Basic-properties-for-non-coll-sol}Basic properties
for solutions on $[t_{n},T_{n}(K_{0},\protect\dlt_{2},\protect\alp))$}

In this subsection, let $u(t)$ be a global $\dot{H}^{1}$-solution
as in Assumption~\ref{assumption:sequential-on-param}. Recall the
time sequence $T_{n}(K_{0},\dlt_{2},\alp)$. Here, $K_{0}$ is fixed
and $\dlt_{2}$ and $\alp$ are assumed to be small. Implicit constants
depending on $K_{0}$ will be ignored.

We consider the time variations of $\lmb_{i}$ and $z_{i}$, which
are obtained by differentiating in time the orthogonality conditions
\eqref{eq:modulation-identity-psi}. We begin with rough estimates
of these time variations, which we call rough \emph{modulation estimates}. 
\begin{lem}[Rough modulation estimates]
\label{lem:rough-mod}We have an algebraic identity 
\begin{equation}
\begin{aligned}\rd_{t}\lan\psi_{\ul{;i}},g\ran & =\sum_{b,j}\Big\{\lan\psi_{\ul{;i}},\calV_{b;j}\ran-\lan\psi_{\ul{;i}},\lmb_{j}\rd_{z_{j}^{b}}\td U\ran+\chf_{j=i}\lan\lmb_{i}\rd_{z_{i}^{b}}(\psi_{\ul{;i}}),g\ran\Big\}\frac{(z_{j}^{b})_{t}}{\lmb_{j}}\\
 & \quad+\sum_{b,j}\lan\psi_{\ul{;i}},\calV_{b;j}\ran\frac{\frkr_{b,j}}{\lmb_{j}^{2}}+\lan\psi_{\ul{;i}},\Dlt g+f(U+g)-f(U)\ran
\end{aligned}
\label{eq:modulation-identity-psi}
\end{equation}
for any $\psi\in\dot{H}^{1}$ with $\Lmb\psi,\nabla\psi\in\dot{H}^{1}$.
We have rough modulation estimates 
\begin{equation}
|\lmb_{i,t}|+|z_{i,t}|+\lmb_{i}|\lan\calY_{\ul{;i}},g\ran_{t}|\aleq\|g\|_{\dot{H}^{2}}+\max_{b,j}\frac{|\frkr_{b,j}|}{\lmb_{j}}\aleq\|g\|_{\dot{H}^{2}}+\lmb_{\secmax}^{D-1}\lmb_{\max}^{D}.\label{eq:rough-mod-est}
\end{equation}
\end{lem}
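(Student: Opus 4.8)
The plan is to prove the algebraic identity \eqref{eq:modulation-identity-psi} first, then extract the rough modulation estimates \eqref{eq:rough-mod-est} from it by testing against a well-chosen finite family of profiles and inverting an almost-diagonal matrix. For the identity, I would start from the evolution equation for $g = u - U(\vec\iota,\vec\lmb(t),\vec z(t))$. Since $u$ solves \eqref{eq:NLH} and $\rd_t U = \sum_{b,j}\frac{(z_j^b)_t}{\lmb_j}\big(\lmb_j\rd_{z_j^b}U\big)$, with $\lmb_j\rd_{z_j^b}W_{;j} = -\calV_{b;j}$ and $\lmb_j\rd_{z_j^b}U = -\calV_{b;j} + \lmb_j\rd_{z_j^b}\td U$, I get
\[
\rd_t g = \sum_{b,j}\frac{(z_j^b)_t}{\lmb_j}\big(\calV_{b;j} - \lmb_j\rd_{z_j^b}\td U\big) + \Dlt u + f(u) - (\Dlt U + f(U)) + \Dlt U + f(U).
\]
Using $\Dlt U + f(U) = \sum_{b,j}\frkr_{b,j}\calV_{b\ul{;j}}$ from \eqref{eq:U-eqn} and $\Dlt u + f(u) - \Dlt U - f(U) = \Dlt g + f(U+g) - f(U)$, this becomes
\[
\rd_t g = \sum_{b,j}\frac{(z_j^b)_t}{\lmb_j}\big(\calV_{b;j} - \lmb_j\rd_{z_j^b}\td U\big) + \sum_{b,j}\frac{\frkr_{b,j}}{\lmb_j^2}\calV_{b;j} + \Dlt g + f(U+g) - f(U).
\]
Then differentiating the orthogonality condition $\lan\psi_{\ul{;i}},g\ran = 0$ (here with $\psi = \calZ_a$, but the identity holds for general $\psi$ as stated) in time and using $\rd_t(\psi_{\ul{;i}}) = \sum_b \frac{(z_i^b)_t}{\lmb_i}\,\lmb_i\rd_{z_i^b}(\psi_{\ul{;i}})$ gives exactly \eqref{eq:modulation-identity-psi}; this step is a routine but careful bookkeeping of the chain rule and the factors of $\lmb_j$.

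Next, to derive \eqref{eq:rough-mod-est}, I would apply the identity \eqref{eq:modulation-identity-psi} with $\psi = \calZ_a$ (so that $\lan\calZ_{a\ul{;i}},g\ran = 0$ for all $t$, hence the left side vanishes) for each $a\in\{0,\dots,N\}$ and $i\in\setJ$, and also with $\psi = \calY$ (which does not make the left side vanish, but produces the extra equation for $\lmb_i\lan\calY_{\ul{;i}},g\ran_t$). From the $\calZ$-equations I get a linear system for the unknowns $\frac{(z_j^b)_t}{\lmb_j}$: the coefficient matrix is $\lan\calZ_{a\ul{;i}},\calV_{b;j}\ran + (\text{small corrections})$, which by the scaling-invariant inner product \eqref{eq:calZ-calV-inner-prod} (in the form $\lan\lmb_i^{-1}\calZ_{a;i},\lmb_j^{-1}\calV_{b;j}\ran = \chf_{(a,i)=(b,j)} + \calO(\chf_{i\neq j}R_{ij}^{-(N-4)})$) is diagonally dominant with uniformly bounded inverse once $\dlt_2$ and $\alp$ are small. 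The corrections from $\lan\psi_{\ul{;i}},\lmb_j\rd_{z_j^b}\td U\ran$ and $\chf_{j=i}\lan\lmb_i\rd_{z_i^b}(\psi_{\ul{;i}}),g\ran$ are $\calO(\|\td U\|_{\dot H^2}) + \calO(\|g\|_{\dot H^1})$, both small. The right-hand-side forcing is controlled by $\sum_{b,j}|\lan\psi_{\ul{;i}},\calV_{b;j}\ran|\frac{|\frkr_{b,j}|}{\lmb_j^2}\cdot\lmb_j \aleq \max_{b,j}\frac{|\frkr_{b,j}|}{\lmb_j}$ (using that $\lan\psi_{\ul{;i}},\calV_{b;j}\ran$ carries a factor $\lmb_j^{-1}$ worth of scaling and is $\calO(\lmb_i^{-1})$ for $j=i$, $\calO(\lmb_j^{-1}R_{ij}^{-(N-4)})$ otherwise), together with the term $|\lan\psi_{\ul{;i}},\Dlt g + f(U+g) - f(U)\ran| \aleq \|g\|_{\dot H^2} + (\text{small})\|g\|_{\dot H^2}$, where I use $\Dlt g + f(U+g) - f(U) = \calL_{\calW}g + \calO(|g|^p + |\td U|^{p-1}|g|)$ as in the proof of the coercivity-of-dissipation proposition, and test against the exponentially localized $\psi_{\ul{;i}}$. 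Inverting the matrix yields $|\lmb_{i,t}| + |z_{i,t}| \aleq \|g\|_{\dot H^2} + \max_{b,j}\frac{|\frkr_{b,j}|}{\lmb_j}$, and the $\psi=\calY$ equation gives the same bound for $\lmb_i|\lan\calY_{\ul{;i}},g\ran_t|$. The second inequality in \eqref{eq:rough-mod-est} is then immediate from the already-established bound $\max_{a,i}\frac{|\frkr_{a,i}|}{\lmb_i}\aleq \lmb_{\secmax}^{\frac{N-4}{2}}\lmb_{\max}^{\frac{N-2}{2}} = \lmb_{\secmax}^{D-1}\lmb_{\max}^{D}$ from \eqref{eq:non-coll-frkr_a,i-est-0}.

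The main obstacle I anticipate is purely organizational rather than conceptual: keeping track of the precise powers of $\lmb_j$ that appear when one differentiates $\psi_{\ul{;i}} = \lmb_i^{-2}\psi_{;i}$ and $\calV_{b\ul{;j}} = \lmb_j^{-2}\calV_{b;j}$, and making sure the scaling-invariant formulation of the matrix (dividing the $(a,i)$-equation by an appropriate power of $\lmb_i$, as is done in Step~3 of Lemma~\ref{lem:inversion}) is used so that the inverse is genuinely uniformly bounded. One subtlety worth flagging: the term $\chf_{j=i}\lan\lmb_i\rd_{z_i^b}(\psi_{\ul{;i}}),g\ran$ is linear in $g$ and multiplied by the unknown $\frac{(z_i^b)_t}{\lmb_i}$, so it belongs on the matrix side; since $\|g\|_{\dot H^1} < \alp$ is small, it is a negligible perturbation of the diagonal and does not spoil invertibility. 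Once this is set up correctly, everything else — the $\td U$ estimates, the $\frkr$ estimates, the $\calL_{\calW}g$ coercivity bound — is quoted directly from the earlier parts of the paper.
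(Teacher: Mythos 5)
Your proposal is correct and follows essentially the same route as the paper: derive the identity by testing the evolution equation for $g$ against $\psi_{\ul{;i}}$ (writing $-\rd_t U=\sum_{b,j}\frac{(z_j^b)_t}{\lmb_j}(\calV_{b;j}-\lmb_j\rd_{z_j^b}\td U)$ and using \eqref{eq:U-eqn}), then set $\psi=\calZ_a$ to obtain an almost-diagonal linear system for $(z_j^b)_t/\lmb_j$ (with the $\td U$-derivative and $g$-terms absorbed into the matrix, invertible via \eqref{eq:calZ-calV-inner-prod} and \eqref{eq:non-coll-tdU-est}), treat $\psi=\calY$ separately, and conclude with \eqref{eq:non-coll-frkr_a,i-est-0}. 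The only caveat is presentational: the identity \eqref{eq:modulation-identity-psi} holds for general $\psi$ without any orthogonality, which you already acknowledge, so there is no gap.
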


\begin{proof}
\uline{Proof of \mbox{\eqref{eq:modulation-identity-psi}}}. Substituting
$u=U+g$ and the equation \eqref{eq:U-eqn} of $U$ into \eqref{eq:NLH},
we get 
\[
\rd_{t}g=-\rd_{t}U+\sum_{b,j}\frac{\frkr_{b,j}}{\lmb_{j}^{2}}\calV_{b;j}+\Dlt g+f(U+g)-f(U).
\]
Testing against $\psi_{\ul{;i}}$, we get 
\[
\rd_{t}\lan\psi_{\ul{;i}},g\ran=\lan\psi_{\ul{;i}},-\rd_{t}U\ran+\sum_{b,j}\lan\psi_{\ul{;i}},\calV_{b;j}\ran\frac{\frkr_{b,j}}{\lmb_{j}^{2}}+\lan\rd_{t}\psi_{\ul{;i}},g\ran+\lan\psi_{\ul{;i}},\Dlt g+f(U+g)-f(U)\ran.
\]
Substituting into the above the following
\begin{align*}
-\rd_{t}U & =-\sum_{b,j}\frac{(z_{j}^{b})_{t}}{\lmb_{j}}\cdot\lmb_{j}\rd_{z_{j}^{b}}(\calW+\td U)=\sum_{b,j}\frac{(z_{j}^{b})_{t}}{\lmb_{j}}\cdot(\calV_{b;j}-\lmb_{j}\rd_{z_{j}^{b}}\td U),\\
\rd_{t}\psi_{\ul{;i}} & =\sum_{b}\frac{(z_{i}^{b})_{t}}{\lmb_{i}}\cdot\lmb_{i}\rd_{z_{i}^{b}}(\psi_{\ul{;i}}),
\end{align*}
we obtain \eqref{eq:modulation-identity-psi}.

\uline{Proof of \mbox{\eqref{eq:rough-mod-est}} for \mbox{$\lmb_{i,t}$}
and \mbox{$z_{i,t}$}.} Substituting $\psi=\calZ_{a}$ into \eqref{eq:modulation-identity-psi}
and using the orthogonality conditions $\lan\calZ_{a\ul{;i}},g\ran=0$
\eqref{eq:curve-orthog}, we get 
\begin{equation}
\begin{aligned}0 & =\sum_{b,j}\Big\{\lan\calZ_{a\ul{;i}},\calV_{b;j}\ran-\lan\calZ_{a\ul{;i}},\lmb_{j}\rd_{z_{j}^{b}}\td U\ran+\chf_{j=i}\lan\lmb_{i}\rd_{z_{i}^{b}}(\calZ_{a\ul{;i}}),g\ran\Big\}\frac{(z_{j}^{b})_{t}}{\lmb_{j}}\\
 & \quad+\sum_{b,j}\lan\calZ_{a\ul{;i}},\calV_{b;j}\ran\frac{\frkr_{b,j}}{\lmb_{j}^{2}}+\lan\calZ_{a\ul{;i}},\Dlt g+f(U+g)-f(U)\ran.
\end{aligned}
\label{eq:modulation-identity-calZ}
\end{equation}
Dividing by the diagonal $d_{ai}^{(\calZ)}\coloneqq1-\lan\calZ_{a\ul{;i}},\lmb_{i}\rd_{z_{i}^{a}}\td U\ran+\lan\lmb_{i}\rd_{z_{i}^{a}}(\calZ_{a\ul{;i}}),g\ran$,
this can be rewritten as 
\begin{equation}
\frac{(z_{i}^{a})_{t}}{\lmb_{i}}+\sum_{b,j}\calM_{aibj}^{(\calZ)}\frac{(z_{j}^{b})_{t}}{\lmb_{j}}=\wh h_{ai}^{(\calZ)},\label{eq:M^Z-wh-h-identity}
\end{equation}
where 
\begin{align}
\calM_{aibj}^{(\calZ)} & \coloneqq\chf_{(a,i)\neq(b,j)}\frac{1}{d_{ai}^{(\calZ)}}\Big\{\lan\calZ_{a\ul{;i}},\calV_{b;j}\ran-\lan\calZ_{a\ul{;i}},\lmb_{j}\rd_{z_{j}^{b}}\td U\ran+\chf_{j=i}\lan\lmb_{i}\rd_{z_{i}^{b}}(\calZ_{a\ul{;i}}),g\ran\Big\},\label{eq:M^Z-def}\\
\wh h_{ai}^{(\calZ)} & \coloneqq-\frac{1}{d_{ai}^{(\calZ)}}\Big\{\sum_{b,j}\lan\calZ_{a\ul{;i}},\calV_{b;j}\ran\frac{\frkr_{b,j}}{\lmb_{j}^{2}}+\lan\calZ_{a\ul{;i}},\Dlt g+f(U+g)-f(U)\ran\Big\}.\label{eq:wh-h^Z-def}
\end{align}
Note that the division by $d_{ai}^{(\calZ)}$ is justified because
(see \eqref{eq:tdU-lmb-deriv-est} for $\|\lmb_{i}\rd_{z_{i}^{a}}\td U\|_{\dot{H}^{1}}$)
\[
d_{ai}^{(\calZ)}=1+\calO(\|\lmb_{i}\rd_{z_{i}^{a}}\td U\|_{\dot{H}^{1}}+\|g\|_{\dot{H}^{1}})=1+\calO(\alp).
\]

We estimate $\calM_{aibj}^{(\calZ)}$ and $\wh h_{ai}^{(\calZ)}$.
We estimate $\wh h_{ai}^{(\calZ)}$ crudely as 
\[
|\wh h_{ai}^{(\calZ)}|\aleq\frac{1}{\lmb_{i}}\Big(\max_{b,j}\frac{|\frkr_{b,j}|}{\lmb_{j}}+\|g\|_{\dot{H}^{2}}\Big).
\]
Next, we observe that $\calM_{aibj}^{(\calZ)}$ is small; we use \eqref{eq:calZ-calV-inner-prod}
and \eqref{eq:non-coll-tdU-est} to have 
\begin{align*}
|\calM_{aibj}^{(\calZ)}| & \aleq\chf_{(a,i)\neq(b,j)}|\lan\calZ_{a\ul{;i}},\calV_{b;j}\ran|+\|\lmb_{j}\rd_{z_{j}^{b}}\td U\|_{\dot{H}^{1}}+\chf_{j=i}\|g\|_{\dot{H}^{1}},\\
 & \aleq R^{-(N-4)}\lmb_{i}^{-1}\lmb_{j}+\calO(\lmb_{j}R^{-\frac{N-2}{2}})+\chf_{j=i}\calO(\alp)\aleq o_{\alp\to0}(1)\cdot\lmb_{i}^{-1}\lmb_{j}.
\end{align*}
This implies the mapping property of $\calM_{aibj}^{(\calZ)}$: 
\[
\tsum{b,j}{}|\calM_{aibj}^{(\calZ)}|\lmb_{j}^{-1}\aleq o_{\alp\to0}(1)\cdot\lmb_{i}^{-1}.
\]
Therefore, we can solve the system \eqref{eq:M^Z-wh-h-identity} with
the estimate 
\[
\frac{|(z_{i}^{a})_{t}|}{\lmb_{i}}\aleq\frac{1}{\lmb_{i}}(\max_{b,j}\frac{|\frkr_{b,j}|}{\lmb_{j}}+\|g\|_{\dot{H}^{2}}).
\]
Finally, substituting \eqref{eq:non-coll-frkr_a,i-est-0} into the
above gives \eqref{eq:rough-mod-est} for $\lmb_{i,t}$ and $z_{i,t}$.

\uline{Proof of \mbox{\eqref{eq:rough-mod-est}} for \mbox{$\lan\calY_{\ul{;i}},g\ran_{t}$}}.
It suffices to show that the right hand side of \eqref{eq:modulation-identity-psi}
when $\psi=\calY$ is of size $\calO(\lmb_{\max}^{2D-1}+\|g\|_{\dot{H}^{2}})$.
The terms with $(b,j)=(a,i)$ are of size 
\[
\calO(1)\cdot\calO\Big(\Big|\frac{(z_{i}^{a})_{t}}{\lmb_{i}}\Big|+\Big|\frac{\frkr_{a,i}}{\lmb_{i}^{2}}\Big|\Big)=\frac{1}{\lmb_{i}}\cdot\calO(\max_{b,j}\frac{|\frkr_{b,j}|}{\lmb_{j}}+\|g\|_{\dot{H}^{2}}).
\]
The terms with $(b,j)\neq(a,i)$ were essentially estimated in the
previous paragraph with a crude bound 
\[
\sum_{b,j}o_{\alp\to0}(1)\frac{\lmb_{j}}{\lmb_{i}}\cdot\calO\Big(\Big|\frac{(z_{i}^{a})_{t}}{\lmb_{i}}\Big|+\Big|\frac{\frkr_{a,i}}{\lmb_{i}^{2}}\Big|\Big)=\frac{o_{\alp\to0}(1)}{\lmb_{i}}\cdot\calO(\max_{b,j}\frac{|\frkr_{b,j}|}{\lmb_{j}}+\|g\|_{\dot{H}^{2}}).
\]
The last term $\lan\calY_{\ul{;i}},\Dlt g+f(U+g)-f(U)\ran$ was also
essentially estimated in the previous paragraph with a crude bound
\[
\lan\calY_{\ul{;i}},\Dlt g+f(U+g)-f(U)\ran=\calO(\|\calY_{\ul{;i}}\|_{L^{2}}\|g\|_{\dot{H}^{2}})=\calO(\lmb_{i}^{-1}\|g\|_{\dot{H}^{2}}).
\]
This completes the proof of \eqref{eq:rough-mod-est} for $\lan\calY_{\ul{;i}},g\ran_{t}$.
\end{proof}
The following is a useful corollary of the rough modulation estimates.
\begin{lem}
We have 
\begin{equation}
E[u(t)]\to JE[W]\qquad\text{as }t\to+\infty,\label{eq:E(u(t))-go-to-JE(W)}
\end{equation}
and (denoting $T_{n}=T_{n}(K_{0},\dlt_{2},\alp)$) 
\begin{align}
\int_{t_{n}}^{T_{n}}\Big(\|g\|_{\dot{H}^{2}}^{2}+\sum_{a,i}\frac{\frkr_{a,i}^{2}}{\lmb_{i}^{2}}\Big)dt & =o_{n\to\infty}(1),\label{eq:spacetime-control}\\
\sup_{t\in[t_{n},T_{n})}|\lan\calY_{;i}(t),g(t)\ran| & =o_{n\to\infty}(1),\qquad\forall i\in\setJ.\label{eq:a_i-is-o(1)}
\end{align}
\end{lem}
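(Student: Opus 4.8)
The plan is to derive all three assertions from the energy identity \eqref{eq:energy-identity}, the coercivity of dissipation \eqref{eq:coercivity-dissipation}, and the rough modulation estimates of Lemma~\ref{lem:rough-mod}, together with the basic $W$-bubbling facts at $t=t_n$ recorded in \eqref{eq:5.2}--\eqref{eq:5.4}. The key point is that on $[t_n,T_n)$ the solution stays in $\calT_J(2\alp)$ with $\lmb_{\max}<K_0$ and $\vec z$ near $\vec z^\ast$, so all the estimates from Section~\ref{subsec:Basic-estimates-for-R,U,frkr} apply, and we can decompose $u(t)=U(\vec\iota,\vec\lmb(t),\vec z(t))+g(t)$ with the orthogonality \eqref{eq:curve-orthog}.

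First I would prove \eqref{eq:E(u(t))-go-to-JE(W)}. By the energy identity, $E[u(t)]$ is non-increasing in $t$ and bounded below (since $u(t)$ is $\dot H^1$-bounded, or since $E[u(t)]$ stays close to $JE[W]$ along the bubbling sequence), hence $E[u(t)]$ converges to some limit $E_\infty$ as $t\to+\infty$. At $t=t_n$, writing $u(t_n)=\calW(\vec\iota,\vec\lmb(t_n),\vec z(t_n))+o_{n\to\infty}(1)$ in $\dot H^1$ by \eqref{eq:5.2}, and using the continuity of the energy on $\dot H^1$ together with the interaction estimate \eqref{eq:E(calW)-JE(W)-est} (the interaction term is $\calO(R^{-(N-2)})=o_{n\to\infty}(1)$ since $R(t_n)^{-1}\to0$), we get $E[u(t_n)]\to JE[W]$. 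Since $t_n\to+\infty$ and $E[u(t)]$ converges, $E_\infty=JE[W]$, which is \eqref{eq:E(u(t))-go-to-JE(W)}.

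Next, for \eqref{eq:spacetime-control}: the energy identity gives
\[
\int_{t_n}^{T_n}\|\Dlt u+f(u)\|_{L^2}^2\,dt\le E[u(t_n)]-\inf_{t}E[u(t)]=E[u(t_n)]-JE[W]=o_{n\to\infty}(1),
\]
using \eqref{eq:E(u(t))-go-to-JE(W)} and the fact that $E[u(t_n)]\to JE[W]$ from above. Now apply the coercivity of dissipation \eqref{eq:coercivity-dissipation}, which is valid pointwise in $t\in[t_n,T_n)$ because $(\vec\iota,\vec\lmb(t),\vec z(t))\in\calP_J(2\alp)$ with $\alp$ small and $\|g(t)\|_{\dot H^1}<C_J\cdot2\alp$ small (after possibly shrinking $\alp$); this upgrades the left-hand side to $\int_{t_n}^{T_n}(\|g\|_{\dot H^2}^2+\sum_{a,i}\frkr_{a,i}^2/\lmb_i^2)\,dt=o_{n\to\infty}(1)$, which is \eqref{eq:spacetime-control}. (One should check $g(t)\in\dot H^2$, which holds since $u(t)\in\dot H^1$ solves \eqref{eq:NLH} and $U\in\dot H^2$; on the set where $\|\Dlt u+f(u)\|_{L^2}=+\infty$ the bound is trivial.)

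Finally, for \eqref{eq:a_i-is-o(1)}: at $t=t_n$, since $\|g(t_n)\|_{\dot H^1}\to0$ by \eqref{eq:5.2}, we have $|\lan\calY_{;i}(t_n),g(t_n)\ran|\aleq\|\calY\|_{L^{(2^\ast)'}}\|g(t_n)\|_{L^{2^\ast}}=o_{n\to\infty}(1)$. For general $t\in[t_n,T_n)$ I would integrate the rough modulation estimate \eqref{eq:rough-mod-est} for $\lan\calY_{\ul{;i}},g\ran_t$. Note $\lan\calY_{;i},g\ran=\lmb_i\lan\calY_{\ul{;i}},g\ran$, and by the product rule $\rd_t(\lmb_i\lan\calY_{\ul{;i}},g\ran)=\lmb_{i,t}\lan\calY_{\ul{;i}},g\ran+\lmb_i\lan\calY_{\ul{;i}},g\ran_t$; using \eqref{eq:rough-mod-est} twice and $|\lan\calY_{\ul{;i}},g\ran|\aleq\lmb_i^{-1}\|g\|_{\dot H^2}$ (from $\|\calY_{\ul{;i}}\|_{L^2}\aeq\lmb_i^{-1}$), one bounds $|\rd_t\lan\calY_{;i},g\ran|\aleq \|g\|_{\dot H^2}^2+(\|g\|_{\dot H^2}+\lmb_{\secmax}^{D-1}\lmb_{\max}^D)\cdot(\|g\|_{\dot H^2}+\lmb_{\secmax}^{D-1}\lmb_{\max}^D)$, hence $|\rd_t\lan\calY_{;i},g\ran|\aleq\|g\|_{\dot H^2}^2+\lmb_{\secmax}^{2D-2}\lmb_{\max}^{2D}$. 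Integrating from $t_n$ to $t$ and using \eqref{eq:spacetime-control} for the first term and $\lmb_{\max}<K_0$ together with the spacetime control $\int\sum_{a,i}\frkr_{a,i}^2/\lmb_i^2\,dt=o_{n\to\infty}(1)$ (the second term is controlled by $\frkr$-type quantities, cf.\ \eqref{eq:non-coll-frkr_a,i-est-0}) for the second, we get $\sup_{t\in[t_n,T_n)}|\lan\calY_{;i}(t),g(t)\ran|\le|\lan\calY_{;i}(t_n),g(t_n)\ran|+o_{n\to\infty}(1)=o_{n\to\infty}(1)$, which is \eqref{eq:a_i-is-o(1)}.

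The main obstacle is the bookkeeping in the last step: one must verify that the quadratic-in-$\lmb_{\secmax}^{D-1}\lmb_{\max}^D$ contribution integrates to $o_{n\to\infty}(1)$, which requires relating $\lmb_{\secmax}^{2D-2}\lmb_{\max}^{2D}$ to the dissipation through the $\frkr_{a,i}/\lmb_i$ lower bound or directly to $\|g\|_{\dot H^2}^2$; this is where the precise choice of profiles in \eqref{eq:strategy-U-eqn} and the estimate \eqref{eq:non-coll-frkr_a,i-est-0} enter, and one has to be careful that $\lmb_{\max}$ is only bounded (not small) in this generality, so the bound $\lmb_{\secmax}^{2D-2}\lmb_{\max}^{2D}\aleq K_0^{2D}\lmb_{\secmax}^{2D-2}$ must still be integrable, which follows because $\frkr_{a,i}^2/\lmb_i^2\ageq$ a positive power of $\lmb_{\secmax}\lmb_{\max}$-interaction whenever interactions are nontrivial, and when they are not, $\lmb_{\secmax}$ itself is already small.
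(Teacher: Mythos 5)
Your treatment of \eqref{eq:E(u(t))-go-to-JE(W)} and \eqref{eq:spacetime-control} is correct and essentially identical to the paper's: monotonicity of $t\mapsto E[u(t)]$ plus $E[u(t_{n})]\to JE[W]$ along the bubbling sequence gives the first claim, and the energy identity combined with the dissipation coercivity \eqref{eq:coercivity-dissipation} (valid on $[t_{n},T_{n})$ because of the orthogonality \eqref{eq:curve-orthog} and the smallness coming from $u(t)\in\calT_{J}(\alp)$) gives the second.

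The proof of \eqref{eq:a_i-is-o(1)} has a genuine gap. After invoking \eqref{eq:rough-mod-est} you immediately substitute the \emph{upper} bound $\max_{b,j}|\frkr_{b,j}|/\lmb_{j}\aleq\lmb_{\secmax}^{D-1}\lmb_{\max}^{D}$ from \eqref{eq:non-coll-frkr_a,i-est-0}, so you are left needing $\int_{t_{n}}^{T_{n}}\lmb_{\secmax}^{2D-2}\lmb_{\max}^{2D}\,dt=o_{n\to\infty}(1)$. But \eqref{eq:non-coll-frkr_a,i-est-0} goes the wrong way for this: it gives $\frkr_{a,i}^{2}/\lmb_{i}^{2}\aleq\lmb_{\secmax}^{2D-2}\lmb_{\max}^{2D}$, so the spacetime control \eqref{eq:spacetime-control} does not dominate that term. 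The matching \emph{lower} bound $\max_{a,i}|\frkr_{a,i}|/\lmb_{i}\ageq\lmb_{\secmax}^{D-1}\lmb_{\max}^{D}$, which your closing paragraph appeals to, is not available at this stage of the paper: it is only proved later, scenario by scenario (\eqref{eq:case2-max-frkr_a,i}, \eqref{eq:case1-max-frkr_a,i}), and in the minimally degenerate regime it is actually false — there the sharp statement \eqref{eq:case3-max-frkr_a,i} reads $\max_{a,i}|\frkr_{a,i}|/\lmb_{i}\aeq\lmb^{D-1}|A[\vec z]\vec{\lmb}^{D}|+\lmb^{2D}$, which can be much smaller than $\lmb_{\secmax}^{D-1}\lmb_{\max}^{D}$ because of cancellations in the $\Lmb W$ direction. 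Nor does the fallback "when interactions are trivial, $\lmb_{\secmax}$ is already small" help: $[t_{n},T_{n})$ may be arbitrarily long (possibly infinite), so pointwise smallness of $\lmb_{\secmax}^{2D-2}\lmb_{\max}^{2D}$ gives no control of its time integral.

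The repair is simple and is what the paper does: do not replace the $\frkr$-terms by their bound in $\vec{\lmb}$. Keeping them, \eqref{eq:rough-mod-est} together with $\lmb_{i}^{-1}|\lan\calY_{;i},g\ran|\aleq\|g\|_{\dot{H}^{2}}$ yields
\[
|(\lan\calY_{;i},g\ran^{2})_{t}|\aleq\frac{|\lan\calY_{;i},g\ran|}{\lmb_{i}}\Big(\|g\|_{\dot{H}^{2}}+\max_{b,j}\frac{|\frkr_{b,j}|}{\lmb_{j}}\Big)\aleq\|g\|_{\dot{H}^{2}}^{2}+\max_{b,j}\frac{\frkr_{b,j}^{2}}{\lmb_{j}^{2}},
\]
and the right-hand side is exactly the quantity whose time integral over $[t_{n},T_{n})$ is $o_{n\to\infty}(1)$ by \eqref{eq:spacetime-control}. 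Integrating from $t_{n}$ and using $\lan\calY_{;i},g\ran(t_{n})=\calO(\|g(t_{n})\|_{\dot{H}^{1}})=o_{n\to\infty}(1)$ then gives \eqref{eq:a_i-is-o(1)} without any lower bound on the $\frkr_{a,i}$.
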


\begin{proof}
\uline{Proof of \mbox{\eqref{eq:E(u(t))-go-to-JE(W)}}}. This follows
from $E[u(t_{n})]\to JE[W]$, $t_{n}\to+\infty$, and monotone decreasing
property of $t\mapsto E[u(t)]$. 

\uline{Proof of \mbox{\eqref{eq:spacetime-control}}}. By the nonlinear
energy identity \eqref{eq:energy-identity}, the boundedness of $\|u(t_{n})\|_{\dot{H}^{1}}$,
and $t_{n}\to+\infty$, we get $\int_{0}^{\infty}\|\Dlt u+f(u)\|_{L^{2}}^{2}dt<+\infty$.
In particular, 
\[
\int_{t_{n}}^{\infty}\|\Dlt u+f(u)\|_{L^{2}}^{2}dt=o_{n\to\infty}(1).
\]
Applying \eqref{eq:coercivity-dissipation} on $[t_{n},T_{n})$ gives
\eqref{eq:spacetime-control}.

\uline{Proof of \mbox{\eqref{eq:a_i-is-o(1)}}}. By \eqref{eq:rough-mod-est}
and $\lmb_{i}^{-1}|\lan\calY_{;i},g\ran|\aleq\|g\|_{\dot{H}^{2}}$,
we have 
\[
|(\lan\calY_{;i},g\ran^{2})_{t}|\aleq\frac{|\lan\calY_{;i},g\ran|}{\lmb_{i}}(\|g\|_{\dot{H}^{2}}+\max_{b,j}\frac{|\frkr_{b,j}|}{\lmb_{j}})\aleq\|g\|_{\dot{H}^{2}}^{2}+\max_{b,j}\frac{\frkr_{b,j}^{2}}{\lmb_{j}^{2}}.
\]
Integrating this and applying $\lan\calY_{;i},g\ran(t_{n})=\calO(\|g(t_{n})\|_{\dot{H}^{1}})=o_{n\to\infty}(1)$
and \eqref{eq:case1-spacetime-est}, we conclude 
\[
\sup_{t\in[t_{n},T_{n})}|\lan\calY_{;i},g\ran(t)|^{2}\leq|\lan\calY_{;i},g\ran(t_{n})|^{2}+\int_{t_{n}}^{T_{n}}\calO\Big(\|g\|_{\dot{H}^{2}}^{2}+\max_{b,j}\frac{\frkr_{b,j}^{2}}{\lmb_{j}^{2}}\Big)dt=o_{n\to\infty}(1).\qedhere
\]
\end{proof}
For classification purposes, rough modulation estimates are not enough.
First, we have not identified any leading term of time variations
of $\lmb_{i}$ and $z_{i}$. A more serious problem is that the error
$\|g\|_{\dot{H}^{2}}/\lmb_{i}$ (of $\lmb_{i,t}/\lmb_{i}$) is far
from being time-integrable, which is necessary to justify that $g$
does not enter the modulation dynamics. We need the following refined
modulation estimates.
\begin{lem}[Refined modulation estimates]
\label{lem:non-coll-ref-mod-est}We have 
\begin{equation}
\begin{aligned}\Big|\frac{\lmb_{i,t}}{\lmb_{i}}+\frac{\frkr_{0,i}}{\lmb_{i}^{2}}-\frac{\rd_{t}\lan[\Lmb W]_{\ul{;i}},g\ran}{\|\Lmb W\|_{L^{2}}^{2}}\Big|+\chf_{a\neq0}\Big|\frac{(z_{i}^{a})_{t}}{\lmb_{i}}+\frac{\frkr_{a,i}}{\lmb_{i}^{2}}\Big|+\Big|\lan\calY_{\ul{;i}},g\ran_{t}-\frac{e_{0}}{\lmb_{i}^{2}}\lan\calY_{\ul{;i}},g\ran\Big|\quad\\
\aleq\|g\|_{\dot{H}^{2}}^{2}+\lmb_{\secmax}^{D-2}\lmb_{\max}^{D-1}\|g\|_{\dot{H}^{2}}+\lmb_{\secmax}^{D-\frac{3}{2}}\lmb_{\max}^{D+\frac{3}{2}}
\end{aligned}
\label{eq:non-coll-refined-mod-est}
\end{equation}
with the estimate 
\begin{equation}
\frac{\lan[\Lmb W]_{\ul{;i}},g\ran}{\|\Lmb W\|_{L^{2}}^{2}}=\calO\Big(\min\{\|g\|_{\dot{H}^{1}},\|g\|_{\dot{H}^{2}}+\lmb_{i}\|g\|_{\dot{H}^{1}}\}\Big).\label{eq:non-coll-refined-mod-est-corr-bound}
\end{equation}
\end{lem}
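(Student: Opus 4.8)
The plan is to derive the refined modulation estimates by substituting the specific orthogonality-dual profiles $\psi=[\Lmb W]$, $\psi=[\nabla W]$, and $\psi=\calY$ into the master algebraic identity \eqref{eq:modulation-identity-psi} from Lemma~\ref{lem:rough-mod}, and then carefully separating the \emph{leading} terms (the ones explicitly appearing in \eqref{eq:non-coll-refined-mod-est}) from the \emph{error} terms (to be bounded by the right-hand side). The key point that makes this ``refined'' rather than ``rough'' is that $[\Lmb W]$, $[\nabla W]$, and $\calY$ are (almost) kernel/eigenfunctions of $\calL_{W_{;i}}$, so the term $\lan\psi_{\ul{;i}},\Dlt g+f(U+g)-f(U)\ran$ — which in Lemma~\ref{lem:rough-mod} was crudely bounded by $\lmb_i^{-1}\|g\|_{\dot H^2}$ — can instead be integrated by parts onto $\psi$ and thereby gains quadratic-in-$g$ smallness plus genuinely small linear-in-$g$ coefficients. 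Concretely, I write $\Dlt g+f(U+g)-f(U)=\calL_{W_{;i}}g+\{f'(U)-f'(W_{;i})\}g+\NL_U(g)$, move $\calL_{W_{;i}}$ onto $\psi_{\ul{;i}}$ using $\calL_W\Lmb W=0$, $\calL_W\nabla W=0$, $\calL_W\calY=e_0\calY$, and estimate the remaining pieces: the $\NL_U(g)$ term is $\calO(\|g\|_{\dot H^2}^2)$ by \eqref{eq:f(a,b)-3} and Sobolev, while $\{f'(U)-f'(W_{;i})\}g$ splits into an interaction part (controlled via $f(W_{;i},W_{;j})$ estimates, Lemma~\ref{lem:f(W1,W2)-est}, giving the $\lmb_{\secmax}^{D-2}\lmb_{\max}^{D-1}\|g\|_{\dot H^2}$ term and, after using $R_i^{-(N+2)/2}\aleq$ appropriate powers, the purely interaction term $\lmb_{\secmax}^{D-3/2}\lmb_{\max}^{D+3/2}$) and a $\td U$ part absorbed via \eqref{eq:non-coll-tdU-est}.

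The second ingredient is to handle the ``mixing'' terms in \eqref{eq:modulation-identity-psi}: the sum $\sum_{b,j}\{\lan\psi_{\ul{;i}},\calV_{b;j}\ran-\dots\}(z_j^b)_t/\lmb_j$ and $\sum_{b,j}\lan\psi_{\ul{;i}},\calV_{b;j}\ran\frkr_{b,j}/\lmb_j^2$. The diagonal $(b,j)=(a,i)$ contribution, via transversality \eqref{eq:transversality} and $\lan\calV_a,\calV_b\ran=\chf_{a=b}\|\calV_a\|_{L^2}^2$, is exactly what produces the leading relation $\lmb_{i,t}/\lmb_i + \frkr_{0,i}/\lmb_i^2 = \rd_t\lan[\Lmb W]_{\ul{;i}},g\ran/\|\Lmb W\|_{L^2}^2 + (\text{error})$ when $\psi=\Lmb W$, and similarly $(z_i^a)_t/\lmb_i + \frkr_{a,i}/\lmb_i^2=(\text{error})$ when $\psi=\rd_a W$. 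The off-diagonal $(b,j)\neq(a,i)$ contributions carry a factor $\lan\calV_{a;i},\calV_{b;j}\ran=\calO(R_{ij}^{-(N-4)})=\calO((\lmb_i\lmb_j)^{(N-4)/2})$ by \eqref{eq:calV-calV-inner-prod} and Lemma~\ref{lem:non-coll-R_ij}, which together with the rough bound \eqref{eq:rough-mod-est} on $(z_j^b)_t$ and \eqref{eq:non-coll-frkr_a,i-est-0} on $\frkr_{b,j}/\lmb_j$ gives the claimed error size. The corrector bound \eqref{eq:non-coll-refined-mod-est-corr-bound} follows from $\lan[\Lmb W]_{\ul{;i}},g\ran=\lan\lmb_i^{-2}[\Lmb W]_{;i},g\ran$, Cauchy--Schwarz in $\dot H^2$ versus $\dot H^1$ pairing ($\lmb_i^{-2}[\Lmb W]_{;i}$ has $L^2$-norm $\aeq\lmb_i^{-1}$ and $(\dot H^{-1})$ or $\dot H^{-2}$ norms of the appropriate powers), producing the minimum of $\|g\|_{\dot H^1}$ and $\|g\|_{\dot H^2}+\lmb_i\|g\|_{\dot H^1}$; here one must take a little care since $[\Lmb W]_{;i}$ is only $O(\lan y_i\rangle^{-(N-2)})$, but $N\geq7$ makes all the integrals converge.

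The main technical obstacle is the bookkeeping of the error terms to match the precise powers $\lmb_{\secmax}^{D-2}\lmb_{\max}^{D-1}$ and $\lmb_{\secmax}^{D-3/2}\lmb_{\max}^{D+3/2}$: one needs the $L^2$-type estimates on $f(W_{;i},W_{;j})$ from \eqref{eq:f(W1,W2)-L2-est} (and the $(2^{\ast\ast})'$ versions from \eqref{eq:non-coll-f(W_i,W_j)-H2dual}) rather than the cruder $(\dot H^1)^\ast$ ones, and one must track whether $\lmb_i=\lmb_{\max}$ or $\lmb_i<\lmb_{\max}$ as in Lemma~\ref{lem:non-coll-tdU-est}; the worst case dictates the stated bound. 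A secondary subtlety is that dividing \eqref{eq:modulation-identity-psi} by the diagonal coefficient $d^{(\Lmb W)}_{ii}=\|\Lmb W\|_{L^2}^2+\calO(\dots)$ introduces corrections, but these are $1+\calO(\alp)$ and get absorbed. I would organize the proof as: (Step 1) the $\psi=\Lmb W$ identity and isolation of leading terms; (Step 2) the $\psi=\rd_a W$ identity, $a\neq 0$; (Step 3) the $\psi=\calY$ identity using $e_0$; (Step 4) the corrector bound \eqref{eq:non-coll-refined-mod-est-corr-bound}; with a shared lemma-style estimate for $\lan\psi_{\ul{;i}},\{f'(U)-f'(W_{;i})\}g+\NL_U(g)\ran$ reused in all three. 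I expect Steps 1–3 to be mechanical once the shared estimate is in place, and the shared estimate itself — squeezing the interaction/$\td U$/nonlinear contributions into exactly the advertised error — to be where essentially all the work lies.
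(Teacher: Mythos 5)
Your proposal is correct and follows essentially the same route as the paper: plug $\psi\in\{\Lmb W,\rd_aW,\calY\}$ into the identity \eqref{eq:modulation-identity-psi}, use $\calL_{W_{;i}}\calV_{a;i}=0$, $\calL_{W_{;i}}\calY_{;i}=e_0\lmb_i^{-2}\calY_{;i}$ to reduce $\lan\psi_{\ul{;i}},\Dlt g+f(U+g)-f(U)\ran$ to terms controlled by \eqref{eq:f(a,b)-4}, \eqref{eq:non-coll-f(W_i,W_j)-H2dual}, \eqref{eq:non-coll-tdU-est}, handle the off-diagonal $\calV$-mixing via \eqref{eq:calV-calV-inner-prod} together with \eqref{eq:rough-mod-est}, \eqref{eq:non-coll-frkr_a,i-est-0}, and prove \eqref{eq:non-coll-refined-mod-est-corr-bound} by splitting the pairing at a radius $r^{1/2}=\lmb_i^{-1}$. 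The only (harmless) discrepancy is bookkeeping: in the paper the term $\lmb_{\secmax}^{D-2}\lmb_{\max}^{D-1}\|g\|_{\dot{H}^{2}}$ arises from the off-diagonal mixing sums, while the interaction part of $\lan\psi_{\ul{;i}},\{f'(U)-f'(W_{;i})\}g\ran$ yields $\lmb_{\secmax}^{\frac{N-5}{4}}\lmb_{\max}^{\frac{N+1}{4}}\|g\|_{\dot{H}^{2}}$, absorbed into $\|g\|_{\dot{H}^{2}}^{2}+\lmb_{\secmax}^{D-\frac{3}{2}}\lmb_{\max}^{D+\frac{3}{2}}$.
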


\begin{proof}
\uline{Proof of \mbox{\eqref{eq:non-coll-refined-mod-est}}}. First,
we claim for $\psi\in\{\calV_{0},\calV_{1},\dots,\calV_{N},\calY\}$
that 
\begin{equation}
\begin{aligned}\rd_{t}\lan\psi_{\ul{;i}},g\ran & =\sum_{b}\lan\psi,\calV_{b}\ran\Big(\frac{(z_{i}^{b})_{t}}{\lmb_{i}}+\frac{\frkr_{b,i}}{\lmb_{i}^{2}}\Big)+\chf_{\psi=\calY}\frac{e_{0}}{\lmb_{i}^{2}}\lan\calY_{\ul{;i}},g\ran\\
 & \quad+\calO(\|g\|_{\dot{H}^{2}}^{2}+\lmb_{\secmax}^{D-2}\lmb_{\max}^{D-1}\|g\|_{\dot{H}^{2}}+\lmb_{\secmax}^{D-\frac{3}{2}}\lmb_{\max}^{D+\frac{3}{2}}).
\end{aligned}
\label{eq:5.24}
\end{equation}
Recall the identity \eqref{eq:modulation-identity-psi}. First, we
use \eqref{eq:non-coll-tdU-est} to have 
\begin{align*}
\lan\psi_{\ul{;i}},\lmb_{j}\rd_{z_{j}^{b}}\td U\ran & =\calO(\|\lmb_{j}\rd_{z_{j}^{b}}\td U\|_{\dot{H}^{1}})=\calO(\lmb_{j}\lmb_{\secmax}^{D/2}\lmb_{\max}^{1+D/2}),\\
\lan\lmb_{i}\rd_{z_{i}^{b}}(\psi_{\ul{;i}}),g\ran & =\calO(\lmb_{i}\|g\|_{\dot{H}^{2}}),
\end{align*}
which together with \eqref{eq:rough-mod-est} and $\max_{b,j}\frac{|\frkr_{b,j}|}{\lmb_{j}}=\calO(\lmb_{\secmax}^{D-1}\lmb_{\max}^{D})$
give 
\begin{equation}
\begin{aligned} & \sum_{b,j}\Big\{-\lan\psi_{\ul{;i}},\lmb_{j}\rd_{z_{j}^{b}}\td U\ran+\chf_{j=i}\lan\lmb_{i}\rd_{z_{i}^{b}}(\psi_{\ul{;i}}),g\ran\Big\}\frac{(z_{j}^{b})_{t}}{\lmb_{j}}\\
 & =\calO(\lmb_{\secmax}^{D/2}\lmb_{\max}^{1+D/2}+\|g\|_{\dot{H}^{2}})\cdot\calO(\|g\|_{\dot{H}^{2}}+\lmb_{\secmax}^{D-1}\lmb_{\max}^{D})=\calO(\|g\|_{\dot{H}^{2}}^{2}+\lmb_{\secmax}^{D}\lmb_{\max}^{D+2}).
\end{aligned}
\label{eq:5.25}
\end{equation}
Next, since $\calL_{W_{;i}}\calV_{a;i}=0$ and $\calL_{W_{;i}}\calY_{;i}=\frac{e_{0}}{\lmb_{i}^{2}}\calY_{;i}$,
we have 
\[
\lan\psi_{\ul{;i}},\Dlt g+f(U+g)-f(U)\ran=\lan\psi_{\ul{;i}},f(U+g)-f(U)-f'(W_{;i})g\ran+\chf_{\psi=\calY}\frac{e_{0}}{\lmb_{i}^{2}}\lan\calY_{\ul{;i}},g\ran.
\]
The first term of the right hand side can be estimated using $|\psi|\aleq W$
and \eqref{eq:f(a,b)-4}: 
\begin{align*}
 & |\lan\psi_{\ul{;i}},f(U+g)-f(U)-f'(W_{;i})g\ran|\\
 & \aleq\lmb_{i}^{-2}\big\|\{\tsum{j\neq i}{}f(W_{;i},W_{;j})+f(W_{;i},\td U)+f(W_{;i},g)\}g\big\|_{L^{1}}\\
 & \aleq\lmb_{i}^{-2}\tsum{j\neq i}{}\|f(W_{;i},W_{;j})\|_{L^{(2^{\ast\ast})'}}\|g\|_{L^{2^{\ast\ast}}}+\|\td U\|_{L^{2^{\ast\ast}}}\|g\|_{L^{2^{\ast\ast}}}+\|g\|_{L^{2^{\ast\ast}}}^{2},
\end{align*}
which combined with \eqref{eq:non-coll-f(W_i,W_j)-H2dual} and \eqref{eq:non-coll-tdU-est}
gives 
\begin{equation}
|\lan\psi_{a\ul{;i}},\Dlt g+f(U+g)-f(U)\ran|\aleq\lmb_{\secmax}^{\frac{N-5}{4}}\lmb_{\max}^{\frac{N+1}{4}}\|g\|_{\dot{H}^{2}}+\|g\|_{\dot{H}^{2}}^{2}\aleq\|g\|_{\dot{H}^{2}}^{2}+\lmb_{\secmax}^{D-\frac{3}{2}}\lmb_{\max}^{D+\frac{3}{2}}.\label{eq:5.26}
\end{equation}
Finally, we use $\chf_{j\neq i}\lan\psi_{\ul{;i}},\calV_{b;j}\ran=\calO(\chf_{j\neq i}\lmb_{i}^{D-2}\lmb_{j}^{D})$,
\eqref{eq:rough-mod-est}, and \eqref{eq:non-coll-frkr_a,i-est-0}
to get 
\begin{equation}
\begin{aligned}\Big|\sum_{b,j}\chf_{j\neq i}\lan\psi_{\ul{;i}},\calV_{b;j}\ran\Big(\frac{(z_{j}^{b})_{t}}{\lmb_{j}}+\frac{\frkr_{b,j}}{\lmb_{j}^{2}}\Big)\Big| & \aleq\tsum{j\neq i}{}\lmb_{i}^{D-2}\lmb_{j}^{D-1}\cdot(\|g\|_{\dot{H}^{2}}+\lmb_{\secmax}^{D-1}\lmb_{\max}^{D})\\
 & \aleq\lmb_{\secmax}^{D-2}\lmb_{\max}^{D-1}\|g\|_{\dot{H}^{2}}+\lmb_{\secmax}^{2D-3}\lmb_{\max}^{2D-1}.
\end{aligned}
\label{eq:5.27}
\end{equation}
Substituting \eqref{eq:5.25}, \eqref{eq:5.26}, and \eqref{eq:5.27}
into \eqref{eq:modulation-identity-psi} concludes the proof of \eqref{eq:5.24}.

Having established the claim \eqref{eq:5.24}, we are now ready to
finish the proof. Substituting $\psi=\calV_{a}=\|\rd_{1}W\|_{L^{2}}^{2}\calZ_{a}$
for $a=1,\dots,N$ and the orthogonality conditions $\lan\calZ_{a\ul{;i}},g\ran=0$
into \eqref{eq:5.24} gives \eqref{eq:non-coll-refined-mod-est} for
$(z_{i}^{a})_{t}$. Substituting $\psi=\calV_{0}=\Lmb W$ into \eqref{eq:5.24}
gives \eqref{eq:non-coll-refined-mod-est} for $\lmb_{i,t}$. Substituting
$\psi=\calY$ into \eqref{eq:5.24} and using $\lan\calY,\calV_{b}\ran=0$
give \eqref{eq:non-coll-refined-mod-est} for $\lan\calY_{\ul{;i}},g\ran_{t}$.
This completes the proof of \eqref{eq:non-coll-refined-mod-est}.

\uline{Proof of \mbox{\eqref{eq:non-coll-refined-mod-est-corr-bound}}}.
For any radius $r\geq1$, observe (the bound saturates when $N=7$)
\begin{align*}
\|[\Lmb W]_{\ul{;i}}g\|_{L^{1}} & \aleq\lmb_{i}\|\chf_{|y|\leq r}|y|^{2}\Lmb W\|_{L_{y}^{2}}\||x|^{-2}g\|_{L^{2}}+\|\chf_{|y|>r}|y|\Lmb W\|_{L_{y}^{2}}\||x|^{-1}g\|_{L^{2}},\\
 & \aleq\lmb_{i}r^{1/2}\|g\|_{\dot{H}^{2}}+r^{-1/2}\|g\|_{\dot{H}^{1}}.
\end{align*}
Taking $r^{1/2}=\lmb_{i}^{-1}$ gives \eqref{eq:non-coll-refined-mod-est-corr-bound}.
\end{proof}

\subsection{\label{subsec:Case-separation}Case separation proposition}

This subsection is independent of the previous subsections. The goal
of this subsection is to separate the dynamical scenarios of our solution
$u(t)$ into three cases, depending on the dynamical behavior of the
quantities 
\[
\frac{\lmb_{\secmax}(t)}{\lmb_{\max}(t)}\qquad\text{and}\qquad\min_{\emptyset\neq\calI\subseteq\setJ}\bigg(\frac{|A_{\calI}^{\ast}\vec{\lmb}_{\calI}^{D}(t)|}{|\vec{\lmb}_{\calI}^{D}(t)|}+\sum_{j\notin\calI}\frac{\lmb_{j}(t)}{\lmb_{\max}(t)}\bigg).
\]
If the first quantity goes to zero for a sequence of times, we will
see the scenario of \emph{$J-1$ bubbles concentrating at distinct
points on top of one common bubble} (Theorem~\ref{thm:main-one-bubble-tower-classification}).
The second quantity is important in view of Lemma~\ref{lem:distance-from-degen};
this quantity being away from zero for all time will lead to the \emph{non-degenerate
regime} (Proposition~\ref{prop:case1-main}). The remaining case
will be collected into a \emph{degenerate regime}, which will be considered
in Proposition~\ref{prop:case3-main} under an additional assumption
on the given configuration. This case separation is stated more precisely
in the following proposition.
\begin{prop}[Case separation]
\label{prop:case-separation}For any $u(t)$ satisfying Assumption~\ref{assumption:sequential-on-param},
one of the following scenarios holds. 
\begin{itemize}
\item \textbf{Case 1}: After passing to a subsequence of $t_{n}$ (still
denoted by $t_{n}$), there exist a sequence $T_{n}'\in(t_{n},+\infty]$,
constants $\dlt_{1}>0$, $\dlt_{2}^{\ast}>0$, $\alp^{\ast}\in(0,\alptube)$,
and a continuous curve $(\vec{\lmb},\vec{z}):\br{[t_{n},T_{n}')}\to\br{\calP_{J}(2\alp^{\ast})}$
such that:
\begin{itemize}
\item (Initial conditions) $\{u(t_{n}),\vec{\iota},\vec{\lmb}(t_{n}),\vec{z}(t_{n})\}$
is a $W$-bubbling sequence, $\vec{z}(t_{n})\to\vec{z}^{\ast}$, and
$\lmb_{\max}(t_{n})\to0$.
\item (Controls on $[t_{n},T_{n}')$) For all $t\in[t_{n},T_{n}')$, we
have $u(t)\in\calT_{J}(\alp^{\ast})$, $|\vec{z}(t)-\vec{z}^{\ast}|<\dlt_{2}^{\ast}$,
$\lmb_{\max}(t)<1$, \eqref{eq:curve-orthog}, and 
\begin{equation}
\min_{\emptyset\neq\calI\subseteq\setJ}\bigg(\frac{|A_{\calI}^{\ast}\vec{\lmb}_{\calI}^{D}(t)|}{|\vec{\lmb}_{\calI}^{D}(t)|}+\sum_{j\notin\calI}\frac{\lmb_{j}(t)}{\lmb_{\max}(t)}\bigg)\geq\dlt_{1}.\label{eq:5.28}
\end{equation}
\item (Exit conditions) If $T_{n}'<+\infty$, then either $u(T_{n}')\notin\calT_{J}(\alp^{\ast})$,
$|\vec{z}(T_{n}')-\vec{z}^{\ast}|\geq\dlt_{2}^{\ast}$, or $\lmb_{\max}(T_{n}')\geq1$.
\end{itemize}
\item \textbf{Case 2}: There exist $j_{0}\in\setJ$ and sequences $t_{n}'\to+\infty$,
$\vec{\lmb}_{n}'\in(0,\infty)^{J}$, and $\vec{z}_{n}'\in(\bbR^{N})^{J}$
such that $\{u(t_{n}'),\vec{\iota},\vec{\lmb}_{n}',\vec{z}_{n}'\}$
is a $W$-bubbling sequence and 
\begin{align*}
\vec{z}_{n} & \to\vec{z}^{\ast},\\
1 & \ageq\lmb_{i_{0},n}\gg\lmb_{i,n}\quad\text{for all }i\neq i_{0}.
\end{align*}
\item \textbf{Case 3}: There exist two time sequences $t_{n}'<T_{n}'\leq+\infty$
with $t_{n}'\to+\infty$, constants $\dlt_{1}>0$, $\dlt_{2}^{\ast}>0$,
$\alp^{\ast}\in(0,\alptube)$, and a continuous curve $(\vec{\lmb},\vec{z}):\br{[t_{n}',T_{n}')}\to\br{\calP_{J}(2\alp^{\ast})}$
such that:
\begin{itemize}
\item (Initial conditions) $\{u(t_{n}'),\vec{\iota},\vec{\lmb}(t_{n}'),\vec{z}(t_{n}')\}$
is a $W$-bubbling sequence, $\vec{z}(t_{n}')\to\vec{z}^{\ast}$,
$\lmb_{\max}(t_{n}')\to0$, and there exists a subset $\calI\subseteq\setJ$
with $|\calI|\geq2$ such that 
\begin{align*}
\lmb_{j}(t_{n}')\aeq\lmb_{\max}(t_{n}') & \quad\text{for all }j\in\calI,\\
\lmb_{j}(t_{n}')\ll\lmb_{\max}(t_{n}') & \quad\text{for all }j\notin\calI,\\
|A_{\calI}^{\ast}\vec{\lmb}_{\calI}^{D}(t_{n}')| & \ll\lmb_{\max}^{D}(t_{n}'),
\end{align*}
and the configuration $\{(\iota_{j},z_{j}^{\ast}):j\in\calI\}$ is
degenerate.
\item (Controls on $[t_{n}',T_{n}')$) For all $t\in[t_{n}',T_{n}')$, we
have $u(t)\in\calT_{J}(\alp^{\ast})$, $|\vec{z}(t)-\vec{z}^{\ast}|<\dlt_{2}^{\ast}$,
$\lmb_{\max}(t)<1$, \eqref{eq:curve-orthog}, and 
\begin{equation}
\frac{\lmb_{\secmax}(t)}{\lmb_{\max}(t)}\geq\dlt_{1}.\label{eq:5.29}
\end{equation}
\item (Exit conditions) If $T_{n}'<+\infty$, then either $u(T_{n}')\notin\calT_{J}(\alp^{\ast})$,
$|\vec{z}(T_{n}')-\vec{z}^{\ast}|\geq\dlt_{2}^{\ast}$, or $\lmb_{\max}(T_{n}')\geq1$.
\end{itemize}
\end{itemize}
\end{prop}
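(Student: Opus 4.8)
The plan is to prove Proposition~\ref{prop:case-separation} by a decomposition of the positive time axis into the two competing ``near-degeneracy'' mechanisms identified in the statement, and extracting the correct scenario by a limiting argument along the sequence $t_n$. First I would fix $K_0 \ge 1$ with $\lmb_{\max,n} \le \frac12 K_0$ for all $n$, a small $\dlt_2 > 0$, and a small $\alp \in (0,\alptube)$; by the discussion preceding Subsection~\ref{subsec:Basic-estimates-for-R,U,frkr}, for all large $n$ we have well-defined times $t_n < T_n(K_0,\dlt_2,\alp) \le \Ttbexit_n(\alp)$ and a continuous curve $(\vec\iota,\vec\lmb,\vec z) : \br{[t_n,\Ttbexit_n(\alp))} \to \br{\calP_J(2\alp)}$ satisfying \eqref{eq:curve-orthog}, together with the initial data \eqref{eq:5.2}--\eqref{eq:5.4}. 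Since $\{u(t_n),\vec\iota,\vec\lmb_n,\vec z_n\}$ is a $W$-bubbling sequence, the asymptotic orthogonality $R^{-1}(t_n) \to 0$ combined with $z_i^\ast$ distinct and $\lmb_{\max}(t_n) \aleq 1$ forces (via Lemma~\ref{lem:non-coll-R}, once we are inside the regime $|\vec z - \vec z^\ast| < \dlt_2$) that $\sqrt{\lmb_{\max}(t_n)\lmb_{\secmax}(t_n)} \to 0$; so at least $\lmb_{\secmax}(t_n) \to 0$, and if we are not in Case~2, also $\lmb_{\max}(t_n) \to 0$. This gives the ``initial conditions'' bullet of Cases~1 and~3 for free once we arrange $\dlt_2^\ast \le \dlt_2$ and $\alp^\ast \le \alp$.

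The heart of the case split is the following trichotomy applied at the sequence $t_n$ (after passing to a subsequence). Consider the scale-invariant quantity
\[
Q(t) \coloneqq \min_{\emptyset \neq \calI \subseteq \setJ}\Big(\frac{|A_\calI^\ast \vec\lmb_\calI^D(t)|}{|\vec\lmb_\calI^D(t)|} + \sum_{j \notin \calI}\frac{\lmb_j(t)}{\lmb_{\max}(t)}\Big).
\]
\textbf{Case~2} is declared when there exists a sequence $t_n' \to +\infty$ (possibly different from $t_n$, obtained by applying Lemma~\ref{lem:curve-modulation} freshly at each such time or just using the modulation already in place) along which $\lmb_{\secmax} / \lmb_{\max} \to 0$ while $\{u(t_n'),\dots\}$ remains a $W$-bubbling sequence with $\vec z \to \vec z^\ast$; this is exactly the hypothesis of Theorem~\ref{thm:main-one-bubble-tower-classification}, and the $W$-bubbling property guarantees $\lmb_{i_0,n} \gg \lmb_{i,n}$ for $i \neq i_0$ with $\lmb_{i_0,n} \aleq 1$. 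Otherwise $\lmb_{\secmax}(t)/\lmb_{\max}(t)$ is bounded below by some $\dlt_1 > 0$ for all $t$ in each modulation interval, for all large $n$ (this is where ``not Case~2'' gets used). Then we further split according to whether $Q(t)$ can be propagated forward with a uniform lower bound: \textbf{Case~1} is when, after passing to a subsequence of $t_n$, there is a choice of $\dlt_1 > 0$ and times $T_n' \in (t_n, +\infty]$ (the first time one of the three exit conditions $u \notin \calT_J(\alp^\ast)$, $|\vec z - \vec z^\ast| \ge \dlt_2^\ast$, $\lmb_{\max} \ge 1$ fails, or $+\infty$) such that \eqref{eq:5.28} holds throughout $[t_n, T_n')$; and \textbf{Case~3} is the complementary situation, where along some subsequence $t_n' \to +\infty$ one has $Q(t_n') \to 0$ while $\lmb_{\secmax}/\lmb_{\max}$ stays $\ageq \dlt_1$. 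In Case~3, pick a \emph{minimal} subset $\calI$ achieving the minimum in $Q(t_n')$; since $\lmb_{\secmax}(t_n')/\lmb_{\max}(t_n') \ageq \dlt_1$ we cannot have $|\calI| = 1$ (that would force $\lmb_{\secmax}(t_n') \ll \lmb_{\max}(t_n')$, contradicting the lower bound — that possibility is precisely Case~2, already excluded). Hence $|\calI| \ge 2$; by minimality of $\calI$, $\lmb_j(t_n') \ll \lmb_{\max}(t_n')$ for $j \notin \calI$ and $\lmb_j(t_n') \aeq \lmb_{\max}(t_n')$ for $j \in \calI$, and $|A_\calI^\ast \vec\lmb_\calI^D(t_n')| \ll \lmb_{\max}^D(t_n')$. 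Passing to a further subsequence so that $\vec\lmb_\calI^D(t_n')/|\vec\lmb_\calI^D(t_n')| \to \vec\ell \in (0,\infty)^{|\calI|}$ (strictly positive by $\lmb_j \aeq \lmb_{\max}$ on $\calI$), we get $A_\calI^\ast \vec\ell = 0$ with $\vec\ell$ a positive vector, i.e.\ the sub-configuration $\{(\iota_j, z_j^\ast) : j \in \calI\}$ is degenerate in the sense of Definition~\ref{def:non-degeneracy-of-configurations}.

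The remaining work is bookkeeping: choosing the constants consistently ($\alp^\ast \le \alp$, $\dlt_2^\ast \le \dlt_2$, $\dlt_1$ small enough to be simultaneously the lower bound in \eqref{eq:5.28} or \eqref{eq:5.29}), defining $T_n'$ as the first exit time from the region $\{u \in \calT_J(\alp^\ast),\ |\vec z - \vec z^\ast| < \dlt_2^\ast,\ \lmb_{\max} < 1\}$ (intersected in Case~1 with $\{Q \ge \dlt_1\}$, which by definition of Case~1 does not shrink the interval), and reading off the exit conditions from which constraint is violated first. One also checks that the curve $(\vec\lmb,\vec z)$ restricted to $[t_n,T_n')$ lands in $\br{\calP_J(2\alp^\ast)}$ by Lemma~\ref{lem:curve-modulation}. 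I expect the main subtlety — not a deep obstacle but the place demanding care — to be the logical structure of the dichotomy ``Case~1 vs.\ Case~3'': one must argue that if neither Case~1 nor Case~2 holds then Case~3 holds, which amounts to: failure of Case~1 means that for \emph{every} choice of $\dlt_1$ and every subsequence, $Q$ drops below $\dlt_1$ somewhere before the exit time, hence (diagonalizing over $\dlt_1 \downarrow 0$ and over $n$) one can extract a subsequence of times $t_n' \to +\infty$ with $Q(t_n') \to 0$; combined with the a priori lower bound $\lmb_{\secmax}/\lmb_{\max} \ageq \dlt_1$ coming from ``not Case~2'', this is exactly the Case~3 hypothesis. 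The fact that $z_1^\ast, \dots, z_J^\ast$ are general distinct points (so $Q \ge \dlt_1$ need not propagate) is what makes the forward-in-time exit time $T_n'$ in Case~1 genuinely necessary rather than automatically $+\infty$; this is consistent with and anticipates the role of \eqref{eq:intro-case1} in Step~5 of the strategy.
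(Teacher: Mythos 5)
Your proposal is correct in substance and relies on the same key ingredients as the paper's proof: the quantity you call $Q(t)$, first-exit times $T_{n}'$, a diagonal extraction over shrinking window parameters $(\dlt_{1},\dlt_{2},\alp)\searrow0$, a minimal subset $\calI$ with the positivity-of-the-limit-ratio argument (the paper's \eqref{eq:4.6}), and Lemma~\ref{lem:non-degen} to convert $|A_{\calI}^{\ast}\vec{\lmb}_{\calI}^{D}|\ll\lmb_{\max}^{D}$ into degeneracy of the sub-configuration. The difference is the architecture of the trichotomy. The paper dichotomizes first on the statement \eqref{eq:statement} (which is Case~1) versus its negation; under the negation it extracts times $t_{n}'$ satisfying \eqref{eq:after-diagonal-lmb} for a minimal $\calI$ and decides Case~2 versus Case~3 according to $|\calI|=1$ or $|\calI|\geq2$, and the continuous-in-time bound \eqref{eq:5.29} in Case~3 is then obtained from the \emph{minimality of $\calI$} by a separate contradiction argument, not from any prior exclusion of Case~2. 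You instead peel off Case~2 first and let the negation of Case~2 produce a uniform lower bound on $\lmb_{\secmax}/\lmb_{\max}$ over suitable modulation windows, which then does double duty: it rules out $|\calI|=1$ and yields \eqref{eq:5.29} directly; this is a legitimate and arguably cleaner division of labor, at the price of checking the nesting $[t_{n}',T_{n}')\subseteq[t_{n},T_{n}(K_{0},\dlt_{2}^{\ast},\alp^{\ast}))$ (monotonicity of the exit times in $\dlt_{2},\alp$) so the bound transfers to the Case~3 intervals. Two quantifier points must be spelled out for your route to close, and both are bookkeeping rather than ideas: (i) ``not Case~2'' does not literally give the ratio bound ``for all $t$ in each modulation interval, for all large $n$'' with the initially fixed $\dlt_{2},\alp$; what the diagonal argument gives is the existence of \emph{some} $\dlt_{1},\dlt_{2}^{\ast},\alp^{\ast}$ and a subsequence of $n$ such that $\lmb_{\secmax}(t)/\lmb_{\max}(t)\geq\dlt_{1}$ on the corresponding windows (this is exactly the contradiction argument the paper runs inside its Step~3), and your later extractions must be carried out within that subsequence and with window parameters no larger than $(\dlt_{2}^{\ast},\alp^{\ast})$; (ii) when Case~1 fails, the times $t_{n}'$ must be extracted with $\dlt_{2}$ and $\alp$ shrinking to zero as well, not only $\dlt_{1}\downarrow0$, since the initial-condition bullets of Case~3 ($W$-bubbling at $t_{n}'$, $\vec{z}(t_{n}')\to\vec{z}^{\ast}$, $\lmb_{\max}(t_{n}')\to0$) require the error, $R^{-1}$ and $|\vec{z}-\vec{z}^{\ast}|$ to tend to zero at those times, which a window with fixed $\alp^{\ast},\dlt_{2}^{\ast}$ cannot supply; your negation is over all parameter choices, so this is available, but your write-up only mentions diagonalizing over $\dlt_{1}$ and $n$.
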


\begin{proof}
\uline{Step 1: Statement for case separation}. Consider the following
sentence 
\begin{equation}
\begin{aligned} & \exists\dlt_{1}>0,\ \exists\dlt_{2}^{\ast}>0,\ \exists\alp^{\ast}\in(0,\alptube),\ \exists\text{a subsequence of }t_{n}\text{ such that}\\
 & \inf_{t\in[t_{n},T_{n}(K_{0},\dlt_{2}^{\ast},\alp^{\ast}))}\min_{\emptyset\neq\calI\subseteq\setJ}\bigg(\frac{|A_{\calI}^{\ast}\vec{\lmb}_{\calI}^{D}(t)|}{|\vec{\lmb}_{\calI}^{D}(t)|}+\sum_{j\notin\calI}\frac{\lmb_{j}(t)}{\lmb_{\max}(t)}\bigg)\geq\dlt_{1}.
\end{aligned}
\label{eq:statement}
\end{equation}
In this step, we show that \eqref{eq:statement} implies \textbf{Case~1}.
By considering $\calI=\{j\}$ for all $j\in\setJ$ in the last line
of \eqref{eq:statement}, we obtain 
\[
\inf_{t\in[t_{n},T_{n}(K_{0},\dlt_{2},\alp))}\frac{\lmb_{\secmax}(t)}{\lmb_{\max}(t)}\ageq\dlt_{1}.
\]
This together with $(\vec{\iota},\vec{\lmb}(t_{n}),\vec{z}(t_{n}))\in\calP_{J}(o_{n\to\infty}(1))$
implies $\lmb_{\max}(t_{n})=o_{n\to\infty}(1)$. Now, we choose $T_{n}'\coloneqq T_{n}(1,\dlt_{2}^{\ast},\alp^{\ast})\in(t_{n},T_{n}(K_{0},\dlt_{2}^{\ast},\alp^{\ast})]$,
which is well-defined for all large $n$. By the definition of $T_{n}'$,
the first three controls on $[t_{n},T_{n}')$ and the exit conditions
at $t=T_{n}'$ when $T_{n}'<+\infty$ are immediate. Finally, \eqref{eq:5.28}
follows from \eqref{eq:statement}.

\uline{Step 2: Negation of \mbox{\eqref{eq:statement}}, choice
of \mbox{$\calI$}, and another time sequence \mbox{$t_{n}'$}}. Taking
the negation of \eqref{eq:statement} and using the fact that $u(t_{n})\in\calT_{J}(o_{n\to\infty}(1))$,
$\vec{z}(t_{n})\to\vec{z}^{\ast}$, and $\lmb_{\max}(t_{n})<K_{0}$,
we get 
\begin{equation}
\begin{aligned} & \forall\dlt_{1}>0,\ \forall\dlt_{2}>0,\ \forall\alp\in(0,\alptube),\ \exists n_{0}\in\bbN\ \forall n\geq n_{0},\\
 & u(t_{n})\in\calT_{J}(\alp),\ |\vec{z}(t_{n})-\vec{z}^{\ast}|<\dlt_{2},\ \lmb_{\max}(t_{n})<K_{0},\ \text{and}\\
 & \inf_{t\in[t_{n},T_{n}(K_{0},\dlt_{2},\alp))}\min_{\emptyset\neq\calI\subseteq\setJ}\bigg(\frac{|A_{\calI}^{\ast}\vec{\lmb}_{\calI}^{D}(t)|}{|\vec{\lmb}_{\calI}^{D}(t)|}+\sum_{j\notin\calI}\frac{\lmb_{j}(t)}{\lmb_{\max}(t)}\bigg)<\dlt_{1}.
\end{aligned}
\label{eq:negation-case1}
\end{equation}
Under \eqref{eq:negation-case1}, we ultimately need to prove the
dichotomy: \textbf{Case~2} or \textbf{Case~3}.

Choose any sequences $\dlt_{1,n}\searrow0$, $\dlt_{2,n}\searrow0$,
and $\alp_{n}\searrow0$. By \eqref{eq:negation-case1}, we can find
a subsequence of $t_{n}$ such that $u(t_{n})\in\calT_{J}(\alp_{n})$,
$|\vec{z}(t_{n})-\vec{z}^{\ast}|<\dlt_{2,n}$, $\lmb_{\max}(t_{n})<K_{0}$,
and 
\[
\inf_{t\in[t_{n},T_{n}(K_{0},\dlt_{2,n},\alp_{n}))}\min_{\emptyset\neq\calI\subseteq\setJ}\bigg(\frac{|A_{\calI}^{\ast}\vec{\lmb}_{\calI}^{D}(t)|}{|\vec{\lmb}_{\calI}^{D}(t)|}+\sum_{j\notin\calI}\frac{\lmb_{j}(t)}{\lmb_{\max}(t)}\bigg)<\dlt_{1,n}.
\]
Passing to a further subsequence of $t_{n}$, there exist a subset
$\emptyset\neq\calI\subseteq\setJ$ and another sequence $t_{n}'\in[t_{n},T_{n}(K_{0},\dlt_{2,n},\alp_{n}))$
such that 
\[
\frac{|A_{\calI}^{\ast}\vec{\lmb}_{\calI}^{D}(t_{n}')|}{|\vec{\lmb}_{\calI}^{D}(t_{n}')|}+\sum_{j\notin\calI}\frac{\lmb_{j}(t_{n}')}{\lmb_{\max}(t_{n}')}<\dlt_{1,n}.
\]

Therefore, we can choose a \emph{minimal subset $\emptyset\neq\calI\subseteq\setJ$}
with the following property: there exist a subsequence of $t_{n}$
and sequences $\dlt_{2,n}\searrow0$, and $\alp_{n}\searrow0$ such
that $u(t_{n})\in\calT_{J}(\alp_{n})$, $|\vec{z}(t_{n})-\vec{z}^{\ast}|<\dlt_{2,n}$,
$\lmb_{\max}(t_{n})<K_{0}$, and there is another sequence $t_{n}'\in[t_{n},T_{n}(K_{0},\dlt_{2,n},\alp_{n}))$
satisfying 
\begin{equation}
\frac{|A_{\calI}^{\ast}\vec{\lmb}_{\calI}^{D}(t_{n}')|}{|\vec{\lmb}_{\calI}^{D}(t_{n}')|}+\sum_{j\notin\calI}\frac{\lmb_{j}(t_{n}')}{\lmb_{\max}(t_{n}')}\to0.\label{eq:after-diagonal-lmb}
\end{equation}

First, observe that $\{u(t_{n}'),\vec{\iota},\vec{\lmb}(t_{n}'),\vec{z}(t_{n}')\}$
is a $W$-bubbling sequence due to $u(t_{n}')\in\calT_{J}(\alp_{n})$,
$(\vec{\iota},\vec{\lmb}(t_{n}'),\vec{z}(t_{n}'))\in\calP_{J}(2\alp_{n})$,
and $\alp_{n}\searrow0$. Next, $\lmb_{\max}(t_{n}')<K_{0}$. Finally,
by $|\vec{z}_{n}'-\vec{z}^{\ast}|<\dlt_{2,n}\searrow0$, we have $\vec{z}_{n}'\to\vec{z}^{\ast}$.

Passing to a further subsequence of $t_{n}$, for all $j\in\setJ$,
the ratios $\ell_{j,n}\coloneqq\lmb_{j}(t_{n}')/\lmb_{\max}(t_{n}')$
converge to some $\ell_{j}^{\ast}\in[0,1]$. We claim that 
\begin{equation}
A_{\calI}^{\ast}\vec{\ell}_{\calI}^{\ast D}=0\qquad\text{and}\qquad\ell_{j}^{\ast}>0\impmi j\in\calI.\label{eq:4.6}
\end{equation}
Indeed, \eqref{eq:after-diagonal-lmb} gives $A_{\calI}^{\ast}\vec{\ell}_{\calI}^{\ast D}=0$
and $\ell_{j}^{\ast}=0$ for all $j\notin\calI$. To show $\ell_{j}^{\ast}>0$
for $j\in\calI$, suppose not. Then, the set $\wh{\calI}\coloneqq\{j\in\calI:\ell_{j}^{\ast}>0\}$
is a proper subset of $\calI$. Note that $\ell_{j}^{\ast}=0$ for
$j\notin\wh{\calI}$. This together with $A_{\calI}^{\ast}\vec{\ell}_{\calI}^{\ast D}=0$
gives $A_{\wh{\calI}}^{\ast}\vec{\ell}_{\wh{\calI}}^{\ast D}=0$ also.
This contradicts to the minimality of $\calI$. Hence, the claim \eqref{eq:4.6}
is proved.

\uline{Step 3: Dichotomy under \mbox{\eqref{eq:negation-case1}}}.
We separate the cases to obtain a dichotomy (i.e., \textbf{Case~2}
or \textbf{Case~3} of this proposition): (\textbf{Case~A}) $|\calI|=1$
or (\textbf{Case~B}) $|\calI|\geq2$. By Step~2, \textbf{Case~A}
implies \textbf{Case~2} by setting $\lmb_{n}'=\vec{\lmb}(t_{n}')$
and $z_{n}'=\vec{z}(t_{n}')$.

We turn to show that \textbf{Case~B} belongs to \textbf{Case~3}.
First, we check the initial conditions of \textbf{Case~3}. By Step~2,
it suffices to show that $\lmb_{\max}(t_{n}')=o_{n\to\infty}(1)$
and the configuration $\{(\iota_{j},z_{j}^{\ast}):j\in\calI\}$ is
degenerate. The latter fact easily follows from \eqref{eq:after-diagonal-lmb}
and Lemma~\ref{lem:non-degen}. The former follows from $\lmb_{j}(t_{n}')\aeq\lmb_{\max}(t_{n}')$
for $j\in\calI$, $|\calI|\geq2$, and $(\vec{\iota},\vec{\lmb}(t_{n}'),\vec{z}(t_{n}'))\in\calP_{J}(o_{n\to\infty}(1))$.
Thus the initial conditions of \textbf{Case~3} are satisfied. By
the initial conditions at $t=t_{n}'$, for any $\dlt_{2}>0$ and $\alp\in(0,\alptube)$,
for all large $n$, we have a well-defined time 
\[
T_{n}'(\dlt_{2},\alp)\coloneqq\sup\{\tau\in[t_{n}',\Ttbexit_{n}(\alp)):\lmb_{\max}(t)<1\text{ and }|\vec{z}(t)-\vec{z}^{\ast}|<\dlt_{2}\text{ for all }t\in[t_{n}',\tau]\}.
\]

We claim that there exist $\dlt_{1}>0$, $\dlt_{2}^{\ast}>0$, $\alp^{\ast}\in(0,\alptube)$,
and a subsequence of $t_{n}'$ such that $\lmb_{\secmax}(t)\geq\dlt_{1}\lmb_{\max}(t)$
for all $t\in[t_{n}',T_{n}'(\dlt_{2}^{\ast},\alp^{\ast}))$; we show
this using $|\calI|\geq2$ and the minimality of $\calI$. Suppose
not. Then, for any $\dlt_{1}>0$, $\dlt_{2}>0$, and $\alp\in(0,\alptube)$,
$\inf_{t\in[t_{n}',T_{n}'(\dlt_{2},\alp))}\lmb_{\secmax}(t)/\lmb_{\max}(t)<\dlt_{1}$
for all large $n$. Then, we can find $\dlt_{2,n}''\searrow0$, $\alp_{n}''\searrow0$,
and $t_{n}''\in[t_{n},T_{n}'(\dlt_{2,n}'',\alp_{n}''))$ such that
$\lmb_{\secmax}(t_{n}'')/\lmb_{\max}(t_{n}'')\to0$. Passing to a
further subsequence, we may assume $\lmb_{j_{0}}(t_{n}'')\aeq\lmb_{\max}(t_{n}'')$
for some $j_{0}\in\setJ$, so \eqref{eq:after-diagonal-lmb} with
$\calI=\{j_{0}\}$ is satisfied at times $t_{n}''$. Since $t_{n}''\in[t_{n},T_{n}'(\dlt_{2,n}'',\alp_{n}''))$,
we have $u(t_{n}'')\in\calT_{J}(\alp_{n}'')$, $|\vec{z}(t_{n}'')-\vec{z}^{\ast}|<\dlt_{2,n}''$,
and $\lmb_{\max}(t_{n}'')<1$, so this contradicts to the minimality
of $\calI$ and $|\calI|\geq2$. This proves the claim.

Choose $\dlt_{1},\dlt_{2}^{\ast},\alp^{\ast}$ as in the previous
claim and define $T_{n}'\coloneqq T_{n}'(\dlt_{2}^{\ast},\alp^{\ast})$.
The definition $T_{n}'=T_{n}'(\dlt_{2}^{\ast},\alp^{\ast})$ gives
the first three controls on $[t_{n}',T_{n}')$ and the exit conditions
at $t=T_{n}'$ when $T_{n}'<+\infty$. The previous claim gives the
last control $\lmb_{\secmax}(t)\geq\dlt_{1}\lmb_{\max}(t)$ for all
$t\in[t_{n}',T_{n}')$. This completes the proof.
\end{proof}

\section{\label{sec:Case2}Proof of Theorem~\ref{thm:main-one-bubble-tower-classification}}

This section is devoted to the proof of Theorem~\ref{thm:main-one-bubble-tower-classification}.
This in particular treats \textbf{Case~2} of Proposition~\ref{prop:case-separation}.
By \eqref{eq:main1-lmb_max>>lmb_secmax}, passing to a subsequence,
we may fix $i_{0}\in\setJ$ such that $\lmb_{i_{0},n}=\lmb_{\max,n}$
and 
\[
\frac{\sum_{i\neq i_{0}}\lmb_{i,n}}{\lmb_{i_{0},n}}=o_{n\to\infty}(1).
\]
Define the sets 
\[
\calI_{+}\coloneqq\{i\in\setJ:\iota_{i}=\iota_{i_{0}}\}\ni i_{0}\quad\text{and}\quad\calI_{-}\coloneqq\{i\in\setJ:\iota_{i}=-\iota_{i_{0}}\}.
\]
We will ultimately prove $\calI_{+}=\{i_{0}\}$, but this is not known
at this point.

Fix $K_{0}>0$ such that $\lmb_{i_{0},n}\leq\tfrac{1}{2}K_{0}$ for
all $n$. Since $K_{0}$ is fixed, any implicit constants depending
on $K_{0}$ will be ignored. Let $0<\dlt_{0}\ll1$ be a small parameter
that can shrink in the course of the proof. For all large $n$ depending
on $\dlt_{0}$, we define three sequences of times
\[
t_{n}<T_{n}'\leq T_{n}\leq\Ttbexit_{n}\leq+\infty
\]
as follows. We define $\Ttbexit_{n}\coloneqq\Ttbexit_{n}(\dlt_{0})$
(see \eqref{eq:def-Ttbexit_n}), decompose $u(t)$ for $t\in[t_{n},\Ttbexit_{n})$
as in Lemma~\ref{lem:curve-modulation} (cf.~beginning of Section~\ref{sec:Dynamical-case-separation}),
define $T_{n}\coloneqq T_{n}(K_{0},\dlt_{0})$ by
\[
T_{n}\coloneqq\sup\{\tau\in[t_{n},\Ttbexit_{n}):\lmb_{\max}(t)<K_{0},\ |\vec{z}(t)-\vec{z}^{\ast}|<\dlt_{0},\text{ and }\frac{\sum_{i\in\calI_{-}}\lmb_{i}(t)}{\sum_{i\in\calI_{+}}\lmb_{i}(t)}<\dlt_{0}\ \forall t\in[t_{n},\tau]\},
\]
and finally define $T_{n}'$ by 
\[
T_{n}'\coloneqq\sup\{\tau\in[t_{n},T_{n}):\frac{\sum_{i\neq i_{0}}\lmb_{i}(t)}{\lmb_{i_{0}}(t)}<(\dlt_{0})^{\frac{1}{100}}\text{ for all }t\in[t_{n},\tau]\}.
\]
This $T_{n}'$ is an intermediate time up to which $\lmb_{i_{0}}(t)=\lmb_{\max}(t)$
is ensured. If $\calI_{+}=\{i_{0}\}$, then $T_{n}'=T_{n}$ and $T_{n}'$
is not necessary. However, we introduced $T_{n}'$ because $\calI_{+}$
is not assumed to be $\{i_{0}\}$ at the beginning.

\subsection{Basic observations on $[t_{n},T_{n})$}

We begin with some basic observations on solutions $u(t)$ on $[t_{n},T_{n})$.
The estimates proved in Section~\ref{subsec:Basic-estimates-for-R,U,frkr}
and Section~\ref{subsec:Basic-properties-for-non-coll-sol} are crucial.
\begin{lem}
We have 
\begin{equation}
\max_{a,i}\frac{|\frkr_{a,i}|}{\lmb_{i}}\aeq\lmb_{\secmax}^{D-1}\lmb_{\max}^{D}.\label{eq:case2-max-frkr_a,i}
\end{equation}
In particular, we have a rough modulation estimate 
\begin{equation}
|\lmb_{i,t}|+|z_{i,t}|+\lmb_{i}|a_{i,t}|\aleq\|g\|_{\dot{H}^{2}}+\lmb_{\secmax}^{D-1}\lmb_{\max}^{D},\label{eq:case2-rough-mod-est}
\end{equation}
and a spacetime estimate 
\begin{equation}
\int_{t_{n}}^{T_{n}}(\|g\|_{\dot{H}^{2}}^{2}+\lmb_{\secmax}^{2D-2}\lmb_{\max}^{2D})dt=o_{n\to\infty}(1).\label{eq:case2-spacetime-est}
\end{equation}
\end{lem}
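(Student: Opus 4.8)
The plan is to read off \eqref{eq:case2-max-frkr_a,i}--\eqref{eq:case2-spacetime-est} from the general modulation estimates of Section~\ref{sec:Modulation-in-multi-bubble} once the leading order of the coefficients $\frkr_{a,i}$ is pinned down. The upper bound $\max_{a,i}|\frkr_{a,i}|/\lmb_i\aleq\lmb_{\secmax}^{D-1}\lmb_{\max}^D$ in \eqref{eq:case2-max-frkr_a,i} is exactly \eqref{eq:non-coll-frkr_a,i-est-0} rewritten via $\tfrac{N-4}{2}=D-1$, $\tfrac{N-2}{2}=D$; and \eqref{eq:case2-rough-mod-est} is obtained by feeding this (or \eqref{eq:non-coll-frkr_a,i-est-0} directly) into the rough modulation estimate \eqref{eq:rough-mod-est} of Lemma~\ref{lem:rough-mod}, which already bounds $|\lmb_{i,t}|+|z_{i,t}|+\lmb_i|\lan\calY_{\ul{;i}},g\ran_t|$ by $\|g\|_{\dot{H}^2}+\max_{b,j}|\frkr_{b,j}|/\lmb_j$. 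Thus the real content is the matching \emph{lower} bound in \eqref{eq:case2-max-frkr_a,i}.

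For that I would test the leading-order identity \eqref{eq:non-coll-frkr_a,i-est} on the \emph{scaling} coefficient $\frkr_{0,i_\star}$, with $i_\star\in\setJ$ realizing $\lmb_{i_\star}=\lmb_{\secmax}$. Using \eqref{eq:2.48} (in whichever of its two cases applies to each $j$), the far-field asymptotics $W(y)=(\kpp_\infty+o(1))|y|^{-(N-2)}$ and $\Lmb W(y)=(-\tfrac{N-2}{2}\kpp_\infty+o(1))|y|^{-(N-2)}$, together with $|z_i^\ast-z_j^\ast|\aeq1$ and $\lmb_{\max}\to0$, each summand $\lan\calV_{0;i_\star},f'(W_{;i_\star})W_{;j}\ran$ equals $-\tfrac{N-2}{2}\big(\int W^p\big)\kpp_\infty\,\iota_{i_\star}\iota_j\,\lmb_{i_\star}^{D}\lmb_j^{D}|z_{i_\star}^\ast-z_j^\ast|^{-(N-2)}$ up to a factor $1+o(1)$ plus corrections strictly smaller by a power of $\lmb_{\max}$ (controlled using \eqref{eq:non-coll-R_ij} and the error term of \eqref{eq:non-coll-frkr_a,i-est}). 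Dividing by $\|\calV_0\|_{L^2}^2=\|\Lmb W\|_{L^2}^2$, summing over $j$, and recalling $\kpp_0=\tfrac{N-2}{2}\|\Lmb W\|_{L^2}^{-2}\int W^p$ from \eqref{eq:def-kpp}, this gives
\[
\frkr_{0,i_\star}=-\big(1+o(1)\big)\,\lmb_{i_\star}^{D}\,(A^\ast\vec{\lmb}^{D})_{i_\star}+\text{(strictly smaller)}.
\]
The crux, and what I expect to be the main obstacle, is showing that $(A^\ast\vec{\lmb}^{D})_{i_\star}$ does not degenerate at the relevant order. Here one uses the defining inequality $\sum_{i\in\calI_-}\lmb_i<\dlt_0\sum_{i\in\calI_+}\lmb_i$ of $T_n$: it forces every bubble of scale comparable to $\lmb_{\max}$ to carry the sign $\iota_{i_0}$ (indeed $\lmb_j<\dlt_0J\lmb_{\max}$ whenever $\iota_j=-\iota_{i_0}$), so in $(A^\ast\vec{\lmb}^{D})_{i_\star}=\kpp_0\kpp_\infty\iota_{i_\star}\sum_{j\ne i_\star}\iota_j\lmb_j^{D}|z_{i_\star}^\ast-z_j^\ast|^{-(N-2)}$ the summands of largest size — those with $\lmb_j\aeq\lmb_{\max}$, in particular $j=i_0$ — all share the sign $\iota_{i_0}$ and therefore reinforce rather than cancel. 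Hence $|(A^\ast\vec{\lmb}^{D})_{i_\star}|\aeq\lmb_{\max}^{D}$, so $|\frkr_{0,i_\star}|\aeq\lmb_{\secmax}^{D}\lmb_{\max}^{D}$, and dividing by $\lmb_{i_\star}=\lmb_{\secmax}$ yields $\max_{a,i}|\frkr_{a,i}|/\lmb_i\ge|\frkr_{0,i_\star}|/\lmb_{i_\star}\ageq\lmb_{\secmax}^{D-1}\lmb_{\max}^{D}$. (On $[t_n,T_n')$ this is simpler still: $\lmb_{\secmax}<\dlt_0^{1/100}\lmb_{\max}$ leaves only the single term $j=i_0$ at leading order, and the distinctness $z_i^\ast\ne z_j^\ast$ keeps it nonzero.)

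Finally, \eqref{eq:case2-spacetime-est} follows from \eqref{eq:case2-max-frkr_a,i}: the energy identity \eqref{eq:energy-identity}, $\dot{H}^1$-boundedness of $u(t_n)$, and $t_n\to+\infty$ give $\int_{t_n}^{+\infty}\|\Dlt u+f(u)\|_{L^2}^2\,dt=o_{n\to\infty}(1)$; the coercivity of dissipation \eqref{eq:coercivity-dissipation}, valid on $[t_n,T_n)$ where $u(t)\in\calT_J(\alp)$ with the decomposition $u=U+g$ in force, upgrades this to $\int_{t_n}^{T_n}\big(\|g\|_{\dot{H}^2}^2+\sum_{a,i}\frkr_{a,i}^2/\lmb_i^2\big)dt=o_{n\to\infty}(1)$; and $\sum_{a,i}\frkr_{a,i}^2/\lmb_i^2\ge(\max_{a,i}|\frkr_{a,i}|/\lmb_i)^2\aeq\lmb_{\secmax}^{2D-2}\lmb_{\max}^{2D}$ by \eqref{eq:case2-max-frkr_a,i}, so both pieces of the integrand in \eqref{eq:case2-spacetime-est} have vanishing time integral.
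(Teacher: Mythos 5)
Your treatment of the upper bound, of \eqref{eq:case2-rough-mod-est} via \eqref{eq:rough-mod-est}, and of \eqref{eq:case2-spacetime-est} via the energy identity, \eqref{eq:coercivity-dissipation} and \eqref{eq:case2-max-frkr_a,i} coincides with the paper's argument, and your idea for the lower bound (test the scaling coefficient and use the bootstrap sign constraint $\sum_{i\in\calI_-}\lmb_i<\dlt_0\sum_{i\in\calI_+}\lmb_i$ to rule out cancellation) is the right mechanism; it is essentially the paper's argument in the regime $\lmb_{\secmax}\aeq\lmb_{\max}$, except that you test at the second-largest bubble while the paper tests at the largest, which is an inessential difference.

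The gap is in the step where you identify $\frkr_{0,i_\star}$ with $-(1+o(1))\lmb_{i_\star}^{D}(A^{\ast}\vec{\lmb}^{D})_{i_\star}$ ``using $\lmb_{\max}\to0$.'' On $[t_n,T_n)$ the bootstrap only gives $\lmb_{\max}(t)<K_0$, and in the very scenario of Theorem~\ref{thm:main-one-bubble-tower-classification} one has $\lmb_{\max}=\lmb_{i_0}\to\lmb_{i_0}^{\ast}\in(0,\infty)$, so $\lmb_{\max}$ is \emph{not} small; only $\lmb_{\secmax}\aleq R^{-2}/\lmb_{\max}\aleq\dlt_0$ is guaranteed small. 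When $\lmb_{j_{\max}}\aeq1$ the far-field replacement $W\big(\tfrac{z_{j_{\max}}-z_{i_\star}}{\lmb_{j_{\max}}}\big)=(\kpp_\infty+o(1))\,|z_{j_{\max}}-z_{i_\star}|^{-(N-2)}\lmb_{j_{\max}}^{N-2}$ is false (the argument of $W$ is of order one), so the ``$1+o(1)$'' identification with the $A^{\ast}$-sum, and hence the transfer of the non-degeneracy $|(A^{\ast}\vec{\lmb}^{D})_{i_\star}|\aeq\lmb_{\max}^{D}$ back to $\frkr_{0,i_\star}$, is not justified there; likewise your ``corrections strictly smaller by a power of $\lmb_{\max}$'' carry no smallness in that regime. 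The repair is a case split as in the paper: if $\lmb_{\secmax}<\gmm\lmb_{\max}$ for a fixed small $\gmm$ (which in particular covers every time at which $\lmb_{\max}$ fails to be small, since $\lmb_{\secmax}\lmb_{\max}\aleq\dlt_0^2$), then in \eqref{eq:non-coll-frkr_a,i-est} the single pair term with the largest bubble already has size $\aeq\lmb_{\secmax}^{D}\lmb_{\max}^{D}$ by \eqref{eq:2.48} (no sign information and no precise constant needed), while all other pair terms are $\aleq\lmb_{\secmax}^{2D}\leq\gmm^{D}\lmb_{\secmax}^{D}\lmb_{\max}^{D}$; if instead $\lmb_{\secmax}\geq\gmm\lmb_{\max}$, then $\lmb_{\max}\aleq\dlt_0/\sqrt{\gmm}$ is small and your expansion-plus-sign argument goes through. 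Your parenthetical only invokes the first mechanism on $[t_n,T_n')$, via the defining inequality of $T_n'$, but the lemma is asserted on all of $[t_n,T_n)$, where the configuration $\lmb_{\secmax}\ll\lmb_{\max}$ with $\lmb_{\max}\aeq1$ is not excluded a priori beyond $T_n'$; the split must therefore be made in terms of the ratio $\lmb_{\secmax}/\lmb_{\max}$ (or the size of $\lmb_{\max}$) rather than in terms of the time interval.
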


\begin{proof}
\uline{Proof of \mbox{\eqref{eq:case2-max-frkr_a,i}}}. As the
$(\aleq)$-inequality is proved in \eqref{eq:non-coll-frkr_a,i-est-0},
we prove the $(\ageq)$-inequality. 

For a small constant $\gmm>0$ to be chosen, assume $\lmb_{\secmax}<\gmm\lmb_{\max}$.
Choose distinct $i,j_{0}\in\setJ$ such that $\lmb_{i}=\lmb_{\secmax}$
and $\lmb_{j_{0}}=\lmb_{\max}$. (It is not necessary that $j_{0}=i_{0}$.)
By \eqref{eq:2.48}, we have 
\begin{align*}
|\lan[\Lmb W]_{;i},f'(W_{;i})W_{;j_{0}}\ran| & \aeq\lmb_{i}^{D}\lmb_{j_{0}}^{D}\aeq\lmb_{\secmax}^{D}\lmb_{\max}^{D},\\
\chf_{j\neq i,j_{0}}|\lan[\Lmb W]_{;i},f'(W_{;i})W_{;j}\ran| & \aleq\chf_{j\neq i,j_{0}}\lmb_{i}^{D}\lmb_{j}^{D}\aleq\lmb_{\secmax}^{2D}\aleq\gmm^{D}\lmb_{\secmax}^{D}\lmb_{\max}^{D}.
\end{align*}
Choosing $\gmm$ small, \eqref{eq:non-coll-frkr_a,i-est} gives $\lmb_{i}^{-1}|\frkr_{0,i}|\aeq\lmb_{\secmax}^{D-1}\lmb_{\max}^{D}$.
This concludes \eqref{eq:case2-max-frkr_a,i} when $\lmb_{\secmax}<\gmm\lmb_{\max}$.

Now assume $\lmb_{\secmax}\geq\gmm\lmb_{\max}$. As $\gmm$ was fixed
in the previous paragraph, we ignore any dependence on $\gmm$ from
now on. Choose $i\in\setJ$ such that $\lmb_{i}=\lmb_{\max}$. Observe
also that $(\vec{\iota},\vec{\lmb},\vec{z})\in\calP_{J}(2\dlt_{0})$
and $\lmb_{\secmax}\geq\gmm\lmb_{\max}$ imply the smallness of $\lmb_{\max}$:
\[
\lmb_{\max}\aleq\sqrt{\lmb_{\max}\lmb_{\secmax}}\aeq R^{-1}\aleq\dlt_{0}.
\]
Now, using \eqref{eq:2.48} with $\Lmb W(y)=-\frac{N-2}{2}\kpp_{\infty}|y|^{-2D}+\calO(|y|^{-2D-2})$,
we have for $j\neq i$ 
\begin{align*}
 & \lan[\Lmb W]_{;i},f'(W_{;i})W_{;j}\ran\\
 & =\{-\iota_{i}\iota_{j}\kpp_{\infty}\tfrac{N-2}{2}\tint{}{}f(W)|z_{j}-z_{i}|^{-2D}\lmb_{i}^{D}\lmb_{j}^{D}+\calO(\lmb_{i}^{D+2}\lmb_{j}^{D})\}+\calO(R_{ij}^{-N}(1+\log R_{ij}))\\
 & =-\iota_{i}\iota_{j}\kpp_{\infty}\tfrac{N-2}{2}\tint{}{}f(W)|z_{j}-z_{i}|^{-2D}\lmb_{i}^{D}\lmb_{j}^{D}+\calO(\dlt_{0}^{2}|\log\dlt_{0}|\cdot\lmb_{i}^{D}\lmb_{j}^{D}).
\end{align*}
Now, observe (for $j\in\calI_{-}$, we use $\lmb_{j}\aleq\dlt_{0}\lmb_{\max}$
by the bootstrap hypothesis on $[t_{n},T_{n})$) 
\begin{align*}
\chf_{j\in\calI_{-}}|\lan[\Lmb W]_{;i},f'(W_{;i})W_{;j}\ran| & \aleq\lmb_{i}^{D}\lmb_{j}^{D}\aleq\dlt_{0}^{D}\lmb_{\max}^{2D},\\
-\chf_{j\in\calI_{+}\setminus\{i\}}\lan[\Lmb W]_{;i},f'(W_{;i})W_{;j}\ran & \geq\{\kpp_{\infty}\tfrac{N-2}{2}\tint{}{}f(W)|z_{j}-z_{i}|^{-2D}-\calO(\dlt_{0}^{2}|\log\dlt_{0}|)\}\lmb_{i}^{D}\lmb_{j}^{D}.
\end{align*}
Note that $\lmb_{\secmax}\geq\gmm\lmb_{\max}$ and $\chf_{j\in\calI_{-}}\lmb_{j}\aleq\dlt_{0}\lmb_{\max}$
(recall $\dlt_{0}$ is a small parameter) implies that $\lmb_{\secmax}=\lmb_{k}$
for some $k\in\calI_{+}\setminus\{i\}$. Thus the previous display
implies 
\begin{align*}
 & -\tsum{j\neq i}{}\lan[\Lmb W]_{;i},f'(W_{;i})W_{;j}\ran\\
 & \geq\{\kpp_{\infty}\tfrac{N-2}{2}\tint{}{}f(W)|z_{k}-z_{i}|^{-2D}-\calO(\dlt_{0}^{2}|\log\dlt_{0}|)\}\lmb_{i}^{D}\lmb_{k}^{D}-\calO(\dlt_{0}^{D}\lmb_{\max}^{2D})\aeq\lmb_{\secmax}^{D}\lmb_{\max}^{D}.
\end{align*}
This combined with \eqref{eq:non-coll-frkr_a,i-est} and $\lmb_{\secmax}\aeq\lmb_{\max}$
gives $\lmb_{i}^{-1}|\frkr_{0,i}|\aeq\lmb_{\secmax}^{D-1}\lmb_{\max}^{D}$.
This concludes \eqref{eq:case2-max-frkr_a,i} when $\lmb_{\secmax}\geq\gmm\lmb_{\max}$.
This completes the proof of \eqref{eq:case2-max-frkr_a,i}.

\uline{Proof of \mbox{\eqref{eq:case2-rough-mod-est}}}. This is
a restatement of \eqref{eq:rough-mod-est}.

\uline{Proof of \mbox{\eqref{eq:case2-spacetime-est}}}. This follows
from \eqref{eq:spacetime-control} and \eqref{eq:case2-max-frkr_a,i}.
\end{proof}

\subsection{Modulation analysis and closing bootstrap on $[t_{n},T_{n}')$}

In this section, we analyze $u(t)$ on the interval $[t_{n},T_{n}')$.
The definition of $T_{n}'$ ensures $\lmb_{i_{0}}=\lmb_{\max}$ and
$\lmb_{\secmax}<(\dlt_{0})^{\frac{1}{100}}\lmb_{\max}$ on $[t_{n},T_{n}')$.
The goal of this subsection is to close the bootstrap hypotheses used
in the definition of $T_{n}$ on $[t_{n},T_{n}')$. (We will prove
$T_{n}=T_{n}'$ in the next subsection.)
\begin{lem}[Smallness of $\|g(t)\|_{\dot{H}^{1}}+R^{-1}(t)$]
We have 
\begin{align}
\sup_{t\in[t_{n},T_{n}')}R^{-1}(t) & =o_{n\to\infty}(1),\label{eq:case2-Rinv-is-o(1)}\\
\sup_{t\in[t_{n},T_{n}')}\|g(t)\|_{\dot{H}^{1}} & =o_{n\to\infty}(1).\label{eq:case2-gHdot1-is-o(1)}
\end{align}
\end{lem}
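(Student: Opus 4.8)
The plan is to control $\|g(t)\|_{\dot H^1}+R^{-1}(t)$ on $[t_n,T_n')$ by propagating the smallness at $t=t_n$ forward in time, using the energy identity together with the spacetime estimate \eqref{eq:case2-spacetime-est} and a lower bound for the ``forcing'' term $\max_{a,i}|\frkr_{a,i}|/\lmb_i$. The two quantities $\|g\|_{\dot H^1}$ and $R^{-1}$ are linked: $R^{-1}\aeq\sqrt{\lmb_{\max}\lmb_{\secmax}}$ by \eqref{eq:non-coll-R}, so controlling $R^{-1}$ amounts to controlling $\lmb_{\max}$ and $\lmb_{\secmax}$, and these move only through the modulation equations.

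First I would obtain the $\dot H^1$-smallness of $g$. Expanding the energy $E[u(t)]=E[U(t)]+\lan \ldots\ran+\tfrac12(-\lan\calL_{\calW}g,g\ran)+\calO(\cdots)$, using \eqref{eq:E(calW)-JE(W)-est}, \eqref{eq:non-coll-tdU-est} for the profile corrections, and the coercivity \eqref{eq:calL-H1-coer-quad-form} (the unstable direction being controlled by \eqref{eq:a_i-is-o(1)}), one gets $\|g(t)\|_{\dot H^1}^2\aleq |E[u(t)]-JE[W]| + R^{-(N-2)}(t) + \sum_i\lan\calY_{\ul{;i}},g\ran^2 + \|g(t)\|_{\dot H^1}^{p+1}$. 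Since $E[u(t)]\to JE[W]$ by \eqref{eq:E(u(t))-go-to-JE(W)}, since $\sup_{t}\sum_i\lan\calY_{\ul{;i}},g\ran^2=o_{n\to\infty}(1)$ by \eqref{eq:a_i-is-o(1)}, and since $R^{-1}(t)\le\dlt_0$ is small on $[t_n,T_n')$ (from the bootstrap hypotheses in the definition of $T_n$, as $\lmb_{\max}<K_0$ and $\lmb_{\secmax}$ is small there), a bootstrap/continuity argument in $\|g\|_{\dot H^1}$ gives \eqref{eq:case2-gHdot1-is-o(1)}.

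Next I would control $R^{-1}$, i.e. $\lmb_{\max}\lmb_{\secmax}$. On $[t_n,T_n')$ we have $\lmb_{i_0}=\lmb_{\max}$, so $\lmb_{\max}$ is essentially non-decreasing up to controllable errors; the point is that $\lmb_{\secmax}$ cannot grow relative to $\lmb_{\max}$. Using the refined modulation estimate \eqref{eq:non-coll-refined-mod-est} together with the leading-order expansion \eqref{eq:non-coll-frkr_a,i-est} of $\frkr_{0,i}$, one checks that for the index $k$ realizing $\lmb_k=\lmb_{\secmax}$ the sign of $\frkr_{0,k}/\lmb_k^2$ is determined by $\iota_k\iota_{i_0}$ (the $i_0$-bubble dominates the interaction since $\lmb_{i_0}\gg\lmb_j$ for $j\ne i_0$ and $|z_j^\ast-z_{i_0}^\ast|\aeq 1$), so that $\tfrac{d}{dt}\log(\lmb_{\secmax}/\lmb_{\max})$ has a favorable sign up to errors of size $\|g\|_{\dot H^2}/\lmb_{\secmax}+\lmb_{\secmax}^{D-\frac32}\lmb_{\max}^{D+\frac12}/\lmb_{\secmax}+\ldots$, which are time-integrable after multiplying by suitable powers, by \eqref{eq:case2-spacetime-est} and $\lmb_{\max}\aleq1$. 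Integrating from $t_n$ and using that $\lmb_{\secmax}(t_n)/\lmb_{\max}(t_n)=o_{n\to\infty}(1)$ (from $W$-bubbling at $t_n$ and $R^{-1}(t_n)\to 0$) propagates the smallness $\lmb_{\secmax}(t)/\lmb_{\max}(t)=o_{n\to\infty}(1)$; combined with $\lmb_{\max}\aleq 1$ this gives $R^{-1}(t)\aleq\sqrt{\lmb_{\max}\lmb_{\secmax}}=o_{n\to\infty}(1)$, which is \eqref{eq:case2-Rinv-is-o(1)}.

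The main obstacle I anticipate is the lack of a priori time-integrability of the error $\|g\|_{\dot H^2}/\lmb_i$ that would naively appear when integrating $\lmb_{i,t}/\lmb_i$: one must work with ratios like $\lmb_{\secmax}/\lmb_{\max}$ (or $\lmb_j/\lmb_{i_0}$) rather than individual $\lmb_i$, so that the dangerous $\|g\|_{\dot H^2}/\lmb_{\min}$-type terms are tamed by the decoupling structure and the non-collision lower bound $|z_i^\ast-z_j^\ast|\aeq 1$, and so that the remaining errors genuinely close against the spacetime bound \eqref{eq:case2-spacetime-est}. A secondary subtlety is that the expansion of $E[u(t)]$ and the coercivity \eqref{eq:calL-H1-coer-quad-form} require $(\vec\iota,\vec\lmb,\vec z)\in\calP_J(\dlt)$ with $\dlt$ small, which is exactly what the bootstrap region $[t_n,T_n')$ guarantees, so the argument is consistent as long as the continuity/bootstrap constants are chosen in the right order ($\dlt_0$ small, then $n$ large).
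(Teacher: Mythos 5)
Your second step — proving \eqref{eq:case2-Rinv-is-o(1)} by showing that $\tfrac{d}{dt}\log(\lmb_{\secmax}/\lmb_{\max})$ has a favorable sign and propagating the smallness of the ratio from $t_n$ — has a genuine gap. The sign of $\frkr_{0,k}$ is indeed dictated by $\iota_k\iota_{i_0}$, but the sign is \emph{unfavorable} precisely when $\iota_k=\iota_{i_0}$: for $k\in\calI_+\setminus\{i_0\}$ the interaction with the dominant bubble makes $\lmb_k$ \emph{grow} (cf.\ \eqref{eq:case2.1-zeta_i-diff-ineq} with $\iota_i\iota_{i_0}=+1$), while $\lmb_{i_0}$ stays essentially constant, so the ratio $\lmb_{\secmax}/\lmb_{\max}$ increases; indeed in the scenario $\calI_+\supsetneq\{i_0\}$ this ratio climbs up to $\dlt_0^{1/100}$ by time $T_n'$ (that is exactly how $T_n'$ is defined), so it does \emph{not} remain $o_{n\to\infty}(1)$ on $[t_n,T_n')$. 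You cannot invoke $\calI_+=\{i_0\}$ here, since that fact is only established much later in the argument (after the analysis on $[T_n',T_n'')$), so your monotonicity step is either false or circular. The paper avoids signs altogether: for each $i\neq i_0$ it bounds the \emph{absolute} size via the rough modulation estimate \eqref{eq:case2-rough-mod-est}, writing $|(\lmb_i^{2D})_t|\aleq\lmb_i^{2D-1}(\|g\|_{\dot H^2}+\lmb_{\secmax}^{D-1}\lmb_{\max}^{D})\aleq\|g\|_{\dot H^2}^2+\lmb_{\secmax}^{2D-2}\lmb_{\max}^{2D}$ (the high power $2D$ makes the error quadratic), and then integrates from $t_n$ using $\lmb_i(t_n)=o_{n\to\infty}(1)$ together with the spacetime estimate \eqref{eq:case2-spacetime-est}, whose $o_{n\to\infty}(1)$ right-hand side comes from the lower bound \eqref{eq:case2-max-frkr_a,i}. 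Thus each $\lmb_i$, $i\neq i_0$, stays $o_{n\to\infty}(1)$ even if it grows relative to its initial value, and $R^{-1}\aeq\sqrt{\lmb_{\max}\lmb_{\secmax}}\aleq\sqrt{\lmb_{\secmax}}$ gives \eqref{eq:case2-Rinv-is-o(1)} by \eqref{eq:non-coll-R} and $\lmb_{\max}\aleq 1$.

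Your first step (energy expansion, coercivity \eqref{eq:calL-H1-coer-quad-form}, \eqref{eq:E(u(t))-go-to-JE(W)}, and \eqref{eq:a_i-is-o(1)}) is the paper's argument, but you run it with only the tube bound $R^{-1}\aleq\dlt_0$. With that input the profile and interaction errors, e.g.\ $E[U]-JE[W]=\calO(R^{-(N-2)})$, are only $\calO(\dlt_0^{N-2})$ — a fixed constant independent of $n$ — so you would conclude $\|g(t)\|_{\dot H^1}=o_{\dlt_0\to 0}(1)$, not the stated $o_{n\to\infty}(1)$. The order matters: one must first prove \eqref{eq:case2-Rinv-is-o(1)} and then feed it into the energy expansion so that all $o_{R\to\infty}(1)$ errors become $o_{n\to\infty}(1)$, which is exactly how the paper obtains \eqref{eq:case2-gHdot1-is-o(1)}.
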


\begin{proof}
\uline{Proof of \mbox{\eqref{eq:case2-Rinv-is-o(1)}}}. Since $R^{-1}\aleq\sqrt{\lmb_{\secmax}\lmb_{\max}}\aleq\sqrt{\lmb_{\secmax}}$,
it suffices to show $\lmb_{\secmax}=o_{n\to\infty}(1)$. For any $i\neq i_{0}$,
we use \eqref{eq:case2-rough-mod-est} to have 
\[
|(\lmb_{i}^{2D})_{t}|\aleq\lmb_{i}^{2D-1}|\lmb_{i,t}|\aleq\lmb_{i}^{2D-1}(\|g\|_{\dot{H}^{2}}+\lmb_{\secmax}^{D-1}\lmb_{\max}^{D})\aleq\|g\|_{\dot{H}^{2}}^{2}+\lmb_{\secmax}^{2D-2}\lmb_{\max}^{2D}.
\]
Integrating this with $\lmb_{\secmax}(t_{n})=o_{n\to\infty}(1)$ and
\eqref{eq:case2-spacetime-est}, we get 
\[
\sup_{t\in[t_{n},T_{n}')}\lmb_{i}^{2D}(t)\aleq\lmb_{i}^{2D}(t_{n})+\int_{t_{n}}^{T_{n}'}\calO(\|g\|_{\dot{H}^{2}}^{2}+\lmb_{\secmax}^{2D-2}\lmb_{\max}^{2D})dt=o_{n\to\infty}(1).
\]
As $i\neq i_{0}$ was arbitrary, this completes the proof of $\lmb_{\secmax}=o_{n\to\infty}(1)$
and hence the proof of \eqref{eq:case2-Rinv-is-o(1)}.

\uline{Proof of \mbox{\eqref{eq:case2-gHdot1-is-o(1)}}}. This
is a consequence of \eqref{eq:E(u(t))-go-to-JE(W)}, \eqref{eq:a_i-is-o(1)},
\eqref{eq:calL-H1-coer-quad-form}, and \eqref{eq:case2-Rinv-is-o(1)}.
We expand 
\begin{align*}
E[u] & =E[U]-\lan\Dlt U+f(U),g\ran-\tfrac{1}{2}\lan\calL_{U}g,g\ran+\tint{}{}\{\tfrac{|U+g|^{p+1}}{p+1}-\tfrac{|U|^{p+1}}{p+1}-f(U)g-f'(U)g^{2}\}\\
 & =E[U]-\tsum{b,j}{}\frkr_{b,j}\lan\calV_{b\ul{;j}},g\ran-\tfrac{1}{2}\lan\calL_{U}g,g\ran+\tint{}{}\calO(|g|^{p+1})\\
 & =(JE[W]+o_{R\to\infty}(1))+o_{R\to\infty}(1)\|g\|_{\dot{H}^{1}}-\{\lan\calL_{\calW}g,g\ran+o_{R\to\infty}(\|g\|_{\dot{H}^{1}}^{2})\}+\calO(\|g\|_{\dot{H}^{1}}^{p+1}).
\end{align*}
By \eqref{eq:case2-Rinv-is-o(1)} and $\|g\|_{\dot{H}^{1}}\aleq\dlt_{0}$,
we get 
\[
E[u]=JE[W]-\tfrac{1}{2}\lan\calL_{\calW}g,g\ran+\dlt_{0}^{p-1}\calO(\|g\|_{\dot{H}^{1}}^{2})+o_{n\to\infty}(1).
\]
Applying the coercivity estimate \eqref{eq:calL-H1-coer-quad-form}
to $-\lan\calL_{\calW}g,g\ran$ and using the orthogonality conditions
$\lan\calZ_{a\ul{;i}},g\ran=0$ and smallness \eqref{eq:a_i-is-o(1)},
we get 
\[
E[u]\geq JE[W]+(c-\calO(\dlt_{0}^{p-1}))\|g\|_{\dot{H}^{1}}^{2}-o_{n\to\infty}(1)
\]
for some universal constant $c>0$. Since $E[u(t)]\to JE[W]$ and
$\dlt_{0}$ is small, we conclude \eqref{eq:case2-gHdot1-is-o(1)}.
\end{proof}
So far, we have only dealt with rough modulation estimates. To control
the dynamics more precisely, we need refined modulation estimates
\eqref{eq:non-coll-refined-mod-est} and hence the leading term of
each $\frkr_{a,i}$.
\begin{lem}[Leading term of $\frkr_{a,i}$]
\label{lem:case2.1-leading-frkr_a,i}We have 
\begin{align}
\frkr_{0,i} & =-\kpp_{0}\iota_{i}\iota_{i_{0}}W_{\lmb_{i_{0}}}(z_{i_{0}}-z_{i})\lmb_{i}^{D}+\calO(\lmb_{\secmax}^{D}\lmb_{i}^{D}+\lmb_{i}^{2}\lmb_{\secmax}^{D-1}\lmb_{\max}^{D})\quad\text{if }i\neq i_{0},\label{eq:case2.1-frkr_0,i}\\
|\frkr_{0,i_{0}}| & \aleq\lmb_{\secmax}^{D}\lmb_{i_{0}}^{D},\label{eq:case2.1-frkr_0,i_0}\\
|\frkr_{a,i}| & \aleq\lmb_{i}^{2}\lmb_{\secmax}^{D-1}\lmb_{\max}^{D}\quad\text{if }a\neq0.\label{eq:case2.1-frkr_a,i}
\end{align}
\end{lem}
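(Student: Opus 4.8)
The plan is to extract the leading term of each $\frkr_{a,i}$ from the already-proven formula \eqref{eq:non-coll-frkr_a,i-est}, which expresses $\frkr_{a,i}$ as $\sum_{j\neq i}\|\calV_a\|_{L^2}^{-2}\lan\calV_{a;i},f'(W_{;i})W_{;j}\ran$ plus controlled errors, together with the explicit inner product formulas \eqref{eq:2.48} and \eqref{eq:2.49}. The key point is that on $[t_n,T_n')$ we know $\lmb_{i_0}=\lmb_{\max}$ and all other scales satisfy $\lmb_j\le(\dlt_0)^{1/100}\lmb_{\max}$, so the sum over $j\neq i$ is dominated by the single interaction with the top bubble $W_{;i_0}$.

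\textbf{Step 1: the case $i\neq i_0$, estimate of $\frkr_{0,i}$.} Here $a=0$ and we use \eqref{eq:2.48} in the regime $\lmb_i\leq\lmb_{i_0}=\lmb_{j_0}$ (with $j_0=i_0$). Using $\kpp_0 = \frac{N-2}{2}\frac{\int W^p}{\|\Lmb W\|_{L^2}^2}$ from \eqref{eq:def-kpp} and $\|\calV_0\|_{L^2}^2=\|\Lmb W\|_{L^2}^2$, the $j=i_0$ term of \eqref{eq:non-coll-frkr_a,i-est} contributes exactly $-\kpp_0\iota_i\iota_{i_0} W_{\lmb_{i_0}}(z_{i_0}-z_i)\lmb_i^D$ up to an error $\calO(R_{i i_0}^{-N}\frac{\lmb_i}{\lmb_{i_0}}\lan\log(\cdot)\ran)$. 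Since $z_j^\ast$ are distinct and $\dlt_2$ is small, $R_{i i_0}^{-1}\aeq\sqrt{\lmb_i\lmb_{i_0}}$ by \eqref{eq:non-coll-R_ij}, so this error is $\aleq\lmb_i^{2}\lmb_{i_0}^{D-1}\lan\log\rangle\aleq \lmb_i^2\lmb_{\secmax}^{D-1}\lmb_{\max}^D$ after absorbing the log into a slightly worse power (or one just keeps it since $\lmb_{\secmax}\ge\lmb_i$ and $\lmb_i^{2}\lmb_{\max}^{D-1}$ already dominates). The remaining terms $j\neq i,i_0$ of the sum are $\calO(\lmb_i^D\lmb_j^D)\aleq\lmb_i^D\lmb_{\secmax}^D$, giving the first error term. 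Finally the error term in \eqref{eq:non-coll-frkr_a,i-est} itself is $\calO(\chf_{\lmb_i<\lmb_{\max}}\lmb_i^2\lmb_{\secmax}^{(N-4)/2}\lmb_{\max}^{N/2})=\calO(\lmb_i^2\lmb_{\secmax}^{D-1}\lmb_{\max}^D)$ since $\lmb_i<\lmb_{\max}=\lmb_{i_0}$ here. Collecting these yields \eqref{eq:case2.1-frkr_0,i}.

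\textbf{Step 2: the case $i=i_0$, estimate of $\frkr_{0,i_0}$.} Now $\lmb_{i_0}=\lmb_{\max}\ge\lmb_j$ for all $j\neq i_0$, so we use the second branch of \eqref{eq:2.48} (the $\lmb_i\ge\lmb_j$ case): each inner product $\lan[\Lmb W]_{;i_0},f'(W_{;i_0})W_{;j}\ran$ equals $\int f(W)\cdot\iota_{i_0}\iota_j(\lmb_j/\lmb_{i_0})^D\Lmb W(\cdot)+\calO(R_{i_0 j}^{-N}\frac{\lmb_j}{\lmb_{i_0}}\lan\log\rangle)$. Since $\Lmb W$ is bounded and $\lmb_j\le\lmb_{\secmax}$, both the main term and the error are $\aleq(\lmb_j/\lmb_{i_0})^D\lmb_{i_0}^{?}$— more precisely $\aleq\lmb_j^D\lmb_{i_0}^{-D}\cdot\lmb_{i_0}^{2D}=\lmb_j^D\lmb_{i_0}^D$ after accounting for the scaling prefactors, hence $\aleq\lmb_{\secmax}^D\lmb_{i_0}^D$; dividing by $\|\calV_0\|_{L^2}^2$ and summing over $j\neq i_0$ keeps this bound. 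The error in \eqref{eq:non-coll-frkr_a,i-est} for $\lmb_{i_0}=\lmb_{\max}$ is $\calO(\lmb_{i_0}^2\lmb_{\secmax}^{(N-2)/2}\lmb_{\max}^{(N-2)/2})=\calO(\lmb_{i_0}^2\lmb_{\secmax}^D\lmb_{\max}^{D-1})\aleq\lmb_{\secmax}^D\lmb_{i_0}^D$ (using $\lmb_{\max}\aleq1$, $\lmb_{i_0}=\lmb_{\max}$). This gives \eqref{eq:case2.1-frkr_0,i_0}.

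\textbf{Step 3: the case $a\neq0$.} Here $\calV_a=\rd_a W$ and $\rd_a W(y)=\calO(\lan y\rangle^{-(N-1)})=\calO(\lan y\rangle^{-(2D+1)})$ decays faster than $\Lmb W$, which is the source of the extra smallness. For $i\neq i_0$ we use \eqref{eq:2.49} in the $\lmb_i\le\lmb_{i_0}$ branch: the main term is $-\int f(W)\iota_i\iota_{i_0}(\lmb_i/\lmb_{i_0})^{(2D)/2\cdot?}\dots$ — schematically it carries $(\lmb_i/\lmb_{i_0})^{N/2}=(\lmb_i/\lmb_{i_0})^{D+1}$ rather than $(\lmb_i/\lmb_{i_0})^D$, so after the scaling prefactors one gets $\aleq\lmb_i^{D+1}\lmb_{i_0}^{D-1}=\lmb_i^2\cdot\lmb_i^{D-1}\lmb_{i_0}^{D-1}\aleq\lmb_i^2\lmb_{\secmax}^{D-1}\lmb_{\max}^{D-1}\aleq\lmb_i^2\lmb_{\secmax}^{D-1}\lmb_{\max}^D$ (the last step uses $\lmb_{\max}\aleq1$, so actually the sharper bound $\lmb_{\max}^{D-1}$ also works but \eqref{eq:case2.1-frkr_a,i} is stated with $\lmb_{\max}^D$; one should double-check whether the intended bound is $\lmb_{\max}^{D-1}$ and adjust, but morally $\lmb_i^2\lmb_{\secmax}^{D-1}\lmb_{\max}^D$ is what is claimed and follows). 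The error in \eqref{eq:non-coll-frkr_a,i-est} is the same $\calO(\lmb_i^2\lmb_{\secmax}^{(N-4)/2}\lmb_{\max}^{N/2})=\calO(\lmb_i^2\lmb_{\secmax}^{D-1}\lmb_{\max}^D)$. For $i=i_0$ and $a\neq0$, by the symmetry/parity of $W$ and $\rd_a W$ one checks $\lan[\rd_a W]_{;i_0},f'(W_{;i_0})W_{;j}\ran$ is $\calO(\lmb_j^{D+1}\lmb_{i_0}^{D-1})\aleq\lmb_{i_0}^2\lmb_{\secmax}^{D-1}\lmb_{\max}^D$; combine with the $\lmb_{\max}$-error as in Step 2.

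\textbf{Main obstacle.} The delicate point is the bookkeeping of the exact algebraic form and sign of the leading coefficient in \eqref{eq:case2.1-frkr_0,i}: one must verify that $\|\calV_0\|_{L^2}^{-2}\cdot(-\frac{N-2}{2}\int f(W))\cdot\iota_i\iota_{i_0}(\lmb_i/\lmb_{i_0})^D W(\frac{z_{i_0}-z_i}{\lmb_{i_0}})$, after multiplying by the scaling factor $\lmb_i^{-D}$ implicit in the normalization of $W_{\lmb_{i_0}}$, reproduces precisely $-\kpp_0\iota_i\iota_{i_0}W_{\lmb_{i_0}}(z_{i_0}-z_i)\lmb_i^D$ with $\kpp_0$ as in \eqref{eq:def-kpp}. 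This is a matter of carefully tracking powers of $\lmb$ and the definition $W_{\lmb}(x)=\lmb^{-D}W(x/\lmb)$; the logarithmic factors in the error terms of \eqref{eq:2.48}–\eqref{eq:2.49} need to be absorbed, which is harmless since they are dominated by any positive power of $\lmb_{\max}$ — but one should either keep an $\eps$-loss or note $\lmb_i^2\lan\log\rangle\aleq\lmb_i^{2-\eps}$ and arrange the error terms accordingly. Everything else is a direct substitution of Lemma estimates with the regime inequalities $\lmb_{i_0}=\lmb_{\max}$ and $\lmb_{\secmax}\le(\dlt_0)^{1/100}\lmb_{\max}$ valid on $[t_n,T_n')$.
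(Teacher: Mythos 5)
Your route is the same as the paper's: substitute \eqref{eq:non-coll-frkr_a,i-est} and then the explicit inner products \eqref{eq:2.48}--\eqref{eq:2.49}, isolating the $j=i_{0}$ interaction, and your identification of the leading coefficient $-\kpp_{0}\iota_{i}\iota_{i_{0}}W_{\lmb_{i_{0}}}(z_{i_{0}}-z_{i})\lmb_{i}^{D}$ (including the normalization by $\|\Lmb W\|_{L^{2}}^{2}$ and the scaling $W_{\lmb_{i_{0}}}$) is correct, as is Step~2. However, the exponent bookkeeping in your error estimates contains slips that, as written, break the chains of inequalities. In Step~1, the error of \eqref{eq:2.48} is $R_{ii_{0}}^{-N}\tfrac{\lmb_{i}}{\lmb_{i_{0}}}\lan\log(\cdot)\ran\aeq\lmb_{i}^{D+2}\lmb_{\max}^{D}\lan\log(\cdot)\ran$ (total degree $N$), not $\lmb_{i}^{2}\lmb_{\max}^{D-1}\lan\log(\cdot)\ran$. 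The correct quantity is acceptable because $\lmb_{i}^{D+2}\lan\log(\cdot)\ran\aleq\lmb_{i}^{D+1}\leq\lmb_{i}^{2}\lmb_{\secmax}^{D-1}$, whereas your lossy intermediate bound $\lmb_{i}^{2}\lmb_{\max}^{D-1}$ is \emph{not} controlled by the allowed error: the step $\lmb_{\max}^{D-1}\aleq\lmb_{\secmax}^{D-1}\lmb_{\max}^{D}$ would require $1\aleq\lmb_{\secmax}^{D-1}\lmb_{\max}$, which fails.

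The same confusion recurs in Step~3. For $i\neq i_{0}$ the main term of \eqref{eq:2.49} is $(\lmb_{i}/\lmb_{\max})^{D+1}\,|\nabla W(\tfrac{z_{i}-z_{i_{0}}}{\lmb_{\max}})|\aleq\lmb_{i}^{D+1}\lmb_{\max}^{D}$, because $|\nabla W(y)|\aleq\lan y\ran^{-(2D+1)}$ and $|z_{i}-z_{i_{0}}|\aeq1$; the factor is $\lmb_{\max}^{D}$, not $\lmb_{\max}^{D-1}$, and it yields exactly the claimed bound since $\lmb_{i}^{D+1}\leq\lmb_{i}^{2}\lmb_{\secmax}^{D-1}$. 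Your subsequent step ``$\lmb_{\max}^{D-1}\aleq\lmb_{\max}^{D}$'' goes in the wrong direction (for $\lmb_{\max}\aleq1$ the smaller power is the \emph{weaker} bound), which is precisely why you were unsure whether \eqref{eq:case2.1-frkr_a,i} should read $\lmb_{\max}^{D-1}$; it should not, and no adjustment of the statement is needed. Similarly, for $i=i_{0}$ and $a\neq0$ the second branch of \eqref{eq:2.49} gives $\lmb_{j}^{D}\lmb_{i_{0}}^{D+1}$, not $\lmb_{j}^{D+1}\lmb_{i_{0}}^{D-1}$, and then $\lmb_{\secmax}^{D}\lmb_{i_{0}}^{D+1}\leq\lmb_{i_{0}}^{2}\lmb_{\secmax}^{D-1}\lmb_{\max}^{D}$ closes the estimate. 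With these exponents corrected --- which is what the paper's proof does --- your argument is complete; no new idea is missing.
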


\begin{proof}
\uline{Proof of \mbox{\eqref{eq:case2.1-frkr_0,i}}}. Let $i\neq i_{0}$.
By \eqref{eq:non-coll-frkr_a,i-est}, we have 
\[
\frkr_{0,i}=\frac{\lan\calV_{a;i},f'(W_{;i})W_{;i_{0}}\ran}{\|\Lmb W\|_{L^{2}}^{2}}+\sum_{j\neq i,i_{0}}\frac{\lan\calV_{a;i},f'(W_{;i})W_{;j}\ran}{\|\Lmb W\|_{L^{2}}^{2}}+\calO(\lmb_{i}^{2}\lmb_{\secmax}^{D-1}\lmb_{\max}^{D+1}).
\]
Applying \eqref{eq:2.48} and the definition of $\kpp_{0}$ \eqref{eq:def-kpp},
we have 
\begin{align*}
\frkr_{0,i} & =-\kpp_{0}\iota_{i}\iota_{j}W_{\lmb_{i_{0}}}(z_{i_{0}}-z_{i})\lmb_{i}^{D}+\calO\big(\lmb_{i}^{D+2}\lmb_{i_{0}}^{D}(1+|\log\lmb_{i}|)\big)+\calO(\lmb_{i}^{D}\lmb_{\secmax}^{D})+\calO(\lmb_{i}^{2}\lmb_{\secmax}^{D-1}\lmb_{\max}^{D+1})\\
 & =-\kpp_{0}\iota_{i}\iota_{i_{0}}W_{\lmb_{i_{0}}}(z_{i_{0}}-z_{i})\lmb_{i}^{D}+\calO(\lmb_{\secmax}^{D}\lmb_{i}^{D})+\calO(\lmb_{i}^{2}\lmb_{\secmax}^{D-1}\lmb_{\max}^{D+1}|\log\lmb_{\max}|).
\end{align*}

\uline{Proof of \mbox{\eqref{eq:case2.1-frkr_0,i_0}}}. By \eqref{eq:non-coll-frkr_a,i-est}
and \eqref{eq:2.48}, one has $|\frkr_{0,i_{0}}|\aleq\lmb_{\secmax}^{D}\lmb_{i_{0}}^{D}$.

\uline{Proof of \mbox{\eqref{eq:case2.1-frkr_a,i}}}. Let $a\neq0$.
By \eqref{eq:non-coll-frkr_a,i-est} and \eqref{eq:2.49}, one has
$|\frkr_{a,i_{0}}|\aleq\lmb_{\secmax}^{D}\lmb_{i_{0}}^{D+1}$ and
$|\frkr_{a,i}|\aleq\lmb_{i}^{2}\lmb_{\secmax}^{D-1}\lmb_{\max}^{D}$.
These are acceptable in \eqref{eq:case2.1-frkr_a,i}.
\end{proof}
\begin{prop}[Refined modulation estimates]
\label{prop:case2-modulation-est}We have 
\begin{equation}
\begin{aligned}\chf_{i\neq i_{0}}\Big|\frac{\lmb_{i,t}}{\lmb_{i}}+\frac{d}{dt}o_{n\to\infty}(1)-\kpp_{0}\iota_{i}\iota_{i_{0}}W_{\lmb_{i_{0}}}(z_{i_{0}}-z_{i})\lmb_{i}^{D-2}\Big|+\Big|\frac{\lmb_{i_{0},t}}{\lmb_{i_{0}}}+\frac{d}{dt}o_{n\to\infty}(1)\Big|\qquad\\
\aleq\|g\|_{\dot{H}^{2}}^{2}+\lmb_{\secmax}^{D}\lmb_{i}^{D-2}+\lmb_{\secmax}^{D-\frac{3}{2}}\lmb_{\max}^{D+\frac{1}{2}},
\end{aligned}
\label{eq:case2.1-lmb_i-ref-mod}
\end{equation}
and 
\begin{align}
\Big|\frac{z_{i,t}}{\lmb_{i}}\Big| & \aleq\|g\|_{\dot{H}^{2}}^{2}+\lmb_{\secmax}^{D-\frac{3}{2}}\lmb_{\max}^{D+\frac{1}{2}}.\label{eq:case2.1-z_i-ref-mod}
\end{align}
\end{prop}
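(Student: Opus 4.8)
The plan is to derive both estimates from the refined modulation identity \eqref{eq:non-coll-refined-mod-est} of Lemma~\ref{lem:non-coll-ref-mod-est}, specializing it to the present regime where $\lmb_{i_0}=\lmb_{\max}$ and $\lmb_{\secmax}<(\dlt_0)^{1/100}\lmb_{\max}$ on $[t_n,T_n')$, and then substituting the sharp expressions for the coefficients $\frkr_{a,i}$ provided by Lemma~\ref{lem:case2.1-leading-frkr_a,i}. First I would invoke \eqref{eq:non-coll-refined-mod-est} directly: for each $i$,
\[
\Big|\frac{\lmb_{i,t}}{\lmb_i}+\frac{\frkr_{0,i}}{\lmb_i^2}-\frac{\rd_t\lan[\Lmb W]_{\ul{;i}},g\ran}{\|\Lmb W\|_{L^2}^2}\Big|+\chf_{a\neq0}\Big|\frac{(z_i^a)_t}{\lmb_i}+\frac{\frkr_{a,i}}{\lmb_i^2}\Big|\aleq \|g\|_{\dot H^2}^2+\lmb_{\secmax}^{D-2}\lmb_{\max}^{D-1}\|g\|_{\dot H^2}+\lmb_{\secmax}^{D-\frac32}\lmb_{\max}^{D+\frac32}.
\]
The plan is then to absorb the mixed term $\lmb_{\secmax}^{D-2}\lmb_{\max}^{D-1}\|g\|_{\dot H^2}$ into $\|g\|_{\dot H^2}^2+\lmb_{\secmax}^{2D-4}\lmb_{\max}^{2D-2}$ by Young's inequality, and to note $\lmb_{\secmax}^{2D-4}\lmb_{\max}^{2D-2}\aleq \lmb_{\secmax}^{D-\frac32}\lmb_{\max}^{D+\frac12}$ since the exponent difference is controlled using $\lmb_{\secmax}\aleq \lmb_{\max}\aleq 1$ and $D=\frac{N-2}{2}\geq \frac52$ (so $D-4\geq -\frac32$ after pairing; more simply, $\lmb_{\secmax}^{2D-4}\lmb_{\max}^{2D-2}=\lmb_{\secmax}^{D-\frac32}\lmb_{\max}^{D+\frac12}\cdot\lmb_{\secmax}^{D-\frac52}\lmb_{\max}^{D-\frac52}\aleq \lmb_{\secmax}^{D-\frac32}\lmb_{\max}^{D+\frac12}$ when $N\geq7$). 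This normalizes the error budget to exactly what appears on the right side of the claimed estimates.

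Next I would handle the correction term $\lan[\Lmb W]_{\ul{;i}},g\ran/\|\Lmb W\|_{L^2}^2$. By \eqref{eq:non-coll-refined-mod-est-corr-bound} this is $\calO(\|g\|_{\dot H^1})$, which by \eqref{eq:case2-gHdot1-is-o(1)} is $o_{n\to\infty}(1)$; hence $\frac{d}{dt}\big(\text{this term}\big)$ is exactly of the form $\frac{d}{dt}o_{n\to\infty}(1)$ appearing in the statement. Then I substitute the $\frkr$-estimates: for $i\neq i_0$, \eqref{eq:case2.1-frkr_0,i} gives $\frkr_{0,i}/\lmb_i^2 = -\kpp_0\iota_i\iota_{i_0}W_{\lmb_{i_0}}(z_{i_0}-z_i)\lmb_i^{D-2}+\calO(\lmb_{\secmax}^D\lmb_i^{D-2}+\lmb_{\secmax}^{D-1}\lmb_{\max}^D)$; the last piece is absorbed into $\lmb_{\secmax}^{D-\frac32}\lmb_{\max}^{D+\frac12}$ since $\lmb_{\secmax}^{D-1}\lmb_{\max}^D = \lmb_{\secmax}^{D-\frac32}\lmb_{\max}^{D+\frac12}\cdot (\lmb_{\secmax}/\lmb_{\max})^{1/2}\aleq \lmb_{\secmax}^{D-\frac32}\lmb_{\max}^{D+\frac12}$. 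This yields \eqref{eq:case2.1-lmb_i-ref-mod} for $i\neq i_0$. For $i=i_0$, \eqref{eq:case2.1-frkr_0,i_0} gives $|\frkr_{0,i_0}/\lmb_{i_0}^2|\aleq \lmb_{\secmax}^D\lmb_{i_0}^{D-2}\aleq \lmb_{\secmax}^{D-\frac32}\lmb_{\max}^{D+\frac12}$ (again using $\lmb_{\secmax}\aleq\lmb_{\max}\aleq1$ and $N\geq7$), so this term is itself part of the error, giving the $\big|\frac{\lmb_{i_0,t}}{\lmb_{i_0}}+\frac{d}{dt}o_{n\to\infty}(1)\big|$ bound. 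Finally, for \eqref{eq:case2.1-z_i-ref-mod}, I use the $a\neq0$ part of \eqref{eq:non-coll-refined-mod-est} together with \eqref{eq:case2.1-frkr_a,i}: $|\frkr_{a,i}/\lmb_i^2|\aleq \lmb_{\secmax}^{D-1}\lmb_{\max}^D\aleq\lmb_{\secmax}^{D-\frac32}\lmb_{\max}^{D+\frac12}$, so $|z_{i,t}/\lmb_i|\aleq \|g\|_{\dot H^2}^2+\lmb_{\secmax}^{D-\frac32}\lmb_{\max}^{D+\frac12}$.

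The main obstacle, such as it is, lies entirely in the bookkeeping of exponents: one must verify carefully that every auxiliary error term ($\lmb_{\secmax}^D\lmb_i^D$-type, $\lmb_i^2\lmb_{\secmax}^{D-1}\lmb_{\max}^D$-type, the Young-split remainder $\lmb_{\secmax}^{2D-4}\lmb_{\max}^{2D-2}$) is dominated by one of the two retained error terms $\|g\|_{\dot H^2}^2$ and $\lmb_{\secmax}^{D-\frac32}\lmb_{\max}^{D+\frac12}$, using only $\lmb_i\leq\lmb_{\secmax}\leq\lmb_{\max}\leq 1$ for $i\neq i_0$, $\lmb_{i_0}=\lmb_{\max}$, and $N\geq 7$ (equivalently $D\geq\frac52$). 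A secondary point of care is that the leading term in \eqref{eq:case2.1-lmb_i-ref-mod} is written with the factor $\lmb_i^{D-2}$ rather than $\lmb_i^D$, matching the division of $\frkr_{0,i}$ by $\lmb_i^2$ in the modulation identity; I would make sure the statement and the substituted identity use consistent normalizations. No genuinely new estimate is needed — this proposition is a direct consequence of Lemma~\ref{lem:non-coll-ref-mod-est}, Lemma~\ref{lem:case2.1-leading-frkr_a,i}, and \eqref{eq:case2-gHdot1-is-o(1)}.
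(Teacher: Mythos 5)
Your proposal is correct and is essentially the paper's own proof: substitute Lemma~\ref{lem:case2.1-leading-frkr_a,i} into \eqref{eq:non-coll-refined-mod-est}, treat the correction term $\lan[\Lmb W]_{\ul{;i}},g\ran/\|\Lmb W\|_{L^{2}}^{2}$ as the $\frac{d}{dt}o_{n\to\infty}(1)$ via \eqref{eq:non-coll-refined-mod-est-corr-bound} and \eqref{eq:case2-gHdot1-is-o(1)}, and absorb the cross term by Young's inequality using $D\geq\frac{5}{2}$, exactly as in the paper. One small caution: your parenthetical claim that $\lmb_{\secmax}^{D}\lmb_{i_{0}}^{D-2}\aleq\lmb_{\secmax}^{D-\frac{3}{2}}\lmb_{\max}^{D+\frac{1}{2}}$ is not justified (it would require $\lmb_{\secmax}^{3/2}\aleq\lmb_{\max}^{5/2}$, and there is no a priori lower bound on $\lmb_{\max}$ at this stage of the bootstrap), but this does not affect the proof since $\lmb_{\secmax}^{D}\lmb_{i}^{D-2}$ with $i=i_{0}$ is already one of the retained error terms in \eqref{eq:case2.1-lmb_i-ref-mod}.
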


\begin{proof}
This follows from substituting the estimates of Lemma~\ref{lem:case2.1-leading-frkr_a,i}
into \eqref{eq:non-coll-refined-mod-est}, where we also use $\lmb_{\secmax}^{D-2}\lmb_{\max}^{D-1}\|g\|_{\dot{H}^{2}}\aleq\|g\|_{\dot{H}^{2}}^{2}+\lmb_{\secmax}^{2D-4}\lmb_{\max}^{2D-2}\aleq\|g\|_{\dot{H}^{2}}^{2}+\lmb_{\secmax}^{D-\frac{3}{2}}\lmb_{\max}^{D+\frac{1}{2}}$
by $D\geq\frac{5}{2}$.
\end{proof}
In order to integrate refined modulation estimates \eqref{eq:case2.1-lmb_i-ref-mod}
and \eqref{eq:case2.1-z_i-ref-mod}, it is convenient to introduce
corresponding refined modulation parameters $\vec{\zeta}(t)\approx\vec{\lmb}(t)$
and $\vec{\xi}(t)\approx\vec{z}(t)$ as in the following corollary.
The point is that we get rid of $\|g\|_{\dot{H}^{1}}^{2}$ in the
error, thanks to the spacetime estimate \eqref{eq:case2-spacetime-est}.
\begin{cor}[Modulation estimates in simple form integrated from $t_{n}$]
\label{cor:case2.1-mod-est-simple-form}There exist functions $\zeta_{i}(t)$
and $\xi_{i}(t)$ on $[t_{n},T_{n}')$ such that 
\begin{align}
\zeta_{i}(t) & =(1+o_{n\to\infty}(1))\lmb_{i}(t),\label{eq:case2.1-zeta_i-ptwise}\\
\xi_{i}(t) & =z_{i}(t)+o_{n\to\infty}(1),\label{eq:case2.1-xi_i-ptwise}
\end{align}
and 
\begin{align}
\Big|\frac{\zeta_{i,t}}{\zeta_{i}}-\iota_{i}\iota_{i_{0}}\kpp_{0}W_{\zeta_{i_{0}}}(z_{i_{0}}-z_{i})\zeta_{i}^{D-2}\Big| & \aleq(\dlt_{0})^{\frac{D}{100}}\zeta_{i}^{D-2}\zeta_{i_{0}}^{D}+\zeta_{\secmax}^{D-\frac{3}{2}}\zeta_{i_{0}}^{D}\qquad\text{if }i\neq i_{0},\label{eq:case2.1-zeta_i-diff-ineq}\\
|\zeta_{i_{0},t}|+|\vec{\xi}_{t}| & \aleq\zeta_{\secmax}^{D-\frac{3}{2}}\zeta_{i_{0}}^{D+\frac{1}{2}}.\label{eq:case2.1-zeta_i_0-diff-ineq}
\end{align}
\end{cor}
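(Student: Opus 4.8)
The plan is to absorb the two ``non-integrable'' features of Proposition~\ref{prop:case2-modulation-est} — the time derivative $\tfrac{d}{dt}o_{n\to\infty}(1)$, which is exactly $-\tfrac{\rd_{t}\lan[\Lmb W]_{\ul{;i}},g\ran}{\|\Lmb W\|_{L^{2}}^{2}}$, and the square term $\|g\|_{\dot{H}^{2}}^{2}$ — into the definitions of $\zeta_{i}$ and $\xi_{i}$. First I set $h_{i}(t):=-\tfrac{\lan[\Lmb W]_{\ul{;i}}(t),g(t)\ran}{\|\Lmb W\|_{L^{2}}^{2}}$, which is absolutely continuous in $t$ and, by \eqref{eq:non-coll-refined-mod-est-corr-bound} together with \eqref{eq:case2-gHdot1-is-o(1)}, satisfies $\sup_{t\in[t_{n},T_{n}')}|h_{i}(t)|=o_{n\to\infty}(1)$. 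With this notation, Proposition~\ref{prop:case2-modulation-est} says that the continuous functions $r_{i}(t):=\tfrac{\lmb_{i,t}}{\lmb_{i}}+h_{i,t}-\chf_{i\neq i_{0}}\kpp_{0}\iota_{i}\iota_{i_{0}}W_{\lmb_{i_{0}}}(z_{i_{0}}-z_{i})\lmb_{i}^{D-2}$ and $s_{i}(t):=z_{i,t}$ obey $|r_{i}(t)|\aleq\|g\|_{\dot{H}^{2}}^{2}+\lmb_{\secmax}^{D}\lmb_{i}^{D-2}+\lmb_{\secmax}^{D-\frac{3}{2}}\lmb_{\max}^{D+\frac{1}{2}}$ and $|s_{i}(t)|\aleq\lmb_{i}\|g\|_{\dot{H}^{2}}^{2}+\lmb_{i}\lmb_{\secmax}^{D-\frac{3}{2}}\lmb_{\max}^{D+\frac{1}{2}}$. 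Writing $G(t)=C\|g(t)\|_{\dot{H}^{2}}^{2}$ and $P_{i}(t)=C(\lmb_{\secmax}^{D}\lmb_{i}^{D-2}+\lmb_{\secmax}^{D-\frac{3}{2}}\lmb_{\max}^{D+\frac{1}{2}})>0$, I split $r_{i}=r_{i}'+r_{i}''$ with $r_{i}'=r_{i}\tfrac{G}{G+P_{i}}$ and $r_{i}''=r_{i}\tfrac{P_{i}}{G+P_{i}}$, so that $|r_{i}'|\leq G$ and $|r_{i}''|\leq P_{i}$; similarly $s_{i}=s_{i}'+s_{i}''$ with $|s_{i}'|\aleq\|g\|_{\dot{H}^{2}}^{2}$ (using $\lmb_{i}\aleq1$) and $|s_{i}''|\aleq\lmb_{\secmax}^{D-\frac{3}{2}}\lmb_{\max}^{D+\frac{3}{2}}$. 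These splittings are into at least locally integrable functions on $[t_{n},T_{n}')$, which is all that is needed below (e.g.\ by \eqref{eq:case2-spacetime-est}).

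Now I define, for $t\in[t_{n},T_{n}')$,
\[
\log\zeta_{i}(t):=\log\lmb_{i}(t)+h_{i}(t)-\int_{t_{n}}^{t}r_{i}'(s)\,ds,\qquad\xi_{i}(t):=z_{i}(t)-\int_{t_{n}}^{t}s_{i}'(s)\,ds.
\]
By \eqref{eq:case2-spacetime-est} one has $\int_{t_{n}}^{T_{n}'}(|r_{i}'|+|s_{i}'|)\,ds\aleq\int_{t_{n}}^{T_{n}'}\|g\|_{\dot{H}^{2}}^{2}\,ds=o_{n\to\infty}(1)$; combined with $\sup|h_{i}|=o_{n\to\infty}(1)$ this gives $\zeta_{i}(t)=(1+o_{n\to\infty}(1))\lmb_{i}(t)$ and $\xi_{i}(t)=z_{i}(t)+o_{n\to\infty}(1)$, which are \eqref{eq:case2.1-zeta_i-ptwise}–\eqref{eq:case2.1-xi_i-ptwise}. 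Differentiating the definitions, $\tfrac{\zeta_{i,t}}{\zeta_{i}}=\kpp_{0}\iota_{i}\iota_{i_{0}}W_{\lmb_{i_{0}}}(z_{i_{0}}-z_{i})\lmb_{i}^{D-2}+r_{i}''$ for $i\neq i_{0}$, $\tfrac{\zeta_{i_{0},t}}{\zeta_{i_{0}}}=r_{i_{0}}''$, and $\xi_{i,t}=s_{i}''$.

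It remains to pass the leading term and the error bounds to the $\zeta$-variables. On $[t_{n},T_{n}')$ one has $\lmb_{i_{0}}=\lmb_{\max}<K_{0}\aleq1$ and $\lmb_{\secmax}<(\dlt_{0})^{1/100}\lmb_{\max}$, hence $\zeta_{i_{0}}\aeq\lmb_{\max}\aleq1$ and $\zeta_{\secmax}\aleq(\dlt_{0})^{1/100}\zeta_{i_{0}}$ for $n$ large. Using the scale-uniform continuity of $W$ — explicitly, with $c=\tfrac{1}{N(N-2)}$, $\tfrac{W(a\rho)}{W(a)}=\big(\tfrac{1+ca^{2}}{1+ca^{2}\rho^{2}}\big)^{D}=1+O(|\rho^{2}-1|)$ uniformly in $a>0$ as $\rho\to1$ — together with $\zeta_{j}=(1+o_{n\to\infty}(1))\lmb_{j}$, we get $W_{\lmb_{i_{0}}}(z_{i_{0}}-z_{i})\lmb_{i}^{D-2}=(1+o_{n\to\infty}(1))W_{\zeta_{i_{0}}}(z_{i_{0}}-z_{i})\zeta_{i}^{D-2}$, and since $|z_{i_{0}}-z_{i}|\aeq1$ and $\zeta_{i_{0}}\aleq1$ one has $W_{\zeta_{i_{0}}}(z_{i_{0}}-z_{i})\zeta_{i}^{D-2}=\zeta_{i_{0}}^{-D}W((z_{i_{0}}-z_{i})/\zeta_{i_{0}})\zeta_{i}^{D-2}\aeq\zeta_{i_{0}}^{D}\zeta_{i}^{D-2}$. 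Therefore, for $i\neq i_{0}$, the $(1+o_{n\to\infty}(1))$-factor contributes at most $o_{n\to\infty}(1)\cdot\zeta_{i_{0}}^{D}\zeta_{i}^{D-2}\leq(\dlt_{0})^{\frac{D}{100}}\zeta_{i_{0}}^{D}\zeta_{i}^{D-2}$ (for $n$ large depending on $\dlt_{0}$), while $|r_{i}''|\leq P_{i}\aleq\lmb_{\secmax}^{D}\lmb_{i}^{D-2}+\lmb_{\secmax}^{D-\frac{3}{2}}\lmb_{\max}^{D+\frac{1}{2}}$, and $\lmb_{\secmax}^{D}\lmb_{i}^{D-2}\aleq(\dlt_{0})^{\frac{D}{100}}\lmb_{\max}^{D}\lmb_{i}^{D-2}\aeq(\dlt_{0})^{\frac{D}{100}}\zeta_{i_{0}}^{D}\zeta_{i}^{D-2}$, $\lmb_{\secmax}^{D-\frac{3}{2}}\lmb_{\max}^{D+\frac{1}{2}}\aeq\zeta_{\secmax}^{D-\frac{3}{2}}\zeta_{i_{0}}^{D+\frac{1}{2}}\leq\zeta_{\secmax}^{D-\frac{3}{2}}\zeta_{i_{0}}^{D}$ (as $\zeta_{i_{0}}\aleq1$); this yields \eqref{eq:case2.1-zeta_i-diff-ineq}. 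For $i_{0}$, $|\zeta_{i_{0},t}|=\zeta_{i_{0}}|r_{i_{0}}''|\aleq\zeta_{i_{0}}(\lmb_{\secmax}^{D}\lmb_{i_{0}}^{D-2}+\lmb_{\secmax}^{D-\frac{3}{2}}\lmb_{\max}^{D+\frac{1}{2}})\aeq\zeta_{\secmax}^{D}\zeta_{i_{0}}^{D-1}+\zeta_{\secmax}^{D-\frac{3}{2}}\zeta_{i_{0}}^{D+\frac{3}{2}}\aleq\zeta_{\secmax}^{D-\frac{3}{2}}\zeta_{i_{0}}^{D+\frac{1}{2}}$ using $\zeta_{\secmax}\aleq(\dlt_{0})^{1/100}\zeta_{i_{0}}$ and $\zeta_{i_{0}}\aleq1$; and $|\xi_{i,t}|=|s_{i}''|\aleq\lmb_{\secmax}^{D-\frac{3}{2}}\lmb_{\max}^{D+\frac{3}{2}}\aeq\zeta_{\secmax}^{D-\frac{3}{2}}\zeta_{i_{0}}^{D+\frac{3}{2}}\aleq\zeta_{\secmax}^{D-\frac{3}{2}}\zeta_{i_{0}}^{D+\frac{1}{2}}$; together these give \eqref{eq:case2.1-zeta_i_0-diff-ineq}. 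The only non-bookkeeping point, and the place where a little care is needed, is this last paragraph: replacing $\lmb_{i_{0}}$ by $\zeta_{i_{0}}$ inside $W_{\lmb_{i_{0}}}$ and matching powers of $\zeta_{i_{0}}$ and $\zeta_{\secmax}$, which rests precisely on the facts $\zeta_{i_{0}}\aeq\lmb_{\max}\aleq1$, $\zeta_{\secmax}\ll\zeta_{i_{0}}$ on $[t_{n},T_{n}')$, and the uniform scaling continuity of the explicit profile $W$.
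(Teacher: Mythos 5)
Your proof is correct and follows essentially the same route as the paper: absorb the derivative term $-\rd_{t}\lan[\Lmb W]_{\ul{;i}},g\ran/\|\Lmb W\|_{L^{2}}^{2}$ and the $\calO(\|g\|_{\dot{H}^{2}}^{2})$ error into the definitions of $\zeta_{i}$ and $\xi_{i}$, invoke the spacetime bound \eqref{eq:case2-spacetime-est} and $\|g\|_{\dot{H}^{1}}=o_{n\to\infty}(1)$ for \eqref{eq:case2.1-zeta_i-ptwise}--\eqref{eq:case2.1-xi_i-ptwise}, and use $\lmb_{\secmax}\leq(\dlt_{0})^{1/100}\lmb_{\max}$, $\lmb_{\max}\aleq1$ to convert the remaining errors. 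The explicit splitting $r_{i}=r_{i}'+r_{i}''$ and the scaling-continuity argument for replacing $W_{\lmb_{i_{0}}}$ by $W_{\zeta_{i_{0}}}$ are just careful elaborations of steps the paper leaves implicit.
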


\begin{proof}
\uline{Proof of \mbox{\eqref{eq:case2.1-zeta_i-ptwise}}}. Take
$\zeta_{i}(t)=\lmb_{i}(t)\exp(o_{n\to\infty}(1)+\int_{t_{n}}^{t}\calO(\|g(\tau)\|_{\dot{H}^{2}}^{2})d\tau)$,
where $o_{n\to\infty}(1)$ is the one in the left hand side of \eqref{eq:case2.1-lmb_i-ref-mod},
and $\calO(\|g\|_{\dot{H}^{2}}^{2})$ is from the right hand side
of \eqref{eq:case2.1-lmb_i-ref-mod}. By \eqref{eq:case2-spacetime-est},
we get \eqref{eq:case2.1-zeta_i-ptwise}.

\uline{Proof of \mbox{\eqref{eq:case2.1-xi_i-ptwise}}}. Write
\eqref{eq:case2.1-z_i-ref-mod} as $z_{i,t}=\calO(\|g\|_{\dot{H}^{2}}^{2})+\calO(\lmb_{\secmax}^{D-\frac{3}{2}}\lmb_{\max}^{D+\frac{3}{2}})$
and define $\xi_{i}(t)=z_{i}(t)+\int_{t_{n}}^{t}\calO(\|g(\tau)\|_{\dot{H}^{2}}^{2})d\tau$.
By \eqref{eq:case2-spacetime-est}, we get \eqref{eq:case2.1-xi_i-ptwise}.

\uline{Proof of \mbox{\eqref{eq:case2.1-zeta_i-diff-ineq}}}. Let
$i\neq i_{0}$. By the definition of $\zeta_{i}$, $\zeta_{i,t}/\zeta_{i}$
satisfies \eqref{eq:case2.1-lmb_i-ref-mod} without $\frac{d}{dt}o_{n\to\infty}(1)$
and $\calO(\|g\|_{\dot{H}^{2}}^{2})$. Then, we replace $\lmb_{i}$
by $\zeta_{i}$ using \eqref{eq:case2.1-zeta_i-ptwise} and apply
$\lmb_{\secmax}^{D}\lmb_{i}^{D-2}\aleq(\dlt_{0})^{\frac{D}{100}}\lmb_{i}^{D-2}\lmb_{i_{0}}^{D}$.
This gives \eqref{eq:case2.1-zeta_i-diff-ineq}.

\uline{Proof of \mbox{\eqref{eq:case2.1-zeta_i_0-diff-ineq}}}.
Proceeding similarly as in the proof of \eqref{eq:case2.1-zeta_i-diff-ineq}
gives \eqref{eq:case2.1-zeta_i_0-diff-ineq}.
\end{proof}
With the above refined modulation estimates at hand, we turn to close
the hypotheses in the definition of $T_{n}$ on the time interval
$[t_{n},T_{n}')$. The following inequality exploits non-degeneracy
in the equations of $\lmb_{i,t}$ and it is the key to control the
modulation parameters on $[t_{n},T_{n}')$.
\begin{lem}[Key inequality]
We have 
\begin{align}
\frac{d}{dt}\Big(\sum_{i\neq i_{0}}\iota_{i}\iota_{i_{0}}\sqrt{\zeta_{i}}\Big) & \aeq\lmb_{\secmax}^{D-\frac{3}{2}}\lmb_{\max}^{D}.\label{eq:case2-key-diff-ineq}
\end{align}
In particular, we have 
\begin{equation}
\int_{\tau_{1}}^{\tau_{2}}\lmb_{\secmax}^{D-\frac{3}{2}}\lmb_{\max}^{D}\aleq\sqrt{\lmb_{\secmax}}(\tau_{1})+\sqrt{\lmb_{\secmax}}(\tau_{2})\text{ for any }\tau_{1},\tau_{2}\in[t_{n},T_{n}')\text{ with }\tau_{1}<\tau_{2}.\label{eq:case2-key-int-ineq}
\end{equation}
\end{lem}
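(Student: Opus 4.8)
The plan is to exploit the sign structure of the refined modulation equations \eqref{eq:case2.1-zeta_i-diff-ineq}. The crucial point is that for $i\neq i_{0}$, the leading term $\iota_{i}\iota_{i_{0}}\kpp_{0}W_{\zeta_{i_{0}}}(z_{i_{0}}-z_{i})\zeta_{i}^{D-2}$ has the sign $\iota_{i}\iota_{i_{0}}$, because $\kpp_{0}>0$ and $W_{\zeta_{i_{0}}}(z_{i_{0}}-z_{i})>0$. Moreover, since $|z_{i_{0}}-z_{i}|\aeq1$ (the $z_{j}^{\ast}$ are distinct and $\dlt_{0}$ is small) and $\zeta_{i_{0}}=\lmb_{\max}(1+o_{n\to\infty}(1))$, we have the two-sided bound $W_{\zeta_{i_{0}}}(z_{i_{0}}-z_{i})\aeq\zeta_{i_{0}}^{D}\aeq\lmb_{\max}^{D}$. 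First I would compute, using \eqref{eq:case2.1-zeta_i-diff-ineq} and $\zeta_{i,t}/\zeta_{i}=2(\sqrt{\zeta_{i}})_{t}/\sqrt{\zeta_{i}}$,
\[
(\sqrt{\zeta_{i}})_{t}=\tfrac{1}{2}\sqrt{\zeta_{i}}\cdot\iota_{i}\iota_{i_{0}}\kpp_{0}W_{\zeta_{i_{0}}}(z_{i_{0}}-z_{i})\zeta_{i}^{D-2}+\calO\big((\dlt_{0})^{\frac{D}{100}}\zeta_{i}^{D-\frac{3}{2}}\zeta_{i_{0}}^{D}+\zeta_{\secmax}^{D-\frac{3}{2}}\zeta_{i_{0}}^{D+\frac{1}{2}}\big).
\]
Multiplying by $\iota_{i}\iota_{i_{0}}$ and summing over $i\neq i_{0}$, the main terms become a sum of strictly positive quantities $\tfrac{1}{2}\kpp_{0}\sqrt{\zeta_{i}}\,W_{\zeta_{i_{0}}}(z_{i_{0}}-z_{i})\zeta_{i}^{D-2}\aeq \zeta_{i}^{D-\frac{3}{2}}\lmb_{\max}^{D}$.

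Next I would show that the dominant contribution to the main sum comes from $i$ with $\zeta_{i}=\zeta_{\secmax}$, and that this contribution is $\aeq \lmb_{\secmax}^{D-\frac{3}{2}}\lmb_{\max}^{D}$ (here I use $D-\tfrac{3}{2}>0$, so $\zeta_{i}\mapsto\zeta_{i}^{D-3/2}$ is increasing, hence the $\secmax$ index gives the largest term among $i\neq i_{0}$, and there are only boundedly many terms). So the main part of $\tfrac{d}{dt}\big(\sum_{i\neq i_{0}}\iota_{i}\iota_{i_{0}}\sqrt{\zeta_{i}}\big)$ is $\aeq\lmb_{\secmax}^{D-\frac{3}{2}}\lmb_{\max}^{D}$. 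It remains to absorb the two error terms: the first, $(\dlt_{0})^{\frac{D}{100}}\zeta_{i}^{D-\frac{3}{2}}\zeta_{i_{0}}^{D}$, is the same size as the main term but with the small prefactor $(\dlt_{0})^{D/100}$, so it is absorbed by shrinking $\dlt_{0}$; the second, $\zeta_{\secmax}^{D-\frac{3}{2}}\zeta_{i_{0}}^{D+\frac{1}{2}}=\zeta_{\secmax}^{D-\frac{3}{2}}\zeta_{i_{0}}^{D}\cdot\zeta_{i_{0}}^{1/2}$, carries an extra factor $\zeta_{i_{0}}^{1/2}\aleq\lmb_{\max}^{1/2}\aleq 1$ and — more usefully — $\zeta_{i_{0}}^{1/2}\aleq K_{0}^{1/2}$, but in fact on $[t_{n},T_{n}')$ we have $\lmb_{\max}(t_n)=o_{n\to\infty}(1)$ together with the bootstrap $\lmb_{\max}<K_0$; the simplest route is to note $\zeta_{i_0}^{1/2}$ is bounded and the extra power is harmless once we recall these errors were already shown in \eqref{eq:case2.1-zeta_i_0-diff-ineq} to be genuinely subleading relative to $\zeta_{i_0}^{D}$-type main terms. (If a cleaner bound is desired, one uses $\zeta_{\secmax}\le\zeta_{i_0}$ to write the error as $\aleq\zeta_{\secmax}^{D-3/2}\lmb_{\max}^{D}\cdot\lmb_{\max}^{1/2}$, absorbed for $n$ large since $\lmb_{\max}(t)\le K_0$ but the relevant smallness comes from $\lmb_{\max}$ being small on a subinterval — alternatively just keep $\lmb_{\max}^{1/2}\le K_0^{1/2}=\calO(1)$ and note the main term dominates because $\lmb_{\secmax}\aeq\lmb_{\max}$ is impossible here, since $\lmb_{\secmax}<(\dlt_0)^{1/100}\lmb_{\max}$ on $[t_n,T_n')$, giving an extra smallness $(\dlt_0)^{(D-3/2)/100}$ in the error versus an honest lower bound — wait, this needs care, see below.) This yields \eqref{eq:case2-key-diff-ineq}.

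Finally, for \eqref{eq:case2-key-int-ineq}, I would integrate \eqref{eq:case2-key-diff-ineq} over $[\tau_{1},\tau_{2}]$: the right side integrates to $\aleq\lmb_{\secmax}^{D-3/2}\lmb_{\max}^{D}$ integrated, and the left side is the total variation of $\sum_{i\neq i_{0}}\iota_{i}\iota_{i_{0}}\sqrt{\zeta_{i}}$, bounded by $\sum_{i\neq i_{0}}\big(\sqrt{\zeta_{i}}(\tau_{1})+\sqrt{\zeta_{i}}(\tau_{2})\big)\aleq\sqrt{\lmb_{\secmax}(\tau_{1})}+\sqrt{\lmb_{\secmax}(\tau_{2})}$, using \eqref{eq:case2.1-zeta_i-ptwise} and the fact that $\sqrt{\zeta_i}\aleq\sqrt{\zeta_{\secmax}}$ for $i\neq i_0$ together with the boundedness of $J$. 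Since the integrand $\lmb_{\secmax}^{D-3/2}\lmb_{\max}^{D}$ is nonnegative, monotonicity of the primitive is what makes the argument work; no oscillation issues arise. The main obstacle is getting the sign bookkeeping in the summation exactly right — in particular confirming that the errors in \eqref{eq:case2.1-zeta_i-diff-ineq}, which are only a small constant factor below the main term, are genuinely absorbed for $\dlt_{0}$ small and $n$ large, and that the main term does not degenerate: this uses crucially that $|z_{i_0}-z_i|\aeq1$ so $W_{\zeta_{i_0}}(z_{i_0}-z_i)$ is bounded below by a positive multiple of $\zeta_{i_0}^D$, i.e. the non-colliding hypothesis.
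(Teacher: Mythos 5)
Your overall route is exactly the paper's: read off \eqref{eq:case2-key-diff-ineq} from \eqref{eq:case2.1-zeta_i-diff-ineq} together with $W_{\zeta_{i_{0}}}(z_{i_{0}}-z_{i})\aeq\zeta_{i_{0}}^{D}$ (non-collision), and then integrate, bounding the boundary terms by $\sqrt{\lmb_{\secmax}}$ via \eqref{eq:case2.1-zeta_i-ptwise}. The integration half of your argument is fine. The problem is the error absorption, which you flag yourself (``wait, this needs care'') and never actually resolve, and which as written contains a false ingredient: in this regime $\lmb_{\max}(t_{n})$ is \emph{not} $o_{n\to\infty}(1)$ --- the whole point of Theorem~\ref{thm:main-one-bubble-tower-classification} is that $\lmb_{i_{0}}(t)\to\lmb_{i_{0}}^{\ast}\in(0,\infty)$, and Assumption~\ref{assumption:sequential-on-param} only gives $\lmb_{i,n}\aleq1$; the asymptotic orthogonality forces $\lmb_{\max}\lmb_{\secmax}\to0$, not $\lmb_{\max}\to0$. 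Consequently your bound of the second error by $\zeta_{\secmax}^{D-\frac{3}{2}}\zeta_{i_{0}}^{D+\frac{1}{2}}$, i.e.\ main term times $\zeta_{i_{0}}^{1/2}\aleq K_{0}^{1/2}=\calO(1)$, does not beat the implicit constant in $\aleq$, and the alternative you sketch (comparing against an ``extra smallness $(\dlt_{0})^{(D-3/2)/100}$'') does not work either, because the error and the main term carry the \emph{same} factor $\zeta_{\secmax}^{D-3/2}$, so the ratio between them is $\zeta_{i_{0}}^{1/2}$, not a power of $\lmb_{\secmax}/\lmb_{\max}$.

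The fix is one line and uses the correct smallness. When you pass from \eqref{eq:case2.1-zeta_i-diff-ineq} to $(\sqrt{\zeta_{i}})_{t}$ you multiply the error $(\dlt_{0})^{\frac{D}{100}}\zeta_{i}^{D-2}\zeta_{i_{0}}^{D}+\zeta_{\secmax}^{D-\frac{3}{2}}\zeta_{i_{0}}^{D}$ by $\tfrac{1}{2}\sqrt{\zeta_{i}}$; since $i\neq i_{0}$ you should bound $\sqrt{\zeta_{i}}\aleq\sqrt{\zeta_{\secmax}}$ (not $\sqrt{\zeta_{i_{0}}}$), so the second error becomes $\sqrt{\zeta_{\secmax}}\cdot\zeta_{\secmax}^{D-\frac{3}{2}}\zeta_{i_{0}}^{D}$. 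Now $\zeta_{\secmax}\aeq\lmb_{\secmax}=o_{n\to\infty}(1)$ on $[t_{n},T_{n}')$ (this is exactly what the proof of \eqref{eq:case2-Rinv-is-o(1)} establishes; alternatively $\lmb_{\secmax}\aleq(\dlt_{0})^{\frac{1}{100}}\lmb_{i_{0}}\aleq(\dlt_{0})^{\frac{1}{100}}K_{0}$ by the definition of $T_{n}'$), so this error is a genuinely small multiple of the lower bound $\sum_{i\neq i_{0}}\tfrac{1}{2}\kpp_{0}W_{\zeta_{i_{0}}}(z_{i_{0}}-z_{i})\zeta_{i}^{D-\frac{3}{2}}\ageq c\,\zeta_{\secmax}^{D-\frac{3}{2}}\zeta_{i_{0}}^{D}$, while the first error is absorbed by $(\dlt_{0})^{D/100}$ as you said. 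With that correction your argument closes and coincides with the paper's proof.
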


\begin{proof}
\eqref{eq:case2-key-diff-ineq} follows from \eqref{eq:case2.1-zeta_i-diff-ineq}
and $W_{\zeta_{i_{0}}}(z_{i_{0}}-z_{i})\aeq\zeta_{i_{0}}^{D}$. Integrating
\eqref{eq:case2-key-diff-ineq} gives \eqref{eq:case2-key-int-ineq}.
\end{proof}
\begin{lem}[Control of $\zeta_{i_{0}}$ and $\vec{\xi}$]
We have 
\begin{align}
\sup_{t\in[t_{n},T_{n}')}\Big|\frac{\lmb_{i_{0}}(t)}{\lmb_{i_{0}}(t_{n})}-1\Big| & \aleq\dlt_{0}^{1/200},\label{eq:case2.1-lmb_i_0-control}\\
\sup_{t\in[t_{n},T_{n}')}|\vec{z}(t)-\vec{z}^{\ast}| & =o_{n\to\infty}(1),\label{eq:case2.1-z_i-control}\\
\sup_{t\in[t_{n},T_{n}')}\frac{\sum_{j\in\calI_{-}}\lmb_{j}(t)}{\lmb_{i_{0}}(t)} & =o_{n\to\infty}(1).\label{eq:case2.1-minus-ratio-control}
\end{align}
\end{lem}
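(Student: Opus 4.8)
The plan is a continuity (bootstrap) argument on $[t_{n},T_{n}')$. Recall that the definition of $T_{n}'$ forces $\lmb_{i_{0}}=\lmb_{\max}$ and $\lmb_{\secmax}<(\dlt_{0})^{1/100}\lmb_{\max}$ on this interval, that $|z_{i_{0}}-z_{j}|\aeq1$ for $j\neq i_{0}$ (since $|\vec{z}-\vec{z}^{\ast}|<\dlt_{0}$ and the $z_{i}^{\ast}$ are distinct), hence $W_{\zeta_{i_{0}}}(z_{i_{0}}-z_{j})\aeq\zeta_{i_{0}}^{D}$, and that $\zeta_{i}=(1+o_{n\to\infty}(1))\lmb_{i}$ and $\vec{\xi}=\vec{z}+o_{n\to\infty}(1)$ uniformly by \eqref{eq:case2.1-zeta_i-ptwise}--\eqref{eq:case2.1-xi_i-ptwise} and \eqref{eq:case2-spacetime-est}. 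Also, from \eqref{eq:case2-Rinv-is-o(1)} and $R^{-1}\aeq\sqrt{\lmb_{\secmax}\lmb_{\max}}\ageq\lmb_{\secmax}$ we record $\sup_{[t_{n},T_{n}')}\lmb_{\secmax}(t)=o_{n\to\infty}(1)$. All three bounds are obtained by integrating the refined estimates \eqref{eq:case2.1-zeta_i-diff-ineq}--\eqref{eq:case2.1-zeta_i_0-diff-ineq} against the sign-exploiting key inequality \eqref{eq:case2-key-int-ineq}.

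\textbf{Proof of \eqref{eq:case2.1-lmb_i_0-control}.} Set $\mu_{n}\coloneqq\lmb_{i_{0}}(t_{n})$ and bootstrap $\tfrac{1}{2}\mu_{n}\leq\lmb_{i_{0}}(t)\leq2\mu_{n}$ on a maximal subinterval of $[t_{n},T_{n}')$ containing $t_{n}$. There $\zeta_{i_{0}}\aeq\lmb_{\max}\aeq\mu_{n}$, and $\lmb_{\secmax}(s)<(\dlt_{0})^{1/100}\lmb_{\max}(s)$ gives $\sqrt{\lmb_{\secmax}}(s)\aleq(\dlt_{0})^{1/200}\mu_{n}^{1/2}$; so \eqref{eq:case2.1-zeta_i_0-diff-ineq} yields $|\zeta_{i_{0},t}|\aleq\zeta_{\secmax}^{D-3/2}\zeta_{i_{0}}^{D+1/2}\aleq\mu_{n}^{1/2}\,\lmb_{\secmax}^{D-3/2}\lmb_{\max}^{D}$, and \eqref{eq:case2-key-int-ineq} gives
\[
|\zeta_{i_{0}}(t)-\zeta_{i_{0}}(t_{n})|\aleq\mu_{n}^{1/2}\int_{t_{n}}^{t}\lmb_{\secmax}^{D-3/2}\lmb_{\max}^{D}\,ds\aleq\mu_{n}^{1/2}\big(\sqrt{\lmb_{\secmax}}(t_{n})+\sqrt{\lmb_{\secmax}}(t)\big)\aleq(\dlt_{0})^{1/200}\mu_{n}.
\]
Since $\zeta_{i_{0}}(t_{n})\aeq\mu_{n}$ and $\dlt_{0}$ is small, the bootstrap improves strictly, hence holds on all of $[t_{n},T_{n}')$; translating via $\lmb_{i_{0}}=(1+o_{n\to\infty}(1))\zeta_{i_{0}}$ gives \eqref{eq:case2.1-lmb_i_0-control}.

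\textbf{Proof of \eqref{eq:case2.1-z_i-control} and \eqref{eq:case2.1-minus-ratio-control}.} Using $\lmb_{\max}<K_{0}\aeq1$, \eqref{eq:case2.1-zeta_i_0-diff-ineq} gives $|\vec{\xi}_{t}|\aleq\zeta_{\secmax}^{D-3/2}\zeta_{i_{0}}^{D+1/2}\aleq\lmb_{\secmax}^{D-3/2}\lmb_{\max}^{D}$, so \eqref{eq:case2-key-int-ineq} yields $\sup_{t}|\vec{\xi}(t)-\vec{\xi}(t_{n})|\aleq\sup_{t}\big(\sqrt{\lmb_{\secmax}}(t_{n})+\sqrt{\lmb_{\secmax}}(t)\big)=o_{n\to\infty}(1)$; combined with $\vec{\xi}=\vec{z}+o_{n\to\infty}(1)$ uniformly and $\vec{z}(t_{n})\to\vec{z}^{\ast}$, the triangle inequality gives \eqref{eq:case2.1-z_i-control}. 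For the $\calI_{-}$ ratio, let $j\in\calI_{-}$; then $\iota_{j}\iota_{i_{0}}=-1$, so the leading term of \eqref{eq:case2.1-zeta_i-diff-ineq} is $-\kpp_{0}W_{\zeta_{i_{0}}}(z_{i_{0}}-z_{j})\zeta_{j}^{D-2}\aeq-\zeta_{i_{0}}^{D}\zeta_{j}^{D-2}$; absorbing the $(\dlt_{0})^{D/100}\zeta_{j}^{D-2}\zeta_{i_{0}}^{D}$ error into it for $\dlt_{0}$ small leaves $\zeta_{j,t}\aleq\zeta_{j}\,\zeta_{\secmax}^{D-3/2}\zeta_{i_{0}}^{D}$. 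Summing over $j\in\calI_{-}$ and writing $\Sigma\coloneqq\sum_{j\in\calI_{-}}\zeta_{j}$, we get $\Sigma_{t}\aleq\Sigma\,\zeta_{\secmax}^{D-3/2}\zeta_{i_{0}}^{D}$; Grönwall plus \eqref{eq:case2-key-int-ineq} then gives $\Sigma(t)\leq(1+o_{n\to\infty}(1))\Sigma(t_{n})$. Dividing by $\zeta_{i_{0}}(t)\aeq\zeta_{i_{0}}(t_{n})$ (first part), using $\Sigma(t_{n})/\zeta_{i_{0}}(t_{n})\aleq\sum_{i\neq i_{0}}\lmb_{i}(t_{n})/\lmb_{i_{0}}(t_{n})=o_{n\to\infty}(1)$ (the choice of $i_{0}$ at the beginning of this section), and $\lmb_{i}=(1+o_{n\to\infty}(1))\zeta_{i}$, yields \eqref{eq:case2.1-minus-ratio-control}.

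\textbf{Main obstacle.} Since a priori $\lmb_{\max}(t_{n})$ need not be bounded below, one cannot derive \eqref{eq:case2.1-minus-ratio-control} from the crude bound $\lmb_{\secmax}=o_{n\to\infty}(1)$ — the monotone (Grönwall) structure coming from $\iota_{j}\iota_{i_{0}}=-1$ is essential. Likewise, for $\lmb_{i_{0}}$ the pointwise estimate $|\zeta_{i_{0},t}/\zeta_{i_{0}}|\aleq\lmb_{\secmax}^{D-3/2}\lmb_{\max}^{D-1/2}$ is not by itself integrable in time; the gain comes only from pairing it with \eqref{eq:case2-key-int-ineq} (which supplies the missing power $\lmb_{\max}^{D}$) and with the bootstrap comparison $\lmb_{\max}(t)\aeq\lmb_{\max}(t_{n})$ (which absorbs the surviving factor $\lmb_{\max}^{-1/2}$ into $\mu_{n}^{-1/2}$).
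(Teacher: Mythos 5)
Your proposal is correct and follows essentially the same route as the paper: all three bounds come from integrating the refined estimates \eqref{eq:case2.1-zeta_i-diff-ineq}--\eqref{eq:case2.1-zeta_i_0-diff-ineq} against the key integral inequality \eqref{eq:case2-key-int-ineq}, using the sign $\iota_j\iota_{i_0}=-1$ to drop the (negative) leading term for $j\in\calI_{-}$ and the $o_{n\to\infty}(1)$ smallness of $\sum_{i\neq i_0}\lmb_i(t_n)/\lmb_{i_0}(t_n)$ at the initial time. The only cosmetic difference is in \eqref{eq:case2.1-lmb_i_0-control}, where the paper integrates $(\sqrt{\zeta_{i_0}})_t$ and absorbs the $\dlt_0^{1/200}\sqrt{\zeta_{i_0}}(t)$ term directly, while you run an explicit continuity bootstrap $\lmb_{i_0}(t)\in[\tfrac12\mu_n,2\mu_n]$ — these are equivalent.
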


\begin{proof}
By \eqref{eq:case2.1-zeta_i-ptwise} and \eqref{eq:case2.1-xi_i-ptwise},
we may freely replace $\lmb_{i}$, $z_{i}$ by $\zeta_{i}$, $\xi_{i}$,
respectively.

\uline{Proof of \mbox{\eqref{eq:case2.1-lmb_i_0-control}}}. We
have $|(\sqrt{\zeta_{i_{0}}})_{t}|\aleq\zeta_{\secmax}^{D-\frac{3}{2}}\zeta_{i_{0}}^{D}$
by \eqref{eq:case2.1-zeta_i_0-diff-ineq}. Integrating this using
\eqref{eq:case2-key-int-ineq} and $\lmb_{\secmax}<(\dlt_{0})^{\frac{1}{100}}\lmb_{i_{0}}$,
we get 
\begin{align*}
\sqrt{\zeta_{i_{0}}}(t)=\sqrt{\zeta_{i_{0}}}(t_{n})+\tint{t_{n}}t\calO(\zeta_{\secmax}^{D-\frac{3}{2}}\zeta_{\max}^{D})d\tau=\sqrt{\zeta_{i_{0}}}(t_{n})+\calO(\sqrt{\zeta_{\secmax}}(t_{n})+\sqrt{\zeta_{\secmax}}(t))\quad\\
=\sqrt{\zeta_{i_{0}}}(t_{n})\cdot(1+\calO((\dlt_{0})^{\frac{1}{200}}))+(\dlt_{0})^{\frac{1}{200}}\cdot\calO(\sqrt{\zeta_{i_{0}}}(t)).
\end{align*}
This gives \eqref{eq:case2.1-lmb_i_0-control}. 

\uline{Proof of \mbox{\eqref{eq:case2.1-z_i-control}}}. Integrating
\eqref{eq:case2.1-zeta_i_0-diff-ineq} with \eqref{eq:case2-key-int-ineq},
we get 
\begin{align*}
\sup_{t\in[t_{n},T_{n}')}|\vec{\xi}(t)-\vec{z}^{\ast}| & \leq|\vec{\xi}(t_{n})-\vec{z}^{\ast}|+\tint{t_{n}}{T_{n}'}\calO(\zeta_{\secmax}^{D-\frac{3}{2}}\zeta_{\max}^{D})dt\\
 & \aleq|\vec{\xi}(t_{n})-\vec{z}^{\ast}|+\sqrt{\zeta_{\secmax}}(t_{n})+\limsup_{t\to T_{n}}\sqrt{\zeta_{\secmax}}(t).
\end{align*}
Applying $|\vec{\xi}(t_{n})-\vec{z}^{\ast}|=o_{n\to\infty}(1)$ and
$\lmb_{\secmax}(t)=o_{n\to\infty}(1)$ (by \eqref{eq:case2-Rinv-is-o(1)}),
we get \eqref{eq:case2.1-z_i-control}. 

\uline{Proof of \mbox{\eqref{eq:case2.1-minus-ratio-control}}}.
For any $i\in\calI_{-}$, we have $\iota_{i}\iota_{i_{0}}=-1$ so
$\zeta_{i,t}\leq\zeta_{i}\calO(\zeta_{\secmax}^{D-\frac{3}{2}}\zeta_{\max}^{D})$
by \eqref{eq:case2.1-zeta_i-diff-ineq}. Integrating this using \eqref{eq:case2-key-int-ineq}
and $\lmb_{\secmax}=o_{n\to\infty}(1)$, we conclude $\zeta_{i}(t)\leq\zeta_{i}(t_{n})\cdot(1+o_{n\to\infty}(1))$
for $t\in[t_{n},T_{n}')$. Combining this with \eqref{eq:case2.1-lmb_i_0-control}
and $\lmb_{i}(t_{n})/\lmb_{i_{0}}(t_{n})=o_{n\to\infty}(1)$, we get
\[
\sup_{t\in[t_{n},T_{n}')}\frac{\zeta_{i}(t)}{\zeta_{i_{0}}(t)}\aleq\frac{\zeta_{i}(t_{n})}{\zeta_{i_{0}}(t_{n})}=o_{n\to\infty}(1)
\]
as desired.
\end{proof}
\begin{cor}[Closing bootstrap provided $T_{n}'=T_{n}$]
\label{cor:case2-closing-bootstrap}Suppose $T_{n}'=T_{n}$ for all
large $n$. Then, $T_{n}=\Ttbexit_{n}=+\infty$, $\calI_{+}=\{i_{0}\}$,
and 
\begin{equation}
R^{-1}(t)+\|g(t)\|_{\dot{H}^{1}}+\frac{\lmb_{\secmax}(t)}{\lmb_{i_{0}}(t)}\to0\qquad\text{as }t\to+\infty.\label{eq:case2-closing-bootstrap}
\end{equation}
\end{cor}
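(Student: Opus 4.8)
The plan is to assume $T_{n}'=T_{n}$ for all large $n$; then every estimate established above on $[t_{n},T_{n}')$ --- in particular \eqref{eq:case2-Rinv-is-o(1)}, \eqref{eq:case2-gHdot1-is-o(1)}, \eqref{eq:case2.1-lmb_i_0-control}, \eqref{eq:case2.1-z_i-control}, \eqref{eq:case2.1-minus-ratio-control} --- holds on $[t_{n},T_{n})$, and there $\lmb_{\max}=\lmb_{i_{0}}$ (since $\sum_{i\neq i_{0}}\lmb_{i}<\dlt_{0}^{1/100}\lmb_{i_{0}}$). First I claim $T_{n}=\Ttbexit_{n}$. If $T_{n}<\Ttbexit_{n}$, then by continuity one of the three conditions cutting off $T_{n}$ holds with equality at $t=T_{n}$; but \eqref{eq:case2.1-lmb_i_0-control} together with $\lmb_{i_{0},n}\leq\frac{1}{2}K_{0}$ gives $\lmb_{\max}(t)\leq\frac{3}{4}K_{0}$ for $\dlt_{0}$ small and $n$ large, \eqref{eq:case2.1-z_i-control} gives $|\vec z(t)-\vec z^{\ast}|=o_{n\to\infty}(1)<\dlt_{2}$, and \eqref{eq:case2.1-minus-ratio-control} with $\sum_{i\in\calI_{+}}\lmb_{i}\geq\lmb_{i_{0}}$ gives $\sum_{i\in\calI_{-}}\lmb_{i}(t)/\sum_{i\in\calI_{+}}\lmb_{i}(t)=o_{n\to\infty}(1)<\dlt_{0}$, all uniformly on $[t_{n},T_{n})$ and hence still strictly at $t=T_{n}$ by continuity --- a contradiction. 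So $T_{n}=\Ttbexit_{n}$, and the above estimates now hold on all of $[t_{n},\Ttbexit_{n})$.

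Next I show $\Ttbexit_{n}=+\infty$. On $[t_{n},\Ttbexit_{n})$ one has $\|u(t)-\calW(\vec\iota,\vec\lmb(t),\vec z(t))\|_{\dot{H}^{1}}\leq\|\td U(t)\|_{\dot{H}^{1}}+\|g(t)\|_{\dot{H}^{1}}=o_{n\to\infty}(1)$ and $R^{-1}(t)=o_{n\to\infty}(1)$, so $u(t)\in\calT_{J}(\eps_{n})$ with $\eps_{n}\to0$; fix $n$ large so $\eps_{n}<\dlt_{0}/2$. If $\Ttbexit_{n}<+\infty$, pick $\tau_{k}\nearrow\Ttbexit_{n}$ with $\lmb_{i}(\tau_{k})\to\bar\lmb_{i}\in[0,K_{0}]$ (using $\lmb_{\max}<K_{0}$) and $\vec z(\tau_{k})\to\vec z^{\ast}$, where $\bar\lmb_{i_{0}}>0$ by \eqref{eq:case2.1-lmb_i_0-control}. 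If all $\bar\lmb_{i}>0$, then $\calW(\vec\iota,\vec\lmb(\tau_{k}),\vec z(\tau_{k}))\to\calW(\vec\iota,\bar{\vec\lmb},\vec z^{\ast})$ in $\dot{H}^{1}$ with $(\vec\iota,\bar{\vec\lmb},\vec z^{\ast})\in\calP_{J}(\dlt_{0}/2)$, so $u(\Ttbexit_{n})=\lim_{k}u(\tau_{k})\in\calT_{J}(\dlt_{0})$; if instead some $\bar\lmb_{i}=0$ ($i\neq i_{0}$), then splitting $\calW=\calW_{\calK}+\calW_{\calK^{c}}$ along $\calK=\{i:\bar\lmb_{i}>0\}$, the part $\calW_{\calK}(\tau_{k})$ converges strongly in $\dot{H}^{1}$ while $\calW_{\calK^{c}}(\tau_{k})$ converges weakly to $0$ but has $\dot{H}^{1}$-norm bounded below by a positive constant (by $\dot{H}^{1}$-decoupling), contradicting the strong $\dot{H}^{1}$-convergence of $u(\tau_{k})=\calW_{\calK}(\tau_{k})+\calW_{\calK^{c}}(\tau_{k})+\td U(\tau_{k})+g(\tau_{k})$. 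So only the first case occurs, and then $u(\Ttbexit_{n})\in\calT_{J}(\dlt_{0})$ lets Lemma~\ref{lem:curve-modulation} extend the modulation curve past $\Ttbexit_{n}$, contradicting the definition of $\Ttbexit_{n}$. Hence $T_{n}'=T_{n}=\Ttbexit_{n}=+\infty$.

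Now I determine $\calI_{+}=\{i_{0}\}$. Since $T_{n}'=+\infty$, its defining bound gives $\lmb_{i}(t)<\dlt_{0}^{1/100}\lmb_{i_{0}}(t)$ for all $t\geq t_{n}$ and $i\neq i_{0}$, hence $\lmb_{\secmax}(t)<\dlt_{0}^{1/100}\lmb_{i_{0}}(t)\aleq\dlt_{0}^{1/100}$; together with \eqref{eq:case2-key-int-ineq} and $\sqrt{\lmb_{\secmax}}(t_{n})=o_{n\to\infty}(1)$ (from the $W$-bubbling initial data) this yields $\int_{t_{n}}^{\infty}\lmb_{\secmax}^{D-\frac{3}{2}}\lmb_{i_{0}}^{D}\,dt=o_{n\to\infty}(1)+\calO(\dlt_{0}^{1/200})$, small for $\dlt_{0}$ small and $n$ large. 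Suppose for contradiction $i\in\calI_{+}\setminus\{i_{0}\}$, so $\iota_{i}\iota_{i_{0}}=+1$ and $W_{\zeta_{i_{0}}}(z_{i_{0}}-z_{i})\aeq\zeta_{i_{0}}^{D}$ (using $|z_{i_{0}}-z_{i}|\aeq1$ and $\zeta_{i_{0}}\aleq1$). Integrating the crude lower bound $(\log\zeta_{i})_{t}\geq-C\zeta_{\secmax}^{D-\frac{3}{2}}\zeta_{i_{0}}^{D}$ from \eqref{eq:case2.1-zeta_i-diff-ineq} (and \eqref{eq:case2.1-zeta_i-ptwise}) over $[t_{n},t]$, the above smallness gives $\zeta_{i}(t)\geq\tfrac{1}{2}\zeta_{i}(t_{n})=:\underline\zeta_{i}>0$ for all $t$. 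Feeding $\zeta_{i}\geq\underline\zeta_{i}$ back into the leading term of \eqref{eq:case2.1-zeta_i-diff-ineq} and absorbing the $\dlt_{0}^{D/100}$-error, we get $(\log\zeta_{i})_{t}\geq c\,\zeta_{i_{0}}^{D}\underline\zeta_{i}^{D-2}-C\zeta_{\secmax}^{D-\frac{3}{2}}\zeta_{i_{0}}^{D}$ for some $c>0$; integrating and using $\zeta_{i_{0}}(t)\geq c_{n}>0$ from \eqref{eq:case2.1-lmb_i_0-control}, so that $\int_{t_{n}}^{T}\zeta_{i_{0}}^{D}\geq c_{n}^{D}(T-t_{n})\to+\infty$, forces $\zeta_{i}(T)\to+\infty$, contradicting $\zeta_{i}(t)\aleq\dlt_{0}^{1/100}$. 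Hence $\calI_{+}=\{i_{0}\}$, $\calI_{-}=\setJ\setminus\{i_{0}\}$, and $\lmb_{i_{0}}(t)=\lmb_{\max}(t)$ for all $t\geq t_{n}$. Finally, letting $n\to\infty$ (so $t_{n}\to+\infty$) in \eqref{eq:case2-Rinv-is-o(1)}, \eqref{eq:case2-gHdot1-is-o(1)}, and \eqref{eq:case2.1-minus-ratio-control}, and bounding $\lmb_{\secmax}(t)/\lmb_{i_{0}}(t)\leq\sum_{j\in\calI_{-}}\lmb_{j}(t)/\lmb_{i_{0}}(t)$, we obtain \eqref{eq:case2-closing-bootstrap}.

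I expect the main obstacle to be the sign determination $\calI_{+}=\{i_{0}\}$: it hinges on the interplay between the non-degenerate growth of a same-sign satellite scale (which requires first establishing an a priori positive lower bound on $\zeta_{i}$) and the absorption of the off-diagonal error terms, for which the monotone-functional control \eqref{eq:case2-key-int-ineq} is essential. The continuity argument for $\Ttbexit_{n}=+\infty$ is routine, but does require the $\dot{H}^{1}$-decoupling remark to rule out an over-concentrating satellite bubble exactly at $t=\Ttbexit_{n}$.
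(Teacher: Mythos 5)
Your proposal is correct and follows essentially the same route as the paper: you close the bootstrap using the already-established controls \eqref{eq:case2-Rinv-is-o(1)}, \eqref{eq:case2-gHdot1-is-o(1)}, \eqref{eq:case2.1-lmb_i_0-control}--\eqref{eq:case2.1-minus-ratio-control}, and you rule out $\calI_{+}\supsetneq\{i_{0}\}$ by first deriving a positive lower bound on a same-sign satellite scale from \eqref{eq:case2.1-zeta_i-diff-ineq} together with \eqref{eq:case2-key-int-ineq}, exactly as the paper does. The only deviations are cosmetic: the paper finishes the sign argument by observing that the lower bounds on $\lmb_{i}$ and $\lmb_{i_{0}}$ already keep $R_{ii_{0}}^{-1}\aeq\sqrt{\lmb_{i}\lmb_{i_{0}}}$ bounded below, contradicting \eqref{eq:case2-Rinv-is-o(1)}, instead of pushing on to unbounded growth of $\zeta_{i}$; and for $\Ttbexit_{n}=+\infty$ your compactness/decoupling dichotomy, while valid, is not needed, since $u(t)\in\calT_{J}(o_{n\to\infty}(1))$ lies strictly inside the tube and one may use the witness multi-bubble from a time slightly before $\Ttbexit_{n}$ together with $\dot{H}^{1}$-continuity of the flow.
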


\begin{proof}
Assume $T_{n}'=T_{n}$. First, we prove $T_{n}=\Ttbexit_{n}=+\infty$.
We have $T_{n}=\Ttbexit_{n}$ by $T_{n}'=T_{n}$, \eqref{eq:case2.1-lmb_i_0-control},
\eqref{eq:case2.1-z_i-control}, and \eqref{eq:case2.1-minus-ratio-control}.
Next, we have $\Ttbexit_{n}=+\infty$ by $T_{n}'=\Ttbexit$, \eqref{eq:case2-Rinv-is-o(1)},
and \eqref{eq:case2-gHdot1-is-o(1)}. Hence, $T_{n}=\Ttbexit_{n}=+\infty$.
These controls also give 
\begin{equation}
R^{-1}(t)+\|g(t)\|_{\dot{H}^{1}}+\frac{\sum_{j\in\calI_{-}}\lmb_{i}(t)}{\lmb_{i_{0}}(t)}\to0\qquad\text{as }t\to+\infty.\label{eq:7.25-1}
\end{equation}

Next, we prove $\calI_{+}=\{i_{0}\}$. Suppose not. Choose $i\in\calI_{+}\setminus\{i_{0}\}$.
By \eqref{eq:case2.1-zeta_i-diff-ineq}, $(\zeta_{i})_{t}\geq\zeta_{i}\calO(\zeta_{\secmax}^{D-\frac{3}{2}}\zeta_{\max}^{D})$.
Integrating this using \eqref{eq:case2-key-int-ineq} and $\lmb_{\secmax}(t)=o_{n\to\infty}(1)$,
we conclude $\zeta_{i}(t)\geq\zeta_{i}(t_{n})\cdot(1-o_{n\to\infty}(1))$.
Fix $n=n_{0}$ sufficiently large so that $T_{n}'=+\infty$; we have
$\lmb_{i}(t)\ageq\lmb_{i}(t_{n_{0}})$ for all $t\in[t_{n_{0}},+\infty)$.
On the other hand, $\lmb_{i_{0}}(t)\ageq\lmb_{i_{0}}(t_{n_{0}})$
for all $t\in[t_{n_{0}},+\infty)$ by \eqref{eq:case2.1-lmb_i_0-control}.
Hence $R_{ii_{0}}^{-1}(t)$ cannot converge to $0$ as $t\to+\infty$,
contradicting to \eqref{eq:case2-Rinv-is-o(1)}.

Finally, \eqref{eq:case2-closing-bootstrap} follows from $\calI_{+}=\{i_{0}\}$
and \eqref{eq:7.25-1}.
\end{proof}

\subsection{Modulation analysis on $[T_{n}',T_{n}'')$ and proof of $T_{n}'=T_{n}$}

The goal of this subsection is to show $T_{n}'=T_{n}$ for all large
$n$. We prove this by contradiction. Throughout this subsection,
suppose 
\[
T_{n}'<T_{n}
\]
for a subsequence of $n$, which we still denote by a sequence indexed
by $n$. 

As mentioned at the beginning of Section~\ref{sec:Case2}, $T_{n}'<T_{n}$
implies 
\[
\calI_{+}\supsetneq\{i_{0}\}.
\]
By \eqref{eq:case2.1-lmb_i_0-control}, we have $\lmb_{i_{0}}(T_{n}')\leq(\frac{1}{2}+o_{n\to\infty}(1))K_{0}$.
By \eqref{eq:case2.1-z_i-control} and \eqref{eq:case2.1-minus-ratio-control},
we have 
\[
|\vec{z}(T_{n}')-\vec{z}^{\ast}|+\frac{\sum_{j\in\calI_{-}}\lmb_{j}(T_{n}')}{\lmb_{i_{0}}(T_{n}')}=o_{n\to\infty}(1).
\]
The fact that $T_{n}'<T_{n}$ implies $\tsum{i\neq i_{0}}{}\lmb_{i}(T_{n}')=(\dlt_{0})^{\frac{1}{100}}\lmb_{i_{0}}(T_{n}')$.
In particular, $\lmb_{\secmax}(T_{n}')\aeq(\dlt_{0})^{\frac{1}{100}}\lmb_{\max}(T_{n}')$
and $\lmb_{\max}(T_{n}')=o_{n\to\infty}(1)$ (by \eqref{eq:case2-Rinv-is-o(1)}).
Now, the following time is well-defined for all large $n$: 
\[
T_{n}''=\sup\{\tau\in[T_{n}',T_{n}):\frac{\lmb_{\secmax}(t)}{\lmb_{\max}(t)}>(\dlt_{0})^{\frac{1}{50}}\text{ for all }t\in[T_{n}',\tau]\}\in(T_{n}',T_{n}].
\]
We will bootstrap the hypothesis $\frac{\lmb_{\secmax}(t)}{\lmb_{\max}(t)}>(\dlt_{0})^{\frac{1}{50}}$
so that $T_{n}''=T_{n}$. Moreover, on $[T_{n}',T_{n}'')$, $\lmb_{\secmax}$
and $\lmb_{\max}$ arise from indices in $\calI_{+}$, so both $\lmb_{\max}$
and $\lmb_{\secmax}$ are increasing in time. This will then contradict
to the asymptotic orthogonality of parameters.
\begin{lem}[Smallness of $\|g(t)\|_{\dot{H}^{1}}+R^{-1}(t)$]
We have 
\begin{align}
\sup_{t\in[T_{n}',T_{n}'')}\{R^{-1}(t)+\lmb_{\max}(t)\} & =o_{n\to\infty}(1),\label{eq:case2.2-Rinv-is-o(1)-1}\\
\sup_{t\in[T_{n}',T_{n}'')}\|g(t)\|_{\dot{H}^{1}} & =o_{n\to\infty}(1).\label{eq:case2.2-gHdot1-is-o(1)-1}
\end{align}
\end{lem}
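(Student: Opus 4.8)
The plan is to reproduce, on the adjacent interval $[T_{n}',T_{n}'')$, the argument already used for \eqref{eq:case2-Rinv-is-o(1)}--\eqref{eq:case2-gHdot1-is-o(1)} on $[t_{n},T_{n}')$, this time integrating in time from $T_{n}'$ rather than from $t_{n}$. The two facts that make this work are that $\lmb_{\max}(T_{n}')=o_{n\to\infty}(1)$ (recorded just above) and that the spacetime estimate \eqref{eq:case2-spacetime-est} is valid on all of $[t_{n},T_{n})$, hence on $[T_{n}',T_{n}'')$. For \eqref{eq:case2.2-Rinv-is-o(1)-1}, since $R^{-1}\aeq\sqrt{\lmb_{\max}\lmb_{\secmax}}\leq\lmb_{\max}$ it suffices to control $\lmb_{\max}$. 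For every $i\in\setJ$ the rough modulation estimate \eqref{eq:case2-rough-mod-est} and Young's inequality give
\[
|(\lmb_{i}^{2D})_{t}|\aleq\lmb_{i}^{2D-1}\big(\|g\|_{\dot{H}^{2}}+\lmb_{\secmax}^{D-1}\lmb_{\max}^{D}\big)\aleq\|g\|_{\dot{H}^{2}}^{2}+\lmb_{\max}^{4D-2}.
\]
On $[T_{n}',T_{n}'')$ the bootstrap hypothesis $\lmb_{\secmax}/\lmb_{\max}>(\dlt_{0})^{1/50}$ gives $\lmb_{\max}\aleq_{\dlt_{0}}\lmb_{\secmax}$, hence $\lmb_{\max}^{4D-2}\aleq_{\dlt_{0}}\lmb_{\secmax}^{2D-2}\lmb_{\max}^{2D}$; since $\dlt_{0}$ is fixed, integrating from $T_{n}'$ and using $\lmb_{\max}(T_{n}')=o_{n\to\infty}(1)$ together with \eqref{eq:case2-spacetime-est} yields $\sup_{[T_{n}',T_{n}'')}\lmb_{i}^{2D}=o_{n\to\infty}(1)$ for each $i$, so $\sup_{[T_{n}',T_{n}'')}\lmb_{\max}=o_{n\to\infty}(1)$, which is \eqref{eq:case2.2-Rinv-is-o(1)-1}.

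For \eqref{eq:case2.2-gHdot1-is-o(1)-1} I would reproduce verbatim the energy argument behind \eqref{eq:case2-gHdot1-is-o(1)}: expand $E[u]$ around $U$ using $\Dlt U+f(U)=\sum_{b,j}\frkr_{b,j}\calV_{b\ul{;j}}$ and the orthogonality conditions \eqref{eq:curve-orthog}, apply the coercivity estimate \eqref{eq:calL-H1-coer-quad-form} to $-\lan\calL_{\calW}g,g\ran$ together with the smallness \eqref{eq:a_i-is-o(1)} of $\lan\calY_{;i},g\ran$, and use $\|g\|_{\dot{H}^{1}}\aleq\dlt_{0}$ from the definition of $\Ttbexit_{n}$. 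This produces, uniformly on $[T_{n}',T_{n}'')$,
\[
E[u]\geq JE[W]+\big(c-\calO(\dlt_{0}^{p-1})\big)\|g\|_{\dot{H}^{1}}^{2}-\calO(R^{-1})-o_{n\to\infty}(1)
\]
for a universal $c>0$; combining with $E[u(t)]\to JE[W]$ from \eqref{eq:E(u(t))-go-to-JE(W)}, with \eqref{eq:case2.2-Rinv-is-o(1)-1} just proved, and with the smallness of $\dlt_{0}$ forces $\sup_{[T_{n}',T_{n}'')}\|g(t)\|_{\dot{H}^{1}}^{2}=o_{n\to\infty}(1)$.

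I do not expect a genuine obstacle: the statement is essentially a transcription of the $[t_{n},T_{n}')$ analysis to $[T_{n}',T_{n}'')$, the left-endpoint data being supplied by the estimates already established on $[t_{n},T_{n}')$ and the exit conditions at $T_{n}'$. The only point deserving care is the bookkeeping of $\dlt_{0}$-powers: the bootstrap on $[T_{n}',T_{n}'')$ only gives $\lmb_{\max}\aeq\lmb_{\secmax}$ up to a fixed power of $\dlt_{0}$, so one must check that these $\dlt_{0}$-dependent constants are harmless once $\dlt_{0}$ has been fixed and, in particular, that they do not interfere with the absorption of $\calO(\dlt_{0}^{p-1})\|g\|_{\dot{H}^{1}}^{2}$ in the coercivity step.
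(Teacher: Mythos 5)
Your proposal is correct and matches the paper's own proof essentially verbatim: the first bound is obtained by integrating $|(\lmb_{i}^{2D})_{t}|\aleq\|g\|_{\dot{H}^{2}}^{2}+\lmb_{\max}^{4D-2}$ from $T_{n}'$, using the bootstrap $\lmb_{\secmax}>(\dlt_{0})^{1/50}\lmb_{\max}$ to feed $\lmb_{\max}^{4D-2}$ into the spacetime estimate and the data $\lmb_{\max}(T_{n}')=o_{n\to\infty}(1)$, while the second bound is the same energy/coercivity argument as \eqref{eq:case2-gHdot1-is-o(1)} with \eqref{eq:case2.2-Rinv-is-o(1)-1} in place of \eqref{eq:case2-Rinv-is-o(1)}. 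The $\dlt_{0}$-bookkeeping you flag is harmless, exactly as the paper treats it.
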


\begin{proof}
\uline{Proof of \mbox{\eqref{eq:case2.2-Rinv-is-o(1)-1}}}. Since
$R^{-1}(t)\aleq\lmb_{\max}(t)$, it suffices to show that $\lmb_{\max}(t)=o_{n\to\infty}(1)$.
For any $i\in\setJ$, \eqref{eq:case2-rough-mod-est} gives 
\[
|(\lmb_{i}^{2D})_{t}|\aleq\lmb_{i}^{2D-1}(\|g\|_{\dot{H}^{2}}+\lmb_{\max}^{2D-1})\aleq\|g\|_{\dot{H}^{2}}^{2}+\lmb_{\max}^{4D-2}.
\]
By \eqref{eq:case2-max-frkr_a,i} and $\lmb_{\secmax}>(\dlt_{0})^{\frac{1}{50}}\lmb_{\max}$,
we have $\int_{T_{n}'}^{T_{n}''}(\|g\|_{\dot{H}^{2}}^{2}+\lmb_{\max}^{4D-2})dt=o_{n\to\infty}(1)$.
Integrating the above display with $\lmb_{\max}(T_{n}')=o_{n\to\infty}(1)$
gives $\lmb_{i}^{2D}(t)=o_{n\to\infty}(1)$. As $i\in\setJ$ was arbitrary,
we get $\lmb_{\max}(t)=o_{n\to\infty}(1)$ as desired.

\uline{Proof of \mbox{\eqref{eq:case2.2-gHdot1-is-o(1)-1}}}. The
proof of \eqref{eq:case2-gHdot1-is-o(1)} works the same after replacing
\eqref{eq:case2-Rinv-is-o(1)} by \eqref{eq:case2.2-Rinv-is-o(1)-1}.
\end{proof}
As before, we need refined modulation estimates and hence the leading
term of each $\frkr_{a,i}$.
\begin{lem}[Leading term of $\frkr_{a,i}$]
\label{lem:case2.2-leading-frkr_a,i}We have 
\begin{align}
\frkr_{0,i} & =-\kpp_{0}\kpp_{\infty}\sum_{j\in\calI_{+}\setminus\{i\}}\frac{\lmb_{i}^{D}\lmb_{j}^{D}}{|z_{j}-z_{i}|^{2D}}+(\dlt_{0})^{\frac{49}{50}D}\cdot\calO(\lmb_{i}^{2}\lmb_{\secmax}^{D}\lmb_{\max}^{D-2}),\quad\text{if }i\in\calI_{+},\label{eq:case2.2-frkr_0,i-plus}\\
|\frkr_{0,i}| & \aleq(\dlt_{0})^{\frac{49}{50}(D-2)-\frac{2}{50}}\cdot\lmb_{i}^{2}\lmb_{\secmax}^{D}\lmb_{\max}^{D-2}\quad\text{if }i\in\calI_{-},\label{eq:case2.2-frkr_0,i-minus}\\
|\frkr_{a,i}| & \aleq o_{n\to\infty}(1)\cdot\lmb_{i}\lmb_{\secmax}^{D}\lmb_{\max}^{D-1}\quad\text{if }a\neq0.\label{eq:case2.2-frkr_a,i}
\end{align}
\end{lem}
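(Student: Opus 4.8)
The plan is to compute the leading-order behavior of each $\frkr_{a,i}$ by plugging the now-available controls on this interval (most importantly $\lmb_{\max}(t) = o_{n\to\infty}(1)$ from \eqref{eq:case2.2-Rinv-is-o(1)-1}, the bootstrap hypothesis $(\dlt_{0})^{1/50}\lmb_{\max} < \lmb_{\secmax} \leq \lmb_{\max}$, and the partition $\setJ = \calI_+ \sqcup \calI_-$ together with $\sum_{j \in \calI_-}\lmb_j = o_{n\to\infty}(1)\cdot\lmb_{i_0}$) into the general expansion \eqref{eq:non-coll-frkr_a,i-est} of Lemma~\ref{lem:non-coll-tdU-est}. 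That expansion reads $\frkr_{a,i} = \sum_{j\neq i} \lan\calV_{a;i},f'(W_{;i})W_{;j}\ran / \|\calV_a\|_{L^2}^2 + \calO(\text{error})$, so everything reduces to estimating the inner products $\lan\calV_{a;i},f'(W_{;i})W_{;j}\ran$ via \eqref{eq:2.48} and \eqref{eq:2.49}, and bookkeeping the error terms against the various powers of $\lmb_{\secmax}$, $\lmb_{\max}$, $\lmb_i$, and $\dlt_0$.

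First I would treat the case $a = 0$, $i \in \calI_+$. Using \eqref{eq:2.48} with the asymptotics $\Lmb W(y) = -\tfrac{N-2}{2}\kpp_\infty|y|^{-2D} + \calO(|y|^{-2D-2})$ and $W(y) = \kpp_\infty |y|^{-2D} + \calO(|y|^{-2D-2})$ (note $2D = N-2$), one gets $\lan[\Lmb W]_{;i}, f'(W_{;i})W_{;j}\ran = -\kpp_0\kpp_\infty \iota_i\iota_j \lmb_i^D\lmb_j^D / |z_i - z_j|^{2D} + (\text{errors})$, where the error is $\calO(\lmb_i^{D+2}\lmb_j^D \langle\log\rangle)$ plus the $\calO(R_{ij}^{-N}\langle\log\rangle)$ tail from \eqref{eq:2.48}, both of which I would bound using $|z_i - z_j| \aeq 1$ and $R_{ij}^{-1}\aeq\sqrt{\lmb_i\lmb_j}$ (Lemma~\ref{lem:non-coll-R_ij}). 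Splitting the sum over $j \neq i$ into $j \in \calI_+\setminus\{i\}$ and $j \in \calI_-$: for $j \in \calI_-$, $\lmb_j = o_{n\to\infty}(1)\cdot\lmb_{i_0}$ and $\lmb_i \aleq \lmb_{\max} \aeq \lmb_{i_0}$, so $\lmb_i^D\lmb_j^D$ is negligibly small compared with the claimed error size $(\dlt_0)^{\frac{49}{50}D}\lmb_i^2\lmb_{\secmax}^D\lmb_{\max}^{D-2}$ (here one uses $\lmb_i^{D-2} \aleq \lmb_{\max}^{D-2}$ and $\lmb_{\secmax}^D \geq (\dlt_0)^{\frac{D}{50}}\lmb_{\max}^D$, so $\lmb_i^2\lmb_{\secmax}^D\lmb_{\max}^{D-2} \ageq (\dlt_0)^{\frac{D}{50}}\lmb_i^2\lmb_{\max}^{2D-2}$; the $\calI_-$ contribution to $\frkr_{0,i}$, being $\aleq \lmb_i^D \cdot o_{n\to\infty}(1)\cdot\lmb_{i_0}^D \aleq o_{n\to\infty}(1)\lmb_{\max}^{2D}$, is absorbed). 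For $j \in \calI_+\setminus\{i\}$, $\lmb_j \aleq \lmb_{\secmax}$, so each such term has size $\aleq \lmb_i^D\lmb_{\secmax}^D$, giving the main term displayed in \eqref{eq:case2.2-frkr_0,i-plus}. The error terms coming from \eqref{eq:non-coll-frkr_a,i-est}, from the $\calO(\lmb_i^{D+2}\lmb_j^D\langle\log\rangle)$ pieces, and from the log tails, I would all check are of the form $(\dlt_0)^{\frac{49}{50}D}\calO(\lmb_i^2\lmb_{\secmax}^D\lmb_{\max}^{D-2})$ by pulling out two extra powers of $\lmb_j$ or $\lmb_i$ (hence one extra $\lmb_{\max}^2$), using $\lmb_{\max} \aleq (\dlt_0)^{1/50}$-type bounds is not even needed — rather the gain comes from $\lmb_j^D$ with $j \in \calI_+\setminus\{i\}$ contributing $\lmb_{\secmax}^D$, and $\lmb_i^{D+2} = \lmb_i^2 \cdot \lmb_i^D$.

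Next, for $a = 0$, $i \in \calI_-$: now every $j \neq i$ with $\lmb_j$ comparable to $\lmb_{\secmax}$ or $\lmb_{\max}$ lies in $\calI_+$ (since $\calI_-$ scales are $o_{n\to\infty}(1)\cdot\lmb_{i_0}$). But $\lmb_i = o_{n\to\infty}(1)\cdot\lmb_{i_0}$, so $\lmb_i^D\lmb_j^D \aleq o_{n\to\infty}(1)^D \lmb_{\max}^{2D}$; to land in the form $(\dlt_0)^{\frac{49}{50}(D-2)-\frac{2}{50}}\lmb_i^2\lmb_{\secmax}^D\lmb_{\max}^{D-2}$ I would write $\lmb_i^D\lmb_j^D = \lmb_i^2 \cdot \lmb_i^{D-2}\lmb_j^D$ and bound $\lmb_i^{D-2} \aleq \lmb_{\secmax}^{D-2} \cdot (\lmb_i/\lmb_{\secmax})^{D-2}$ with $\lmb_i/\lmb_{\secmax} = o_{n\to\infty}(1)$, and $\lmb_j^D \aleq \lmb_{\max}^D$, so the total is $o_{n\to\infty}(1)\cdot\lmb_i^2\lmb_{\secmax}^{D-2}\lmb_{\max}^D$, which is $\aleq (\dlt_0)^{\text{(something)}}\lmb_i^2\lmb_{\secmax}^D\lmb_{\max}^{D-2}$ once one uses $\lmb_{\secmax}^{D-2}\lmb_{\max}^D = \lmb_{\secmax}^D\lmb_{\max}^{D-2}\cdot(\lmb_{\max}/\lmb_{\secmax})^2 \aleq (\dlt_0)^{-2/50}\lmb_{\secmax}^D\lmb_{\max}^{D-2}$; the $o_{n\to\infty}(1)$ will dominate any negative power of $\dlt_0$ for $n$ large, so in fact the stated bound holds with room. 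For $a \neq 0$ (any $i$): use \eqref{eq:2.49} in place of \eqref{eq:2.48}; the leading inner product $\lan[\nabla W]_{;i}, f'(W_{;i})W_{;j}\ran$ carries an extra power of $\lmb_i$ (one more derivative) compared to the $a=0$ case, i.e. size $\aleq \lmb_i^{D+1}\lmb_j^D = \lmb_i \cdot \lmb_i^D\lmb_j^D$, which after the same splitting gives $\aleq \lmb_i\lmb_{\secmax}^D\lmb_{\max}^D \cdot o_{n\to\infty}(1)$ — here the $o_{n\to\infty}(1)$ is needed because the naive bound $\lmb_i\lmb_{\secmax}^D\lmb_{\max}^D$ without a gain is exactly the claimed RHS of \eqref{eq:case2.2-frkr_a,i} up to the $o_{n\to\infty}(1)$, and the gain comes from either $j \in \calI_+\setminus\{i\}$ (so $\lmb_j^D \aleq \lmb_{\secmax}^D$, and we still need one more smallness: use $\lmb_{\secmax} \aleq \lmb_{\max} = o_{n\to\infty}(1)$ to convert one $\lmb_{\secmax}$ into $o_{n\to\infty}(1)$ at the cost of nothing, since there are $D \geq 5/2$ factors) or $j \in \calI_-$. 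The main obstacle I anticipate is purely the bookkeeping in case \eqref{eq:case2.2-frkr_0,i-plus}: one must verify that \emph{every} error channel — the remainder in \eqref{eq:non-coll-frkr_a,i-est} (which contains a $\chf_{\lmb_i = \lmb_{\max}}$ versus $\chf_{\lmb_i < \lmb_{\max}}$ dichotomy; here $\lmb_i$ may or may not equal $\lmb_{\max}$ since $i$ ranges over $\calI_+$), the $\calO(\lmb_i^{D+2}\lmb_j^D\langle\log\rangle)$ expansion errors, the $\calO(R_{ij}^{-N}\langle\log\rangle)$ tails, and the contributions of $j \in \calI_-$ — all fit under the single clean bound $(\dlt_0)^{\frac{49}{50}D}\calO(\lmb_i^2\lmb_{\secmax}^D\lmb_{\max}^{D-2})$, and in particular tracking how the power $\frac{49}{50}D$ (resp. $\frac{49}{50}(D-2) - \frac{2}{50}$) emerges from $\lmb_{\secmax}^D \geq (\dlt_0)^{\frac{D}{50}}\lmb_{\max}^D$ combined with the bootstrap identity $\lmb_{\secmax}(T_n') \aeq (\dlt_0)^{1/100}\lmb_{\max}(T_n')$ — though away from $T_n'$ only the weaker $\lmb_{\secmax} > (\dlt_0)^{1/50}\lmb_{\max}$ holds, and it is this exponent $\frac{1}{50}$ that should be the source of the $\frac{49}{50}$ factors. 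I would organize the proof as: recall \eqref{eq:non-coll-frkr_a,i-est}; evaluate the inner products via \eqref{eq:2.48}--\eqref{eq:2.49}; convert all $R_{ij}$ to $\sqrt{\lmb_i\lmb_j}$ via Lemma~\ref{lem:non-coll-R_ij} and all $|z_i-z_j|$ to $\aeq 1$; then do the $\calI_+$/$\calI_-$ split and collect.
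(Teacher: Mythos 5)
Your overall route is the same as the paper's: start from \eqref{eq:non-coll-frkr_a,i-est}, evaluate the inner products via \eqref{eq:2.48}--\eqref{eq:2.49} with $|z_i-z_j|\aeq 1$ and $R_{ij}\aeq(\lmb_i\lmb_j)^{-1/2}$, split $j\neq i$ into $\calI_+\setminus\{i\}$ and $\calI_-$, and collect. However, there is a genuine gap in how you control the $\calI_-$ scales on the interval in question. You repeatedly invoke $\lmb_j=o_{n\to\infty}(1)\cdot\lmb_{i_0}$ for $j\in\calI_-$ (and $\lmb_{\max}\aeq\lmb_{i_0}$), both in the $i\in\calI_+$ case and in the $i\in\calI_-$ case (where you also use $\lmb_i/\lmb_{\secmax}=o_{n\to\infty}(1)$). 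On $[T_n',T_n'')$ neither control is available at this point: $\lmb_{i_0}=\lmb_{\max}$ and $\sum_{j\in\calI_-}\lmb_j=o_{n\to\infty}(1)\lmb_{i_0}$ are guaranteed only on $[t_n,T_n')$, hence at the single time $t=T_n'$; on $[T_n',T_n'')$ the only available input is the bootstrap from the definition of $T_n$, namely $\sum_{j\in\calI_-}\lmb_j<\dlt_0\sum_{j\in\calI_+}\lmb_j\aleq\dlt_0\lmb_{\max}$, together with $\lmb_{\secmax}>(\dlt_0)^{1/50}\lmb_{\max}$ and $\lmb_{\max}=o_{n\to\infty}(1)$. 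The uniform-in-time statement $\sup_{[T_n',T_n'')}\sum_{j\in\calI_-}\lmb_j/\lmb_{\max}=o_{n\to\infty}(1)$ is \eqref{eq:case2.2-minus-ratio-control}, which is proved \emph{after} this lemma using the refined modulation estimates of Proposition~\ref{prop:case2.2-ref-mod}, which in turn rest on this lemma — so using it here is circular. This is exactly why the statement carries explicit $\dlt_0$-powers: the factor $(\dlt_0)^{\frac{49}{50}D}$ in \eqref{eq:case2.2-frkr_0,i-plus} and $(\dlt_0)^{\frac{49}{50}(D-2)-\frac{2}{50}}$ in \eqref{eq:case2.2-frkr_0,i-minus} come from $\lmb_j\aleq\dlt_0\lmb_{\max}\aleq(\dlt_0)^{49/50}\lmb_{\secmax}$ (for $j\in\calI_-$, resp.\ for $i\in\calI_-$) combined with $\lmb_{\max}\aleq(\dlt_0)^{-1/50}\lmb_{\secmax}$, not from any $o_{n\to\infty}(1)$ gain.

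A second, related defect: in the $i\in\calI_+$ case you bound the $\calI_-$ contribution by $o_{n\to\infty}(1)\lmb_{\max}^{2D}$ and declare it absorbed. Since $i$ ranges over all of $\calI_+$ and only $\lmb_{\secmax}$ (not the smaller scales in $\calI_+$) is bounded below relative to $\lmb_{\max}$, $\lmb_i$ may be much smaller than $\lmb_{\max}$, and then $o_{n\to\infty}(1)\lmb_{\max}^{2D}$ is \emph{not} dominated by $(\dlt_0)^{\frac{49}{50}D}\lmb_i^2\lmb_{\secmax}^D\lmb_{\max}^{D-2}$; the weight $\lmb_i^2$ must be retained because the estimate is later divided by $\lmb_i^2$ in the $\lmb_{i,t}/\lmb_i$ equation. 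The correct bookkeeping is $\lmb_i^D\lmb_j^D=\lmb_i^2\lmb_i^{D-2}\lmb_j^D\aleq\lmb_i^2\lmb_{\max}^{D-2}\cdot(\dlt_0)^{\frac{49}{50}D}\lmb_{\secmax}^D$ using only the bootstrap bound on $\lmb_j$, $j\in\calI_-$. With these corrections (replace every $o_{n\to\infty}(1)$ control on $\calI_-$ scales by the $\dlt_0$-bootstrap bound, keep the $\lmb_i^2$ factor, and drop the unjustified $\lmb_{\max}\aeq\lmb_{i_0}$), your computation reduces to the paper's; your $a\neq0$ case and your closing remark about the origin of the $\frac{49}{50}$ exponents are consistent with this fix.
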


\begin{proof}
\uline{Proof of \mbox{\eqref{eq:case2.2-frkr_0,i-plus}}}. Let
$i\in\calI_{+}$. By \eqref{eq:non-coll-frkr_a,i-est}, we have 
\[
\frkr_{0,i}=\sum_{j\in\calI_{+}\setminus\{i\}}\frac{\lan\calV_{0;i},f'(W_{;i})W_{;j}\ran}{\|\Lmb W\|_{L^{2}}^{2}}+\sum_{j\in\calI_{-}}\frac{\lan\calV_{0;i},f'(W_{;i})W_{;j}\ran}{\|\Lmb W\|_{L^{2}}^{2}}+\calO(\lmb_{i}^{2}\lmb_{\secmax}^{D-1}\lmb_{\max}^{D+1}).
\]
Applying \eqref{eq:2.48}, $\lmb_{\max}=o_{n\to\infty}(1)$, and $\lmb_{\max}\aeq_{\dlt_{0}}\lmb_{\secmax}$,
one has 
\[
\frkr_{0,i}=-\kpp_{0}\kpp_{\infty}\sum_{j\in\calI_{+}\setminus\{i\}}\frac{\lmb_{i}^{D}\lmb_{j}^{D}}{|z_{j}-z_{i}|^{2D}}+\calO(\tsum{j\in\calI_{-}}{}\lmb_{j}^{D}\lmb_{i}^{D})+o_{n\to\infty}(1)\cdot\lmb_{i}^{2}\lmb_{\secmax}^{D}\lmb_{\max}^{D-2}.
\]
Applying $\max_{j\in\calI_{-}}\lmb_{j}\aleq\dlt_{0}\lmb_{\max}\aleq(\dlt_{0})^{\frac{49}{50}}\lmb_{\secmax}$,
we get \eqref{eq:case2.2-frkr_0,i-plus}. 

\uline{Proof of \mbox{\eqref{eq:case2.2-frkr_0,i-minus}}}. Let
$i\in\calI_{-}$. By \eqref{eq:non-coll-frkr_a,i-est} and \eqref{eq:2.48},
one has $|\frkr_{0,i}|\aleq\lmb_{i}^{D}\lmb_{\max}^{D}+\lmb_{i}^{2}\lmb_{\secmax}^{D-1}\lmb_{\max}^{D+1}$.
Applying $\lmb_{i}\aleq(\dlt_{0})^{\frac{49}{50}}\lmb_{\secmax}$,
$\lmb_{\max}\aleq(\dlt_{0})^{-\frac{1}{50}}\lmb_{\secmax}$, and $\lmb_{\max}=o_{n\to\infty}(1)$,
we conclude \eqref{eq:case2.2-frkr_0,i-minus}. 

\uline{Proof of \mbox{\eqref{eq:case2.2-frkr_a,i}}}. Let $a\neq0$.
As in the proof of \eqref{eq:case2.1-frkr_a,i}, we have the same
bound $|\frkr_{a,i}|\aleq\lmb_{i}^{2}\lmb_{\secmax}^{D-1}\lmb_{\max}^{D}$.
Applying $\lmb_{\max}\aleq(\dlt_{0})^{-\frac{1}{50}}\lmb_{\secmax}$
and $\lmb_{\max}=o_{n\to\infty}(1)$ gives \eqref{eq:case2.2-frkr_a,i}. 
\end{proof}
\begin{prop}[Refined modulation estimates]
\label{prop:case2.2-ref-mod}We have 
\begin{equation}
\begin{aligned}\Big|\frac{\lmb_{i,t}}{\lmb_{i}}+\frac{d}{dt}o_{n\to\infty}(1)-\kpp_{0}\kpp_{\infty} & \sum_{j\in\calI_{+}\setminus\{i\}}\frac{\lmb_{i}^{D-2}\lmb_{j}^{D}}{|z_{j}-z_{i}|^{2D}}\Big|\\
 & \aleq\|g\|_{\dot{H}^{2}}^{2}+(\dlt_{0})^{\frac{49}{50}D}\lmb_{\secmax}^{D}\lmb_{\max}^{D-2}\quad\text{if }i\in\calI_{+},
\end{aligned}
\label{eq:case2.2-lmb_i-plus-ref-mod}
\end{equation}
and 
\begin{align}
\Big|\frac{\lmb_{i,t}}{\lmb_{i}}+\frac{d}{dt}o_{n\to\infty}(1)\Big| & \aleq\|g\|_{\dot{H}^{2}}^{2}+(\dlt_{0})^{\frac{49}{50}(D-2)-\frac{2}{50}}\lmb_{\secmax}^{D}\lmb_{\max}^{D-2},\quad\text{if }i\in\calI_{-},\label{eq:case2.2-lmb_i-minus-ref-mod}\\
|z_{i,t}| & \aleq\|g\|_{\dot{H}^{2}}^{2}+o_{n\to\infty}(1)\cdot\lmb_{\secmax}^{D}\lmb_{\max}^{D-1}.\label{eq:case2.2-z_i-ref-mod}
\end{align}
\end{prop}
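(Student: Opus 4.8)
The plan is to obtain Proposition~\ref{prop:case2.2-ref-mod} by substituting the leading-order expressions for $\frkr_{a,i}$ from Lemma~\ref{lem:case2.2-leading-frkr_a,i} into the general refined modulation estimate \eqref{eq:non-coll-refined-mod-est} of Lemma~\ref{lem:non-coll-ref-mod-est}, exactly as Proposition~\ref{prop:case2-modulation-est} was derived from Lemma~\ref{lem:case2.1-leading-frkr_a,i} on the earlier time interval. Recall that \eqref{eq:non-coll-refined-mod-est} reads, in the relevant components,
\[
\Big|\tfrac{\lmb_{i,t}}{\lmb_{i}}+\tfrac{\frkr_{0,i}}{\lmb_{i}^{2}}-\tfrac{\rd_{t}\lan[\Lmb W]_{\ul{;i}},g\ran}{\|\Lmb W\|_{L^{2}}^{2}}\Big|+\chf_{a\neq0}\Big|\tfrac{(z_{i}^{a})_{t}}{\lmb_{i}}+\tfrac{\frkr_{a,i}}{\lmb_{i}^{2}}\Big|\aleq\|g\|_{\dot{H}^{2}}^{2}+\lmb_{\secmax}^{D-2}\lmb_{\max}^{D-1}\|g\|_{\dot{H}^{2}}+\lmb_{\secmax}^{D-\frac{3}{2}}\lmb_{\max}^{D+\frac{3}{2}},
\]
with the auxiliary bound \eqref{eq:non-coll-refined-mod-est-corr-bound} for the correction term $\lan[\Lmb W]_{\ul{;i}},g\ran/\|\Lmb W\|_{L^{2}}^{2}$. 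So the first step is to read off from \eqref{eq:case2.2-frkr_0,i-plus} that, for $i\in\calI_{+}$, $\frkr_{0,i}/\lmb_{i}^{2}=-\kpp_0\kpp_\infty\sum_{j\in\calI_+\setminus\{i\}}\lmb_i^{D-2}\lmb_j^D/|z_j-z_i|^{2D}+(\dlt_0)^{\frac{49}{50}D}\cdot\calO(\lmb_{\secmax}^D\lmb_{\max}^{D-2})$, absorb the correction term $\rd_t\lan[\Lmb W]_{\ul{;i}},g\ran/\|\Lmb W\|_{L^2}^2$ into a $\frac{d}{dt}o_{n\to\infty}(1)$ using \eqref{eq:non-coll-refined-mod-est-corr-bound} (together with $\sup\|g\|_{\dot H^1}=o_{n\to\infty}(1)$ from \eqref{eq:case2.2-gHdot1-is-o(1)-1}), which yields \eqref{eq:case2.2-lmb_i-plus-ref-mod}.

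Second, for $i\in\calI_-$, I would plug \eqref{eq:case2.2-frkr_0,i-minus} into the same place to get \eqref{eq:case2.2-lmb_i-minus-ref-mod}; here there is no leading term, only the error $(\dlt_0)^{\frac{49}{50}(D-2)-\frac{2}{50}}\lmb_{\secmax}^D\lmb_{\max}^{D-2}$ plus the $\frac{d}{dt}o_{n\to\infty}(1)$ coming from the $g$-correction. Third, for the center equation, substitute \eqref{eq:case2.2-frkr_a,i} (the $a\neq0$ bound) into the $\chf_{a\neq0}$ part of \eqref{eq:non-coll-refined-mod-est} to obtain \eqref{eq:case2.2-z_i-ref-mod}; the main point is that the $o_{n\to\infty}(1)$ gain in \eqref{eq:case2.2-frkr_a,i} makes $\frkr_{a,i}/\lmb_i^2=o_{n\to\infty}(1)\cdot\lmb_{\secmax}^D\lmb_{\max}^{D-1}$, which dominates the other error terms on the right-hand side of \eqref{eq:non-coll-refined-mod-est}.

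The only genuinely non-routine bookkeeping is reconciling the three error terms $\|g\|_{\dot{H}^2}^2$, $\lmb_{\secmax}^{D-2}\lmb_{\max}^{D-1}\|g\|_{\dot{H}^2}$, and $\lmb_{\secmax}^{D-\frac32}\lmb_{\max}^{D+\frac32}$ in \eqref{eq:non-coll-refined-mod-est} with the claimed right-hand sides in \eqref{eq:case2.2-lmb_i-plus-ref-mod}--\eqref{eq:case2.2-z_i-ref-mod}. For this one uses Young's inequality on the cross term, $\lmb_{\secmax}^{D-2}\lmb_{\max}^{D-1}\|g\|_{\dot H^2}\aleq\|g\|_{\dot H^2}^2+\lmb_{\secmax}^{2D-4}\lmb_{\max}^{2D-2}$, then invokes $\lmb_{\max}=o_{n\to\infty}(1)$ and the relation $\lmb_{\secmax}\aeq_{\dlt_0}\lmb_{\max}$ valid on $[T_n',T_n'')$ (where $\lmb_{\secmax}(t)/\lmb_{\max}(t)>(\dlt_0)^{1/50}$, giving $\lmb_{\max}\aleq(\dlt_0)^{-1/50}\lmb_{\secmax}$) to check that $\lmb_{\secmax}^{2D-4}\lmb_{\max}^{2D-2}$ and $\lmb_{\secmax}^{D-\frac32}\lmb_{\max}^{D+\frac32}$ are both bounded by $(\dlt_0)^{\frac{49}{50}D}\cdot\lmb_{\secmax}^{D}\lmb_{\max}^{D-2}$ times a factor that is $o_{n\to\infty}(1)$ (using $\lmb_{\max}=o_{n\to\infty}(1)$ and $D\geq\frac52$). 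I expect this power-counting to be the only place where care is needed; it is entirely parallel to the argument already carried out in Corollary~\ref{cor:case2.1-mod-est-simple-form} and the proof of Proposition~\ref{prop:case2-modulation-est}, so no new ideas are involved.

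\begin{proof}
This is entirely analogous to the proof of Proposition~\ref{prop:case2-modulation-est}. We substitute the estimates of Lemma~\ref{lem:case2.2-leading-frkr_a,i} into the refined modulation estimate \eqref{eq:non-coll-refined-mod-est}.

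For $i\in\calI_{+}$, \eqref{eq:case2.2-frkr_0,i-plus} gives
\[
\frac{\frkr_{0,i}}{\lmb_{i}^{2}}=-\kpp_{0}\kpp_{\infty}\sum_{j\in\calI_{+}\setminus\{i\}}\frac{\lmb_{i}^{D-2}\lmb_{j}^{D}}{|z_{j}-z_{i}|^{2D}}+(\dlt_{0})^{\frac{49}{50}D}\cdot\calO(\lmb_{\secmax}^{D}\lmb_{\max}^{D-2}).
\]
Using \eqref{eq:non-coll-refined-mod-est-corr-bound} with $\|g\|_{\dot{H}^{1}}=o_{n\to\infty}(1)$ from \eqref{eq:case2.2-gHdot1-is-o(1)-1} and \eqref{eq:spacetime-control}, the term $\rd_{t}\lan[\Lmb W]_{\ul{;i}},g\ran/\|\Lmb W\|_{L^{2}}^{2}$ can be absorbed into a $\frac{d}{dt}o_{n\to\infty}(1)$. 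It remains to bound the error terms on the right hand side of \eqref{eq:non-coll-refined-mod-est}. By Young's inequality, $\lmb_{\secmax}^{D-2}\lmb_{\max}^{D-1}\|g\|_{\dot{H}^{2}}\aleq\|g\|_{\dot{H}^{2}}^{2}+\lmb_{\secmax}^{2D-4}\lmb_{\max}^{2D-2}$. On $[T_{n}',T_{n}'')$ we have $\lmb_{\max}\aleq(\dlt_{0})^{-\frac{1}{50}}\lmb_{\secmax}$ and $\lmb_{\max}=o_{n\to\infty}(1)$ by \eqref{eq:case2.2-Rinv-is-o(1)-1}, so
\[
\lmb_{\secmax}^{2D-4}\lmb_{\max}^{2D-2}\aleq(\dlt_{0})^{-\frac{2D-4}{50}}\lmb_{\max}^{4D-6}=o_{n\to\infty}(1)\cdot(\dlt_{0})^{-\frac{2D-4}{50}}\lmb_{\max}^{2D-2}\cdot\lmb_{\secmax}^{D}\lmb_{\max}^{D-2}\aleq(\dlt_{0})^{\frac{49}{50}D}\lmb_{\secmax}^{D}\lmb_{\max}^{D-2},
\]
using $D\geq\frac{5}{2}$. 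Similarly $\lmb_{\secmax}^{D-\frac{3}{2}}\lmb_{\max}^{D+\frac{3}{2}}\aleq\lmb_{\max}^{3}\cdot\lmb_{\secmax}^{D-\frac{3}{2}}\lmb_{\max}^{D-\frac{3}{2}}\aleq o_{n\to\infty}(1)\cdot(\dlt_{0})^{\frac{49}{50}D}\lmb_{\secmax}^{D}\lmb_{\max}^{D-2}$. This proves \eqref{eq:case2.2-lmb_i-plus-ref-mod}.

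For $i\in\calI_{-}$, we substitute \eqref{eq:case2.2-frkr_0,i-minus} instead; there is no leading term, and the error $(\dlt_{0})^{\frac{49}{50}(D-2)-\frac{2}{50}}\lmb_{\secmax}^{D}\lmb_{\max}^{D-2}$ dominates the error terms from \eqref{eq:non-coll-refined-mod-est} by the same computation as above (noting $(\dlt_{0})^{\frac{49}{50}(D-2)-\frac{2}{50}}\geq(\dlt_{0})^{\frac{49}{50}D}$ for $\dlt_{0}$ small). Absorbing the $g$-correction into $\frac{d}{dt}o_{n\to\infty}(1)$ as before gives \eqref{eq:case2.2-lmb_i-minus-ref-mod}.

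Finally, for the center equations, we substitute the $a\neq0$ bound \eqref{eq:case2.2-frkr_a,i} into the $\chf_{a\neq0}$ component of \eqref{eq:non-coll-refined-mod-est} to get
\[
\Big|\frac{(z_{i}^{a})_{t}}{\lmb_{i}}\Big|\aleq o_{n\to\infty}(1)\cdot\lmb_{\secmax}^{D}\lmb_{\max}^{D-1}+\|g\|_{\dot{H}^{2}}^{2}+\lmb_{\secmax}^{D-2}\lmb_{\max}^{D-1}\|g\|_{\dot{H}^{2}}+\lmb_{\secmax}^{D-\frac{3}{2}}\lmb_{\max}^{D+\frac{3}{2}}.
\]
Multiplying by $\lmb_{i}\aleq\lmb_{\max}$ and using $\lmb_{\secmax}^{D-2}\lmb_{\max}^{D-1}\|g\|_{\dot{H}^{2}}\aleq\|g\|_{\dot{H}^{2}}^{2}+\lmb_{\secmax}^{2D-4}\lmb_{\max}^{2D-2}$ together with the bounds $\lmb_{\secmax}^{2D-4}\lmb_{\max}^{2D-2}\aleq o_{n\to\infty}(1)\cdot\lmb_{\secmax}^{D}\lmb_{\max}^{D-1}$ and $\lmb_{\secmax}^{D-\frac{3}{2}}\lmb_{\max}^{D+\frac{3}{2}}\aleq o_{n\to\infty}(1)\cdot\lmb_{\secmax}^{D}\lmb_{\max}^{D-1}$ (valid on $[T_{n}',T_{n}'')$ since $\lmb_{\max}=o_{n\to\infty}(1)$ and $\lmb_{\secmax}\aeq_{\dlt_{0}}\lmb_{\max}$), we obtain \eqref{eq:case2.2-z_i-ref-mod}.
\end{proof}
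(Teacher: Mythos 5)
Your strategy coincides with the paper's: the proof there is exactly the substitution of Lemma~\ref{lem:case2.2-leading-frkr_a,i} and $\lmb_{\max}=o_{n\to\infty}(1)$ into \eqref{eq:non-coll-refined-mod-est}, with the $g$-correction absorbed into $\frac{d}{dt}o_{n\to\infty}(1)$ via \eqref{eq:non-coll-refined-mod-est-corr-bound}, and your final bounds are the right ones.

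Two of your intermediate displays are miswritten, however, and need repair. First, in the chain for the $\calI_{+}$ case you assert $(\dlt_{0})^{-\frac{2D-4}{50}}\lmb_{\max}^{4D-6}=o_{n\to\infty}(1)\cdot(\dlt_{0})^{-\frac{2D-4}{50}}\lmb_{\max}^{2D-2}\cdot\lmb_{\secmax}^{D}\lmb_{\max}^{D-2}$; the middle expression carries total scale-power $4D-4$ while the left-hand side has $4D-6$, so this step is off by a factor $\aeq\lmb_{\max}^{-2}$ and is false as written. The correct (and shorter) route is to compute the ratio directly: $\lmb_{\secmax}^{2D-4}\lmb_{\max}^{2D-2}\big/\big(\lmb_{\secmax}^{D}\lmb_{\max}^{D-2}\big)=\lmb_{\secmax}^{D-4}\lmb_{\max}^{D}\aeq_{\dlt_{0}}\lmb_{\max}^{2D-4}$ on $[T_{n}',T_{n}'')$, which is $\leq(\dlt_{0})^{\frac{49}{50}D}$ for $n$ large depending on $\dlt_{0}$ since $2D-4=N-6\geq1$ and $\lmb_{\max}=o_{n\to\infty}(1)$. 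Second, in the center equation you invoke $\lmb_{\secmax}^{2D-4}\lmb_{\max}^{2D-2}\aleq o_{n\to\infty}(1)\cdot\lmb_{\secmax}^{D}\lmb_{\max}^{D-1}$, which fails when $N=7$: there the ratio is $\lmb_{\secmax}^{-3/2}\lmb_{\max}^{3/2}\aeq_{\dlt_{0}}1$, not $o_{n\to\infty}(1)$. The fix is not to discard the factor $\lmb_{i}\aleq\lmb_{\max}$ you multiplied by: the term to bound is $\lmb_{i}\lmb_{\secmax}^{2D-4}\lmb_{\max}^{2D-2}$, whose ratio to $\lmb_{\secmax}^{D}\lmb_{\max}^{D-1}$ is $\aleq_{\dlt_{0}}\lmb_{\max}^{2D-4}=o_{n\to\infty}(1)$, giving \eqref{eq:case2.2-z_i-ref-mod}. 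With these two corrections your argument is exactly the paper's.
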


\begin{proof}
This easily follows from substituting Lemma~\ref{lem:case2.2-leading-frkr_a,i}
and $\lmb_{\max}=o_{n\to\infty}(1)$ into \eqref{eq:non-coll-refined-mod-est}.
\end{proof}
\begin{cor}[Modulation estimates in simple form integrated from $T_{n}'$]
There exist functions $\zeta_{i}(t)$ and $\xi_{i}(t)$ on $[T_{n}',T_{n}'')$
such that 
\begin{align}
\zeta_{i}(t) & =\lmb_{i}(t)\cdot(1+o_{n\to\infty}(1)),\label{eq:case2.2-zeta-ptwise}\\
\xi_{i}(t) & =z_{i}(t)+o_{n\to\infty}(1),\label{eq:case2.2-xi-ptwise}
\end{align}
and 
\begin{align}
\frac{\zeta_{i,t}}{\zeta_{i}} & =\sum_{j\in\calI_{+}\setminus\{i\}}\kpp_{0}\kpp_{\infty}\frac{\zeta_{i}^{D-2}\zeta_{j}^{D}}{|z_{j}-z_{i}|^{2D}}+\calO((\dlt_{0})^{\frac{49}{50}D}\zeta_{\secmax}^{D}\zeta_{\max}^{D-2})\quad\text{if }i\in\calI_{+},\label{eq:case2.2-zeta-positive-diff}\\
\Big|\frac{\zeta_{i,t}}{\zeta_{i}}\Big| & \aleq(\dlt_{0})^{\frac{49}{50}(D-2)-\frac{2}{50}}\zeta_{\secmax}^{D}\zeta_{\max}^{D-2}\quad\text{if }i\in\calI_{-},\label{eq:case2.2-zeta-negative-diff}\\
|\vec{\xi}_{t}| & \aleq o_{n\to\infty}(1)\cdot\zeta_{\secmax}^{D}\zeta_{\max}^{D-1}.\label{eq:case2.2-xi-diff}
\end{align}
\end{cor}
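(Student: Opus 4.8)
The plan is to define $\zeta_i$ and $\xi_i$ by integrating away the two error contributions that cannot be directly controlled pointwise in time, exactly as in Corollary~\ref{cor:case2.1-mod-est-simple-form}. Namely, looking at the refined modulation estimates of Proposition~\ref{prop:case2.2-ref-mod}, the left-hand sides contain a term $\frac{d}{dt}o_{n\to\infty}(1)$ (coming from $\frac{d}{dt}$ of the correction $\frac{\lan[\Lmb W]_{\ul{;i}},g\ran}{\|\Lmb W\|_{L^2}^2}$ in \eqref{eq:non-coll-refined-mod-est}) and the right-hand sides contain a term $\calO(\|g\|_{\dot{H}^2}^2)$. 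Both are genuinely problematic: the first is an exact derivative but not obviously small pointwise, and the second is not pointwise small but is time-integrable with $o_{n\to\infty}(1)$ total mass by \eqref{eq:case2-spacetime-est} (which applies on $[t_n,T_n)\supseteq[T_n',T_n'')$). So I would set
\[
\zeta_i(t)=\lmb_i(t)\exp\Big(\eta_i(t)+\int_{T_n'}^{t}\calO(\|g(\tau)\|_{\dot{H}^2}^2)\,d\tau\Big),
\]
where $\eta_i(t)$ is the $o_{n\to\infty}(1)$ correction term appearing on the left side of \eqref{eq:case2.2-lmb_i-plus-ref-mod}/\eqref{eq:case2.2-lmb_i-minus-ref-mod} and the integrand is the $\calO(\|g\|_{\dot{H}^2}^2)$ term from the right side. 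The factor $\int_{T_n'}^t\calO(\|g\|_{\dot{H}^2}^2)d\tau$ is $o_{n\to\infty}(1)$ by \eqref{eq:case2-spacetime-est}, and $\eta_i(t)=o_{n\to\infty}(1)$ by \eqref{eq:non-coll-refined-mod-est-corr-bound} together with $\|g(t)\|_{\dot{H}^1}=o_{n\to\infty}(1)$ from \eqref{eq:case2.2-gHdot1-is-o(1)-1}; hence the exponential is $1+o_{n\to\infty}(1)$, giving \eqref{eq:case2.2-zeta-ptwise}. Similarly, $\xi_i(t)=z_i(t)+\int_{T_n'}^t\calO(\|g(\tau)\|_{\dot{H}^2}^2)\,d\tau$ satisfies \eqref{eq:case2.2-xi-ptwise} by the same reasoning, after writing \eqref{eq:case2.2-z_i-ref-mod} in the form $z_{i,t}=\calO(\|g\|_{\dot{H}^2}^2)+\calO(\cdots)$.

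Next I would differentiate. By construction $\frac{\zeta_{i,t}}{\zeta_i}=\frac{\lmb_{i,t}}{\lmb_i}+\eta_{i,t}+\calO(\|g\|_{\dot{H}^2}^2)$, and this exactly cancels both troublesome terms in \eqref{eq:case2.2-lmb_i-plus-ref-mod}: for $i\in\calI_+$ we obtain
\[
\frac{\zeta_{i,t}}{\zeta_i}=\kpp_0\kpp_\infty\sum_{j\in\calI_+\setminus\{i\}}\frac{\lmb_i^{D-2}\lmb_j^{D}}{|z_j-z_i|^{2D}}+\calO\big((\dlt_0)^{\frac{49}{50}D}\lmb_{\secmax}^{D}\lmb_{\max}^{D-2}\big).
\]
It then remains only to replace $\lmb_i,\lmb_j,\lmb_{\secmax},\lmb_{\max}$ by $\zeta_i,\zeta_j,\zeta_{\secmax},\zeta_{\max}$ inside both the main term and the error; this is legitimate because each ratio $\zeta/\lmb=1+o_{n\to\infty}(1)$ by \eqref{eq:case2.2-zeta-ptwise}, and $\zeta_{\secmax}$ (resp.\ $\zeta_{\max}$) is comparable to $\lmb_{\secmax}$ (resp.\ $\lmb_{\max}$) since all the $\zeta_i/\lmb_i$ are uniformly close to $1$, so the orderings are preserved up to a negligible multiplicative factor. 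The resulting $1+o_{n\to\infty}(1)$ factors on the main term can be absorbed: on the interval $[T_n',T_n'')$ we have $\lmb_{\secmax}>(\dlt_0)^{1/50}\lmb_{\max}$, so $\sum_{j\in\calI_+\setminus\{i\}}\frac{\zeta_i^{D-2}\zeta_j^D}{|z_j-z_i|^{2D}}\aeq_{\dlt_0}\zeta_{\secmax}^D\zeta_{\max}^{D-2}$ (using $|z_j-z_i|\aeq 1$), and the $o_{n\to\infty}(1)$ relative error on it is dominated by the existing $\calO((\dlt_0)^{\frac{49}{50}D}\zeta_{\secmax}^D\zeta_{\max}^{D-2})$ term for $n$ large; this yields \eqref{eq:case2.2-zeta-positive-diff}. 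For $i\in\calI_-$, \eqref{eq:case2.2-lmb_i-minus-ref-mod} has no main term, so the same cancellation directly gives $\big|\frac{\zeta_{i,t}}{\zeta_i}\big|\aleq(\dlt_0)^{\frac{49}{50}(D-2)-\frac{2}{50}}\lmb_{\secmax}^D\lmb_{\max}^{D-2}$, and swapping $\lmb$ for $\zeta$ gives \eqref{eq:case2.2-zeta-negative-diff}. Finally, $\xi_{i,t}=z_{i,t}+\calO(\|g\|_{\dot{H}^2}^2)$ combined with \eqref{eq:case2.2-z_i-ref-mod} gives $|\xi_{i,t}|\aleq o_{n\to\infty}(1)\cdot\lmb_{\secmax}^D\lmb_{\max}^{D-1}$, hence \eqref{eq:case2.2-xi-diff} after replacing $\lmb$ by $\zeta$.

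I do not expect a serious obstacle here; this corollary is an essentially mechanical repackaging of Proposition~\ref{prop:case2.2-ref-mod}, mirroring Corollary~\ref{cor:case2.1-mod-est-simple-form} on the earlier interval $[t_n,T_n')$. The only point requiring a little care is the bookkeeping of constants that depend on $\dlt_0$: one must check that replacing $\lmb$-quantities by $\zeta$-quantities in the \emph{main} term of \eqref{eq:case2.2-zeta-positive-diff} produces a relative error that is genuinely absorbed into the stated error term — this uses the lower bound $\lmb_{\secmax}>(\dlt_0)^{1/50}\lmb_{\max}$ valid on $[T_n',T_n'')$ so that the main term is $\aeq_{\dlt_0}\zeta_{\secmax}^D\zeta_{\max}^{D-2}$, and then the $o_{n\to\infty}(1)$ factor beats any fixed power of $\dlt_0$ once $n$ is large. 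One should also note that the spacetime bound \eqref{eq:case2-spacetime-est} is stated on $[t_n,T_n)$, which contains $[T_n',T_n'')$ since $T_n'\le T_n''\le T_n$, so no new integrability input is needed.
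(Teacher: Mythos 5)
Your proof is correct and is essentially the paper's own argument: the paper simply states that the proof of Corollary~\ref{cor:case2.1-mod-est-simple-form} works the same with Proposition~\ref{prop:case2.2-ref-mod} in place of Proposition~\ref{prop:case2-modulation-est}, i.e., one absorbs the exact-derivative correction and the $\calO(\|g\|_{\dot{H}^{2}}^{2})$ error into $\zeta_{i}$, $\xi_{i}$ via the spacetime bound \eqref{eq:case2-spacetime-est} and then swaps $\lmb$-quantities for $\zeta$-quantities. One nitpick: for the absorption step you only need the one-sided bound (main term $\aleq(\dlt_{0})^{-D/50}\zeta_{\secmax}^{D}\zeta_{\max}^{D-2}$, since $\lmb_{i}$ for $i\in\calI_{+}$ need not be $\ageq\lmb_{\secmax}$), not the two-sided comparability you assert, but your stated mechanism that the $o_{n\to\infty}(1)$ factor beats any fixed power of $\dlt_{0}$ is exactly what closes it.
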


\begin{proof}
The proof of Corollary~\ref{cor:case2.1-mod-est-simple-form} works
the same; one uses Proposition~\ref{prop:case2.2-ref-mod} instead
of Proposition~\ref{prop:case2-modulation-est}.
\end{proof}
Having established refined modulation estimates, we now close all
the bootstrap hypotheses used in the definition of $T_{n}$ and $T_{n}''$.
The key to control the dynamics of parameters is the following.
\begin{lem}[Key inequalities]
We have 
\begin{align}
\rd_{t}(\tsum{j\in\calI_{+}}{}\zeta_{j}) & \aeq\zeta_{\secmax}^{D-1}\zeta_{\max}^{D},\label{eq:case2.2-key-diff-ineq-1}\\
\rd_{t}(\tsum{j\in\calI_{+}\setminus\{i\}}{}\zeta_{j}^{2}) & \aeq\zeta_{\secmax}^{D}\zeta_{\max}^{D}\aeq\rd_{t}(\tsum{j\in\calI_{+}}{}\zeta_{j}^{2}),\qquad\forall i\in\calI_{+}.\label{eq:case2.2-key-diff-ineq-2}
\end{align}
\end{lem}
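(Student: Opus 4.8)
The plan is to establish the two "key inequalities" by differentiating the relevant sums of $\zeta_j$'s and using the refined modulation estimates together with the sign structure on the interval $[T_n',T_n'')$, where $\lmb_{\secmax}>(\dlt_0)^{1/50}\lmb_{\max}$ and (after replacing $\lmb$ by $\zeta$, $z$ by $\xi$) both $\zeta_{\max}$ and $\zeta_{\secmax}$ come from indices in $\calI_+$. First I would note that on this interval $\zeta_{\max}\aeq_{\dlt_0}\zeta_{\secmax}$, and crucially, for $i\in\calI_+$ the nonlinear interaction term in \eqref{eq:case2.2-zeta-positive-diff} is a sum of \emph{strictly positive} quantities $\kpp_0\kpp_\infty\zeta_i^{D-2}\zeta_j^D/|z_j-z_i|^{2D}$ over $j\in\calI_+\setminus\{i\}$ (nonempty since $\calI_+\supsetneq\{i_0\}$), so each $\zeta_i$ with $i\in\calI_+$ is increasing up to acceptable errors.

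For \eqref{eq:case2.2-key-diff-ineq-1}, I would write $\rd_t(\sum_{j\in\calI_+}\zeta_j)=\sum_{j\in\calI_+}\zeta_{j,t}$ and substitute \eqref{eq:case2.2-zeta-positive-diff}, giving a main term $\sum_{j\in\calI_+}\zeta_j\cdot\sum_{k\in\calI_+\setminus\{j\}}\kpp_0\kpp_\infty\zeta_j^{D-2}\zeta_k^D/|z_j-z_k|^{2D}$ plus an error of size $\sum_{j\in\calI_+}\zeta_j\cdot\calO((\dlt_0)^{\frac{49}{50}D}\zeta_{\secmax}^D\zeta_{\max}^{D-2})$. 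Using $\zeta_j\aeq_{\dlt_0}\zeta_{\max}$ for the two largest indices (both in $\calI_+$) and $|z_j-z_k|\aeq 1$ (distinct $z_i^\ast$, $\dlt_2$ small), the main term is $\ageq_{\dlt_0}\zeta_{\secmax}^{D-1}\zeta_{\max}^D$; choosing $\dlt_0$ small the error is absorbed, giving $\rd_t(\sum_{\calI_+}\zeta_j)\aeq\zeta_{\secmax}^{D-1}\zeta_{\max}^D$ (the $\aleq$ direction is immediate from the same expansion). For \eqref{eq:case2.2-key-diff-ineq-2}, I would compute $\rd_t(\sum_{j\in\calI_+\setminus\{i\}}\zeta_j^2)=2\sum_{j\in\calI_+\setminus\{i\}}\zeta_j^2\cdot(\zeta_{j,t}/\zeta_j)$ and substitute \eqref{eq:case2.2-zeta-positive-diff} again; the leading term is $2\sum_{j\in\calI_+\setminus\{i\}}\sum_{k\in\calI_+\setminus\{j\}}\kpp_0\kpp_\infty\zeta_j^D\zeta_k^D/|z_j-z_k|^{2D}$, which is $\aeq_{\dlt_0}\zeta_{\secmax}^D\zeta_{\max}^D$ since the pair of the two largest scales (which lie in $\calI_+$, and at least one of which is $\neq i$) contributes a term of that size while all terms are nonnegative; the error contributes $\sum_j\zeta_j^2\cdot\calO((\dlt_0)^{\frac{49}{50}D}\zeta_{\secmax}^D\zeta_{\max}^{D-2})=\calO((\dlt_0)^{\frac{49}{50}D}\zeta_{\max}^2\zeta_{\secmax}^D\zeta_{\max}^{D-2})$, absorbed for small $\dlt_0$. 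The same computation without removing $i$ gives the identical bound for $\rd_t(\sum_{j\in\calI_+}\zeta_j^2)$, establishing the chain of $\aeq$'s.

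The one point requiring a little care is verifying that removing a single index $i$ from $\calI_+$ in \eqref{eq:case2.2-key-diff-ineq-2} does not destroy the lower bound: this is where $|\calI_+|\geq 2$ alone would be insufficient, but in fact $\calI_+\supsetneq\{i_0\}$ together with the fact that on $[T_n',T_n'')$ the two largest scales arise in $\calI_+$ means $\calI_+\setminus\{i\}$ still contains at least one index realizing $\aeq_{\dlt_0}\zeta_{\max}$, and the pair consisting of that index and any other index of comparable (or second-largest) scale in $\calI_+\setminus\{i\}$ — which exists since $|\calI_+\setminus\{i\}|\geq 1$ and $\zeta_{\secmax}\aeq_{\dlt_0}\zeta_{\max}$ forces at least two $\calI_+$-indices of size $\aeq_{\dlt_0}\zeta_{\max}$, at most one of which is $i$ — gives a term $\ageq_{\dlt_0}\zeta_{\secmax}^D\zeta_{\max}^D=\zeta_{\max}^{2D}$. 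So the main obstacle is purely bookkeeping of which indices survive and realize the extremal scales; once that is pinned down, both inequalities follow by the positivity of the interaction terms and absorbing the $(\dlt_0)^{\text{power}}$-errors by taking $\dlt_0$ small, exactly as in the analogous estimate \eqref{eq:case2-key-diff-ineq} on $[t_n,T_n')$.
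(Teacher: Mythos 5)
Your overall strategy (differentiate, insert \eqref{eq:case2.2-zeta-positive-diff}, use positivity of the $\calI_{+}$-interactions, absorb the $(\dlt_{0})$-power error) is the same as the paper's, and for \eqref{eq:case2.2-key-diff-ineq-1} it goes through. But for \eqref{eq:case2.2-key-diff-ineq-2} your argument has a genuine gap in exactly the case the lemma is designed to handle. After expanding, the main term is $2\sum_{j\in\calI_{+}\setminus\{i\}}\sum_{k\in\calI_{+}\setminus\{j\}}\kpp_{0}\kpp_{\infty}\zeta_{j}^{D}\zeta_{k}^{D}/|z_{j}-z_{k}|^{2D}$, and the whole point is that only the \emph{outer} index excludes $i$: the inner index $k$ ranges over $\calI_{+}\setminus\{j\}$ and is therefore allowed to equal $i$. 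Hence the extremal pair (the indices realizing $\zeta_{\max}$ and $\zeta_{\secmax}$, both of which lie in $\calI_{+}$ on $[T_{n}',T_{n}'')$) always appears in the double sum — if $i$ happens to be the maximal index, take $j$ to be the second-maximal one and $k=i$ — giving a term $\aeq\zeta_{\secmax}^{D}\zeta_{\max}^{D}$ using only $|\calI_{+}|\geq2$ and $|z_{j}-z_{k}|\aeq1$. Your "point requiring a little care" instead tries to find a pair of comparable indices \emph{inside} $\calI_{+}\setminus\{i\}$, and that pair need not exist: if $|\calI_{+}|=2$ and $i$ is the index of maximal scale, then $\calI_{+}\setminus\{i\}$ is a singleton, so the construction you describe fails outright (your claim that $|\calI_{+}\setminus\{i\}|\geq1$ suffices does not produce a second index, and "$\zeta_{\secmax}^{D}\zeta_{\max}^{D}=\zeta_{\max}^{2D}$" is not an identity).

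A second, related defect is that your bounds are stated as $\aeq_{\dlt_{0}}$, i.e.\ with constants depending on $\dlt_{0}$, obtained by trading $\zeta_{\secmax}$ for $(\dlt_{0})^{1/50}\zeta_{\max}$. The lemma asserts $\aeq$ with constants independent of $\dlt_{0}$ (the paper's proof stresses this), and this uniformity is not cosmetic: \eqref{eq:case2.2-key-diff-ineq-2} is used in the proof of \eqref{eq:case2.2-max2-ratio-control} to compare $\rd_{t}(\tsum{j\in\calI_{+}\setminus\{i\}}{}\zeta_{j}^{2})$ with $\rd_{t}(\tsum{j\in\calI_{+}}{}\zeta_{j}^{2})$ and conclude $\lmb_{\secmax}/\lmb_{\max}\ageq(\dlt_{0})^{1/100}$, which must strictly beat the bootstrap threshold $(\dlt_{0})^{1/50}$ defining $T_{n}''$. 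A lower-bound constant degenerating like a power $(\dlt_{0})^{cD}$ (which is what your substitution produces) would fail to close that bootstrap. The extremal-pair observation above yields the required $\dlt_{0}$-independent constants at no extra cost; the error term $(\dlt_{0})^{\frac{49}{50}D}\zeta_{\secmax}^{D}\zeta_{\max}^{D-2}\cdot\zeta_{j}^{2}\aleq(\dlt_{0})^{\frac{49}{50}D}\zeta_{\secmax}^{D}\zeta_{\max}^{D}$ is then absorbed for $\dlt_{0}$ small, exactly as you intend.
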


\begin{proof}
This follows from \eqref{eq:case2.2-zeta-positive-diff}. For the
proof of \eqref{eq:case2.2-key-diff-ineq-2}, notice that $|\calI_{+}|\geq2$
implies 
\[
\tsum{j\in\calI_{+}\setminus\{i\}}{}\tsum{k\in\calI_{+}\setminus\{j\}}{}\zeta_{j}^{D}\zeta_{k}^{D}\aeq\zeta_{\secmax}^{D}\zeta_{\max}^{D}
\]
with the implicit constant independent of $\dlt_{0}$.
\end{proof}
\begin{lem}
We have 
\begin{align}
\sup_{t\in[T_{n}',T_{n}'')}|\vec{z}(t)-\vec{z}^{\ast}| & =o_{n\to\infty}(1),\label{eq:case2.2-z-control}\\
\inf_{t\in[T_{n}',T_{n}'')}\lmb_{\max}(t) & \ageq\lmb_{\max}(T_{n}'),\label{eq:case2.2-max-control}\\
\sup_{t\in[T_{n}',T_{n}'')}\frac{\lmb_{\secmax}(t)}{\lmb_{\max}(t)} & \ageq(\dlt_{0})^{\frac{1}{100}},\label{eq:case2.2-max2-ratio-control}\\
\sup_{t\in[T_{n}',T_{n}'')}\frac{\sum_{j\in\calI_{-}}\lmb_{j}(t)}{\lmb_{\max}(t)} & =o_{n\to\infty}(1).\label{eq:case2.2-minus-ratio-control}
\end{align}
\end{lem}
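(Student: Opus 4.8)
The plan is to integrate the refined modulation estimates \eqref{eq:case2.2-zeta-positive-diff}--\eqref{eq:case2.2-xi-diff} from $T_n'$ forward, using the key monotonicity inequalities \eqref{eq:case2.2-key-diff-ineq-1}--\eqref{eq:case2.2-key-diff-ineq-2} to control the various error integrals, exactly in parallel with the treatment of $[t_n,T_n')$ in the previous subsection. By \eqref{eq:case2.2-zeta-ptwise}--\eqref{eq:case2.2-xi-ptwise} we may freely replace $\lmb_i,z_i$ by $\zeta_i,\xi_i$ throughout. The crucial observation is that on $[T_n',T_n'')$ the two largest scales come from $\calI_+$ (since $\sum_{j\in\calI_-}\lmb_j \aleq \dlt_0\lmb_{\max}$ while $\lmb_{\secmax}\geq(\dlt_0)^{1/50}\lmb_{\max}$ with $|\calI_+|\geq 2$), so $\zeta_{\max}$ and $\zeta_{\secmax}$ are \emph{increasing} by \eqref{eq:case2.2-zeta-positive-diff}. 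This sign information, absent on $[t_n,T_n')$, is what makes the bootstrap close.

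First I would prove \eqref{eq:case2.2-max-control}: pick $i\in\calI_+$ with $\zeta_i=\zeta_{\max}$ at the relevant time; by \eqref{eq:case2.2-zeta-positive-diff} the leading term is nonnegative and the error is lower-order by \eqref{eq:case2.2-key-diff-ineq-1} (integrating $\zeta_{\secmax}^{D-1}\zeta_{\max}^D$ against $\rd_t(\sum_{\calI_+}\zeta_j)$ gives a bound by $\zeta_{\max}(T_n'')-\zeta_{\max}(T_n')$, hence a small multiple of $\zeta_{\max}$), so no scale in $\calI_+$ can drop below $\zeta_{\max}(T_n')\cdot(1-o_{n\to\infty}(1))$; since $\lmb_{\max}$ is attained in $\calI_+$ this gives the claim. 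Next, \eqref{eq:case2.2-z-control} follows by integrating \eqref{eq:case2.2-xi-diff}: the right-hand side $\zeta_{\secmax}^D\zeta_{\max}^{D-1}$ is controlled by $\rd_t(\sum_{\calI_+}\zeta_j^2)$ up to constants via \eqref{eq:case2.2-key-diff-ineq-2}, which integrates to $\calO(\sum_{\calI_+}\zeta_j^2)\big|_{T_n'}^{T_n''} = o_{n\to\infty}(1)$ using $\lmb_{\max}(t)=o_{n\to\infty}(1)$ from \eqref{eq:case2.2-Rinv-is-o(1)-1}; combined with $|\vec\xi(T_n')-\vec z^\ast|=o_{n\to\infty}(1)$ this closes it. For \eqref{eq:case2.2-minus-ratio-control}: for $i\in\calI_-$, \eqref{eq:case2.2-zeta-negative-diff} gives $|\zeta_{i,t}/\zeta_i|\aleq(\dlt_0)^{\ast}\zeta_{\secmax}^D\zeta_{\max}^{D-2}$ which integrates against $\rd_t(\sum_{\calI_+\setminus\{i'\}}\zeta_j^2)$ (for any fixed $i'\in\calI_+$ with $\zeta_{i'}=\zeta_{\max}$, using $|\calI_+|\geq 2$ so $\zeta_{\secmax}$ is still attained among $\calI_+\setminus\{i'\}$ — or simply $\sum_{\calI_+}\zeta_j^2$) to a bound $o_{n\to\infty}(1)$; hence $\zeta_i(t)\leq\zeta_i(T_n')\cdot(1+o_{n\to\infty}(1))$, and dividing by $\zeta_{\max}(t)\ageq\zeta_{\max}(T_n')$ from \eqref{eq:case2.2-max-control} together with $\sum_{\calI_-}\lmb_j(T_n')/\lmb_{\max}(T_n')=o_{n\to\infty}(1)$ finishes it.

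The remaining estimate \eqref{eq:case2.2-max2-ratio-control} is the main point and the one requiring the sign structure most essentially. The idea is that both $\zeta_{\max}$ and $\zeta_{\secmax}$ grow, but $\zeta_{\secmax}$ cannot grow \emph{too slowly relative to} $\zeta_{\max}$, because the growth of $\zeta_{\max}$ is driven (via the $j=\secmax$ term in \eqref{eq:case2.2-zeta-positive-diff}) by $\zeta_{\secmax}$ itself. Concretely, writing $i_1,i_2\in\calI_+$ for the indices realizing $\zeta_{\max},\zeta_{\secmax}$ at $T_n'$, one compares $\rd_t\log\zeta_{i_2}$ and $\rd_t\log\zeta_{i_1}$ using \eqref{eq:case2.2-zeta-positive-diff}: the leading term of $\rd_t\log\zeta_{i_1}$ is $\aeq\zeta_{i_2}^D\zeta_{i_1}^{D-2}$ while that of $\rd_t\log\zeta_{i_2}$ is $\ageq\zeta_{i_1}^D\zeta_{i_2}^{D-2}\ageq\zeta_{i_2}^D\zeta_{i_1}^{D-2}\cdot(\zeta_{i_1}/\zeta_{i_2})^2$; the errors are $(\dlt_0)^{\ast}$-small relative to these by the above. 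Thus $\rd_t\log(\zeta_{i_2}/\zeta_{i_1})\ageq -\calO((\dlt_0)^{\ast})\zeta_{i_2}^D\zeta_{i_1}^{D-2}$, or better, and integrating against $\rd_t\log\zeta_{i_1}$ (whose total variation is finite-ish via \eqref{eq:case2.2-key-diff-ineq-1} after dividing by $\zeta_{i_1}\ageq\zeta_{\max}(T_n')$) keeps $\zeta_{i_2}/\zeta_{i_1}$ from decreasing below, say, $\tfrac12(\dlt_0)^{1/100}$; since at $T_n'$ the ratio is $\aeq(\dlt_0)^{1/100}$, we get $\lmb_{\secmax}/\lmb_{\max}\geq(\dlt_0)^{1/100}$ on all of $[T_n',T_n'')$, with room to spare below the threshold $(\dlt_0)^{1/50}$ defining $T_n''$. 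Carrying out this comparison cleanly — making sure the $\dlt_0$-powers in the error terms of \eqref{eq:case2.2-zeta-positive-diff}--\eqref{eq:case2.2-zeta-negative-diff} are genuinely dominated after the logarithmic change of variables — is the delicate step; everything else is a routine integration against the monotone quantities of \eqref{eq:case2.2-key-diff-ineq-1}--\eqref{eq:case2.2-key-diff-ineq-2}. Once \eqref{eq:case2.2-max2-ratio-control} holds, it strictly beats the exit threshold, and \eqref{eq:case2.2-z-control}, \eqref{eq:case2.2-max-control}, \eqref{eq:case2.2-minus-ratio-control} together with \eqref{eq:case2.2-Rinv-is-o(1)-1} show $T_n''=T_n$; but then on $[T_n',T_n)$ the index $i\in\calI_+\setminus\{i_0\}$ with $\zeta_i$ growing (by \eqref{eq:case2.2-zeta-positive-diff} with nonnegative leading term) forces $\lmb_i(t)\ageq\lmb_i(T_n')\aeq(\dlt_0)^{1/100}\lmb_{\max}(T_n')\ageq(\dlt_0)^{1/100}\lmb_{i_0}(T_n')$ while $\lmb_{i_0}$ also stays bounded below, contradicting $R^{-1}\to 0$. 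This contradiction yields $T_n'=T_n$, and Corollary~\ref{cor:case2-closing-bootstrap} then applies.
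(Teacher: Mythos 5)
Your overall architecture (integrate the refined modulation estimates on $[T_{n}',T_{n}'')$ against the monotone quantities \eqref{eq:case2.2-key-diff-ineq-1}--\eqref{eq:case2.2-key-diff-ineq-2}) is the right one, and your argument for \eqref{eq:case2.2-max-control} is essentially the paper's. But there is a genuine gap in how you treat the lower-order integrands, and it breaks your proofs of \eqref{eq:case2.2-minus-ratio-control} and, as written, of \eqref{eq:case2.2-z-control}. Only $\int\zeta_{\secmax}^{D-1}\zeta_{\max}^{D}dt$ and $\int\zeta_{\secmax}^{D}\zeta_{\max}^{D}dt$ are controlled (being comparable to the increments of $\sum_{\calI_{+}}\zeta_{j}$ and $\sum_{\calI_{+}}\zeta_{j}^{2}$, hence $o_{n\to\infty}(1)$ by \eqref{eq:case2.2-Rinv-is-o(1)-1}). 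The integrand $\zeta_{\secmax}^{D}\zeta_{\max}^{D-2}$ from \eqref{eq:case2.2-zeta-negative-diff} exceeds $\rd_{t}(\sum_{\calI_{+}}\zeta_{j}^{2})\aeq\zeta_{\secmax}^{D}\zeta_{\max}^{D}$ by the factor $\zeta_{\max}^{-2}$, and dividing by $\inf\zeta_{\max}\ageq\lmb_{\max}(T_{n}')$ only yields a bound by a power of the growth factor $\lmb_{\max}(t)/\lmb_{\max}(T_{n}')$, which is \emph{not} bounded on $[T_{n}',T_{n}'')$: $\lmb_{\max}$ can grow by an arbitrarily large factor before $T_{n}''$. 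Indeed all one can say is $\int\zeta_{\secmax}^{D}\zeta_{\max}^{D-2}dt\aleq\int\rd_{t}\log(\sum_{\calI_{+}}\zeta_{j})\,dt$, i.e.\ the logarithm of the growth factor. Consequently your intermediate claim $\zeta_{i}(t)\le\zeta_{i}(T_{n}')(1+o_{n\to\infty}(1))$ for $i\in\calI_{-}$ is unjustified (the $\calI_{-}$ scales may themselves grow), and \eqref{eq:case2.2-minus-ratio-control} does not follow by dividing absolute bounds. The paper instead compares growth rates: since the error $\dlt_{0}^{\ast}\zeta_{\secmax}^{D}\zeta_{\max}^{D-2}$ is dominated by $\rd_{t}\log\sum_{\calI_{+}}\zeta_{j}\aeq\zeta_{\secmax}^{D-1}\zeta_{\max}^{D-1}$, the ratio $\sum_{\calI_{-}}\zeta_{j}/\sum_{\calI_{+}}\zeta_{j}$ is monotone decreasing, and its smallness at $T_{n}'$ propagates. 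Similarly, for \eqref{eq:case2.2-z-control} your comparison of $\zeta_{\secmax}^{D}\zeta_{\max}^{D-1}$ with $\rd_{t}(\sum_{\calI_{+}}\zeta_{j}^{2})$ is off by a factor $\zeta_{\max}^{-1}$; the correct (and one-line) fix is the paper's: $\zeta_{\secmax}^{D}\zeta_{\max}^{D-1}\le\zeta_{\secmax}^{D-1}\zeta_{\max}^{D}\aeq\rd_{t}(\sum_{\calI_{+}}\zeta_{j})$, whose integral is $\aleq\lmb_{\max}=o_{n\to\infty}(1)$.

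For \eqref{eq:case2.2-max2-ratio-control}, your pairwise comparison of the indices $i_{1},i_{2}$ realizing $\lmb_{\max},\lmb_{\secmax}$ at $T_{n}'$ captures a correct mechanism (when $r=\zeta_{i_{2}}/\zeta_{i_{1}}$ is small, $\rd_{t}\log r\ageq\zeta_{i_{1}}^{2D-2}(cr^{D-2}-Cr^{D}-C\dlt_{0}^{\ast}r^{D-2})>0$, so $r$ cannot decrease), but as you acknowledge it is incomplete: the identities of the maximal and second-maximal indices may change on $[T_{n}',T_{n}'')$, so a bound for the fixed pair $(i_{1},i_{2})$ does not control $\lmb_{\secmax}(t)/\lmb_{\max}(t)$ if a third index of $\calI_{+}$ overtakes; and your appeal to the ``total variation'' of $\log\zeta_{i_{1}}$ is unavailable, since it is unbounded for the same reason as above. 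The paper sidesteps both issues with a sum-of-squares comparison: by \eqref{eq:case2.2-key-diff-ineq-2} the increments of $\sum_{\calI_{+}\setminus\{i\}}\zeta_{j}^{2}$ and of $\sum_{\calI_{+}}\zeta_{j}^{2}$ are comparable uniformly in $i\in\calI_{+}$, their ratio at $T_{n}'$ is $\ageq\dlt_{0}^{1/50}$, and choosing $i$ to be the \emph{current} maximal index gives $\lmb_{\secmax}^{2}(t)\ageq\dlt_{0}^{1/50}\lmb_{\max}^{2}(t)$ at every $t$. You should either adopt this or prove your pairwise inequality for the time-dependent pair of largest indices; as it stands, this step and \eqref{eq:case2.2-minus-ratio-control} are not established.
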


\begin{proof}
\uline{Proof of \mbox{\eqref{eq:case2.2-z-control}}}. This follows
from \eqref{eq:case2.2-xi-diff} and \eqref{eq:case2.2-key-diff-ineq-1}:
\[
|\vec{\xi}(t)-\vec{z}^{\ast}|\leq|\vec{\xi}(T_{n}')-\vec{z}^{\ast}|+o_{n\to\infty}(1)\cdot\tint{T_{n}'}{T_{n}''}\zeta_{\secmax}^{D-1}\zeta_{\max}^{D}dt=o_{n\to\infty}(1).
\]

\uline{Proof of \mbox{\eqref{eq:case2.2-max-control}}}. $\tsum{j\in\calI_{+}}{}\zeta_{j}(t)$
is increasing by \eqref{eq:case2.2-key-diff-ineq-1}. As $\tsum{j\in\calI_{+}}{}\zeta_{j}\aeq\lmb_{\max}$,
we get \eqref{eq:case2.2-max-control}.

\uline{Proof of \mbox{\eqref{eq:case2.2-max2-ratio-control}}}.
For any $i\in\calI_{+}$ and $t\in[T_{n}',T_{n}'')$, we use \eqref{eq:case2.2-key-diff-ineq-2}
and $\tsum{i\neq i_{0}}{}\lmb_{i}^{2}(T_{n}')\aeq(\dlt_{0})^{\frac{1}{50}}\lmb_{i_{0}}^{2}(T_{n}')$
to have 
\begin{align*}
\tsum{j\in\calI_{+}\setminus\{i\}}{}\zeta_{j}^{2}(t) & =\tsum{j\in\calI_{+}\setminus\{i\}}{}\zeta_{j}^{2}(T_{n}')+\tint{T_{n}'}t\rd_{t}(\tsum{j\in\calI_{+}\setminus\{i\}}{}\zeta_{j}^{2})d\tau\\
 & \ageq(\dlt_{0})^{\frac{1}{50}}\{\tsum{j\in\calI_{+}}{}\zeta_{j}^{2}(T_{n}')+\tint{T_{n}'}t\rd_{t}(\tsum{j\in\calI_{+}}{}\zeta_{j}^{2})d\tau\}\aeq(\dlt_{0})^{\frac{1}{50}}\tsum{j\in\calI_{+}}{}\zeta_{j}^{2}(t).
\end{align*}
Choosing $i$ (depending on $t$) such that $\tsum{j\in\calI_{+}\setminus\{i\}}{}\zeta_{j}^{2}(t)\aeq\lmb_{\secmax}^{2}(t)$,
we get 
\[
\lmb_{\secmax}^{2}(t)\ageq(\dlt_{0})^{\frac{1}{50}}\lmb_{\max}^{2}(t).
\]
Taking the square root gives \eqref{eq:case2.2-max2-ratio-control}.

\uline{Proof of \mbox{\eqref{eq:case2.2-minus-ratio-control}}}.
By \eqref{eq:case2.2-zeta-negative-diff} and \eqref{eq:case2.2-key-diff-ineq-1},
we have 
\begin{align*}
-\frac{d}{dt}\Big(\frac{\sum_{j\in\calI_{-}}\zeta_{j}(t)}{\sum_{j\in\calI_{+}}\zeta_{j}(t)}\Big) & =\frac{\sum_{j\in\calI_{-}}\zeta_{j}(t)}{\sum_{j\in\calI_{+}}\zeta_{j}(t)}\Big\{\frac{\rd_{t}(\tsum{j\in\calI_{+}}{}\zeta_{j})}{\sum_{j\in\calI_{+}}\zeta_{j}(t)}-\calO((\dlt_{0})^{\frac{49}{50}(D-2)-\frac{2}{50}}\zeta_{\secmax}^{D}\zeta_{\max}^{D-2})\Big\}\\
 & \ageq\frac{\sum_{j\in\calI_{-}}\zeta_{j}(t)}{\sum_{j\in\calI_{+}}\zeta_{j}(t)}\cdot\zeta_{\secmax}^{D-1}\zeta_{\max}^{D-1}.
\end{align*}
Thus this ratio is decreasing in time and we conclude
\[
\sup_{t\in[T_{n}',T_{n}'')}\frac{\sum_{j\in\calI_{-}}\zeta_{j}(t)}{\sum_{j\in\calI_{+}}\zeta_{j}(t)}\aleq\frac{\sum_{j\in\calI_{-}}\lmb_{j}(T_{n}')}{\lmb_{\max}(T_{n}')}=o_{n\to\infty}(1).\qedhere
\]
\end{proof}
\begin{cor}
\label{cor:case2-T_n'-is-T_n}We get a contradiction. Therefore, $T_{n}'=T_{n}$
for all large $n$.
\end{cor}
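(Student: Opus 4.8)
The plan is to run two more short bootstrap steps and then close with a growth argument. Throughout we keep the standing assumption $T_n'<T_n$ along a subsequence, which forces $\calI_+\supsetneq\{i_0\}$, i.e.\ $|\calI_+|\ge2$; the goal is to contradict this.

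First I would upgrade $T_n''$ to $T_n$. The estimate behind \eqref{eq:case2.2-max2-ratio-control} is really a pointwise bound $\lmb_{\secmax}(t)\ageq(\dlt_0)^{1/100}\lmb_{\max}(t)$ with a \emph{universal} implicit constant, valid for all $t\in[T_n',T_n'')$; for $\dlt_0$ small this strictly dominates the threshold $(\dlt_0)^{1/50}$, and by continuity of $\vec\lmb(\cdot)$ it persists at $t=T_n''$. Hence $T_n''$ cannot be a finite exit time for the hypothesis $\lmb_{\secmax}/\lmb_{\max}>(\dlt_0)^{1/50}$, so $T_n''=T_n$ and all of \eqref{eq:case2.2-z-control}--\eqref{eq:case2.2-minus-ratio-control} and \eqref{eq:case2.2-Rinv-is-o(1)-1}--\eqref{eq:case2.2-gHdot1-is-o(1)-1} hold on $[T_n',T_n)$. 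Next I would show $T_n=\Ttbexit_n=+\infty$: on $[T_n',T_n)$ and for $n$ large, $\lmb_{\max}(t)<K_0$ and $|\vec z(t)-\vec z^{\ast}|<\dlt_0$ by \eqref{eq:case2.2-Rinv-is-o(1)-1}, \eqref{eq:case2.2-z-control}; $\|u(t)-\calW\|_{\dot H^1}\le\|g(t)\|_{\dot H^1}+\|\td U\|_{\dot H^1}<\dlt_0$ by \eqref{eq:case2.2-gHdot1-is-o(1)-1}, \eqref{eq:non-coll-tdU-est}, so $u(t)\in\calT_J(\dlt_0)$; and since \eqref{eq:case2.2-minus-ratio-control} forces $\lmb_{\max}$ to be attained on $\calI_+$ --- whence $\sum_{i\in\calI_+}\lmb_i(t)\ge\lmb_{\max}(t)$ --- one gets $\sum_{i\in\calI_-}\lmb_i(t)/\sum_{i\in\calI_+}\lmb_i(t)\le\sum_{i\in\calI_-}\lmb_i(t)/\lmb_{\max}(t)=o_{n\to\infty}(1)<\dlt_0$. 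All these hold with room to spare, so no exit condition for $T_n$ or $\Ttbexit_n$ is met at a finite time.

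Then I would close with the growth argument. Fix $n$ large, so $T_n=+\infty$; by \eqref{eq:case2.2-max-control}, $\lmb_{\max}(t)\ageq\lmb_{\max}(T_n')>0$ on $[T_n',+\infty)$, and there both $\lmb_{\max},\lmb_{\secmax}$ are attained on $\calI_+$ with $\lmb_{\secmax}\aeq_{\dlt_0}\lmb_{\max}$, hence via \eqref{eq:case2.2-zeta-ptwise} also $\zeta_{\max},\zeta_{\secmax}\aeq_{\dlt_0}\lmb_{\max}$. Put $S(t):=\sum_{j\in\calI_+}\zeta_j^2(t)\aeq_{\dlt_0}\lmb_{\max}^2(t)$, so $S(T_n')>0$ and $S$ is increasing; \eqref{eq:case2.2-key-diff-ineq-2} gives $S'(t)\aeq\zeta_{\secmax}^D\zeta_{\max}^D\aeq_{\dlt_0}S(t)^D$, and since $S(t)\ge S(T_n')$ this forces $S'(t)\ageq_{\dlt_0}S(T_n')^{D-1}S(t)$, so $S(t)\to+\infty$ as $t\to+\infty$. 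This contradicts $S(t)\aeq_{\dlt_0}\lmb_{\max}^2(t)$ staying bounded on $[T_n',+\infty)$ (indeed $\lmb_{\max}(t)=o_{n\to\infty}(1)$ uniformly in $t$). Therefore $T_n'<T_n$ is impossible along any subsequence, i.e.\ $T_n'=T_n$ for all large $n$.

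The main obstacle is the bookkeeping in the first step: one must track which constants are universal and which merely depend on $\dlt_0$, so that the gain $\ageq(\dlt_0)^{1/100}$ genuinely beats the threshold $(\dlt_0)^{1/50}$; and one must be sure that $\lmb_{\max}$ and $\lmb_{\secmax}$ are attained on $\calI_+$, since this is what lets \eqref{eq:case2.2-minus-ratio-control} control the $\calI_-/\calI_+$ ratio appearing in the definition of $T_n$. Once these are in hand the growth argument is routine.
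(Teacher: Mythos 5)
Your proof is correct and follows the paper's own argument: you close the $T_{n}''$ bootstrap via the pointwise bound behind \eqref{eq:case2.2-max2-ratio-control} (whose universal constant indeed beats the threshold $(\dlt_{0})^{1/50}$), check that none of the exit conditions for $T_{n}$ or $\Ttbexit_{n}$ can trigger (using \eqref{eq:case2.2-Rinv-is-o(1)-1}, \eqref{eq:case2.2-z-control}, \eqref{eq:case2.2-gHdot1-is-o(1)-1}, \eqref{eq:case2.2-minus-ratio-control}), and then contradict $\calI_{+}\supsetneq\{i_{0}\}$ through the forced growth of the $\calI_{+}$-scales. The only (harmless) deviation is the endgame: the paper plays the monotone lower bound \eqref{eq:case2.2-max-control} against $\lmb_{\max}(t)\to0$ as $t\to+\infty$ (which uses the bounds for all large $n$ at once), whereas you integrate \eqref{eq:case2.2-key-diff-ineq-2} as $S'\aeq_{\dlt_{0}}S^{D}$ with $S=\sum_{j\in\calI_{+}}\zeta_{j}^{2}$ to force blow-up of $S$, contradicting the uniform smallness of $\lmb_{\max}$ for a single fixed $n$ --- a slightly more self-contained way to reach the same contradiction.
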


\begin{proof}
First, we show $T_{n}''=T_{n}=\Ttbexit_{n}=+\infty$ for all large
$n$. We have $T_{n}''=T_{n}$ by \eqref{eq:case2.2-max2-ratio-control}.
We have $T_{n}=\Ttbexit_{n}$ by $\lmb_{\max}(t)=o_{n\to\infty}(1)$,
\eqref{eq:case2.2-z-control}, and \eqref{eq:case2.2-minus-ratio-control}.
We have $\Ttbexit_{n}=+\infty$ by \eqref{eq:case2-Rinv-is-o(1)}
and \eqref{eq:case2-gHdot1-is-o(1)}. 

Moreover, we have $\lmb_{\max}(t)\to0$ as $t\to+\infty$ by $R^{-1}(t)\to0$
and $\lmb_{\secmax}\ageq_{\dlt_{0}}\lmb_{\max}$. Now, fix a large
$n_{0}$ such that $T_{n_{0}}''=+\infty$. By \eqref{eq:case2.2-max-control},
$\lmb_{\max}(t)\ageq\lmb_{\max}(T_{n_{0}}')$ on $[T_{n_{0}}',+\infty)$,
which contradicts to $\lmb_{\max}(t)\to0$.
\end{proof}

\subsection{Dynamics: proof of Theorem~\ref{thm:main-one-bubble-tower-classification}}

By Corollary~\ref{cor:case2-T_n'-is-T_n} and Corollary~\ref{cor:case2-closing-bootstrap},
we have $T_{n}'=+\infty$ for all large $n$, $\calI_{+}=\{i_{0}\}$,
and $u(t)$ satisfies a continuous-in-time resolution with $\vec{z}(t)\to\vec{z}^{\ast}$.
It remains to show \eqref{eq:main1-lmb_i_0} and \eqref{eq:main1-lmb_i}.
We begin with the following refined modulation estimates, integrated
backwards from $+\infty$.
\begin{cor}[Modulation estimates in simple form integrated from $+\infty$]
There exist functions $\zeta_{i}$ defined for all large $t$ such
that 
\begin{equation}
\zeta_{i}(t)=\lmb_{i}(t)\cdot(1+o(1))\label{eq:case2-zeta_i-ptwise-final}
\end{equation}
and 
\begin{align}
\frac{\zeta_{i,t}}{\zeta_{i}} & =-\kpp_{0}W_{\zeta_{i_{0}}}(z_{i_{0}}^{\ast}-z_{i}^{\ast})\zeta_{i}^{D-2}+o(1)\cdot\zeta_{i}^{D-2}\zeta_{i_{0}}^{D},\qquad\forall i\neq i_{0},\label{eq:case2-zeta_i-diff-final}\\
|\zeta_{i_{0},t}| & \aleq\zeta_{\secmax}^{D-\frac{3}{2}}\zeta_{i_{0}}^{D+\frac{1}{2}}.\label{eq:case2-zeta_i_0-diff-final}
\end{align}
\end{cor}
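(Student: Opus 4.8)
The statement is a ``corollary'' that packages the refined modulation estimates of Proposition~\ref{prop:case2.2-ref-mod} (more precisely, its $\calI_{+}=\{i_{0}\}$ specialization, which is essentially Proposition~\ref{prop:case2-modulation-est}) into a form where all $\|g\|_{\dot H^{2}}^{2}$-terms have been absorbed into the $\zeta_{i}$'s, and where the $z_{i}(t)$ appearing in the leading coefficient $W_{\zeta_{i_0}}(z_{i_0}-z_i)$ have been replaced by their limits $z_{i}^{\ast}$. The plan is to repeat the construction in Corollary~\ref{cor:case2.1-mod-est-simple-form}, but integrating the error terms from $+\infty$ backwards rather than from $t_{n}$ forwards; this is legitimate now because we already know $T_{n}'=+\infty$, $\calI_{+}=\{i_0\}$, $\vec z(t)\to\vec z^{\ast}$, $R^{-1}(t)\to0$, $\|g(t)\|_{\dot H^1}\to0$ (Corollary~\ref{cor:case2-closing-bootstrap} and Corollary~\ref{cor:case2-T_n'-is-T_n}), and, crucially, the spacetime bound $\int_{t_n}^{+\infty}(\|g\|_{\dot H^2}^2+\lmb_{\secmax}^{2D-2}\lmb_{\max}^{2D})\,dt=o_{n\to\infty}(1)$ from \eqref{eq:case2-spacetime-est}, which in particular gives $\int_{t}^{+\infty}\|g(\tau)\|_{\dot H^2}^2\,d\tau=o_{t\to+\infty}(1)$.

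\textbf{Step 1: Rewrite the refined modulation estimate with leading coefficient frozen at $z^{\ast}$.} Since $\calI_{+}=\{i_0\}$, Proposition~\ref{prop:case2-modulation-est} (equation \eqref{eq:case2.1-lmb_i-ref-mod}) reads, for $i\neq i_0$,
\[
\frac{\lmb_{i,t}}{\lmb_i}+\frac{d}{dt}o_{n\to\infty}(1)=\kpp_0\iota_i\iota_{i_0}W_{\lmb_{i_0}}(z_{i_0}-z_i)\lmb_i^{D-2}+\calO\big(\|g\|_{\dot H^2}^2+\lmb_{\secmax}^D\lmb_i^{D-2}+\lmb_{\secmax}^{D-\frac32}\lmb_{\max}^{D+\frac12}\big),
\]
with $\iota_i\iota_{i_0}=-1$ (since $i\notin\calI_+$). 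Using $\lmb_{\secmax}(t)=o(1)$, $\lmb_{\max}=\lmb_{i_0}\aeq\lmb_{i_0}$, and the continuity $W_{\lmb_{i_0}}(z_{i_0}-z_i)=W_{\lmb_{i_0}}(z_{i_0}^{\ast}-z_i^{\ast})\cdot(1+o(1))$ coming from $\vec z(t)\to\vec z^{\ast}$ together with $W_{\lmb_{i_0}}(z_{i_0}^{\ast}-z_i^{\ast})\aeq\lmb_{i_0}^{D}$ (as $z_{i_0}^{\ast}\neq z_i^{\ast}$), all the error terms except $\|g\|_{\dot H^2}^2$ are $o(1)\cdot\zeta_i^{D-2}\zeta_{i_0}^D$. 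For $i_0$ itself, \eqref{eq:case2.1-lmb_i-ref-mod}/\eqref{eq:case2.1-z_i-ref-mod} (or Corollary~\ref{cor:case2.1-mod-est-simple-form}, \eqref{eq:case2.1-zeta_i_0-diff-ineq}) already gives $\lmb_{i_0,t}/\lmb_{i_0}+\frac{d}{dt}o_{n\to\infty}(1)=\calO(\|g\|_{\dot H^2}^2+\lmb_{\secmax}^{D-\frac32}\lmb_{\max}^{D+\frac12})$.

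\textbf{Step 2: Define $\zeta_i$ by integrating the $o_{n\to\infty}(1)$-derivative and the $\|g\|_{\dot H^2}^2$-term from $+\infty$.} Set
\[
\zeta_i(t):=\lmb_i(t)\exp\Big(-o_{n\to\infty}(1)\big|_{t}+\int_{t}^{+\infty}\calO(\|g(\tau)\|_{\dot H^2}^2)\,d\tau\Big),
\]
where the $o_{n\to\infty}(1)\big|_t$ is the time-dependent quantity appearing as $\frac{d}{dt}o_{n\to\infty}(1)$ in the estimate above (it tends to a limit which we may normalize to $0$ by reabsorbing a constant, or simply note it is $o(1)$ as $t\to+\infty$) and the integral is finite and $o(1)$ as $t\to+\infty$ by \eqref{eq:case2-spacetime-est}. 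Then \eqref{eq:case2-zeta_i-ptwise-final} holds, and by construction $\zeta_{i,t}/\zeta_i$ equals $\lmb_{i,t}/\lmb_i$ plus exactly the terms $\frac{d}{dt}o_{n\to\infty}(1)$ and $-\calO(\|g\|_{\dot H^2}^2)$, so these cancel in the modulation equation. Replacing $\lmb_i$ by $\zeta_i$ throughout (harmless, by \eqref{eq:case2-zeta_i-ptwise-final}) yields \eqref{eq:case2-zeta_i-diff-final} and \eqref{eq:case2-zeta_i_0-diff-final}. The $\zeta_i$ are well-defined for all large $t$ since $\lmb_i(t)>0$ and the exponential factor is finite.

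\textbf{Main obstacle.} There is no deep obstacle here — this is a bookkeeping corollary. The only point requiring a little care is the legitimacy of integrating the $\|g\|_{\dot H^2}^2$-error and the drift $o_{n\to\infty}(1)$ backwards from $+\infty$: one must check that $\int_t^{+\infty}\|g(\tau)\|_{\dot H^2}^2\,d\tau$ is finite and $\to0$ as $t\to+\infty$, which is exactly what \eqref{eq:case2-spacetime-est} gives (once we know $T_n'=+\infty$, the interval $[t_n,T_n)$ is $[t_n,+\infty)$), and that the resulting exponential correction factor is $1+o(1)$ so that \eqref{eq:case2-zeta_i-ptwise-final} holds. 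The second mild point is justifying $W_{\lmb_{i_0}}(z_{i_0}-z_i)=(1+o(1))W_{\lmb_{i_0}}(z_{i_0}^{\ast}-z_i^{\ast})$: since $W_{\lmb}(w)=\lmb^{-D}W(w/\lmb)$ and $|z_{i_0}-z_i|\aeq|z_{i_0}^{\ast}-z_i^{\ast}|\aeq1\gg\lmb_{i_0}$, one has $W_{\lmb_{i_0}}(w)=\kpp_{\infty}\lmb_{i_0}^{D}|w|^{-2D}(1+\calO(\lmb_{i_0}^2))$, and $|z_{i_0}-z_i|^{-2D}=|z_{i_0}^{\ast}-z_i^{\ast}|^{-2D}(1+o(1))$ by $\vec z(t)\to\vec z^{\ast}$; the $\calO(\lmb_{i_0}^2)=o(1)$ correction is absorbed into the $o(1)\cdot\zeta_i^{D-2}\zeta_{i_0}^D$ error of \eqref{eq:case2-zeta_i-diff-final}.
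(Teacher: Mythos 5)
Your overall strategy (freeze the centers at $z_i^{\ast}$, then define $\zeta_i$ by multiplying $\lmb_i$ with an exponential of errors integrated backwards from $+\infty$) is the paper's strategy, but Step~1 contains a genuine gap: you claim that, besides $\|g\|_{\dot H^2}^2$, all error terms in \eqref{eq:case2.1-lmb_i-ref-mod} are pointwise $o(1)\cdot\zeta_i^{D-2}\zeta_{i_0}^{D}$. This is fine for $\lmb_{\secmax}^{D}\lmb_i^{D-2}$ (use $\lmb_{\secmax}=o(1)\lmb_{i_0}$ from \eqref{eq:case2-closing-bootstrap}), but it fails for the term $\lmb_{\secmax}^{D-\frac32}\lmb_{\max}^{D+\frac12}$: that term does not involve $\lmb_i$ at all, so bounding it by $o(1)\,\lmb_i^{D-2}\lmb_{i_0}^{D}$ requires $\lmb_{\secmax}^{D-\frac32}\lmb_{i_0}^{\frac12}\aleq o(1)\,\lmb_i^{D-2}$, which is false whenever $\lmb_i\ll\lmb_{\secmax}$ — a configuration that is not excluded at this stage when $J\geq3$ (nothing in the bootstrap forces the small bubbles to have comparable scales; their comparability, via \eqref{eq:case2-lmb-ratio}, is precisely part of the conclusion you are trying to reach, so assuming it would be circular). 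The issue only disappears for the index achieving $\lmb_{\secmax}$, or for $J=2$.

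The repair is exactly what the paper does: put this term into the exponential as well, i.e.\ define
$\zeta_i(t)=\lmb_i(t)\exp\{o(1)-\int_t^{\infty}\calO(\|g\|_{\dot H^2}^2+\lmb_{\secmax}^{D-\frac32}\lmb_{\max}^{D+\frac12})\,d\tau\}$,
and verify that the tail integral is $o(1)$. Integrability of $\|g\|_{\dot H^2}^2$ is \eqref{eq:case2-spacetime-est} as you say, but for $\lmb_{\secmax}^{D-\frac32}\lmb_{\max}^{D+\frac12}$ you need the key integral inequality \eqref{eq:case2-key-int-ineq} (together with $\lmb_{\max}\aleq1$ and $\lmb_{\secmax}=o(1)$), which gives $\int_t^{\infty}\lmb_{\secmax}^{D-\frac32}\lmb_{\max}^{D+\frac12}\,d\tau\aleq\sqrt{\lmb_{\secmax}}(t)+\limsup_{\tau\to\infty}\sqrt{\lmb_{\secmax}}(\tau)=o(1)$. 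Your writeup never invokes \eqref{eq:case2-key-int-ineq}, and without it (or an equivalent monotonicity input) neither the pointwise absorption nor the integrability of this error is available, so \eqref{eq:case2-zeta_i-diff-final} is not established for all $i\neq i_0$. With this modification the rest of your argument (treatment of the $\frac{d}{dt}o(1)$ drift, the replacement $W_{\lmb_{i_0}}(z_{i_0}-z_i)=(1+o(1))W_{\lmb_{i_0}}(z_{i_0}^{\ast}-z_i^{\ast})$, and the easier bound \eqref{eq:case2-zeta_i_0-diff-final} for $i_0$) goes through as in the paper.
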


\begin{proof}
We only prove \eqref{eq:case2-zeta_i-ptwise-final} for $i\neq i_{0}$
and \eqref{eq:case2-zeta_i-diff-final}. By $T_{n}'=+\infty$, $\calI_{+}=\{i_{0}\}$,
and \eqref{eq:case2-closing-bootstrap}, we can rewrite \eqref{eq:case2.1-lmb_i-ref-mod}
as 
\[
\frac{\lmb_{i,t}}{\lmb_{i}}+\frac{d}{dt}o(1)+\kpp_{0}W_{\lmb_{i_{0}}}(z_{i_{0}}^{\ast}-z_{i}^{\ast})\lmb_{i}^{D-2}=\calO(\|g\|_{\dot{H}^{2}}^{2}+\lmb_{\secmax}^{D-\frac{3}{2}}\lmb_{\max}^{D+\frac{1}{2}})+o(1)\cdot\lmb_{i}^{D-2}\lmb_{i_{0}}^{D}.
\]
Define $\zeta_{i}(t)=\lmb_{i}(t)\exp\{o(1)-\int_{t}^{\infty}\calO(\|g\|_{\dot{H}^{2}}^{2}+\lmb_{\secmax}^{D-\frac{3}{2}}\lmb_{\max}^{D+\frac{1}{2}})d\tau\}$.
The integral from $t$ to $\infty$ is of size $o(1)$ by \eqref{eq:case2-spacetime-est}
and \eqref{eq:case2-key-int-ineq} with $\lmb_{\secmax}=o(1)$. This
concludes the proof of \eqref{eq:case2-zeta_i-ptwise-final} and \eqref{eq:case2-zeta_i-diff-final}
for $i\neq i_{0}$.

The proof of \eqref{eq:case2-zeta_i-ptwise-final} for $i=i_{0}$
and \eqref{eq:case2-zeta_i_0-diff-final} is easier and omitted.
\end{proof}
We are ready to prove \eqref{eq:main1-lmb_i_0} and \eqref{eq:main1-lmb_i}.
\begin{proof}
\uline{Proof of \mbox{\eqref{eq:main1-lmb_i_0}}}. It suffices
to show this for $\zeta_{i_{0}}$ by \eqref{eq:case2-zeta_i-ptwise-final}.
Revisiting the proof of \eqref{eq:case2.1-lmb_i_0-control}, but using
$\lmb_{\secmax}/\lmb_{i_{0}}=o(1)$ this time, gives 
\[
\sqrt{\zeta_{i_{0}}}(t)=\sqrt{\zeta_{i_{0}}}(t_{n})\cdot(1+o_{n\to\infty}(1))
\]
for all $t\geq t_{n}$. This completes the proof of $\lmb_{i_{0}}\to\exists\lmb_{i_{0}}^{\ast}\in(0,\infty)$.

\uline{Proof of \mbox{\eqref{eq:main1-lmb_i}}}. Let $i\neq i_{0}$.
Since $\zeta_{i_{0}}\to\lmb_{i_{0}}^{\ast}\in(0,\infty)$, we can
rewrite \eqref{eq:case2-zeta_i-diff-final} as 
\[
\frac{\zeta_{i,t}}{\zeta_{i}^{D-1}}=-\kpp_{0}W_{\lmb_{i_{0}}^{\ast}}(z_{i_{0}}^{\ast}-z_{i}^{\ast})+o(1).
\]
Integrating this forwards in time, we get 
\[
\zeta_{i}(t)=\Big\{(D-2)\kpp_{0}W_{\lmb_{i_{0}}^{\ast}}(z_{i_{0}}^{\ast}-z_{i}^{\ast})\Big\}^{-\frac{1}{D-2}}t^{-\frac{1}{D-2}}(1+o(1)).
\]
Applying $\lmb_{i}(t)=\zeta_{i}(t)\cdot(1+o(1))$ and $D-2=\frac{N-6}{2}$,
we conclude 
\begin{equation}
\lmb_{i}(t)=\Big\{\frac{N-6}{2}\kpp_{0}W_{\lmb_{i_{0}}^{\ast}}(z_{i_{0}}^{\ast}-z_{i}^{\ast})\Big\}^{-\frac{2}{N-6}}t^{-\frac{2}{N-6}}(1+o(1)).\qedhere\label{eq:case2-lmb-ratio}
\end{equation}
\end{proof}

\section{\label{sec:Case1}Modulation analysis in Case 1}

This section is devoted to the dynamics in \textbf{Case~1} of Proposition~\ref{prop:case-separation}.
We also prove the first item of Theorem~\ref{thm:main-lmb-to-zero-classification}
at the end of this section. The main result of this section is the
following.
\begin{prop}
\label{prop:case1-main}Consider the solution $u(t)$ belonging to
\textbf{Case~1} of Proposition~\ref{prop:case-separation}. Recall
the notation therein. Then, the following hold.
\begin{itemize}
\item (Continuous-in-time resolution) $T_{n}'=+\infty$ for all large $n$
and the resolution \eqref{eq:main1-conti-time-res-with-z-conv} holds.
\item (Non-degeneracy) The configuration $\{(\iota_{j},z_{j}^{\ast}):j\in\setJ\}$
is non-degenerate.
\item (Asymptotics of $\lmb_{i}(t)$) The scales $\lmb_{i}(t)$ satisfy
\eqref{eq:main-thm-nondegen-lmb-rate} and the ratio vector $t^{1/(N-4)}\vec{\lmb}(t)$
converges to a connected component of the set \eqref{eq:main-thm-nondegen-ratio-set}.
\end{itemize}
\end{prop}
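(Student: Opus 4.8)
The plan is to run a continuity/bootstrap argument on the interval $[t_{n},T_{n}')$ where $T_{n}'$ is the exit time from \textbf{Case~1}, closing all the hypotheses used in its definition (namely $u(t)\in\calT_{J}(\alp^{\ast})$, $|\vec z(t)-\vec z^{\ast}|<\dlt_{2}^{\ast}$, and $\lmb_{\max}(t)<1$), and then to upgrade the resulting a priori bounds into the precise asymptotics. First I would record, using Lemma~\ref{lem:distance-from-degen} and the standing assumption \eqref{eq:5.28}, that $\sum_{i}(A^{\ast}\vec\lmb^{D})_{i}^{2}\lmb_{i}^{2D-2}\aeq_{\dlt_{1}}\lmb_{\max}^{4D-2}$ and $\lmb_{\secmax}\ageq\dlt_{1}\lmb_{\max}$; combined with the orthogonality of parameters at $t=t_{n}$ this already gives $\lmb_{\max}(t_{n})=o_{n\to\infty}(1)$. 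The non-degeneracy of the $\lmb_{i,t}$ equations then provides a coercive lower bound $\max_{a,i}|\frkr_{a,i}|/\lmb_{i}\ageq\lmb_{\max}^{2D-1}$ (via the leading-order computation of $\frkr_{a,i}$ in Lemma~\ref{lem:non-coll-tdU-est} and \eqref{eq:2.48}, together with $\vec z\approx\vec z^{\ast}$), which when inserted into the spacetime control \eqref{eq:spacetime-control} yields $\int_{t_{n}}^{T_{n}'}(\|g\|_{\dot H^{2}}^{2}+\lmb_{\max}^{4D-2})\,dt=o_{n\to\infty}(1)$. Feeding this back through the rough modulation estimate \eqref{eq:rough-mod-est} for $|\lmb_{i,t}|$ and $|z_{i,t}|$ and integrating from $t_{n}$ propagates $\lmb_{\max}(t)=o_{n\to\infty}(1)$, $|\vec z(t)-\vec z^{\ast}|=o_{n\to\infty}(1)$, and (via the energy expansion as in the proof of \eqref{eq:case2-gHdot1-is-o(1)}, using coercivity \eqref{eq:calL-H1-coer-quad-form} and $E[u(t)]\to JE[W]$) $\|g(t)\|_{\dot H^{1}}=o_{n\to\infty}(1)$. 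Hence all exit conditions are strictly improved, forcing $T_{n}'=+\infty$ and giving the continuous-in-time resolution \eqref{eq:main1-conti-time-res-with-z-conv}; non-degeneracy of $\{(\iota_j,z_j^{\ast})\}$ then follows because \eqref{eq:5.28} persists in the limit $t\to+\infty$ with $\vec z\to\vec z^{\ast}$, which by Lemma~\ref{lem:distance-from-degen}/\ref{lem:non-degen} is exactly non-degeneracy of the configuration.

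Next I would extract the blow-up rate $\lmb_{\max}(t)\aeq t^{-1/(N-4)}$ by a sequence of one-sided bounds. From the refined modulation estimates \eqref{eq:non-coll-refined-mod-est} with the leading term $\lmb_{i,t}/\lmb_i=-\frkr_{0,i}/\lmb_i^2+\ldots$ and the leading-order formula for $\frkr_{0,i}$, one gets, after introducing refined parameters $\zeta_i\approx\lmb_i$ to absorb the non-integrable $\|g\|_{\dot H^2}$ error (as in Corollary~\ref{cor:case2.1-mod-est-simple-form}), an autonomous-at-leading-order system $\zeta_{i,t}=\sum_j A_{ij}^{\ast}\zeta_i^{D-2}\zeta_j^{D}+\text{(small)}$. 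Testing this against the positive vector $\vec\beta$ with $A^{\ast}\vec\beta\in(0,\infty)^J$ (constructed in Lemma~\ref{lem:non-degen}) and using $\lmb_i\aeq\lmb_{\max}$ for all $i$ — which holds because the ratio vector stays bounded away from the boundary of the simplex by \eqref{eq:5.28} — gives a differential inequality of the form $\partial_t(\sum_i\beta_i\zeta_i)\aeq-\zeta_{\max}^{2D-1}$, hence the upper bound $\lmb_{\max}(t)\aleq t^{-1/(2D-2)}=t^{-1/(N-4)}$ after integration. For the matching lower bound I would integrate backward from $t=+\infty$: the same differential inequality with the reverse sign on a suitable combination (or a direct ODE comparison using that $A^{\ast}$ is never positive definite, so some coordinate combination is genuinely repelling) yields $\lmb_{\max}(t)\ageq t^{-1/(N-4)}$. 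Passing from the sequential bound at times $t_n$ to a continuous-in-time bound requires an anti-oscillation argument: one shows the quantity $t^{1/(N-4)}\lmb_{\max}(t)$ cannot oscillate between two separated values because the monotonicity of $\sum_i\beta_i\zeta_i$ (up to small errors) pins it down. Finally, once $\lmb_{\max}(t)\aeq t^{-1/(N-4)}$ is known for the continuous curve, integrating each individual equation $\zeta_{i,t}/\zeta_i\aeq t^{-1}$ gives $\lmb_i(t)\aeq t^{-1/(N-4)}$ for every $i$, i.e.\ \eqref{eq:main-thm-nondegen-lmb-rate}.

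For the convergence of the ratio vector $t^{1/(N-4)}\vec\lmb(t)$ to a connected component of the set \eqref{eq:main-thm-nondegen-ratio-set}, I would switch to self-similar variables $\vec\mu(s)=e^{s/(2D-2)}\vec\lmb(t)$ with $s=\log t$ (so that the target set is precisely the set of equilibria of the rescaled flow $\mu_{i,s}=\sum_j A_{ij}^{\ast}\mu_i^{D-2}\mu_j^{D}+\frac{1}{N-4}\mu_i^{-(D-3)}$ after normalizing constants, and non-emptiness follows because the left-hand side of the defining equation is strictly positive for the all-equal-sign subconfigurations, ruling those out, while for genuine sign-changing configurations a fixed-point/degree argument on the compact slice $\{\lmb_{\max}=1\}$ produces a solution — this is also where the sign-changing necessity enters). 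The key object is a Lyapunov/monotonicity functional in self-similar variables analogous to the one in \cite{CortazarDelPinoMusso2020JEMS}: a free-energy-type functional $\mathcal{F}(\vec\mu)=\frac12\sum_{i\neq j}A_{ij}^{\ast}\mu_i^{D}\mu_j^{D}-\frac{c_N}{2D-4}\sum_i\mu_i^{-(2D-4)}$ (signs/weights chosen so it is non-increasing along the leading-order flow), which together with the a priori bounds $\mu_i\aeq1$ and LaSalle's invariance principle forces $\vec\mu(s)$ to approach the set of critical points; since that set is exactly \eqref{eq:main-thm-nondegen-ratio-set} and $\vec\mu(s)$ is a continuous curve, it converges to one connected component. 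The main obstacle I anticipate is twofold: first, rigorously propagating the hypothesis \eqref{eq:5.28} (which by hypothesis only holds on $[t_n,T_n')$ and is \emph{not} obviously forward-invariant) all the way to $T_n'=+\infty$ without circularity — this is handled precisely because closing the bootstrap gives $\lmb_{\max}(t)\to0$ and $\vec z\to\vec z^{\ast}$, and \eqref{eq:5.28} with these limits is an open condition preserved in the limit; and second, controlling the error terms in the modulation system well enough (uniformly in the possibly vanishing $\lmb_i$) that the leading-order ODE analysis and the Lyapunov functional argument survive — this hinges on the sharp weighted estimates in Lemma~\ref{lem:non-coll-tdU-est} and Lemma~\ref{lem:non-coll-ref-mod-est}, especially the requirement of keeping separate powers of $R_i$ and $\lmb_i$ rather than collapsing them to $R$, exactly as flagged in the remarks after Proposition~\ref{prop:Modified-Profiles}.
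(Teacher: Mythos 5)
Your overall architecture (close the bootstrap on $[t_{n},T_{n}')$, then extract rates via refined modulation parameters and a self-similar Lyapunov functional) is the same as the paper's, but three steps as you describe them would not go through. First, the control of $|\vec z(t)-\vec z^{\ast}|$ cannot be obtained by ``integrating the rough modulation estimate'': \eqref{eq:rough-mod-est} (and even the refined bound \eqref{eq:case1-refined-mod-est-z}) gives $|z_{i,t}|\aleq\|g\|_{\dot H^{2}}^{2}+\lmb_{\max}^{2D-1}$ up to factors, and the spacetime bound \eqref{eq:case1-spacetime-est} only controls $\int\lmb_{\max}^{4D-2}$; on a time interval whose length is not controlled (ultimately infinite), Cauchy--Schwarz does not make $\int\lmb_{\max}^{2D-1}$ small. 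This is exactly the ``tricky'' point the paper isolates: one first proves the monotonicity \eqref{eq:case1-A-lmb^D-lmb^D-diff-ineq} for $\lan\vec{\zeta}^{D},A^{\ast}\vec{\zeta}^{D}\ran$, whose time derivative is a weighted sum of squares bounded below via \eqref{eq:case1-calD-non-degen}, and then converts the resulting bound $\int\zeta^{4D-2}\aleq\zeta^{2D}(\tau_{1})+\zeta^{2D}(\tau_{2})$ into $\int\zeta^{2D-1}\aleq\zeta(\tau_{1})+\zeta(\tau_{2})$ by the $\vphi^{1/2D}$ interpolation trick of Lemma~\ref{lem:case1-control-z}; without this your bootstrap on $|\vec z-\vec z^{\ast}|$ does not close (and the same estimate is later the source of the sequential upper bound on $\lmb_{\max}$). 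Second, your deduction of non-degeneracy of $\{(\iota_{j},z_{j}^{\ast})\}$ from ``\eqref{eq:5.28} persists in the limit'' is a non sequitur: \eqref{eq:5.28} only says that the particular direction $\vec{\lmb}^{D}(t)/|\vec{\lmb}^{D}(t)|$ stays at distance $\gtrsim\dlt_{1}$ from $\ker A^{\ast}$ (for $\calI=\setJ$; the proper subsets contribute nothing once the scales are comparable), and Lemma~\ref{lem:distance-from-degen} goes in the opposite direction. A kernel vector in $[0,\infty)^{J}$ could simply be avoided by the trajectory. The paper needs a genuinely dynamical argument: assuming $0\neq\vec{\frkc}\in\ker A^{\ast}\cap[0,\infty)^{J}$, the quantity $\lan\vec{\frkc},\vec{\wh{\zeta}}^{-(D-2)}\ran$ is almost conserved (its derivative is $\lan\vec{\frkc},A^{\ast}\vec{\wh{\zeta}}^{D}\ran+o(\lmb_{\max}^{D})=o(t^{-D/(2D-2)})$ by symmetry of $A^{\ast}$) yet must grow like $t^{(D-2)/(2D-2)}$ once \eqref{eq:main-thm-nondegen-lmb-rate} is known; so non-degeneracy is a consequence of the rates, not of the bootstrap hypothesis.

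Third, your route to the upper bound $\lmb_{\max}\aleq t^{-1/(N-4)}$ rests on two unjustified claims. The inequality $\rd_{t}\big(\sum_{i}\beta_{i}\zeta_{i}\big)\aeq-\zeta_{\max}^{2D-1}$ does not follow from $A^{\ast}\vec{\beta}\in(0,\infty)^{J}$: because of the weights $\zeta_{i}^{D-1}$ in $\sum_{i}\beta_{i}\zeta_{i}^{D-1}(A^{\ast}\vec{\zeta}^{D})_{i}$ one cannot pair $\vec\beta$ against $A^{\ast}\vec{\zeta}^{D}$ by symmetry, and the sign is not determined; the quantity with a definite sign is the quadratic form used in \eqref{eq:case1-A-lmb^D-lmb^D-diff-ineq}. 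Moreover, your auxiliary input ``$\lmb_{i}\aeq\lmb_{\max}$ for all $i$ by \eqref{eq:5.28}'' is false as an a priori statement: taking $\calI=\{i\}$ in \eqref{eq:5.28} only yields $\lmb_{\secmax}\ageq\dlt_{1}\lmb_{\max}$, and the comparability of every $\lmb_{i}$ is a conclusion of the proof (the paper derives the individual lower bounds $\lmb_{i}\ageq t^{-1/(2D-2)}$ at the end by integrating $|\rd_{t}(\wh{\zeta}_{i}^{-(D-2)})|\aleq\lmb_{\max}^{D}$, which in turn needs the refined error bounds and $|\vec z-\vec z^{\ast}|=o(\lmb_{\max})$). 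Finally, two smaller points on the ratio convergence: the confining term of the Lyapunov functional must be quadratic, $-\tfrac{1}{4D-4}\sum_{i}\mu_{i}^{2}$, so that its critical points are exactly \eqref{eq:main-thm-nondegen-ratio-set} (your inverse-power term gives the wrong Euler--Lagrange equation), and non-emptiness of that set comes for free from the convergence of $\vec{\mu}(t)$ rather than requiring a degree argument; also, the almost-gradient structure is only available after replacing $\lmb_{i}$ by $\td{\zeta}_{i}=\lmb_{i}e^{f_{i}}$ with the sharp bound \eqref{eq:case1-f_i-bound}, so that the remaining errors are genuinely $L^{1}$ in time.
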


Let $\dlt_{2}\in(0,\dlt_{2}^{\ast}]$ and $\alp\in(0,\alp^{\ast}]$
be two \emph{small parameters} that can shrink in the course of the
proof satisfying the parameter dependence 
\[
\alp,\dlt_{2}\ll1.
\]
Consider the time (recall the definition \eqref{eq:def-T_n(K,delta,alpha)})
\[
T_{n}\coloneqq T_{n}(1,\dlt_{2},\alp)\in(t_{n},T_{n}'].
\]
Recall \eqref{eq:5.28}; there exists $\dlt_{1}>0$ (independent of
$n$) such that 
\begin{equation}
\min_{\emptyset\neq\calI\subseteq\setJ}\bigg(\frac{|A_{\calI}^{\ast}\vec{\lmb}_{\calI}^{D}(t)|}{|\vec{\lmb}_{\calI}^{D}(t)|}+\sum_{j\notin\calI}\frac{\lmb_{j}(t)}{\lmb_{\max}(t)}\bigg)\geq\dlt_{1}\quad\text{for all }t\in[t_{n},T_{n}).\label{eq:5.1}
\end{equation}
The estimates in the following assume $t\in[t_{n},T_{n})$. As $\dlt_{1}$
is fixed, any dependence on $\dlt_{1}$ will be ignored.

\subsection{Basic observations}
\begin{rem}
\label{rem:7.2}Lemma~\ref{lem:case1-R_ij}, Lemma~\ref{lem:case1-tdU-est},
and Lemma~\ref{lem:case1-leading-frkr_a,i} remain valid under $\lmb_{\max}(t)\aeq\lmb_{\secmax}(t)$.
In consequence, these lemmas will be used again in Section~\ref{sec:Case3}.
\end{rem}

\begin{lem}[$R_{ij}$ in terms of $\vec{\lmb}$]
\label{lem:case1-R_ij}We have 
\begin{align}
R^{-1} & \aeq\lmb_{\max}\aeq\lmb_{\secmax},\label{eq:case1-R}\\
R_{i}^{-1} & \aeq\sqrt{\lmb_{i}\lmb_{\max}},\label{eq:case1-R_i}\\
R_{ij}^{-1} & \aeq\sqrt{\lmb_{i}\lmb_{j}}\quad\text{if }i\neq j.\label{eq:case1-R_ij}
\end{align}
\end{lem}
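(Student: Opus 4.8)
The statement is Lemma~\ref{lem:case1-R_ij}, which identifies $R$, $R_i$, $R_{ij}$ in terms of $\vec\lmb$ in the regime of \textbf{Case~1} (or more generally whenever $\lmb_{\max}(t)\aeq\lmb_{\secmax}(t)$, as noted in Remark~\ref{rem:7.2}). The plan is to start from the non-colliding lemma already proved, namely Lemma~\ref{lem:non-coll-R_ij}, which holds whenever $\lmb_{\max}(t)<K_0$ and $|\vec z(t)-\vec z^\ast|<\dlt_2$ with $z_1^\ast,\dots,z_J^\ast$ distinct --- exactly the situation on $[t_n,T_n)$ here (with $K_0=1$). That lemma already gives $R^{-1}\aeq\sqrt{\lmb_{\max}\lmb_{\secmax}}$, $R_{ij}^{-1}\aeq\sqrt{\lmb_i\lmb_j}$ for $i\neq j$, and $R_i^{-1}\aeq\chf_{\lmb_i=\lmb_{\max}}\sqrt{\lmb_{\secmax}\lmb_{\max}}+\chf_{\lmb_i<\lmb_{\max}}\sqrt{\lmb_i\lmb_{\max}}$.

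\textbf{Key steps.} First I would invoke the defining bound \eqref{eq:5.1} (which is \eqref{eq:5.28} specialized to $t\in[t_n,T_n)$): taking the singleton $\calI=\{i_0\}$ with $\lmb_{i_0}=\lmb_{\max}$ forces $\sum_{i\neq i_0}\lmb_i/\lmb_{\max}\geq\dlt_1$, hence $\lmb_{\secmax}\ageq\dlt_1\lmb_{\max}$; combined with the trivial $\lmb_{\secmax}\leq\lmb_{\max}$ this gives $\lmb_{\max}\aeq\lmb_{\secmax}$. (Alternatively one may cite Lemma~\ref{lem:distance-from-degen}, eq.~\eqref{eq:case1-secmax=00003Dmax}, directly.) Then \eqref{eq:case1-R} follows by substituting $\lmb_{\secmax}\aeq\lmb_{\max}$ into $R^{-1}\aeq\sqrt{\lmb_{\max}\lmb_{\secmax}}$ from \eqref{eq:non-coll-R}. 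Equation \eqref{eq:case1-R_ij} is literally \eqref{eq:non-coll-R_ij}, so nothing new is needed. For \eqref{eq:case1-R_i}, I would look at the two cases in \eqref{eq:non-coll-R_i}: if $\lmb_i=\lmb_{\max}$ then $R_i^{-1}\aeq\sqrt{\lmb_{\secmax}\lmb_{\max}}\aeq\sqrt{\lmb_i\lmb_{\max}}$ using $\lmb_i=\lmb_{\max}\aeq\lmb_{\secmax}$; if $\lmb_i<\lmb_{\max}$ then $R_i^{-1}\aeq\sqrt{\lmb_i\lmb_{\max}}$ outright. In both cases $R_i^{-1}\aeq\sqrt{\lmb_i\lmb_{\max}}$, which is \eqref{eq:case1-R_i}.

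\textbf{Main obstacle.} There is essentially no obstacle: the lemma is a direct specialization of Lemma~\ref{lem:non-coll-R_ij} once one observes $\lmb_{\max}\aeq\lmb_{\secmax}$, which itself is immediate from the \textbf{Case~1} hypothesis \eqref{eq:5.1} (or \eqref{eq:case1-secmax=00003Dmax}). The only mild point to be careful about is that Remark~\ref{rem:7.2} asks this to hold more generally under $\lmb_{\max}(t)\aeq\lmb_{\secmax}(t)$; in that more general form the proof is the same, taking $\lmb_{\max}\aeq\lmb_{\secmax}$ as a direct hypothesis rather than deriving it from \eqref{eq:5.1}.

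\begin{proof}
By Lemma~\ref{lem:distance-from-degen} (specifically \eqref{eq:case1-secmax=00003Dmax} applied with $\dlt_{1}$ from \eqref{eq:5.1}), or directly by taking $\calI=\{i_{0}\}$ with $\lmb_{i_{0}}=\lmb_{\max}$ in \eqref{eq:5.1}, we have $\lmb_{\secmax}\ageq\dlt_{1}\lmb_{\max}$. Together with the trivial inequality $\lmb_{\secmax}\leq\lmb_{\max}$ this gives
\[
\lmb_{\max}\aeq\lmb_{\secmax}.
\]
(More generally, this is precisely the hypothesis $\lmb_{\max}(t)\aeq\lmb_{\secmax}(t)$ under which Remark~\ref{rem:7.2} asserts the lemma, so the remainder of the argument proves both cases.)

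Since $\lmb_{\max}<1$ and $|\vec{z}(t)-\vec{z}^{\ast}|<\dlt_{2}$ with $z_{1}^{\ast},\dots,z_{J}^{\ast}$ distinct, Lemma~\ref{lem:non-coll-R_ij} applies. Then \eqref{eq:case1-R_ij} is exactly \eqref{eq:non-coll-R_ij}. For \eqref{eq:case1-R}, we substitute $\lmb_{\secmax}\aeq\lmb_{\max}$ into \eqref{eq:non-coll-R}:
\[
R^{-1}\aeq\sqrt{\lmb_{\max}\lmb_{\secmax}}\aeq\lmb_{\max}\aeq\lmb_{\secmax}.
\]
For \eqref{eq:case1-R_i}, we use \eqref{eq:non-coll-R_i}. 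If $\lmb_{i}=\lmb_{\max}$, then $R_{i}^{-1}\aeq\sqrt{\lmb_{\secmax}\lmb_{\max}}\aeq\sqrt{\lmb_{i}\lmb_{\max}}$, using $\lmb_{i}=\lmb_{\max}\aeq\lmb_{\secmax}$. If $\lmb_{i}<\lmb_{\max}$, then $R_{i}^{-1}\aeq\sqrt{\lmb_{i}\lmb_{\max}}$ directly from \eqref{eq:non-coll-R_i}. In either case $R_{i}^{-1}\aeq\sqrt{\lmb_{i}\lmb_{\max}}$, which is \eqref{eq:case1-R_i}. This completes the proof.
\end{proof}
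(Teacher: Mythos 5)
Your proposal is correct and follows essentially the same route as the paper: deduce $\lmb_{\max}\aeq\lmb_{\secmax}$ from \eqref{eq:case1-secmax=00003Dmax} (equivalently from \eqref{eq:5.1} with a singleton $\calI$), then specialize Lemma~\ref{lem:non-coll-R_ij}. The only difference is that you spell out the case split in \eqref{eq:non-coll-R_i}, which the paper leaves implicit.
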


\begin{proof}
We have $\lmb_{\max}\aeq\lmb_{\secmax}$ by \eqref{eq:case1-secmax=00003Dmax}.
The proof then follows from Lemma~\ref{lem:non-coll-R_ij}.
\end{proof}
\begin{lem}[Estimates for $\td U$]
\label{lem:case1-tdU-est}For each $i\in\setJ$, we have the following
estimates: 
\begin{gather}
\tsum{j\neq i}{}\|f(W_{;i},W_{;j})\|_{L^{(2^{\ast\ast})'}}\aleq\lmb_{i}^{2}\lmb_{\max}^{\frac{N-2}{2}},\label{eq:case1-f(W_i,W_j)-H2dual}\\
\|\td U\|_{\dot{H}^{1}}\aleq\lmb_{\max}^{\frac{N+2}{2}},\quad\|\td U\|_{\dot{H}^{2}}\aleq\lmb_{\max}^{\frac{N}{2}},\quad\|\lmb_{i}\rd_{z_{i}^{a}}\td U\|_{\dot{H}^{1}}\aleq\lmb_{i}\lmb_{\max}^{\frac{N}{2}}.\label{eq:case1-tdU-est}
\end{gather}
\end{lem}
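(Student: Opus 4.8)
The plan is to prove Lemma~\ref{lem:case1-tdU-est} by specializing the general estimates from Proposition~\ref{prop:Modified-Profiles} to the present regime, where by Lemma~\ref{lem:case1-R_ij} we have the sharp asymptotics $R^{-1}\aeq\lmb_{\max}\aeq\lmb_{\secmax}$, $R_i^{-1}\aeq\sqrt{\lmb_i\lmb_{\max}}$, and $R_{ij}^{-1}\aeq\sqrt{\lmb_i\lmb_j}$ for $i\neq j$. First I would establish \eqref{eq:case1-f(W_i,W_j)-H2dual}: since $\lmb_{\max}\aleq1$, we have $\lmb_{ij}^{-}\coloneqq\min\{\lmb_i,\lmb_j\}\aeq\lmb_i$ (as $\lmb_i\aleq\lmb_{\max}\aeq\lmb_j$ when $j$ realizes the max, and in general $\lmb_i\lmb_j\aeq\lmb_i\lmb_{\max}$ cannot hold unless $\lmb_j\aeq\lmb_{\max}$, but the dominant term in the sum over $j$ is the one with $\lmb_j\aeq\lmb_{\max}$); substituting $R_{ij}\aeq(\lmb_i\lmb_j)^{-1/2}$ into \eqref{eq:f(W1,W2)-H2dual-est} and using $\lmb_{\max}\aleq1$ to absorb logarithms and lower-order powers, one checks term by term (for $N=7$, $N=8$, $N\geq9$) that each summand is $\aleq\lmb_i^2\lmb_{\max}^{(N-2)/2}$; this is essentially the computation already carried out in the proof of Lemma~\ref{lem:non-coll-tdU-est} for \eqref{eq:non-coll-f(W_i,W_j)-H2dual}, now simplified by $\lmb_{\secmax}\aeq\lmb_{\max}$.

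Next I would derive the three estimates in \eqref{eq:case1-tdU-est}. For $\|\td U\|_{\dot H^1}$, plug $R^{-1}\aeq\lmb_{\max}$ into \eqref{eq:tdU-Hdot1-est} to get $\|\td U\|_{\dot H^1}\aleq R^{-(N+2)/2}\aeq\lmb_{\max}^{(N+2)/2}$. For $\|\td U\|_{\dot H^2}$, use \eqref{eq:tdU-Hdot2-est} together with $\frkp(R_i)\aleq R_i^{-(N+2)/2}$, so $\|\td U\|_{\dot H^2}\aleq\sum_i\lmb_i^{-1}R_i^{-(N+2)/2}\aeq\sum_i\lmb_i^{-1}(\lmb_i\lmb_{\max})^{(N+2)/4}=\sum_i\lmb_i^{(N-2)/4}\lmb_{\max}^{(N+2)/4}\aleq\lmb_{\max}^{(N-2)/4}\lmb_{\max}^{(N+2)/4}=\lmb_{\max}^{N/2}$, using $\lmb_i\aleq\lmb_{\max}$ and $(N-2)/4>0$. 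Finally, for $\|\lmb_i\rd_{z_i^a}\td U\|_{\dot H^1}$, use the bound \eqref{eq:tdU-lmb-deriv-est}, namely $\aleq R_i^{-(N+2)/2}+\lmb_i\|\td U\|_{\dot H^2}+\max_b|\frkr_{b,i}|$; the first term is $\aeq(\lmb_i\lmb_{\max})^{(N+2)/4}\aleq\lmb_i\lmb_{\max}^{N/2}$ (since $(\lmb_i/\lmb_{\max})^{(N-2)/4}\aleq1$), the second is $\aleq\lmb_i\lmb_{\max}^{N/2}$ by the $\dot H^2$-estimate just proved, and for the third one invokes \eqref{eq:frkr-est} giving $\max_b|\frkr_{b,i}|\aleq R^{-(N-2)}\aeq\lmb_{\max}^{N-2}$, which is $\aleq\lmb_i\lmb_{\max}^{N/2}$ precisely when $\lmb_{\max}^{N-2}\aleq\lmb_i\lmb_{\max}^{N/2}$, i.e.\ $\lmb_{\max}^{(N-4)/2}\aleq\lmb_i$; this may fail for small $\lmb_i$, so instead I would use the refined coefficient bound analogous to \eqref{eq:non-coll-frkr_a,i-est-0}, i.e.\ $|\frkr_{b,i}|\aleq\lmb_i\cdot\max_{b,j}|\frkr_{b,j}|/\lmb_j$ together with $\max_{b,j}|\frkr_{b,j}|/\lmb_j\aleq\sum_k\lmb_k^{-1}R_k^{-(N+2)/2}R^{-(N-6)/2}\aeq\lmb_{\max}^{N/2}$ from \eqref{eq:frkr/lmb-est}, yielding $\max_b|\frkr_{b,i}|\aleq\lmb_i\lmb_{\max}^{N/2}$ directly.

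The main obstacle, such as it is, is bookkeeping rather than a genuine difficulty: one must be careful that in sums like $\sum_i\lmb_i^{(N-2)/4}\lmb_{\max}^{(N+2)/4}$ the right power of $\lmb_i$ versus $\lmb_{\max}$ appears so that $\lmb_i\aleq\lmb_{\max}$ gives a clean bound, and that the $\frkr$-coefficient estimate is taken in the form with an explicit $\lmb_i$ prefactor (as in \eqref{eq:non-coll-frkr_a,i-est-0}) rather than the cruder \eqref{eq:frkr-est}, otherwise the $\|\lmb_i\rd_{z_i^a}\td U\|_{\dot H^1}$ bound loses a factor of $\lmb_i/\lmb_{\max}$. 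Since Remark~\ref{rem:7.2} notes these lemmas are reused in Section~\ref{sec:Case3} under the weaker hypothesis $\lmb_{\max}\aeq\lmb_{\secmax}$ (without \eqref{eq:5.1}), I would make sure every step uses only $\lmb_{\max}\aeq\lmb_{\secmax}$ and $\lmb_{\max}\aleq1$, which indeed suffices throughout. The whole proof is then a short sequence of substitutions of Lemma~\ref{lem:case1-R_ij} into Proposition~\ref{prop:Modified-Profiles}, closely paralleling the proof of Lemma~\ref{lem:non-coll-tdU-est}.
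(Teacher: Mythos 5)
Your proposal is correct and is essentially the paper's argument: the paper's proof of this lemma simply invokes Lemma~\ref{lem:non-coll-tdU-est} together with $\lmb_{\secmax}\aeq\lmb_{\max}$, which is exactly the specialization of Proposition~\ref{prop:Modified-Profiles} via the asymptotics of Lemma~\ref{lem:case1-R_ij} that you re-derive, including the key point of bounding $\max_{b}|\frkr_{b,i}|$ through \eqref{eq:frkr/lmb-est} (with the $\lmb_i$ prefactor) rather than the cruder \eqref{eq:frkr-est}. Two harmless slips: $\lmb_{ij}^{-}$ need not be $\aeq\lmb_{i}$ (only the term-by-term check you fall back on, which is the computation in the proof of Lemma~\ref{lem:non-coll-tdU-est}, is actually used), and $\tsum k{}\lmb_{k}^{-1}R_{k}^{-\frac{N+2}{2}}R^{-\frac{N-6}{2}}\aleq\lmb_{\max}^{N-3}\aleq\lmb_{\max}^{N/2}$, so your ``$\aeq\lmb_{\max}^{N/2}$'' should read ``$\aleq$''; neither affects the conclusion.
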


\begin{proof}
The proof follows from Lemma~\ref{lem:non-coll-tdU-est} and $\lmb_{\max}\aeq\lmb_{\secmax}$.
\end{proof}
\begin{lem}[Leading terms of $\frkr_{a,i}$]
\label{lem:case1-leading-frkr_a,i}We have 
\begin{align}
\frkr_{0,i} & =-\sum_{j\neq i}A_{ij}[\vec{z}]\lmb_{i}^{D}\lmb_{j}^{D}+\calO(\lmb_{i}^{2}\lmb_{\max}^{2D}|\log\lmb_{\max}|),\label{eq:case1-frkr_0,i}\\
\frkr_{a,i} & =\frac{\kpp_{1}}{\kpp_{0}}\sum_{j\neq i}A_{ij}[\vec{z}]\frac{\lmb_{i}^{D+1}\lmb_{j}^{D}}{|z_{i}-z_{j}|^{2}}(z_{i}^{a}-z_{j}^{a})+\calO(\lmb_{i}^{2}\lmb_{\max}^{2D}|\log\lmb_{\max}|)\quad\text{if }a\neq0.\label{eq:case1-frkr_a,i}
\end{align}
\end{lem}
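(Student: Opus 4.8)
The plan is to read off $\frkr_{a,i}$ from the already-established expansion $\eqref{eq:non-coll-frkr_a,i-est}$. Since we are in the regime $\lmb_{\max}\aeq\lmb_{\secmax}$ (cf.\ Remark~\ref{rem:7.2}), the remainder in $\eqref{eq:non-coll-frkr_a,i-est}$ collapses to $\calO(\lmb_i^{2}\lmb_{\max}^{2D})$, so the task reduces to expanding the triple inner products $\lan\calV_{a;i},f'(W_{;i})W_{;j}\ran/\|\calV_{a}\|_{L^{2}}^{2}$ for $j\neq i$. For these I would invoke $\eqref{eq:2.48}$ when $a=0$ and $\eqref{eq:2.49}$ when $a\in\{1,\dots,N\}$, in both cases splitting according to whether $\lmb_i\le\lmb_j$ or $\lmb_i\ge\lmb_j$ exactly as those formulas are stated.

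In each subcase I would insert the far-field asymptotics $W(y)=\kpp_{\infty}|y|^{-(N-2)}(1+\calO(|y|^{-2}))$, $\Lmb W(y)=-\tfrac{N-2}{2}\kpp_{\infty}|y|^{-(N-2)}(1+\calO(|y|^{-2}))$, and $\nabla W(y)=-(N-2)\kpp_{\infty}|y|^{-N}y\,(1+\calO(|y|^{-2}))$, evaluated at $y=(z_j-z_i)/\lmb_j$ or $y=(z_i-z_j)/\lmb_i$, whose magnitudes are $\aeq\lmb_j^{-1}$ resp.\ $\lmb_i^{-1}$ because the $z_k^{\ast}$ are distinct and $\dlt_2$ is small, hence $|z_i-z_j|\aeq1$. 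The algebra then produces a leading term proportional to $\lmb_i^{D}\lmb_j^{D}$ (for $a=0$) or to $\lmb_i^{D+1}\lmb_j^{D}$ (for $a\neq0$), with the two subcase computations yielding the same coefficient. That coefficient is matched to the claimed one using $\tfrac{N-2}{2}\kpp_{\infty}\int W^{p}/\|\Lmb W\|_{L^{2}}^{2}=\kpp_{0}\kpp_{\infty}$ (the definition $\eqref{eq:def-kpp}$ of $\kpp_0$, together with $f(W)=W^{p}$) and $(N-2)\kpp_{\infty}\int W^{p}/\|\rd_{1}W\|_{L^{2}}^{2}=\kpp_{1}\kpp_{\infty}$ (the definition $\eqref{eq:def-kpp_1}$), after which one rewrites $\kpp_{0}\kpp_{\infty}\iota_i\iota_j|z_i-z_j|^{-(N-2)}=A_{ij}[\vec z]$ and $\kpp_{1}\kpp_{\infty}\iota_i\iota_j|z_i-z_j|^{-N}=\tfrac{\kpp_{1}}{\kpp_{0}}A_{ij}[\vec z]|z_i-z_j|^{-2}$ to arrive at $\eqref{eq:case1-frkr_0,i}$ and $\eqref{eq:case1-frkr_a,i}$.

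What remains is to check that every remainder is $\calO(\lmb_i^{2}\lmb_{\max}^{2D}|\log\lmb_{\max}|)$, and this bookkeeping is the only real obstacle. The $\calO(|y|^{-2})$ corrections to the profiles contribute $\calO(\lmb_i^{D}\lmb_j^{D+2})$ resp.\ $\calO(\lmb_i^{D+1}\lmb_j^{D+2})$, which are $\aleq\lmb_i^{2}\lmb_{\max}^{2D}$ since $D>2$ and $\lmb_i\le\lmb_j\le\lmb_{\max}$. The genuine error terms in $\eqref{eq:2.48}$--$\eqref{eq:2.49}$, after substituting $R_{ij}^{-1}\aeq\sqrt{\lmb_i\lmb_j}$ from Lemma~\ref{lem:case1-R_ij} and $\langle\log(\cdot)\rangle\aeq\langle\log(1/\min\{\lmb_i,\lmb_j\})\rangle$, take the shape $\lmb_i^{\frac N2\pm1}\lmb_j^{\frac N2\mp1}|\log\min\{\lmb_i,\lmb_j\}|$ resp.\ $\lmb_i^{\frac N2+2}\lmb_j^{\frac N2-1}|\log\lmb_i|$; each already carries the factor $\lmb_i^{2}$ and the polynomial weight $(\lmb_i\lmb_j)^{D}$, so one only needs to bound the leftover by $\lmb_{\max}^{2D}|\log\lmb_{\max}|$. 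This follows from $\lmb_i\lmb_j\le\lmb_{\max}^{2}$ together with the elementary facts $x^{\eps}|\log x|\aleq_{\eps}1$ on $(0,1]$ and $|\log\lmb_k|\le|\log(\lmb_k/\lmb_{\max})|+|\log\lmb_{\max}|$, which tame the "excess'' logarithm whenever $\lmb_k\ll\lmb_{\max}$ by a fractional power of $\lmb_k/\lmb_{\max}$. Carefully tracking which of $\lmb_i,\lmb_j$ occupies which slot in each subcase is the main nuisance; the rest is a routine substitution.
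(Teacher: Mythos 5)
Your proposal is correct and follows essentially the same route as the paper: start from \eqref{eq:non-coll-frkr_a,i-est} (whose error collapses to $\calO(\lmb_i^2\lmb_{\max}^{2D})$ since $\lmb_{\secmax}\aeq\lmb_{\max}$), expand the triple products via \eqref{eq:2.48}--\eqref{eq:2.49} with the far-field asymptotics of $W$, $\Lmb W$, $\nabla W$ and $R_{ij}^{-1}\aeq\sqrt{\lmb_i\lmb_j}$, and match constants through \eqref{eq:def-kpp}, \eqref{eq:def-kpp_1}, \eqref{eq:def-kpp_infty}. Your logarithm bookkeeping is also consistent with the paper, which simply records the relative error as $\calO(\lmb_{\max}^{2}|\log\lmb_{\max}|)$ using monotonicity of $x\mapsto x^{2}|\log x|$ near $0$.
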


\begin{proof}
We begin with \eqref{eq:non-coll-frkr_a,i-est}, which we recall under
the assumption $\lmb_{\max}\aeq\lmb_{\secmax}$ as 
\begin{equation}
\frkr_{a,i}=\sum_{j\neq i}\frac{\lan\calV_{a;i},f'(W_{;i})W_{;j}\ran}{\|\calV_{a}\|_{L^{2}}^{2}}+\calO(\lmb_{i}^{2}\lmb_{\max}^{2D}).\label{eq:6.15}
\end{equation}
Therefore, it suffices to show the following claim: 
\begin{align}
\frac{\lan\calV_{0;i},f'(W_{;i})W_{;j}\ran}{\|\calV_{0}\|_{L^{2}}^{2}} & =-A_{ij}[\vec{z}]\lmb_{i}^{D}\lmb_{j}^{D}+\calO(\lmb_{i}^{2}\lmb_{\max}^{2D}|\log\lmb_{\max}|),\label{eq:6.16}\\
\frac{\lan\calV_{a;i},f'(W_{;i})W_{;j}\ran}{\|\calV_{a}\|_{L^{2}}^{2}} & =\frac{\kpp_{1}}{\kpp_{0}}A_{ij}[\vec{z}]\frac{\lmb_{i}^{D+1}\lmb_{j}^{D}}{|z_{i}-z_{j}|^{2}}(z_{i}-z_{j})+\calO(\lmb_{i}^{2}\lmb_{\max}^{2D+1}|\log\lmb_{\max}|),\label{eq:6.17}
\end{align}
where $i\neq j$ and $a\in\{1,\dots,N\}$.

\uline{Proof of \mbox{\eqref{eq:6.16}}}. Recall $\calV_{0}=\Lmb W$
and \eqref{eq:def-kpp_infty} so that 
\[
|W(y)-\kpp_{\infty}|y|^{-(N-2)}|+|\Lmb W(y)+\tfrac{N-2}{2}\kpp_{\infty}|y|^{-(N-2)}|\aleq|y|^{-N}\qquad\forall|y|\ageq1.
\]
Using this together with \eqref{eq:2.48} and \eqref{eq:case1-R_ij},
we get 
\[
\lan[\Lmb W]_{;i},f'(W_{;i})W_{;j}\ran=-\frac{N-2}{2}\int f(W)\cdot\kpp_{\infty}\iota_{i}\iota_{j}\frac{\lmb_{i}^{\frac{N-2}{2}}\lmb_{j}^{\frac{N-2}{2}}}{|z_{i}-z_{j}|^{N-2}}\{1+\calO(\lmb_{\max}^{2}|\log\lmb_{\max}|)\}.
\]
Using \eqref{eq:def-kpp}, $D>2$, and simplifying the error, we get
\[
\frac{\lan\calV_{0;i},f'(W_{;i})W_{;j}\ran}{\|\calV_{0}\|_{L^{2}}^{2}}=-\kpp_{\infty}\kpp_{0}\iota_{i}\iota_{j}\frac{\lmb_{i}^{D}\lmb_{j}^{D}}{|z_{i}-z_{j}|^{2D}}+\calO(\lmb_{i}^{2}\lmb_{\max}^{2D}|\log\lmb_{\max}|).
\]
Recalling the definition $A_{ij}[\vec{z}]$ from \eqref{eq:def-A_jk},
we get \eqref{eq:6.16}.

\uline{Proof of \mbox{\eqref{eq:6.17}}}. When $a\in\{1,\dots,N\}$,
we recall $\calV_{a}=\rd_{a}W$ and 
\[
\nabla W(y)=-\kpp_{\infty}(N-2)y|y|^{-N}+\calO(|y|^{-(N+1)})\qquad\forall|y|\ageq1.
\]
Using this together with \eqref{eq:2.49} and \eqref{eq:case1-R_ij},
we get 
\[
\lan[\nabla W]_{;i},f'(W_{;i})W_{;j}\ran=(N-2)\kpp_{\infty}\int f(W)\cdot\iota_{i}\iota_{j}\frac{\lmb_{i}^{\frac{N}{2}}\lmb_{j}^{\frac{N-2}{2}}}{|z_{i}-z_{j}|^{N}}(z_{i}-z_{j})+\calO(\lmb_{i}^{\frac{N}{2}}\lmb_{j}^{\frac{N-2}{2}}\lmb_{\max}^{2}|\log\lmb_{\max}|).
\]
Using \eqref{eq:def-kpp_1} and simplifying the error, we get 
\[
\frac{\lan\calV_{a;i},f'(W_{;i})W_{;j}\ran}{\|\calV_{a}\|_{L^{2}}^{2}}=\kpp_{1}\kpp_{\infty}\iota_{i}\iota_{j}\frac{\lmb_{i}^{\frac{N}{2}}\lmb_{j}^{\frac{N-2}{2}}}{|z_{i}-z_{j}|^{N}}(z_{i}-z_{j})+\calO(\lmb_{i}^{2}\lmb_{\max}^{N-1}|\log\lmb_{\max}|).
\]
Recalling the definition $A_{ij}$ from \eqref{eq:def-A_jk}, we get
\eqref{eq:6.17}.
\end{proof}
\begin{lem}
\label{lem:7.6}We have 
\begin{equation}
\max_{a,i}\frac{|\frkr_{a,i}|}{\lmb_{i}}\aeq\lmb_{\max}^{2D-1}.\label{eq:case1-max-frkr_a,i}
\end{equation}
In particular, we have a rough modulation estimate 
\begin{equation}
|\lmb_{i,t}|+|z_{i,t}|+\lmb_{i}|a_{i,t}|\aleq\|g\|_{\dot{H}^{2}}+\lmb_{\max}^{2D-1},\label{eq:case1-rough-mod-est}
\end{equation}
and smallness of following quantities: 
\begin{align}
\int_{t_{n}}^{T_{n}}(\lmb_{\max}^{4D-2}+\|g\|_{\dot{H}^{2}}^{2})dt & =o_{n\to\infty}(1),\label{eq:case1-spacetime-est}\\
\sup_{t\in[t_{n},T_{n})}R^{-1}(t)\aeq\sup_{t\in[t_{n},T_{n})}\lmb_{\max}(t) & =o_{n\to\infty}(1),\label{eq:case1-lmb_max-is-o(1)}\\
\sup_{t\in[t_{n},T_{n})}\|g(t)\|_{\dot{H}^{1}} & =o_{n\to\infty}(1).\label{eq:case1-g(t)-Hdot1-is-o(1)}
\end{align}
\end{lem}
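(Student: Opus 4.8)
The plan is to prove \eqref{eq:case1-max-frkr_a,i} first, since the rough modulation estimate \eqref{eq:case1-rough-mod-est} is just \eqref{eq:rough-mod-est}, and then \eqref{eq:case1-spacetime-est}--\eqref{eq:case1-g(t)-Hdot1-is-o(1)} follow by an integration/bootstrap argument. For \eqref{eq:case1-max-frkr_a,i}, the upper bound $\max_{a,i}|\frkr_{a,i}|/\lmb_i\aleq\lmb_{\max}^{2D-1}$ is immediate from \eqref{eq:non-coll-frkr_a,i-est-0} together with $\lmb_{\secmax}\aeq\lmb_{\max}$ (Lemma~\ref{lem:case1-R_ij}, which rests on \eqref{eq:case1-secmax=00003Dmax}). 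The lower bound is where the non-degeneracy assumption \eqref{eq:5.1} enters: by Lemma~\ref{lem:case1-leading-frkr_a,i}, $\frkr_{0,i}=-\sum_{j\neq i}A_{ij}[\vec z]\lmb_i^D\lmb_j^D+\calO(\lmb_i^2\lmb_{\max}^{2D}|\log\lmb_{\max}|)$, so $\frkr_{0,i}/\lmb_i = -\lmb_i^{D-1}(A[\vec z]\vec{\lmb}^D)_i + \calO(\lmb_i\lmb_{\max}^{2D}|\log\lmb_{\max}|)$. Hence $\sum_i (\frkr_{0,i}/\lmb_i)^2 \aeq \sum_i \lmb_i^{2D-2}(A[\vec z]\vec{\lmb}^D)_i^2 + \text{(error)}$. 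Now $\vec z$ is close to $\vec z^{\ast}$ (since $|\vec z - \vec z^{\ast}|<\dlt_2$), so $A[\vec z]$ is close to $A^{\ast}$, and \eqref{eq:5.1} lets me apply Lemma~\ref{lem:distance-from-degen}, specifically \eqref{eq:case1-calD-non-degen}, which gives $\sum_i \lmb_i^{2D-2}(A^{\ast}\vec{\lmb}^D)_i^2\ageq_{\dlt_1}\lmb_{\max}^{4D-2}$. Choosing $\dlt_2$ small enough and $n$ large enough absorbs the perturbation from $A[\vec z]-A^{\ast}$ and the $\calO(\lmb_{\max}^{2D})$-type errors (which are lower order relative to $\lmb_{\max}^{2D-1}$ since $\lmb_{\max}\to0$), yielding $\max_{a,i}|\frkr_{a,i}|/\lmb_i\ageq\lmb_{\max}^{2D-1}$.

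Having \eqref{eq:case1-max-frkr_a,i}, I would obtain \eqref{eq:case1-spacetime-est} by combining the spacetime control \eqref{eq:spacetime-control}, which says $\int_{t_n}^{T_n}(\|g\|_{\dot H^2}^2+\sum_{a,i}\frkr_{a,i}^2/\lmb_i^2)dt=o_{n\to\infty}(1)$, with the pointwise lower bound $\sum_{a,i}\frkr_{a,i}^2/\lmb_i^2\ageq\lmb_{\max}^{4D-2}$ just proved. Then \eqref{eq:case1-lmb_max-is-o(1)} follows by integrating the rough modulation estimate: from \eqref{eq:case1-rough-mod-est}, $|(\lmb_i^{2D})_t|\aleq\lmb_i^{2D-1}|\lmb_{i,t}|\aleq\lmb_i^{2D-1}(\|g\|_{\dot H^2}+\lmb_{\max}^{2D-1})\aleq\|g\|_{\dot H^2}^2+\lmb_{\max}^{4D-2}$ (using $\lmb_i\le\lmb_{\max}\le1$ and Young's inequality on the cross term), so integrating from $t_n$ and using $\lmb_i(t_n)=o_{n\to\infty}(1)$ (a consequence of \eqref{eq:5.28} with $\calI=\{j\}$, which forces $\lmb_{\secmax}(t_n)\ageq\dlt_1\lmb_{\max}(t_n)$, hence $(\vec\iota,\vec\lmb(t_n),\vec z(t_n))\in\calP_J(o_{n\to\infty}(1))$ gives $\lmb_{\max}(t_n)\to0$) together with \eqref{eq:case1-spacetime-est} yields $\sup_{[t_n,T_n)}\lmb_i(t)^{2D}=o_{n\to\infty}(1)$ for each $i$, hence $\sup\lmb_{\max}=o_{n\to\infty}(1)$; the relation $R^{-1}\aeq\lmb_{\max}$ is \eqref{eq:case1-R}. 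Finally \eqref{eq:case1-g(t)-Hdot1-is-o(1)} follows from the energy expansion exactly as in the proof of \eqref{eq:case2-gHdot1-is-o(1)}: expand $E[u]=JE[W]-\tfrac12\lan\calL_{\calW}g,g\ran+o_{R\to\infty}(1)+o_{R\to\infty}(\|g\|_{\dot H^1}^2)+\calO(\|g\|_{\dot H^1}^{p+1})$, apply the coercivity estimate \eqref{eq:calL-H1-coer-quad-form} with the orthogonality conditions \eqref{eq:curve-orthog} and the smallness $\lan\calY_{;i},g\ran=o_{n\to\infty}(1)$ from \eqref{eq:a_i-is-o(1)}, use $E[u(t)]\to JE[W]$ from \eqref{eq:E(u(t))-go-to-JE(W)}, $R^{-1}=o_{n\to\infty}(1)$ from \eqref{eq:case1-lmb_max-is-o(1)}, and $\|g\|_{\dot H^1}\aleq\alp$ small to conclude.

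The main obstacle is the lower bound in \eqref{eq:case1-max-frkr_a,i}: one must carefully track that the error terms $\calO(\lmb_i^2\lmb_{\max}^{2D}|\log\lmb_{\max}|)$ in $\frkr_{0,i}$ and the deviation $\|A[\vec z]-A^{\ast}\|=\calO(|\vec z-\vec z^{\ast}|)=\calO(\dlt_2)$ are genuinely subleading compared with the non-degenerate main term $\lmb_{\max}^{2D-1}$. This requires first choosing $\dlt_2$ small (depending only on $\dlt_1$ and the fixed configuration) to handle the $A[\vec z]-A^{\ast}$ perturbation via the quantitative estimate \eqref{eq:case1-calD-non-degen}, and then, since $\lmb_{\max}(t)\le1$ is not by itself enough to beat the $|\log\lmb_{\max}|$ factor, using that $\lmb_{\max}(t_n)=o_{n\to\infty}(1)$ so that on $[t_n,T_n)$ one has $\lmb_{\max}(t)\le1$ with the logarithmic error term bounded by $\lmb_i^2\lmb_{\max}^{2D}|\log\lmb_{\max}|\aleq\lmb_i\cdot\lmb_{\max}\lmb_{\max}^{2D-2}|\log\lmb_{\max}|=o(1)\cdot\lmb_i\lmb_{\max}^{2D-2}$, which is $o(\lmb_i\lmb_{\max}^{2D-1})$ — wait, one needs $\lmb_{\max}^{2D-2}|\log\lmb_{\max}|\lmb_{\max}=\lmb_{\max}^{2D-1}\cdot\lmb_{\max}|\log\lmb_{\max}|$, and $\lmb_{\max}|\log\lmb_{\max}|\to0$ since $\lmb_{\max}\to0$; so the error is $o_{n\to\infty}(1)\cdot\lmb_i\lmb_{\max}^{2D-1}$, negligible. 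A secondary care point is that $\lmb_i(t_n)=o_{n\to\infty}(1)$ must be deduced from the initial $W$-bubbling structure and \eqref{eq:5.28}, not merely assumed; I would spell this out at the start of the argument.
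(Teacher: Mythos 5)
Your proposal is correct and follows essentially the same route as the paper: the upper bound from \eqref{eq:non-coll-frkr_a,i-est-0} with $\lmb_{\secmax}\aeq\lmb_{\max}$, the lower bound from the leading term of $\frkr_{0,i}$ combined with \eqref{eq:5.1} and Lemma~\ref{lem:distance-from-degen}, then \eqref{eq:spacetime-control} for the spacetime bound, integration of $(\lmb_{i}^{2D})_{t}$ for \eqref{eq:case1-lmb_max-is-o(1)}, and the energy/coercivity argument of \eqref{eq:case2-gHdot1-is-o(1)} for \eqref{eq:case1-g(t)-Hdot1-is-o(1)}. One small sourcing point: the smallness of $\lmb_{\max}$ needed to absorb the $|\log\lmb_{\max}|$ error uniformly on $[t_{n},T_{n})$ should be quoted as $\lmb_{\max}\aeq R^{-1}\aleq\alp$ (membership in the tube), as the paper does, rather than ``$\lmb_{\max}\to0$'', which at that stage is only known at $t=t_{n}$.
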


\begin{proof}
\uline{Proof of \mbox{\eqref{eq:case1-max-frkr_a,i}}}. Note that
$\chf_{a\neq0}\lmb_{i}^{-1}|\frkr_{a,i}|\aleq\lmb_{\max}^{2D}$ by
\eqref{eq:case1-frkr_a,i}. Now, by \eqref{eq:case1-frkr_0,i}, we
get 
\[
\sum_{i}\frac{|\frkr_{0,i}|^{2}}{\lmb_{i}^{2}}=\sum_{i}(A[\vec{z}]\vec{\lmb}^{D})_{i}^{2}\lmb_{i}^{2D-2}+\calO(\lmb_{\max}^{4D+1}|\log\lmb_{\max}|).
\]
By $|\vec{z}-\vec{z}^{\ast}|<\dlt_{2}$ (small), $\lmb_{\max}\aeq R^{-1}\aleq\alp$
(small), \eqref{eq:5.1}, and Lemma~\ref{lem:distance-from-degen},
we conclude \eqref{eq:case1-max-frkr_a,i}.

\uline{Proof of \mbox{\eqref{eq:case1-rough-mod-est}}}. This follows
from \eqref{eq:rough-mod-est}.

\uline{Proof of \mbox{\eqref{eq:case1-spacetime-est}}}. This follows
from \eqref{eq:spacetime-control} and \eqref{eq:case1-max-frkr_a,i}.

\uline{Proof of \mbox{\eqref{eq:case1-lmb_max-is-o(1)}}}. The
proof of \eqref{eq:case2.2-Rinv-is-o(1)-1} works the same; we omit
the proof.

\uline{Proof of \mbox{\eqref{eq:case1-g(t)-Hdot1-is-o(1)}}}. The
proof of \eqref{eq:case2-gHdot1-is-o(1)} works the same; we omit
the proof.
\end{proof}

\subsection{Refined modulation estimates}

\begin{prop}[Refined modulation estimates in Case~1]
\label{prop:case1-modulation-est}We have refined modulation estimates
\begin{align}
\Big|\frac{\lmb_{i,t}}{\lmb_{i}}+\frac{d}{dt}f_{i}-\sum_{j=1}^{J}A_{ij}^{\ast}\lmb_{i}^{D-2}\lmb_{j}^{D}\Big| & \aleq\|g\|_{\dot{H}^{2}}^{2}+(\lmb_{\max}+|\vec{z}-\vec{z}^{\ast}|)\lmb_{\max}^{2D-2},\label{eq:case1-refined-mod-est-lmb}\\
\Big|\frac{z_{i,t}}{\lmb_{i}}\Big| & \aleq\|g\|_{\dot{H}^{2}}^{2}+\lmb_{\max}^{2D-1},\label{eq:case1-refined-mod-est-z}
\end{align}
where $f_{i}=f_{i}(t)$ satisfies 
\begin{equation}
|f_{i}|\aleq\min\{\|g\|_{\dot{H}^{1}},\|g\|_{\dot{H}^{2}}+\lmb_{i}\|g\|_{\dot{H}^{1}}\}.\label{eq:case1-f_i-bound}
\end{equation}
\end{prop}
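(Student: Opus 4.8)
The plan is to derive the refined modulation estimates \eqref{eq:case1-refined-mod-est-lmb}--\eqref{eq:case1-refined-mod-est-z} from the already-established refined estimates \eqref{eq:non-coll-refined-mod-est} by feeding in the leading-order formulas for the coefficients $\frkr_{a,i}$ obtained in Lemma~\ref{lem:case1-leading-frkr_a,i}, and then absorbing the time-dependent correction term $\lan[\Lmb W]_{\ul{;i}},g\ran/\|\Lmb W\|_{L^2}^2$ into a function $f_i(t)$ satisfying \eqref{eq:case1-f_i-bound}. First I would recall \eqref{eq:non-coll-refined-mod-est}, which already has the structure $\lmb_{i,t}/\lmb_i + \frkr_{0,i}/\lmb_i^2 - \rd_t\lan[\Lmb W]_{\ul{;i}},g\ran/\|\Lmb W\|_{L^2}^2 = \calO(\cdots)$ and the analogous identity for $(z_i^a)_t$. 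So I set $f_i \coloneqq -\lan[\Lmb W]_{\ul{;i}},g\ran/\|\Lmb W\|_{L^2}^2$; the bound \eqref{eq:case1-f_i-bound} is then exactly \eqref{eq:non-coll-refined-mod-est-corr-bound}, so that part is immediate.

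The substantive step is to rewrite $\frkr_{0,i}/\lmb_i^2$ as $-\sum_j A_{ij}^\ast \lmb_i^{D-2}\lmb_j^D$ plus acceptable errors. By \eqref{eq:case1-frkr_0,i}, $\frkr_{0,i} = -\sum_{j\ne i} A_{ij}[\vec z]\lmb_i^D\lmb_j^D + \calO(\lmb_i^2\lmb_{\max}^{2D}|\log\lmb_{\max}|)$, so $\frkr_{0,i}/\lmb_i^2 = -\sum_{j\ne i}A_{ij}[\vec z]\lmb_i^{D-2}\lmb_j^D + \calO(\lmb_{\max}^{2D-2}\cdot\lmb_{\max}^{2}|\log\lmb_{\max}|)$. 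Next I replace $A_{ij}[\vec z]$ by $A_{ij}^\ast$: since $A_{ij}[\vec z] = \chf_{i\ne j}\kpp_0\kpp_\infty\iota_i\iota_j|z_i-z_j|^{-(N-2)}$ and the $z_i^\ast$ are distinct with $|\vec z-\vec z^\ast|<\dlt_2$, the mean value theorem gives $|A_{ij}[\vec z]-A_{ij}^\ast| \aleq |\vec z-\vec z^\ast|$, contributing an error $\calO(|\vec z-\vec z^\ast|\,\lmb_i^{D-2}\lmb_j^D) = \calO(|\vec z-\vec z^\ast|\,\lmb_{\max}^{2D-2})$. Finally, $A_{ii}^\ast = 0$ so $\sum_{j\ne i}A_{ij}^\ast\lmb_i^{D-2}\lmb_j^D = \sum_{j=1}^J A_{ij}^\ast\lmb_i^{D-2}\lmb_j^D$, matching the stated form. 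The error $\calO(\lmb_{\max}^{2+2D-2}|\log\lmb_{\max}|)$ is dominated by $\calO(\lmb_{\max}\cdot\lmb_{\max}^{2D-2})$ for $\lmb_{\max}$ small (absorbing the log into a power of $\lmb_{\max}$, valid since $\lmb_{\max}=o_{n\to\infty}(1)$ by \eqref{eq:case1-lmb_max-is-o(1)}), hence fits into the right-hand side $(\lmb_{\max}+|\vec z-\vec z^\ast|)\lmb_{\max}^{2D-2}$. For the remaining error terms on the right-hand side of \eqref{eq:non-coll-refined-mod-est}, namely $\lmb_{\secmax}^{D-2}\lmb_{\max}^{D-1}\|g\|_{\dot H^2} + \lmb_{\secmax}^{D-\frac32}\lmb_{\max}^{D+\frac32}$, I use $\lmb_{\secmax}\aeq\lmb_{\max}$ (Lemma~\ref{lem:case1-R_ij}, or \eqref{eq:case1-secmax=00003Dmax}) to see these become $\lmb_{\max}^{2D-3}\|g\|_{\dot H^2}$ and $\lmb_{\max}^{2D}$; by Young's inequality $\lmb_{\max}^{2D-3}\|g\|_{\dot H^2} \aleq \|g\|_{\dot H^2}^2 + \lmb_{\max}^{4D-6}$, and since $D\ge 5/2$ we have $4D-6\ge 2D-1 > 2D-2$, so both are dominated by $\|g\|_{\dot H^2}^2 + \lmb_{\max}\cdot\lmb_{\max}^{2D-2}$, again using $\lmb_{\max}=o(1)$.

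The estimate \eqref{eq:case1-refined-mod-est-z} for $z_{i,t}$ is handled the same way but is cleaner: from the $a\ne 0$ part of \eqref{eq:non-coll-refined-mod-est} we get $|z_{i,t}/\lmb_i + \frkr_{a,i}/\lmb_i^2| \aleq$ (same error), and by \eqref{eq:case1-frkr_a,i} we have $|\frkr_{a,i}|/\lmb_i^2 = \calO(\lmb_{\max}^{2D-1}) + \calO(\lmb_{\max}^{2D}|\log\lmb_{\max}|) = \calO(\lmb_{\max}^{2D-1})$; combining with the error estimates above (which are all $\aleq \|g\|_{\dot H^2}^2 + \lmb_{\max}^{2D-1}$) yields \eqref{eq:case1-refined-mod-est-z}. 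I do not expect a genuine obstacle here — the proof is essentially bookkeeping, substituting Lemma~\ref{lem:case1-leading-frkr_a,i} into \eqref{eq:non-coll-refined-mod-est} and tracking powers of $\lmb_{\max}$; the only mild care needed is (i) the $A[\vec z]\to A^\ast$ replacement, which costs exactly the $|\vec z-\vec z^\ast|$ factor appearing in the statement, and (ii) checking that every error term, after using $\lmb_{\secmax}\aeq\lmb_{\max}$ and Young's inequality on the cross term, is subordinate to $\|g\|_{\dot H^2}^2 + (\lmb_{\max}+|\vec z-\vec z^\ast|)\lmb_{\max}^{2D-2}$, which relies on the smallness $\lmb_{\max}=o_{n\to\infty}(1)$ from \eqref{eq:case1-lmb_max-is-o(1)} to kill the logarithmic factors.
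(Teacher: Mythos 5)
Your proposal is correct and follows essentially the same route as the paper: the paper's proof is precisely the substitution of Lemma~\ref{lem:case1-leading-frkr_a,i} together with $A_{ij}[\vec{z}]=A_{ij}^{\ast}+\calO(|\vec{z}-\vec{z}^{\ast}|)$ into \eqref{eq:non-coll-refined-mod-est}, with \eqref{eq:case1-f_i-bound} being a restatement of \eqref{eq:non-coll-refined-mod-est-corr-bound}. Your bookkeeping (using $\lmb_{\secmax}\aeq\lmb_{\max}$, Young's inequality on the cross term with $D\geq\tfrac{5}{2}$, and absorbing the logarithm) is exactly the implicit simplification the paper performs.
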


\begin{proof}
\eqref{eq:case1-refined-mod-est-lmb} and \eqref{eq:case1-refined-mod-est-z}
follow from substituting Lemma~\ref{lem:case1-leading-frkr_a,i}
and $A_{ij}[\vec{z}]=A_{ij}^{\ast}+\calO(|\vec{z}-\vec{z}^{\ast}|)$
into \eqref{eq:non-coll-refined-mod-est}. \eqref{eq:case1-f_i-bound}
is a restatement of \eqref{eq:non-coll-refined-mod-est-corr-bound}. 
\end{proof}
As in the previous section, we introduce corresponding refined modulation
parameters $\vec{\zeta}(t)\approx\vec{\lmb}(t)$ and $\vec{\xi}(t)\approx\vec{z}(t)$
as follows.
\begin{cor}[Modulation estimates in simple form integrated from $t_{n}$]
\label{cor:case1-simple-mod-est-integ-from-tn}There exist functions
$\zeta_{i}(t)$ and $\xi_{i}(t)$ on $[t_{n},T_{n})$ such that 
\begin{align}
\zeta_{i}(t) & =(1+o_{n\to\infty}(1))\lmb_{i}(t),\label{eq:case1-lmb-to-wh-lmb}\\
\xi_{i}(t) & =z_{i}(t)+o_{n\to\infty}(1),\label{eq:case1-z-to-wh-z}
\end{align}
and 
\begin{align}
\Big|\zeta_{i,t}-\sum_{j=1}^{J}A_{ij}^{\ast}\zeta_{i}^{D-1}\zeta_{j}^{D}\Big| & \aleq(|\vec{z}-\vec{z}^{\ast}|+o_{n\to\infty}(1))\zeta^{2D-1},\label{eq:case1-lmb-mod-est}\\
|\xi_{i,t}| & \aleq o_{n\to\infty}(1)\cdot\zeta^{2D-1},\label{eq:case1-z-mod-est}
\end{align}
where we denoted $\zeta\coloneqq\sqrt{\zeta_{1}^{2}+\dots+\zeta_{J}^{2}}\aeq\lmb_{\max}$.
In particular, we have 
\begin{equation}
|\rd_{t}\log\zeta|\aleq\zeta^{2D-2}.\label{eq:case1-log-lmb-est}
\end{equation}
\end{cor}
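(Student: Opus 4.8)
The plan is to absorb the exact-derivative term $\frac{d}{dt}f_i$ and the time-integrable error $\calO(\|g\|_{\dot H^2}^2)$ in the refined modulation estimate \eqref{eq:case1-refined-mod-est-lmb} into a logarithmic reparametrization of $\lmb_i$, exactly as in the proof of Corollary~\ref{cor:case2.1-mod-est-simple-form}. First I would rewrite \eqref{eq:case1-refined-mod-est-lmb} in the form
\[
\frac{\lmb_{i,t}}{\lmb_i} = -\frac{d}{dt}f_i + \sum_{j=1}^J A_{ij}^*\lmb_i^{D-2}\lmb_j^D + E_i^{(1)}(t) + E_i^{(2)}(t),
\]
where $E_i^{(1)}=\calO(\|g\|_{\dot H^2}^2)$ collects the $\dot H^2$-part of the error and $E_i^{(2)}=\calO((\lmb_{\max}+|\vec z-\vec z^*|)\lmb_{\max}^{2D-2})$, and then set
\[
\zeta_i(t) \coloneqq \lmb_i(t)\exp\Big(f_i(t) - \tint{t_n}t E_i^{(1)}(\tau)\,d\tau\Big),
\]
so that by construction $\zeta_{i,t}/\zeta_i = \sum_j A_{ij}^*\lmb_i^{D-2}\lmb_j^D + E_i^{(2)}$. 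Similarly, I would rewrite \eqref{eq:case1-refined-mod-est-z} as $z_{i,t} = \calO(\|g\|_{\dot H^2}^2) + \calO(\lmb_{\max}^{2D})$ (using $\lmb_i\leq\lmb_{\max}\leq1$), peel off the $\calO(\|g\|_{\dot H^2}^2)$-part, and let $\xi_i(t)$ be $z_i(t)$ with this part integrated away from $t_n$, so that $|\xi_{i,t}|\aleq\lmb_{\max}^{2D}$.

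The pointwise closeness \eqref{eq:case1-lmb-to-wh-lmb}--\eqref{eq:case1-z-to-wh-z} would then follow from two smallness facts, both \emph{uniform} in $t\in[t_n,T_n)$: that $|f_i(t)|\aleq\|g(t)\|_{\dot H^1}=o_{n\to\infty}(1)$ by \eqref{eq:case1-f_i-bound} and \eqref{eq:case1-g(t)-Hdot1-is-o(1)}, and that $\tint{t_n}t|E_i^{(1)}(\tau)|\,d\tau+|\xi_i(t)-z_i(t)| \aleq \tint{t_n}{T_n}\|g\|_{\dot H^2}^2\,dt = o_{n\to\infty}(1)$ by \eqref{eq:case1-spacetime-est}. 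Together these give $\log(\zeta_i/\lmb_i)=o_{n\to\infty}(1)$ and $\xi_i=z_i+o_{n\to\infty}(1)$.

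For the clean equations \eqref{eq:case1-lmb-mod-est}--\eqref{eq:case1-z-mod-est} I would substitute $\lmb_i=(1+o_{n\to\infty}(1))\zeta_i$ into the identities for $\zeta_{i,t}$ and $\xi_{i,t}$ obtained above: this converts $\zeta_i\cdot\lmb_i^{D-2}\lmb_j^D$ into $(1+o_{n\to\infty}(1))\zeta_i^{D-1}\zeta_j^D$, and using $\lmb_{\max}\aeq\zeta$ together with $\lmb_{\max}=o_{n\to\infty}(1)$ from \eqref{eq:case1-lmb_max-is-o(1)} it turns all leftover errors (the $o_{n\to\infty}(1)$ produced by the substitution, $\zeta_iE_i^{(2)}$, and $\lmb_{\max}^{2D}$) into $(|\vec z-\vec z^*|+o_{n\to\infty}(1))\zeta^{2D-1}$ and $o_{n\to\infty}(1)\cdot\zeta^{2D-1}$ respectively. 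Finally \eqref{eq:case1-log-lmb-est} is immediate: $\rd_t\log\zeta=\zeta^{-2}\sum_j\zeta_j\zeta_{j,t}$ with $|\zeta_{j,t}|\aleq\zeta^{2D-1}$ from \eqref{eq:case1-lmb-mod-est} (here $|\vec z-\vec z^*|<\dlt_2\aleq1$ and $A^*_{ij}\zeta_i^{D-1}\zeta_j^D=\calO(\zeta^{2D-1})$), so $|\rd_t\log\zeta|\aleq\zeta^{2D-2}$. There is no genuine obstacle here; the only point requiring care --- and the reason the spacetime bound \eqref{eq:case1-spacetime-est} is needed --- is that the quantities absorbed into $\zeta_i$ and $\xi_i$ must be small uniformly in $t$, not merely at $t=t_n$, which \eqref{eq:case1-spacetime-est} supplies for the $\|g\|_{\dot H^2}^2$-terms and \eqref{eq:case1-f_i-bound}--\eqref{eq:case1-g(t)-Hdot1-is-o(1)} supply for $f_i$.
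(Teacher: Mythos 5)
Your proposal is correct and follows essentially the same route as the paper: both absorb the exact-derivative term $f_i$ and the time-integrable $\calO(\|g\|_{\dot H^2}^2)$ error into the reparametrized $\zeta_i$ (resp.\ integrate the $\|g\|_{\dot H^2}^2$ part away to define $\xi_i$), using \eqref{eq:case1-f_i-bound}, \eqref{eq:case1-g(t)-Hdot1-is-o(1)}, and the spacetime control \eqref{eq:case1-spacetime-est} for uniform-in-time smallness, and then substitute $\lmb_i=(1+o_{n\to\infty}(1))\zeta_i$ together with $\lmb_{\max}=o_{n\to\infty}(1)$ to reach \eqref{eq:case1-lmb-mod-est}--\eqref{eq:case1-log-lmb-est}.
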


\begin{proof}
\uline{Proof of \mbox{\eqref{eq:case1-lmb-to-wh-lmb}} and \mbox{\eqref{eq:case1-lmb-mod-est}}}.
By \eqref{eq:case1-refined-mod-est-lmb}, \eqref{eq:case1-lmb_max-is-o(1)},
and \eqref{eq:case1-g(t)-Hdot1-is-o(1)}, we have 
\[
\frac{d}{dt}\Big\{\log\lmb_{i}+o_{n\to\infty}(1)\Big\}-\frac{1}{\lmb_{i}^{2}}\sum_{j=1}^{J}A_{ij}^{\ast}\lmb_{i}^{D}\lmb_{j}^{D}=\calO(\|g\|_{\dot{H}^{2}}^{2})+\calO((|\vec{z}-\vec{z}^{\ast}|+o_{n\to\infty}(1))\lmb_{\max}^{2D-2}).
\]
Integrating the term $\calO(\|g\|_{\dot{H}^{2}}^{2})$ using the spacetime
control \eqref{eq:case1-spacetime-est}, we get \eqref{eq:case1-lmb-to-wh-lmb}
and \eqref{eq:case1-lmb-mod-est}.

\uline{Proof of \mbox{\eqref{eq:case1-z-to-wh-z}} and \mbox{\eqref{eq:case1-z-mod-est}}}.
Express \eqref{eq:case1-refined-mod-est-z} in the form 
\[
|z_{i,t}|=\calO(\lmb_{i}\|g\|_{\dot{H}^{2}}^{2}+\lmb_{i}\lmb_{\max}^{2D-1})=\calO(\|g\|_{\dot{H}^{2}}^{2})+o_{n\to\infty}(1)\cdot\lmb_{\max}^{2D}.
\]
Proceeding as in the previous paragraph, we get \eqref{eq:case1-z-to-wh-z}
and \eqref{eq:case1-z-mod-est}.

\uline{Proof of \mbox{\eqref{eq:case1-log-lmb-est}}}. This is
clear from \eqref{eq:case1-lmb-mod-est}.
\end{proof}

\subsection{Control of $\vec{z}(t)$ and proof of $T_{n}=+\infty$}

In order to close the bootstrap (i.e., to show $T_{n}=+\infty$),
by \eqref{eq:case1-lmb_max-is-o(1)} and \eqref{eq:case1-g(t)-Hdot1-is-o(1)},
it remains to prove 
\[
\sup_{t\in[t_{n},T_{n})}|\vec{z}(t)-\vec{z}^{\ast}|=o_{n\to\infty}(1).
\]
The proof of this $\vec{z}$-control is a little tricky. We begin
with an inequality that exploits non-degeneracy in the equations of
$\lmb_{i,t}$.
\begin{lem}[Key inequality]
We have 
\begin{equation}
\rd_{t}\lan\vec{\zeta}^{D},A^{\ast}\vec{\zeta}^{D}\ran\aeq\zeta^{4D-2}.\label{eq:case1-A-lmb^D-lmb^D-diff-ineq}
\end{equation}
In particular, we have 
\begin{align}
\int_{\tau_{1}}^{\tau_{2}}\zeta^{4D-2}dt & \aleq\zeta^{2D}(\tau_{1})+\zeta^{2D}(\tau_{2})\text{ for any }\tau_{1},\tau_{2}\in[t_{n},T_{n})\text{ with }\tau_{1}<\tau_{2}.\label{eq:case1-int-ineq-lmb_max^4D-2}
\end{align}
\end{lem}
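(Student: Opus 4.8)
The plan is to establish a coercivity-type lower bound for $\rd_t\lan\vec{\zeta}^{D},A^{\ast}\vec{\zeta}^{D}\ran$ and then integrate. First I would compute the time derivative using the chain rule: since $\rd_t(\zeta_i^D) = D\zeta_i^{D-1}\zeta_{i,t}$, we get
\[
\rd_t\lan\vec{\zeta}^{D},A^{\ast}\vec{\zeta}^{D}\ran = 2D\sum_{i}\zeta_i^{D-1}\zeta_{i,t}(A^{\ast}\vec{\zeta}^{D})_i
\]
using the symmetry of $A^{\ast}$. Then I would substitute the modulation estimate \eqref{eq:case1-lmb-mod-est}, which says $\zeta_{i,t} = \sum_j A_{ij}^{\ast}\zeta_i^{D-1}\zeta_j^D + \calO((|\vec{z}-\vec{z}^{\ast}|+o_{n\to\infty}(1))\zeta^{2D-1})$. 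The main term becomes
\[
2D\sum_i \zeta_i^{2D-2}(A^{\ast}\vec{\zeta}^{D})_i^2,
\]
which is a sum of squares and hence nonnegative; by \eqref{eq:case1-calD-non-degen} of Lemma~\ref{lem:distance-from-degen} (applicable because \eqref{eq:5.1} holds and $\vec{z}$ is close to $\vec{z}^{\ast}$), this is $\aeq \zeta_{\max}^{4D-2}\aeq\zeta^{4D-2}$. The error term from the modulation estimate contributes at most
\[
\calO\Big((|\vec{z}-\vec{z}^{\ast}|+o_{n\to\infty}(1))\zeta^{2D-1}\cdot\sum_i\zeta_i^{D-1}|(A^{\ast}\vec{\zeta}^{D})_i|\Big) = \calO\big((\dlt_2 + o_{n\to\infty}(1))\zeta^{4D-2}\big),
\]
since $|(A^{\ast}\vec{\zeta}^{D})_i|\aleq\zeta^D$ and $\zeta_i^{D-1}\aleq\zeta^{D-1}$, summing to $\calO(\zeta^{2D-1})$. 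Choosing $\dlt_2$ small and $n$ large, this error is absorbed into the positive main term, giving \eqref{eq:case1-A-lmb^D-lmb^D-diff-ineq}.

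For the integrated inequality \eqref{eq:case1-int-ineq-lmb_max^4D-2}, I would integrate \eqref{eq:case1-A-lmb^D-lmb^D-diff-ineq} over $[\tau_1,\tau_2]$ to obtain
\[
\int_{\tau_1}^{\tau_2}\zeta^{4D-2}\,dt \aleq \lan\vec{\zeta}^{D},A^{\ast}\vec{\zeta}^{D}\ran(\tau_2) - \lan\vec{\zeta}^{D},A^{\ast}\vec{\zeta}^{D}\ran(\tau_1) \aleq |\lan\vec{\zeta}^{D},A^{\ast}\vec{\zeta}^{D}\ran(\tau_1)| + |\lan\vec{\zeta}^{D},A^{\ast}\vec{\zeta}^{D}\ran(\tau_2)|.
\]
Then the trivial bound $|\lan\vec{\zeta}^{D},A^{\ast}\vec{\zeta}^{D}\ran| \aleq |\vec{\zeta}^{D}|\cdot|A^{\ast}\vec{\zeta}^{D}| \aleq |\vec{\zeta}^{D}|^2 \aeq \zeta^{2D}$ (using $\|A^{\ast}\|\aleq 1$) finishes the proof.

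The only genuine subtlety is bookkeeping the error term against the positive definite main term: one must verify that the quadratic form $\sum_i\zeta_i^{2D-2}(A^{\ast}\vec{\zeta}^{D})_i^2$ genuinely dominates the linear-in-error cross term, which works because the error carries a small factor $(\dlt_2 + o_{n\to\infty}(1))$ while both terms scale like $\zeta^{4D-2}$. This is where Lemma~\ref{lem:distance-from-degen} does the essential work: without the non-degeneracy hypothesis \eqref{eq:5.1} the main term could vanish and no such lower bound would hold. I do not expect any further obstacle; the remaining manipulations are routine applications of Cauchy--Schwarz and the already-established estimates.
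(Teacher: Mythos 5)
Your proposal is correct and follows essentially the same route as the paper: differentiate using the symmetry of $A^{\ast}$, insert \eqref{eq:case1-lmb-mod-est} to expose the positive main term $2D\sum_i\zeta_i^{2D-2}(A^{\ast}\vec{\zeta}^{D})_i^2$, invoke the non-degeneracy \eqref{eq:case1-calD-non-degen} to bound it below by $\zeta^{4D-2}$ while absorbing the $\calO((\dlt_2+o_{n\to\infty}(1))\zeta^{4D-2})$ error, and then integrate together with the trivial bound $|\lan\vec{\zeta}^{D},A^{\ast}\vec{\zeta}^{D}\ran|\aleq\zeta^{2D}$. The only cosmetic remark is that Lemma~\ref{lem:distance-from-degen} itself needs only \eqref{eq:5.1} (transferred from $\vec{\lmb}$ to $\vec{\zeta}$ via \eqref{eq:case1-lmb-to-wh-lmb}); the closeness of $\vec{z}$ to $\vec{z}^{\ast}$ enters only through the error term of the modulation estimate, exactly as you use it.
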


\begin{proof}
\uline{Proof of \mbox{\eqref{eq:case1-A-lmb^D-lmb^D-diff-ineq}}}.
By the symmetry of $A^{\ast}$, \eqref{eq:case1-lmb-mod-est}, and
$|\vec{z}-\vec{z}^{\ast}|<\dlt_{2}$, we get 
\begin{align*}
\rd_{t}\lan\vec{\zeta}^{D},A^{\ast}\vec{\zeta}^{D}\ran=2\lan A^{\ast}\vec{\zeta}^{D},\rd_{t}(\vec{\zeta}^{D})\ran & =2D\tsum i{}(A^{\ast}\vec{\zeta}^{D})_{i}\cdot\{\zeta_{i}^{2D-2}(A^{\ast}\vec{\zeta}^{D})_{i}+\dlt_{2}\cdot\calO(\zeta^{3D-2})\}\\
 & =2D\tsum i{}(A^{\ast}\vec{\zeta}^{D})_{i}^{2}\zeta_{i}^{2D-2}+\dlt_{2}\cdot\calO(\zeta^{4D-2}).
\end{align*}
Applying \eqref{eq:case1-calD-non-degen} to the first term, we get
\eqref{eq:case1-A-lmb^D-lmb^D-diff-ineq}. 

\uline{Proof of \mbox{\eqref{eq:case1-int-ineq-lmb_max^4D-2}}}.
This follows from integrating \eqref{eq:case1-A-lmb^D-lmb^D-diff-ineq}
and $\lan\vec{\zeta}^{D},A^{\ast}\vec{\zeta}^{D}\ran=\calO(\zeta^{2D})$.
\end{proof}
\begin{lem}[Control of $\vec{z}(t)$]
\label{lem:case1-control-z}We have 
\begin{align}
\int_{\tau_{1}}^{\tau_{2}}\zeta^{2D-1}dt & \aleq\zeta(\tau_{1})+\zeta(\tau_{2})\text{ for any }\tau_{1},\tau_{2}\in[t_{n},T_{n})\text{ with }\tau_{1}<\tau_{2}.\label{eq:case1-int-ineq-lmb_max^2D-1}
\end{align}
In particular, we have 
\begin{equation}
\sup_{t\in[t_{n},T_{n})}|\vec{z}(t)-\vec{z}^{\ast}|+\int_{t_{n}}^{T_{n}}\zeta^{2D-1}dt=o_{n\to\infty}(1).\label{eq:case1-z-control}
\end{equation}
\end{lem}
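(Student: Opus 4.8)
The plan is to first reduce closing the bootstrap to the integral bound of the first display, then prove that bound by a monotonicity argument, and finally deduce the $\vec z$–control.

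First I would note that, by \eqref{eq:case1-lmb_max-is-o(1)} and \eqref{eq:case1-g(t)-Hdot1-is-o(1)}, the defining conditions $\lmb_{\max}(t)<1$, $\|g(t)\|_{\dot H^1}<\alp$, $u(t)\in\calT_J(\alp)$ of $T_n=T_n(1,\dlt_2,\alp)$ are propagated with an $o_{n\to\infty}(1)$ margin on $[t_n,T_n)$ (for the last one also use $\|\td U\|_{\dot H^1}\aleq\lmb_{\max}^{(N+2)/2}$ and $R^{-1}\aeq\lmb_{\max}$). Hence, to obtain $T_n=+\infty$, it suffices to propagate $|\vec z(t)-\vec z^\ast|<\dlt_2$. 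Integrating \eqref{eq:case1-z-mod-est} for the modified translation parameters, $|\xi_{i,t}|\aleq o_{n\to\infty}(1)\cdot\zeta^{2D-1}$, gives $|\vec\xi(t)-\vec\xi(t_n)|\aleq o_{n\to\infty}(1)\int_{t_n}^t\zeta^{2D-1}\,d\tau$, so everything reduces to the integral inequality $\int_{\tau_1}^{\tau_2}\zeta^{2D-1}\,dt\aleq\zeta(\tau_1)+\zeta(\tau_2)$. Taking $\tau_1=t_n$ and letting $\tau_2\nearrow T_n$, this together with \eqref{eq:case1-lmb_max-is-o(1)} yields $\int_{t_n}^{T_n}\zeta^{2D-1}\,dt=o_{n\to\infty}(1)$, hence $\sup_{[t_n,T_n)}|\vec\xi-\vec\xi(t_n)|=o_{n\to\infty}(1)$; combined with \eqref{eq:case1-z-to-wh-z} and $\vec\xi(t_n)=\vec z(t_n)+o_{n\to\infty}(1)\to\vec z^\ast$ this gives $\sup_{[t_n,T_n)}|\vec z-\vec z^\ast|=o_{n\to\infty}(1)$, so $T_n=+\infty$ for large $n$, which also establishes the second display.

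The heart of the matter is the integral inequality, proved as follows. Set $\Phi(t):=\lan\vec\zeta^D(t),A^\ast\vec\zeta^D(t)\ran$. By the key inequality \eqref{eq:case1-A-lmb^D-lmb^D-diff-ineq}, $\Phi$ is increasing with $\rd_t\Phi\aeq\zeta^{4D-2}$ (the lower bound using the non-degeneracy through \eqref{eq:case1-calD-non-degen}), while trivially $|\Phi|\aleq\zeta^{2D}$; moreover \eqref{eq:case1-lmb-mod-est} and $|\vec z-\vec z^\ast|<\dlt_2$ give $\rd_t(\zeta^2)=2\Phi+\calO\big((\dlt_2+o_{n\to\infty}(1))\zeta^{2D}\big)$. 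Fix a small universal $\eta>0$ and shrink $\dlt_2,\alp$ (and take $n$ large) so that the $\calO$-constant times $\dlt_2+\sup o_{n\to\infty}(1)$ is $\ll\eta$. On the open set $G:=\{t:\Phi(t)<-\eta\zeta^{2D}(t)\}$ one has $\rd_t\zeta\le-\tfrac{\eta}{4}\zeta^{2D-1}$, so $\zeta$ is strictly decreasing there with $|\rd_t\zeta|\aeq\zeta^{2D-1}$, and the change of variables $\int_G\zeta^{2D-1}\,dt=\int_G\zeta^{2D-1}\,|dt/d\zeta|\,d\zeta\aleq\int d\zeta$ bounds this part by $\aleq\zeta(\tau_1)+\zeta(\tau_2)$. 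On the complement one exploits that at every local extremum of $\zeta$ one has $\Phi=\calO\big((\dlt_2+o_{n\to\infty}(1))\zeta^{2D}\big)$ (since $\rd_t(\zeta^2)=0$ there), that $\Phi$ is monotone increasing, and that $\rd_t\Phi\ageq\zeta^{4D-2}$: these pin the trajectory so that between a local maximum and the next local minimum the ratio of the two values of $\zeta$ is either close to $1$ (a negligible fluctuation) or small (a geometric drop), whence the increasing stretches of $\zeta$ contribute only $\aleq\zeta(\tau_1)$ to $\int\zeta^{2D-1}\,dt$ by a dyadic summation over scales. Equivalently, one may pass to the self-similar time $ds=\zeta^{2D-2}\,dt$, where the reduced flow is the Cortázar--del Pino--Musso gradient-type flow with Lyapunov functional $\calF(\vec\mu)=\tfrac{1}{2}\lan\vec\mu^D,A^\ast\vec\mu^D\ran+\tfrac{D}{2(2D-2)}|\vec\mu|^2$, whose monotonicity confines the phase where $\vec\zeta^D/|\vec\zeta^D|$ is far from the fixed-point set of \eqref{eq:main-thm-nondegen-ratio-set} (on which $\Phi<0$) to a bounded $s$-interval starting near $\tau_1$, on which $\int\zeta^{2D-1}\,dt=\int\zeta\,ds\aleq\zeta(\tau_1)$.

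The step I expect to be the main obstacle is exactly this last one — ruling out slow oscillations of $\zeta$ on the complement of $G$, where $\Phi$ is not yet negative enough to force monotone decay, so that one has to combine the monotonicity and the quantitative lower bound $\rd_t\Phi\ageq\zeta^{4D-2}$ (equivalently, the structure of the self-similar Lyapunov functional) to confine this transient regime. Once the integral inequality is established, the rest is the short deduction carried out in the second paragraph.
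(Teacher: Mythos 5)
Your reduction of \eqref{eq:case1-z-control} to the integral inequality \eqref{eq:case1-int-ineq-lmb_max^2D-1} (integrate \eqref{eq:case1-z-mod-est}, then use \eqref{eq:case1-z-to-wh-z} and \eqref{eq:case1-lmb_max-is-o(1)} with $\tau_1=t_n$, $\tau_2\nearrow T_n$) is exactly the paper's deduction and is fine; note only that this lemma does not yet claim $T_n=+\infty$, which is the subsequent corollary. The problem is the proof of \eqref{eq:case1-int-ineq-lmb_max^2D-1} itself, which is the heart of the lemma and which you leave open at precisely the step you flag as the ``main obstacle.'' On the set $G=\{\Phi<-\eta\zeta^{2D}\}$ your change of variables only bounds $\int_G\zeta^{2D-1}\,dt$ by $\eta^{-1}$ times the \emph{total decrease} of $\zeta$ over $G$, and bounding that total decrease by $\zeta(\tau_1)+\zeta(\tau_2)$ already requires controlling the increasing stretches elsewhere; and on the complement your two suggestions do not constitute a proof. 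The local-extrema dichotomy plus ``dyadic summation over scales'' is asserted, not derived (nothing in what you wrote rules out many comparable-scale excursions a priori). The ``equivalent'' self-similar Lyapunov argument is not available at this stage: the functional governing \eqref{eq:main-thm-nondegen-ratio-set} only appears after the additional rescaling by $t^{1/(2D-2)}$, i.e.\ it presupposes the very rate $\zeta\aeq t^{-1/(2D-2)}$ (equivalently $s\aeq\log t$) that has not yet been established when closing the bootstrap; with only $ds=\zeta^{2D-2}dt$ the reduced flow is not the gradient flow of that functional, and even granting a bounded $s$-interval for the transient, $\int\zeta\,ds\aleq\zeta(\tau_1)$ does not follow since $\zeta$ may grow on it.

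The paper avoids all of this with a short telescoping device: integrating \eqref{eq:case1-A-lmb^D-lmb^D-diff-ineq} gives \eqref{eq:case1-int-ineq-lmb_max^4D-2}, i.e.\ $\int_{\tau_1}^{\tau_2}\zeta^{4D-2}\aleq\zeta^{2D}(\tau_1)+\zeta^{2D}(\tau_2)$; then, splitting at a time $\tau'$ where $\zeta$ attains its minimum on $[\tau_1,\tau_2]$ and setting $\vphi(t)=\zeta^{2D}(\tau')+\int_t^{\tau'}\zeta^{4D-2}$ on $[\tau_1,\tau']$ (and its mirror on $[\tau',\tau_2]$), one has $\vphi\aleq\zeta^{2D}$ and $-\vphi_t=\zeta^{4D-2}$, so $\zeta^{2D-1}\aleq-\vphi_t/\vphi^{(2D-1)/2D}\aeq-(\vphi^{1/2D})_t$, which integrates to $\zeta(\tau_1)$ (resp.\ $\zeta(\tau_2)$); no discussion of oscillations or of the sign of $\Phi$ is needed. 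Your $\Phi$-based setup can in fact be completed in the same spirit, using that $\Phi$ is monotone increasing (hence changes sign at most once) together with $|\Phi|\aleq\zeta^{2D}$ and $\rd_t\Phi\aeq\zeta^{4D-2}$, so that $\zeta^{2D-1}\aleq\rd_t\Phi/|\Phi|^{(2D-1)/2D}\aeq\pm\rd_t(|\Phi|^{1/2D})$ telescopes on each of the two sign intervals; but as written your proposal contains neither this nor any other complete argument for \eqref{eq:case1-int-ineq-lmb_max^2D-1}, so there is a genuine gap.
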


\begin{proof}
\uline{Proof of \mbox{\eqref{eq:case1-int-ineq-lmb_max^2D-1}}}.
Choose $\tau'\in[\tau_{1},\tau_{2}]$ such that $\zeta(\tau')=\inf_{\tau\in[\tau_{1},\tau_{2}]}\zeta(\tau)$.
On the interval $[\tau_{1},\tau']$, consider the function $\vphi(t)=\zeta^{2D}(\tau')+\int_{t}^{\tau'}\zeta^{4D-2}(t')dt'>0$.
By \eqref{eq:case1-int-ineq-lmb_max^4D-2}, $\vphi(t)$ satisfies
\[
\vphi(t)\aleq\zeta^{2D}(t)\qquad\text{and}\qquad-\vphi_{t}(t)=\zeta^{4D-2}(t).
\]
This implies 
\begin{equation}
\int_{\tau_{1}}^{\tau'}\zeta^{2D-1}dt\aleq\int_{\tau_{1}}^{\tau'}\frac{-\vphi_{t}}{\vphi^{(2D-1)/2D}}dt\aeq\int_{\tau_{1}}^{\tau'}(-\vphi^{1/2D})_{t}dt\aleq\vphi^{1/2D}(\tau_{1})\aleq\zeta(\tau_{1}).\label{eq:case1-5.37}
\end{equation}
On the interval $[\tau',\tau_{2}]$, similarly consider the function
$\vphi(t)=\zeta^{2D}(\tau')+\int_{\tau'}^{t}\zeta^{4D-2}(t')dt'>0$
and observe 
\[
\vphi(t)\aleq\zeta^{2D}(t)\qquad\text{and}\qquad\vphi_{t}(t)=\zeta^{4D-2}(t).
\]
This implies 
\begin{equation}
\int_{\tau'}^{\tau_{2}}\zeta^{2D-1}dt\aleq\int_{\tau'}^{\tau_{2}}\frac{\vphi_{t}}{\vphi^{(2D-1)/2D}}dt\aeq\int_{\tau'}^{\tau_{2}}(\vphi^{1/2D})_{t}dt\aleq\vphi^{1/2D}(\tau_{2})\aleq\zeta(\tau_{2}).\label{eq:case1-5.38}
\end{equation}
\eqref{eq:case1-int-ineq-lmb_max^2D-1} now follows from \eqref{eq:case1-5.37}
and \eqref{eq:case1-5.38}.

\uline{Proof of \mbox{\eqref{eq:case1-z-control}}}. By \eqref{eq:case1-z-mod-est}
and \eqref{eq:case1-int-ineq-lmb_max^2D-1}, we get 
\[
\sup_{t\in[t_{n},T_{n})}|\vec{\xi}(t)-\vec{z}^{\ast}|+\int_{t_{n}}^{T_{n}}\zeta^{2D-1}dt\aleq|\vec{\xi}(t_{n})-\vec{z}^{\ast}|+\zeta(t_{n})+\limsup_{t\to T_{n}}\zeta(t).
\]
Applying \eqref{eq:case1-z-to-wh-z}, $|\vec{z}(t_{n})-\vec{z}^{\ast}|=o_{n\to\infty}(1)$,
and \eqref{eq:case1-lmb_max-is-o(1)} to the above, we get \eqref{eq:case1-z-control}. 
\end{proof}
\begin{cor}[Closing bootstrap]
\label{cor:case1-closing-bootstrap}We have $T_{n}=+\infty$ for
all large $n$. Moreover, 
\begin{equation}
\|g(t)\|_{\dot{H}^{1}}+\lmb_{\max}(t)+|\vec{z}(t)-\vec{z}^{\ast}|\to0\qquad\text{ as }t\to+\infty.\label{eq:case1-all-go-to-zero}
\end{equation}
\end{cor}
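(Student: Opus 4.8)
The plan is to extract the corollary from the three supremum bounds \eqref{eq:case1-lmb_max-is-o(1)}, \eqref{eq:case1-g(t)-Hdot1-is-o(1)}, and \eqref{eq:case1-z-control} by a routine bootstrap-closing argument followed by an elementary passage from $o_{n\to\infty}(1)$ bounds to $o_{t\to+\infty}(1)$ convergence; all the analytic content has already been spent in those three estimates.

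First I would prove $T_{n}=+\infty$ for all large $n$. Suppose not; after passing to a subsequence, $T_{n}<+\infty$ for every $n$. Using the decomposition $u=U+g=\calW(\vec{\iota},\vec{\lmb},\vec{z})+\td U+g$ together with $\|\td U\|_{\dot{H}^{1}}\aleq\lmb_{\max}^{(N+2)/2}$ from \eqref{eq:case1-tdU-est} and with \eqref{eq:case1-lmb_max-is-o(1)}, \eqref{eq:case1-g(t)-Hdot1-is-o(1)}, \eqref{eq:case1-z-control}, one gets
\[
\sup_{t\in[t_{n},T_{n})}\Big(\|u(t)-\calW(\vec{\iota},\vec{\lmb}(t),\vec{z}(t))\|_{\dot{H}^{1}}+R^{-1}(t)+\lmb_{\max}(t)+|\vec{z}(t)-\vec{z}^{\ast}|\Big)=o_{n\to\infty}(1).
\]
Since $T_{n}\leq T_{n}'\leq\Ttbexit_{n}(\alp)$, the modulation curve and $u(\cdot)$ are continuous up to and including $t=T_{n}$, so the same bound holds at $t=T_{n}$. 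For $n$ large it is smaller than $\min\{1/2,\dlt_{2}/2,\alp/2\}$, hence $u(T_{n})\in\calT_{J}(\alp/2)$ — which, by openness of $\calT_{J}(\alp)$, forces $T_{n}\neq\Ttbexit_{n}(\alp)$ — and moreover $\lmb_{\max}(T_{n})<1$ and $|\vec{z}(T_{n})-\vec{z}^{\ast}|<\dlt_{2}$. This contradicts the exit conditions built into the definition \eqref{eq:def-T_n(K,delta,alpha)} of $T_{n}=T_{n}(1,\dlt_{2},\alp)$. Hence $T_{n}=+\infty$ for all large $n$, and consequently $T_{n}'=+\infty$ as well since $t_{n}<T_{n}\leq T_{n}'$.

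Next I would pass to the limit in $t$. Fix $n_{0}$ with $T_{n}=+\infty$ for all $n\geq n_{0}$. For $n_{0}\leq n_{1}<n_{2}$, the curves obtained from base points $t_{n_{1}}$ and $t_{n_{2}}$ are both continuous curves in $\br{\calP_{J}(2\alp)}$ on $[t_{n_{2}},+\infty)$ satisfying the orthogonality conditions \eqref{eq:curve-orthog}; since $z_{i,n}\to z_{i}^{\ast}$ are distinct, the bubble labeling is forced for large $n$, so the uniqueness statement of Lemma~\ref{lem:curve-modulation} makes them coincide on $[t_{n_{2}},+\infty)$. Thus $(\vec{\lmb},\vec{z})$ is a single continuous curve on $[t_{n_{0}},+\infty)$, and the resolution \eqref{eq:main1-conti-time-res-with-z-conv} holds once $\vec{z}(t)\to\vec{z}^{\ast}$ is shown. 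Now \eqref{eq:case1-lmb_max-is-o(1)} (with $T_{n}=+\infty$) says $\sup_{t\geq t_{n}}\lmb_{\max}(t)\to0$ as $n\to\infty$; hence given $\eps>0$, choosing $n$ with $\sup_{t\geq t_{n}}\lmb_{\max}(t)<\eps$ yields $\limsup_{t\to+\infty}\lmb_{\max}(t)\leq\eps$, so $\lmb_{\max}(t)\to0$. The identical argument applied to \eqref{eq:case1-g(t)-Hdot1-is-o(1)} and to \eqref{eq:case1-z-control} gives $\|g(t)\|_{\dot{H}^{1}}\to0$ and $|\vec{z}(t)-\vec{z}^{\ast}|\to0$, which is \eqref{eq:case1-all-go-to-zero}.

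The only genuine point requiring care here is the consistency of the modulation decomposition across the different base times $t_{n}$ — without it the statement "$\lmb_{\max}(t)\to0$" would not even refer to a single quantity — and this is dispatched by the forced labeling of the bubbles near the distinct points $z_{i}^{\ast}$ together with the uniqueness clause of Lemma~\ref{lem:curve-modulation}; the rest is the continuity-up-to-$T_{n}$ bookkeeping used to exclude a finite exit time. I do not anticipate any real obstacle in this step.
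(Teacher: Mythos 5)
Your proposal is correct and takes essentially the same route as the paper, which likewise closes the bootstrap by playing the three sup-bounds \eqref{eq:case1-lmb_max-is-o(1)}, \eqref{eq:case1-g(t)-Hdot1-is-o(1)}, \eqref{eq:case1-z-control} against the exit conditions (showing $T_{n}=\Ttbexit_{n}(\alp)=T_{n}'=+\infty$) and then reads off \eqref{eq:case1-all-go-to-zero} from the same controls with $t_{n}\to+\infty$. Two minor remarks: the ordering is $T_{n}\leq\Ttbexit_{n}(\alp)\leq T_{n}'$, not $T_{n}\leq T_{n}'\leq\Ttbexit_{n}(\alp)$ (harmless, since you only use that $T_{n}$ is below both), and your explicit gluing of the modulation decompositions across the base times $t_{n}$ via the uniqueness clause of Lemma~\ref{lem:curve-modulation} makes precise a consistency point the paper leaves implicit.
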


\begin{proof}
For the proof of $T_{n}=+\infty$, recall $T_{n}=T_{n}(1,\dlt_{2},\alp)\leq\Ttbexit_{n}(\alp)\leq T_{n}'$
and the exit condition for $T_{n}'$ (see Proposition~\ref{prop:case-separation}).
By \eqref{eq:case1-lmb_max-is-o(1)} and \eqref{eq:case1-z-control},
$T_{n}=\Ttbexit_{n}(\alp)$. By \eqref{eq:case1-lmb_max-is-o(1)}
and \eqref{eq:case1-g(t)-Hdot1-is-o(1)}, $\Ttbexit_{n}(\alp)=T_{n}'$.
By all these controls \eqref{eq:case1-lmb_max-is-o(1)}, \eqref{eq:case1-g(t)-Hdot1-is-o(1)},
\eqref{eq:case1-z-control}, and the exit condition for $T_{n}'$,
we get $T_{n}'=+\infty$. These controls also imply \eqref{eq:case1-all-go-to-zero}.
\end{proof}
\begin{cor}[Modulation estimates in simple form integrated from $+\infty$]
There exist functions $\zeta_{i}(t)$ and $\xi_{i}(t)$ defined for
all large $t$ such that 
\begin{align}
\zeta_{i}(t) & =(1+o(1))\lmb_{i}(t),\label{eq:case1-lmb-to-wh-lmb-1}\\
\xi_{i}(t) & =z_{i}(t)+o(1)\cdot\sup_{\tau\in[t,+\infty)}\zeta_{\max}(\tau),\label{eq:case1-z-to-wh-z-1}
\end{align}
and 
\begin{align}
\Big|\frac{\zeta_{i,t}}{\zeta_{i}}-\sum_{j=1}^{J}(A_{ij}^{\ast}+o(1))\zeta_{i}^{D-2}\zeta_{j}^{D}\Big| & \aleq(\zeta+|\vec{z}-\vec{z}^{\ast}|)\zeta^{2D-2},\label{eq:case1-lmb-mod-est-1}\\
|\xi_{i,t}| & \aleq\zeta^{2D},\label{eq:case1-z-mod-est-1}
\end{align}
where we denoted $\zeta\coloneqq\sqrt{\zeta_{1}^{2}+\dots+\zeta_{J}^{2}}\aeq\lmb_{\max}$.
Moreover, we have \eqref{eq:case1-log-lmb-est}, \eqref{eq:case1-A-lmb^D-lmb^D-diff-ineq},
and \eqref{eq:case1-int-ineq-lmb_max^2D-1} valid as well.
\end{cor}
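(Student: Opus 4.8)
The plan is to rerun the proof of Corollary~\ref{cor:case1-simple-mod-est-integ-from-tn}, but now integrating the refined modulation estimates of Proposition~\ref{prop:case1-modulation-est} \emph{backward from $t=+\infty$}. This becomes legitimate once Corollary~\ref{cor:case1-closing-bootstrap} is in hand: since $T_{n}=+\infty$ for all large $n$, every estimate of this section holds for all large $t$; the spacetime control \eqref{eq:case1-spacetime-est} upgrades to $\int_{t}^{+\infty}(\lmb_{\max}^{4D-2}+\|g\|_{\dot{H}^{2}}^{2})\,d\tau=o(1)$ as $t\to+\infty$; every $o_{n\to\infty}(1)$ appearing in Section~\ref{sec:Case1} turns into a genuine $o_{t\to+\infty}(1)=o(1)$ by \eqref{eq:case1-all-go-to-zero}; and $\zeta$, $|\vec{z}-\vec{z}^{\ast}|$, and the correction $f_{i}$ of \eqref{eq:case1-f_i-bound} are all $o(1)$. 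I would also record at the outset that \eqref{eq:case1-int-ineq-lmb_max^2D-1} (valid for $\lmb_{\max}\aeq\zeta$), together with $\lmb_{\max}(t)\to0$, gives $\int_{t}^{+\infty}\lmb_{\max}^{2D-1}\,d\tau\aleq\lmb_{\max}(t)=o(1)$, so that the relevant backward tails are finite.

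To construct $\zeta_{i}$, I would split the error in \eqref{eq:case1-refined-mod-est-lmb} into a time-integrable part $\calO(\|g\|_{\dot{H}^{2}}^{2})$ and the small (not necessarily integrable) remainder $\calO((\lmb_{\max}+|\vec{z}-\vec{z}^{\ast}|)\lmb_{\max}^{2D-2})$ that we want to keep, and set
\[
\zeta_{i}(t):=\lmb_{i}(t)\exp\Big\{f_{i}(t)+\int_{t}^{+\infty}\calO(\|g(\tau)\|_{\dot{H}^{2}}^{2})\,d\tau\Big\}.
\]
Because $f_{i}=o(1)$ and the backward integral is $o(1)$ by the spacetime control, this yields \eqref{eq:case1-lmb-to-wh-lmb-1}. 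Differentiating and inserting \eqref{eq:case1-refined-mod-est-lmb},
\[
\frac{\zeta_{i,t}}{\zeta_{i}}=\frac{d}{dt}(\log\lmb_{i}+f_{i})-\calO(\|g\|_{\dot{H}^{2}}^{2})=\sum_{j=1}^{J}A_{ij}^{\ast}\lmb_{i}^{D-2}\lmb_{j}^{D}+\calO\big((\lmb_{\max}+|\vec{z}-\vec{z}^{\ast}|)\lmb_{\max}^{2D-2}\big);
\]
replacing each $\lmb_{k}$ by $\zeta_{k}$ via \eqref{eq:case1-lmb-to-wh-lmb-1} (a relative error $o(1)$, absorbed into the coefficient, whence the $A_{ij}^{\ast}+o(1)$ in the statement) and using $\lmb_{\max}\aeq\zeta$ gives \eqref{eq:case1-lmb-mod-est-1}, from which \eqref{eq:case1-log-lmb-est} is immediate.

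For $\xi_{i}$ I would rewrite \eqref{eq:case1-refined-mod-est-z} as $z_{i,t}=\calO(\lmb_{i}\|g\|_{\dot{H}^{2}}^{2})+\calO(\lmb_{i}\lmb_{\max}^{2D-1})$ and set $\xi_{i}(t):=z_{i}(t)+\int_{t}^{+\infty}\calO(\lmb_{i}(\tau)\|g(\tau)\|_{\dot{H}^{2}}^{2})\,d\tau$. Then $\xi_{i,t}=z_{i,t}-\calO(\lmb_{i}\|g\|_{\dot{H}^{2}}^{2})=\calO(\lmb_{i}\lmb_{\max}^{2D-1})=\calO(\zeta^{2D})$, which is \eqref{eq:case1-z-mod-est-1}, while
\[
|\xi_{i}(t)-z_{i}(t)|\aleq\sup_{\tau\in[t,+\infty)}\lmb_{i}(\tau)\cdot\int_{t}^{+\infty}\|g(\tau)\|_{\dot{H}^{2}}^{2}\,d\tau=o(1)\cdot\sup_{\tau\in[t,+\infty)}\zeta_{\max}(\tau),
\]
which is \eqref{eq:case1-z-to-wh-z-1}. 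Finally, \eqref{eq:case1-log-lmb-est}, \eqref{eq:case1-A-lmb^D-lmb^D-diff-ineq}, and \eqref{eq:case1-int-ineq-lmb_max^2D-1} for the new $\zeta_{i}$ are, after the harmless renaming $\zeta_{i}^{\mathrm{new}}=(1+o(1))\zeta_{i}^{\mathrm{old}}$, exactly the inequalities already established in and before Lemma~\ref{lem:case1-control-z}, whose proofs used only \eqref{eq:case1-lmb-mod-est} and $|\vec{z}-\vec{z}^{\ast}|<\dlt_{2}$, both now available in the form \eqref{eq:case1-lmb-mod-est-1}, so they transfer verbatim. The step I expect to be most delicate, and the only real content beyond bookkeeping, is justifying that the backward integrals defining $\zeta_{i}$ and $\xi_{i}$ converge with $o(1)$ tails (and that no non-integrable contribution has been smuggled in); this is precisely where $T_{n}=+\infty$, the spacetime control \eqref{eq:case1-spacetime-est}, and the Gronwall-type bound \eqref{eq:case1-int-ineq-lmb_max^2D-1} with $\lmb_{\max}\to0$ are needed. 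Everything else is a rerun of Corollary~\ref{cor:case1-simple-mod-est-integ-from-tn}.
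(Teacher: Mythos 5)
Your proposal is correct and follows essentially the same route as the paper: defining $\zeta_{i}=\lmb_{i}e^{f_{i}+\int_{t}^{\infty}\calO(\|g\|_{\dot{H}^{2}}^{2})d\tau}$ and $\xi_{i}=z_{i}+\int_{t}^{\infty}\calO(\lmb_{i}\|g\|_{\dot{H}^{2}}^{2})d\tau$, using $T_{n}=+\infty$ together with the spacetime control \eqref{eq:case1-spacetime-est} to make the backward tails $o(1)$, and bounding $|\xi_{i}-z_{i}|$ by $\sup_{\tau\geq t}\lmb_{\max}(\tau)\cdot\int_{t}^{\infty}\|g\|_{\dot{H}^{2}}^{2}d\tau$ exactly as in the paper's proof. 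The remaining remarks (absorbing the $o(1)$ relative error into the coefficients $A_{ij}^{\ast}+o(1)$, and transferring \eqref{eq:case1-log-lmb-est}, \eqref{eq:case1-A-lmb^D-lmb^D-diff-ineq}, \eqref{eq:case1-int-ineq-lmb_max^2D-1}) are consistent with what the paper leaves implicit.
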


\begin{proof}
One basically follows the proof of Corollary~\ref{cor:case1-simple-mod-est-integ-from-tn}.
This time, however, we use $T_{n}=+\infty$ and keep the error terms
of the refined modulation estimates \eqref{eq:case1-refined-mod-est-lmb}
and \eqref{eq:case1-refined-mod-est-z}.

\uline{Proof of \mbox{\eqref{eq:case1-z-to-wh-z-1}} and \mbox{\eqref{eq:case1-z-mod-est-1}}}.
Express \eqref{eq:case1-refined-mod-est-z} in the form 
\[
|z_{i,t}|=\calO(\lmb_{i}\|g\|_{\dot{H}^{2}}^{2}+\lmb_{i}\lmb_{\max}^{2D-1}).
\]
This time, we integrate the first term $\calO(\lmb_{i}\|g\|_{\dot{H}^{2}}^{2})$
by adding 
\[
\int_{t}^{\infty}\calO(\lmb_{i}(\tau)\|g(\tau)\|_{\dot{H}^{2}}^{2})d\tau=\sup_{\tau\ge t}\lmb_{\max}(\tau)\cdot\calO(\int_{t}^{\infty}\|g(\tau)\|_{\dot{H}^{2}}^{2}d\tau)=o(1)\cdot\sup_{\tau\ge t}\lmb_{\max}(\tau)
\]
to $z_{i}$. This gives \eqref{eq:case1-z-to-wh-z-1} and \eqref{eq:case1-z-mod-est-1}.

\uline{Proof of \mbox{\eqref{eq:case1-lmb-to-wh-lmb-1}} and \mbox{\eqref{eq:case1-lmb-mod-est-1}}}.
By \eqref{eq:case1-refined-mod-est-lmb} and \eqref{eq:case1-g(t)-Hdot1-is-o(1)},
we have 
\[
\frac{d}{dt}\Big\{\log\lmb_{i}+o_{n\to\infty}(1)\Big\}-\frac{1}{\lmb_{i}^{2}}\sum_{j=1}^{J}A_{ij}^{\ast}\lmb_{i}^{D}\lmb_{j}^{D}=\calO(\|g\|_{\dot{H}^{2}}^{2})+\calO((\lmb_{\max}+|\vec{z}-\vec{z}^{\ast}|)\lmb_{\max}^{2D-2}).
\]
This time, we integrate the term $\calO(\|g\|_{\dot{H}^{2}}^{2})$
by adding $\int_{t}^{\infty}\calO(\|g(\tau)\|_{\dot{H}^{2}}^{2})d\tau=o(1)$
(using the spacetime control \eqref{eq:case1-spacetime-est}) to $\log\lmb_{i}$.
Replacing $\lmb_{i}$ by $\zeta_{i}$ as well, we get \eqref{eq:case1-lmb-to-wh-lmb-1}
and \eqref{eq:case1-lmb-mod-est-1}.
\end{proof}

\subsection{Dynamics of Case~1: proof of Proposition~\ref{prop:case1-main}}

The continuous-in-time resolution with $\vec{z}(t)\to\vec{z}^{\ast}$
was already established by Corollary~\ref{cor:case1-closing-bootstrap}.
It remains to prove the second and third items of Proposition~\ref{prop:case1-main}. 
\begin{proof}[\uline{Proof of \mbox{\eqref{eq:main-thm-nondegen-lmb-rate}}}]
Here, we prove $\lmb_{i}(t)\aeq t^{-1/(2D-2)}$ for all $i\in\setJ$.

\uline{Step 1: Lower bound for \mbox{$\lmb_{\max}$}}. In this
step, we show 
\begin{equation}
\lmb_{\max}(t)\ageq t^{-1/(2D-2)}.\label{eq:case1-lmb_max-lower-bd}
\end{equation}
Since $|\rd_{t}\log\zeta|\aleq\zeta^{2D-2}$, we have $|\rd_{t}(\zeta^{-(2D-2)})|\aleq1$.
Integrating this gives $\zeta^{-(2D-2)}\aleq t$ as $t\to\infty$.
As $\zeta\aeq\lmb_{\max}$, we get \eqref{eq:case1-lmb_max-lower-bd}. 

\uline{Step 2: Sequential upper bound for \mbox{$\lmb_{\max}$}}.
In this step, we claim 
\begin{equation}
\lmb_{\max}(\tau_{n})\aleq\tau_{n}^{-1/(2D-2)}\quad\text{for some }\tau_{n}\to+\infty.\label{eq:case1-lmb_max-seq-upper-bd}
\end{equation}
By \eqref{eq:case1-int-ineq-lmb_max^2D-1} and $\zeta(\tau)\to0$
as $\tau\to+\infty$, we obtain $\int_{t}^{\infty}\zeta^{2D-1}(t')dt'\aleq\zeta(t)$
for all large $t$. Letting $\vphi(t)\coloneqq\int_{t}^{\infty}\zeta^{2D-1}(t')dt'$,
this implies $-\vphi_{t}=\zeta^{2D-1}\ageq\vphi^{2D-1}$, whose integration
gives $\vphi(t)\aleq t^{-1/(2D-2)}$. Expanding the definition of
$\vphi(t)$, we get 
\[
\int_{t}^{\infty}\zeta^{2D-1}(t')dt'\aleq t^{-1/(2D-2)}.
\]
This gives the sequential bound \eqref{eq:case1-lmb_max-seq-upper-bd}.

\uline{Step 3: Upper bound for \mbox{$\lmb_{\max}(t)$}}. In this
step, we show 
\begin{equation}
\lmb_{\max}(t)\aeq t^{-1/(2D-2)}.\label{eq:case1-lmb_max-rate}
\end{equation}
By \eqref{eq:case1-lmb_max-lower-bd}, it remains to show $\lmb_{\max}(t)\aleq t^{-1/(2D-2)}$. 

Suppose not. Denote $\nu(t)\coloneqq t^{1/(2D-2)}\zeta(t)\ageq1$.
By the sequential upper bound \eqref{eq:case1-lmb_max-seq-upper-bd},
for any large constants $K_{0}$ and $K$ with $K>K_{0}$, there exist
four times $t_{K_{0}}<t_{K}<t_{2K}<t_{K_{0}}'$ with 
\[
\nu(t_{K_{0}})=\nu(t_{K_{0}}')=K_{0},\quad\nu(t_{K})=K,\quad\nu(t_{2K})=2K,
\]
and 
\begin{align*}
\nu(t) & \geq K_{0}\quad\text{for all }t\in[t_{K_{0}},t_{K_{0}}'],\\
K\leq\nu(t) & \leq2K\quad\text{for all }t\in[t_{K},t_{2K}].
\end{align*}
Let $t\in[t_{K_{0}},t_{K_{0}}'${]}. By $\frac{d}{dt}\calO(\zeta^{2D})\aeq\zeta^{4D-2}$
(see \eqref{eq:case1-A-lmb^D-lmb^D-diff-ineq}), we have $(t\frac{d}{dt}-\frac{D}{D-1})\calO(\nu^{2D})\aeq\nu^{4D-2}$.
Choosing $K_{0}$ sufficiently large, $\nu^{4D-2}\geq K_{0}^{2D-2}\nu^{2D}\geq\frac{D}{D-1}\calO(\nu^{2D})$,
so $t\frac{d}{dt}\calO(\nu^{2D})\aeq\nu^{4D-2}$. Integrating this
on $[t_{K_{0}},t_{K_{0}}']$, we have 
\begin{equation}
\int_{t_{K_{0}}}^{t_{K_{0}}'}\nu^{4D-2}(t)\frac{dt}{t}\aleq\nu^{2D}(t_{K_{0}})+\nu^{2D}(t_{K_{0}}')\aleq K_{0}^{2D}.\label{eq:7.40}
\end{equation}
On the other hand, let $\in[t_{K},t_{2K}]$. Since $|\rd_{t}\log\zeta|\aleq\zeta^{2D-2}$
and $\nu\ageq1$, we have $t\rd_{t}\log\nu=\calO(\nu^{2D-2}+1)=\calO(\nu^{2D-2})$.
Hence, $t\rd_{t}\nu^{2D}=\calO(\nu^{4D-2})$. Integrating this on
$[t_{K},t_{2K}]$, we have 
\begin{equation}
K^{2D}\aeq\nu^{2D}(t_{2K})-\nu^{2D}(t_{K})\aleq\int_{t_{K}}^{t_{2K}}\nu^{4D-2}(t)\frac{dt}{t}.\label{eq:7.41}
\end{equation}
Since $K$ is allowed to be arbitrarily large, \eqref{eq:7.41} contradicts
to \eqref{eq:7.40}. This completes the proof of \eqref{eq:case1-lmb_max-rate}.

\uline{Step 4: Upper bound for \mbox{$|\vec{z}(t)-\vec{z}^{\ast}|$}}.
In this step, we claim that 
\begin{equation}
|\vec{z}(t)-\vec{z}^{\ast}|=o(1)\cdot\lmb_{\max}(t).\label{eq:7.42}
\end{equation}
Indeed, we use \eqref{eq:case1-z-to-wh-z-1} and \eqref{eq:case1-z-mod-est-1}
to get 
\[
|\vec{z}(t)-\vec{z}^{\ast}|\aleq o(1)\cdot\Big(\sup_{\tau\geq t}\lmb_{\max}(\tau)+\int_{t}^{\infty}\zeta^{2D-1}(\tau)d\tau\Big).
\]
Applying \eqref{eq:case1-lmb_max-rate}, we conclude \eqref{eq:7.42}.

\uline{Step 5: Proof of \mbox{\eqref{eq:main-thm-nondegen-lmb-rate}}}.
In this last step, we show $\lmb_{1}(t)\aeq\dots\aeq\lmb_{J}(t)\aeq t^{-1/(2D-2)}$.
This step requires the refined error bounds as written in \eqref{eq:case1-lmb-mod-est-1}
and \eqref{eq:case1-z-mod-est-1}. By \eqref{eq:case1-lmb_max-rate},
it suffices to show the lower bound $\lmb_{i}(t)\ageq t^{-1/(2D-2)}$
for all $i\in\setJ$.

We use \eqref{eq:case1-lmb-mod-est-1}, \eqref{eq:7.42}, and \eqref{eq:case1-lmb_max-rate}
to get 
\[
\Big|\frac{\zeta_{i,t}}{\zeta_{i}}-\sum_{j=1}^{J}(A_{ij}^{\ast}+o(1))\zeta_{i}^{D-2}\zeta_{j}^{D}\Big|\aleq t^{-(2D-1)/(2D-2)}.
\]
Notice that the right hand side is integrable as $t\to\infty$. Thus
we can introduce another function $\wh{\zeta}_{i}(t)=(1+o(1))\cdot\zeta_{i}(t)$
as $t\to\infty$ such that 
\begin{equation}
\frac{\wh{\zeta}_{i,t}}{\wh{\zeta}_{i}}=\sum_{j=1}^{J}(A_{ij}^{\ast}+o(1))\wh{\zeta}_{i}^{D-2}\wh{\zeta}_{j}^{D}.\label{eq:case1-wh-zeta-diff}
\end{equation}
Dividing both sides by $\wh{\zeta}_{i}^{D-2}$ and applying \eqref{eq:case1-lmb_max-rate},
we obtain 
\[
|\rd_{t}(\wh{\zeta}_{i}^{-(D-2)})|\aleq\lmb_{\max}^{D}\aleq t^{-D/(2D-2)}.
\]
Integrating this gives the lower bound $\wh{\zeta}_{i}(t)\ageq t^{-1/(2D-2)}$
as desired. This completes the proof of \eqref{eq:main-thm-nondegen-lmb-rate}.
\end{proof}
\begin{proof}[\uline{Proof of the non-degeneracy of \mbox{$\{(\iota_{1},z_{1}^{\ast}),\dots,(\iota_{J},z_{J}^{\ast})\}$}}]
Suppose $\{(\iota_{1},z_{1}^{\ast}),\dots,(\iota_{J},z_{J}^{\ast})\}$
were to be degenerate. Then, there exists $0\neq\vec{\frkc}\in\ker(A^{\ast})\cap[0,\infty)^{J}$.
The idea is to study the quantity $\lan\vec{\frkc},\vec{\wh{\zeta}}^{-(D-2)}\ran$,
where $\wh{\zeta}_{i}$ was defined in Step~5 of the proof of \eqref{eq:main-thm-nondegen-lmb-rate}.
Since all $\lmb_{i}(t)$ are equivalent by \eqref{eq:main-thm-nondegen-lmb-rate},
we have 
\[
\lan\vec{\frkc},\vec{\wh{\zeta}}^{-(D-2)}\ran\aeq\lmb_{\max}^{-(D-2)}\aeq t^{(D-2)/(2D-2)}.
\]
On the other hand, since $A^{\ast}\vec{\frkc}=0$ and $A^{\ast}$
is symmetric, \eqref{eq:case1-wh-zeta-diff} implies 
\[
\rd_{t}\lan\vec{\frkc},\vec{\wh{\zeta}}^{-(D-2)}\ran=\lan\vec{\frkc},A^{\ast}\vec{\wh{\zeta}}^{D}\ran+o(\lmb_{\max}^{D})=o(\lmb_{\max}^{D})=o(t^{-D/(2D-2)}).
\]
Integrating this gives $\lan\vec{\frkc},\vec{\wh{\zeta}}^{-(D-2)}\ran=o(t^{(D-2)/(2D-2)})$,
which contradicts to \eqref{eq:main-thm-nondegen-lmb-rate}. Therefore,
$\{(\iota_{1},z_{1}^{\ast}),\dots,(\iota_{J},z_{J}^{\ast})\}$ is
non-degenerate.
\end{proof}
\begin{proof}[\uline{Proof of convergence of the ratio vector to a set}]
Here, we prove that the ratio vector $t^{1/(2D-2)}\vec{\lmb}(t)$
converges to a connected component of the set \eqref{eq:main-thm-nondegen-ratio-set}.
The modulation estimate \eqref{eq:case1-wh-zeta-diff} seem insufficient
for this purpose; we will derive a stronger version \eqref{eq:case1-td-zeta-diff}
below. 

Recall \eqref{eq:case1-refined-mod-est-lmb} combined with \eqref{eq:7.42}:
\[
\Big|\frac{\lmb_{i,t}}{\lmb_{i}}+\frac{d}{dt}f_{i}-\sum_{j=1}^{J}A_{ij}^{\ast}\lmb_{i}^{D-2}\lmb_{j}^{D}\Big|\aleq\|g\|_{\dot{H}^{2}}^{2}+\lmb_{\max}^{2D-1}.
\]
Using $\|g\|_{\dot{H}^{1}}=o(1)$ and \eqref{eq:case1-f_i-bound}
for $f_{i}$, we see that 
\begin{equation}
\td{\zeta}_{i}(t)\coloneqq\lmb_{i}(t)e^{f_{i}(t)}=\lmb_{i}(t)\cdot\big(1+\calO(\min\{o(1),\|g\|_{\dot{H}^{2}}+o(\lmb_{\max})\})\big)\label{eq:td-zeta-ptwise}
\end{equation}
satisfies 
\[
\Big|\frac{\td{\zeta}_{i,t}}{\td{\zeta}_{i}}-\sum_{j=1}^{J}A_{ij}^{\ast}\td{\zeta}_{i}^{D-2}\td{\zeta}_{j}^{D}\Big|\aleq\|g\|_{\dot{H}^{2}}^{2}+\lmb_{\max}^{2D-1}+(\|g\|_{\dot{H}^{2}}+o(\lmb_{\max}))\cdot\lmb_{\max}^{2D-2}\aleq\|g\|_{\dot{H}^{2}}^{2}+\lmb_{\max}^{2D-1}.
\]
The right hand side is integrable by \eqref{eq:case1-spacetime-est}
and $\lmb_{\max}\aeq t^{-1/(2D-2)}$. Thus we have proved
\begin{equation}
\td{\zeta}_{i,t}=\sum_{j=1}^{J}A_{ij}^{\ast}\td{\zeta}_{i}^{D-1}\td{\zeta}_{j}^{D}+\td{\zeta}_{i}\cdot L_{1},\label{eq:case1-td-zeta-diff}
\end{equation}
where here and below $L_{1}=L_{1}(t)$ denotes an integrable function
in time. 

Now, introduce renormalized variables 
\[
\mu_{i}(t)\coloneqq t^{1/(2D-2)}\td{\zeta}_{i}(t).
\]
We have 
\begin{align}
\mu_{i} & =t^{1/(2D-2)}\lmb_{i}\cdot(1+o(1))\aeq1,\label{eq:case1-mu_i-ptwise}\\
\mu_{i,t} & =\frac{1}{t}\Big\{\sum_{j=1}^{J}A_{ij}^{\ast}\mu_{i}^{D-1}\mu_{j}^{D}+\frac{\mu_{i}}{(2D-2)}\Big\}+L_{1}.\label{eq:case1-mu_i-evol-eq-prelim}
\end{align}
Introducing a functional (cf.~\cite[p.295]{CortazarDelPinoMusso2020JEMS})
\[
I(\vec{\mu})\coloneqq-\frac{1}{2D}\sum_{i,j=1}^{J}A_{ij}^{\ast}\mu_{i}^{D}\mu_{j}^{D}-\frac{1}{4D-4}\sum_{i=1}^{J}\mu_{i}^{2},\qquad\vec{\mu}\in(0,\infty)^{J},
\]
we can rewrite \eqref{eq:case1-mu_i-evol-eq-prelim} as 
\begin{equation}
\mu_{i,t}=-t^{-1}\rd_{\mu_{i}}I(\vec{\mu})+L_{1}.\label{eq:case1-mu_i-evol-eq}
\end{equation}
This system can be viewed as an almost gradient flow associated to
$I(\vec{\mu})$ after introducing a renormalized time. We use this
almost gradient structure to prove the convergence of ratios.

We show $\nabla I(\vec{\mu}(t))\to0$ as $t\to+\infty$. Observe 
\[
\frac{d}{dt}I(\vec{\mu})=\lan\nabla I,-t^{-1}\nabla I+L_{1}\ran=-t^{-1}|\nabla I|^{2}+L_{1}.
\]
Integrating this gives $\int_{t_{0}}^{\infty}|\nabla I(\vec{\mu}(t))|^{2}\frac{dt}{t}<+\infty$.
In particular, $\nabla I(\vec{\mu}(\tau_{n}))\to0$ for some $\tau_{n}\to+\infty$.
Now, observe 
\[
\frac{d}{dt}|\nabla I|^{2}=2\lan\nabla I,(\nabla^{2}I)(-t^{-1}\nabla I+L_{1})\ran=\calO(t^{-1}|\nabla I|^{2})+L_{1}.
\]
Since the right hand side is integrable, we see that $|\nabla I(\vec{\mu}(t))|^{2}$
converges as $t\to+\infty$. As we know $\nabla I(\vec{\mu}(\tau_{n}))\to0$,
we conclude $\nabla I(\vec{\mu}(t))\to0$ as $t\to+\infty$.

Since $\mu_{1}\aeq\dots\aeq\mu_{J}\aeq1$, $t\mapsto\vec{\mu}(t)$
is a continuous curve whose trajectory has compact closure in $(0,\infty)^{J}$.
This fact together with $\nabla I(\vec{\mu}(t))\to0$ implies that
$\vec{\mu}(t)$ converges a connected component of the set $\{\vec{\mu}\in(0,\infty)^{J}:\nabla I(\vec{\mu})=0\}$,
which is the same as \eqref{eq:main-thm-nondegen-ratio-set}. 
\end{proof}

\subsection{Proof of the first item of Theorem~\ref{thm:main-lmb-to-zero-classification}}
\begin{proof}
Let $u(t)$ be as in Assumption~\ref{assumption:sequential-on-param}
and suppose Theorem~\ref{thm:main-one-bubble-tower-classification}
does not apply. Assume the configuration $\{(\iota_{1},z_{1}^{\ast}),\dots,(\iota_{J},z_{j}^{\ast})\}$
is totally non-degenerate. Apply the case separation Proposition~\ref{prop:case-separation}
to $u(t)$. Note that Case~2 is excluded because we assumed Theorem~\ref{thm:main-one-bubble-tower-classification}
does not apply. Case~3 is also excluded by the definition of total
non-degeneracy. Therefore, only Case~1 is possible. Applying Proposition~\ref{prop:case1-main}
gives the desired conclusion.
\end{proof}

\section{\label{sec:Case3}Modulation analysis in Case 3 under minimal degeneracy}

In this section, we consider the dynamics in \textbf{Case~3} of Proposition~\ref{prop:case-separation}
\emph{under the additional assumption} that $\{(\iota_{1},z_{1}^{\ast}),\dots,(\iota_{J},z_{J}^{\ast})\}$
is minimally degenerate satisfying \eqref{eq:case3-v_i-not-all-zero}.
We also prove the second item of Theorem~\ref{thm:main-lmb-to-zero-classification}
at the end of this section. The goal of this section is the following. 
\begin{prop}[Main proposition in Case 3 under minimal degeneracy]
\label{prop:case3-main}Consider the solution $u(t)$ belonging to
\textbf{Case~3} of Proposition~\ref{prop:case-separation}. Recall
the notation therein. Assume further that $\{(\iota_{1},z_{1}^{\ast}),\dots,(\iota_{J},z_{J}^{\ast})\}$
is minimally degenerate satisfying \eqref{eq:case3-v_i-not-all-zero}.
Then, the following hold.
\begin{itemize}
\item (Continuous-in-time resolution) $T_{n}'=+\infty$ for all large $n$
and the resolution \eqref{eq:main1-conti-time-res-with-z-conv} holds. 
\item (Asymptotics of $\lmb_{i}(t)$) As $t\to+\infty$, we have 
\begin{equation}
\lmb_{i}(t)=\Big\{\Big(\frac{\kpp_{0}}{\kpp_{1}}\frac{\sum_{j=1}^{J}\frkc_{j}^{4/(N-2)}}{(N-3)^{2}(2N-4)\sum_{j=1}^{J}|v_{j}^{\ast}|^{2}}\Big)^{\frac{1}{2N-6}}\frkc_{i}^{2/(N-2)}+o(1)\Big\}\cdot t^{-1/(N-3)}.\label{eq:case3-lmb-rate}
\end{equation}
\end{itemize}
\end{prop}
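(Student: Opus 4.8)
\textbf{Proof strategy for Proposition~\ref{prop:case3-main}.} The plan follows the template already established in Section~\ref{sec:Case1}, but with the non-degeneracy of the $\lmb_i$-equations replaced by the weaker non-degeneracy of Lemma~\ref{lem:case3-non-degen-quantity}: whenever the $\lmb_{i,t}$-equations degenerate (which now \emph{can} happen because $A^\ast$ has a nonzero kernel), the $z_{i,t}$-equations do not. First I would set up small parameters $\dlt_2 \ll 1$, $\alp \ll 1$, define $T_n = T_n(1,\dlt_2,\alp)$, and record the case-3 hypotheses: $\lmb_{\secmax}(t) \geq \dlt_1 \lmb_{\max}(t)$ on $[t_n', T_n)$ and $|\vec z - \vec z^\ast| < \dlt_2$. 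Since $\lmb_{\max}\aeq\lmb_{\secmax}$, the basic estimates of Section~\ref{sec:Case1} (Lemma~\ref{lem:case1-R_ij}, Lemma~\ref{lem:case1-tdU-est}, Lemma~\ref{lem:case1-leading-frkr_a,i}) carry over verbatim by Remark~\ref{rem:7.2}. In particular the refined modulation estimates take the form $\lmb_{i,t}/\lmb_i + \frac{d}{dt}f_i = \sum_j A_{ij}[\vec z]\lmb_i^{D-2}\lmb_j^D + (\text{err})$ and $z_{i,t}/\lmb_i = \frac{\kpp_1}{\kpp_0}\sum_j A_{ij}[\vec z]\frac{\lmb_i^{D-1}\lmb_j^D}{|z_i-z_j|^2}(z_i^a - z_j^a) + (\text{err})$, with errors controlled by $\|g\|_{\dot H^2}^2 + \lmb_{\max}^{2D-1}$ after absorbing the $\frac{d}{dt}$-exact terms. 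The crucial point, contrasting with Case~1, is that we \emph{must not} replace $A[\vec z]$ by $A^\ast$ in the $\lmb_i$-equation, because $A^\ast$ is singular; we keep $A[\vec z]$ and exploit that the $z_i$-variation sweeps the configuration into a regime where the quantity in \eqref{eq:case3-non-degen-quantity} stays bounded below.

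\textbf{Closing the bootstrap.} The key monotonicity input is the analogue of the ``key inequality'' from Case~1. I would consider the quantity $\rd_t\langle \vec\zeta^{D}, A[\vec z]\vec\zeta^{D}\rangle$ together with the evolution of $\vec z$; differentiating and using the refined modulation estimates, the leading term is $\sum_i \lmb_i^{2D-2}(A[\vec z]\vec\zeta^D)_i^2 + \sum_i \lmb_i^{2D}|\sum_j \frac{A[\vec z]_{ij}\lmb_j^D(z_i-z_j)}{|z_i-z_j|^2}|^2$ (the second piece coming from the $z_i$-contribution to the time derivative, via a Pohozaev/virial-type identity). By Lemma~\ref{lem:case3-non-degen-quantity} this is $\ageq \lmb_{\max}^{2D-2}|A[\vec z]\vec\zeta^D|^2 + \lmb_{\max}^{4D}$, which is $\ageq \lmb_{\max}^{4D}$ unconditionally. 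This yields $\rd_t(\text{monotone functional}) \aeq \lmb_{\max}^{4D}$, hence an integrated bound $\int_{\tau_1}^{\tau_2}\lmb_{\max}^{4D}\,dt \aleq \lmb_{\max}^{2D}(\tau_1) + \lmb_{\max}^{2D}(\tau_2)$, and then (by the same $\vphi^{1/(\cdot)}$ trick as in Lemma~\ref{lem:case1-control-z}) $\int_{\tau_1}^{\tau_2}\lmb_{\max}^{2D+1}\,dt \aleq \lmb_{\max}(\tau_1) + \lmb_{\max}(\tau_2)$. Combined with the rough $\int(\|g\|_{\dot H^2}^2 + \lmb_{\max}^{4D-2})\,dt = o_{n\to\infty}(1)$ from \eqref{eq:spacetime-control}, this controls $|\vec z(t) - \vec z^\ast| = o_{n\to\infty}(1)$ by integrating $z_{i,t} = \calO(\lmb_{\max}^{2D})$-type estimates, and controls $\|g\|_{\dot H^1}$ via the energy-expansion/coercivity argument exactly as in \eqref{eq:case1-g(t)-Hdot1-is-o(1)}. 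Together with $\lmb_{\max}(t) = o_{n\to\infty}(1)$ (forced by asymptotic orthogonality once $\lmb_{\max}\aeq\lmb_{\secmax}$), this gives $T_n' = +\infty$ and the continuous-in-time resolution \eqref{eq:main1-conti-time-res-with-z-conv}.

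\textbf{Deriving the rate.} For the asymptotics \eqref{eq:case3-lmb-rate}, I would first establish $\vec\lmb^D(t)/|\vec\lmb^D(t)| \to \vec\frkc$. This follows from a monotonicity formula: since $|A^\ast\vec\lmb^D| \aleq |A[\vec z]\vec\lmb^D| + |\vec z - \vec z^\ast|\lmb_{\max}^D$ and the RHS is $o(\lmb_{\max}^D)$ after integration (using that $\int \lmb_{\max}^{2D-2}|A[\vec z]\vec\lmb^D|^2 \cdot (\cdots)$ is summable), one gets $|A^\ast\vec\lmb^D| = o(\lmb_{\max}^D)$, hence by \eqref{eq:case3-equiv-dist} the ratio converges to $\vec\frkc$. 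Write $\lmb_i = \frkc_i^{2/(N-2)}\rho(1 + o(1))$ for a scalar $\rho \aeq \lmb_{\max}$; here $\frkc_i^{2/(N-2)} = \frkc_i^{1/D}$ so that $\lmb_i^D = \frkc_i \rho^D(1+o(1))$. Substituting into the $z_{i,t}$-equation and using $A[\vec z_n]\vec\lmb_n^D = o(\lmb_{\max}^D)\Rightarrow \vec\lmb^D \approx |\vec\lmb^D|\vec\frkc$, the leading term of $z_{i,t}/\lmb_i$ becomes $\frac{\kpp_1}{\kpp_0}\rho^{2D-1}|\vec\frkc|^2\,v_i^\ast/(\text{normalization})$ up to $o(1)$; squaring and summing over $i$, and combining with a Pohozaev identity relating $\rd_t(\text{something})$ to $\sum_i \lmb_i^{2D}|\cdots|^2 = \rho^{4D}\sum_i|v_i^\ast|^2(1+o(1))$, one obtains a clean scalar ODE $\rho_t/\rho = -(\text{const})\rho^{2D-2}(1+o(1))$ with the constant built from $\kpp_0/\kpp_1$, $\sum\frkc_j^{1/D} = \sum\frkc_j^{2/(N-2)}$, $\sum|v_j^\ast|^2$, and combinatorial factors $(N-3)$, $(2N-4)$ arising from $2D-2 = N-4$, $2D = N-2$ bookkeeping. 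Integrating $\rho_t/\rho \sim -c\rho^{N-4}$ gives $\rho \sim (c(N-4)t)^{-1/(N-4)}$... and here I expect the main bookkeeping subtlety: the exponent should come out $-1/(N-3)$, not $-1/(N-4)$, because the $\rho$-self-interaction in the $\lmb$-equation vanishes to leading order (projected onto $\vec\frkc$), so the effective equation is $\rho_t/\rho \sim -c\rho^{2D-1} = -c\rho^{N-3}$ coming from the $z$-variation feeding back — integrating this yields $\rho \sim (c(N-3)t)^{-1/(N-3)}$, and matching the constant to the displayed formula is the final step. \textbf{The main obstacle} I anticipate is precisely this feedback mechanism: one must show that, along the kernel direction $\vec\frkc$, the dominant balance in $\rho_t/\rho$ is not $\lmb_{\max}^{2D-2}$ (which would give the Case~1 rate $t^{-1/(N-4)}$) but rather the genuinely smaller $\lmb_{\max}^{2D-1}$ term generated by the $z$-drift $z_{i,t} \aeq \lmb_{\max}^{2D}$ and its quadratic-in-$z$ coupling back into $A[\vec z]$. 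Making this rigorous requires a careful second-order expansion of $A[\vec z(t)]$ around $\vec z^\ast$, controlling $\vec z(t) - \vec z^\ast \aeq \int_t^\infty \lmb_{\max}^{2D}\,d\tau \aeq \lmb_{\max}(t)$ via the integrated inequality above, and then a Grönwall/bootstrap argument (as in Step~5 of the proof of \eqref{eq:main-thm-nondegen-lmb-rate}) introducing yet another corrected scale $\td\zeta_i$ with integrable error to extract the sharp prefactor. The convergence $\vec\lmb^D/|\vec\lmb^D| \to \vec\frkc$ uses one-dimensionality of $\ker A^\ast$ (Lemma~\ref{lem:min-degen-implies-unique-pos-vec}) crucially, and the positivity of $\vec\frkc$ guarantees all $\lmb_i$ stay comparable, which is needed to keep $\lmb_{\secmax}\aeq\lmb_{\max}$ throughout.
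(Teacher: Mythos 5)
Your structural reading is right (keep $A[\vec z]$, use Lemma~\ref{lem:case3-non-degen-quantity}, expect the rate $t^{-1/(N-3)}$ from the $z$-drift feeding back into $A[\vec z]$), but the quantitative mechanism you propose for closing the bootstrap does not work, and this is exactly where the paper's argument is different. Your monotone quantity $\lan\vec\zeta^{D},A[\vec z]\vec\zeta^{D}\ran$ does have time derivative comparable to the left-hand side of \eqref{eq:case3-non-degen-quantity}, hence $\ageq\lmb_{\max}^{4D}$; but since the quantity itself is only $\calO(\lmb_{\max}^{2D})$, the integrated bound you extract is $\int\lmb_{\max}^{2D+1}\,dt\aleq\lmb_{\max}(\tau_{1})+\lmb_{\max}(\tau_{2})$, which is one power of $\lmb_{\max}$ short of what is needed: in the degenerate regime the $z$-drift is genuinely of size $z_{i,t}\approx\frac{\kpp_{1}}{\kpp_{0}}v_{i}^{\ast}\lmb^{2D}$ with $v_{i}^{\ast}\neq0$, so controlling $|\vec z-\vec z^{\ast}|$ requires $\int\lmb_{\max}^{2D}\,dt=o_{n\to\infty}(1)$, which cannot be inferred from $\int\lmb_{\max}^{2D+1}$. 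The paper closes this via the coupled system of Lemma~\ref{lem:case3-key-ineq}: the scalar $\alp\aeq-\lan\vec\frkc,A[\vec z]\vec\frkc\ran$ (of size $|\vec z-\vec z^{\ast}|$) satisfies $\alp_{t}=-\frac{\kpp_{1}}{\kpp_{0}}\zeta^{2D}+\dots$, i.e.\ $\zeta^{2D}$ is, to leading order, an exact time derivative, so $\int\zeta^{2D}$ becomes a boundary term $\sup|\alp|$; this is then coupled with $\nu_{t}=-\gmm\alp\zeta^{2D-1}+\dots$ and the identity $(\nu^{2})_{t}\approx\gmm\frac{\kpp_{0}}{\kpp_{1}}(\alp^{2})_{t}$ to close with the smallness of $\dlt_{1}$. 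Your virial-type computation is related in spirit but quadratic in $z_{i,t}$, and it misses this linear total-derivative structure; without it the $\vec z$-control, hence $T_{n}'=+\infty$, does not follow.

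The second gap is the propagation of the kernel-proximity hypothesis. To write the $z$-equation leading term as $v_{i}^{\ast}\zeta^{2D}$ at all (and to get the ratio convergence $\vec\lmb^{D}/\lmb^{D}\to\vec\frkc$) you need $|A^{\ast}\vec\lmb^{D}(t)|/\lmb^{D}(t)\ll1$ continuously in time, while Case~3 only provides this sequentially at $t_{n}'$. Your one-line claim that $|A^{\ast}\vec\lmb^{D}|=o(\lmb_{\max}^{D})$ ``after integration'' has no proof behind it, and indeed the paper states that this hypothesis does not propagate by a direct bootstrap: it introduces the extra time $T_{n}''$ in \eqref{eq:case3-def-T_n''} and proves $T_{n}''=T_{n}$ by contradiction, showing (via the functional $\vphi=\lan A^{\ast}\vec{\wh\zeta}^{D},\vec{\wh\zeta}^{D}\ran\lan\vec\frkc,\vec{\wh\zeta}^{-D+2}\ran^{2D/(D-2)}$ of Lemma~\ref{lem:case3-ratio-key-ineq}) that once the ratio reaches $\dlt_{1}$ it stays bounded below, so that the solution would then satisfy the Case~1 hypotheses \eqref{eq:5.28}, and Proposition~\ref{prop:case1-main} would force the configuration to be non-degenerate --- contradicting minimal degeneracy. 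This appeal to the already-proved Case~1 classification is an essential ingredient absent from your plan. Finally, your rate derivation is heuristically on target (the effective balance $\zeta_{t}/\zeta\sim-c\zeta^{2D-1}$ with $|\vec z-\vec z^{\ast}|\aeq\zeta$), but as you anticipate it is not carried out; in the paper it follows by integrating the same $\alp$--$\nu$ system backwards from $t=+\infty$, which yields $\zeta\aeq\alp$ and the explicit constant in \eqref{eq:case3-lmb-rate}, rather than by a second-order expansion of $A[\vec z]$.
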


Let us denote $t_{n}=t_{n}'$, where $t_{n}'$ is as in \textbf{Case~3}
of Proposition~\ref{prop:case-separation}. Let $\dlt_{1}\in(0,1)$,
$\dlt_{2}\in(0,\dlt_{2}^{\ast}]$, and $\alp\in(0,\alp^{\ast}]$ be
three \emph{small parameters} that can shrink in the course of the
proof satisfying the parameter dependence 
\[
\dlt_{2}\ll\dlt_{1}\ll1\quad\text{and}\quad\alp\ll1.
\]
As in Section~\ref{sec:Case1}, introduce 
\[
T_{n}\coloneqq T_{n}(1,\dlt_{2},\alp)\in(t_{n},\Ttbexit_{n}(\alp)].
\]
Recall \eqref{eq:5.29}; we have (implicit constant independent of
$n$)
\begin{equation}
\lmb_{\max}\aeq\lmb_{\secmax}\quad\text{on }[t_{n},T_{n}).\label{eq:8.2}
\end{equation}
We denote $\lmb(t)\coloneqq(\lmb_{1}^{2D}(t)+\dots+\lmb_{J}^{2D}(t))^{\frac{1}{2D}}$. 

We also need to improve the sequential convergence of the ratio 
\begin{equation}
\frac{|A^{\ast}\vec{\lmb}^{D}|}{\lmb^{D}}(t_{n})=o_{n\to\infty}(1)\label{eq:7.12-2}
\end{equation}
to the continuous-in-time convergence. For this purpose, we introduce
an intermediate time 
\begin{equation}
T_{n}''\coloneqq\sup\{\tau\in[t_{n},T_{n}):\frac{|A^{\ast}\vec{\lmb}^{D}(t)|}{\lmb^{D}(t)}<\dlt_{1}\text{ for all }t\in[t_{n},\tau]\}\in(t_{n},T_{n}].\label{eq:case3-def-T_n''}
\end{equation}
We will show later in Section~\ref{subsec:case3-ratio} that $T_{n}''=T_{n}$
for all large $n$. The proof of $T_{n}''=T_{n}$ is delicate because
the bootstrap hypothesis on the ratio in \eqref{eq:case3-def-T_n''}
does not seem to propagate. We instead derive a contradiction if $T_{n}''<T_{n}$,
which relies on the classification Proposition~\ref{prop:case1-main}. 

This section is more involved than Section~\ref{sec:Case1} because
the equations of $\lmb_{i,t}$ can degenerate. We need better estimates
on the errors of the $\lmb_{i,t}$ equations compared to Section~\ref{sec:Case1}.
A non-degeneracy in this section stems from the condition \eqref{eq:case3-v_i-not-all-zero},
which roughly says that the equations of $z_{i,t}$ do not degenerate
whenever the equations of $\lmb_{i,t}$ degenerate. 

\subsection{Basic observations on $[t_{n},T_{n})$}

By \eqref{eq:8.2} and Remark~\ref{rem:7.2}, we can use Lemma~\ref{lem:case1-R_ij},
Lemma~\ref{lem:case1-tdU-est}, and Lemma~\ref{lem:case1-leading-frkr_a,i}
in this section. Lemma~\ref{lem:7.6} becomes weaker in this degenerate
regime.
\begin{lem}[Spacetime bound]
\label{lem:case3-spacetime}We have 
\begin{equation}
\max_{a,i}\frac{|\frkr_{a,i}|}{\lmb_{i}}\aeq\lmb^{D-1}|A[\vec{z}]\vec{\lmb}^{D}|+\lmb^{2D}.\label{eq:case3-max-frkr_a,i}
\end{equation}
In particular, we have a rough modulation estimate
\begin{equation}
|\lmb_{i,t}|+|z_{i,t}|+\lmb_{i}|a_{i,t}|\aleq\|g\|_{\dot{H}^{2}}+\lmb^{D-1}|A[\vec{z}]\vec{\lmb}^{D}|+\lmb^{2D}\label{eq:case3-rough-mod-est}
\end{equation}
and a spacetime estimate 
\begin{align}
\int_{t_{n}}^{T_{n}}(\lmb^{2D-2}|A[\vec{z}]\vec{\lmb}^{D}|^{2}+\lmb^{4D}+\|g\|_{\dot{H}^{2}}^{2})dt & =o_{n\to\infty}(1),\label{eq:case3-spacetime-est}\\
\sup_{t\in[t_{n},T_{n})}R^{-1}(t)\aeq\sup_{t\in[t_{n},T_{n})}\lmb(t) & =o_{n\to\infty}(1),\label{eq:case3-lmb_max=00003Do(1)}\\
\sup_{t\in[t_{n},T_{n})}\|g(t)\|_{\dot{H}^{1}} & =o_{n\to\infty}(1).\label{eq:case3-g(t)-Hdot1=00003Do(1)}
\end{align}
\end{lem}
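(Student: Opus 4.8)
The plan is to follow the template of Lemma~\ref{lem:7.6} (Case~1), upgrading it to account for the possible degeneracy of the $\lmb_{i,t}$ equations. The key difference from Case~1 is that in \eqref{eq:case1-max-frkr_a,i} one had the clean lower bound $\lmb_{\max}^{2D-1}$ coming from Lemma~\ref{lem:distance-from-degen}, whereas here $|A[\vec{z}]\vec{\lmb}^D|$ may be much smaller than $\lmb^D$, so the genuine size of $\max_{a,i}|\frkr_{a,i}|/\lmb_i$ is dictated by $\lmb^{D-1}|A[\vec{z}]\vec{\lmb}^D|$ (from the $\frkr_{0,i}$ terms) and $\lmb^{2D}$ (from the next-order $\frkr_{a,i}$ terms, $a\neq0$). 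The upper bound $(\aleq)$ in \eqref{eq:case3-max-frkr_a,i} is immediate from Lemma~\ref{lem:case1-leading-frkr_a,i}: \eqref{eq:case1-frkr_0,i} gives $|\frkr_{0,i}|/\lmb_i\aleq\lmb^{D-1}|A[\vec{z}]\vec{\lmb}^D|+\lmb^{2D}|\log\lmb|\aleq\lmb^{D-1}|A[\vec{z}]\vec{\lmb}^D|+\lmb^{2D}$ after absorbing the log (using $\lmb\aleq\alp$ small, or replacing $2D$ by $2D-\eps$), and \eqref{eq:case1-frkr_a,i} gives $|\frkr_{a,i}|/\lmb_i\aleq\lmb^{2D}$ for $a\neq0$.

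For the lower bound $(\ageq)$ in \eqref{eq:case3-max-frkr_a,i}, I would separate the two cases $\lmb^{D-1}|A[\vec{z}]\vec{\lmb}^D|\geq\lmb^{2D}$ and its opposite. In the first case, since $\lmb_{\max}\aeq\lmb_{\secmax}$ on $[t_n,T_n)$ by \eqref{eq:8.2}, the estimate \eqref{eq:case3-dist-from-kernel} of Lemma~\ref{lem:min-degen-implies-unique-pos-vec} gives $\sum_i\lmb_i^{2D-2}(A^\ast\vec{\lmb}^D)_i^2\aeq\lmb_{\max}^{2D-2}|A^\ast\vec{\lmb}^D|^2$; combined with \eqref{eq:case1-frkr_0,i}, $A[\vec{z}]=A^\ast+\calO(\dlt_2)$, and $\dlt_2$ small one obtains $\sum_i|\frkr_{0,i}|^2/\lmb_i^2\ageq\lmb^{2D-2}|A[\vec{z}]\vec{\lmb}^D|^2$ up to the acceptable error $\lmb^{4D}|\log\lmb|^2$, which is dominated since we are in the regime where $\lmb^{D-1}|A[\vec{z}]\vec{\lmb}^D|$ is the larger quantity and $\lmb$ is small. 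In the second case ($\lmb^{2D}$ dominant), the quantity $\lmb^{D-1}|A[\vec{z}]\vec{\lmb}^D|+\lmb^{2D}\aeq\lmb^{2D}$, so it suffices to produce a lower bound $\max_{a,i}|\frkr_{a,i}|/\lmb_i\ageq\lmb^{2D}$; here I would use \eqref{eq:case1-frkr_a,i}: when $|A[\vec{z}]\vec{\lmb}^D|$ is this small, $\vec{\lmb}^D/|\vec{\lmb}^D|$ is close to $\vec{\frkc}$ by \eqref{eq:case3-equiv-dist} and $\vec{z}\approx\vec{z}^\ast$, so $\frkr_{a,i}/\lmb_i\approx\tfrac{\kpp_1}{\kpp_0}v_i^\ast\,|\vec{\lmb}^D|^{2}\lmb_i^{D-2}\cdot(\text{nonzero})$ for the appropriate normalization, and since $\sum_i|v_i^\ast|^2>0$ by assumption \eqref{eq:case3-v_i-not-all-zero} and all $\lmb_i\aeq\lmb_{\max}$, we get $\max_{a,i}|\frkr_{a,i}|/\lmb_i\ageq\lmb^{2D}$ — this is precisely where the non-degeneracy hypothesis of the minimally degenerate scenario enters, exactly as flagged in the remark after Lemma~\ref{lem:case3-non-degen-quantity}. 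One can unify both cases via Lemma~\ref{lem:case3-non-degen-quantity}, whose left-hand side is $\aeq\sum_i|\frkr_{0,i}|^2/\lmb_i^2+\sum_i|\frkr_{a,i}|^2/\lmb_i^2$ up to logs and whose right-hand side is $\lmb_{\max}^{2D-2}|A^\ast\vec{\lmb}^D|^2+\lmb_{\max}^{4D}\aeq(\lmb^{D-1}|A[\vec{z}]\vec{\lmb}^D|+\lmb^{2D})^2$ after passing from $A^\ast$ to $A[\vec{z}]$.

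The rough modulation estimate \eqref{eq:case3-rough-mod-est} is then just \eqref{eq:rough-mod-est} of Lemma~\ref{lem:rough-mod} rewritten using \eqref{eq:case3-max-frkr_a,i}. For \eqref{eq:case3-spacetime-est}, I combine the coercivity-of-dissipation spacetime control \eqref{eq:spacetime-control}, which gives $\int_{t_n}^{T_n}(\|g\|_{\dot H^2}^2+\sum_{a,i}\frkr_{a,i}^2/\lmb_i^2)\,dt=o_{n\to\infty}(1)$, together with \eqref{eq:case3-max-frkr_a,i} to replace $\sum_{a,i}\frkr_{a,i}^2/\lmb_i^2$ by $\aeq\lmb^{2D-2}|A[\vec{z}]\vec{\lmb}^D|^2+\lmb^{4D}$. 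Finally \eqref{eq:case3-lmb_max=00003Do(1)} follows by the same argument as \eqref{eq:case1-lmb_max-is-o(1)} / \eqref{eq:case2.2-Rinv-is-o(1)-1}: for each $i$, $|(\lmb_i^{2D})_t|\aleq\lmb_i^{2D-1}(\|g\|_{\dot H^2}+\lmb^{D-1}|A[\vec{z}]\vec{\lmb}^D|+\lmb^{2D})\aleq\|g\|_{\dot H^2}^2+\lmb^{2D-2}|A[\vec{z}]\vec{\lmb}^D|^2+\lmb^{4D}$ by Cauchy--Schwarz, whose time integral on $[t_n,T_n)$ is $o_{n\to\infty}(1)$ by \eqref{eq:case3-spacetime-est}; since $\lmb_i(t_n)=o_{n\to\infty}(1)$ (from $(\vec{\iota},\vec{\lmb}(t_n),\vec{z}(t_n))$ being a $W$-bubbling sequence and $\lmb_{\max}(t_n)\to0$), integration yields $\sup_{[t_n,T_n)}\lmb_i^{2D}=o_{n\to\infty}(1)$, and $R^{-1}\aeq\lmb_{\max}$ by \eqref{eq:case1-R}. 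And \eqref{eq:case3-g(t)-Hdot1=00003Do(1)} is obtained verbatim as in the proof of \eqref{eq:case2-gHdot1-is-o(1)}: expand $E[u]=JE[W]-\tfrac12\lan\calL_\calW g,g\ran+\calO(R^{-1})\|g\|_{\dot H^1}+\dlt_2^{p-1}\calO(\|g\|_{\dot H^1}^2)+o_{n\to\infty}(1)$, apply the coercivity \eqref{eq:calL-H1-coer-quad-form} together with the orthogonality conditions and the smallness $\lan\calY_{\ul{;i}},g\ran=o_{n\to\infty}(1)$ from \eqref{eq:a_i-is-o(1)}, and use $E[u(t)]\to JE[W]$ from \eqref{eq:E(u(t))-go-to-JE(W)} and $R^{-1}=o_{n\to\infty}(1)$ from \eqref{eq:case3-lmb_max=00003Do(1)}. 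The main obstacle is the lower bound in \eqref{eq:case3-max-frkr_a,i} when the $\lmb_{i,t}$ equations degenerate — that is exactly the content of Lemma~\ref{lem:case3-non-degen-quantity} and relies crucially on hypothesis \eqref{eq:case3-v_i-not-all-zero}; the rest is bookkeeping parallel to Sections~\ref{sec:Case1} and~\ref{sec:Case2}.
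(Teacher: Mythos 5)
Your overall route matches the paper's: the two-sided bound \eqref{eq:case3-max-frkr_a,i} comes from computing $\sum_{a,i}\frkr_{a,i}^{2}/\lmb_{i}^{2}$ via Lemma~\ref{lem:case1-leading-frkr_a,i} and invoking Lemma~\ref{lem:case3-non-degen-quantity}, and \eqref{eq:case3-rough-mod-est}, \eqref{eq:case3-spacetime-est}, \eqref{eq:case3-g(t)-Hdot1=00003Do(1)} are handled as you say. But your proof of \eqref{eq:case3-lmb_max=00003Do(1)} has a genuine error. The claimed inequality
$\lmb_{i}^{2D-1}\big(\|g\|_{\dot{H}^{2}}+\lmb^{D-1}|A[\vec{z}]\vec{\lmb}^{D}|+\lmb^{2D}\big)\aleq\|g\|_{\dot{H}^{2}}^{2}+\lmb^{2D-2}|A[\vec{z}]\vec{\lmb}^{D}|^{2}+\lmb^{4D}$
is false in the regime that Case~3 is designed to handle: the third term alone gives $\lmb_{i}^{2D-1}\lmb^{2D}\aeq\lmb^{4D-1}\gg\lmb^{4D}$, and Young's inequality on $\lmb_{i}^{2D-1}\|g\|_{\dot{H}^{2}}$ produces $\lmb^{4D-2}$; neither $\lmb^{4D-1}$ nor $\lmb^{4D-2}$ is dominated by $\lmb^{2D-2}|A[\vec{z}]\vec{\lmb}^{D}|^{2}+\lmb^{4D}$ when $|A[\vec{z}]\vec{\lmb}^{D}|\ll\lmb^{D}$, so the spacetime estimate does not control your integrand and the integration step does not close. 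The paper's fix is to monitor a higher power: $|(\lmb_{i}^{2D+2})_{t}|\aleq\lmb_{i}^{2D+1}(\|g\|_{\dot{H}^{2}}+\lmb^{2D-1})\aleq\|g\|_{\dot{H}^{2}}^{2}+\lmb^{4D}$, whose integral over $[t_{n},T_{n})$ is $o_{n\to\infty}(1)$ by \eqref{eq:case3-spacetime-est}, and $\sup\lmb_{i}^{2D+2}=o_{n\to\infty}(1)$ still gives \eqref{eq:case3-lmb_max=00003Do(1)}. This is an easy repair, but the step as written fails.

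A secondary point on \eqref{eq:case3-max-frkr_a,i}: Lemma~\ref{lem:case3-non-degen-quantity} is already stated with $A[\vec{z}]$ on \emph{both} sides, so no passage between $A^{\ast}$ and $A[\vec{z}]$ is needed; and the transfers you invoke (using \eqref{eq:case3-dist-from-kernel} with $A^{\ast}$ in your first case, and the asserted equivalence $\lmb^{2D-2}|A^{\ast}\vec{\lmb}^{D}|^{2}+\lmb^{4D}\aeq\lmb^{2D-2}|A[\vec{z}]\vec{\lmb}^{D}|^{2}+\lmb^{4D}$) are not valid at the precision required, since $|A[\vec{z}]\vec{\lmb}^{D}-A^{\ast}\vec{\lmb}^{D}|$ can be of size $\dlt_{2}\lmb^{D}$, which may dominate both $|A[\vec{z}]\vec{\lmb}^{D}|$ and $\lmb^{D+1}$ for $\lmb$ small with $\dlt_{2}$ fixed. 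Quoting Lemma~\ref{lem:case3-non-degen-quantity} as stated (which is exactly what the paper does, after writing $\sum_{a,i}\frkr_{a,i}^{2}/\lmb_{i}^{2}$ as the two non-degenerate sums plus an error $\calO((\lmb^{D-1}|A[\vec{z}]\vec{\lmb}^{D}|+\lmb^{2D})\lmb^{2D+1}|\log\lmb|)$) removes this issue entirely.
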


\begin{proof}
\uline{Proof of \mbox{\eqref{eq:case3-max-frkr_a,i}}}. By \eqref{eq:case1-frkr_0,i}
and \eqref{eq:case1-frkr_a,i}, we get 
\begin{align*}
\sum_{a,i}\frac{\frkr_{a,i}^{2}}{\lmb_{i}^{2}} & =\sum_{i}\lmb_{i}^{2D-2}(A[\vec{z}]\vec{\lmb}^{D})_{i}^{2}+\sum_{i}\lmb_{i}^{2D}\Big|\frac{\kpp_{1}}{\kpp_{0}}\sum_{j}\frac{A_{ij}[\vec{z}]\lmb_{j}^{D}(z_{i}-z_{j})}{|z_{i}-z_{j}|^{2}}\Big|^{2}\\
 & \quad+\calO((\lmb^{D-1}|A[\vec{z}]\vec{\lmb}^{D}|+\lmb^{2D})\cdot\lmb^{2D+1}|\log\lmb|).
\end{align*}
By \eqref{eq:8.2}, $\lmb\aeq R^{-1}<\alp$ (small), and $|\vec{z}-\vec{z}^{\ast}|<\dlt_{2}$
(small), applying Lemma~\ref{lem:case3-non-degen-quantity} gives
\eqref{eq:case3-max-frkr_a,i}.

\uline{Proof of \mbox{\eqref{eq:case3-rough-mod-est}}}. This follows
from \eqref{eq:rough-mod-est} and \eqref{eq:case3-max-frkr_a,i}.

\uline{Proof of \mbox{\eqref{eq:case3-spacetime-est}}}. This follows
from \eqref{eq:spacetime-control} and \eqref{eq:case3-max-frkr_a,i}.

\uline{Proof of \mbox{\eqref{eq:case3-lmb_max=00003Do(1)}}}. For
any $i\in\setJ$, we use \eqref{eq:case3-rough-mod-est} to have 
\[
|(\lmb_{i}^{2D+2})_{t}|\aleq\lmb_{i}^{2D+1}|\lmb_{i,t}|\aleq\lmb_{i}^{2D+1}(\|g\|_{\dot{H}^{2}}+\lmb^{2D-1})\aleq\|g\|_{\dot{H}^{2}}^{2}+\lmb^{4D}.
\]
Integrating this on $[t_{n},T_{n})$ with $\lmb(t_{n})=o_{n\to\infty}(1)$
and \eqref{eq:case3-spacetime-est}, we conclude \eqref{eq:case3-lmb_max=00003Do(1)}.

\uline{Proof of \mbox{\eqref{eq:case3-g(t)-Hdot1=00003Do(1)}}}.
The proof of \eqref{eq:case2-gHdot1-is-o(1)} works the same; we omit
the proof.
\end{proof}
As a consequence of Lemma~\ref{lem:case1-leading-frkr_a,i}, we have
the following refined modulation estimates.
\begin{prop}[Refined modulation estimates]
\label{prop:case3-modulation-est}We have refined modulation estimates
\begin{align}
\Big|\frac{\lmb_{i,t}}{\lmb_{i}}+\frac{d}{dt}f_{i}-\sum_{j=1}^{J}A_{ij}[\vec{z}]\lmb_{i}^{D-2}\lmb_{j}^{D}\Big| & \aleq\|g\|_{\dot{H}^{2}}^{2}+\lmb^{2D-3}\|g\|_{\dot{H}^{2}}+\lmb^{2D}|\log\lmb|,\label{eq:case3-ref-mod-lmb}\\
\Big|z_{i,t}-\frac{\kpp_{1}}{\kpp_{0}}\sum_{j=1}^{J}\frac{A_{ij}[\vec{z}]\lmb_{i}^{D}\lmb_{j}^{D}(z_{j}-z_{i})}{|z_{i}-z_{j}|^{2}}\Big| & \aleq\lmb\{\|g\|_{\dot{H}^{2}}^{2}+\lmb^{2D-3}\|g\|_{\dot{H}^{2}}+\lmb^{2D}|\log\lmb|\},\label{eq:case3-ref-mod-z}
\end{align}
where $f_{i}=f_{i}(t)$ satisfies 
\begin{equation}
|f_{i}|\aleq\min\{\|g\|_{\dot{H}^{1}},\|g\|_{\dot{H}^{2}}+\lmb_{i}\|g\|_{\dot{H}^{1}}\}.\label{eq:case3-est-f_i}
\end{equation}
\end{prop}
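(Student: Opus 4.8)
The plan is to establish Proposition~\ref{prop:case3-modulation-est} by substituting the leading-order computations of $\frkr_{a,i}$ from Lemma~\ref{lem:case1-leading-frkr_a,i} into the general refined modulation estimate \eqref{eq:non-coll-refined-mod-est}, exactly as in the proof of Proposition~\ref{prop:case1-modulation-est}, but keeping track of more of the error terms since the main terms in the $\lmb_{i,t}$ equation can now vanish. Concretely, recall that \eqref{eq:non-coll-refined-mod-est} reads
\[
\Big|\frac{\lmb_{i,t}}{\lmb_{i}}+\frac{\frkr_{0,i}}{\lmb_{i}^{2}}-\frac{\rd_{t}\lan[\Lmb W]_{\ul{;i}},g\ran}{\|\Lmb W\|_{L^{2}}^{2}}\Big|+\chf_{a\neq0}\Big|\frac{(z_{i}^{a})_{t}}{\lmb_{i}}+\frac{\frkr_{a,i}}{\lmb_{i}^{2}}\Big|+\cdots\aleq\|g\|_{\dot{H}^{2}}^{2}+\lmb_{\secmax}^{D-2}\lmb_{\max}^{D-1}\|g\|_{\dot{H}^{2}}+\lmb_{\secmax}^{D-\frac{3}{2}}\lmb_{\max}^{D+\frac{3}{2}}.
\]
Under \eqref{eq:8.2} we have $\lmb_{\secmax}\aeq\lmb_{\max}\aeq\lmb$, so the error becomes $\|g\|_{\dot{H}^{2}}^{2}+\lmb^{2D-3}\|g\|_{\dot{H}^{2}}+\lmb^{2D+3}$, and the last term $\lmb^{2D+3}$ is absorbed into $\lmb^{2D}|\log\lmb|$. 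Setting $f_{i}\coloneqq\lan[\Lmb W]_{\ul{;i}},g\ran/\|\Lmb W\|_{L^{2}}^{2}$ immediately gives \eqref{eq:case3-est-f_i} from \eqref{eq:non-coll-refined-mod-est-corr-bound}. It remains to substitute $-\frkr_{0,i}/\lmb_{i}^{2}$ and $-\frkr_{a,i}/\lmb_{i}^{2}$ by their leading expressions.

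The key input is Lemma~\ref{lem:case1-leading-frkr_a,i}, which under $\lmb_{\secmax}\aeq\lmb_{\max}\aeq\lmb$ gives
\[
\frac{\frkr_{0,i}}{\lmb_{i}^{2}}=-\sum_{j\neq i}A_{ij}[\vec{z}]\lmb_{i}^{D-2}\lmb_{j}^{D}+\calO(\lmb^{2D}|\log\lmb|),\qquad\frac{\frkr_{a,i}}{\lmb_{i}^{2}}=\frac{\kpp_{1}}{\kpp_{0}}\sum_{j\neq i}\frac{A_{ij}[\vec{z}]\lmb_{i}^{D-1}\lmb_{j}^{D}}{|z_{i}-z_{j}|^{2}}(z_{i}^{a}-z_{j}^{a})+\calO(\lmb^{2D}|\log\lmb|)
\]
for $a\neq0$. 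For the $\lmb_{i}$-equation, substituting the first identity into \eqref{eq:non-coll-refined-mod-est} and noting $\chf_{j=i}$ contributes nothing to $A_{ij}$, we obtain \eqref{eq:case3-ref-mod-lmb} directly (the $\calO(\lmb^{2D}|\log\lmb|)$ from $\frkr_{0,i}$ merges with the one from \eqref{eq:non-coll-refined-mod-est}). For the $z_{i}$-equation, we multiply the second-coordinate version of \eqref{eq:non-coll-refined-mod-est} by $\lmb_{i}$; since $(z_{i}^{a})_{t}=-\lmb_{i}\frkr_{a,i}/\lmb_{i}^{2}+\lmb_{i}\cdot\calO(\|g\|_{\dot{H}^{2}}^{2}+\lmb^{2D-3}\|g\|_{\dot{H}^{2}}+\lmb^{2D}|\log\lmb|)$ and $-\lmb_{i}\frkr_{a,i}/\lmb_{i}^{2}=\frac{\kpp_{1}}{\kpp_{0}}\sum_{j\neq i}A_{ij}[\vec{z}]\lmb_{i}^{D}\lmb_{j}^{D}(z_{j}^{a}-z_{i}^{a})/|z_{i}-z_{j}|^{2}+\calO(\lmb^{2D+1}|\log\lmb|)$, vector-izing over $a\in\{1,\dots,N\}$ gives \eqref{eq:case3-ref-mod-z}. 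One minor check is that in the $z$-equation the error $\lmb^{2D+1}|\log\lmb|$ coming from $\frkr_{a,i}$ is dominated by the stated $\lmb\cdot\lmb^{2D}|\log\lmb|$; this is immediate.

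I expect the routine bookkeeping of error sizes to be the only real content, and no genuine obstacle: this is the same argument as Proposition~\ref{prop:case1-modulation-est} with $\lmb_{\max}\aeq\lmb_{\secmax}\aeq\lmb$, except that here we may not simplify $A_{ij}[\vec{z}]$ to $A^{\ast}_{ij}$ (since the main term can be degenerate and replacing $\vec{z}$ by $\vec{z}^{\ast}$ would introduce an $\calO(|\vec{z}-\vec{z}^{\ast}|\lmb^{2D-2})$ error that is not yet known to be negligible relative to the possibly-vanishing main term), and we must retain the mixed error $\lmb^{2D-3}\|g\|_{\dot{H}^{2}}$ rather than Cauchy--Schwarz it away into $\|g\|_{\dot{H}^{2}}^{2}+\lmb^{4D-6}$, because $\lmb^{4D-6}$ need not be time-integrable with a useful rate when $D$ is small. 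Keeping the equations in terms of $A[\vec{z}]$ and with the sharp error $\lmb^{2D-3}\|g\|_{\dot{H}^{2}}$ is exactly what the later sections (closing the bootstrap via the non-degeneracy of Lemma~\ref{lem:case3-non-degen-quantity} and integrating against the kernel vector $\vec{\frkc}$) will need.
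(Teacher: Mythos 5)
Your proposal is correct and follows essentially the same route as the paper, which likewise obtains the proposition by feeding Lemma~\ref{lem:case1-leading-frkr_a,i} (valid here since \eqref{eq:8.2} gives $\lmb_{\max}\aeq\lmb_{\secmax}$) into \eqref{eq:non-coll-refined-mod-est}, taking $f_{i}$ from \eqref{eq:non-coll-refined-mod-est-corr-bound}, and refraining from replacing $A[\vec{z}]$ by $A^{\ast}$. One harmless slip: $\lmb_{\secmax}^{D-\frac{3}{2}}\lmb_{\max}^{D+\frac{3}{2}}\aeq\lmb^{2D}$, not $\lmb^{2D+3}$, but either way it is absorbed into $\lmb^{2D}|\log\lmb|$.
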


\begin{proof}
This follows from \eqref{eq:non-coll-refined-mod-est} with $\lmb_{\max}\aeq\lmb_{\secmax}$,
Lemma~\ref{lem:case1-leading-frkr_a,i}, and \eqref{eq:non-coll-refined-mod-est-corr-bound}.
Note that there are not so much we can simplify further.
\end{proof}

\subsection{\label{subsec:case3-control-z}Control of $\vec{z}$ on $[t_{n},T_{n}'')$}

The goal of this subsection is to control $\vec{z}(t)$ on $[t_{n},T_{n}'')$
by showing that $\vec{z}(t)=\vec{z}^{\ast}+o_{n\to\infty}(1)$ for
$t\in[t_{n},T_{n}'')$. Notice that we control on $[t_{n},T_{n}'')$,
not on $[t_{n},T_{n})$. We will prove $T_{n}''=T_{n}$ in the next
subsection. 

The estimates in this section assume $t\in[t_{n},T_{n}'')$. By $\frac{|A^{\ast}\vec{\lmb}^{D}(t)|}{\lmb^{D}(t)}<\dlt_{1}$,
\eqref{eq:case3-equiv-dist}, and the fact that $\vec{\frkc}\in(0,\infty)^{J}$
is a positive vector, we have 
\begin{equation}
\Big|\frac{\vec{\lmb}^{D}(t)}{\lmb^{D}(t)}-\vec{\frkc}\Big|\aleq\dlt_{1}\quad\text{and}\quad\lmb_{1}(t)\aeq\dots\aeq\lmb_{J}(t).\label{eq:8.13}
\end{equation}
We begin with more simplified modulation estimates obtained from Proposition~\ref{prop:case3-modulation-est}
under \eqref{eq:8.13}.
\begin{cor}[Modulation estimates in simple form integrated from $t_{n}$]
\label{cor:8.4}There exist functions $\zeta_{i}(t)$ on $[t_{n},T_{n}'')$
such that 
\begin{align}
\zeta_{i}(t) & =(1+o_{n\to\infty}(1))\lmb_{i}(t),\label{eq:case3-lmb-to-wh-lmb}
\end{align}
and 
\begin{align}
\Big|\frac{\zeta_{i,t}}{\zeta_{i}}-\sum_{j=1}^{J}A_{ij}[\vec{z}]\zeta_{i}^{D-2}\zeta_{j}^{D}\Big| & \aleq L_{1,n}+o_{n\to\infty}(1)\cdot\zeta^{2D-1},\label{eq:case3-lmb_j-mod-est}\\
\Big|z_{i,t}-\frac{\kpp_{1}}{\kpp_{0}}v_{i}^{\ast}\zeta^{2D}\Big| & \aleq L_{1,n}\cdot\zeta+\calO(\dlt_{1})\cdot\zeta^{2D},\label{eq:case3-z_j-mod-est}
\end{align}
where $L_{1,n}$ denotes some function $h_{n}(t)$ on $[t_{n},T_{n}'')$
such that $\int_{t_{n}}^{T_{n}''}|h_{n}(t)|dt=o_{n\to\infty}(1)$
and $\zeta\coloneqq\big(\zeta_{1}^{2D}+\cdots+\zeta_{J}^{2D}\big)^{\frac{1}{2D}}\aeq\lmb_{\max}$.
\end{cor}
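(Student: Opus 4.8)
The plan is to derive Corollary~\ref{cor:8.4} from the refined modulation estimates \eqref{eq:case3-ref-mod-lmb}--\eqref{eq:case3-ref-mod-z} of Proposition~\ref{prop:case3-modulation-est} by absorbing the $\calO(\|g\|_{\dot{H}^2}^2)$ and $\calO(\|g\|_{\dot{H}^2})$ error terms into the modulation parameters $\zeta_i$ via the spacetime control \eqref{eq:case3-spacetime-est}, then simplifying the remaining errors using the closeness \eqref{eq:8.13} to the degenerate direction $\vec{\frkc}$ and $\lmb(t)=o_{n\to\infty}(1)$ from \eqref{eq:case3-lmb_max=00003Do(1)}.

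First I would define $\zeta_i(t):=\lmb_i(t)\exp\bigl(f_i(t)+\int_{t_n}^{t}h_i(\tau)\,d\tau\bigr)$, where $f_i$ is the correction in \eqref{eq:case3-ref-mod-lmb} and $h_i$ collects the error terms $\calO(\|g\|_{\dot H^2}^2)+\calO(\lmb^{2D-3}\|g\|_{\dot H^2})$; more precisely, by Young's inequality $\lmb^{2D-3}\|g\|_{\dot H^2}\aleq\|g\|_{\dot H^2}^2+\lmb^{4D-6}$, and since $4D-6\ge 2D$ (as $D\ge 3$, hence $2D\ge 6$) this is $\aleq\|g\|_{\dot H^2}^2+\lmb^{2D}$, so $h_i$ contributes $\aleq\|g\|_{\dot H^2}^2$ after further absorbing a $\lmb^{2D}$-term. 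The key point is that $\int_{t_n}^{T_n''}\|g(\tau)\|_{\dot H^2}^2\,d\tau=o_{n\to\infty}(1)$ by \eqref{eq:case3-spacetime-est}, and $|f_i|\aleq\|g\|_{\dot H^1}=o_{n\to\infty}(1)$ by \eqref{eq:case3-est-f_i} and \eqref{eq:case3-g(t)-Hdot1=00003Do(1)}, which together give \eqref{eq:case3-lmb-to-wh-lmb}. Differentiating the definition of $\zeta_i$ and substituting \eqref{eq:case3-ref-mod-lmb} cancels the $\frac{d}{dt}f_i$ term and leaves the leading term $\sum_j A_{ij}[\vec z]\lmb_i^{D-2}\lmb_j^D$ plus an integrable-in-$n$ error (the function I called $L_{1,n}$) plus the $\lmb^{2D}|\log\lmb|$ remainder; the latter is $o_{n\to\infty}(1)\cdot\lmb^{2D-1}$ since $\lmb|\log\lmb|=o_{n\to\infty}(1)$ by \eqref{eq:case3-lmb_max=00003Do(1)}. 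Finally, replacing $\lmb_i$ by $\zeta_i$ in the leading term via \eqref{eq:case3-lmb-to-wh-lmb} produces $\sum_j A_{ij}[\vec z]\zeta_i^{D-2}\zeta_j^D$ up to an $o_{n\to\infty}(1)\cdot\zeta^{2D-1}$ error, giving \eqref{eq:case3-lmb_j-mod-est}. (One uses $\lmb_i\aeq\lmb\aeq\zeta$ from \eqref{eq:8.13}.)

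For the $z$-equation \eqref{eq:case3-z_j-mod-est}, I would start from \eqref{eq:case3-ref-mod-z}: the error on the right is $\lmb\{\|g\|_{\dot H^2}^2+\lmb^{2D-3}\|g\|_{\dot H^2}+\lmb^{2D}|\log\lmb|\}$, which, by the same Young/absorption argument and $\lmb|\log\lmb|=o(1)$, is $\aleq L_{1,n}\cdot\zeta+o_{n\to\infty}(1)\cdot\zeta^{2D}$ — note here the correction $f_i$ does not appear in the $z$-equation, so no integral-correction of $z_i$ is needed and we keep $z_i$ itself. It then remains to show the leading term $\frac{\kpp_1}{\kpp_0}\sum_j\frac{A_{ij}[\vec z]\lmb_i^D\lmb_j^D(z_j-z_i)}{|z_i-z_j|^2}$ equals $\frac{\kpp_1}{\kpp_0}v_i^\ast\zeta^{2D}+\calO(\dlt_1)\cdot\zeta^{2D}$. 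For this I would write $A_{ij}[\vec z]=A_{ij}^\ast+\calO(|\vec z-\vec z^\ast|)=A_{ij}^\ast+\calO(\dlt_2)$, use $\frac{z_j-z_i}{|z_i-z_j|^2}=\frac{z_j^\ast-z_i^\ast}{|z_i^\ast-z_j^\ast|^2}+\calO(\dlt_2)$, and substitute $\lmb_i^D\lmb_j^D=\frkc_i\frkc_j\,\lmb^{2D}+\calO(\dlt_1)\cdot\lmb^{2D}$ from \eqref{eq:8.13}; recalling the definition $v_i^\ast=\sum_j\frac{A_{ij}^\ast\frkc_i\frkc_j}{|z_i^\ast-z_j^\ast|^2}(z_j^\ast-z_i^\ast)$ from \eqref{eq:case3-v_i-not-all-zero} and $\lmb\aeq\zeta$, the leading term becomes $\frac{\kpp_1}{\kpp_0}v_i^\ast\zeta^{2D}+\calO(\dlt_1+\dlt_2)\cdot\zeta^{2D}=\frac{\kpp_1}{\kpp_0}v_i^\ast\zeta^{2D}+\calO(\dlt_1)\cdot\zeta^{2D}$ since $\dlt_2\ll\dlt_1$. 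This yields \eqref{eq:case3-z_j-mod-est}.

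The main obstacle I anticipate is the careful bookkeeping of which error terms are genuinely integrable uniformly in $n$ (the $L_{1,n}$ terms, controlled by \eqref{eq:case3-spacetime-est}) versus which are merely small pointwise ($o_{n\to\infty}(1)\cdot\zeta^{2D-1}$ or $\calO(\dlt_1)\cdot\zeta^{2D}$), because the subsequent argument (proof of $T_n''=T_n$ and the rate \eqref{eq:case3-lmb-rate}) will integrate these estimates and the two types of errors play very different roles. In particular one must be careful that the $\lmb^{2D}$-type remainders, after multiplying by $\zeta^{-(2D-1)}$ or similar renormalizing factors in later steps, do not blow up — but at this stage \eqref{eq:case3-lmb_max=00003Do(1)} makes all of $\lmb$, $\lmb|\log\lmb|$ small, so the only real care is the Young-inequality absorption $\lmb^{2D-3}\|g\|_{\dot H^2}\aleq\|g\|_{\dot H^2}^2+\lmb^{4D-6}$ and checking $4D-6\ge 2D$, i.e. $D\ge 3$, i.e. $N\ge 8$; for $N=7$, $D=5/2$ and $4D-6=4<2D=5$, so one instead bounds $\lmb^{2D-3}\|g\|_{\dot H^2}=\lmb^{2}\|g\|_{\dot H^2}\aleq\|g\|_{\dot H^2}^2+\lmb^{4}$ and notes $\lmb^4=\lmb^{4D-6}$ is still $o_{n\to\infty}(1)\cdot\lmb^{2D-1}=o_{n\to\infty}(1)\cdot\lmb^{4}$ trivially after using $\lmb=o_{n\to\infty}(1)$ only to the extent of absorbing into the $L_{1,n}$ slot via $\int\lmb^{4D}\,dt=o_{n\to\infty}(1)$ — so the dimension $N=7$ needs a separate (but equally short) line.
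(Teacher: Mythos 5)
Your overall route is the same as the paper's (start from Proposition~\ref{prop:case3-modulation-est}, use the spacetime bound \eqref{eq:case3-spacetime-est} for the $L_{1,n}$ part and \eqref{eq:8.13}, $\lmb=o_{n\to\infty}(1)$ for the rest; the treatment of \eqref{eq:case3-z_j-mod-est} via $A_{ij}[\vec z]=A_{ij}^{\ast}+\calO(\dlt_2)$ and $\lmb_i^D\lmb_j^D=(\frkc_i\frkc_j+\calO(\dlt_1))\lmb^{2D}$ is exactly the paper's). But there is a genuine gap at precisely the point this corollary exists for, namely getting the non-integrable error at order $\zeta^{2D-1}$ rather than $\zeta^{2D-2}$ (see the remark following the corollary). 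When you convert the main term $\sum_j A_{ij}[\vec z]\lmb_i^{D-2}\lmb_j^D$ into $\zeta$-variables you invoke only \eqref{eq:case3-lmb-to-wh-lmb}, i.e.\ $\zeta_i=(1+o_{n\to\infty}(1))\lmb_i$; that multiplicative closeness carries no pointwise gain in $\lmb$ or $\|g\|_{\dot H^2}$, so the replacement error it yields is $o_{n\to\infty}(1)\cdot\zeta^{2D-2}$, one power of $\zeta$ short of \eqref{eq:case3-lmb_j-mod-est}. Worse, your choice to fold the $\calO(\|g\|_{\dot H^2}^2)$-type errors into the exponent of $\zeta_i$ makes this loss unavoidable: the factor $\exp(\int_{t_n}^t h_i)$ differs from $1$ by a quantity that is only $o_{n\to\infty}(1)$, not pointwise $\aleq\|g\|_{\dot H^2}+\lmb$, so the induced discrepancy in the main term is again $o_{n\to\infty}(1)\cdot\zeta^{2D-2}$. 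The paper avoids this by taking $\zeta_i=\lmb_i e^{f_i}$ only, leaving all $\|g\|_{\dot H^2}^2$-errors in the $L_{1,n}$ slot (which the statement explicitly allows, so no absorption is needed), and using the full strength of \eqref{eq:case3-est-f_i}, namely $|f_i|\aleq\|g\|_{\dot H^2}+\lmb_i\|g\|_{\dot H^1}$, which gives $\lmb_i^{D-2}\lmb_j^D=\zeta_i^{D-2}\zeta_j^D+\calO(\|g\|_{\dot H^2}^2)+o_{n\to\infty}(1)\cdot\lmb^{2D-1}$. Without this refined use of $f_i$ the downstream arguments (Lemma~\ref{lem:case3-key-ineq}, the $\vec z$-control, and the sharp rate in Section~\ref{subsec:case3-proof-of-prop}) do not close, because an $o_{n\to\infty}(1)\cdot\zeta^{2D-2}$ error produces $o_{n\to\infty}(1)\cdot\zeta^{2D-1}$ terms in $\nu_t$ whose time integral is not controlled.

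The second problem is your $N=7$ patch for the cross term $\lmb^{2D-3}\|g\|_{\dot H^2}$. For $D=\tfrac52$ one has $4D-6=2D-1$, so unweighted Young gives only $\calO(1)\cdot\lmb^{2D-1}$; the assertion that $\lmb^{4}$ is ``still $o_{n\to\infty}(1)\cdot\lmb^{4}$'' is circular, and $\lmb^{2D-1}$ cannot be absorbed into $L_{1,n}$ either, since the spacetime estimate controls $\int\lmb^{4D}dt$, not $\int\lmb^{2D-1}dt$ (indeed $\lmb^{2D-1}\sim t^{-1}$ in the end). The correct and uniform-in-$N$ fix, which is what the paper's inequality \eqref{eq:8.19} encodes, is a weighted Young inequality with an $n$-dependent weight: $\lmb^{2D-3}\|g\|_{\dot H^2}\le\eps_n^{-1}\|g\|_{\dot H^2}^2+\eps_n\lmb^{4D-6}$ with $\eps_n\to0$ chosen slowly enough (e.g.\ $\eps_n$ the square root of $\int_{t_n}^{T_n''}\|g\|_{\dot H^2}^2dt$) that $\eps_n^{-1}\|g\|_{\dot H^2}^2$ still qualifies as $L_{1,n}$, while $\eps_n\lmb^{4D-6}\le o_{n\to\infty}(1)\cdot\lmb^{2D-1}$ using $D\ge\tfrac52$. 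The same smallness is needed in the $z$-equation: an $\calO(1)\cdot\zeta^{2D}$ remainder there would be of the same size as the leading term $-\tfrac{\kpp_1}{\kpp_0}\zeta^{2D}$ exploited in \eqref{eq:case3-rd-alp}, so the prefactor must be small, not merely bounded.
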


\begin{rem}
\eqref{eq:case3-z_j-mod-est} combined with \eqref{eq:case3-v_i-not-all-zero}
implies non-degeneracy in the equations of $z_{i,t}$. Notice also
that we have a bound $o_{n\to\infty}(1)\cdot\zeta^{2D-1}$, not $o_{n\to\infty}(1)\cdot\zeta^{2D-2}$
as in \eqref{eq:case1-lmb-mod-est}. This better error estimate is
necessary due to the degeneracy in the equations \eqref{eq:case3-lmb_j-mod-est}
of $\lmb_{i,t}$.
\end{rem}

\begin{proof}
\uline{Proof of \mbox{\eqref{eq:case3-lmb-to-wh-lmb}} and \mbox{\eqref{eq:case3-lmb_j-mod-est}}}.
Define $\zeta_{i}\coloneqq\lmb_{i}e^{f_{i}}$, where $f_{i}$ is as
in \eqref{eq:case3-ref-mod-lmb} and \eqref{eq:case3-est-f_i}. Since
$f_{i}=\calO(\|g\|_{\dot{H}^{1}})=o_{n\to\infty}(1)$ by \eqref{eq:case3-g(t)-Hdot1=00003Do(1)},
we get \eqref{eq:case3-lmb-to-wh-lmb}. Next, using the full control
of \eqref{eq:case3-est-f_i} and $\|g(t)\|_{\dot{H}^{1}}+\lmb(t)=o_{n\to\infty}(1)$,
we have 
\begin{align*}
\lmb_{i}^{D-2}\lmb_{j}^{D} & =\zeta_{i}^{D-2}\zeta_{j}^{D}\cdot\{1+\calO(\|g\|_{\dot{H}^{2}})+o_{n\to\infty}(1)\cdot\lmb\}\\
 & =\zeta_{i}^{D-2}\zeta_{j}^{D}+\calO(\|g\|_{\dot{H}^{2}}^{2})+o_{n\to\infty}(1)\cdot\lmb^{2D-1}.
\end{align*}
Substituting this into the refined modulation estimate \eqref{eq:case3-ref-mod-lmb}
gives 
\[
\Big|\frac{\zeta_{i,t}}{\zeta_{i}}-\sum_{j=1}^{J}A_{ij}[\vec{z}]\zeta_{i}^{D-2}\zeta_{j}^{D}\Big|\aleq\|g\|_{\dot{H}^{2}}^{2}+\lmb^{2D-3}\|g\|_{\dot{H}^{2}}+o_{n\to\infty}(1)\cdot\lmb^{2D-1}.
\]
Note that $\|g\|_{\dot{H}^{2}}^{2}=L_{1,n}$ by the spacetime estimate
\eqref{eq:case3-spacetime-est}. This fact with $D\geq\frac{5}{2}$
implies 
\begin{equation}
\lmb^{2D-3}\|g\|_{\dot{H}^{2}}\leq L_{1,n}+o_{n\to\infty}(1)\cdot\lmb^{4D-6}\leq L_{1,n}+o_{n\to\infty}(1)\cdot\lmb^{2D-1}.\label{eq:8.19}
\end{equation}
This completes the proof of \eqref{eq:case3-lmb_j-mod-est}.

\uline{Proof of \mbox{\eqref{eq:case3-z_j-mod-est}}}. By \eqref{eq:case3-ref-mod-z}
and \eqref{eq:8.19}, we obtain 
\[
\Big|z_{i,t}-\frac{\kpp_{1}}{\kpp_{0}}\sum_{j=1}^{J}\frac{A_{ij}[\vec{z}]\lmb_{i}^{D}\lmb_{j}^{D}(z_{j}-z_{i})}{|z_{i}-z_{j}|^{2}}\Big|\aleq L_{1,n}\cdot\lmb+o_{n\to\infty}(1)\cdot\lmb^{2D}.
\]
Applying $\lmb_{i}^{D}=\lmb^{D}(\frkc_{i}+\calO(\dlt_{1}))$, \eqref{eq:case3-lmb-to-wh-lmb},
and $z_{i}=z_{i}^{\ast}+\calO(\dlt_{2})$, we obtain 
\[
\Big|z_{i,t}-\frac{\kpp_{1}}{\kpp_{0}}\sum_{j=1}^{J}\frac{A_{ij}^{\ast}\frkc_{i}\frkc_{j}(z_{j}^{\ast}-z_{i}^{\ast})}{|z_{i}^{\ast}-z_{j}^{\ast}|^{2}}\zeta^{2D}\Big|\aleq L_{1,n}\cdot\zeta+\calO(\dlt_{1})\cdot\zeta^{2D}.
\]
This is \eqref{eq:case3-z_j-mod-est} by the definition \eqref{eq:case3-v_i-not-all-zero}
of $v_{i}^{\ast}$.
\end{proof}
The following two differential inequalities are the keys to control
$\vec{z}(t)$ on $[t_{n},T_{n}'')$. 
\begin{lem}[Key inequalities]
\label{lem:case3-key-ineq}Let 
\begin{equation}
\alp(t)\coloneqq-\frac{\lan\vec{\frkc},A[\vec{z}(t)]\vec{\frkc}\ran}{4D\sum_{i=1}^{J}|v_{i}^{\ast}|^{2}}\qquad\text{and}\qquad\nu(t)\coloneqq\Big(\frac{\lan\vec{\frkc},\vec{\zeta}^{-D+2}(t)\ran}{\sum_{i=1}^{J}\frkc_{i}^{2/D}}\Big)^{-\frac{1}{D-2}}.\label{eq:case3-def-alp-nu}
\end{equation}
Then, we have 
\begin{align}
|\alp| & \aleq|\vec{z}-\vec{z}^{\ast}|,\label{eq:case3-alp-est}\\
\nu & =(1+\calO(\dlt_{1}))\zeta,\label{eq:case3-nu-est}
\end{align}
and 
\begin{align}
\frac{d}{dt}\alp & =-\frac{\kpp_{1}}{\kpp_{0}}\zeta^{2D}+L_{1,n}\cdot\zeta+\dlt_{1}\cdot\calO(\zeta^{2D}),\label{eq:case3-rd-alp}\\
\frac{d}{dt}\nu & =-\gmm\alp\zeta^{2D-1}+L_{1,n}\cdot\zeta+\dlt_{1}\cdot\calO(|\vec{z}-\vec{z}^{\ast}|\zeta^{2D-1}+\zeta^{2D}),\label{eq:case3-rd-nu}
\end{align}
where $\gmm$ is a constant defined by 
\begin{equation}
\gmm\coloneqq\frac{4D\sum_{i=1}^{J}|v_{i}^{\ast}|^{2}}{\sum_{i=1}^{J}\frkc_{i}^{2/D}}>0.\label{eq:case3-def-gmm}
\end{equation}

\end{lem}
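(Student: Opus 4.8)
The plan is to compute the time derivatives of $\alp$ and $\nu$ directly, treating them as functions of $\vec{z}$ and $\vec{\zeta}$ respectively, and then substitute the modulation estimates \eqref{eq:case3-lmb_j-mod-est}--\eqref{eq:case3-z_j-mod-est} from Corollary~\ref{cor:8.4}. The estimates \eqref{eq:case3-alp-est} and \eqref{eq:case3-nu-est} are immediate: for the first, $\alp(t)-\alp^{\ast} = -\lan\vec{\frkc},(A[\vec{z}(t)]-A^{\ast})\vec{\frkc}\ran/(4D\sum|v_i^{\ast}|^2)$ and one notes $\alp^{\ast}=0$ since $\lan\vec{\frkc},A^{\ast}\vec{\frkc}\ran=0$ (as $A^{\ast}\vec{\frkc}=0$), together with Lipschitz dependence $A[\vec{z}]=A^{\ast}+\calO(|\vec{z}-\vec{z}^{\ast}|)$; for the second, use \eqref{eq:8.13} which gives $\vec{\zeta}^{D}/\zeta^{D}=\vec{\frkc}+\calO(\dlt_1)$ and hence $\lan\vec{\frkc},\vec{\zeta}^{-D+2}\ran = \zeta^{-(D-2)}(\sum\frkc_i^{2/D}+\calO(\dlt_1))$.

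For \eqref{eq:case3-rd-alp}, I would differentiate: $\frac{d}{dt}\alp = -\frac{1}{4D\sum|v_i^{\ast}|^2}\sum_{a}\lan\vec{\frkc},\rd_{z^a}A[\vec{z}]\vec{\frkc}\ran\,z_{a,t}$, expanding the derivative of $A[\vec{z}]$ (which brings a factor $|z_i-z_j|^{-2D-2}(z_i-z_j)$ type term). The key algebraic point is that after substituting $z_{i,t}=\frac{\kpp_1}{\kpp_0}v_i^{\ast}\zeta^{2D}+L_{1,n}\zeta+\calO(\dlt_1)\zeta^{2D}$ from \eqref{eq:case3-z_j-mod-est}, the leading term collapses to $-\frac{\kpp_1}{\kpp_0}\zeta^{2D}$ by the very definition of $v_i^{\ast}$ and the normalization in $\alp$: indeed $\sum_i \lan\frac{\rd A}{\rd z_i}\vec{\frkc},\vec{\frkc}\ran\cdot v_i^{\ast}$ should, after collecting terms, equal $4D\sum_i|v_i^{\ast}|^2$ up to the conventions. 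This is essentially a "conservation-law" computation: $\alp$ is designed so that $A^{\ast}\vec{\frkc}=0$ makes the variation of $\lan\vec{\frkc},A[\vec{z}]\vec{\frkc}\ran$ pair cleanly against the $z_{i,t}$ equation. The error terms $L_{1,n}\zeta$ and $\dlt_1\calO(\zeta^{2D})$ are then read off directly from the error in \eqref{eq:case3-z_j-mod-est}.

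For \eqref{eq:case3-rd-nu}, differentiate $\nu = c(\lan\vec{\frkc},\vec{\zeta}^{-D+2}\ran)^{-1/(D-2)}$, giving $\frac{d}{dt}\nu = \frac{c}{D-2}(\lan\vec{\frkc},\vec{\zeta}^{-D+2}\ran)^{-1/(D-2)-1}\sum_i\frkc_i(D-2)\zeta_i^{-D+1}\zeta_{i,t}$, so $\frac{d}{dt}\nu \aeq \nu^{D-1}\sum_i\frkc_i\zeta_i^{-D+1}\zeta_{i,t}$. Substituting $\zeta_{i,t}/\zeta_i = \sum_j A_{ij}[\vec{z}]\zeta_i^{D-2}\zeta_j^D + (\text{error})$ from \eqref{eq:case3-lmb_j-mod-est}, the sum $\sum_i\frkc_i\zeta_i^{-D+1}\cdot\zeta_i\cdot\sum_j A_{ij}[\vec{z}]\zeta_i^{D-2}\zeta_j^D = \sum_{i,j}\frkc_i A_{ij}[\vec{z}]\zeta_j^D = \lan\vec{\frkc},A[\vec{z}]\vec{\zeta}^D\ran$. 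Now $\vec{\zeta}^D = \zeta^D\vec{\frkc}+\calO(\dlt_1\zeta^D)$, so this equals $\zeta^D\lan\vec{\frkc},A[\vec{z}]\vec{\frkc}\ran + \calO(\dlt_1\zeta^{2D}) = -4D(\sum|v_i^{\ast}|^2)\alp\zeta^D\cdot(\text{const}) + \dots$ using the definition of $\alp$; collecting powers (the extra $\nu^{D-1}\aeq\zeta^{D-1}$) produces the claimed $-\gmm\alp\zeta^{2D-1}$ with $\gmm$ as in \eqref{eq:case3-def-gmm}. The error terms come from: (i) the $L_{1,n}$ and $o_{n\to\infty}(1)\zeta^{2D-1}$ errors in \eqref{eq:case3-lmb_j-mod-est} contributing $L_{1,n}\zeta$ and $o_{n\to\infty}(1)\zeta^{2D}$ (the latter absorbed into $\dlt_1\calO(\zeta^{2D})$ for $n$ large), (ii) the $\calO(\dlt_1)$ discrepancy between $\vec{\zeta}^D$ and $\zeta^D\vec{\frkc}$ giving $\dlt_1\calO(\zeta^{2D})$, and (iii) replacing $A[\vec{z}]$ by its value — but since $\lan\vec{\frkc},A^{\ast}\vec{\frkc}\ran=0$, the genuinely linear-in-$(\vec{z}-\vec{z}^{\ast})$ part is captured by $\alp$ itself, and the remaining quadratic-in-$\dlt_1$ part gives $\dlt_1\calO(|\vec{z}-\vec{z}^{\ast}|\zeta^{2D-1})$.

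The main obstacle I expect is bookkeeping the universal constants so that $\gmm$ comes out exactly as in \eqref{eq:case3-def-gmm} and the leading coefficient in \eqref{eq:case3-rd-alp} is exactly $-\kpp_1/\kpp_0$; this requires carefully matching the factors of $D$, $2D$, and $\kpp_1/\kpp_0$ generated by differentiating $|z_i-z_j|^{-2D}$ versus the normalizations built into $\alp$ and $\nu$, and using the identity $v_i^{\ast} = \sum_j \frac{A_{ij}^{\ast}\frkc_i\frkc_j}{|z_i^{\ast}-z_j^{\ast}|^2}(z_j^{\ast}-z_i^{\ast})$ together with $A^{\ast}\vec{\frkc}=0$ to simplify $\sum_i \lan(\rd_{z_i}A^{\ast})\vec{\frkc},\vec{\frkc}\ran$-type expressions. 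A secondary subtlety is ensuring that the various "$L_{1,n}$" symbols (which stand for different integrable functions) and the "$\dlt_1\cdot\calO(\cdot)$" terms are aggregated correctly — in particular that $o_{n\to\infty}(1)\cdot\zeta^{2D-1}$ times $\nu^{D-1}\aeq\zeta^{D-1}$ really does land inside $L_{1,n}\cdot\zeta + \dlt_1\calO(\zeta^{2D})$ after possibly shrinking $n$, which is routine but must be stated. None of the steps are deep; the whole lemma is a direct substitution plus constant-chasing.
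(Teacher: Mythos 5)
Your proposal is correct and follows essentially the same route as the paper: both differentiate $\lan\vec{\frkc},A[\vec{z}]\vec{\frkc}\ran$ and $\lan\vec{\frkc},\vec{\zeta}^{-D+2}\ran$, substitute the modulation estimates \eqref{eq:case3-z_j-mod-est} and \eqref{eq:case3-lmb_j-mod-est}, and use the definition of $v_{i}^{\ast}$ together with $A^{\ast}\vec{\frkc}=0$ and \eqref{eq:8.13} to identify the leading constants, exactly as in the paper's computation. The only point to tighten is in the $\nu$-equation: when replacing $\vec{\zeta}^{D}$ by $\zeta^{D}\vec{\frkc}$ you must first use the full kernel identity $A^{\ast}\vec{\frkc}=0$ (not just the scalar $\lan\vec{\frkc},A^{\ast}\vec{\frkc}\ran=0$), writing $\lan\vec{\frkc},A[\vec{z}]\vec{\zeta}^{D}\ran=\lan(A[\vec{z}]-A^{\ast})\vec{\frkc},\vec{\zeta}^{D}\ran$, so that the cross term is $\dlt_{1}\cdot\calO(|\vec{z}-\vec{z}^{\ast}|\zeta^{D})$ rather than the too-large $\calO(\dlt_{1}\zeta^{D})$ suggested by your intermediate line, which then yields the stated error $\dlt_{1}\cdot\calO(|\vec{z}-\vec{z}^{\ast}|\zeta^{2D-1})$ after multiplying by $\nu^{D-1}$.
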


\begin{proof}
\uline{Proof of \mbox{\eqref{eq:case3-alp-est}}}. By $A^{\ast}\vec{\frkc}=0$,
we get 
\[
\lan\vec{\frkc},A[\vec{z}(t)]\vec{\frkc}\ran=\lan\vec{\frkc},(A[\vec{z}(t)]-A^{\ast})\vec{\frkc}\ran=\calO(|A[\vec{z}]-A^{\ast}|)=\calO(|\vec{z}-\vec{z}^{\ast}|).
\]

\uline{Proof of \mbox{\eqref{eq:case3-nu-est}}}. By \eqref{eq:8.13},
we have 
\[
\lan\vec{\frkc},\vec{\zeta}^{-D+2}\ran=\zeta^{-D+2}\{\lan\vec{\frkc},\vec{\frkc}^{-1+\frac{2}{D}}\ran+\calO(\dlt_{1})\}=\{\tsum i{}\frkc_{i}^{2/D}+\calO(\dlt_{1})\}\zeta^{-D+2}.
\]
Dividing the above by $\sum_{i=1}^{J}\frkc_{i}^{2/D}$ and taking
the power of $-\frac{1}{D-2}$ give \eqref{eq:case3-nu-est}.

\uline{Proof of \mbox{\eqref{eq:case3-rd-alp}}}. Applying \eqref{eq:case3-z_j-mod-est}
and $|\vec{z}-\vec{z}^{\ast}|<\dlt_{2}\ll\dlt_{1}$, we get 
\begin{align*}
\rd_{t}A_{ij}[\vec{z}] & =-2D\frac{A_{ij}[\vec{z}]}{|z_{i}-z_{j}|^{2}}(z_{i}-z_{j})\cdot(z_{i}-z_{j})_{t}\\
 & =-2D\frac{\kpp_{1}}{\kpp_{0}}\frac{A_{ij}^{\ast}}{|z_{i}^{\ast}-z_{j}^{\ast}|^{2}}(z_{i}^{\ast}-z_{j}^{\ast})\cdot(v_{i}^{\ast}-v_{j}^{\ast})\zeta^{2D}+L_{1,n}\cdot\zeta+\dlt_{1}\cdot\calO(\zeta^{2D}).
\end{align*}
This implies 
\begin{align*}
\rd_{t}(-\lan\vec{\frkc},A[\vec{z}]\vec{\frkc}\ran) & =-\tsum{i,j}{}(\rd_{t}A_{ij}[\vec{z}])\frkc_{i}\frkc_{j}\\
 & =2D\frac{\kpp_{1}}{\kpp_{0}}\sum_{i,j}\frac{A_{ij}^{\ast}\frkc_{i}\frkc_{j}}{|z_{i}^{\ast}-z_{j}^{\ast}|^{2}}(z_{i}^{\ast}-z_{j}^{\ast})\cdot(v_{i}^{\ast}-v_{j}^{\ast})\zeta^{2D}+L_{1,n}\cdot\zeta+\dlt_{1}\cdot\calO(\zeta^{2D})\\
 & =-4D\frac{\kpp_{1}}{\kpp_{0}}\Big(\sum_{i}|v_{i}^{\ast}|^{2}\Big)\zeta^{2D}+L_{1,n}\cdot\zeta+\dlt_{1}\cdot\calO(\zeta^{2D}),
\end{align*}
where in the last equality we used \eqref{eq:case3-v_i-not-all-zero}.
Dividing this by $4D\sum_{i=1}^{J}|v_{i}^{\ast}|^{2}$ gives \eqref{eq:case3-rd-alp}.

\uline{Proof of \mbox{\eqref{eq:case3-rd-nu}}}. By \eqref{eq:case3-lmb_j-mod-est}
and $\lmb_{\min}\aeq\lmb_{\max}$, we have 
\[
\rd_{t}(\nu^{-D+2})=\frac{\lan\vec{\frkc},\rd_{t}(\vec{\zeta}^{-D+2})\ran}{\sum_{i=1}^{J}\frkc_{i}^{2/D}}=-\frac{D-2}{\sum_{i=1}^{J}\frkc_{i}^{2/D}}\lan\vec{\frkc},A[\vec{z}]\vec{\zeta}^{D}\ran+L_{1,n}\cdot\calO(\zeta^{-D+2})+o_{n\to\infty}(1)\cdot\zeta^{D+1}.
\]
We manipulate the first term using $A^{\ast}\vec{\frkc}=0$, symmetry
of $A^{\ast}$, \eqref{eq:8.13}, and \eqref{eq:case3-def-alp-nu}:
\begin{align*}
\lan\vec{\frkc},A[\vec{z}]\vec{\zeta}^{D}\ran=\lan\vec{\frkc},(A[\vec{z}]-A^{\ast})\vec{\zeta}^{D}\ran & =\lan\vec{\frkc},(A[\vec{z}]-A^{\ast})(\vec{\frkc}+\calO(\dlt_{1}))\ran\zeta^{D}\\
 & =-(4D\tsum i{}|v_{i}^{\ast}|^{2})\alp\zeta^{D}+\dlt_{1}\cdot\calO(|\vec{z}-\vec{z}^{\ast}|\zeta^{D}).
\end{align*}
Substituting this into the previous display and using \eqref{eq:case3-def-gmm},
we get 
\[
\rd_{t}(\nu^{-D+2})=(D-2)\gmm\alp\zeta^{D}+L_{1,n}\cdot\calO(\zeta^{-D+2})+\dlt_{1}\cdot\calO(|\vec{z}-\vec{z}^{\ast}|\zeta^{D})+o_{n\to\infty}(1)\cdot\zeta^{D+1}.
\]
Using $\rd_{t}\nu=-\frac{1}{D-2}\nu^{D-1}\rd_{t}(\nu^{-D+2})$, \eqref{eq:case3-nu-est},
and \eqref{eq:case3-alp-est}, we get \eqref{eq:case3-rd-nu}.
\end{proof}
We use these two differential inequalities \eqref{eq:case3-rd-alp}
and \eqref{eq:case3-rd-nu} to show that $\vec{z}(t)=\vec{z}^{\ast}+o_{n\to\infty}(1)$.
\begin{lem}[Control of $\vec{z}(t)$]
We have 
\begin{equation}
\sup_{t\in[t_{n},T_{n}'')}|\vec{z}(t)-\vec{z}^{\ast}|+\int_{t_{n}}^{T_{n}''}\lmb^{2D}(t)dt=o_{n\to\infty}(1).\label{eq:8.26}
\end{equation}
\end{lem}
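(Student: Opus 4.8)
The plan is to turn the two differential identities of Lemma~\ref{lem:case3-key-ineq} into an approximate conservation law for the pair $(\alp(t),\nu(t))$ and then feed it back into the $\alp$- and $\vec{z}$-equations. Set $M\coloneqq\sup_{t\in[t_{n},T_{n}'')}|\alp(t)|$ and $N\coloneqq\sup_{t\in[t_{n},T_{n}'')}|\vec{z}(t)-\vec{z}^{\ast}|$; a priori $N\aleq\dlt_{2}$ (built into the definition of $T_{n}$, and $[t_{n},T_{n}'')\subseteq[t_{n},T_{n})$) and hence $M\aleq N\aleq\dlt_{2}$ by $\eqref{eq:case3-alp-est}$. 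Note that $\alp(t_{n})=o_{n\to\infty}(1)$ by $\eqref{eq:case3-alp-est}$ and $\vec{z}(t_{n})\to\vec{z}^{\ast}$, that $\nu(t_{n})=o_{n\to\infty}(1)$ by $\eqref{eq:case3-nu-est}$ and $\lmb_{\max}(t_{n})\to0$, and — crucially — that $\sup_{t\in[t_{n},T_{n}'')}\zeta(t)=o_{n\to\infty}(1)$ by $\eqref{eq:case3-lmb_max=00003Do(1)}$ and $\zeta\aeq\lmb$, whence $\sup\nu=o_{n\to\infty}(1)$ by $\eqref{eq:case3-nu-est}$. This step is self-contained; it does not use the (later) fact that $T_{n}''=T_{n}$.

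First I would record two preliminary bounds. Integrating $\eqref{eq:case3-rd-alp}$ over $[t_{n},\tau]$, using $\int_{t_{n}}^{T_{n}''}|L_{1,n}|\,dt=o_{n\to\infty}(1)$ and $\sup\zeta=o_{n\to\infty}(1)$, and absorbing the $\dlt_{1}\cdot\calO(\zeta^{2D})$ term, one gets
\[
\int_{t_{n}}^{T_{n}''}\zeta^{2D}\,dt\ \aleq\ M+o_{n\to\infty}(1).
\]
Integrating $\eqref{eq:case3-z_j-mod-est}$ and using the previous line then gives the \emph{slaving estimate} $N\aleq\int_{t_{n}}^{T_{n}''}\zeta^{2D}\,dt+o_{n\to\infty}(1)\aleq M+o_{n\to\infty}(1)$: the drift of $\vec{z}$ is controlled by $\sup|\alp|$.

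Now the conservation law. From $\eqref{eq:case3-rd-alp}$, $\frac{d}{dt}(\alp^{2})=-\tfrac{2\kpp_{1}}{\kpp_{0}}\alp\zeta^{2D}+(\mathrm{err}_{\alp})$, and from $\eqref{eq:case3-rd-nu}$ together with $\eqref{eq:case3-nu-est}$, $\frac{d}{dt}(\nu^{2})=-2\gmm\alp\nu\zeta^{2D-1}+(\mathrm{err}_{\nu})=-2\gmm\alp\zeta^{2D}+(\mathrm{err}_{\nu}')$, so that the main terms cancel in $\frac{d}{dt}\bigl(\nu^{2}-\tfrac{\gmm\kpp_{0}}{\kpp_{1}}\alp^{2}\bigr)$. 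The errors are of three kinds: those carrying a factor $L_{1,n}$, which integrate to $o_{n\to\infty}(1)$ since $\sup(|\alp|+\nu)\aleq1$ and $\sup\zeta=o_{n\to\infty}(1)$; the mismatch $\dlt_{1}\cdot\calO(|\alp|\zeta^{2D})$ coming from $\nu=(1+\calO(\dlt_{1}))\zeta$ in $\eqref{eq:case3-nu-est}$ (not $\nu=\zeta$); and the genuine $\dlt_{1}$-errors $\dlt_{1}\cdot\calO\bigl(|\alp|\zeta^{2D}+\nu|\vec{z}-\vec{z}^{\ast}|\zeta^{2D-1}+\nu\zeta^{2D}\bigr)$ from $\eqref{eq:case3-rd-alp}$--$\eqref{eq:case3-rd-nu}$. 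Using $\nu\zeta^{2D-1}\aeq\zeta^{2D}$, the slaving estimate $|\vec{z}-\vec{z}^{\ast}|\le N\aleq M+o_{n\to\infty}(1)$, the preliminary bound $\int\zeta^{2D}\aleq M+o_{n\to\infty}(1)$, and $M\aleq1$, all of these integrate to at most $\calO(\dlt_{1}M^{2})+o_{n\to\infty}(1)$. Integrating from $t_{n}$ and using $\nu(t_{n})^{2},\alp(t_{n})^{2}=o_{n\to\infty}(1)$ gives $\nu(t)^{2}-\tfrac{\gmm\kpp_{0}}{\kpp_{1}}\alp(t)^{2}\ge-o_{n\to\infty}(1)-\calO(\dlt_{1}M^{2})$ on $[t_{n},T_{n}'')$, hence $\tfrac{\gmm\kpp_{0}}{\kpp_{1}}\alp(t)^{2}\le\nu(t)^{2}+o_{n\to\infty}(1)+\calO(\dlt_{1}M^{2})=o_{n\to\infty}(1)+\calO(\dlt_{1}M^{2})$ since $\sup\nu=o_{n\to\infty}(1)$. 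Taking the supremum over $t$ and choosing $\dlt_{1}$ small enough to absorb the $\dlt_{1}M^{2}$ term on the left yields $M=o_{n\to\infty}(1)$. Feeding this back into the two preliminary bounds gives $\int_{t_{n}}^{T_{n}''}\zeta^{2D}\,dt=o_{n\to\infty}(1)$ and $N=o_{n\to\infty}(1)$; since $\zeta\aeq\lmb$, this is exactly the claim.

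The main obstacle I anticipate is the error bookkeeping in the conservation law, in particular the $\dlt_{1}$-errors of $\eqref{eq:case3-rd-nu}$ involving $|\vec{z}-\vec{z}^{\ast}|$: bounding these by the crude a priori constant $\dlt_{2}$ only produces errors of fixed size $\calO(\dlt_{1}\dlt_{2})$, which do not tend to $0$ and would leave a $\dlt_{1}\dlt_{2}$-gap in the final estimate. The resolution — the only non-routine ingredient — is the slaving estimate $N\aleq M+o_{n\to\infty}(1)$, which upgrades those errors to $\calO(\dlt_{1}M^{2})$ so that they are absorbable; together with the a priori smallness $\sup_{[t_{n},T_{n}'')}\zeta=o_{n\to\infty}(1)$ from $\eqref{eq:case3-lmb_max=00003Do(1)}$, which forces $\nu$ to be uniformly small, this closes the argument. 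The two integrations and the choice of $\dlt_{1}$ are then routine.
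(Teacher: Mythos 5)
Your proof is correct and follows essentially the same route as the paper: you derive $\int_{t_n}^{T_n''}\zeta^{2D}\aleq\sup|\alp|+o_{n\to\infty}(1)$ from \eqref{eq:case3-rd-alp}, the slaving bound $\sup|\vec{z}-\vec{z}^{\ast}|\aleq\sup|\alp|+o_{n\to\infty}(1)$ from \eqref{eq:case3-z_j-mod-est}, and then exploit the cancellation $(\nu^{2})_{t}\approx\gmm\tfrac{\kpp_{0}}{\kpp_{1}}(\alp^{2})_{t}$ (the paper writes this identity directly rather than as a conserved quantity, but it is the same computation) together with $\sup\nu=o_{n\to\infty}(1)$ and absorption of the $\dlt_{1}$-errors to conclude $\sup|\alp|=o_{n\to\infty}(1)$ and close the loop. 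The error bookkeeping, including the upgrade of the $\dlt_{1}|\vec{z}-\vec{z}^{\ast}|$-errors via the slaving estimate, matches the paper's argument.
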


\begin{proof}
First, integrating \eqref{eq:case3-z_j-mod-est} with $|\vec{z}(t_{n})-\vec{z}^{\ast}|=o_{n\to\infty}(1)$
and $\lmb=o_{n\to\infty}(1)$ gives 
\begin{equation}
\sup_{t\in[t_{n},T_{n}'')}|\vec{z}(t)-\vec{z}^{\ast}|\aleq\int_{t_{n}}^{T_{n}''}\zeta^{2D}(t)dt+o_{n\to\infty}(1).\label{eq:7.24-1}
\end{equation}
Next, integrating \eqref{eq:case3-rd-alp} with $\alp(t_{n})=\calO(|\vec{z}(t_{n})-\vec{z}^{\ast}|)=o_{n\to\infty}(1)$
and $\lmb=o_{n\to\infty}(1)$ gives 
\begin{equation}
\int_{t_{n}}^{T_{n}''}\zeta^{2D}(t)dt\aleq\sup_{t\in[t_{n},T_{n}'')}|\alp(t)|+o_{n\to\infty}(1).\label{eq:7.25-2}
\end{equation}
Next, we use \eqref{eq:case3-rd-nu}, \eqref{eq:case3-nu-est}, \eqref{eq:case3-alp-est},
and \eqref{eq:case3-rd-alp} to obtain 
\begin{align*}
(\nu^{2})_{t} & =2\nu\{-\gmm\alp\zeta^{2D-1}+L_{1,n}\cdot\zeta+\dlt_{1}\cdot\calO(|\vec{z}-\vec{z}^{\ast}|\zeta^{2D-1}+\zeta^{2D})\}\\
 & =-2\gmm\alp\zeta^{2D}+L_{1,n}\cdot\zeta^{2}+\dlt_{1}\cdot\calO(|\vec{z}-\vec{z}^{\ast}|\zeta^{2D}+\zeta^{2D+1})\\
 & =\gmm\frac{\kpp_{0}}{\kpp_{1}}(\alp^{2})_{t}+L_{1,n}\cdot(|\vec{z}-\vec{z}^{\ast}|\zeta+\zeta^{2})+\dlt_{1}\cdot\calO(|\vec{z}-\vec{z}^{\ast}|\zeta^{2D}+\zeta^{2D+1}).
\end{align*}
Integrating this with $\alp(t_{n})=o_{n\to\infty}(1)$ and $\nu(t)=o_{n\to\infty}(1)$,
we get 
\[
\sup_{t\in[t_{n},T_{n}'')}\alp^{2}(t)\aleq o_{n\to\infty}(1)+\Big(\dlt_{1}\sup_{t\in[t_{n},T_{n}'')}|\vec{z}-\vec{z}^{\ast}|+o_{n\to\infty}(1)\Big)\int_{t_{n}}^{T_{n}''}\zeta^{2D}(t)dt.
\]
Applying \eqref{eq:7.24-1} and \eqref{eq:7.25-2}, we get 
\[
\sup_{t\in[t_{n},T_{n}'')}\alp^{2}(t)\aleq o_{n\to\infty}(1)+\dlt_{1}\Big(\sup_{t\in[t_{n},T_{n}'')}|\alp(t)|\Big)^{2}\aleq o_{n\to\infty}(1)+\dlt_{1}\sup_{t\in[t_{n},T_{n}'')}\alp^{2}(t).
\]
Since $\dlt_{1}$ is small, we conclude that $\sup_{t\in[t_{n},T_{n}'')}|\alp(t)|=o_{n\to\infty}(1)$.
Substituting this into \eqref{eq:7.25-2} and \eqref{eq:7.24-1} completes
the proof of \eqref{eq:8.26}.
\end{proof}

\subsection{\label{subsec:case3-ratio}Proof of $T_{n}''=T_{n}$ and control
of ratio}

The goal of this subsection is twofold: (i) to prove $T_{n}''=T_{n}$
for all large $n$ and (ii) to prove $|A^{\ast}\vec{\lmb}^{D}(t)|\ll\lmb^{D}(t)$
for all $t\in[t_{n},T_{n})$. 

The proof of $T_{n}''=T_{n}$ roughly proceeds as follows. If not,
i.e., $T_{n}''<T_{n}$, then 
\begin{equation}
\frac{|A^{\ast}\vec{\lmb}^{D}(T_{n}'')|}{\lmb^{D}(T_{n}'')}=\dlt_{1}.\label{eq:case3-ratio-at-T_n''}
\end{equation}
We will show in \eqref{eq:8.34} below that $|A^{\ast}\vec{\lmb}^{D}(t)|\ageq\dlt_{1}\lmb^{D}(t)$
for all $t\in[T_{n}'',T_{n})$. Note that, at $t=T_{n}''$, \eqref{eq:case3-lmb_max=00003Do(1)},
\eqref{eq:case3-g(t)-Hdot1=00003Do(1)}, and \eqref{eq:8.26} imply
that $u(T_{n}'')$ can be viewed as a $W$-bubbling sequence with
$\vec{z}(T_{n}'')\to\vec{z}^{\ast}$. As a surprising consequence
of the lower bound on the ratio and the minimal degeneracy of $\{(\iota_{1},z_{1}^{\ast}),\dots,(\iota_{J},z_{J}^{\ast})\}$,
we see that the left hand side of \eqref{eq:5.28} is strictly away
from zero for all $t\in[T_{n}'',T_{n})$. Thus $u(t)$ can be viewed
as a solution belonging to \textbf{Case~1} of Proposition~\ref{prop:case-separation},
which turns out to be impossible by the second item of Proposition~\ref{prop:case1-main}.
\begin{lem}[Key inequality]
\label{lem:case3-ratio-key-ineq}There exist functions $\wh{\zeta}_{i}(t)$
defined on $[t_{n},T_{n})$ such that 
\begin{align}
\wh{\zeta}_{i}(t) & =(1+o_{n\to\infty}(1))\cdot\lmb_{i}(t),\label{eq:case3-wh-zeta}\\
\frac{\wh{\zeta}_{i,t}}{\wh{\zeta}_{i}} & =\wh{\zeta}_{i}^{D-2}(A^{\ast}\vec{\wh{\zeta}}^{D})_{i}+\dlt_{2}\cdot\calO(\wh{\zeta}^{2D-2}).\label{eq:case3-wh-zeta-mod-est}
\end{align}
From this, there exists a function $\vphi(t)$ defined on $[t_{n},T_{n})$
such that 
\begin{align}
|\vphi(t)| & \aleq\wh{\zeta}_{\min}^{-2D}|A^{\ast}\vec{\wh{\zeta}}^{D}|^{2},\label{eq:8.30}\\
\frac{d}{dt}\vphi(t) & \geq\wh{\zeta}_{\min}^{-2D}\{c\wh{\zeta}^{2D-2}|A^{\ast}\wh{\zeta}^{D}|^{2}-\dlt_{2}\cdot\calO(\wh{\zeta}^{4D-2})\}\label{eq:8.31}
\end{align}
for some constant $c>0$. Here, we denoted $\wh{\zeta}\coloneqq(\wh{\zeta}_{1}^{2D}+\dots+\wh{\zeta}_{J}^{2D})^{1/(2D)}$
and $\wh{\zeta}_{\min}=\min_{i\in\setJ}\wh{\zeta}_{i}$. 
\end{lem}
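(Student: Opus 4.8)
The plan is to upgrade the modulation estimates of Corollary~\ref{cor:8.4} --- which are only valid on $[t_n,T_n'')$ because they use \eqref{eq:8.13} --- to a version valid on the whole interval $[t_n,T_n)$, at the cost of keeping the matrix $A^{\ast}$ rather than $A[\vec z]$ and absorbing the discrepancy $A[\vec z]-A^{\ast}=\calO(|\vec z-\vec z^{\ast}|)=\calO(\dlt_2)$ into the error. Concretely, first I would set $\wh\zeta_i\coloneqq\lmb_i e^{f_i}$ with $f_i$ from \eqref{eq:case3-ref-mod-lmb}--\eqref{eq:case3-est-f_i}; then $\wh\zeta_i=(1+o_{n\to\infty}(1))\lmb_i$ by \eqref{eq:case3-g(t)-Hdot1=00003Do(1)} and $|f_i|\aleq\|g\|_{\dot H^1}$, giving \eqref{eq:case3-wh-zeta}. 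The $\dot H^1$-smallness also lets one replace $\lmb_i^{D-2}\lmb_j^D$ by $\wh\zeta_i^{D-2}\wh\zeta_j^D$ up to $\calO(\|g\|_{\dot H^2}^2)+o_{n\to\infty}(1)\cdot\lmb^{2D-1}$ as in the proof of Corollary~\ref{cor:8.4}. The new point is that, \emph{without} the ratio bound \eqref{eq:8.13}, one cannot absorb $\lmb^{2D-3}\|g\|_{\dot H^2}$ or the $o_{n\to\infty}(1)$-type errors into a clean $\dlt_2\cdot\calO(\wh\zeta^{2D-2})$; instead these are genuinely $L^1$-in-time by the spacetime bound \eqref{eq:case3-spacetime-est} ($\int\|g\|_{\dot H^2}^2+\lmb^{4D}=o_{n\to\infty}(1)$, and $\lmb^{2D-3}\|g\|_{\dot H^2}\le \|g\|_{\dot H^2}^2+\lmb^{4D-6}$ with $D\ge\frac52$). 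So I would integrate those errors into $f_i$ (enlarging it while keeping \eqref{eq:case3-wh-zeta}), leaving exactly \eqref{eq:case3-wh-zeta-mod-est} with the remaining error of size $\dlt_2\cdot\calO(\wh\zeta^{2D-2})$ coming purely from $A[\vec z]-A^{\ast}$ together with $\lmb^{2D}|\log\lmb|$ which is $\ll\dlt_2\wh\zeta^{2D-2}$ once $\lmb$ is small (valid by \eqref{eq:case3-lmb_max=00003Do(1)}).

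For the functional $\vphi$, the natural choice mirrors \eqref{eq:case1-A-lmb^D-lmb^D-diff-ineq}: take $\vphi\coloneqq \wh\zeta_{\min}^{-2D}\lan\vec{\wh\zeta}^{D},A^{\ast}\vec{\wh\zeta}^{D}\ran$, or rather a localized/renormalized version of it, so that its size is controlled by $\wh\zeta_{\min}^{-2D}|A^{\ast}\vec{\wh\zeta}^D|^2$ (using that on any region where this quantity is relevant we have $\wh\zeta_i\aeq\wh\zeta$, hence $|\lan\vec{\wh\zeta}^D,A^{\ast}\vec{\wh\zeta}^D\ran|$ and $|A^{\ast}\vec{\wh\zeta}^D|^2$ are comparable after multiplying by appropriate powers). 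Then I would differentiate: by symmetry of $A^{\ast}$ and \eqref{eq:case3-wh-zeta-mod-est},
\[
\rd_t\lan\vec{\wh\zeta}^D,A^{\ast}\vec{\wh\zeta}^D\ran
=2D\sum_i (A^{\ast}\vec{\wh\zeta}^D)_i\,\wh\zeta_i^{D-1}\Big(\wh\zeta_i^{D-2}(A^{\ast}\vec{\wh\zeta}^D)_i+\dlt_2\calO(\wh\zeta^{2D-2})\Big)
=2D\sum_i (A^{\ast}\vec{\wh\zeta}^D)_i^2\wh\zeta_i^{2D-2}+\dlt_2\cdot\calO(\wh\zeta^{4D-2}),
\]
and the main term is $\ageq \wh\zeta^{2D-2}|A^{\ast}\vec{\wh\zeta}^D|^2$ whenever $\wh\zeta_i\aeq\wh\zeta$ for all $i$ (which is guaranteed precisely in the regime where the ratio is bounded below, i.e.\ on $[T_n'',T_n)$, by the minimal-degeneracy lemma~\ref{lem:min-degen-implies-unique-pos-vec}, or more robustly by Lemma~\ref{lem:case3-dist-from-kernel}). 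Handling the factor $\wh\zeta_{\min}^{-2D}$ requires also differentiating it; since $\rd_t\log\wh\zeta_{\min}=\calO(\wh\zeta^{2D-2})$ by \eqref{eq:case3-wh-zeta-mod-est}, that produces only lower-order contributions of the same shape, so after collecting terms one obtains \eqref{eq:8.31} with a universal $c>0$ once $\dlt_2$ is chosen small relative to the implicit constants. This is the Lyapunov-type inequality that will force, in the next subsection, the ratio to stay small (contradicting \eqref{eq:case3-ratio-at-T_n''} if $T_n''<T_n$).

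I expect the main obstacle to be bookkeeping the error terms carefully enough that \eqref{eq:case3-wh-zeta-mod-est} really has the clean form $\dlt_2\cdot\calO(\wh\zeta^{2D-2})$ and not merely $o_{\dlt_2\to0}(1)\cdot\wh\zeta^{2D-2}+L_{1,n}$: the $L_{1,n}$ part must be pushed entirely into $f_i$ (so it disappears from the differential equation), which is legitimate exactly because of the spacetime integrability \eqref{eq:case3-spacetime-est} and the elementary inequality $\lmb^{2D-3}\|g\|_{\dot H^2}\le\|g\|_{\dot H^2}^2+\lmb^{4D-6}$ valid for $D\ge\frac52$. A secondary subtlety is that the coercivity $\sum_i(A^{\ast}\vec{\wh\zeta}^D)_i^2\wh\zeta_i^{2D-2}\ageq\wh\zeta^{2D-2}|A^{\ast}\vec{\wh\zeta}^D|^2$ is \emph{not} true for all $\vec{\wh\zeta}$ (it can fail when one $\wh\zeta_i$ is much smaller than the others); it holds only under $\wh\zeta_{\secmax}\ageq\dlt_1\wh\zeta_{\max}$, which is exactly \eqref{eq:8.2}, so one must invoke \eqref{eq:case3-dist-from-kernel} rather than a naive bound. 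Once \eqref{eq:case3-wh-zeta}--\eqref{eq:case3-wh-zeta-mod-est} and \eqref{eq:8.30}--\eqref{eq:8.31} are in hand, the remaining arguments of Section~\ref{subsec:case3-ratio} (deriving $|A^{\ast}\vec\lmb^D|\ageq\dlt_1\lmb^D$ on $[T_n'',T_n)$ and then the contradiction via Proposition~\ref{prop:case1-main}) are downstream of this lemma.
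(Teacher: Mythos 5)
The first half of your proposal (defining $\wh{\zeta}_{i}=\lmb_{i}e^{f_{i}+\int(\text{integrable errors})}$, absorbing the $L^{1}$-in-time terms via \eqref{eq:case3-spacetime-est} and $\lmb^{2D-3}\|g\|_{\dot{H}^{2}}\le\|g\|_{\dot{H}^{2}}^{2}+\lmb^{4D-6}$, and sourcing the $\dlt_{2}\cdot\calO(\wh{\zeta}^{2D-2})$ error from $A[\vec{z}]-A^{\ast}$ and $\lmb^{2D}|\log\lmb|$) matches what the paper does for \eqref{eq:case3-wh-zeta}--\eqref{eq:case3-wh-zeta-mod-est}.

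The gap is in the construction of $\vphi$. You take $\vphi=\wh{\zeta}_{\min}^{-2D}\lan\vec{\wh{\zeta}}^{D},A^{\ast}\vec{\wh{\zeta}}^{D}\ran$ and assert that differentiating the weight $\wh{\zeta}_{\min}^{-2D}$ "produces only lower-order contributions of the same shape." This is false precisely in the degenerate regime the lemma is designed for. Since $\rd_{t}\log\wh{\zeta}_{\min}=\calO(\wh{\zeta}^{2D-2})$ with an $O(1)$ constant and no sign, that contribution has size $\wh{\zeta}_{\min}^{-2D}\wh{\zeta}^{2D-2}\,|\lan\vec{\wh{\zeta}}^{D},A^{\ast}\vec{\wh{\zeta}}^{D}\ran|\aleq\wh{\zeta}_{\min}^{-2D}\wh{\zeta}^{3D-2}|A^{\ast}\vec{\wh{\zeta}}^{D}|$, which is \emph{linear} in $|A^{\ast}\vec{\wh{\zeta}}^{D}|$; when $|A^{\ast}\vec{\wh{\zeta}}^{D}|\ll\wh{\zeta}^{D}$ (e.g.\ on the intervals in Lemma~\ref{lem:case3-ratio-geq-dlt_1} where the ratio is $\aeq K^{-1}\dlt_{1}$) it dominates the good quadratic term $c\,\wh{\zeta}^{2D-2}|A^{\ast}\vec{\wh{\zeta}}^{D}|^{2}$ and is not of the allowed form $\dlt_{2}\cdot\calO(\wh{\zeta}^{4D-2})$, so \eqref{eq:8.31} fails for your $\vphi$. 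The paper's functional is $\vphi=\lan A^{\ast}\vec{\wh{\zeta}}^{D},\vec{\wh{\zeta}}^{D}\ran\cdot\lan\vec{\frkc},\vec{\wh{\zeta}}^{-D+2}\ran^{\frac{2D}{D-2}}$: the whole point of replacing $\wh{\zeta}_{\min}^{-2D}$ by the comparable weight $\lan\vec{\frkc},\vec{\wh{\zeta}}^{-D+2}\ran^{\frac{2D}{D-2}}$ is that its time derivative's leading part, $-(D-2)\lan\vec{\frkc},A^{\ast}\vec{\wh{\zeta}}^{D}\ran$, vanishes identically because $A^{\ast}\vec{\frkc}=0$ (minimal degeneracy, Lemma~\ref{lem:min-degen-implies-unique-pos-vec}), leaving only a $\dlt_{2}\cdot\calO(\wh{\zeta}_{\min}^{-D+2}\wh{\zeta}^{2D-2})$ remainder. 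Your hedge about a "localized/renormalized version" does not supply this cancellation, and it is the essential idea of the lemma. A secondary slip: the lower bound $\sum_{i}\wh{\zeta}_{i}^{2D-2}(A^{\ast}\vec{\wh{\zeta}}^{D})_{i}^{2}\ageq\wh{\zeta}^{2D-2}|A^{\ast}\vec{\wh{\zeta}}^{D}|^{2}$ is obtained from \eqref{eq:case3-dist-from-kernel} under $\lmb_{\secmax}\geq\dlt_{1}\lmb_{\max}$, which holds on all of $[t_{n},T_{n})$ by \eqref{eq:8.2}; it is not tied to $[T_{n}'',T_{n})$ nor to all $\wh{\zeta}_{i}$ being comparable, as your phrasing suggests.
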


\begin{rem}
Notice that the error size in \eqref{eq:case3-wh-zeta-mod-est} is
only $\dlt_{2}\cdot\calO(\lmb^{2D-2})$, not $o_{n\to\infty}(1)\cdot\lmb^{2D-1}$
as in \eqref{eq:case3-lmb_j-mod-est}. However, since we are in the
degenerate regime, it is crucial to keep the error $o_{n\to\infty}(1)\cdot\lmb^{2D-1}$
in other places in order to justify the leading terms of in the modulation
ODEs. 
\end{rem}

\begin{proof}
\uline{Proof of \mbox{\eqref{eq:case3-wh-zeta}} and \mbox{\eqref{eq:case3-wh-zeta-mod-est}}}.
This easily follows from \eqref{eq:case3-ref-mod-lmb}, $f_{i}=o_{n\to\infty}(1)$
(from \eqref{eq:case3-g(t)-Hdot1=00003Do(1)}), $|\vec{z}-\vec{z}^{\ast}|<\dlt_{2}$
(small), and the spacetime estimate \eqref{eq:case3-spacetime-est}.

\uline{Proof of \mbox{\eqref{eq:8.30}} and \mbox{\eqref{eq:8.31}}}.
It suffices to show that 
\begin{align}
\vphi\coloneqq\lan A^{\ast}\vec{\wh{\zeta}}^{D},\vec{\wh{\zeta}}^{D}\ran\cdot\lan\vec{\frkc},\vec{\wh{\zeta}}^{-D+2}\ran^{\frac{2D}{D-2}} & =\calO(\wh{\zeta}_{\min}^{-2D}|A^{\ast}\vec{\wh{\zeta}}^{D}|^{2}),\label{eq:8.32}\\
\frac{d}{dt}\Big\{\lan A^{\ast}\vec{\wh{\zeta}}^{D},\vec{\wh{\zeta}}^{D}\ran\cdot\lan\vec{\frkc},\vec{\wh{\zeta}}^{-D+2}\ran^{\frac{2D}{D-2}}\Big\} & \geq c\wh{\zeta}^{2D-2}\frac{|A^{\ast}\wh{\zeta}^{D}|^{2}}{\wh{\zeta}_{\min}^{2D}}-\calO(\dlt_{2})\cdot\frac{\wh{\zeta}^{4D-2}}{\wh{\zeta}_{\min}^{2D}}.\label{eq:8.33}
\end{align}

First, we prove \eqref{eq:8.32}. Since $A^{\ast}$ is a fixed symmetric
matrix (not depending on time), we have $\lan A^{\ast}\vec{\zeta},\vec{\zeta}\ran=\calO(|A^{\ast}\vec{\wh{\zeta}}^{D}|^{2})$.
As $\vec{\frkc}\in(0,\infty)^{J}$ is a positive vector and $-D+2=-\frac{N-6}{2}<0$,
we get 
\[
\lan\vec{\frkc},\vec{\wh{\zeta}}^{-D+2}\ran\aeq\wh{\zeta}_{\min}^{-D+2}.
\]
Combining these observations, we get \eqref{eq:8.32}.

We turn to the proof of \eqref{eq:8.33}. We use \eqref{eq:case3-wh-zeta-mod-est}
to compute 
\begin{align*}
\rd_{t}\lan A^{\ast}\vec{\wh{\zeta}}^{D},\vec{\wh{\zeta}}^{D}\ran & =2\lan A^{\ast}\vec{\wh{\zeta}}^{D},\rd_{t}(\vec{\wh{\zeta}}^{D})\ran=2D\tsum i{}\wh{\zeta}_{i}^{2D-2}(A^{\ast}\vec{\wh{\zeta}}^{D})_{i}^{2}+\dlt_{2}\cdot\calO(\wh{\zeta}^{4D-2}),\\
\rd_{t}\lan\vec{\frkc},\vec{\wh{\zeta}}^{-D+2}\ran & =-(D-2)\lan\vec{\frkc},A^{\ast}\vec{\wh{\zeta}}^{D}\ran+\dlt_{2}\cdot\calO(\tsum i{}\wh{\zeta}_{i}^{-D+2}\wh{\zeta}^{2D-2})=\dlt_{2}\cdot\calO(\wh{\zeta}_{\min}^{-D+2}\wh{\zeta}^{2D-2}),
\end{align*}
where in the last equality we used $A^{\ast}\vec{\frkc}=0$. The above
two identities imply 
\begin{align*}
\frac{d}{dt}\Big\{\lan A^{\ast}\vec{\wh{\zeta}}^{D},\vec{\wh{\zeta}}^{D}\ran\cdot\lan\vec{\frkc},\vec{\wh{\zeta}}^{-D+2}\ran^{\frac{2D}{D-2}}\Big\} & =2D\tsum i{}\wh{\zeta}_{i}^{2D-2}(A^{\ast}\vec{\wh{\zeta}}^{D})_{i}^{2}\cdot\lan\vec{\frkc},\vec{\wh{\zeta}}^{-D+2}\ran^{\frac{2D}{D-2}}+\dlt_{2}\cdot\calO(\wh{\zeta}_{\min}^{-2D}\wh{\zeta}^{4D-2}).
\end{align*}
Applying \eqref{eq:case3-dist-from-kernel} and $\lan\vec{\frkc},\vec{\wh{\zeta}}^{-D+2}\ran^{\frac{2D}{D-2}}\aeq\wh{\zeta}_{\min}^{-2D}$,
we conclude \eqref{eq:8.33}.
\end{proof}
\begin{lem}
\label{lem:case3-ratio-geq-dlt_1}Under the assumption $T_{n}''<T_{n}$,
we have 
\begin{equation}
\frac{|A^{\ast}\vec{\lmb}^{D}(t)|}{\lmb^{D}(t)}\ageq\dlt_{1}\qquad\text{for all }t\in[T_{n}'',T_{n}).\label{eq:8.34}
\end{equation}
\end{lem}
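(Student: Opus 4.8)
The plan is to prove \eqref{eq:8.34} by a continuity (bootstrap) argument on the interval $[T_n'', T_n)$, starting from the fact that at $t = T_n''$ we have equality $|A^\ast \vec\lmb^D(T_n'')|/\lmb^D(T_n'') = \dlt_1$ by \eqref{eq:case3-ratio-at-T_n''}. First I would pass to the refined variables $\wh\zeta_i(t)$ constructed in Lemma~\ref{lem:case3-ratio-key-ineq}, which satisfy $\wh\zeta_i = (1 + o_{n\to\infty}(1))\lmb_i$ by \eqref{eq:case3-wh-zeta}, so that the ratio $|A^\ast \vec\lmb^D|/\lmb^D$ is comparable (up to a $1 + o_{n\to\infty}(1)$ factor) to $|A^\ast \vec{\wh\zeta}^D|/\wh\zeta^D$. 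Thus it suffices to show $|A^\ast\vec{\wh\zeta}^D(t)| \ageq \dlt_1 \wh\zeta^D(t)$ for all $t \in [T_n'', T_n)$, with a constant slightly worse than but comparable to the one at $t = T_n''$.

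The key mechanism is the monotone functional $\vphi(t)$ from Lemma~\ref{lem:case3-ratio-key-ineq}: by \eqref{eq:8.30}--\eqref{eq:8.31} we have $|\vphi| \aleq \wh\zeta_{\min}^{-2D}|A^\ast\vec{\wh\zeta}^D|^2$ and $\frac{d}{dt}\vphi \geq \wh\zeta_{\min}^{-2D}\{c\wh\zeta^{2D-2}|A^\ast\wh\zeta^D|^2 - \dlt_2 \cdot\calO(\wh\zeta^{4D-2})\}$. The idea is that as long as the ratio $|A^\ast\vec{\wh\zeta}^D|/\wh\zeta^D$ stays near $\dlt_1$, the ``good'' term $c\wh\zeta^{2D-2}|A^\ast\wh\zeta^D|^2 \aeq c\dlt_1^2 \wh\zeta^{4D-2}$ dominates the ``bad'' term $\dlt_2 \cdot\calO(\wh\zeta^{4D-2})$ because $\dlt_2 \ll \dlt_1$ (recall the parameter dependence $\dlt_2 \ll \dlt_1 \ll 1$), hence $\dlt_2 \ll \dlt_1^2$ after possibly shrinking $\dlt_2$ further. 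Therefore $\vphi$ is strictly increasing wherever the ratio is $\aeq \dlt_1$. Since $\vphi$ controls the ratio from both sides (up to powers of $\wh\zeta_{\min}$, which is comparable to $\wh\zeta$ in the regime $\lmb_{\max}\aeq\lmb_{\min}$ enforced by \eqref{eq:8.13}, which continues to hold just past $T_n''$ by continuity — this needs a short justification), the ratio cannot drop below, say, $\tfrac12\dlt_1$: if it ever tried to, one traces back along the flow and uses the monotonicity of $\vphi$ together with $\vphi(T_n'') \aeq \dlt_1^2 \wh\zeta_{\min}^{-2D}(T_n'')$ to get a contradiction. Concretely, define $\tau_n^\sharp = \sup\{\tau \in [T_n'', T_n) : |A^\ast\vec{\wh\zeta}^D(t)|/\wh\zeta^D(t) \geq \tfrac12\dlt_1 \text{ for all } t \in [T_n'', \tau]\}$; on this interval the ratio stays between $\tfrac12\dlt_1$ and (by $\wh\zeta_{\min}\aeq\wh\zeta$ and the two-sided bound via $\vphi$) some $C\dlt_1$, and $\vphi$ is increasing, which forces the ratio at $\tau_n^\sharp$ to remain above $\tfrac12\dlt_1$ strictly, so $\tau_n^\sharp = T_n$.

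The main obstacle I anticipate is handling the coupling between the ratio bound and the condition $\wh\zeta_{\min} \aeq \wh\zeta$ (i.e. all scales comparable). The functional estimates \eqref{eq:8.30}--\eqref{eq:8.31} involve $\wh\zeta_{\min}^{-2D}$, and \eqref{eq:case3-dist-from-kernel} (used in deriving \eqref{eq:8.31}) requires $\lmb_{\secmax} \ageq \dlt \lmb_{\max}$, which on $[t_n, T_n)$ is guaranteed by \eqref{eq:8.2}, so this is not actually an issue — $\lmb_{\max}\aeq\lmb_{\secmax}$ holds on all of $[t_n,T_n)$ regardless of the ratio. What does require care is that a lower bound on $|A^\ast\vec{\wh\zeta}^D|/\wh\zeta^D$ near $\dlt_1$ is compatible with $\lmb_{\max}\aeq\lmb_{\min}$: it is, because $\lmb_{\max}\aeq\lmb_{\secmax}$ plus the structure of $A^\ast$ (via Lemma~\ref{lem:case3-non-degen-quantity} or Lemma~\ref{lem:min-degen-implies-unique-pos-vec}, \eqref{eq:case3-dist-from-kernel}) forces $\lmb_{\min}\aeq\lmb_{\max}$ throughout. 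Once that is in place, the argument is a routine Grönwall/continuity bootstrap. I would also record, as a byproduct, that combined with $T_n'' = T_n$ (proved separately via the contradiction with Proposition~\ref{prop:case1-main}, using exactly \eqref{eq:8.34}) and the exit-time structure, \eqref{eq:8.34} is vacuous in the end — but it is the essential intermediate step enabling the reduction to Case~1.
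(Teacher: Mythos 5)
Your plan hinges on the claim that $\vphi$ ``controls the ratio from both sides'', but \eqref{eq:8.30} is only the one-sided bound $|\vphi|\aleq\wh{\zeta}_{\min}^{-2D}|A^{\ast}\vec{\wh{\zeta}}^{D}|^{2}$, and no reverse inequality is available: $\vphi$ is built from the quadratic form $\lan A^{\ast}\vec{\wh{\zeta}}^{D},\vec{\wh{\zeta}}^{D}\ran$, which can vanish or be negative even when $|A^{\ast}\vec{\wh{\zeta}}^{D}|\aeq\wh{\zeta}^{D}$ (the matrix $A^{\ast}$ has zero diagonal, hence is indefinite, and contributions from positive and negative eigendirections can cancel in the quadratic form while $|A^{\ast}v|$ stays large). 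Consequently the monotonicity of $\vphi$ does not propagate a lower bound on the ratio forward from $T_{n}''$: $\vphi$ may be negative at $T_{n}''$, increase, and still satisfy the upper bound \eqref{eq:8.30} at your exit time $\tau_{n}^{\sharp}$, which is perfectly consistent with the ratio falling from $\dlt_{1}$ to $\tfrac{1}{2}\dlt_{1}$. Quantitatively, all your bootstrap yields is $\vphi(\tau_{n}^{\sharp})-\vphi(T_{n}'')\aleq\dlt_{1}^{2}$ against a gain $\ageq\dlt_{1}^{2}\int_{T_{n}''}^{\tau_{n}^{\sharp}}\wh{\zeta}^{2D-2}\,dt$, i.e.\ only the harmless conclusion $\int\wh{\zeta}^{2D-2}\,dt\aleq1$; nothing forces $\tau_{n}^{\sharp}=T_{n}$, so the continuity argument does not close.

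The paper's argument is structured differently and uses two ingredients your proposal omits. It argues by contradiction, exploiting both the \emph{rise} of the ratio on $[t_{n},T_{n}'']$ (from $o_{n\to\infty}(1)$ at $t_{n}$, by \eqref{eq:7.12-2}, up to $\dlt_{1}$ at $T_{n}''$) and the hypothesized fall below $K^{-1}\dlt_{1}$ after $T_{n}''$, for a large universal constant $K$. One chooses an interval $[a_{n},b_{n}]$ containing $T_{n}''$ on which the ratio is $\geq K^{-1}\dlt_{1}$, with equality exactly at the two endpoints; then \eqref{eq:8.30} gives $|\vphi(a_{n})|+|\vphi(b_{n})|\aleq K^{-2}\dlt_{1}^{2}$, and integrating \eqref{eq:8.31} yields $\int_{a_{n}}^{b_{n}}\wh{\zeta}^{-2}|A^{\ast}\vec{\wh{\zeta}}^{D}|^{2}\,dt\aleq K^{-2}\dlt_{1}^{2}$. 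On the other hand, because the ratio rises from $K^{-1}\dlt_{1}$ at $a_{n}$ to $\approx\dlt_{1}$ at $T_{n}''$, there is a subinterval $[c_{n},d_{n}]\subset[a_{n},b_{n}]$ on which it climbs from $\tfrac{1}{3}\dlt_{1}$ to $\tfrac{2}{3}\dlt_{1}$; since $\rd_{t}(\wh{\zeta}^{-2D}|A^{\ast}\vec{\wh{\zeta}}^{D}|^{2})=\calO(\wh{\zeta}^{-2}|A^{\ast}\vec{\wh{\zeta}}^{D}|^{2})$ there, this climb forces the same integral to be $\ageq\dlt_{1}^{2}$, a contradiction once $K$ is large. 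The missing ideas in your proposal are precisely (i) using the portion of the evolution before $T_{n}''$, which is where the lower bound on the integral is generated, and (ii) the large separation factor $K$ between the endpoint level $K^{-1}\dlt_{1}$ and the oscillation amplitude $\aeq\dlt_{1}$; an argument anchored only at $T_{n}''$ with exit level $\tfrac{1}{2}\dlt_{1}$ supplies neither.
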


\begin{proof}
Suppose not. Let $K>10$ be a large universal constant to be chosen.
By \eqref{eq:case3-wh-zeta}, \eqref{eq:7.12-2}, \eqref{eq:case3-ratio-at-T_n''},
and the negation of \eqref{eq:8.34}, there exists $[a_{n},b_{n}]\subseteq(t_{n},T_{n})$
and two sequences $c_{n},d_{n}\in(a_{n},b_{n})$ with $c_{n}<d_{n}$
such that 
\begin{align}
 & \wh{\zeta}^{-D}|A^{\ast}\vec{\wh{\zeta}}^{D}|(a_{n})=K^{-1}\dlt_{1},\quad\wh{\zeta}^{-D}|A^{\ast}\vec{\wh{\zeta}}^{D}|(b_{n})=K^{-1}\dlt_{1},\label{eq:8.35}\\
 & \wh{\zeta}^{-D}|A^{\ast}\vec{\wh{\zeta}}^{D}|\geq K^{-1}\dlt_{1}\quad\text{on }[a_{n},b_{n}],\label{eq:8.36}\\
 & \wh{\zeta}^{-D}|A^{\ast}\vec{\wh{\zeta}}^{D}|(c_{n})=\tfrac{1}{3}\dlt_{1},\quad\wh{\zeta}^{-D}|A^{\ast}\vec{\wh{\zeta}}^{D}|(d_{n})=\tfrac{2}{3}\dlt_{1}.\label{eq:8.37}
\end{align}
At $t\in\{a_{n},b_{n}\}$, by \eqref{eq:8.30}, \eqref{eq:8.13},
and \eqref{eq:8.35}, we have 
\[
|\vphi(a_{n})|+|\vphi(b_{n})|\aleq K^{-2}\dlt_{1}^{2}.
\]
On the interval $[a_{n},b_{n}]$, by \eqref{eq:8.31}, \eqref{eq:8.36},
and $\dlt_{2}\ll K^{-1}\dlt_{1}$, we have 
\[
\rd_{t}\vphi\ageq\wh{\zeta}_{\min}^{-2D}\wh{\zeta}^{2D-2}|A^{\ast}\vec{\wh{\zeta}}^{D}|^{2}\ageq\wh{\zeta}^{-2}|A^{\ast}\vec{\wh{\zeta}}^{D}|^{2}.
\]
Integrating this on $[a_{n},b_{n}]$ gives 
\begin{equation}
\int_{a_{n}}^{b_{n}}\frac{|A^{\ast}\vec{\wh{\zeta}}^{D}|^{2}}{\wh{\zeta}^{2}}dt\aleq|\vphi(a_{n})|+|\vphi(b_{n})|\aleq K^{-2}\dlt_{1}^{2}.\label{eq:8.38}
\end{equation}
On the other hand, on $[c_{n},d_{n}]$, we have $\rd_{t}(\wh{\zeta}^{-2D}|A^{\ast}\vec{\wh{\zeta}}^{D}|^{2})=\calO(\wh{\zeta}^{-2}|A^{\ast}\vec{\wh{\zeta}}^{D}|^{2})$
by \eqref{eq:case3-wh-zeta-mod-est}, \eqref{eq:8.36}, and $\dlt_{2}\ll K^{-1}\dlt_{1}$.
Integrating this on $[c_{n},d_{n}]$ with \eqref{eq:8.37} and \eqref{eq:8.38}
gives 
\[
\tfrac{1}{3}\dlt_{1}^{2}=\Big|\frac{|A^{\ast}\vec{\wh{\zeta}}^{D}|^{2}}{\wh{\zeta}^{2D}}(d_{n})-\frac{|A^{\ast}\vec{\wh{\zeta}}^{D}|^{2}}{\wh{\zeta}^{2D}}(c_{n})\Big|=\Big|\int_{c_{n}}^{d_{n}}\calO\Big(\frac{|A^{\ast}\vec{\wh{\zeta}}^{D}|^{2}}{\wh{\zeta}^{2}}\Big)dt\Big|\aleq\int_{a_{n}}^{b_{n}}\frac{|A^{\ast}\vec{\wh{\zeta}}^{D}|^{2}}{\wh{\zeta}^{2}}dt\aleq K^{-2}\dlt_{1}^{2}.
\]
Choosing $K$ sufficiently large gives a contradiction. This completes
the proof of \eqref{eq:8.34}.
\end{proof}
\begin{cor}
\label{cor:case3-T_n''-is-T_n}We have $T_{n}''=T_{n}=+\infty$ for
all large $n$. Moreover, we have 
\begin{equation}
\|g(t)\|_{\dot{H}^{1}}+\lmb_{\max}(t)+|\vec{z}(t)-\vec{z}^{\ast}|\to0\qquad\text{as }t\to+\infty.\label{eq:case3-all-go-to-zero}
\end{equation}
\end{cor}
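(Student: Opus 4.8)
The plan is to argue by contradiction to establish $T_n''=T_n$; suppose instead that $T_n''<T_n$ along a subsequence, still denoted by $n$. Then Lemma~\ref{lem:case3-ratio-geq-dlt_1} applies and gives $|A^{\ast}\vec{\lmb}^{D}(t)|\geq\dlt_{1}\lmb^{D}(t)$ for all $t\in[T_n'',T_n)$. The first substantive step is to upgrade this single bound into the \emph{full} non-degeneracy estimate \eqref{eq:5.28} on $[T_n'',T_n)$. For $\calI=\setJ$ this is immediate since $|\vec{\lmb}^{D}(t)|=\lmb^{D}(t)$ and the sum over $j\notin\calI$ is empty. For singletons $\calI=\{j\}$ we have $A_{\{j\}}^{\ast}=0$, while $\sum_{j'\neq j}\lmb_{j'}\geq\lmb_{\secmax}$ always, so the sum term is $\ageq\lmb_{\secmax}/\lmb_{\max}\ageq\dlt_{1}$ by \eqref{eq:8.2}. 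For a proper $\calI$ with $|\calI|\geq2$, minimal degeneracy of $\{(\iota_{1},z_{1}^{\ast}),\dots,(\iota_{J},z_{J}^{\ast})\}$ forces the sub-configuration $\{(\iota_{j},z_{j}^{\ast}):j\in\calI\}$ to be non-degenerate, so Lemma~\ref{lem:non-degen} applied to $A_{\calI}^{\ast}$ yields $|A_{\calI}^{\ast}\vec{\lmb}_{\calI}^{D}|\aeq|\vec{\lmb}_{\calI}^{D}|$. Collecting the three cases gives that the minimum in \eqref{eq:5.28} is $\ageq\dlt_{1}$ on $[T_n'',T_n)$, with implicit constant depending only on the configuration.

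Next I would observe that, at time $T_n''$, estimates \eqref{eq:case3-lmb_max=00003Do(1)}, \eqref{eq:case3-g(t)-Hdot1=00003Do(1)}, and \eqref{eq:8.26} make $\{u(T_n''),\vec{\iota},\vec{\lmb}(T_n''),\vec{z}(T_n'')\}$ a $W$-bubbling sequence with $\vec{z}(T_n'')\to\vec{z}^{\ast}$ and $\lmb_{\max}(T_n'')\to0$. Combining this with the controls already available on $[T_n'',T_n)$ — namely $u(t)\in\calT_{J}(\alp)$, $|\vec{z}(t)-\vec{z}^{\ast}|<\dlt_{2}$, $\lmb_{\max}(t)<1$, the orthogonality \eqref{eq:curve-orthog}, and the newly-proved \eqref{eq:5.28} — together with the exit alternative at $T_n$ inherited from the definition $T_n=T_n(1,\dlt_{2},\alp)$, the restriction of $u$ to $[T_n'',+\infty)$ with initial sequence $T_n''$ satisfies precisely the structural hypotheses of \textbf{Case~1} of Proposition~\ref{prop:case-separation}. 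Hence Proposition~\ref{prop:case1-main} applies to this restriction, and its second item forces $\{(\iota_{1},z_{1}^{\ast}),\dots,(\iota_{J},z_{J}^{\ast})\}$ to be non-degenerate, contradicting its minimal degeneracy. Therefore $T_n''=T_n$ for all large $n$. With this identity, $[t_{n},T_n)=[t_{n},T_n'')$, so \eqref{eq:case3-lmb_max=00003Do(1)}, \eqref{eq:8.26}, and \eqref{eq:case3-g(t)-Hdot1=00003Do(1)} all hold on the whole interval $[t_{n},T_n)$, i.e.\ $\sup_{t\in[t_{n},T_n)}\bigl(\|g(t)\|_{\dot{H}^{1}}+\lmb_{\max}(t)+|\vec{z}(t)-\vec{z}^{\ast}|\bigr)=o_{n\to\infty}(1)$. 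Since these quantities (and $R^{-1}\aeq\lmb_{\max}$, $\|\td U\|_{\dot{H}^{1}}=o_{n\to\infty}(1)$) eventually lie strictly below the bootstrap thresholds $\alp$, $1$, $\dlt_{2}$, the usual continuation argument gives $T_n=\Ttbexit_{n}(\alp)=+\infty$; and since $t_{n}\to+\infty$, the same $o_{n\to\infty}(1)$ bound yields \eqref{eq:case3-all-go-to-zero}.

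The step I expect to be the main obstacle is making the reduction to \textbf{Case~1} airtight: one must check that restarting the modulation decomposition at $T_n''$ reproduces the same curve $(\vec{\lmb},\vec{z})$ on $[T_n'',T_n)$ (via the uniqueness part of Lemma~\ref{lem:curve-modulation}), that the limiting data $(\vec{\iota},\vec{z}^{\ast})$ is unchanged, and that the exit conditions defining $T_n=T_n(1,\dlt_{2},\alp)$ line up with those required in the statement of \textbf{Case~1}, so that Proposition~\ref{prop:case1-main} genuinely applies with $T_n''$ in place of $t_{n}$. Once this bookkeeping is settled, the degeneracy-versus-non-degeneracy contradiction is immediate and the remaining deductions are routine.
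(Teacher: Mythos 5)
Your proposal is correct and follows essentially the same route as the paper: assume $T_n''<T_n$, use Lemma~\ref{lem:case3-ratio-geq-dlt_1} together with \eqref{eq:8.2} and minimal degeneracy plus Lemma~\ref{lem:non-degen} to verify the Case~1 condition \eqref{eq:5.28} on $[T_n'',T_n)$ (with the same case split $\calI=\setJ$, $|\calI|=1$, $2\leq|\calI|\leq J-1$), view $u$ restarted at $T_n''$ as a Case~1 solution, and let the non-degeneracy conclusion of Proposition~\ref{prop:case1-main} contradict minimal degeneracy; the paper then closes the bootstrap from \eqref{eq:case3-lmb_max=00003Do(1)}, \eqref{eq:case3-g(t)-Hdot1=00003Do(1)}, \eqref{eq:8.26} exactly as you do. The bookkeeping you flag (that the already-defined modulation curve and exit conditions at $T_n$ line up with the structural requirements of Case~1) is handled in the paper in the same tacit way, by noting the initial conditions at $T_n''$ and checking the exit alternatives at $T_n$, so there is no gap.
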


\begin{proof}
\uline{Step 1: Proof of \mbox{$T_{n}''=T_{n}$}}. Fix the constants
$\alp$, $\dlt_{1}$, and $\dlt_{2}$ so that the arguments in this
section (until this point) hold. Now, suppose $T_{n}''<T_{n}$. 

First, by \eqref{eq:case3-lmb_max=00003Do(1)}, \eqref{eq:case3-g(t)-Hdot1=00003Do(1)},
and \eqref{eq:8.26}, we see that $\{u(T_{n}''),\vec{\iota},\vec{\lmb}(T_{n}''),\vec{z}(T_{n}'')\}$
is a $W$-bubbling sequence, $\vec{z}(T_{n}'')\to\vec{z}^{\ast}$,
and $\lmb_{\max}(T_{n}'')\to0$. 

Next, we claim that there exists $\dlt_{1}^{\ast}>0$ (independent
of $n$) such that 
\begin{equation}
\min_{\emptyset\neq\calI\subseteq\setJ}\Big(\frac{|A_{\calI}^{\ast}\vec{\lmb}_{\calI}^{D}(t)|}{|\vec{\lmb}_{\calI}^{D}(t)|}+\sum_{j\notin\calI}\frac{\lmb_{j}(t)}{\lmb_{\max}(t)}\Big)\geq\dlt_{1}^{\ast}\qquad\text{on }[T_{n}'',T_{n}).\label{eq:8.40}
\end{equation}
By \eqref{eq:8.2}, this is true for $|\calI|=1$. By \eqref{eq:8.34},
this is true for $\calI=\setJ$. By the minimal degeneracy of $\{(\iota_{1},\vec{z}_{1}^{\ast}),\dots,(\iota_{J},\vec{z}_{J}^{\ast})\}$
and Lemma~\ref{lem:non-degen}, this is also true for $2\leq|\calI|\leq J-1$.
Therefore, the claim \eqref{eq:8.40} holds. 

Finally, if $T_{n}<+\infty$, separately considering the cases $T_{n}=T_{n}'<+\infty$
(recall the exit condition for $T_{n}'$ in \textbf{Case~3} of Proposition~\ref{prop:case-separation})
and $T_{n}<T_{n}'$ (recall the definition of $T_{n}$), we see that
either $u(T_{n})\notin\calT_{J}(\alp)$, $|\vec{z}(T_{n})-\vec{z}^{\ast}|\geq\dlt_{2}$,
or $\lmb_{\max}(T_{n})\geq1$. 

Therefore, our solution $u(t)$ belongs to \textbf{Case~1} of Proposition~\ref{prop:case-separation},
where $T_{n}''$, $T_{n}$, $\dlt_{1}^{\ast}$, $\dlt_{2}$, $\alp$
in this proof play the role of $t_{n}$, $T_{n}'$, $\dlt_{1}$, $\dlt_{2}^{\ast}$,
$\alp^{\ast}$ in the statement of \textbf{Case~1}, respectively.
Applying Proposition~\ref{prop:case1-main}, we conclude in particular
that $\{(\iota_{1},z_{1}^{\ast}),\dots,(\iota_{J},z_{J}^{\ast})\}$
is non-degenerate, which is a contradiction.

\uline{Step 2: Proof of the remaining statements}. This is standard.
Thanks to $T_{n}''=T_{n}$, the control \eqref{eq:8.26} is now valid
in $[t_{n},T_{n})$. Now, $T_{n}=\Ttbexit_{n}=T_{n}'=+\infty$ follows
from \eqref{eq:case3-lmb_max=00003Do(1)}, \eqref{eq:case3-g(t)-Hdot1=00003Do(1)},
\eqref{eq:8.26}, and the exit condition of $T_{n}'$. These three
controls imply \eqref{eq:case3-all-go-to-zero} as well.
\end{proof}
Having established $T_{n}''=T_{n}=+\infty$, we can repeat the argument
of this subsection to show that $\vec{\lmb}^{D}/\lmb^{D}$ converges
to $\vec{\frkc}$ as $t\to+\infty$ as well.
\begin{cor}[Improved control of the ratio]
We have 
\begin{equation}
\frac{\vec{\lmb}^{D}}{\lmb^{D}}=\vec{\frkc}+o(1)\quad\text{and}\quad\lmb_{1}\aeq\dots\aeq\lmb_{J}\aeq\lmb.\label{eq:case3-ratio-control}
\end{equation}
\end{cor}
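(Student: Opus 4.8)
The plan is to prove that $\sigma(t):=\wh{\zeta}^{-D}|A^{\ast}\vec{\wh{\zeta}}^{D}|(t)\to0$ as $t\to+\infty$, where $\wh{\zeta}_{i}$ are the functions from Lemma~\ref{lem:case3-ratio-key-ineq}. Since $T_{n}''=T_{n}=+\infty$ for all large $n$ by Corollary~\ref{cor:case3-T_n''-is-T_n}, every estimate of Sections~\ref{subsec:case3-control-z}--\ref{subsec:case3-ratio} holds on $[t_{n},+\infty)$; in particular $\sigma(t)<\dlt_{1}$ there, so \eqref{eq:case3-equiv-dist} gives $\wh{\zeta}_{1}\aeq\cdots\aeq\wh{\zeta}_{J}\aeq\wh{\zeta}$, and because $\wh{\zeta}_{i}(t)=\lmb_{i}(t)e^{f_{i}(t)}$ with $|f_{i}(t)|\aleq\|g(t)\|_{\dot{H}^{1}}\to0$ by \eqref{eq:case3-all-go-to-zero}, one has $\wh{\zeta}_{i}(t)/\lmb_{i}(t)\to1$. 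Hence $\sigma(t)\to0$ is equivalent to $|A^{\ast}\vec{\lmb}^{D}(t)|/\lmb^{D}(t)\to0$, which by \eqref{eq:case3-equiv-dist} is equivalent to $\vec{\lmb}^{D}/\lmb^{D}\to\vec{\frkc}$ (the remaining claim $\lmb_{1}\aeq\cdots\aeq\lmb_{J}\aeq\lmb$ is \eqref{eq:8.13}, now valid on $[t_{n},+\infty)$). The decisive new input is that \eqref{eq:8.26}, now holding on $[t_{n},+\infty)$, yields $\sup_{t\geq t_{n}}|\vec{z}(t)-\vec{z}^{\ast}|=o_{n\to\infty}(1)$.

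Because of this uniform-in-time smallness of $|\vec{z}-\vec{z}^{\ast}|$, the error term $\dlt_{2}\cdot\calO(\wh{\zeta}^{2D-2})$ in the modulation identity \eqref{eq:case3-wh-zeta-mod-est} — which came solely from $A[\vec{z}]=A^{\ast}+\calO(|\vec{z}-\vec{z}^{\ast}|)$ — may be replaced, for $n$ large, by $o_{n\to\infty}(1)\cdot\calO(\wh{\zeta}^{2D-2})$. Re-running the proof of Lemma~\ref{lem:case3-ratio-key-ineq} with this improvement produces on $[t_{n},+\infty)$ the monotone functional $\vphi(t)$ satisfying $|\vphi|\aleq\sigma^{2}$, $\frac{d}{dt}\vphi\geq c\,\wh{\zeta}^{-2}|A^{\ast}\vec{\wh{\zeta}}^{D}|^{2}-o_{n\to\infty}(1)\cdot\calO(\wh{\zeta}^{2D-2})$ and $|\rd_{t}(\wh{\zeta}^{-2D}|A^{\ast}\vec{\wh{\zeta}}^{D}|^{2})|\aleq\wh{\zeta}^{-2}|A^{\ast}\vec{\wh{\zeta}}^{D}|^{2}+o_{n\to\infty}(1)\cdot\calO(\wh{\zeta}^{2D-2})$, together with $|\rd_{t}\log\wh{\zeta}|\aleq\wh{\zeta}^{2D-2}$. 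The point is that, whereas Lemma~\ref{lem:case3-ratio-geq-dlt_1} needed $\dlt_{2}\ll K^{-1}\dlt_{1}$ and hence could only pin $\sigma$ at the fixed scale $\dlt_{1}$, the errors here can now be absorbed against $c\,\sigma^{2}\wh{\zeta}^{2D-2}$ down to any scale $\eps$, at the cost of taking $n$ large.

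The core step is to fix an arbitrary $\eps\in(0,\dlt_{1})$ and show $\sigma(t)<\eps$ on $[t_{n},+\infty)$ once $n$ is large (depending on $\eps$); since $t_{n}\to+\infty$ and $\eps$ is arbitrary this gives $\sigma(t)\to0$. Let $K$ be the universal constant from the contradiction in the proof of Lemma~\ref{lem:case3-ratio-geq-dlt_1}, and take $n$ so large that $\sigma(t_{n})<K^{-1}\eps$ (using \eqref{eq:7.12-2} and \eqref{eq:case3-wh-zeta}) and the $o_{n\to\infty}(1)$ errors above are $\ll K^{-2}\eps^{2}$. Suppose $\sigma$ first reaches $\eps$ at some $t_{0}>t_{n}$, and let $a\in[t_{n},t_{0})$ be the last time with $\sigma(a)=K^{-1}\eps$. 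If $\sigma\geq K^{-1}\eps$ on $[t_{0},+\infty)$, then $\vphi$ is increasing there with $\frac{d}{dt}\vphi\geq\tfrac{c}{2}K^{-2}\eps^{2}\wh{\zeta}^{2D-2}$, so boundedness of $\vphi$ forces $\int^{\infty}\wh{\zeta}^{2D-2}dt<+\infty$, hence $\log\wh{\zeta}$ converges and $\wh{\zeta}(t)\to\ell>0$, contradicting $\lmb_{\max}(t)\to0$; thus $\sigma$ returns below $K^{-1}\eps$ at some $b>t_{0}$, and on the window $[a,b]$ the oscillation argument of Lemma~\ref{lem:case3-ratio-geq-dlt_1} (verbatim, with $\dlt_{1}$ replaced by $\eps$ and $\dlt_{2}$ by $o_{n\to\infty}(1)$) gives $\tfrac{1}{3}\eps^{2}\aleq\int_{c}^{d}\wh{\zeta}^{-2}|A^{\ast}\vec{\wh{\zeta}}^{D}|^{2}dt\leq\int_{a}^{b}(\cdots)dt\aleq K^{-2}\eps^{2}$ on the sub-window $[c,d]$ where $\sigma$ crosses from $\tfrac13\eps$ to $\tfrac23\eps$, a contradiction for $K$ large. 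Hence $\sigma(t)<\eps$ on $[t_{n},+\infty)$, and \eqref{eq:case3-ratio-control} follows. I expect the only obstacle beyond Section~\ref{subsec:case3-ratio} to be exactly the step of producing the \emph{uniform-in-time} bound $\sup_{t\geq t_{n}}|\vec{z}(t)-\vec{z}^{\ast}|=o_{n\to\infty}(1)$, which is what $T_{n}''=T_{n}=+\infty$ provides and which is what allows the same oscillation mechanism to reach every scale $\eps$ rather than just $\dlt_{1}$.
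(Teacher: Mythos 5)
Your proposal is correct and follows essentially the same route as the paper: using $T_{n}''=T_{n}=+\infty$ to upgrade the errors in Lemma~\ref{lem:case3-ratio-key-ineq} to $o(1)\cdot\wh{\zeta}^{2D-2}$, and then running the same monotone functional $\vphi$ and the oscillation argument of Lemma~\ref{lem:case3-ratio-geq-dlt_1} to force $\wh{\zeta}^{-D}|A^{\ast}\vec{\wh{\zeta}}^{D}|\to0$, which via \eqref{eq:case3-equiv-dist} yields \eqref{eq:case3-ratio-control}. The only (harmless) deviation is your handling of the scenario where the ratio never returns below the threshold, which you exclude by the integrability of $\wh{\zeta}^{2D-2}$ against $\lmb_{\max}(t)\to0$, whereas the paper closes the oscillation window $[a_{n},b_{n}]$ directly by invoking the sequential smallness \eqref{eq:7.12-2} at later times.
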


\begin{proof}
\uline{Step 1: Analogue of Lemma~\mbox{\ref{lem:case3-ratio-key-ineq}}}.
Revisiting the proof of Lemma~\ref{lem:case3-ratio-key-ineq}, now
with $T_{n}''=T_{n}=+\infty$ and \eqref{eq:case3-all-go-to-zero},
we can replace all $o_{n\to\infty}(1)$ and $\dlt_{2}$ by $o(1)$.
As a result, there exist functions $\wh{\zeta}_{i}(t)$, now defined
for all large $t$, such that 
\begin{align}
\wh{\zeta}_{i}(t) & =(1+o(1))\cdot\lmb_{i}(t),\label{eq:case3-wh-zeta-1}\\
\frac{\wh{\zeta}_{i,t}}{\wh{\zeta}_{i}} & =\wh{\zeta}_{i}^{D-2}(A^{\ast}\vec{\wh{\zeta}}^{D})_{i}+o(\wh{\zeta}^{2D-2}).\label{eq:case3-wh-zeta-mod-est-1}
\end{align}
From this, there exists a function $\vphi(t)$ defined for all large
$t$ such that 
\begin{align}
\vphi(t) & =\calO(\wh{\zeta}_{\min}^{-2D}|A^{\ast}\vec{\wh{\zeta}}^{D}|^{2}),\label{eq:8.44}\\
\frac{d}{dt}\vphi(t) & \geq\wh{\zeta}_{\min}^{-2D}\{c\wh{\zeta}^{2D-2}|A^{\ast}\wh{\zeta}^{D}|^{2}-o(\wh{\zeta}^{4D-2})\}\label{eq:8.45}
\end{align}
for some constant $c>0$.

\uline{Step 2: Proof of \mbox{\eqref{eq:case3-ratio-control}}}.
By \eqref{eq:case3-equiv-dist}, \eqref{eq:case3-wh-zeta-1}, and
the fact that $\vec{\frkc}\in(0,\infty)^{J}$ is a positive vector,
it suffices to show 
\begin{equation}
\frac{|A^{\ast}\vec{\wh{\zeta}}^{D}(t)|}{\wh{\zeta}^{D}(t)}=o(1).\label{eq:case3-ratio-2}
\end{equation}
Suppose not. Let $\dlt>0$ be a small constant and let $K>10$ be
a large constant to be chosen. By \eqref{eq:7.12-2} and the negation
of \eqref{eq:case3-ratio-2}, there exist a sequence of time intervals
$[a_{n},b_{n}]$ with $a_{n}\to+\infty$ and two sequences $c_{n},d_{n}\in(a_{n},b_{n})$
with $c_{n}<d_{n}$ such that 
\begin{align}
 & \wh{\zeta}^{-D}|A^{\ast}\vec{\wh{\zeta}}^{D}|(a_{n})=K^{-1}\dlt,\quad\wh{\zeta}^{-D}|A^{\ast}\vec{\wh{\zeta}}^{D}|(b_{n})=K^{-1}\dlt,\label{eq:7.13-1}\\
 & \wh{\zeta}^{-D}|A^{\ast}\vec{\wh{\zeta}}^{D}|\geq K^{-1}\dlt\quad\text{on }[a_{n},b_{n}],\label{eq:7.13-2}\\
 & \wh{\zeta}^{-D}|A^{\ast}\vec{\wh{\zeta}}^{D}|(c_{n})=\tfrac{1}{2}\dlt,\quad\wh{\zeta}^{-D}|A^{\ast}\vec{\wh{\zeta}}^{D}|(d_{n})=\dlt.\label{eq:7.13-3}
\end{align}
The argument to derive a contradiction is essentially the same as
in the proof of Lemma~\ref{lem:case3-ratio-geq-dlt_1}. Proceeding
similarly as the proof there, we have $|\vphi(a_{n})|+|\vphi(b_{n})|\aleq K^{-2}\dlt^{2}$
and $\rd_{t}\vphi\ageq\wh{\zeta}^{-2}|A^{\ast}\vec{\wh{\zeta}}^{D}|^{2}$
on $[a_{n},b_{n}]$, so by integration we get $\int_{a_{n}}^{b_{n}}\wh{\zeta}^{-2}|A^{\ast}\vec{\wh{\zeta}}^{D}|^{2}dt\aleq K^{-2}\dlt^{2}$.
On the other hand, we have $\rd_{t}(\wh{\zeta}^{-2D}|A^{\ast}\vec{\wh{\zeta}}^{D}|^{2})=\calO(\wh{\zeta}^{-2}|A^{\ast}\vec{\wh{\zeta}}^{D}|^{2})$
on $[c_{n},d_{n}]$, whose integration gives 
\[
\tfrac{3}{4}\dlt^{2}=\Big|\frac{|A^{\ast}\vec{\wh{\zeta}}^{D}|^{2}}{\wh{\zeta}^{2D}}(d_{n})-\frac{|A^{\ast}\vec{\wh{\zeta}}^{D}|^{2}}{\wh{\zeta}^{2D}}(c_{n})\Big|=\Big|\int_{c_{n}}^{d_{n}}\calO\Big(\frac{|A^{\ast}\vec{\wh{\zeta}}^{D}|^{2}}{\wh{\zeta}^{2}}\Big)dt\Big|\aleq\int_{a_{n}}^{b_{n}}\frac{|A^{\ast}\vec{\wh{\zeta}}^{D}|^{2}}{\wh{\zeta}^{2}}dt\aleq K^{-2}\dlt^{2}.
\]
This is a contradiction by choosing $K$ sufficiently large (the smallness
of $\dlt$ was used to justify this argument). This completes the
proof.
\end{proof}

\subsection{\label{subsec:case3-proof-of-prop}Proof of Proposition~\ref{prop:case3-main}}

Note that the continuous-in-time resolution with $\vec{z}(t)\to\vec{z}^{\ast}$
is shown in \eqref{eq:case3-all-go-to-zero}. It remains to prove
the asymptotics of the rates $\lmb_{i}(t)$, i.e., \eqref{eq:case3-lmb-rate}.
We begin with an analogue of Lemma~\ref{lem:case3-key-ineq} with
all $L_{1,n}$ and $\dlt_{1}$ in the error terms are replaced by
$L_{1}$ and $o(1)$ (thanks to \eqref{eq:case3-ratio-control} and
\eqref{eq:case3-all-go-to-zero}), respectively. Then, we integrate
the ODEs of $\alp$ and $\nu$ to derive sharp asymptotics as $t\to+\infty$.
\begin{proof}[\uline{Proof of \mbox{\eqref{eq:case3-lmb-rate}}}]
\uline{Step 1: Analogue of Corollary~\mbox{\ref{cor:8.4}}}.
Revisiting the proof of Corollary~\ref{cor:8.4} using \eqref{eq:case3-ratio-control}
and \eqref{eq:case3-all-go-to-zero}, we can replace all $L_{1,n}$
and $\dlt_{1}$ by $L_{1}$ and $o(1)$, respectively. Thus there
exist functions $\zeta_{i}(t)$ defined for all large $t$ such that
\begin{equation}
\zeta_{i}(t)=(1+o(1))\lmb_{i}(t)\label{eq:8.50}
\end{equation}
and 
\begin{align}
\Big|\frac{\zeta_{i,t}}{\zeta_{i}}-\zeta_{i}^{D-2}(A[\vec{z}]\vec{\zeta}^{D})_{i}\Big| & \aleq L_{1}+o(1)\cdot\zeta^{2D-1},\label{eq:8.51}\\
\Big|z_{i,t}-\frac{\kpp_{1}}{\kpp_{0}}v_{i}^{\ast}\zeta^{2D}\Big| & \aleq L_{1}\cdot\zeta+o(1)\cdot\zeta^{2D},\label{eq:8.52}
\end{align}
where $L_{1}$ denotes some time-integrable function $h(t)$ defined
for all large $t$ and $\zeta\coloneqq(\zeta_{1}^{2D}+\dots+\zeta_{J}^{2D})^{1/(2D)}\aeq\lmb_{\max}$. 

\uline{Step 2: Analogue of Lemma~\mbox{\ref{lem:case3-key-ineq}}}.
Define $\alp(t)$ and $\nu(t)$ by the same formulas \eqref{eq:case3-def-alp-nu}
with $\zeta_{i}$ and $z_{i}$ as in the previous step. Revisiting
the proof of Lemma~\ref{lem:case3-key-ineq}, but using the estimates
in the previous step this time, we obtain similar estimates for $\alp$
and $\nu$ with all $\dlt_{1}$ and $L_{1,n}$ replaced by $o(1)$
and $L_{1}$, respectively. Thus we have 
\begin{align}
\alp & =\calO(|\vec{z}-\vec{z}^{\ast}|),\label{eq:case3-alp-est-1}\\
\nu & =(1+o(1))\zeta,\label{eq:case3-nu-est-1}
\end{align}
and 
\begin{align}
\frac{d}{dt}\alp & =-\frac{\kpp_{1}}{\kpp_{0}}\zeta^{2D}+L_{1}\cdot\zeta+o(\zeta^{2D}),\label{eq:case3-rd-alp-1}\\
\frac{d}{dt}\nu & =-\gmm\alp\zeta^{2D-1}+L_{1}\cdot\zeta+o(|\vec{z}-\vec{z}^{\ast}|\zeta^{2D-1}+\zeta^{2D}).\label{eq:case3-rd-nu-1}
\end{align}

\uline{Step 3: Proof of \mbox{\eqref{eq:case3-lmb-rate}}}. We
begin with the argument used in the proof of \eqref{eq:8.26}. Using
\eqref{eq:8.52} and $\vec{z}(t)\to\vec{z}^{\ast}$, we have 
\begin{equation}
\sup_{\tau\in[t,+\infty)}|\vec{z}(\tau)-\vec{z}^{\ast}|\aleq\int_{t}^{+\infty}\zeta^{2D}(\tau)d\tau+o(1)\cdot\sup_{\tau\in[t,+\infty)}\zeta(\tau).\label{eq:7.24}
\end{equation}
Next, by \eqref{eq:case3-rd-alp-1}, we have 
\begin{equation}
\int_{t}^{+\infty}\zeta^{2D}(\tau)d\tau\aleq|\alp(t)|+o(1)\cdot\sup_{\tau\in[t,+\infty)}\zeta(\tau).\label{eq:7.25}
\end{equation}
Next, as in the proof of \eqref{eq:8.26}, we obtain 
\begin{align}
(\nu^{2})_{t} & =2\nu\{-\gmm\alp\zeta^{2D-1}+L_{1}\cdot\zeta+o(|\vec{z}-\vec{z}^{\ast}|\zeta^{2D-1}+\zeta^{2D})\}\nonumber \\
 & =-2\gmm\alp\zeta^{2D}+L_{1}\cdot\zeta^{2}+o(|\vec{z}-\vec{z}^{\ast}|\zeta^{2D}+\zeta^{2D+1})\nonumber \\
 & =\gmm\frac{\kpp_{0}}{\kpp_{1}}(\alp^{2})_{t}+L_{1}\cdot(|\vec{z}-\vec{z}^{\ast}|\zeta+\zeta^{2})+o(|\vec{z}-\vec{z}^{\ast}|\zeta^{2D}+\zeta^{2D+1}).\label{eq:8.61}
\end{align}
Integrating this from $t=+\infty$ together with $\nu\aeq\zeta$,
$|\alp|\aleq|\vec{z}-\vec{z}^{\ast}|$, and \eqref{eq:7.25} implies
\begin{equation}
\sup_{\tau\in[t,+\infty)}\zeta^{2}(\tau)\aleq\sup_{\tau\in[t,+\infty)}|\vec{z}(\tau)-\vec{z}^{\ast}|^{2}.\label{eq:8.62}
\end{equation}
This substituted into \eqref{eq:7.24} and \eqref{eq:7.25} gives
\[
\sup_{\tau\in[t,+\infty)}\zeta(\tau)\aleq\sup_{\tau\in[t,+\infty)}|\vec{z}(\tau)-\vec{z}^{\ast}|\aeq\sup_{\tau\in[t,+\infty)}|\alp(\tau)|\aeq\int_{t}^{+\infty}\zeta^{2D}(\tau)d\tau.
\]
Substituting this into \eqref{eq:7.25} again and using $|\vec{z}(t)-\vec{z}^{\ast}|\ageq|\alp(t)|$,
we get 
\begin{equation}
|\alp(t)|\aeq|\vec{z}(t)-\vec{z}^{\ast}|\aeq\int_{t}^{+\infty}\zeta^{2D}(\tau)d\tau.\label{eq:7.28}
\end{equation}
(In particular, $|\alp(t)|$ and $|\vec{z}(t)-\vec{z}^{\ast}|$ are
almost decreasing.) Now, we integrate \eqref{eq:8.61} again with
the controls \eqref{eq:case3-nu-est-1} and \eqref{eq:7.28} to have
\begin{equation}
\zeta^{2}(t)=(\gmm\frac{\kpp_{0}}{\kpp_{1}}+o(1))\alp^{2}(t).\label{eq:7.28-1-1}
\end{equation}
On the other hand, we integrate \eqref{eq:case3-rd-alp-1} to obtain
\begin{align}
\alp(t) & =\Big(\frac{\kpp_{1}}{\kpp_{0}}+o(1)\Big)\int_{t}^{\infty}\zeta^{2D}(\tau)d\tau.\label{eq:7.28-0}
\end{align}
Substituting \eqref{eq:7.28-0} into \eqref{eq:7.28-1-1}, we conclude
\begin{equation}
\zeta(t)=\Big(\sqrt{\gmm\frac{\kpp_{1}}{\kpp_{0}}}+o(1)\Big)\cdot\int_{t}^{\infty}\zeta^{2D}(\tau)d\tau.\label{eq:7.54}
\end{equation}

It remains to solve the integral equation \eqref{eq:7.54}. Denoting
$\vphi(t)=\int_{t}^{\infty}\zeta^{2D}(\tau)d\tau$, this integral
equation reads $(-\rd_{t}\vphi)^{1/(2D)}=(\sqrt{\gmm\kpp_{1}/\kpp_{0}}+o(1))\vphi$.
Hence, 
\[
-\rd_{t}\vphi=\Big(\Big(\gmm\frac{\kpp_{1}}{\kpp_{0}}\Big)^{D}+o(1)\Big)\vphi^{2D}.
\]
Integrating this, we get 
\[
\vphi=\Big\{(2D-1)\Big(\gmm\frac{\kpp_{1}}{\kpp_{0}}\Big)^{D}+o(1)\Big\}^{-\frac{1}{2D-1}}t^{-\frac{1}{2D-1}}.
\]
Recalling $\int_{t}^{\infty}\zeta^{2D}(\tau)d\tau=\vphi(t)$ and substituting
this into \eqref{eq:7.54}, we get 
\[
\zeta(t)=\Big\{\Big(\frac{\kpp_{0}}{(2D-1)^{2}\gmm\kpp_{1}}\Big)^{\frac{1}{4D-2}}+o(1)\Big\}\cdot t^{-1/(2D-1)}.
\]
Substituting the definition \eqref{eq:case3-def-gmm} of $\gmm$ and
$\zeta=(1+o(1))\lmb$ into the above, we conclude 
\[
\lmb(t)=\Big\{\Big(\frac{\kpp_{0}}{\kpp_{1}}\frac{\sum_{i=1}^{J}\frkc_{i}^{2/D}}{(2D-1)^{2}4D\sum_{i=1}^{J}|v_{i}^{\ast}|^{2}}\Big)^{\frac{1}{4D-2}}+o(1)\Big\}\cdot t^{-1/(2D-1)}.
\]
Substituting this and $D=\frac{N-2}{2}$ into $\lmb_{i}^{D}=(\frkc_{i}+o(1))\lmb^{D}$,
we finally get 
\[
\lmb_{i}(t)=\Big\{\Big(\frac{\kpp_{0}}{\kpp_{1}}\frac{\sum_{i=1}^{J}\frkc_{i}^{4/(N-2)}}{(N-3)^{2}(2N-4)\sum_{i=1}^{J}|v_{i}^{\ast}|^{2}}\Big)^{\frac{1}{2N-6}}\frkc_{i}^{2/(N-2)}+o(1)\Big\}\cdot t^{-1/(N-3)}.
\]
This completes the proof of \eqref{eq:case3-lmb-rate}.
\end{proof}

\subsection{Proof of the second item of Theorem~\ref{thm:main-lmb-to-zero-classification}}
\begin{proof}
Let $u(t)$ be as in Assumption~\ref{assumption:sequential-on-param}
and suppose Theorem~\ref{thm:main-one-bubble-tower-classification}
does not apply. Assume the configuration $\{(\iota_{1},z_{1}^{\ast}),\dots,(\iota_{J},z_{j}^{\ast})\}$
is minimally degenerate and satisfies \eqref{eq:case3-v_i-not-all-zero}.
Apply the case separation Proposition~\ref{prop:case-separation}
to $u(t)$. Note that Case~2 is excluded because we assumed Theorem~\ref{thm:main-one-bubble-tower-classification}
does not apply. Case~1 is excluded by Proposition~\ref{prop:case1-main}
and the degeneracy of our configuration. Therefore, only Case~3 is
possible. Applying Proposition~\ref{prop:case3-main} gives the desired
conclusion.
\end{proof}

\section{\label{sec:Construction-of-minimal-examples}Construction of minimal
examples}

In this section, we construct a \emph{minimal example} that demonstrates
the \emph{minimally degenerate case} of Theorem~\ref{thm:main-lmb-to-zero-classification}.
The goal of this section is Proposition~\ref{prop:Sec9-main} below.
In Remark~\ref{rem:other-examples} at the end of this section, we
comment on constructing minimal examples of other scenarios.
\begin{prop}
\label{prop:Sec9-main}Let $J=4$. Consider the minimally degenerate
configuration 
\begin{equation}
\left|\begin{aligned}\iota_{1} & =\iota_{2}=+1, & \iota_{3} & =\iota_{4}=-1,\\
z_{1}^{\ast} & =(\tfrac{1}{2},\tfrac{q_{0}}{2},0,\dots,0), & z_{3}^{\ast} & =(\tfrac{1}{2},-\tfrac{q_{0}}{2},0,\dots,0),\\
z_{2}^{\ast} & =(-\tfrac{1}{2},\tfrac{q_{0}}{2},0,\dots,0), & z_{4}^{\ast} & =(-\tfrac{1}{2},-\tfrac{q_{0}}{2},0,\dots,0),
\end{aligned}
\right.\label{eq:Sec9-def-config}
\end{equation}
where $q_{0}\in(1,\infty)$ is the unique number satisfying 
\[
1-\frac{1}{q_{0}^{2D}}-\frac{1}{(1+q_{0}^{2})^{D}}=0.
\]
Note that $\vec{\frkc}=\frac{1}{2}(1,1,1,1)\in\ker A^{\ast}$. Then,
there exists a solution $u(t)$ exhibiting the degenerate scenario
of Theorem~\ref{thm:main-lmb-to-zero-classification}.
\end{prop}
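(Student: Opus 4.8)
The plan is to construct $u(t)$ by the forward (backward-in-time) construction scheme: prescribe the expected asymptotic profile, set up an approximate solution, and solve for an exact solution with a topological (Brouwer) argument to handle the finitely many unstable modes. First I would exploit the reflection symmetries of the configuration \eqref{eq:Sec9-def-config}: the four points are vertices of a rectangle, symmetric under $x^1\mapsto -x^1$ and $x^2\mapsto -x^2$ (which, together with the sign pattern $\iota_1=\iota_2=+1$, $\iota_3=\iota_4=-1$, is compatible), as well as under the simultaneous swap $(z_1,z_2)\leftrightarrow(z_3,z_4)$ composed with a global sign flip. Restricting to the class of solutions respecting these symmetries collapses the $4J = 16$ modulation parameters $(\vec\lmb,\vec z)$ down to essentially two scalar unknowns: a common scale $\lmb(t)$ (since $\lmb_1=\lmb_2=\lmb_3=\lmb_4$ by symmetry) and the common half-width $q(t)$ of the rectangle (the $x^2$-separation), with the $x^1$-separation frozen. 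This is exactly the structure predicted by Section~\ref{sec:Case3}: the $\lmb$-equation degenerates (because $\vec\frkc=\tfrac12(1,1,1,1)\in\ker A^\ast$) and the dynamics is driven at leading order by the $q$-equation, i.e.\ by the variation of $A[\vec z]$.

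Next I would build the approximate profile. Take $U_{\mathrm{app}}(t) = U(\vec\iota,\vec\lmb(t),\vec z(t))$, the modified multi-bubble profile of Proposition~\ref{prop:Modified-Profiles}, with $\lmb(t)$ and $q(t)$ chosen to solve the reduced ODE system coming from \eqref{eq:intro-lmb-eqn}--\eqref{eq:intro-z-eqn}; by the computations of Lemma~\ref{lem:case3-key-ineq} and Section~\ref{subsec:case3-proof-of-prop}, the consistent self-similar solution has $\lmb(t)\simeq \ell\, t^{-1/(N-3)}$ with $\ell$ given by \eqref{eq:case3-lmb-rate} (specialized to $\vec\frkc=\tfrac12(1,1,1,1)$) and $q(t) = q_0 + c_q\,\lmb(t)^{?}$ for the appropriate power dictated by $\frac{d}{dt}\alp = -\tfrac{\kpp_1}{\kpp_0}\zeta^{2D} + \dots$ and the relation $\zeta^2 \simeq \gmm\tfrac{\kpp_0}{\kpp_1}\alp^2$ from \eqref{eq:7.28-1-1}. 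I would then write $u = U_{\mathrm{app}} + g$, impose the orthogonality conditions \eqref{eq:static-orthog} via the modulation Lemma~\ref{lem:static-modulation}, and derive the equation for $g$; the source term is the profile error $\Dlt U_{\mathrm{app}} + f(U_{\mathrm{app}}) - \rd_t U_{\mathrm{app}}$, whose size is controlled by \eqref{eq:strategy-U-eqn}, the $\frkr_{a,i}$-estimates \eqref{eq:frkr/lmb-est}, and the refined modulation estimates of Lemma~\ref{lem:non-coll-ref-mod-est}. The coercivity of the dissipation (the Proposition following Lemma~\ref{lem:static-modulation}, i.e.\ \eqref{eq:coercivity-dissipation}) together with the energy identity \eqref{eq:energy-identity} provides the monotonicity/spacetime estimate that closes the bootstrap for $\|g\|_{\dot H^2}$ on intervals $[T, T_n]$ with $T_n\to+\infty$, running backward from large times.

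The Brouwer step is where the genuine unstable directions enter. After symmetry reduction there is a one-dimensional unstable mode per bubble coming from the positive eigenvalue $e_0$ of $\calL_W$ — but these are linked by symmetry, so effectively one $\calY$-mode; and the scaling instability inherited from the degeneracy of the $\lmb$-equation contributes another unstable parameter (the modulation variable conjugate to $\lmb$, which grows like a positive power of $t$ rather than decaying). I would set up a finite-dimensional topological shooting argument: parametrize the "unstable data" at time $T_n$ in a small ball, show that the exit map (first time the unstable quantity leaves a prescribed small cylinder) is continuous and that the boundary maps with nonzero degree onto the boundary sphere, and conclude by the no-retraction theorem that some choice of data yields a solution trapped in the bootstrap region for all $t\in[T,+\infty)$. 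Passing $T_n\to+\infty$ and using a compactness/diagonal argument produces the global solution $u(t)$; then the classification theorem (Theorem~\ref{thm:main-lmb-to-zero-classification}, second item, which applies since $J=4$ forces \eqref{eq:case3-v_i-not-all-zero} by Lemma~\ref{lem:J=00003D4-implies-not-too-degenerate}) identifies its asymptotics as the claimed degenerate scenario.

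The main obstacle I expect is the precise bookkeeping of the \emph{nonlinear interaction at the degenerate order}: because the leading-order $\lmb$-dynamics vanishes ($\vec\frkc\in\ker A^\ast$), the effective blow-up rate $t^{-1/(N-3)}$ is produced by the \emph{second-order} coupling between the motion of $q(t)$ and the scales, i.e.\ by $\rd_t A[\vec z]$ acting on $\vec\lmb^D$ as in \eqref{eq:case3-rd-alp}--\eqref{eq:case3-rd-nu}. One must verify that the approximate profile is consistent to high enough order that the error $g$ is genuinely subcritical relative to this slow rate — otherwise the error feeds back into the modulation equations and destroys the self-similar ansatz. Concretely, the nontrivial check is that the terms $L_{1,n}\cdot\zeta$, $o(1)\cdot\zeta^{2D}$ appearing throughout Section~\ref{subsec:case3-control-z} are indeed integrable against the slow profile, which forces a careful choice of the correction in $q(t)$ (one more order than naively expected) and a matching refinement of the orthogonality conditions. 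The symmetry reduction is essential precisely to keep the number of unstable parameters small enough that the Brouwer argument is manageable while this delicate second-order matching is carried out.
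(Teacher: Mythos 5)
Your outline has the right overall flavor (symmetry reduction, modulation around the modified multi-bubble profile, dissipation/energy bootstrap, a Brouwer argument for the unstable modes, and a final appeal to the classification theorem), but two of its load-bearing steps are wrong as stated. First, the symmetry reduction: the reflection symmetries of the rectangle only force \eqref{eq:Sec9-symm-lmb}--\eqref{eq:Sec9-symm-z}; nothing freezes the $x^{1}$-separation $d$. The modulation ODE \eqref{eq:Sec9-formal-ODE} gives $d_{t}=2\kpp_{1}\kpp_{\infty}\big(-d^{-2D-1}+d(d^{2}+q^{2})^{-D-1}\big)\lmb^{2D}\neq0$ near $(d,q)=(1,q_{0})$, and the Taylor expansion \eqref{eq:Sec9-taylor-expn} of the degenerate $\lmb$-equation carries the $(d-1)$ term on equal footing with $(q-q_{0})$; you cannot consistently impose $d\equiv1$, and doing so produces the wrong reduced system. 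The correct reduction is three-dimensional in $(\lmb,d,q)$, and linearizing around the self-similar solution \eqref{eq:Sec9-lmb,d,q-aprx-sol} gives the matrix $M+\tfrac{1}{2D-1}I$ of \eqref{eq:Sec9-def-M} with \emph{two} positive eigenvalues; together with the $\calY$-mode this forces a three-parameter shooting family. Your count ("effectively one $\calY$-mode" plus one scaling-type mode) misses an unstable direction, and with it the Brouwer argument does not close.

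Second, the time direction of the construction: prescribing data at $T_{n}$, solving on $[T,T_{n}]$ "running backward from large times," and passing to a limit by compactness is the dispersive (Martel, C\^ot\'e--Martel--Merle) scheme and is not available here, because \eqref{eq:NLH} is parabolic and the backward Cauchy problem is ill-posed -- there is no flow from data at $T_{n}$ down to $T$. The construction must be run forward: fix one large initial time $t_{0}$, consider the three-parameter family of initial data $\sum_{i}\iota_{i}(W+a_{0}\calY)_{\lmb_{0},(z_{i})_{0}}$ (parameters: the two unstable components of $h(t_{0})$ from \eqref{eq:Sec9-def-h} and $a_{0}$), propagate forward with refined modulation parameters and the spacetime control coming from \eqref{eq:coercivity-dissipation} and \eqref{eq:energy-identity}, and run the exit-time Brouwer argument on that family of initial data, using the strict outward transversality at exit as in \eqref{eq:Sec9-unstab-deriv-positive-1}--\eqref{eq:Sec9-unstab-deriv-positive-2} to get continuity of the exit map and the identity on the boundary. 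The remaining ingredients of your plan (orthogonality via Lemma~\ref{lem:static-modulation}, the second-order mechanism through $\rd_{t}A[\vec{z}]$ producing the $t^{-1/(N-3)}$ rate, and invoking the second item of Theorem~\ref{thm:main-lmb-to-zero-classification} once $\|g(t)\|_{\dot{H}^{1}}\to0$) are consistent with the actual proof, but the two issues above must be repaired for the argument to go through.
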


\subsection{\label{subsec:Linearization-formal-ODE}Linearization of formal ODE
system}

From the symmetry of the configuration \eqref{eq:Sec9-def-config},
we may also assume the following symmetry on $\vec{\lmb}(t)$ and
$\vec{z}(t)$: 
\begin{equation}
\lmb_{1}=\lmb_{2}=\lmb_{3}=\lmb_{4}\eqqcolon\lmb\label{eq:Sec9-symm-lmb}
\end{equation}
and 
\begin{equation}
\left|\begin{aligned}z_{1} & =(\tfrac{d}{2},\tfrac{q}{2},0,\dots,0), & z_{3} & =(\tfrac{d}{2},-\tfrac{q}{2},0,\dots,0),\\
z_{2} & =(-\tfrac{d}{2},\tfrac{q}{2},0,\dots,0), & z_{4} & =(-\tfrac{d}{2},-\tfrac{q}{2},0,\dots,0).
\end{aligned}
\right.\label{eq:Sec9-symm-z}
\end{equation}
(We chose a different but more convenient definition of $\lmb$ compared
to Section~\ref{sec:Case3}.)

The formal ODE system is then described by three parameters $\lmb,d,q$
and is given by 
\begin{equation}
\left|\begin{aligned}\lmb_{t} & =\kpp_{0}\kpp_{\infty}(d^{-2D}-q^{-2D}-(d^{2}+q^{2})^{-D})\lmb^{2D-1},\\
d_{t} & =2\kpp_{1}\kpp_{\infty}(-d^{-2D-1}+d(d^{2}+q^{2})^{-D-1})\lmb^{2D},\\
q_{t} & =2\kpp_{1}\kpp_{\infty}(q^{-2D-1}+q(d^{2}+q^{2})^{-D-1})\lmb^{2D}.
\end{aligned}
\right.\label{eq:Sec9-formal-ODE}
\end{equation}
For $(d,q)$ near $(1,q_{0})$, due to degeneracy, it is more appropriate
to Taylor expand the right hand side of $\lmb_{t}$ using 
\begin{equation}
\begin{aligned} & d^{-2D}-q^{-2D}-(d^{2}+q^{2})^{-D}\\
 & =2D(-1+(1+q_{0}^{2})^{-D-1})(d-1)+2D(q_{0}^{-2D-1}+q_{0}(1+q_{0}^{2})^{-D-1})(q-q_{0})\\
 & \quad+\calO((d-1)^{2}+(q-q_{0})^{2}).
\end{aligned}
\label{eq:Sec9-taylor-expn}
\end{equation}
Substituting \eqref{eq:Sec9-taylor-expn} into the ODE system \eqref{eq:Sec9-formal-ODE}
and taking only the leading order terms, we obtain a more simplified
formal ODE system 
\begin{equation}
\left|\begin{aligned}\lmb_{t} & =2D\kpp_{0}\kpp_{\infty}\big\{(-1+(1+q_{0}^{2})^{-D-1})(d-1)+(q_{0}^{-2D-1}+q_{0}(1+q_{0}^{2})^{-D-1})(q-q_{0})\big\}\lmb^{2D-1},\\
d_{t} & =2\kpp_{1}\kpp_{\infty}(-1+(1+q_{0})^{-D-1})\lmb^{2D},\\
q_{t} & =2\kpp_{1}\kpp_{\infty}(q_{0}^{-2D-1}+q_{0}(1+q_{0}^{2})^{-D-1})\lmb^{2D}.
\end{aligned}
\right.\label{eq:Sec9-formal-ODE-2}
\end{equation}
This ODE system \eqref{eq:Sec9-formal-ODE-2} has a solution 
\begin{equation}
(\lmb_{\mathrm{ap}}(t),d_{\mathrm{ap}}(t),q_{\mathrm{ap}}(t))=(c_{\lmb}t^{-\frac{1}{2D-1}},1+c_{d}t^{-\frac{1}{2D-1}},q_{0}+c_{q}t^{-\frac{1}{2D-1}})\qquad\text{as }t\to+\infty,\label{eq:Sec9-lmb,d,q-aprx-sol}
\end{equation}
where the constants $c_{\lmb},c_{d}>0$ and $c_{q}<0$ satisfy 
\begin{align*}
-\frac{c_{\lmb}}{2D-1} & =2D\kpp_{0}\kpp_{\infty}\big\{(-1+(1+q_{0}^{2})^{-D-1})c_{d}+(q_{0}^{-2D-1}+q_{0}(1+q_{0}^{2})^{-D-1})c_{q}\big\} c_{\lmb}^{2D-1},\\
-\frac{c_{d}}{2D-1} & =2\kpp_{1}\kpp_{\infty}(-1+(1+q_{0}^{2})^{-D-1})c_{\lmb}^{2D},\\
-\frac{c_{q}}{2D-1} & =2\kpp_{1}\kpp_{\infty}(q_{0}^{-2D-1}+q_{0}(1+q_{0}^{2})^{-D-1})c_{\lmb}^{2D}.
\end{align*}
In particular, we have 
\[
\frac{c_{d}}{c_{q}}=-q_{0}\quad\text{and}\quad\frac{c_{\lmb}}{c_{q}}=-\Big\{\frac{D\kpp_{0}}{\kpp_{1}}(1+q_{0}^{2})\Big\}^{1/2}.
\]
The precise formulas of $c_{\lmb},c_{d},c_{q}$ can be found by solving
the relations above.

Linearizing \eqref{eq:Sec9-formal-ODE-2} around \eqref{eq:Sec9-lmb,d,q-aprx-sol},
one formally obtains 
\begin{align*}
(\lmb-\lmb_{\mathrm{ap}})_{t} & \approx\frac{1}{t}\Big(-(\lmb-\lmb_{\mathrm{ap}})-\frac{D\kpp_{0}/\kpp_{1}}{2D-1}\frac{c_{d}}{c_{\lmb}}(d-d_{\mathrm{ap}})-\frac{D\kpp_{0}/\kpp_{1}}{2D-1}\frac{c_{q}}{c_{\lmb}}(q-q_{\mathrm{ap}})\Big),\\
(d-d_{\mathrm{ap}})_{t} & \approx\frac{1}{t}\Big(-\frac{2D}{2D-1}\frac{c_{d}}{c_{\lmb}}(\lmb-\lmb_{\mathrm{ap}})\Big),\\
(q-q_{\mathrm{ap}})_{t} & \approx\frac{1}{t}\Big(-\frac{2D}{2D-1}\frac{c_{q}}{c_{\lmb}}(\lmb-\lmb_{\mathrm{ap}})\Big).
\end{align*}
This motivates the decomposition 
\begin{equation}
\left|\begin{aligned}\lmb(t) & =\lmb_{\mathrm{ap}}(t)+t^{-\frac{1}{2D-1}}h_{1}(t),\\
d(t) & =d_{\mathrm{ap}}(t)+t^{-\frac{1}{2D-1}}(\tfrac{2\kpp_{1}}{\kpp_{0}})^{1/2}h_{2}(t),\\
q(t) & =q_{\mathrm{ap}}(t)+t^{-\frac{1}{2D-1}}(\tfrac{2\kpp_{1}}{\kpp_{0}})^{1/2}h_{3}(t),
\end{aligned}
\right.\qquad h(t)\coloneqq\begin{pmatrix}h_{1}(t)\\
h_{2}(t)\\
h_{3}(t)
\end{pmatrix}\in\bbR^{3},\label{eq:Sec9-def-h}
\end{equation}
so that $h(t)$ solves 
\[
\frac{d}{dt}h\approx\Big(M+\tfrac{1}{2D-1}I\Big)\frac{h}{t}
\]
with 
\begin{equation}
M=\begin{pmatrix}-1 & -\frac{\sqrt{2D}q_{0}}{(2D-1)\sqrt{1+q_{0}^{2}}} & \frac{\sqrt{2D}}{(2D-1)\sqrt{1+q_{0}^{2}}}\\
-\frac{\sqrt{2D}q_{0}}{(2D-1)\sqrt{1+q_{0}^{2}}} & 0 & 0\\
\frac{\sqrt{2D}}{(2D-1)\sqrt{1+q_{0}^{2}}} & 0 & 0
\end{pmatrix}.\label{eq:Sec9-def-M}
\end{equation}
The factor $(\tfrac{2\kpp_{1}}{\kpp_{0}})^{1/2}$ in \eqref{eq:Sec9-def-h}
makes the matrix $M$ symmetric. 

By direct computation, $M$ has eigenvalues $0$, $\frac{1}{2D-1}$,
and $-\frac{2D}{2D-1}$. The eigenvalue $0$ corresponds to scaling
invariance of the underlying ODE system. The matrix $M+\tfrac{1}{2D-1}I$
has eigenvalues $\frac{1}{2D-1}$, $\frac{2}{2D-1}$, and $-1$. Now,
we define the stable (resp., unstable) subspace $V_{s}$ (resp., $V_{u}$)
in $\bbR^{3}$ by the span of eigenvectors corresponding to negative
(resp., positive) eigenvalues of $M+\frac{1}{2D-1}I$. The associated
orthogonal projector is denoted by $P_{s}$ (resp., $P_{u}$).

\subsection{\label{subsec:sec9-setup}Setup for rigorous construction}

From now on, we focus on the construction of a \emph{nonlinear solution}
$u(t)$. From the symmetry of the configuration \eqref{eq:Sec9-def-config},
we assume $u(t)$ satisfies the following symmetry: 
\begin{align*}
u(t,x_{1},\dots,-x_{i},\dots,x_{N}) & =u(t,x_{1},\dots,x_{i},\dots,x_{N})\qquad\text{if }i\neq2,\\
u(t,x_{1},-x_{2},x_{3},\dots,x_{N}) & =-u(t,x_{1},x_{2},x_{3},\dots,x_{N}).
\end{align*}
The associated modulation parameters $\vec{\lmb}(t)$ and $\vec{z}(t)$
will also satisfy symmetry conditions \eqref{eq:Sec9-symm-lmb} and
\eqref{eq:Sec9-symm-z}. Due to uniqueness, the modified profiles
$U$ constructed in Proposition~\ref{prop:Modified-Profiles} are
symmetric accordingly. 

\uline{Small parameters}. We will need three small parameters 
\[
1\gg\eps_{1}\gg\eps_{2}\gg t_{0}^{-1}.
\]
Here, $t_{0}$ serves as the initial time of $u(t)$, so $u(t)$ will
be defined only for large times $t\geq t_{0}$. 

\uline{Initial data}. We consider the initial data at time $t_{0}$
of the form 
\begin{equation}
u(t_{0})=\sum_{i=1}^{4}\iota_{i}(W+a_{0}\calY)_{\lmb_{0},(z_{i})_{0}},\label{eq:Sec9-init-form}
\end{equation}
where $(z_{1})_{0},\dots,(z_{4})_{0}\in\bbR^{N}$ satisfy the symmetry
condition \eqref{eq:Sec9-symm-z}, $(\lmb_{0},d_{0},q_{0})$ are in
a small neighborhood $(c_{\lmb}t_{0}^{-\frac{1}{2D-1}},1+c_{d}t_{0}^{-\frac{1}{2D-1}},q_{0}+c_{q}t_{0}^{-\frac{1}{2D-1}})$,
and $a_{0}$ is small. As the degenerate scenario is dynamically unstable,
we consider a three-parameter family of initial data $u(t_{0})$ and
employ a Brouwer argument as in \cite{MerleZaag1997Duke,CoteMartelMerle2011RMI}
to find a special initial data exhibiting the desired asymptotic behavior. 

More precise choice of our family of initial data is 
\begin{equation}
\Big\{ u(t_{0})=\sum_{i=1}^{4}\iota_{i}(W+a_{0}\calY)_{\lmb_{0},(z_{i})_{0}}:(h_{0},a_{0})\in\br{\calU}\Big\},\label{eq:Sec9-def-calO}
\end{equation}
where $\lmb_{0}$ and $(z_{i})_{0}$ are determined by $h_{0}$ through
\eqref{eq:Sec9-def-h} at $t=t_{0}$ and the symmetry condition \eqref{eq:Sec9-symm-z},
and 
\begin{equation}
\br{\calU}\coloneqq\Theta_{u}^{-1}(\br{B_{V_{u}}(0;\eps_{1})}\times\br{B_{\bbR}(0;\eps_{2})})\label{eq:def-init-data-set}
\end{equation}
with an almost identity map $\Theta_{u}^{-1}$ defined in Lemma~\ref{lem:sec9-init-data-map}
below. In Lemma~\ref{lem:sec9-init-data-map}, we start from $(h_{0},a_{0})$,
form $u(t_{0})$ via \eqref{eq:Sec9-init-form}, decompose it according
to Lemma~\ref{lem:static-modulation} with parameters $(\vec{\iota},\vec{\lmb}(t_{0}),\vec{z}(t_{0}))$
(that should satisfy symmetry conditions \eqref{eq:Sec9-symm-lmb}
and \eqref{eq:Sec9-symm-z}), and define $h(t_{0}),a(t_{0})$ via
\eqref{eq:Sec9-def-h} at $t=t_{0}$ and $a(t_{0})\coloneqq\lan\calY_{\ul{;1}},g(t_{0})\ran$,
respectively.
\begin{lem}[Choice of the family of initial data]
\label{lem:sec9-init-data-map}\textup{The map 
\[
\Theta_{u}:(h_{0},a_{0})\in B_{V_{u}}(0;2\eps_{1})\times B_{\bbR}(0;2\eps_{1})\mapsto(P_{u}h(t_{0}),a(t_{0}))\in B_{V_{u}}(0;3\eps_{1})\times B_{\bbR}(0;3\eps_{1})
\]
is a $C^{1}$-diffeomorphism onto its image and is an almost identity
map in the sense that 
\begin{equation}
|P_{s}h(t_{0})|+|P_{u}h(t_{0})-h_{0}|+|a(t_{0})-a_{0}|=o_{t_{0}\to\infty}(1).\label{eq:Sec9-stable-initially-small}
\end{equation}
}
\end{lem}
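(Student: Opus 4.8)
The plan is to run an inverse-function argument, exploiting that the datum \eqref{eq:Sec9-init-form} differs from a pure bubble sum only by the small term $a_{0}\sum_{i}\iota_{i}\calY_{\lmb_{0},(z_{i})_{0}}$, and that neither this term nor the modified-profile correction $\td U$ moves the renormalized modulation parameters or $a(t_{0})$ by more than $o_{t_{0}\to\infty}(1)$. First I would fix the parametrization: given $(h_{0},a_{0})$, formula \eqref{eq:Sec9-def-h} at $t=t_{0}$ together with the symmetry ansatz \eqref{eq:Sec9-symm-lmb}--\eqref{eq:Sec9-symm-z} determines $\lmb_{0}$ and $(z_{i})_{0}$, hence $u(t_{0})$. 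Since $f\mapsto f_{\lmb,z}$ is linear, $u(t_{0})=\calW(\vec{\iota},\lmb_{0}\vec{1},\vec{z}_{0})+a_{0}\sum_{i}\iota_{i}\calY_{\lmb_{0},(z_{i})_{0}}$ exactly, so $\|u(t_{0})-\calW(\vec{\iota},\lmb_{0}\vec{1},\vec{z}_{0})\|_{\dot{H}^{1}}\aeq|a_{0}|$. As $R^{-1}\aeq\lmb_{0}\aeq t_{0}^{-1/(2D-1)}$ is small (the four bubbles are mutually far apart, cf.\ Lemma~\ref{lem:non-coll-R_ij}) and $|a_{0}|<2\eps_{1}$, for $\eps_{1}$ small and $t_{0}$ large we get $u(t_{0})\in\calT_{J}(\dlt_{J})$, so Lemma~\ref{lem:static-modulation} applies. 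By uniqueness within the reflection-symmetric subclass (the conditions \eqref{eq:static-orthog}, the profiles $\calZ$, and $U$ all respect the symmetry, so a symmetric $u(t_{0})$ yields symmetric parameters), $(\vec{\lmb}(t_{0}),\vec{z}(t_{0}))$ obey \eqref{eq:Sec9-symm-lmb}--\eqref{eq:Sec9-symm-z}, and $h(t_{0}),a(t_{0})$ are well-defined; $\Theta_{u}$ is $C^{1}$ since \eqref{eq:Sec9-init-form} is smooth in $(h_{0},a_{0})$ and $u\mapsto(\vec{\lmb},\vec{z},g)$ is $C^{1}$ by Lemma~\ref{lem:static-modulation} and Proposition~\ref{prop:Modified-Profiles}.

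Next I would prove \eqref{eq:Sec9-stable-initially-small}. Writing $(\mu,\zeta)=(\vec{\lmb}(t_{0}),\vec{z}(t_{0}))$, one has $g(t_{0})=\sum_{j}\iota_{j}(W_{\lmb_{0},(z_{j})_{0}}-W_{\mu_{j},\zeta_{j}})+a_{0}\sum_{j}\iota_{j}\calY_{\lmb_{0},(z_{j})_{0}}-\td U(\mu,\zeta)$. I substitute this into $\lan\calZ_{a\ul{;i}}(\mu,\zeta),g(t_{0})\ran=0$ and use: (i) $\lan\calZ_{a;j}(\mu,\zeta),\td U(\mu,\zeta)\ran=0$ by Proposition~\ref{prop:Modified-Profiles}, so $\td U$ drops out; (ii) $\lan\calZ_{0},\calY\ran=0$ and $\lan\rd_{a}W,\calY\ran=0$ (radial vs.\ odd), so the $a_{0}\calY$-term only contributes at second order in $|\mu_{i}-\lmb_{0}|/\lmb_{0}+|\zeta_{i}-(z_{i})_{0}|/\lmb_{0}$; (iii) the transversality \eqref{eq:transversality} together with the decoupling \eqref{eq:calZ-calV-inner-prod}, so after linearizing the bubble differences the leading part is a diagonally dominant (hence invertible) linear map applied to the parameter-difference vector. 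Solving this perturbed linear system, and absorbing the $|a_{0}|$-multiple of the left side since $|a_{0}|\ll1$, gives $d_{\calP}((\vec{\iota},\mu,\zeta),(\vec{\iota},\lmb_{0}\vec{1},\vec{z}_{0}))\aleq R^{-c}$ for some $c>0$, i.e.\ $|\mu_{i}-\lmb_{0}|+|\zeta_{i}-(z_{i})_{0}|\aleq t_{0}^{-1/(2D-1)}R^{-c}$. Since \eqref{eq:Sec9-def-h} rescales by $t_{0}^{1/(2D-1)}$, this yields $|h(t_{0})-h_{0}|\aleq R^{-c}=o_{t_{0}\to\infty}(1)$, whence $|P_{u}h(t_{0})-h_{0}|+|P_{s}h(t_{0})|=o_{t_{0}\to\infty}(1)$ using $P_{s}h_{0}=0$. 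For $a(t_{0})=\mu_{1}^{-2}\lan\calY_{;1}(\mu,\zeta),g(t_{0})\ran$: the $W$-difference terms give $\calO(R^{-2c})$ by $\lan\calY,\calV_{b}\ran=0$; the $\td U$ term gives $\mu_{1}^{-2}\calO(\|\td U\|_{L^{2^{\ast}}})\aleq t_{0}^{2/(2D-1)}\lmb_{\max}^{\frac{N+2}{2}}\aeq t_{0}^{-D/(2D-1)}$ by \eqref{eq:tdU-Hdot1-est}; and the $a_{0}\calY$-term equals $a_{0}(1+\calO(R^{-c}))$ since its leading coefficient $(\lmb_{0}/\mu_{1})^{2}\|\calY\|_{L^{2}}^{2}\to1$. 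Hence $|a(t_{0})-a_{0}|=o_{t_{0}\to\infty}(1)$, which is \eqref{eq:Sec9-stable-initially-small}; in particular $\Theta_{u}$ lands in $B_{V_{u}}(0;3\eps_{1})\times B_{\bbR}(0;3\eps_{1})$ for $t_{0}$ large.

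To conclude, I would differentiate the construction — equivalently, observe that $(\mu,\zeta,a(t_{0}))$ is produced from $(h_{0},a_{0})$ by the implicit function theorem applied to the orthogonality system, whose linearization is $I+o_{t_{0}\to\infty}(1)$ by the very estimates above — to get $D\Theta_{u}=I+o_{t_{0}\to\infty}(1)$ in operator norm, uniformly on the domain. For $t_{0}$ large, so that this error is $\ll\eps_{2}$, the quantitative inverse function theorem (as in \cite[Remark 2.26]{JendrejLawrie2025JAMS}, already invoked for Lemma~\ref{lem:static-modulation}) shows $\Theta_{u}$ is a $C^{1}$-diffeomorphism onto its image, that the image contains $\br{B_{V_{u}}(0;\eps_{1})}\times\br{B_{\bbR}(0;\eps_{2})}$, and that $\Theta_{u}^{-1}$ there is $o_{t_{0}\to\infty}(1)$-close to the inclusion. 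Consequently $\br{\calU}$ from \eqref{eq:def-init-data-set} is a well-defined compact subset of $B_{V_{u}}(0;2\eps_{1})\times B_{\bbR}(0;2\eps_{1})$ homeomorphic to $\br{B_{V_{u}}(0;\eps_{1})}\times\br{B_{\bbR}(0;\eps_{2})}$, as needed for the subsequent Brouwer argument.

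The main obstacle is the bookkeeping in the second paragraph: one must check that the shape-mismatch $\td U$ and the perturbation $a_{0}\calY$ shift the renormalized parameters $h(t_{0})$ and $a(t_{0})$ only by $o_{t_{0}\to\infty}(1)$, not by $\calO(\eps_{1})$, after the $t_{0}^{1/(2D-1)}$-renormalization. This rests on the two exact orthogonalities $\lan\calZ_{a;j},\td U\ran=0$ and $\lan\calY,\Lmb W\ran=\lan\calY,\rd_{a}W\ran=0$, which make $\td U$ disappear from the orthogonality system and force $\calY$ to enter it only quadratically in the parameter differences. Everything else — the perturbed linear algebra via \eqref{eq:transversality} and \eqref{eq:calZ-calV-inner-prod}, the symmetry reduction to the three scalar parameters $(\lmb,d,q)$, and the inverse function theorem packaging — is routine.
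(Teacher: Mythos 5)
Your proposal follows essentially the same route as the paper: reduce to the four-parameter map obtained from the orthogonality system (the paper writes it as an explicit map $\bfF$ in $(\lmb_{0},d_{0},q_{0},a_{0};\lmb,d,q,a)$, you phrase it through Lemma~\ref{lem:static-modulation}), exploit the two exact cancellations $\lan\calZ_{a;j},\td U\ran=0$ and $\lan\calZ_{a},\calY\ran=0$ to get the pointwise almost-identity property, and close with the implicit/inverse function theorem; the restriction-and-projection step to pass from the full map to $\Theta_{u}$ is also how the paper argues. One quantitative claim in your last paragraph is too strong: the Jacobian is \emph{not} $I+o_{t_{0}\to\infty}(1)$ uniformly on the domain, only $\mathrm{id}+\calO(\eps_{1})$ (as the paper states), because the component $a_{0}\sum_{i}\iota_{i}\calY_{\lmb_{0},(z_{i})_{0}}$, of size up to $2\eps_{1}$ independently of $t_{0}$, enters the derivative at first order — e.g.\ through $\lan\calZ_{0},\Lmb\calY\ran\neq0$ when differentiating in $\lmb_{0}$, and through terms like $\lan\lmb\rd_{\lmb}(\calZ_{a\ul{;1}}),u_{0}-U\ran$ with $\|u_{0}-U\|_{\dot{H}^{1}}\aeq|a_{0}|$. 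Consequently your step ``for $t_{0}$ large, so that this error is $\ll\eps_{2}$'' cannot be realized; but it is also not needed: invertibility of $D\Theta_{u}$ only requires the $\calO(\eps_{1})$ error to be small, which together with convexity of the domain gives injectivity, and the covering of $\br{B_{V_{u}}(0;\eps_{1})}\times\br{B_{\bbR}(0;\eps_{2})}$ follows from the pointwise $o_{t_{0}\to\infty}(1)$ closeness to the identity (degree/IFT), not from any derivative bound relative to $\eps_{2}$. With that correction your argument matches the paper's proof.
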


\begin{proof}
First, observe that it suffices to show that the map 
\[
\Theta:(h_{0},a_{0})\in B_{\bbR^{3}}(0;2\eps_{1})\times B_{\bbR}(0;2\eps_{1})\mapsto(h(t_{0}),a(t_{0}))\in B_{\bbR^{3}}(0;3\eps_{1})\times B_{\bbR}(0;3\eps_{1})
\]
is $C^{1}$ and satisfies 
\begin{align*}
|h(t_{0})-h_{0}|+|a(t_{0})-a_{0}| & =o_{t_{0}\to\infty}(1),\\
\frac{\rd(h(t_{0}),a(t_{0}))}{\rd(h_{0},a_{0})} & =\mathrm{id}_{4\times4}+\calO(\eps_{1}),
\end{align*}
because $\Theta_{u}$ is then obtained by restricting $\Theta$ onto
the (codimension one) subspace $V_{u}\times\bbR$ and taking $P_{u}$
to the $h$-component. Henceforth, we show the above claim for $\Theta$.
As one can proceed similarly as in the proof of Lemma~\ref{lem:static-modulation},
we only sketch the proof. Consider the map $\bfF=(F_{0},F_{1},F_{2},F_{3})$:
\begin{align*}
F_{a}(\lmb_{0},d_{0},q_{0},a_{0};\lmb,d,q,a) & =\lan\calZ_{a\ul{;1}},u_{0}(\lmb_{0},d_{0},q_{0},a_{0})-U(\lmb,d,q)\ran\quad\text{if }a\in\{0,1,2\},\\
F_{3}(\lmb_{0},d_{0},q_{0},a_{0};\lmb,d,q,a) & =a-\lan\calY_{\ul{;1}},u_{0}(\lmb_{0},d_{0},q_{0},a_{0})-U(\lmb,d,q)\ran,
\end{align*}
where we denoted $u_{0}(\lmb_{0},d_{0},q_{0},a_{0})=\sum_{i=1}^{4}\iota_{i}(W+a_{0}\calY)_{\lmb_{0},(z_{i})_{0}}$,
$U(\lmb,d,q)=U(\vec{\iota},\vec{\lmb},\vec{z})$ with $\vec{\lmb},\vec{z}$
defined by symmetry conditions \eqref{eq:Sec9-symm-lmb} and \eqref{eq:Sec9-symm-z},
and $\calZ_{a\ul{;1}},\calY_{\ul{;1}}$ are defined through $(\vec{\iota},\vec{\lmb},\vec{z})$.
Observe 
\begin{align*}
F_{a}(\lmb_{0},d_{0},q_{0},a_{0};\lmb_{0},d_{0},q_{0},a_{0}) & =\lan\calZ_{a\ul{;1}},\td U(\lmb_{0},d_{0},q_{0})+\tsum{i=1}4a_{0}\calY_{;i}\ran=o_{t_{0}\to\infty}(1),\\
F_{3}(\lmb_{0},d_{0},q_{0},a_{0};\lmb_{0},d_{0},q_{0},a_{0}) & =a_{0}-\lan\calY_{\ul{;1}},\td U(\lmb_{0},d_{0},q_{0})+\tsum{i=1}4a_{0}\calY_{;i}\ran=o_{t_{0}\to\infty}(1).
\end{align*}
Observe also 
\begin{align*}
\begin{bmatrix}\lmb\rd_{\lmb}\bfF & 2\lmb\rd_{d}\bfF & 2\lmb\rd_{q}\bfF & \rd_{a}\bfF\end{bmatrix} & =\mathrm{id}_{4\times4}+\calO(\eps_{1}),\\
\begin{bmatrix}\lmb_{0}\rd_{\lmb_{0}}\bfF & 2\lmb_{0}\rd_{d_{0}}\bfF & 2\lmb_{0}\rd_{q_{0}}\bfF & \rd_{a_{0}}\bfF\end{bmatrix} & =-\mathrm{id}_{4\times4}+\calO(\eps_{1}).
\end{align*}
By the implicit function theorem, $(\lmb,d,q,a)$ becomes a function
of $\lmb_{0},d_{0},q_{0},a_{0}$ such that 
\begin{align*}
\frac{|\lmb-\lmb_{0}|+|d-d_{0}|+|q-q_{0}|}{\lmb_{0}}+|a-a_{0}| & =o_{t_{0}\to\infty}(1),\\
\frac{\rd(\lmb,d,q,a)}{\rd(\lmb_{0},d_{0},q_{0},a_{0})} & =\mathrm{id}_{4\times4}+\calO(\eps_{1}).
\end{align*}
Writing this in terms of $(h_{0},a_{0})$, we get the claim. This
completes the sketch of the proof.
\end{proof}
\uline{Bootstrap hypothesis}. Consider an initial data $u(t_{0})$
in \eqref{eq:Sec9-def-calO} at time $t_{0}$; let $u$ be its forward-in-time
evolution with maximal lifespan $[t_{0},T_{+}(u))$. First, define
\[
T'\coloneqq\sup\{\tau\in[t_{0},T_{+}(u)):u(t)\in\calT_{4}(\eps_{1})\text{ and }|h(t)|<10\eps_{1}\}\in(t_{0},T_{+}(u)]
\]
so that $u(t)$ can be decomposed as $u(t)=U(\vec{\iota},\vec{\lmb}(t),\vec{z}(t))+g(t)$
according to Lemma~\ref{lem:curve-modulation} and $\lmb(t)\aeq|d(t)-1|\aeq|q(t)-q_{0}|\aeq t^{-\frac{1}{2D-1}}$
on $[t_{0},T')$. We denote $a(t)\coloneqq\lan\calY_{\ul{;1}}(t),g(t)\ran$. 

To complete the bootstrap setup, we introduce a more precise bootstrap
hypothesis for \emph{refined modulation parameters}. With the current
$(\lmb,d,q,a)$, their time derivatives are controlled only in time
averaged sense, but the Brouwer argument requires pointwise controls.
In order to obtain pointwise controls of time derivatives, we introduce
refined modulation parameters as in the proof of classification theorems.
Let us rewrite Proposition~\ref{prop:case3-modulation-est} in terms
of $(\lmb,d,q)$ as follows. (See Lemma~\ref{lem:non-coll-ref-mod-est}
for $a$.)
\begin{lem}[Refined modulation estimates]
\label{lem:sec9-refined-mod-est}Assume $\lmb(t)\aeq|d(t)-1|\aeq|q(t)-q_{0}|\aeq t^{-\frac{1}{2D-1}}$.
Then, we have 
\begin{align*}
\lmb_{t}+\lmb\frac{d}{dt}f & =\kpp_{0}\kpp_{\infty}(d^{-2D}-q^{-2D}-(d^{2}+q^{2})^{-D})\lmb^{2D-1}+\lmb\cdot\{F_{\lmb}+\calO(\eps_{1}^{2}t^{-1})\},\\
d_{t} & =2\kpp_{1}\kpp_{\infty}(-d^{-2D-1}+d(d^{2}+q^{2})^{-D-1})\lmb^{2D}+\lmb\cdot\{F_{d}+\calO(\eps_{1}^{2}t^{-1})\},\\
q_{t} & =2\kpp_{1}\kpp_{\infty}(q^{-2D-1}+q(d^{2}+q^{2})^{-D-1})\lmb^{2D}+\lmb\cdot\{F_{q}+\calO(\eps_{1}^{2}t^{-1})\},\\
a_{t} & =\frac{e_{0}}{\lmb^{2}}a+F_{a}+\calO(t^{-1})
\end{align*}
for some functions $f,F_{\lmb},F_{d},F_{q},F_{a}$ that depend continuously
with respect to the $\dot{H}^{1}$-topology of $u(t)$ and satisfy
the estimates 
\begin{align*}
|f| & \aleq\|g\|_{\dot{H}^{1}},\\
|F_{\lmb}|+|F_{d}|+|F_{q}| & \aleq\eps_{1}^{-2}\|g\|_{\dot{H}^{2}}^{2},\\
|F_{a}| & \aleq\|g\|_{\dot{H}^{2}}^{2}.
\end{align*}
\end{lem}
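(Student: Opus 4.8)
The plan is to obtain this lemma by specializing to the symmetry-reduced setup the general refined modulation estimates already established --- Lemma~\ref{lem:non-coll-ref-mod-est} together with the leading-order computation of the coefficients $\frkr_{a,i}$ in Lemma~\ref{lem:case1-leading-frkr_a,i} (equivalently Proposition~\ref{prop:case3-modulation-est}) --- whose proofs are local in time and therefore apply verbatim here. First I would decompose $u(t)=U(\vec{\iota},\vec{\lmb}(t),\vec{z}(t))+g(t)$ via Lemma~\ref{lem:curve-modulation}; by uniqueness of the decomposition up to permutation and the symmetry of $u(t)$, the parameters $\vec{\lmb}(t),\vec{z}(t)$ satisfy \eqref{eq:Sec9-symm-lmb}--\eqref{eq:Sec9-symm-z}, so that $\lmb_{1}=\cdots=\lmb_{4}=\lmb$ and the $z_{i}$ are the vertices of the rectangle parametrized by $(d,q)$. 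Under the bootstrap hypothesis $\lmb(t)\aeq|d(t)-1|\aeq|q(t)-q_{0}|\aeq t^{-1/(2D-1)}$ on $[t_{0},T')$ one has $\lmb_{\max}=\lmb_{\secmax}=\lmb$ and $|z_{i}-z_{j}|\aeq1$, which is exactly the regime in which the Section~\ref{sec:Case3} estimates hold; the resulting identities are then just the formal ODE system \eqref{eq:Sec9-formal-ODE} plus controlled errors.

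Next I would substitute the ansatz into Lemma~\ref{lem:case1-leading-frkr_a,i}. Since all $\lmb_{i}$ coincide, the sums collapse to closed-form expressions in $(d,q)$: using $\iota_{1}\iota_{2}=+1$ (distance $d$), $\iota_{1}\iota_{3}=-1$ (distance $q$), $\iota_{1}\iota_{4}=-1$ (distance $\sqrt{d^{2}+q^{2}}$) and $N-2=2D$,
\[
\sum_{j\neq1}A_{1j}[\vec{z}]=\kpp_{0}\kpp_{\infty}\Big(\frac{1}{d^{2D}}-\frac{1}{q^{2D}}-\frac{1}{(d^{2}+q^{2})^{D}}\Big),
\]
so that $-\frkr_{0,1}/\lmb^{2}$ equals $\kpp_{0}\kpp_{\infty}(d^{-2D}-q^{-2D}-(d^{2}+q^{2})^{-D})\lmb^{2D-2}$ up to the error in \eqref{eq:case1-frkr_0,i}; feeding this into \eqref{eq:case3-ref-mod-lmb} and multiplying by $\lmb$ gives the $\lmb_{t}$ equation with $f=\langle[\Lmb W]_{\ul{;1}},g\rangle/\|\Lmb W\|_{L^{2}}^{2}$ (hence $|f|\aleq\|g\|_{\dot{H}^{1}}$ by \eqref{eq:non-coll-refined-mod-est-corr-bound}). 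Likewise, only $j=2,4$ contribute to the first coordinate and only $j=3,4$ to the second, so $\sum_{j\neq1}\frac{A_{1j}[\vec{z}]}{|z_{1}-z_{j}|^{2}}(z_{j}^{1}-z_{1}^{1})=\kpp_{0}\kpp_{\infty}(-d^{-2D-1}+d(d^{2}+q^{2})^{-D-1})$ and $\sum_{j\neq1}\frac{A_{1j}[\vec{z}]}{|z_{1}-z_{j}|^{2}}(z_{j}^{2}-z_{1}^{2})=\kpp_{0}\kpp_{\infty}(q^{-2D-1}+q(d^{2}+q^{2})^{-D-1})$; plugging these into \eqref{eq:case3-ref-mod-z} and using $(z_{1}^{1})_{t}=\tfrac{1}{2}d_{t}$, $(z_{1}^{2})_{t}=\tfrac{1}{2}q_{t}$ (the origin of the overall factor $2$ in the $d_{t},q_{t}$ equations) yields the $d_{t}$ and $q_{t}$ equations, while the $a_{t}$ equation is the $\langle\calY_{\ul{;1}},g\rangle$ line of Lemma~\ref{lem:non-coll-ref-mod-est}.

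It then remains to bookkeep the error terms, all of which are of the form $\|g\|_{\dot{H}^{2}}^{2}+\lmb^{2D-3}\|g\|_{\dot{H}^{2}}+\lmb^{2D}|\log\lmb|$ (with an extra factor $\lmb$ for the $\lmb,d,q$ equations). Since $\lmb\aeq t^{-1/(2D-1)}$ on the bootstrap interval, $\lmb^{4D-6}\aleq t^{-(4D-6)/(2D-1)}\aleq t^{-1}$ (the exponent being $\geq1$ precisely because $N\geq7$) and $\lmb^{2D}|\log\lmb|\aleq t^{-2D/(2D-1)}|\log t|\aleq t^{-1}$ once $t_{0}$ is large relative to $\eps_{1}^{-1}$, which holds by the hierarchy $t_{0}^{-1}\ll\eps_{2}\ll\eps_{1}$. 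By Young's inequality $\lmb^{2D-3}\|g\|_{\dot{H}^{2}}\leq\tfrac{1}{2}\eps_{1}^{-2}\|g\|_{\dot{H}^{2}}^{2}+\tfrac{1}{2}\eps_{1}^{2}\lmb^{4D-6}$, I would set $F_{\lmb},F_{d},F_{q}$ to be the $g$-dependent part of the residual (bounded by $\eps_{1}^{-2}\|g\|_{\dot{H}^{2}}^{2}$) and absorb the rest into $\calO(\eps_{1}^{2}t^{-1})$; for $a_{t}$ no $\eps_{1}$-weighting is needed, so $F_{a}=\calO(\|g\|_{\dot{H}^{2}}^{2})$ with remainder $\calO(t^{-1})$. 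Finally, continuity of $f,F_{\lmb},F_{d},F_{q},F_{a}$ in the $\dot{H}^{1}$-topology of $u(t)$ follows from the $C^{1}$ dependence of the modulation decomposition on $u$ (Lemma~\ref{lem:static-modulation}) and from the fact that, via the algebraic identity \eqref{eq:modulation-identity-psi}, the time derivatives $\lmb_{t}$, $\rd_{t}\langle[\Lmb W]_{\ul{;1}},g\rangle$, $(z_{i})_{t}$, $a_{t}$ are themselves continuous functions of $u(t)$ alone, the right-hand side of \eqref{eq:modulation-identity-psi} involving $u$ only through $g$, $U$, $\frkr$, and $\Dlt u+f(u)-\Dlt U-f(U)$, never $\rd_{t}u$. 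I do not expect a genuine obstacle: the content is a faithful translation of earlier estimates, and the only point requiring care is the consistency of the error hierarchy (quadratic-in-$g$ terms against the $\eps_{1}$-weighted $t^{-1}$ remainder, and the absorption of $\lmb$-powers into $t^{-1}$), which is exactly where $N\geq7$ and the largeness of $t_{0}$ enter.
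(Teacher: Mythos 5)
Your proposal is correct and follows essentially the same route as the paper: specialize the general refined modulation estimates (Proposition~\ref{prop:case3-modulation-est}, with Lemma~\ref{lem:non-coll-ref-mod-est} for $a(t)$ and Lemma~\ref{lem:case1-leading-frkr_a,i} for the leading terms of $\frkr_{a,i}$) to the symmetric configuration, compute the collapsed sums in $(d,q)$ as in the formal system \eqref{eq:Sec9-formal-ODE}, and absorb the residual via Young's inequality with $\eps_{1}$-weights using $\lmb\aeq t^{-1/(2D-1)}$ and $D\geq\tfrac{5}{2}$ (i.e.\ $N\geq7$), with continuity of $f,F_{\lmb},F_{d},F_{q},F_{a}$ coming from the inner-product structure of \eqref{eq:modulation-identity-psi}. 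Your write-up is in fact more explicit than the paper's (e.g.\ the factor $2$ from $(z_{1}^{1})_{t}=\tfrac{1}{2}d_{t}$ and the exponent check $4D-6\geq2D-1$), and the only point left implicit — exactly as in the paper — is the precise splitting of the linear-in-$g$ error $\lmb^{2D-3}\|g\|_{\dot{H}^{2}}$ into a continuous piece bounded by $\eps_{1}^{-2}\|g\|_{\dot{H}^{2}}^{2}$ and a remainder of size $\eps_{1}^{2}t^{-1}$.
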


\begin{proof}
The bounds follow from Proposition~\ref{prop:case3-modulation-est}
(and Lemma~\ref{lem:non-coll-ref-mod-est} for $a(t)$) together
with $D\geq\frac{5}{2}$: 
\[
\|g\|_{\dot{H}^{2}}^{2}+\lmb^{2D-3}\|g\|_{\dot{H}^{2}}+\lmb^{2D}|\log\lmb|\aleq\|g\|_{\dot{H}^{2}}^{2}+t^{-\frac{1}{2}}\|g\|_{\dot{H}^{1}}+t^{-1}\aleq\eps_{1}^{-2}\|g\|_{\dot{H}^{2}}^{2}+\eps_{1}^{2}t^{-1}.
\]
Regarding the continuity, observe that all the inner products arising
in the modulation estimates (e.g., in \eqref{eq:modulation-identity-psi})
are continuous with respect to $\vec{\lmb}(t)$, $\vec{z}(t)$, and
$g(t)$. Since these depend continuously on $u(t)$, the continuous
dependence for functions $f,F_{\lmb},F_{d},F_{q},F_{a}$ follows.
\end{proof}
With the above lemma at hand, we can define the refined modulation
parameters
\begin{align*}
\wh{\lmb}(t) & \coloneqq\lmb(t)e^{\int_{t_{0}}^{t}F_{\lmb}(\tau)d\tau+f(t)-f(t_{0})},\\
\wh d(t)-1 & \coloneqq(d(t)-1)e^{\int_{t_{0}}^{t}\frac{\lmb F_{d}}{d-1}(\tau)d\tau},\\
\wh q(t)-q_{0} & \coloneqq(q(t)-q_{0})e^{\int_{t_{0}}^{t}\frac{\lmb F_{q}}{q-q_{0}}(\tau)d\tau},\\
\wh a(t) & \coloneqq a(t)+\tint{t_{0}}tF_{a}(\tau)d\tau.
\end{align*}
Define $\wh h$ as in \eqref{eq:Sec9-def-h} by replacing $\lmb,d,q$
by $\wh{\lmb},\wh d,\wh q$, respectively. Notice that $\wh h(t_{0})=h(t_{0})$
and $\wh a(t_{0})=a(t_{0})$. We implicitly used the fact that $c_{d}$
and $c_{q}$ are nonzero, but this is not essential. Define the bootstrap
time 
\[
T\coloneqq\sup\{\tau\in[t_{0},T'):\text{\eqref{eq:Sec9-def-bootstrap} holds for all }t\in[t_{0},\tau]\}\in(t_{0},T'],
\]
where 
\begin{equation}
\left\{ \begin{aligned}|P_{s}\wh h(t)| & \leq\eps_{1},\\
|P_{u}\wh h(t)| & \leq\eps_{1},\\
|\wh a(t)| & \leq\eps_{2}.
\end{aligned}
\right.\label{eq:Sec9-def-bootstrap}
\end{equation}
Note that these hypotheses are satisfied at $t_{0}$ by \eqref{eq:def-init-data-set},
$\wh h(t_{0})=h(t_{0})$, and $\wh a(t_{0})=a(t_{0})$.

\subsection{\label{subsec:sec9-closing-bootstrap}Closing bootstrap}
\begin{lem}[Control of $g(t)$ and its consequences]
We have 
\begin{equation}
\sup_{t\in[t_{0},T)}\|g(t)\|_{\dot{H}^{1}}^{2}+\int_{t_{0}}^{T}\|g(t)\|_{\dot{H}^{2}}^{2}dt\aleq\eps_{2}^{2}.\label{eq:Sec9-control-g(t)}
\end{equation}
From this, we have 
\begin{align*}
\wh{\lmb}(t) & =\lmb(t)\cdot\{1+\calO(\eps_{1}^{-2}\eps_{2}^{2})\},\\
\wh d(t)-1 & =(d(t)-1)\cdot\{1+\calO(\eps_{1}^{-2}\eps_{2}^{2})\},\\
\wh q(t)-q_{0} & =(q(t)-1)\cdot\{1+\calO(\eps_{1}^{-2}\eps_{2}^{2})\},\\
\wh a(t) & =a(t)+\calO(\eps_{2}^{2}).
\end{align*}
\end{lem}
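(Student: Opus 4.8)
The plan is to prove the $g$-control estimate \eqref{eq:Sec9-control-g(t)} via the energy identity \eqref{eq:energy-identity} combined with the coercivity of dissipation \eqref{eq:coercivity-dissipation}, and then deduce the refined-parameter comparisons by integrating the definitions against the spacetime bound. First I would set up the energy expansion of $u(t)$ around the modified profile $U(\vec{\iota},\vec{\lmb}(t),\vec{z}(t))$. Using $\Dlt U+f(U)=\sum_{a,i}\frkr_{a,i}\calV_{a\ul{;i}}$ and the coercivity estimate \eqref{eq:calL-H1-coer-quad-form}, one expands exactly as in the proof of \eqref{eq:case2-gHdot1-is-o(1)} (or \eqref{eq:case1-g(t)-Hdot1-is-o(1)}): writing $E[u]=E[U]-\langle\Dlt U+f(U),g\rangle-\tfrac12\langle\calL_U g,g\rangle+\calO(\|g\|_{\dot H^1}^{p+1})$, using $|\langle\calV_{a\ul{;i}},g\rangle|\aleq\lmb_i^{-1}\|g\|_{\dot H^1}$ together with $\max_{a,i}\lmb_i^{-1}|\frkr_{a,i}|\aleq\lmb^{2D-1}=o(1)$ (from \eqref{eq:case3-max-frkr_a,i}, valid on $[t_0,T')$ since $\lmb_{\max}\aeq\lmb_{\secmax}$ there), one gets $E[u(t)]=4E[W]+\tfrac12(-\langle\calL_{\calW}g,g\rangle)+\calO(\text{small})$. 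Applying \eqref{eq:calL-H1-coer-quad-form}, the orthogonality conditions $\langle\calZ_{a\ul{;i}},g\rangle=0$, and the bootstrap hypothesis $|\wh a(t)|\le\eps_2$ (which controls $|\langle\calY_{\ul{;i}},g\rangle|$ up to the error terms $\int_{t_0}^t F_a$, themselves $\aleq\int_{t_0}^T\|g\|_{\dot H^2}^2$), I would obtain
\[
c\|g(t)\|_{\dot H^1}^2\aleq E[u(t)]-4E[W]+\eps_2^2+\Big(\int_{t_0}^{T}\|g(\tau)\|_{\dot H^2}^2d\tau\Big)^2+\dots
\]
for a universal $c>0$.

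Next I would bound $E[u(t)]-4E[W]$. At $t=t_0$ the initial data \eqref{eq:Sec9-init-form} is an explicit sum of translated/dilated $(W+a_0\calY)$ with $|a_0|\le 2\eps_1$ (and in fact $\le\eps_2$ after restricting to the image of $\Theta_u$), so a direct expansion using $\calL_W\calY=e_0\calY$ and the bubble-interaction bound \eqref{eq:E(calW)-JE(W)-est} (with $R^{-1}\aeq\lmb_0\aeq t_0^{-1/(2D-1)}$) gives $E[u(t_0)]-4E[W]=\calO(a_0^2)+\calO(\lmb_0^{2D-2})+\calO(a_0\lmb_0^{\cdots})=\calO(\eps_2^2)+o_{t_0\to\infty}(1)$; since $t_0^{-1}\ll\eps_2$ this is $\aleq\eps_2^2$. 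Then the energy identity \eqref{eq:energy-identity} gives $E[u(t)]-4E[W]=E[u(t_0)]-4E[W]-\int_{t_0}^t\|\Dlt u+f(u)\|_{L^2}^2d\tau$, and the coercivity of dissipation \eqref{eq:coercivity-dissipation} converts $\int_{t_0}^t\|\Dlt u+f(u)\|_{L^2}^2$ into $\int_{t_0}^t(\|g\|_{\dot H^2}^2+\sum_{a,i}\lmb_i^{-2}\frkr_{a,i}^2)d\tau\geq c\int_{t_0}^t\|g\|_{\dot H^2}^2d\tau$. Combining with the previous display and using that $\eps_1^{-2}\eps_2\ll1$ to absorb the quadratic-in-$\int\|g\|_{\dot H^2}^2$ terms, I would close the bound $\sup_{t\in[t_0,T)}\|g(t)\|_{\dot H^1}^2+\int_{t_0}^T\|g(t)\|_{\dot H^2}^2dt\aleq\eps_2^2$, which is \eqref{eq:Sec9-control-g(t)}.

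Finally, the four comparison estimates follow by plugging \eqref{eq:Sec9-control-g(t)} into the definitions of the refined parameters. For $\wh\lmb$: the exponent is $\int_{t_0}^t F_\lmb(\tau)d\tau+f(t)-f(t_0)$, and Lemma~\ref{lem:sec9-refined-mod-est} gives $|F_\lmb|\aleq\eps_1^{-2}\|g\|_{\dot H^2}^2$ and $|f|\aleq\|g\|_{\dot H^1}$; hence the exponent is $\calO(\eps_1^{-2}\eps_2^2)$, so $\wh\lmb(t)=\lmb(t)(1+\calO(\eps_1^{-2}\eps_2^2))$. Same for $\wh d-1$ and $\wh q-q_0$ using $|\lmb F_d/(d-1)|\aleq\lmb\cdot t^{1/(2D-1)}\eps_1^{-2}\|g\|_{\dot H^2}^2\aleq\eps_1^{-2}\|g\|_{\dot H^2}^2$ (here $\lmb\aeq|d-1|\aeq t^{-1/(2D-1)}$ on $[t_0,T')$, and the division is legitimate because $c_d,c_q\neq0$ keep $d-1,q-q_0$ comparable to $t^{-1/(2D-1)}$); and $\wh a(t)=a(t)+\int_{t_0}^t F_a=a(t)+\calO(\int_{t_0}^T\|g\|_{\dot H^2}^2)=a(t)+\calO(\eps_2^2)$. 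The main obstacle is the bookkeeping in closing the energy estimate: one must ensure all cross terms ($\langle\frkr_{a,i}\calV,g\rangle$, the $a$-direction positive eigenvalue contribution, the $o_{t_0\to\infty}(1)$ initial energy defect) are genuinely subordinate to $c\|g\|_{\dot H^1}^2$ and $c\int\|g\|_{\dot H^2}^2$ under the strict hierarchy $\eps_1\gg\eps_2\gg t_0^{-1}$ — in particular that the unstable $\calY$-mode does not spoil coercivity, which is exactly why the bootstrap hypothesis on $\wh a$ (not on $a$ directly) is imposed and why $F_a$ has the clean $\|g\|_{\dot H^2}^2$ bound.
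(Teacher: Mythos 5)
Your proposal follows essentially the same route as the paper's proof: expand the energy around the modified profile, use the linear coercivity \eqref{eq:calL-H1-coer-quad-form} with the orthogonality conditions and the $\wh a$-bootstrap to handle the $\calY$-direction, combine with the energy identity \eqref{eq:energy-identity}, the dissipation coercivity \eqref{eq:coercivity-dissipation}, and the $\calO(\eps_2)$-smallness of the initial defect, and finally read off the four comparisons from the definitions of $\wh{\lmb},\wh d,\wh q,\wh a$ and the bounds on $f,F_{\lmb},F_{d},F_{q},F_{a}$. The only (minor, fixable) deviation is the closing absorption: the paper first obtains $\|g(t)\|_{\dot{H}^{1}}^{2}+\int_{t_{0}}^{t}\|g\|_{\dot{H}^{2}}^{2}\aleq\eps_{2}^{2}+a^{2}(t)$ and then self-improves $a(t)=\wh a(t)+\calO(\int\|g\|_{\dot{H}^{2}}^{2})=\calO(\eps_{2})+\calO(a^{2}(t))$ using the a priori bound $|a(t)|\aleq\|g(t)\|_{\dot{H}^{1}}\aleq\eps_{1}\ll1$, whereas your quadratic-in-$\int\|g\|_{\dot{H}^{2}}^{2}$ term needs the same a priori smallness (or a continuity-in-time argument starting from $\int_{t_{0}}^{t_{0}}=0$) to be absorbed, not merely the hierarchy $\eps_{1}^{-2}\eps_{2}\ll1$ as you state.
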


\begin{proof}
We use a simple energy argument. Since $\lmb(t)\aeq t^{-\frac{1}{2D-1}}\aleq o_{t\to\infty}(1)$
and $\|g(t)\|_{\dot{H}^{1}}\aleq\eps_{1}$ (due to $u(t)\in\calT_{4}(\eps_{1})$),
we have 
\begin{align*}
E[u(t)] & =E[U]-\tint{}{}(\Dlt U+f(U))g-\tfrac{1}{2}\lan\calL_{U}g,g\ran+\tint{}{}\calO(|g|^{p+1})\\
 & =(JE[W]+o_{t\to\infty}(1))+o_{t\to\infty}(1)\cdot\|g\|_{\dot{H}^{1}}-(\tfrac{1}{2}\lan\calL_{\calW}g,g\ran+o_{t\to\infty}(1)\cdot\|g\|_{\dot{H}^{1}}^{2})+\calO(\|g\|_{\dot{H}^{1}}^{p+1})\\
 & =JE[W]+o_{t\to\infty}(1)-\tfrac{1}{2}\lan\calL_{\calW}g,g\ran+\calO(\eps_{1}^{p-1})\cdot\|g\|_{\dot{H}^{1}}^{2}.
\end{align*}
Taking $t=t_{0}$ and applying $\|g(t_{0})\|_{\dot{H}^{1}}=\calO(\eps_{2})$
(this follows from \eqref{eq:Sec9-stable-initially-small} and $|a(t_{0})|\aleq\eps_{2}$)
give 
\begin{equation}
E[u(t_{0})]-JE[W]=o_{t_{0}\to\infty}(1)+\calO(\|g(t_{0})\|_{\dot{H}^{1}}^{2})=\calO(\eps_{2}^{2}).\label{eq:9.14}
\end{equation}
Applying the linear coercivity \eqref{eq:calL-H1-coer-quad-form},
there exists a universal constant $c>0$ such that 
\begin{equation}
c\|g(t)\|_{\dot{H}^{1}}^{2}\leq E[u(t)]-JE[W]+o_{t\to\infty}(1)+\calO(a^{2}(t)).\label{eq:Sec9-energy-ineq}
\end{equation}
On the other hand, by \eqref{eq:energy-identity} and \eqref{eq:coercivity-dissipation},
we have (after shrinking $c>0$ if necessary) 
\[
c\int_{t_{0}}^{t}\|g(\tau)\|_{\dot{H}^{2}}^{2}d\tau\leq E[u(t_{0})]-E[u(t)].
\]
Summing the previous two displays and applying \eqref{eq:9.14} gives
\begin{equation}
\|g(t)\|_{\dot{H}^{1}}^{2}+\int_{t_{0}}^{t}\|g(\tau)\|_{\dot{H}^{2}}^{2}d\tau\aleq\eps_{2}^{2}+a^{2}(t).\label{eq:9.17}
\end{equation}
Note that, by \eqref{eq:Sec9-def-bootstrap} and the above display,
$a(t)=\wh a(t)+\int_{t_{0}}^{t}\calO(\|g(\tau)\|_{\dot{H}^{2}}^{2})d\tau=\calO(\eps_{2})+\calO(a^{2}(t))$.
Since $a(t)=\calO(\|g(t)\|_{\dot{H}^{1}})=\calO(\eps_{1})$ (smaller
than $1$), we conclude $a(t)=\calO(\eps_{2})$. Substituting this
into \eqref{eq:9.17} completes the proof of \eqref{eq:Sec9-control-g(t)}.
The estimates for $\wh{\lmb}$, $\wh d$, $\wh q$, and $\wh a$ follow
from substituting the control \eqref{eq:Sec9-control-g(t)} into their
definitions.
\end{proof}
Next, we turn to control modulation parameters. 
\begin{lem}[Modulation estimates in terms of $\wh h$ and $\wh a$]
We have 
\begin{align}
\wh h_{t} & =\Big(M+\tfrac{1}{2D-1}I+\calO(\eps_{1})\Big)\frac{\wh h}{t}+\calO(\frac{\eps_{1}^{-2}\eps_{2}^{2}}{t}),\label{eq:sec9-wh-h-mod-est}\\
\wh a_{t} & =\frac{e_{0}}{\lmb^{2}}\wh a+\calO(\frac{\eps_{2}^{2}}{\lmb^{2}}+\frac{1}{t}).\label{eq:sec9-wh-a-mod-est}
\end{align}
\end{lem}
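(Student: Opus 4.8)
The plan is to derive \eqref{eq:sec9-wh-h-mod-est} and \eqref{eq:sec9-wh-a-mod-est} by combining the refined modulation estimates of Lemma~\ref{lem:sec9-refined-mod-est} with the definitions of the refined parameters $\wh{\lmb},\wh d,\wh q,\wh a$ and the control of $g(t)$ in \eqref{eq:Sec9-control-g(t)}. The first observation is that, by construction, the logarithmic (or additive, for $\wh a$) correction factors were chosen precisely to cancel the error terms $F_{\lmb},F_{d},F_{q},F_{a}$ (together with the $\frac{d}{dt}f$ term for $\wh{\lmb}$) in Lemma~\ref{lem:sec9-refined-mod-est}. Thus $\wh{\lmb}_{t}/\wh{\lmb}$ equals the leading-order term $\kpp_{0}\kpp_{\infty}(d^{-2D}-q^{-2D}-(d^{2}+q^{2})^{-D})\lmb^{2D-2}$ plus only $\calO(\eps_{1}^{2}t^{-1})$, and similarly for $\wh d,\wh q$; for $\wh a$ we get $\wh a_{t}=\frac{e_{0}}{\lmb^{2}}a+\calO(t^{-1})=\frac{e_{0}}{\lmb^{2}}\wh a+\calO(\frac{\eps_{2}^{2}}{\lmb^{2}}+t^{-1})$ using $|a-\wh a|\aleq\eps_{2}^{2}$ and $\int_{t_{0}}^{t}\calO(\|g\|_{\dot{H}^{2}}^{2})=\calO(\eps_{2}^{2})$, which gives \eqref{eq:sec9-wh-a-mod-est} directly.

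For \eqref{eq:sec9-wh-h-mod-est} the next step is to substitute the decomposition \eqref{eq:Sec9-def-h} (with $\lmb,d,q$ replaced by $\wh{\lmb},\wh d,\wh q$, using that $\wh{\lmb}=\lmb(1+\calO(\eps_{1}^{-2}\eps_{2}^{2}))$ etc.) into these leading-order ODEs and perform the linearization carried out formally in Section~\ref{subsec:Linearization-formal-ODE}. Concretely, I would write $\wh d=d_{\mathrm{ap}}+t^{-1/(2D-1)}(2\kpp_{1}/\kpp_{0})^{1/2}\wh h_{2}$, etc., and Taylor-expand the right-hand sides of the $\lmb,d,q$ equations around the approximate solution \eqref{eq:Sec9-lmb,d,q-aprx-sol} using \eqref{eq:Sec9-taylor-expn}. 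The linear-in-$\wh h$ part produces exactly $(M+\frac{1}{2D-1}I)\wh h/t$ with $M$ as in \eqref{eq:Sec9-def-M} (the extra $\frac{1}{2D-1}I$ coming from differentiating the explicit $t^{-1/(2D-1)}$ prefactor), the quadratic remainder in \eqref{eq:Sec9-taylor-expn} contributes $\calO(|\wh h|^{2}/t)\aleq\calO(\eps_{1}|\wh h|/t)$ since $|\wh h|\aleq\eps_{1}$ on $[t_{0},T)$, and the modulation error $\calO(\eps_{1}^{2}t^{-1})$ combined with the $\calO(\eps_{1}^{-2}\eps_{2}^{2})$ discrepancy between $\wh{\lmb}$ and $\lmb$ yields the claimed $\calO(\eps_{1}^{-2}\eps_{2}^{2}t^{-1})$ term (after noting $\lmb^{2D-2}\aeq t^{-1}$, so all the nonlinear corrections carry the correct power of $t^{-1}$). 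One should also verify that the symmetry conditions \eqref{eq:Sec9-symm-lmb}, \eqref{eq:Sec9-symm-z} force the reduced three-parameter dynamics, so that the modulation estimates for $\vec{\lmb}(t),\vec{z}(t)$ indeed collapse to the scalar equations of Lemma~\ref{lem:sec9-refined-mod-est}; this is immediate from the uniqueness/symmetry statement in the remark after Proposition~\ref{prop:Modified-Profiles} and the reflection invariance of the orthogonality conditions.

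The main obstacle, though purely computational, is bookkeeping the error hierarchy: one must consistently track that every nonlinear or off-resonant contribution is either $\calO(\eps_{1}|\wh h|/t)$ (absorbable into the $\calO(\eps_{1})$ perturbation of $M$) or $\calO(\eps_{1}^{-2}\eps_{2}^{2}/t)$ (the forcing term), and in particular that converting between $\lmb^{2D-2}$, $\lmb^{2D-1}$, and $t^{-1}$, $t^{-(2D)/(2D-1)}$ never loses a power; this uses the two-sided bound $\lmb(t)\aeq|d(t)-1|\aeq|q(t)-q_{0}|\aeq t^{-1/(2D-1)}$ valid on $[t_{0},T')\supseteq[t_{0},T)$ from the definition of $T'$. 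A secondary subtlety is that $\wh{\lmb},\wh d,\wh q$ are defined with integrating factors involving $F_{\lmb},\lmb F_{d}/(d-1),\lmb F_{q}/(q-q_{0})$, so one must check these integrands are genuinely integrable in time — which follows from $|F_{\lmb}|+|F_{d}|+|F_{q}|\aleq\eps_{1}^{-2}\|g\|_{\dot{H}^{2}}^{2}$, $\lmb/(d-1)\aeq\lmb/(q-q_{0})\aeq1$, and the spacetime bound $\int_{t_{0}}^{T}\|g\|_{\dot{H}^{2}}^{2}\aleq\eps_{2}^{2}$ — and that the resulting exponents are $1+\calO(\eps_{1}^{-2}\eps_{2}^{2})$, which is what makes the substitution $\lmb\leftrightarrow\wh{\lmb}$ cost only the forcing-level error. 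Once \eqref{eq:sec9-wh-h-mod-est} and \eqref{eq:sec9-wh-a-mod-est} are established, the Brouwer/topological argument (choosing $(h_{0},a_{0})\in\br{\calU}$ so that the unstable components of $\wh h$ and $\wh a$ never exit, forcing $T=T'=T_{+}(u)=\infty$) proceeds as in \cite{MerleZaag1997Duke,CoteMartelMerle2011RMI}, using that $M+\frac{1}{2D-1}I$ has the one negative eigenvalue $-1$ (stable, automatically contracting) and positive eigenvalues $\frac{1}{2D-1},\frac{2}{2D-1}$ on $V_{u}$ together with the large positive coefficient $e_{0}/\lmb^{2}\gg t^{-1}$ for $\wh a$.
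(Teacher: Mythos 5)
Your proposal follows essentially the same route as the paper: you exploit the built-in cancellation of $F_{\lmb},F_{d},F_{q},F_{a}$ (and $\tfrac{d}{dt}f$) in the definitions of $\wh{\lmb},\wh d,\wh q,\wh a$, Taylor-expand via \eqref{eq:Sec9-taylor-expn} around \eqref{eq:Sec9-lmb,d,q-aprx-sol}, replace $\lmb,d,q$ by their hatted versions at cost $\calO(\eps_{1}^{-2}\eps_{2}^{2})$, and read off $(M+\tfrac{1}{2D-1}I)\wh h/t$, while \eqref{eq:sec9-wh-a-mod-est} follows from the $a$-equation together with $\wh a=a+\calO(\eps_{2}^{2})$, exactly as in the paper. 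One minor bookkeeping slip: the quadratic remainder in \eqref{eq:Sec9-taylor-expn} is of size $\calO(t^{-2/(2D-1)})$, dominated by the approximate solution rather than by $\wh h$, so it is not $\calO(|\wh h|^{2}/t)$ as you assert; it is nevertheless harmless because, after multiplication by $\lmb^{2D-1}t^{1/(2D-1)}$, it is $\aleq t^{-1/(2D-1)}/t\ll\eps_{1}^{-2}\eps_{2}^{2}/t$ and is absorbed into the forcing term, which is where the paper places it.
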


\begin{proof}
In view of \eqref{eq:Sec9-taylor-expn}, we have 
\begin{align*}
 & d^{-2D}-q^{-2D}-(d^{2}+q^{2})^{-D}\\
 & =2D(-1+(1+q_{0}^{2})^{-D-1})(d-1)+2D(q_{0}^{-2D-1}+q_{0}(1+q_{0}^{2})^{-D-1})(q-q_{0})+\calO(t^{-\frac{2}{2D-1}})\\
 & =2D(-1+(1+q_{0}^{2})^{-D-1})(\wh d-1)+2D(q_{0}^{-2D-1}+q_{0}(1+q_{0}^{2})^{-D-1})(\wh q-q_{0})+\calO(\eps_{1}^{-2}\eps_{2}^{2}t^{-\frac{1}{2D-1}}).
\end{align*}
Combining this with Lemma~\ref{lem:sec9-refined-mod-est}, we obtain
(cf.~\eqref{eq:Sec9-formal-ODE-2})
\begin{align*}
\wh{\lmb}_{t} & =2D\kpp_{0}\kpp_{\infty}\big\{(-1+(1+q_{0}^{2})^{-D-1})(\wh d-1)+(q_{0}^{-2D-1}+q_{0}(1+q_{0}^{2})^{-D-1})(\wh q-q_{0})\big\}\wh{\lmb}^{2D-1}\\
 & \quad+\calO(\eps_{1}^{-2}\eps_{2}^{2}t^{-\frac{2D}{2D-1}}),\\
\wh d_{t} & =2\kpp_{1}\kpp_{\infty}(-1+(1+q_{0}^{2})^{-D-1})\wh{\lmb}^{2D}+\calO(\eps_{1}^{-2}\eps_{2}^{2}t^{-\frac{2D}{2D-1}}),\\
\wh q_{t} & =2\kpp_{1}\kpp_{\infty}(q_{0}^{-2D-1}+q_{0}(1+q_{0}^{2})^{-D-1})\wh{\lmb}^{2D}+\calO(\eps_{1}^{-2}\eps_{2}^{2}t^{-\frac{2D}{2D-1}}).
\end{align*}
Combining this with the definition \eqref{eq:Sec9-def-h} of $\wh h$
and recalling the derivation of \eqref{eq:Sec9-def-M}, we obtain
\eqref{eq:sec9-wh-h-mod-est}. The proof of \eqref{eq:sec9-wh-a-mod-est}
easily follows from Lemma~\ref{lem:sec9-refined-mod-est} and $\wh a(t)=a(t)+\calO(\eps_{2}^{2})$.
\end{proof}
\begin{lem}[Control of stable directions]
We have 
\begin{equation}
|P_{s}\wh h|(t)\aleq\eps_{1}^{3/2}.\label{eq:Sec9-control-stable}
\end{equation}
\end{lem}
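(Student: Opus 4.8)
The plan is to run a Grönwall argument for the stable‑subspace component of $\wh h$, exploiting that $V_{s}$ is the eigenspace of $M+\tfrac{1}{2D-1}I$ for the negative eigenvalue $-1$, so that this component decays like $t^{-1}$ while all forcing terms are of higher order in the small parameters.

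First I would record the structure of the linear part of \eqref{eq:sec9-wh-h-mod-est}. Since the normalization in \eqref{eq:Sec9-def-h} makes $M$, hence $M+\tfrac{1}{2D-1}I$, symmetric, its eigenspaces are mutually orthogonal; thus $V_{s}$ (eigenvalue $-1$) and $V_{u}$ (eigenvalues $\tfrac{1}{2D-1},\tfrac{2}{2D-1}$) are orthogonal complements, $P_{s}$ commutes with $M+\tfrac{1}{2D-1}I$, and $(M+\tfrac{1}{2D-1}I)P_{s}=-P_{s}$. Applying $P_{s}$ to \eqref{eq:sec9-wh-h-mod-est}, using the bootstrap bound $|\wh h(t)|\le\sqrt{2}\,\eps_{1}$ on $[t_{0},T)$ from \eqref{eq:Sec9-def-bootstrap} to control the $\calO(\eps_{1})\wh h/t$ coupling term by $\calO(\eps_{1}^{2}t^{-1})$, and invoking the parameter hierarchy $\eps_{1}\gg\eps_{2}$ (shrinking $\eps_{2}$ if necessary so that $\eps_{1}^{-2}\eps_{2}^{2}\le\eps_{1}^{2}$), I would obtain the closed inequality
\[
\tfrac{d}{dt}(P_{s}\wh h)=-\tfrac{1}{t}P_{s}\wh h+\calO(\eps_{1}^{2}t^{-1})\qquad\text{on }[t_{0},T).
\]

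Next I would integrate this using $t$ as integrating factor: from $\tfrac{d}{dt}(tP_{s}\wh h)=\calO(\eps_{1}^{2})$, integrating from $t_{0}$ to $t$ gives $|P_{s}\wh h(t)|\le\tfrac{t_{0}}{t}|P_{s}\wh h(t_{0})|+\calO(\eps_{1}^{2})$. Since $\wh h(t_{0})=h(t_{0})$, estimate \eqref{eq:Sec9-stable-initially-small} gives $|P_{s}\wh h(t_{0})|=|P_{s}h(t_{0})|=o_{t_{0}\to\infty}(1)$; as $\eps_{1}$ is fixed before $t_{0}$, choosing $t_{0}$ large forces $|P_{s}\wh h(t_{0})|\le\eps_{1}^{3/2}$, so that $|P_{s}\wh h(t)|\aleq\eps_{1}^{2}+\eps_{1}^{3/2}\aleq\eps_{1}^{3/2}$ on $[t_{0},T)$, which is \eqref{eq:Sec9-control-stable}. (If one prefers to avoid the mild non‑smoothness of $t\mapsto|P_{s}\wh h(t)|$ at its zeros, the same conclusion follows by running the identical argument for $|P_{s}\wh h(t)|^{2}$, which is $C^{1}$ along the flow.)

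The argument is essentially routine; the only points that need care are (i) using the orthogonality of $V_{s}$ and $V_{u}$, guaranteed by the symmetry of $M$, so that the $\calO(\eps_{1})$ off‑diagonal coupling in \eqref{eq:sec9-wh-h-mod-est} enters only at order $\eps_{1}|\wh h|\aleq\eps_{1}^{2}$ and does not spoil the $t^{-1}$ decay of the stable mode, and (ii) arranging the parameter hierarchy $\eps_{1}\gg\eps_{2}\gg t_{0}^{-1}$ so that both the forcing $\calO(\eps_{1}^{-2}\eps_{2}^{2}t^{-1})$ and the initial error $|P_{s}h(t_{0})|=o_{t_{0}\to\infty}(1)$ sit below the target $\eps_{1}^{3/2}$. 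I do not expect a genuine obstacle beyond this bookkeeping.
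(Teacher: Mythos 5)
Your argument is correct and essentially the paper's proof: both exploit that $V_{s}$ is the eigenspace of the symmetric matrix $M+\tfrac{1}{2D-1}I$ for the eigenvalue $-1$, control the $\calO(\eps_{1})$ coupling and the $\eps_{1}^{-2}\eps_{2}^{2}$ forcing via the bootstrap bound $|\wh h|\aleq\eps_{1}$ and the hierarchy $\eps_{2}\ll\eps_{1}$, and integrate from $t_{0}$ using $|P_{s}\wh h(t_{0})|=|P_{s}h(t_{0})|=o_{t_{0}\to\infty}(1)$. The only (cosmetic) difference is that you integrate the vector equation for $P_{s}\wh h$ with integrating factor $t$, whereas the paper runs the same computation on $\tfrac12|P_{s}\wh h|^{2}$ with a Duhamel factor $(\tau/t)^{9/5}$.
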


\begin{proof}
By \eqref{eq:sec9-wh-h-mod-est} and \eqref{eq:Sec9-def-bootstrap},
we have 
\[
\frac{d}{dt}\frac{1}{2}|P_{s}\wh h|^{2}=\frac{1}{t}\lan P_{s}\wh h,(M+\tfrac{1}{2D-1}I)P_{s}\wh h\ran+\calO(\eps_{1}\frac{|\wh h|^{2}}{t}+\eps_{1}^{-2}\eps_{2}^{2}\frac{|\wh h|}{t})\leq-\frac{9}{10t}|P_{s}\wh h|^{2}+\calO(\frac{\eps_{1}^{3}}{t}).
\]
Integrating this from $t_{0}$ and applying \eqref{eq:Sec9-stable-initially-small},
we obtain 
\[
|P_{s}\wh h|^{2}(t)\leq|P_{s}\wh h|^{2}(t_{0})+\int_{t_{0}}^{t}(\tau/t)^{\frac{18}{10}}\cdot\tau^{-1}\eps_{1}^{3}d\tau\aleq o_{t_{0}\to\infty}(1)+\eps_{1}^{3}\aleq\eps_{1}^{3}.\qedhere
\]
\end{proof}
\begin{lem}[Control of unstable directions; closing bootstrap]
There exists an initial data $u(t_{0})$ in \eqref{eq:Sec9-def-calO}
such that $T=T'=+\infty$.
\end{lem}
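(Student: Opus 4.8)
The plan is a standard topological shooting (Brouwer) argument on the unstable data $(P_{u}\wh h,\wh a)\in V_{u}\times\bbR$. By \eqref{eq:Sec9-control-g(t)} and \eqref{eq:Sec9-control-stable} the remainder $g$ and the stable component $P_{s}\wh h$ are already controlled strictly better than the thresholds in \eqref{eq:Sec9-def-bootstrap}, so the bootstrap can fail only through $(P_{u}\wh h,\wh a)$; note $V_{u}$ is two-dimensional (the eigenvalues of $M+\tfrac{1}{2D-1}I$ being $\tfrac{1}{2D-1},\tfrac{2}{2D-1},-1$) and $e_{0}>0$, so $V_{u}\times\bbR$ is three-dimensional. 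The first step is a strict outgoing (transversality) property at the unstable boundary. If $|\wh a(t)|=\eps_{2}$ for some $t\in[t_{0},T)$, then using $\lmb^{-2}(t)\aeq t^{2/(2D-1)}$, the hierarchy $\eps_{2}\gg t_{0}^{-1}$, and \eqref{eq:sec9-wh-a-mod-est},
\[
\tfrac{1}{2}\tfrac{d}{dt}\wh a^{2}=\tfrac{e_{0}}{\lmb^{2}}\wh a^{2}+\calO\Big(\tfrac{\eps_{2}^{3}}{\lmb^{2}}+\tfrac{\eps_{2}}{t}\Big)\geq\tfrac{e_{0}}{4\lmb^{2}}\eps_{2}^{2}>0;
\]
and if $|P_{u}\wh h(t)|=\eps_{1}$, then since $M+\tfrac{1}{2D-1}I$ is positive definite on $V_{u}$ with smallest eigenvalue $\tfrac{1}{2D-1}$, \eqref{eq:sec9-wh-h-mod-est}, \eqref{eq:Sec9-control-stable}, and $\eps_{2}\ll\eps_{1}$ (say $\eps_{2}\le\eps_{1}^{2}$) give
\[
\tfrac{1}{2}\tfrac{d}{dt}|P_{u}\wh h|^{2}=\tfrac{1}{t}\lan P_{u}\wh h,(M+\tfrac{1}{2D-1}I)P_{u}\wh h\ran+\calO\Big(\tfrac{\eps_{1}|\wh h|^{2}+\eps_{1}^{-2}\eps_{2}^{2}|\wh h|}{t}\Big)\geq\tfrac{\eps_{1}^{2}}{2(2D-1)t}>0.
\]
Hence $|\wh a|$ and $|P_{u}\wh h|$ are strictly increasing whenever they touch $\eps_{2}$ and $\eps_{1}$, so each crosses its band only once and only outward.

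The second step records $T=T'$ and $T'=T_{+}(u)$. On $[t_{0},T)$: \eqref{eq:Sec9-control-g(t)} gives $\|g(t)\|_{\dot H^{1}}\aleq\eps_{2}\ll\eps_{1}$, so $u(t)$ stays strictly inside $\calT_{4}(\eps_{1})$; and \eqref{eq:Sec9-control-stable}, the bootstrap bounds on $(P_{u}\wh h,\wh a)$, and the almost-identity relations between $(h,a)$ and $(\wh h,\wh a)$ (from the definitions of the refined parameters and \eqref{eq:Sec9-control-g(t)}) give $|h(t)|\leq 3\eps_{1}<10\eps_{1}$; hence the two conditions defining $T'$ cannot fail before $T$, so $T=T'$ as long as $T<T_{+}(u)$. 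On $[t_{0},T')$ the solution remains in the bounded set $\calT_{4}(\eps_{1})$ with $\lmb(t)\aeq t^{-1/(2D-1)}$, so it can be continued past any finite $T'$; thus $T'=T_{+}(u)$, and in summary: if $T<+\infty$ then $T=T'=T_{+}(u)<\infty$ and, by Step~1 together with \eqref{eq:Sec9-control-stable}, exactly one of $|P_{u}\wh h(T)|=\eps_{1}$, $|\wh a(T)|=\eps_{2}$ holds, with transversal exit.

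The third step is the Brouwer argument. Suppose for contradiction that $T(h_{0},a_{0})<+\infty$ for every $(h_{0},a_{0})\in\br{\calU}$. By Step~1 and the continuous dependence of the flow and of the refined parameters on the data (Lemma~\ref{lem:curve-modulation}, Lemma~\ref{lem:sec9-refined-mod-est}), the exit time $T$ is continuous in $(h_{0},a_{0})$ and the exit point
\[
\Phi(h_{0},a_{0})\coloneqq\big(\eps_{1}^{-1}P_{u}\wh h(T),\ \eps_{2}^{-1}\wh a(T)\big)\in S^{2}\coloneqq\partial\big(\br{B}_{V_{u}}(0;1)\times\br{B}_{\bbR}(0;1)\big)
\]
defines a continuous map $\Phi:\br{\calU}\to S^{2}$. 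Precomposing with the $C^{1}$-diffeomorphism $\Theta_{u}^{-1}$ of Lemma~\ref{lem:sec9-init-data-map} (which is defined on $\br{B}_{V_{u}}(0;\eps_{1})\times\br{B}_{\bbR}(0;\eps_{2})$ thanks to \eqref{eq:Sec9-stable-initially-small} and \eqref{eq:def-init-data-set}), one gets a continuous map $\Psi\coloneqq\Phi\circ\Theta_{u}^{-1}$ from the three-ball $\br{B}_{V_{u}}(0;\eps_{1})\times\br{B}_{\bbR}(0;\eps_{2})$ to $S^{2}$. For a boundary point $(v,w)$, where $|v|=\eps_{1}$ or $|w|=\eps_{2}$, the initial data $\Theta_{u}^{-1}(v,w)$ satisfies $(P_{u}\wh h(t_{0}),\wh a(t_{0}))=(v,w)$, so by strict outgoingness at $t_{0}$ one has $T=t_{0}$ and $\Psi(v,w)=(\eps_{1}^{-1}v,\eps_{2}^{-1}w)$; thus $\Psi$ restricted to the boundary is the canonical rescaling homeomorphism onto $S^{2}$, in particular not null-homotopic. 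This contradicts the no-retraction theorem. Hence some $(h_{0},a_{0})\in\br{\calU}$ has $T=+\infty$, and then $T'=+\infty$ as well since $T\leq T'$.

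The main obstacle is the transversality step for $\wh a$: the destabilizing coefficient $e_{0}/\lmb^{2}$ in \eqref{eq:sec9-wh-a-mod-est} is time-dependent and unbounded while the error is only $\calO(\eps_{2}^{2}/\lmb^{2}+1/t)$, so one must exploit $\eps_{2}\gg t_{0}^{-1}$ and $\lmb\ageq t^{-1/(2D-1)}$ to make the instability win uniformly on the exit boundary, and the hierarchy $\eps_{1}\gg\eps_{2}$ must be quantitative (e.g.\ $\eps_{2}\le\eps_{1}^{2}$) so that $\calO(\eps_{1}^{-2}\eps_{2}^{2}/t)$ is negligible against $\calO(\eps_{1}^{2}/t)$ in the $P_{u}\wh h$ inequality. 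A secondary point is that the exit point lands on the full two-sphere $S^{2}$ rather than on a lower-dimensional face, which follows from the product structure together with the strict outgoingness established separately in each factor.
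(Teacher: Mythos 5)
Your proposal is correct and follows essentially the same route as the paper: strict outgoing (transversal) derivative estimates at the two unstable boundaries, continuity of the exit time via this transversality together with continuous dependence of the refined parameters, the identity (up to the rescaling via $\Theta_{u}$) on the boundary since $\wh h(t_{0})=h(t_{0})$, $\wh a(t_{0})=a(t_{0})$, and a no-retraction/Brouwer contradiction. The only slip is the phrase ``$T=T'$'' in your second step: when $T<+\infty$ the controls \eqref{eq:Sec9-control-g(t)} and \eqref{eq:Sec9-control-stable} give $T<T'$ (exit strictly before leaving the tube), which is what you actually use, so the argument is unaffected.
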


\begin{proof}
We use the Brouwer argument. Suppose not; for any initial data $u(t_{0})$
in \eqref{eq:Sec9-def-calO}, we have $T<+\infty$. By \eqref{eq:Sec9-control-g(t)},
we have $T'>T$. By \eqref{eq:Sec9-control-stable} and \eqref{eq:Sec9-def-bootstrap},
we have $|P_{u}\wh h(T)|=\eps_{1}$ or $|\wh a(T)|=\eps_{2}$. Now,
observe that for any time $t\in[t_{0},T]$ with $|P_{u}\wh h(t)|=\eps_{1}$,
we have 
\begin{align}
\frac{d}{dt}\frac{1}{2}|P_{u}\wh h|^{2} & =\frac{1}{t}\lan P_{u}\wh h,(M+\tfrac{1}{2D-1}I)P_{u}\wh h\ran+\calO(\eps_{1}\frac{|\wh h|^{2}}{t}+\eps_{1}^{-2}\eps_{2}^{2}\frac{|\wh h|}{t})\nonumber \\
 & \geq\frac{1}{(2D-1)t}|P_{u}\wh h|^{2}-\calO(\frac{\eps_{1}^{3}}{t})\geq\frac{\eps_{1}^{2}}{2Dt}.\label{eq:Sec9-unstab-deriv-positive-1}
\end{align}
Observe also that for any time $t\in[t_{0},T]$ with $|\wh a(t)|=\eps_{2}$,
we have 
\begin{equation}
\frac{d}{dt}\frac{1}{2}|\wh a|^{2}=\Big(\frac{e_{0}}{\lmb^{2}}|\wh a|+\calO(\frac{\eps_{2}^{2}}{\lmb^{2}}+\frac{1}{t})\Big)|\wh a|\geq\Big(\frac{e_{0}}{\lmb^{2}}\cdot\frac{\eps_{2}}{2}\Big)\frac{\eps_{2}}{2}.\label{eq:Sec9-unstab-deriv-positive-2}
\end{equation}
In particular, $T$ is the first time $t\in[t_{0},T]$ such that either
$|P_{u}\wh h(t)|=\eps_{1}$ or $|\wh a(t)|=\eps_{2}$.

Consider the map 
\[
\Phi:(h(t_{0}),a(t_{0}))\in\br{B_{V_{u}}(0;\eps_{1})}\times\br{B_{\bbR}(0;\eps_{2})}\mapsto(P_{u}\wh h(T),\wh a(T))\in\rd(B_{V_{u}}(0;\eps_{1})\times B_{\bbR}(0;\eps_{2})).
\]
First, as $\wh h(t_{0})=h(t_{0})$ and $\wh a(t_{0})=a(t_{0})$, $\Phi$
is the identity map on the boundary $\rd(B_{V_{u}}(0;\eps_{1})\times B_{\bbR}(0;\eps_{2}))$.
Next, we claim that $\Phi$ is continuous. It suffices to show that
$u(t_{0})\mapsto(\wh h(T),\wh a(T))$ is continuous. By the continuity
statement of Lemma~\ref{lem:sec9-refined-mod-est} and the local
well-posedness of \eqref{eq:NLH} in $\dot{H}^{1}$, $(u(t_{0}),t)\mapsto(\wh h(t),\wh a(t))$
and $(u(t_{0}),t)\mapsto(\frac{d}{dt}\wh h(t),\frac{d}{dt}\wh a(t))$
are continuous. The positivity \eqref{eq:Sec9-unstab-deriv-positive-1}
and \eqref{eq:Sec9-unstab-deriv-positive-2} ensures that $u(t_{0})\mapsto T$
is continuous, so $u(t_{0})\mapsto(\wh h(T),\wh a(T))$ is continuous
as desired.

By the previous paragraph, the map $\Phi:\br{B_{V_{u}}(0;\eps_{1})}\times\br{B_{\bbR}(0;\eps_{2})}\to\rd(B_{V_{u}}(0;\eps_{1})\times B_{\bbR}(0;\eps_{2}))$
is continuous and equal to the identity map on the boundary. Such
a map cannot exist by the Brouwer fixed point theorem, so we get a
contradiction. Therefore, $T=+\infty$ (and hence $T=T'=+\infty$)
for some special initial data $u(t_{0})$ in \eqref{eq:Sec9-def-calO}. 
\end{proof}

\subsection{Proof of Proposition~\ref{prop:Sec9-main}}

Having closed the bootstrap, we need to show $\|g(t)\|_{\dot{H}^{1}}\to0$
and derive sharp asymptotics of $\lmb(t)$. Note that we already know
$\lmb(t)\aeq|\vec{z}(t)-\vec{z}^{\ast}|\aeq t^{-\frac{1}{2D-1}}$
for all large $t$ from the bootstrap.
\begin{lem}[$\dot{H}^{1}$-decay of $g(t)$]
We have 
\begin{equation}
\|g(t)\|_{\dot{H}^{1}}\to0\qquad\text{as }t\to+\infty.\label{eq:9.13}
\end{equation}
\end{lem}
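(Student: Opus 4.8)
The plan is to combine the monotonicity of the energy, the coercivity estimate \eqref{eq:calL-H1-coer-quad-form}, and the already-established control of the unstable mode. By the energy identity \eqref{eq:energy-identity} and \eqref{eq:coercivity-dissipation}, $t\mapsto E[u(t)]$ is non-increasing, and by \eqref{eq:Sec9-energy-ineq} together with $|a(t)|\aleq\eps_{2}$ it is bounded below; hence $E[u(t)]$ converges to some $E_{\infty}$, and $E[u(t)]-E_{\infty}=\int_{t}^{\infty}\|\Dlt u+f(u)\|_{L^{2}}^{2}d\tau\to0$. From the spacetime bound \eqref{eq:Sec9-control-g(t)} together with \eqref{eq:coercivity-dissipation} there is a sequence $t_{n}\to+\infty$ along which $\|g(t_{n})\|_{\dot{H}^{2}}\to0$ and $\sum_{a,i}|\frkr_{a,i}(t_{n})|/\lmb_{i}(t_{n})\to0$.

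Next I would show $a(t)=\lan\calY_{\ul{;1}}(t),g(t)\ran\to0$. By Lemma~\ref{lem:sec9-refined-mod-est} (and symmetry, so that $a$ is the common value $\lan\calY_{\ul{;i}},g\ran$ up to sign), $a_{t}=\tfrac{e_{0}}{\lmb^{2}}a+\calO(\|g\|_{\dot{H}^{2}}^{2}+t^{-1})$. Since $\lmb(t)\aeq t^{-1/(2D-1)}$ gives $\int^{\infty}\lmb^{-2}=+\infty$ with $\lmb^{-2}$ increasing, and $|a(t)|\le\eps_{2}$ for all $t\ge t_{0}$ from the closed bootstrap, $a$ is the unique bounded solution of this linear ODE, represented by the backward integral $a(t)=-\int_{t}^{\infty}e^{-\int_{t}^{s}e_{0}\lmb^{-2}(\tau)d\tau}\bigl(\calO(\|g(s)\|_{\dot{H}^{2}}^{2})+\calO(s^{-1})\bigr)ds$. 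Bounding the kernel by $e^{-e_{0}(s-t)/\lmb^{2}(t)}$, the $\calO(s^{-1})$-part is $\calO(t^{-1}\lmb^{2}(t))\to0$, and an integration by parts against $G(t):=\int_{t}^{\infty}\|g(s)\|_{\dot{H}^{2}}^{2}ds\to0$ shows the other part is $\le G(t)\to0$; hence $a(t)\to0$.

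Then I would upgrade this to $E_{\infty}=JE[W]$. Combining \eqref{eq:Sec9-energy-ineq} with the energy expansion used in the proof of \eqref{eq:Sec9-control-g(t)} (Cauchy--Schwarz on the cross terms, and $\max_{a,i}|\frkr_{a,i}|\aleq\lmb^{2D}$ from \eqref{eq:case3-max-frkr_a,i}) gives, for every $t$, $\|g(t)\|_{\dot{H}^{1}}^{2}\aleq(E[u(t)]-JE[W])+\eta(t)$ where $\eta(t):=a^{2}(t)+\lmb^{2D}(t)+\max_{a,i}|\frkr_{a,i}(t)|^{2}\to0$. On the other hand, at $t=t_{n}$: $E[U(t_{n})]\to JE[W]$ because $\lmb(t_{n})\to0$ (using \eqref{eq:E(calW)-JE(W)-est} and \eqref{eq:Psi-Hdot1-dual} for the discrepancy); the linear term $\lan\Dlt U+f(U),g\ran=\lan\sum_{a,i}\frkr_{a,i}\calV_{a\ul{;i}},g\ran$ is $\calO(\sum_{a,i}|\frkr_{a,i}(t_{n})|\,\|g(t_{n})\|_{\dot{H}^{1}})\to0$; and $\lan\calL_{\calW}g,g\ran=-\|g\|_{\dot{H}^{1}}^{2}+\int f'(\calW)g^{2}$ with $\int f'(\calW(t_{n}))g(t_{n})^{2}\to0$. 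This last point is the crux: bounding $f'(\calW)\aleq\sum_{i}|W_{;i}|^{p-1}$, rescaling the $i$-th term to $\int W^{p-1}\psi_{n}^{(i)\,2}$ with $\psi_{n}^{(i)}:=\lmb_{i}(t_{n})^{D}g(t_{n})(\lmb_{i}(t_{n})\cdot+z_{i}(t_{n}))$ (so $\|\psi_{n}^{(i)}\|_{\dot{H}^{1}}=\|g(t_{n})\|_{\dot{H}^{1}}=\calO(\eps_{2})$ and $\|\psi_{n}^{(i)}\|_{\dot{H}^{2}}=\lmb_{i}(t_{n})\|g(t_{n})\|_{\dot{H}^{2}}\to0$), and splitting $\int W^{p-1}\psi^{2}\le\|W^{p-1}\|_{L^{\infty}}\|\psi\|_{L^{2}(B_{1})}^{2}+\int_{|y|>1}|y|^{-4}\psi^{2}\aleq\|\psi\|_{\dot{H}^{2}}^{2}$ via the $L^{2^{\ast\ast}}$-embedding and the elementary bound $\||x|^{-2}\psi\|_{L^{2}}\aleq\|\psi\|_{\dot{H}^{2}}$ (valid for $N\ge5$). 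Hence $E[u(t_{n})]-JE[W]=(\tfrac12+\calO(\eps_{1}^{p-1}))\|g(t_{n})\|_{\dot{H}^{1}}^{2}+o(1)$. Feeding this into the displayed coercivity inequality at $t=t_{n}$ yields $\|g(t_{n})\|_{\dot{H}^{1}}^{2}\aleq(\tfrac12+\calO(\eps_{1}^{p-1}))\|g(t_{n})\|_{\dot{H}^{1}}^{2}+o(1)$, which for small $\eps_{1}$ forces $\|g(t_{n})\|_{\dot{H}^{1}}\to0$ and therefore $E_{\infty}=JE[W]$. Finally, the inequality $\|g(t)\|_{\dot{H}^{1}}^{2}\aleq(E[u(t)]-E_{\infty})+\eta(t)$ for general $t$, together with $E[u(t)]\downarrow E_{\infty}$ and $\eta(t)\to0$, gives \eqref{eq:9.13}.

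The main obstacle is the third step: $\|g(t_{n})\|_{\dot{H}^{2}}\to0$ does not by itself force $\|g(t_{n})\|_{\dot{H}^{1}}\to0$ on $\bbR^{N}$, so one must feed the energy expansion back into the coercivity bound with the sharp constant $\tfrac12<1$ and, separately, establish $\int f'(\calW)g^{2}\to0$ along $t_{n}$ by the rescaling/Hardy argument; controlling the unstable mode $a(t)$ (second step) is the other place where the structure of the construction, rather than a soft argument, is essential. (Alternatively, once $\|g(t_{n})\|_{\dot{H}^{1}}\to0$ is in hand one could note that $u$ satisfies Assumption~\ref{assumption:sequential-on-param} and lies in Case~3 of Proposition~\ref{prop:case-separation}, so Proposition~\ref{prop:case3-main} applies and yields the resolution directly.)
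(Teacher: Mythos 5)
Your first two steps are sound and essentially coincide with the paper's: the convergence of $E[u(t)]$ and the sequential $\dot{H}^{2}$-decay along some $t_{n}$ are immediate from \eqref{eq:energy-identity}, \eqref{eq:coercivity-dissipation}, and \eqref{eq:Sec9-control-g(t)}, and your backward Duhamel representation for the unstable mode is a correct variant of the paper's differential-inequality argument for $a(t)\to0$ (Step~1 of its proof). The final deduction, namely $c\|g(t)\|_{\dot{H}^{1}}^{2}\leq E[u(t)]-JE[W]+o(1)+\calO(a^{2}(t))$ once $E[u(t)]\to JE[W]$ and $a(t)\to0$ are known, is also exactly the paper's Step~3.

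The gap is precisely at the step you flag as the crux, and your proposed fix does not close it. Feeding the expansion $E[u(t_{n})]-JE[W]=(\tfrac12+\calO(\eps_{1}^{p-1}))\|g(t_{n})\|_{\dot{H}^{1}}^{2}+o(1)$ back into \eqref{eq:Sec9-energy-ineq} only produces $c\|g(t_{n})\|_{\dot{H}^{1}}^{2}\leq(\tfrac12+\calO(\eps_{1}^{p-1}))\|g(t_{n})\|_{\dot{H}^{1}}^{2}+o(1)$, where $c$ is the (small, unquantified) coercivity constant coming from \eqref{eq:calL-H1-coer-quad-form}; unless $c>\tfrac12$, which is not available, this inequality is vacuous, and making $\eps_{1}$ small does not help because the $\tfrac12$ comes from the quadratic form itself, not from the error terms. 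The argument is circular: both the expansion and the coercivity say that $E[u]-JE[W]$ and $\|g\|_{\dot{H}^{1}}^{2}$ are comparable, but nothing you use shows either tends to zero. Indeed the scenario $\|g(t)\|_{\dot{H}^{1}}\to\eps_{*}>0$ with $g$ spreading to ever larger scales (so that $\|g\|_{\dot{H}^{2}}\to0$ along sequences, all bootstrap bounds hold, $a(t)\to0$, and $E_{\infty}=JE[W]+\tfrac12\eps_{*}^{2}$) is consistent with every ingredient in your proposal. The paper excludes it with an extra conservation-type input you never invoke: since $u(t_{0})\in L^{2}$ (the data is a finite sum of bubbles), the mass identity gives $\int_{t_{0}}^{t_{1}}\int\{|\nabla u|^{2}-|u|^{p+1}\}\,dx\,dt\leq\int u^{2}(t_{0})<\infty$ uniformly in $t_{1}$; expanding the integrand around $U$ — using exactly the rescaling/Rellich bound $\int f'(\calW)g^{2}\aleq\lmb^{2}\|g\|_{\dot{H}^{2}}^{2}$ that you employ elsewhere — shows that $\liminf_{t\to\infty}\|g(t)\|_{\dot{H}^{1}}=\eps_{*}>0$ would force the integrand to be $\geq\tfrac12\eps_{*}^{2}-\calO(\|g\|_{\dot{H}^{2}}^{2})$ for all large $t$, contradicting time-integrability since $\|g\|_{\dot{H}^{2}}^{2}\in L_{t}^{1}$. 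That yields $\|g(t_{n})\|_{\dot{H}^{1}}\to0$ along a sequence, hence $E_{\infty}=JE[W]$, after which your (and the paper's) concluding step applies; without this $L^{2}$ input, or some substitute ruling out $\dot{H}^{1}$-mass escaping to large scales, the proof is incomplete.
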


\begin{proof}
\uline{Step 1: Proof of \mbox{$a(t)\to0$}}. We begin with 
\[
a_{t}-\frac{e_{0}}{\lmb^{2}}a=\calO(\|g\|_{\dot{H}^{2}}^{2}+t^{-1}).
\]
In particular, we have 
\[
(e^{-t}a^{2})_{t}\geq-e^{-t}|a|\cdot\calO(\|g\|_{\dot{H}^{2}}^{2}+t^{-1}).
\]
Since $e^{-t}a^{2}(t)\to0$ as $t\to+\infty$, we can integrate from
$+\infty$ to obtain 
\begin{align*}
a^{2}(t) & \aleq\int_{t}^{\infty}e^{t-\tau}|a(\tau)|\cdot(\|g(\tau)\|_{\dot{H}^{2}}^{2}+\tau^{-1})d\tau\\
 & \aleq\sup_{\tau\ge t}|a(\tau)|\cdot\Big(\int_{t}^{\infty}\|g(\tau)\|_{\dot{H}^{2}}^{2}d\tau+t^{-1}\Big)\leq o_{t\to\infty}(1)\cdot\sup_{\tau\geq t}|a(\tau)|.
\end{align*}
This concludes the proof of $a(t)\to0$ as $t\to+\infty$.

\uline{Step 2: Proof of \mbox{$\|g(t_{n})\|_{\dot{H}^{1}}\to0$}
for some \mbox{$t_{n}\to+\infty$}}. If not, we have 
\begin{equation}
\liminf_{t\to+\infty}\|g(t)\|_{\dot{H}^{1}}\eqqcolon\eps_{\ast}>0.\label{eq:9.11}
\end{equation}
Since $u(t_{0})\in L^{2}$, $u(t)$ is a global $H^{1}$-solution
to \eqref{eq:NLH} and we can access to the identity 
\[
\int u^{2}(t_{1})=\int u^{2}(t_{0})-\int_{t_{0}}^{t_{1}}\int_{\bbR^{N}}\{|\nabla u|^{2}-|u|^{p+1}\}dxdt
\]
for any $t_{1}\geq t_{0}$. Taking $t_{1}\to+\infty$ and using $\int u^{2}(t_{1})\geq0$,
we conclude 
\begin{equation}
\limsup_{t_{1}\to+\infty}\int_{t_{0}}^{t_{1}}\int_{\bbR^{N}}\{|\nabla u|^{2}-|u|^{p+1}\}dxdt<\infty.\label{eq:9.12}
\end{equation}
We use \eqref{eq:9.11} to derive a contradiction to \eqref{eq:9.12}.
Observe that 
\begin{align*}
 & \tint{}{}\{|\nabla u(t)|^{2}-|u(t)|^{p+1}\}\\
 & =\tint{}{}\{|\nabla U|^{2}-|U|^{p+1}\}+\tint{}{}(-\Dlt U+f(U))g+\tint{}{}p(f(U)-f(\calW))g+\tint{}{}pf(\calW)g\\
 & \quad+\tint{}{}|\nabla g|^{2}-p(p-1)\tint{}{}\{|U|^{p-1}-|\calW|^{p-1}\}g^{2}-p(p-1)\tint{}{}|\calW|^{p-1}g^{2}+\tint{}{}\calO(|g|^{p+1})\\
 & =o_{t\to\infty}(1)+o_{t\to\infty}(1)\cdot\|g\|_{\dot{H}^{1}}+o_{t\to\infty}(1)\cdot\|g\|_{\dot{H}^{1}}+\calO(\lmb\|g\|_{\dot{H}^{2}})\\
 & \quad+\|g\|_{\dot{H}^{1}}^{2}+o_{t\to\infty}(1)\cdot\|g\|_{\dot{H}^{1}}^{2}-\calO(\lmb^{2}\|g\|_{\dot{H}^{2}}^{2})+\calO(\|g\|_{\dot{H}^{1}}^{p+1})\\
 & \geq(1-\calO(\eps_{2}^{p-1})-o_{t\to\infty}(1))\tint{}{}|\nabla g|^{2}-o_{t\to\infty}(1)-\calO(\|g\|_{\dot{H}^{2}}^{2}).
\end{align*}
By \eqref{eq:9.11}, for all large $t$, we have 
\[
\int\{|\nabla u(t)|^{2}-|u(t)|^{p+1}\}\geq\frac{1}{2}\eps_{\ast}^{2}-\calO(\|g(t)\|_{\dot{H}^{2}}^{2}).
\]
Note that $\|g(t)\|_{\dot{H}^{2}}^{2}$ is integrable, but $\frac{1}{2}\eps_{\ast}^{2}$
is not integrable as $t\to+\infty$. This contradicts to \eqref{eq:9.12}.

\uline{Step 3: Proof of \mbox{\eqref{eq:9.13}}}. By Step~2, we
have $E[u(t_{n})]\to JE[W]$. Since $E[u(t)]$ is decreasing in time,
we have $E[u(t)]\to JE[W]$. This together with \eqref{eq:Sec9-energy-ineq}
and $a(t)\to0$ of Step~1 gives 
\[
c\|g(t)\|_{\dot{H}^{1}}^{2}\leq E[u(t)]-JE[W]+o_{t\to\infty}(1)\leq o_{t\to\infty}(1).
\]
This completes the proof of \eqref{eq:9.13}.
\end{proof}
To complete the proof of Proposition~\ref{prop:Sec9-main}, it remains
to derive sharp asymptotics of $\lmb(t)$. 
\begin{proof}[Proof of Proposition~\ref{prop:Sec9-main}]
One might directly integrate the modulation equations from $t=+\infty$,
but let us resort to the classification theorem (the minimally degenerate
case of Theorem~\ref{thm:main-lmb-to-zero-classification}) because
we know that all $\|g(t)\|_{\dot{H}^{1}}$, $\lmb(t)$, and $|\vec{z}(t)-\vec{z}^{\ast}|$
go to zero as $t\to+\infty$. This completes the proof.
\end{proof}
\begin{rem}[Construction of other examples]
\label{rem:other-examples}The construction of minimal examples for
Theorem~\ref{thm:main-one-bubble-tower-classification} and the first
item of Theorem~\ref{thm:main-lmb-to-zero-classification} requires
only $J=2$ with $\iota_{1}=-\iota_{2}$. As in the proof of Proposition~\ref{prop:Sec9-main},
one linearizes the formal dynamical system and identifies stable and
unstable directions (as in Section~\ref{subsec:Linearization-formal-ODE}),
and then setup bootstrap hypotheses (as in Section~\ref{subsec:sec9-setup}).
One can control stable directions forwards in time, whereas for unstable
directions one performs a Brouwer argument to choose a special initial
data (as in Section~\ref{subsec:sec9-closing-bootstrap}). We skip
the construction of these standard (or expected) minimal examples
as it is not the main focus of this paper.
\end{rem}

\appendix

\section{\label{subsec:proof-of-lin-coer}Proof of Proposition~\ref{prop:linear-coer}}
\begin{proof}
\uline{Proof of \mbox{\eqref{eq:calL-H1-bdd}} and \mbox{\eqref{eq:calL-H2-bdd}}}.
We begin with (recall $y_{i}=(x-z_{i})/\lmb_{i}$)
\[
|f'(\calW)|\aleq\tsum{i=1}J|W_{;i}|^{p-1}\aeq\tsum{i=1}J\lmb_{i}^{-2}\lan y_{i}\ran^{-4}\aleq\tsum{i=1}J|x-z_{i}|^{-2}.
\]
By Hardy's inequality and Rellich's inequality, we get 
\begin{align*}
\|f'(\calW)g\|_{(\dot{H}^{1})^{\ast}} & \aleq\tsum i{}\||x-z_{i}|\cdot|x-z_{i}|^{-2}g\|_{L^{2}}\aleq\|g\|_{\dot{H}^{1}},\\
\|f'(\calW)g\|_{L^{2}} & \aleq\tsum i{}\||x-z_{i}|^{-2}g\|_{L^{2}}\aleq\|g\|_{\dot{H}^{2}}.
\end{align*}
This proves \eqref{eq:calL-H1-bdd} and \eqref{eq:calL-H2-bdd}. 

\uline{Proof of \mbox{\eqref{eq:calL-H1-coer-quad-form}} and \mbox{\eqref{eq:calL-H2-coer}}}.
We proceed by induction on $J$. If $J=1$, \eqref{eq:calL-H1-coer-quad-form}
and \eqref{eq:calL-H2-coer} follow from \eqref{eq:calL-one-bubble-coer-H1-quad-form}
and \eqref{eq:calL-one-bubble-coer}, respectively, after scaling
and spatial translation. 

Let $J\geq2$ and assume the $J-1$ case. We show the $J$ case of
\eqref{eq:calL-H1-coer-quad-form} and \eqref{eq:calL-H2-coer} for
some small constant $\dlt=\dlt_{J}>0$ to be chosen later. Let $(\vec{\iota},\vec{\lmb},\vec{z})\in\calP_{J}(\dlt_{J})$.
Permuting indices $1,\dots,J$, we may assume $\lmb_{J}=\min_{i\in\setJ}\lmb_{i}$.
For a constant $K=K(g)\in[\dlt_{J}^{-1/20},\dlt_{J}^{-1/10}]\subset[1,\frac{R}{10}]$
to be chosen later, decompose 
\[
g(x)=g(x)\chi_{K}(y_{J})+g(x)(1-\chi_{K}(y_{J}))\eqqcolon g^{\rmin}(x)+g^{\rmex}(x).
\]
Write 
\begin{equation}
\begin{aligned}-\lan\calL_{\calW}g,g\ran & =-\lan\calL_{\calW}g^{\rmin},g^{\rmin}\ran-2\lan\calL_{\calW}g^{\rmin},g^{\rmex}\ran-\lan\calL_{\calW}g^{\rmex},g^{\rmex}\ran,\\
\lan\calL_{\calW}g,\calL_{\calW}g\ran & =\lan\calL_{\calW}g^{\rmin},\calL_{\calW}g^{\rmin}\ran+2\lan\calL_{\calW}g^{\rmin},\calL_{\calW}g^{\rmex}\ran+\lan\calL_{\calW}g^{\rmex},\calL_{\calW}g^{\rmex}\ran.
\end{aligned}
\label{eq:2.22}
\end{equation}

First, we consider the interior zone. We write 
\begin{equation}
\calL_{\calW}g^{\rmin}=\calL_{W_{;J}}g^{\rmin}+\{f'(\calW)-f'(W_{;J})\}g^{\rmin}.\label{eq:2.22-1}
\end{equation}
For $|y_{J}|\leq2K$, we have $|W_{;i}|\aeq\lmb_{J}^{-D}R_{Ji}^{-2D}$
for $i\neq J$, so 
\[
f'(\calW)-f'(W_{;J})=\calO(|\calW-W_{;J}|^{p-1})=\calO(\lmb_{J}^{-2}R_{J}^{-4})=K^{2}R_{J}^{-4}\calO(|x-z_{J}|^{-2}).
\]
This implies 
\begin{align*}
\|\{f'(\calW)-f'(W_{;J})\}g^{\rmin}\|_{(\dot{H}^{1})^{\ast}} & \aleq K^{2}R_{J}^{-4}\||x-z_{J}|^{-1}g\|_{L^{2}}\aleq K^{2}R_{J}^{-4}\|g\|_{\dot{H}^{1}},\\
\|\{f'(\calW)-f'(W_{;J})\}g^{\rmin}\|_{L^{2}} & \aleq K^{2}R_{J}^{-4}\||x-z_{J}|^{-2}g\|_{L^{2}}\aleq K^{2}R_{J}^{-4}\|g\|_{\dot{H}^{2}}.
\end{align*}
Substituting these controls and $R_{J}^{-1}<\dlt_{J}$ into \eqref{eq:2.22-1},
we conclude 
\begin{align*}
-\lan\calL_{\calW}g^{\rmin},g^{\rmin}\ran & =-\lan\calL_{W_{;J}}g^{\rmin},g^{\rmin}\ran-\calO(K^{2}\dlt_{J}^{4}\|g\|_{\dot{H}^{1}}^{2}),\\
\lan\calL_{\calW}g^{\rmin},\calL_{\calW}g^{\rmin}\ran & =\lan\calL_{W_{;J}}g^{\rmin},\calL_{W_{;J}}g^{\rmin}\ran-\calO(K^{2}\dlt_{J}^{4}\|g\|_{\dot{H}^{2}}^{2}).
\end{align*}
Applying the $J=1$ case of \eqref{eq:calL-H1-coer-quad-form} and
\eqref{eq:calL-H2-coer} to the first term gives 
\begin{align*}
-\lan\calL_{W_{;J}}g^{\rmin},g^{\rmin}\ran & \geq\dlt_{1}\|g^{\rmin}\|_{\dot{H}^{1}}^{2}-\calO(\lan\calY_{\ul{;J}},g^{\rmin}\ran^{2}+\tsum a{}\lan\calZ_{a\ul{;J}},g^{\rmin}\ran^{2}),\\
\lan\calL_{W_{;J}}g^{\rmin},\calL_{W_{;J}}g^{\rmin}\ran & \geq\dlt_{1}\|g^{\rmin}\|_{\dot{H}^{2}}^{2}-\calO(\lmb_{J}^{-2}\tsum a{}\lan\calZ_{a\ul{;J}},g^{\rmin}\ran^{2}),
\end{align*}
for some constant $\dlt_{1}>0$. Now, we we would like to replace
$g^{\rmin}$ in the error terms by $g$. In view of $|\calZ_{a}(y)|+|\calY(y)|\aleq\lan y\ran^{-(N-1)}$,
we have 
\begin{align*}
\tsum a{}|\lan\calZ_{a\ul{;J}},g^{\rmex}\ran|+|\lan\calY_{\ul{;J}},g^{\rmex}\ran| & \aleq K^{-\frac{N-4}{2}}\||x-z_{J}|^{-1}g^{\rmex}\|_{L^{2}}\aleq K^{-\frac{N-4}{2}}\|g\|_{\dot{H}^{1}},\\
\lmb_{J}^{-1}\{\tsum a{}|\lan\calZ_{a\ul{;J}},g^{\rmex}\ran|+|\lan\calY_{\ul{;J}},g^{\rmex}\ran|\} & \aleq K^{-\frac{N-6}{2}}\||x-z_{J}|^{-2}g^{\rmex}\|_{L^{2}}\aleq K^{-\frac{N-6}{2}}\|g\|_{\dot{H}^{2}}.
\end{align*}
Therefore, we have proved 
\begin{equation}
\begin{aligned}-\lan\calL_{W_{;J}}g^{\rmin},g^{\rmin}\ran & \geq\dlt_{1}\|g^{\rmin}\|_{\dot{H}^{1}}^{2}-\calO(\lan\calY_{\ul{;J}},g\ran^{2}+\tsum a{}\lan\calZ_{a\ul{;J}},g\ran^{2})-\calO(K^{2}\dlt_{J}^{4}+K^{-(N-4)})\|g\|_{\dot{H}^{1}}^{2},\\
\lan\calL_{W_{;J}}g^{\rmin},\calL_{W_{;J}}g^{\rmin}\ran & \geq\dlt_{1}\|g^{\rmin}\|_{\dot{H}^{2}}^{2}-\calO(\lmb_{J}^{-2}\tsum a{}\lan\calZ_{a\ul{;J}},g\ran^{2})-\calO(K^{2}\dlt_{J}^{4}+K^{-(N-6)})\|g\|_{\dot{H}^{2}}^{2}.
\end{aligned}
\label{eq:2.23}
\end{equation}

Next, we consider the exterior zone. Let us denote $\calW'\coloneqq\tsum{i<J}{}W_{;i}$
and write 
\begin{equation}
\calL_{\calW}g^{\rmex}=\calL_{\calW'}g^{\rmex}+\{f'(\calW)-f'(\calW')\}g^{\rmex}.\label{eq:2.22-2}
\end{equation}
For $|y_{J}|\geq K$, we have $|W_{;J}|^{p-1}\aleq\lmb_{J}^{-2}|y_{J}|^{-4}\aleq K^{-2}\lmb_{J}^{-2}|y_{J}|^{-2}$,
so 
\[
f'(\calW)-f'(\tsum{i<J}{}W_{;i})=\calO(|W_{;J}|^{p-1})\aleq K^{-2}|x-z_{J}|^{-2}.
\]
This implies 
\begin{align*}
\|\chf_{|y_{J}|\geq K}\{f'(\calW)-f'(W_{;J})\}g\|_{(\dot{H}^{1})^{\ast}} & \aleq K^{-2}\||x-z_{J}|^{-1}g\|_{L^{2}}\aleq K^{-2}\|g\|_{\dot{H}^{1}},\\
\|\chf_{|y_{J}|\geq K}\{f'(\calW)-f'(W_{;J})\}g\|_{L^{2}} & \aleq K^{-2}\||x-z_{J}|^{-2}g\|_{L^{2}}\aleq K^{-2}\|g\|_{\dot{H}^{2}}.
\end{align*}
Substituting these controls into \ref{eq:2.22-2} gives 
\begin{align*}
-\lan\calL_{\calW}g^{\rmex},g^{\rmex}\ran & =-\lan\calL_{\calW'}g^{\rmex},g^{\rmex}\ran+\calO(K^{-2}\|g\|_{\dot{H}^{1}}^{2}),\\
\lan\calL_{\calW}g^{\rmex},\calL_{\calW}g^{\rmex}\ran & =\lan\calL_{\calW'}g^{\rmex},\calL_{\calW'}g^{\rmex}\ran+\calO(K^{-2}\|g\|_{\dot{H}^{2}}^{2}).
\end{align*}
Applying the $J-1$ case of \eqref{eq:calL-H1-coer} and \eqref{eq:calL-H2-coer}
to the first term gives 
\begin{align*}
-\lan\calL_{\calW'}g^{\rmex},g^{\rmex}\ran & \geq\dlt_{J-1}\|g^{\rmex}\|_{\dot{H}^{1}}^{2}-\calO(\tsum{i\neq J}{}\{\lan\calY_{a\ul{;i}},g^{\rmex}\ran^{2}+\tsum a{}\lan\calZ_{a\ul{;i}},g^{\rmex}\ran^{2}\}),\\
\lan\calL_{\calW'}g^{\rmex},\calL_{\calW'}g^{\rmex}\ran & \geq\dlt_{J-1}\|g^{\rmex}\|_{\dot{H}^{2}}^{2}-\calO(\tsum{i\neq J}{}\lmb_{i}^{-2}\tsum a{}\lan\calZ_{a\ul{;i}},g^{\rmex}\ran^{2}),
\end{align*}
for some constant $\dlt_{J-1}>0$. We would like to replace $g^{\rmex}$
in the error terms by $g$. For $i\neq J$, since 
\[
\chf_{|y_{J}|\leq2K}\{\tsum a{}|\calZ_{a\ul{;i}}|+|\calY_{\ul{;i}}|\}\aleq\chf_{|y_{J}|\leq2K}\lmb_{i}^{-\frac{N+2}{2}}\lan y_{i}\ran^{-(N-1)}\aleq\lmb_{i}^{-\frac{3}{2}}\lmb_{J}^{-\frac{N-1}{2}}R_{Ji}^{-(N-1)}
\]
and $\lmb_{J}\leq\lmb_{i}$, we get 
\begin{align*}
\{\tsum a{}|\lan\calZ_{a\ul{;i}},g^{\rmin}\ran|+|\lan\calY_{\ul{;i}},g^{\rmin}\ran|\} & \aleq(\lmb_{J}/\lmb_{i})^{\frac{3}{2}}R_{Ji}^{-(N-1)}K^{\frac{N+2}{2}}\||x-z_{J}|^{-1}g\|_{L^{2}}\aleq K^{\frac{N+2}{2}}\dlt_{J}^{N-1}\|g\|_{\dot{H}^{1}},\\
\lmb_{i}^{-1}\{\tsum a{}|\lan\calZ_{a\ul{;i}},g^{\rmin}\ran|+|\lan\calY_{\ul{;i}},g^{\rmin}\ran|\} & \aleq(\lmb_{J}/\lmb_{i})^{\frac{5}{2}}R_{Ji}^{-(N-1)}K^{\frac{N+4}{2}}\||x-z_{J}|^{-2}g\|_{L^{2}}\aleq K^{\frac{N+4}{2}}\dlt_{J}^{N-1}\|g\|_{\dot{H}^{2}}.
\end{align*}
Therefore, we have proved 
\begin{equation}
\begin{aligned}-\lan\calL_{\calW'}g^{\rmex},g^{\rmex}\ran & \geq\dlt_{J-1}\|g^{\rmex}\|_{\dot{H}^{1}}^{2}-\calO(\tsum{i\neq J}{}\{\lan\calY_{a\ul{;i}},g\ran^{2}+\tsum a{}\lan\calZ_{a\ul{;i}},g\ran^{2}\})-\calO(K^{-2}+K^{\frac{N+2}{2}}\dlt_{J}^{N-1})\|g\|_{\dot{H}^{1}}^{2},\\
\lan\calL_{\calW'}g^{\rmex},\calL_{\calW'}g^{\rmex}\ran & \geq\dlt_{J-1}\|g^{\rmex}\|_{\dot{H}^{2}}^{2}-\calO(\tsum{i\neq J}{}\lmb_{i}^{-2}\tsum a{}\lan\calZ_{a\ul{;i}},g\ran^{2})-\calO(K^{-2}+K^{\frac{N+4}{2}}\dlt_{J}^{N-1})\|g\|_{\dot{H}^{2}}^{2}.
\end{aligned}
\label{eq:2.24}
\end{equation}

Finally, we estimate the mixed terms of \eqref{eq:2.22}: 
\begin{align*}
|\lan\calL_{\calW}g^{\rmin},g^{\rmex}\ran| & \aleq\|\chf_{K\leq|y_{J}|\leq2K}\{|\nabla g|+|x-z_{J}|^{-1}|g|\}\|_{L^{2}}^{2},\\
|\lan\calL_{\calW}g^{\rmin},\calL_{\calW}g^{\rmex}\ran| & \aleq\|\chf_{K\leq|y_{J}|\leq2K}\{|\Dlt g|+|x-z_{J}|^{-2}|g|\}\|_{L^{2}}^{2}.
\end{align*}
Since $\|\nabla g\|_{L^{2}}+\||x-z_{J}|^{-1}g\|_{L^{2}}\aleq\|g\|_{\dot{H}^{1}}$
and $\|\Dlt g\|_{L^{2}}+\||x-z_{J}|^{-2}g\|_{L^{2}}\aleq\|g\|_{\dot{H}^{2}}$,
we can choose $K=K(g)\in[\dlt_{J}^{-1/20},\dlt_{J}^{-1/10}]$ such
that 
\begin{equation}
\begin{aligned}\|\chf_{K\leq|y_{J}|\leq2K}\{|\nabla g|+|x-z_{J}|^{-1}|g|\}\|_{L^{2}}^{2} & \aleq|\log(\dlt_{J}^{-1/20})|^{-1}\|g\|_{\dot{H}^{1}}^{2},\\
\|\chf_{K\leq|y_{J}|\leq2K}\{|\Dlt g|+|x-z_{J}|^{-2}|g|\}\|_{L^{2}}^{2} & \aleq|\log(\dlt_{J}^{-1/20})|^{-1}\|g\|_{\dot{H}^{2}}^{2}.
\end{aligned}
\label{eq:2.25}
\end{equation}

Now, the proof of \eqref{eq:calL-H1-coer} and \eqref{eq:calL-H2-coer}
follows from combining \eqref{eq:2.23}, \eqref{eq:2.24}, and \eqref{eq:2.25},
and taking $\dlt_{J}>0$ sufficiently small.

\uline{Proof of \mbox{\eqref{eq:calL-H1-coer}}}. We use \eqref{eq:calL-H1-coer-quad-form}
to prove \eqref{eq:calL-H1-coer}. Let $\dlt>0$ be a small constant
to be chosen later; let $(\vec{\iota},\vec{\lmb},\vec{z})\in\calP_{J}(\dlt)$.
Let $c_{i}\coloneqq\lan\calY_{\ul{;i}},g\ran$ and $\wh g\coloneqq g-\tsum j{}c_{j}\calY_{;j}$.
We begin with 
\begin{align}
-\lan\calL_{\calW}g,\wh g\ran+\lan\calL_{\calW}g,\tsum j{}c_{j}\calY_{;j}\ran & =-\{\lan\calL_{\calW}\wh g,\wh g\ran+\tsum j{}c_{j}\lan\calL_{\calW}\calY_{;j},\wh g\ran\}+\tsum j{}c_{j}\lan g,\calL_{\calW}\calY_{;j}\ran\nonumber \\
 & =-\lan\calL_{\calW}\wh g,\wh g\ran+\tsum{i,j}{}c_{i}c_{j}\lan\calY_{;i},\calL_{\calW}\calY_{;j}\ran.\label{eq:2.14-1}
\end{align}

For the first term of \eqref{eq:2.14-1}, we apply \eqref{eq:calL-H1-coer-quad-form}
to have 
\begin{align*}
-\lan\calL_{\calW}\wh g,\wh g\ran & \geq\dlt_{J}\|\wh g\|_{\dot{H}^{1}}^{2}-\calO(\tsum i{}\lan\calY_{\ul{;i}},\wh g\ran^{2}+\tsum{a,i}{}\lan\calZ_{a\ul{;i}},\wh g\ran^{2})\\
 & \geq\dlt_{J}\|\wh g\|_{\dot{H}^{1}}^{2}-\calO(\tsum{a,i}{}\lan\calZ_{a\ul{;i}},g\ran^{2})-\calO(\tsum i{}\lan\calY_{\ul{;i}},\wh g\ran^{2}+\tsum{a,i,j}{}c_{j}^{2}\lan\calZ_{a\ul{;i}},\calY_{;j}\ran^{2}),
\end{align*}
for some fixed constant $\dlt_{J}>0$. The terms in the last $\calO$
are small (we used $\|\calY\|_{L^{2}}=1$ and $\lan\calZ_{a},\calY\ran=0$):
\begin{align*}
\lan\calY_{\ul{;i}},\wh g\ran & =\tsum{j\neq i}{}c_{j}\lan\calY_{\ul{;i}},\calY_{;j}\ran=o_{\dlt\to0}(1)\cdot|\vec{c}|,\\
c_{j}\lan\calZ_{a\ul{;i}},\calY_{;j}\ran & =c_{j}\chf_{i\neq j}\lan\calZ_{a\ul{;i}},\calY_{;j}\ran=o_{\dlt\to0}(1)\cdot|\vec{c}|.
\end{align*}
Thus we have proved 
\begin{equation}
-\lan\calL_{\calW}\wh g,\wh g\ran\geq\dlt_{J}\|\wh g\|_{\dot{H}^{1}}^{2}-\calO(\tsum{a,i}{}\lan\calZ_{a\ul{;i}},g\ran^{2})-o_{\dlt\to0}(1)\cdot|\vec{c}|^{2}.\label{eq:2.14-2}
\end{equation}

We turn to the second term of \eqref{eq:2.14-1}. Using $\calL_{W_{;j}}\calY_{;j}=e_{0}\calY_{\ul{;j}}$,
we have 
\begin{align*}
\tsum{i,j}{}c_{i}c_{j}\lan\calY_{;i},\calL_{\calW}\calY_{;j}\ran & =\tsum{i,j}{}c_{i}c_{j}(e_{0}\lan\calY_{;i},\calY_{\ul{;j}}\ran+\lan\calY_{;i},\{f'(\calW)-f'(W_{;j})\}\calY_{;j}\ran)\\
 & =e_{0}\tsum i{}c_{i}^{2}+\tsum{i,j}{}\chf_{i\neq j}c_{i}c_{j}e_{0}\lan\calY_{;i},\calY_{\ul{;j}}\ran+\tsum{i,j}{}c_{i}c_{j}\lan\calY_{;i},\{f'(\calW)-f'(W_{;j})\}\calY_{;j}\ran.
\end{align*}
The second term of the right hand side is of size $o_{\dlt\to0}(1)\cdot|\vec{c}|^{2}$
as before. For the last term, we use $\calY=\calO(W)$, \eqref{eq:f(a,b)-2},
and \eqref{eq:f(W1,W2)-H1dual-est} to estimate 
\[
\chf_{i\neq j}|\lan\calY_{;i},\{f'(\calW)-f'(W_{;j})\}\calY_{;j}\ran|\aleq\|W_{;j}\{f'(\calW)-f'(W_{;j})\}\|_{L^{(2^{\ast})'}}\|W_{;i}\|_{L^{2^{\ast}}}\aleq R_{j}^{-\frac{N+2}{2}}=o_{\dlt\to0}(1).
\]
Thus, we have proved that 
\begin{equation}
\tsum{i,j}{}c_{i}c_{j}\lan\calY_{;i},\calL_{\calW}\calY_{;j}\ran=e_{0}\tsum i{}c_{i}^{2}-o_{\dlt\to0}(1)\cdot|\vec{c}|^{2}.\label{eq:2.14-3}
\end{equation}

Substituting \eqref{eq:2.14-2} and \eqref{eq:2.14-3} into \eqref{eq:2.14-1}
gives 
\[
-\lan\calL_{\calW}g,\wh g\ran+\lan\calL_{\calW}g,\tsum j{}c_{j}\calY_{;j}\ran\geq\dlt_{J}\|\wh g\|_{\dot{H}^{1}}^{2}+e_{0}\tsum i{}c_{i}^{2}-\calO(\tsum{a,i}{}\lan\calZ_{a\ul{;i}},g\ran^{2})-o_{\dlt\to0}(1)\cdot|\vec{c}|^{2}.
\]
Since $\|\wh g\|_{\dot{H}^{1}}^{2}+\tsum i{}c_{i}^{2}\aeq\|g\|_{\dot{H}^{1}}^{2}$,
taking $\dlt>0$ sufficiently small completes the proof of \eqref{eq:calL-H1-coer}.
\end{proof}

\section{\label{subsec:Proof-of-lemma-integrals}Proof of Lemma~\ref{lem:f(W1,W2)-est}}
\begin{lem}
Assume $\lmb_{1}\leq\lmb_{2}$. Then, we have 
\begin{align}
|x-z_{1}|\leq\tfrac{1}{2}\sqrt{\lmb_{1}\lmb_{2}}R_{12}\quad & \implies\quad\lan y_{2}\ran\aeq\sqrt{\lmb_{1}/\lmb_{2}}R_{12},\label{eq:2.10-1}\\
|x-z_{2}|\leq\tfrac{1}{2}\sqrt{\lmb_{1}\lmb_{2}}R_{12}\leq|x-z_{1}|\quad & \implies\quad|y_{1}|\aeq\sqrt{\lmb_{2}/\lmb_{1}}R_{12},\label{eq:2.10-2}\\
\min\{|x-z_{1}|,|x-z_{2}|\}\geq\tfrac{1}{2}\sqrt{\lmb_{1}\lmb_{2}}R_{12}\quad & \implies\quad|x-z_{1}|\aeq|x-z_{2}|\ageq\sqrt{\lmb_{1}\lmb_{2}}R_{12}.\label{eq:2.10-3}
\end{align}
We also have 
\begin{align}
|y_{1}|\leq\tfrac{1}{2}R_{12}\quad & \implies\quad|W_{;1}|\ageq|W_{;2}|\aeq\lmb_{1}^{-\frac{N-2}{2}}R_{12}^{-(N-2)},\label{eq:2.13-1}\\
|y_{1}|\geq\tfrac{1}{2}R_{12}\quad & \implies\quad|W_{;1}|\aleq|W_{;2}|.\label{eq:2.13-2}
\end{align}
\end{lem}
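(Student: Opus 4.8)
The plan is to treat all five implications as elementary consequences of the triangle inequality, the single pointwise bound $W(y)\aeq\lan y\ran^{-(N-2)}$ coming from \eqref{eq:Aubin-Talenti-bubble}, and the defining property of $R_{12}$. Since $\lmb_1\leq\lmb_2$, the definition gives $R_{12}=\max\{\sqrt{\lmb_2/\lmb_1},\,|z_1-z_2|/\sqrt{\lmb_1\lmb_2}\}\geq1$, and in particular $|z_1-z_2|\leq\sqrt{\lmb_1\lmb_2}R_{12}$; these two facts are used throughout. I would first prove the geometric implications \eqref{eq:2.10-1}--\eqref{eq:2.10-3} and then read off the pointwise comparisons \eqref{eq:2.13-1}--\eqref{eq:2.13-2}.

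For \eqref{eq:2.10-1}: from $|x-z_2|\leq|x-z_1|+|z_1-z_2|\leq\tfrac{1}{2}\sqrt{\lmb_1\lmb_2}R_{12}+\sqrt{\lmb_1\lmb_2}R_{12}$, dividing by $\lmb_2$ gives the upper bound $|y_2|\aleq\sqrt{\lmb_1/\lmb_2}R_{12}$. For the matching lower bound $\lan y_2\ran\ageq\sqrt{\lmb_1/\lmb_2}R_{12}$ I would split according to which term realizes the maximum defining $R_{12}$. If $R_{12}=\sqrt{\lmb_2/\lmb_1}$ then $\sqrt{\lmb_1/\lmb_2}R_{12}=1$ and $\lan y_2\ran\geq1$ suffices. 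If $R_{12}=|z_1-z_2|/\sqrt{\lmb_1\lmb_2}$ then the reverse triangle inequality gives $|x-z_2|\geq|z_1-z_2|-|x-z_1|\geq\tfrac{1}{2}|z_1-z_2|$, so $|y_2|\ageq|z_1-z_2|/\lmb_2=\sqrt{\lmb_1/\lmb_2}R_{12}\geq\sqrt{\lmb_1/\lmb_2}\sqrt{\lmb_2/\lmb_1}=1$, whence $\lan y_2\ran\aeq|y_2|\aeq\sqrt{\lmb_1/\lmb_2}R_{12}$. The implications \eqref{eq:2.10-2} and \eqref{eq:2.10-3} are pure triangle-inequality bookkeeping: in \eqref{eq:2.10-2}, $|y_1|=|x-z_1|/\lmb_1\geq\tfrac{1}{2}\sqrt{\lmb_2/\lmb_1}R_{12}$ is immediate and $|x-z_1|\leq|x-z_2|+|z_1-z_2|\leq\tfrac{3}{2}\sqrt{\lmb_1\lmb_2}R_{12}$ gives the upper bound; in \eqref{eq:2.10-3}, the hypothesis already forces $|x-z_1|,|x-z_2|\ageq\sqrt{\lmb_1\lmb_2}R_{12}$, and $|x-z_1|\leq|x-z_2|+|z_1-z_2|\leq|x-z_2|+\sqrt{\lmb_1\lmb_2}R_{12}\leq3|x-z_2|$ together with the symmetric estimate yields $|x-z_1|\aeq|x-z_2|$.

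For the pointwise comparisons: in \eqref{eq:2.13-1}, $|y_1|\leq\tfrac{1}{2}R_{12}$ together with $\lmb_1\leq\lmb_2$ gives $|x-z_1|=\lmb_1|y_1|\leq\tfrac{1}{2}\sqrt{\lmb_1\lmb_2}R_{12}$, so \eqref{eq:2.10-1} applies and $\lan y_2\ran\aeq\sqrt{\lmb_1/\lmb_2}R_{12}$; hence $|W_{;2}|\aeq\lmb_2^{-\frac{N-2}{2}}\lan y_2\ran^{-(N-2)}\aeq\lmb_1^{-\frac{N-2}{2}}R_{12}^{-(N-2)}$, while $\lan y_1\ran\aeq R_{12}$ (using $R_{12}\geq1$) gives $|W_{;1}|\aeq\lmb_1^{-\frac{N-2}{2}}\lan y_1\ran^{-(N-2)}\ageq\lmb_1^{-\frac{N-2}{2}}R_{12}^{-(N-2)}\aeq|W_{;2}|$. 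For \eqref{eq:2.13-2} I would reduce $|W_{;1}|\aleq|W_{;2}|$ to the equivalent inequality $\lan y_2\ran\aleq\sqrt{\lmb_1/\lmb_2}\lan y_1\ran$, which follows from $\lan y_2\ran\aeq1+|x-z_2|/\lmb_2\leq1+(\lmb_1|y_1|+|z_1-z_2|)/\lmb_2$ combined with $\lmb_1/\lmb_2\leq\sqrt{\lmb_1/\lmb_2}$, with $|z_1-z_2|/\lmb_2\leq R_{12}\sqrt{\lmb_1/\lmb_2}\leq2|y_1|\sqrt{\lmb_1/\lmb_2}$ (using $|y_1|\geq\tfrac{1}{2}R_{12}$), and with the lower bound $\sqrt{\lmb_1/\lmb_2}\lan y_1\ran\geq\tfrac{1}{2}\sqrt{\lmb_1/\lmb_2}R_{12}\geq\tfrac{1}{2}$ used to absorb the additive constant $1$.

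There is no real obstacle here: the whole lemma amounts to splitting into the ``$R_{12}$ coming from the scale mismatch $\sqrt{\lmb_2/\lmb_1}$'' versus ``$R_{12}$ coming from the separation $|z_1-z_2|/\sqrt{\lmb_1\lmb_2}$'' alternatives and chasing constants through the triangle inequality. The only spot needing a moment's care is the lower bound in \eqref{eq:2.10-1} --- and hence the two-sided estimate for $|W_{;2}|$ in \eqref{eq:2.13-1} --- where one genuinely uses both $R_{12}\geq\sqrt{\lmb_2/\lmb_1}$ and, in the separated case, the reverse triangle inequality, whereas everywhere else a one-sided triangle inequality is enough.
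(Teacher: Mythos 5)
Your proof is correct and takes essentially the same route as the paper: elementary triangle-inequality bookkeeping based on $R_{12}=\max\{\sqrt{\lmb_{2}/\lmb_{1}},|z_{1}-z_{2}|/\sqrt{\lmb_{1}\lmb_{2}}\}$ (so $\sqrt{\lmb_{1}\lmb_{2}}R_{12}\aeq\lmb_{2}+|z_{1}-z_{2}|$) together with $W(y)\aeq\lan y\ran^{-(N-2)}$, the only cosmetic difference being that for \eqref{eq:2.13-2} the paper splits on the size of $|x-z_{2}|$ while you run a single inequality chain. One tiny slip: in your argument for \eqref{eq:2.13-1} the assertion $\lan y_{1}\ran\aeq R_{12}$ should only be the one-sided bound $\lan y_{1}\ran\aleq R_{12}$ (the point $y_{1}=0$ is allowed), but since the subsequent step uses only this upper bound, the conclusion stands.
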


\begin{proof}
Note that $R_{12}=\max\{\sqrt{\frac{\lmb_{2}}{\lmb_{1}}},\frac{|z_{1}-z_{2}|}{\sqrt{\lmb_{1}\lmb_{2}}}\}$
due to $\lmb_{1}\leq\lmb_{2}$.

\uline{Proof of \mbox{\eqref{eq:2.10-1}}}. Assume $|x-z_{1}|\leq\frac{1}{2}\sqrt{\lmb_{1}\lmb_{2}}R_{12}$.
This implies $|x-z_{1}|\leq\frac{1}{2}(\lmb_{2}+|z_{1}-z_{2}|)$,
so $\lmb_{2}+|x-z_{2}|\aeq\lmb_{2}+|z_{1}-z_{2}|$. Dividing both
sides by $\lmb_{2}$, we obtain $\lan y_{2}\ran\aeq\sqrt{\lmb_{1}/\lmb_{2}}R_{12}$.

\uline{Proof of \mbox{\eqref{eq:2.10-2}}}. Assume $|x-z_{2}|\leq\tfrac{1}{2}\sqrt{\lmb_{1}\lmb_{2}}R_{12}\leq|x-z_{1}|$.
The latter inequality gives a lower bound $|y_{1}|\geq\frac{1}{2}\sqrt{\lmb_{2}/\lmb_{1}}R_{12}$.
The former inequality gives $|x-z_{1}|\leq\tfrac{1}{2}\sqrt{\lmb_{1}\lmb_{2}}R_{12}+|z_{1}-z_{2}|\leq\frac{3}{2}\sqrt{\lmb_{1}\lmb_{2}}R_{12}$,
which gives an upper bound $|y_{1}|\leq\frac{3}{2}\sqrt{\lmb_{2}/\lmb_{1}}R_{12}$. 

\uline{Proof of \mbox{\eqref{eq:2.10-3}}}. The proof easily follows
from $|z_{1}-z_{2}|\leq\sqrt{\lmb_{1}\lmb_{2}}R_{12}$.

\uline{Proof of \mbox{\eqref{eq:2.13-1}}}. Assume $|y_{1}|\leq\frac{1}{2}R_{12}$.
Note that $|W_{;1}|\aeq\lmb_{1}^{-\frac{N-2}{2}}\lan y_{1}\ran^{-(N-2)}\ageq\lmb_{1}^{-\frac{N-2}{2}}R_{12}^{-(N-2)}$.
By \eqref{eq:2.10-1}, we have $|W_{;2}|\aeq\lmb_{2}^{-\frac{N-2}{2}}\lan y_{2}\ran^{-(N-2)}\aeq\lmb_{1}^{-\frac{N-2}{2}}R_{12}^{-(N-2)}$
as desired.

\uline{Proof of \mbox{\eqref{eq:2.13-2}}}. Assume $|y_{1}|\geq\frac{1}{2}R_{12}$.
If $|x-z_{2}|\leq2(\lmb_{2}+|z_{1}-z_{2}|)$, then $|W_{;2}|\ageq\lmb_{2}^{\frac{N-2}{2}}(\lmb_{2}+|z_{1}-z_{2}|)^{-(N-2)}\aeq\lmb_{1}^{-\frac{N-2}{2}}R_{12}^{-(N-2)}$
and $|W_{;1}|\aeq\lmb_{1}^{-\frac{N-2}{2}}|y_{1}|^{-(N-2)}\aleq\lmb_{1}^{-\frac{N-2}{2}}R_{12}^{-(N-2)}$,
so $|W_{;2}|\ageq|W_{;1}|$. If $|x-z_{2}|\geq2(\lmb_{2}+|z_{1}-z_{2}|)$,
then $|x-z_{1}|\aeq|x-z_{2}|\ageq\lmb_{2}+|z_{1}-z_{2}|$ so $|W_{;1}|\aeq\lmb_{1}^{\frac{N-2}{2}}|x-z_{1}|^{-(N-2)}\aleq\lmb_{2}^{\frac{N-2}{2}}|x-z_{2}|^{-(N-2)}\aeq|W_{;2}|$.
This completes the proof.
\end{proof}
\begin{proof}[Proof of Lemma~\ref{lem:f(W1,W2)-est}]
\uline{Proof of \mbox{\eqref{eq:W1W2-Hdot1}}}. We omit the $\eps$-dependence
of implicit constants in the proof. It suffices to estimate $\int g_{\eps}(W_{;1},W_{;2})$,
where 
\begin{equation}
g_{\eps}(W_{;1},W_{;2})\coloneqq\begin{cases}
|W_{;1}|^{\frac{N-\eps}{N-2}}|W_{;2}|^{\frac{N+\eps}{N-2}} & \text{if }|W_{;1}|\geq|W_{;2}|,\\
|W_{;2}|^{\frac{N-\eps}{N-2}}|W_{;1}|^{\frac{N+\eps}{N-2}} & \text{if }|W_{;1}|\leq|W_{;2}|.
\end{cases}\label{eq:def-g_eps}
\end{equation}
By symmetry, we may assume 
\[
\lmb_{1}\leq\lmb_{2}.
\]
We separate into a few regions to prove the estimates. Note that $\bbR^{N}$
can be written as an almost disjoint union 
\[
\bbR^{N}=\Omg_{1}\cup\Omg_{2}\cup\Omg_{3}\cup\Omg_{4},
\]
where 
\begin{align*}
\Omg_{1} & =\{x:|y_{1}|\leq\tfrac{1}{2}R_{12}\},\\
\Omg_{2} & =\{x:\tfrac{1}{2}\lmb_{1}R_{12}\leq|x-z_{1}|\leq\tfrac{1}{2}\sqrt{\lmb_{1}\lmb_{2}}R_{12}\},\\
\Omg_{3} & =\{x:|x-z_{2}|\leq\tfrac{1}{2}\sqrt{\lmb_{1}\lmb_{2}}R_{12}\leq|x-z_{1}|\},\\
\Omg_{4} & =\{x:\min\{|x-z_{1}|,|x-z_{2}|\}\geq\tfrac{1}{2}\sqrt{\lmb_{1}\lmb_{2}}R_{12}\}.
\end{align*}
Note that \eqref{eq:2.13-1} and \eqref{eq:2.13-2} imply 
\begin{align*}
g_{\eps}(W_{;1},W_{;2}) & \aeq|W_{;1}|^{\frac{N-\eps}{N-2}}|W_{;2}|^{\frac{N+\eps}{N-2}}\qquad\text{in }\Omg_{1},\\
g_{\eps}(W_{;1},W_{;2}) & \aeq|W_{;2}|^{\frac{N-\eps}{N-2}}|W_{;1}|^{\frac{N+\eps}{N-2}}\qquad\text{in }\Omg_{2}\cup\Omg_{3}\cup\Omg_{4}.
\end{align*}

In $\Omg_{1}=\{|y_{1}|\leq\frac{1}{2}R_{12}\}$, we have $|W_{;2}|^{\frac{N+\eps}{N-2}}\aeq\lmb_{1}^{-\frac{N+\eps}{2}}R_{12}^{-(N+\eps)}$
by \eqref{eq:2.13-1}, so 
\[
g_{\eps}(W_{;1},W_{;2})\aeq|W_{;1}|^{\frac{N-\eps}{N-2}}|W_{;2}|^{\frac{N+\eps}{N-2}}\aeq\lmb_{1}^{-N}R_{12}^{-(N+\eps)}\lan y_{1}\ran^{-(N-\eps)}.
\]
Hence, 
\[
\int_{\Omg_{1}}g_{\eps}(W_{;1},W_{;2})\aeq R_{12}^{-(N+\eps)}\int_{|y_{1}|\leq\frac{1}{2}R_{12}}\lan y_{1}\ran^{-(N-\eps)}dy_{1}\aeq\begin{cases}
R_{12}^{-N} & \text{if }\eps>0,\\
R_{12}^{-N}\log(1+R_{12}) & \text{if }\eps=0,\\
R_{12}^{-(N+\eps)} & \text{if }\eps<0.
\end{cases}
\]

In $\Omg_{2}=\{\frac{1}{2}\lmb_{1}R_{12}\leq|x-z_{1}|\leq\frac{1}{2}\sqrt{\lmb_{1}\lmb_{2}}R_{12}\}$,
we have $|W_{;2}|^{\frac{N-\eps}{N-2}}\aeq\lmb_{1}^{-\frac{N-\eps}{2}}R_{12}^{-(N-\eps)}$
by \eqref{eq:2.10-1}, so 
\[
g_{\eps}(W_{;1},W_{;2})\aeq|W_{;2}|^{\frac{N-\eps}{N-2}}|W_{;1}|^{\frac{N+\eps}{N-2}}\aeq\lmb_{1}^{-N}R_{12}^{-(N-\eps)}\lan y_{1}\ran^{-(N+\eps)}.
\]
Hence, 
\[
\int_{\Omg_{2}}g_{\eps}(W_{;1},W_{;2})\aeq R_{12}^{-(N-\eps)}\int_{\frac{1}{2}R_{12}\leq|y_{1}|\leq\frac{1}{2}\sqrt{\lmb_{2}/\lmb_{1}}R_{12}}\lan y_{1}\ran^{-(N+\eps)}dy_{1}\aleq\begin{cases}
R_{12}^{-N} & \text{if }\eps>0,\\
R_{12}^{-N}\log(\frac{\lmb_{2}}{\lmb_{1}}) & \text{if }\eps=0,\\
(\frac{\lmb_{2}}{\lmb_{1}})^{-\frac{\eps}{2}}R_{12}^{-N} & \text{if }\eps<0.
\end{cases}
\]

In $\Omg_{3}=\{|x-z_{2}|\leq\frac{1}{2}\sqrt{\lmb_{1}\lmb_{2}}R_{12}\leq|x-z_{1}|\}$,
we have $|W_{;1}|^{\frac{N+\eps}{N-2}}\aeq\lmb_{2}^{-\frac{N+\eps}{2}}R_{12}^{-(N+\eps)}$
by \eqref{eq:2.10-2}, so 
\[
g_{\eps}(W_{;1},W_{;2})\aeq|W_{;2}|^{\frac{N-\eps}{N-2}}|W_{;1}|^{\frac{N+\eps}{N-2}}\aeq\lmb_{2}^{-N}R_{12}^{-(N+\eps)}\lan y_{2}\ran^{-(N-\eps)}.
\]
Hence, 
\[
\int_{\Omg_{3}}g_{\eps}(W_{;1},W_{;2})\aleq R_{12}^{-(N+\eps)}\int_{|y_{2}|\leq\frac{1}{2}\sqrt{\lmb_{1}/\lmb_{2}}R_{12}}\lan y_{2}\ran^{-(N-\eps)}dy_{2}\aleq\begin{cases}
(\frac{\lmb_{1}}{\lmb_{2}})^{\frac{\eps}{2}}R_{12}^{-N} & \text{if }\eps>0,\\
R_{12}^{-N}\log(1+\sqrt{\frac{\lmb_{1}}{\lmb_{2}}}R_{12}) & \text{if }\eps=0,\\
R_{12}^{-(N+\eps)} & \text{if }\eps<0.
\end{cases}
\]

In $\Omg_{4}=\{\min\{|x-z_{1}|,|x-z_{2}|\}\geq\tfrac{1}{2}\sqrt{\lmb_{1}\lmb_{2}}R_{12}\}$,
we use \eqref{eq:2.10-3} to obtain 
\[
g_{\eps}(W_{;1},W_{;2})\aeq|W_{;2}|^{\frac{N-\eps}{N-2}}|W_{;1}|^{\frac{N+\eps}{N-2}}\aeq\lmb_{2}^{\frac{N-\eps}{2}}\lmb_{1}^{\frac{N+\eps}{2}}|x-z_{1}|^{-2N}.
\]
Hence, 
\[
\int_{\Omg_{4}}g_{\eps}(W_{;1},W_{;2})\aeq\lmb_{2}^{\frac{N-\eps}{2}}\lmb_{1}^{\frac{N+\eps}{2}}\int_{|x-z_{1}|\geq\frac{1}{2}\sqrt{\lmb_{1}\lmb_{2}}R_{12}}|x-z_{1}|^{-2N}dx\aleq\Big(\frac{\lmb_{1}}{\lmb_{2}}\Big)^{\frac{\eps}{2}}R_{12}^{-N}.
\]
This completes the proof of \eqref{eq:W1W2-Hdot1}.

\uline{Proof of \mbox{\eqref{eq:f(W1,W2)-H1dual-est}}}. Since
$N>6$, we can choose $\eps>0$ such that $(2^{\ast})'=\frac{N+\eps}{N-2}$.
With this $\eps$, notice that $f(W_{;1},W_{;2})^{(2^{\ast})'}=g_{\eps}(W_{;1},W_{;2})$,
where $g_{\eps}(W_{;1},W_{;2})$ is defined in \eqref{eq:def-g_eps}.
By \eqref{eq:W1W2-Hdot1}, we get
\[
\|f(W_{;1},W_{;2})\|_{L^{(2^{\ast})'}}\aeq\|g_{\eps}(W_{;1},W_{;2})\|_{L^{1}}^{1/(2^{\ast})'}\aleq R_{12}^{-\frac{N+2}{2}}
\]
as desired.

\uline{Proof of \mbox{\eqref{eq:f(W1,W2)-L2-est}} and \mbox{\eqref{eq:f(W1,W2)-H2dual-est}}}.
As in the proof of \eqref{eq:W1W2-Hdot1}, we write $\bbR^{N}$ as
\[
\bbR^{N}=\Omg_{1}\cup\Omg_{2}\cup\Omg_{3}\cup\Omg_{4}.
\]

In $\Omg_{1}=\{|y_{1}|\leq\frac{1}{2}R_{12}\}$, we have $|W_{;2}|\aeq\lmb_{1}^{-\frac{N-2}{2}}R_{12}^{-(N-2)}$
by \eqref{eq:2.13-1}, so 
\[
f(W_{;1},W_{;2})\aeq|W_{;1}|^{p-1}|W_{;2}|\aeq\lmb_{1}^{-\frac{N+2}{2}}R_{12}^{-(N-2)}\lan y_{1}\ran^{-4}.
\]
Hence, 
\begin{align*}
 & \|\chf_{\Omg_{1}}f(W_{;1},W_{;2})\|_{L_{x}^{2}}\aeq\lmb_{1}^{-1}R_{12}^{-(N-2)}\|\chf_{|y_{1}|\leq\frac{1}{2}R_{12}}\lan y_{1}\ran^{-4}\|_{L_{y_{1}}^{2}}\\
 & \qquad\aeq\lmb_{1}^{-1}R_{12}^{-(N-2)}\chf_{N=7}+\lmb_{1}^{-1}R_{12}^{-(N-2)}(1+\log R_{12})^{\frac{1}{2}}\chf_{N=8}+\lmb_{1}^{-1}R_{12}^{-\frac{N}{2}-2}\chf_{N\geq9},\\
 & \|\chf_{\Omg_{1}}f(W_{;1},W_{;2})\|_{L_{x}^{(2^{\ast\ast})'}}\aeq\lmb_{1}R_{12}^{-(N-2)}\|\chf_{|y_{1}|\leq\frac{1}{2}R_{12}}\lan y_{1}\ran^{-4}\|_{L_{y_{1}}^{(2^{\ast\ast})'}}\aeq\lmb_{1}R_{12}^{-\frac{N}{2}}.
\end{align*}

In $\Omg_{2}=\{\frac{1}{2}\lmb_{1}R_{12}\leq|x-z_{1}|\leq\frac{1}{2}\sqrt{\lmb_{1}\lmb_{2}}R_{12}\}$,
we have $|W_{;2}|^{p-1}\aeq\lmb_{1}^{-2}R_{12}^{-4}$ by \eqref{eq:2.10-1},
so 
\[
f(W_{;1},W_{;2})\aeq|W_{;2}|^{p-1}|W_{;1}|\aeq\lmb_{1}^{-\frac{N+2}{2}}R_{12}^{-4}|y_{1}|^{-(N-2)}.
\]
Hence, 
\begin{align*}
 & \|\chf_{\Omg_{2}}f(W_{;1},W_{;2})\|_{L_{x}^{2}}\aleq\lmb_{1}^{-1}R_{12}^{-4}\|\chf_{|y_{1}|\geq\frac{1}{2}R_{12}}|y_{1}|^{-(N-2)}\|_{L_{y_{1}}^{2}}\aleq\lmb_{1}^{-1}R_{12}^{-\frac{N}{2}-2},\\
 & \|\chf_{\Omg_{2}}f(W_{;1},W_{;2})\|_{L_{x}^{(2^{\ast\ast})'}}\aleq\lmb_{1}R_{12}^{-4}\|\chf_{\frac{1}{2}R_{12}\leq|y_{1}|\leq\frac{1}{2}\sqrt{\lmb_{2}/\lmb_{1}}R_{12}}|y_{1}|^{-(N-2)}\|_{L_{y_{1}}^{(2^{\ast\ast})'}}\\
 & \qquad\aleq\lmb_{1}\sqrt{\lmb_{2}/\lmb_{1}}^{\frac{1}{2}}R_{12}^{-\frac{N}{2}}\chf_{N=7}+\lmb_{1}(\log(\lmb_{2}/\lmb_{1}))^{\frac{1}{(2^{\ast\ast})'}}R_{12}^{-\frac{N}{2}}\chf_{N=8}+\lmb_{1}R_{12}^{-\frac{N}{2}}\chf_{N\geq9}.
\end{align*}

In $\Omg_{3}=\{|x-z_{2}|\leq\frac{1}{2}\sqrt{\lmb_{1}\lmb_{2}}R_{12}\leq|x-z_{1}|\}$,
we have $|W_{;1}|\aeq\lmb_{2}^{-\frac{N-2}{2}}R_{12}^{-(N-2)}$ by
\eqref{eq:2.10-2}, so 
\[
f(W_{;1},W_{;2})\aeq|W_{;2}|^{p-1}|W_{;1}|\aeq\lmb_{2}^{-\frac{N+2}{2}}R_{12}^{-(N-2)}\lan y_{2}\ran^{-4}.
\]
Hence, 
\begin{align*}
 & \|\chf_{\Omg_{3}}f(W_{;1},W_{;2})\|_{L_{x}^{2}}\aeq\lmb_{2}^{-1}R_{12}^{-(N-2)}\|\chf_{|y_{2}|\leq\frac{1}{2}\sqrt{\lmb_{1}/\lmb_{2}}R_{12}}\lan y_{2}\ran^{-4}\|_{L_{y_{2}}^{2}}\\
 & \qquad\aeq\lmb_{2}^{-1}R_{12}^{-(N-2)}\chf_{N=7}+\lmb_{2}^{-1}R_{12}^{-(N-2)}(1+\log(\sqrt{\tfrac{\lmb_{1}}{\lmb_{2}}}R_{12}))^{\frac{1}{2}}\chf_{N=8}+\lmb_{2}^{-1}\sqrt{\tfrac{\lmb_{1}}{\lmb_{2}}}^{\frac{N}{2}-4}R_{12}^{-\frac{N}{2}-2}\chf_{N\geq9},\\
 & \|\chf_{\Omg_{3}}f(W_{;1},W_{;2})\|_{L_{x}^{(2^{\ast\ast})'}}\aeq\lmb_{2}R_{12}^{-(N-2)}\|\chf_{|y_{2}|\leq\frac{1}{2}\sqrt{\lmb_{1}/\lmb_{2}}R_{12}}\lan y_{2}\ran^{-4}\|_{L_{y_{2}}^{(2^{\ast\ast})'}}\aeq\lmb_{2}\sqrt{\tfrac{\lmb_{1}}{\lmb_{2}}}^{\frac{N}{2}-2}R_{12}^{-\frac{N}{2}}.
\end{align*}

In $\Omg_{4}=\{\min\{|x-z_{1}|,|x-z_{2}|\}\geq\tfrac{1}{2}\sqrt{\lmb_{1}\lmb_{2}}R_{12}\}$,
we use \eqref{eq:2.10-3} to obtain 
\[
f(W_{;1},W_{;2})\aeq|W_{;2}|^{p-1}|W_{;1}|\aeq\lmb_{2}^{2}\lmb_{1}^{\frac{N-2}{2}}|x-z_{1}|^{-(N+2)}.
\]
Hence, 
\begin{align*}
\|\chf_{\Omg_{4}}f(W_{;1},W_{;2})\|_{L_{x}^{2}} & \aleq\lmb_{2}^{2}\lmb_{1}^{\frac{N-2}{2}}\|\chf_{|x-z_{1}|\geq\tfrac{1}{2}\sqrt{\lmb_{1}\lmb_{2}}R_{12}}|x-z_{1}|^{-(N+2)}\|_{L_{x}^{2}}\aleq\lmb_{2}^{-1}\sqrt{\tfrac{\lmb_{1}}{\lmb_{2}}}^{\frac{N}{2}-4}R_{12}^{-\frac{N}{2}-2},\\
\|\chf_{\Omg_{4}}f(W_{;1},W_{;2})\|_{L_{x}^{(2^{\ast\ast})'}} & \aleq\lmb_{2}^{2}\lmb_{1}^{\frac{N-2}{2}}\|\chf_{|x-z_{1}|\geq\tfrac{1}{2}\sqrt{\lmb_{1}\lmb_{2}}R_{12}}|x-z_{1}|^{-(N+2)}\|_{L_{x}^{(2^{\ast\ast})'}}\aleq\lmb_{2}\sqrt{\tfrac{\lmb_{1}}{\lmb_{2}}}^{\frac{N}{2}-2}R_{12}^{-\frac{N}{2}}.
\end{align*}
This completes the proof.
\end{proof}

\section{\label{sec:Proof-of-inner-prod}Proof of \eqref{eq:2.48} and \eqref{eq:2.49}}
\begin{proof}
\uline{Proof of \mbox{\eqref{eq:2.48}} when \mbox{$\lmb_{i}\leq\lmb_{j}$}}.
Introducing $\lmb=\lmb_{j}/\lmb_{i}\geq1$ and $z=(z_{j}-z_{i})/\lmb_{i}$,
we get 
\[
\tint{}{}[\Lmb W]_{;i}f'(W_{;i})W_{;j}=\iota_{i}\iota_{j}\tint{}{}\Lmb Wf'(W)W_{\lmb,z}.
\]
For $|y|\leq\frac{1}{2}(\lmb+|z|)$, we have $\lmb+|y-z|\aeq\lmb+|z|$
so 
\begin{align*}
W_{\lmb,z}(y) & =W_{\lmb,z}(0)+y\cdot\nabla W_{\lmb,z}(0)+\calO(|y|^{2}\lmb^{\frac{N-2}{2}}(\lmb+|z|)^{-N})\\
 & =W_{\lmb,z}(0)+y\cdot\nabla W_{\lmb,z}(0)+R_{ij}^{-(N-2)}\cdot\calO(|y|^{2}(\lmb+|z|)^{-2}).
\end{align*}
This gives 
\begin{align*}
 & \tint{|y|\leq\frac{1}{2}(\lmb+|z|)}{}\Lmb Wf'(W)W_{\lmb,z}\\
 & =W_{\lmb,z}(0)\tint{|y|\leq\frac{1}{2}(\lmb+|z|)}{}\Lmb Wf'(W)+\tint{|y|\leq\frac{1}{2}(\lmb+|z|)}{}R_{ij}^{-(N-2)}\cdot\calO(\lan y\ran^{-N}(\lmb+|z|)^{-2})\\
 & =W_{\lmb,z}(0)\tint{}{}\Lmb Wf'(W)+R_{ij}^{-(N-2)}\cdot\calO\big((\lmb+|z|)^{-2}(1+\log(\lmb+|z|))\big).
\end{align*}
For $|y|>\frac{1}{2}(\lmb+|z|)$, we have $W(y)\aleq|y|^{-(N-2)}\aleq(\lmb+|z|+|y-z|)^{-(N-2)}$,
so 
\begin{align*}
|\tint{|y|>\frac{1}{2}(\lmb+|z|)}{}\Lmb Wf'(W)W_{\lmb,z}| & \aleq\tint{}{}(\lmb+|z|+|y-z|)^{-(N+2)}\lmb^{\frac{N-2}{2}}(\lmb+|y-z|)^{-(N-2)}\\
 & \aleq\lmb^{\frac{N-2}{2}}(\lmb+|z|)^{-N}\aleq R_{ij}^{-(N-2)}(\lmb+|z|)^{-2}.
\end{align*}
Combining the previous two displays, we conclude 
\[
\tint{}{}\Lmb Wf'(W)W_{\lmb,z}=\tint{}{}\Lmb Wf'(W)\cdot W_{\lmb,z}(0)+R_{ij}^{-(N-2)}\cdot\calO((\lmb+|z|)^{-2}(1+\log(\lmb+|z|))).
\]
Since $\int\Lmb Wf'(W)=-\frac{N-2}{2}\int f(W)$ and 
\[
(\lmb+|z|)^{-2}(1+\log(\lmb+|z|))\aeq(\sqrt{\tfrac{\lmb_{j}}{\lmb_{i}}}R_{ij})^{-2}(1+\log(\sqrt{\tfrac{\lmb_{j}}{\lmb_{i}}}R_{ij})),
\]
we obtain \eqref{eq:2.48} when $\lmb_{i}\leq\lmb_{j}$.

\uline{Proof of \mbox{\eqref{eq:2.48}} when \mbox{$\lmb_{i}\geq\lmb_{j}$}}.
In this case, we first integrate by parts: 
\[
\tint{}{}[\Lmb W]_{;i}f'(W_{;i})W_{;j}=-\tint{}{}\Dlt[\Lmb W]_{;i}W_{;j}=-\tint{}{}[\Lmb W]_{;i}\Dlt W_{;j}=\tint{}{}f(W_{;j})[\Lmb W]_{;i}.
\]
Introducing $\lmb=\lmb_{i}/\lmb_{j}\geq1$ and $z=(z_{i}-z_{j})/\lmb_{j}$,
we get 
\[
\tint{}{}f(W_{;j})[\Lmb W]_{;i}=\iota_{i}\iota_{j}\tint{}{}f(W)[\Lmb W]_{\lmb,z}.
\]
Proceeding as in the previous paragraph, we conclude 
\[
\tint{}{}f(W)[\Lmb W]_{\lmb,z}=\tint{}{}f(W)\cdot[\Lmb W]_{\lmb,z}(0)+R_{ij}^{-(N-2)}\cdot\calO((\lmb+|z|)^{-2}(1+\log(\lmb+|z|))).
\]
As $\lmb+|z|\aeq\sqrt{\lmb}R_{ij}\aeq\sqrt{\frac{\lmb_{i}}{\lmb_{j}}}R_{ij}$,
we get \eqref{eq:2.48} when $\lmb_{i}\geq\lmb_{j}$.

\uline{Proof of \mbox{\eqref{eq:2.49}} when \mbox{$\lmb_{i}\leq\lmb_{j}$}}.
First, we manipulate the integral as 
\[
\tint{}{}[\nabla W]_{;i}f'(W_{;i})W_{;j}=-\tint{}{}\Dlt[\nabla W]_{;i}W_{;j}=-\tint{}{}[\nabla W]_{;i}\Dlt W_{;j}=\tint{}{}[\nabla W]_{;i}f(W_{;j}).
\]
We perform a further integration by parts as 
\[
\tint{}{}[\nabla W]_{;i}f(W_{;j})=-\lmb_{i}\tint{}{}\nabla W_{;i}\,\Dlt W_{;j}=\lmb_{i}\tint{}{}\Dlt W_{;i}\,\nabla W_{;j}=-\tfrac{\lmb_{i}}{\lmb_{j}}\tint{}{}f(W_{;i})[\nabla W]_{;j}.
\]
Introducing $\lmb=\lmb_{j}/\lmb_{i}\geq1$ and $z=(z_{j}-z_{i})/\lmb_{i}$,
we get 
\[
-\tfrac{\lmb_{i}}{\lmb_{j}}\tint{}{}f(W_{;i})[\nabla W]_{;j}=-\iota_{i}\iota_{j}\lmb^{-1}\tint{}{}f(W)[\nabla W]_{\lmb,z}=-\iota_{i}\iota_{j}\tint{}{}f(W)\nabla W_{\lmb,z}.
\]
Now, proceeding as in the proof of \eqref{eq:2.48}, one obtains 
\[
-\tint{}{}f(W)\nabla W_{\lmb,z}=-\tint{}{}f(W)\cdot\nabla W_{\lmb,z}(0)+\calO(R_{ij}^{-(N-2)}(\lmb+|z|)^{-3}(1+\log(\lmb+|z|))).
\]
Expressing this in terms of $(\lmb_{i},z_{i})$ and $(\lmb_{j},z_{j})$
and applying $\lmb+|z|\geq R_{ij}$ give \eqref{eq:2.49} when $\lmb_{i}\leq\lmb_{j}$.

\uline{Proof of \mbox{\eqref{eq:2.49}} when \mbox{$\lmb_{i}\geq\lmb_{j}$}}.
Introducing $\lmb=\lmb_{i}/\lmb_{j}\geq1$ and $z=(z_{i}-z_{j})/\lmb_{j}$,
we get 
\[
\tint{}{}[\nabla W]_{;i}f'(W_{;i})W_{;j}=\tint{}{}[\nabla W]_{;i}f(W_{;j})=\tint{}{}f(W)[\nabla W]_{\lmb,z}=\lmb\tint{}{}f(W)\nabla W_{\lmb,z}.
\]
By the previous paragraph, we observe 
\[
\lmb\tint{}{}f(W)\nabla W_{\lmb,z}=\tint{}{}f(W)\cdot\lmb\nabla W_{\lmb,z}(0)+\calO(R_{ij}^{-(N-2)}\lmb(\lmb+|z|)^{-3}(1+\log(\lmb+|z|))).
\]
Expressing this in terms of $(\lmb_{i},z_{i})$ and $(\lmb_{j},z_{j})$
and applying $\lmb+|z|\geq R_{ij}$ give \eqref{eq:2.49} when $\lmb_{i}\geq\lmb_{j}$.
\end{proof}

\section{\label{sec:Proof-of-Proposition-3.1}Proof of Proposition~\ref{prop:distance-multi-bubbles}}
\begin{proof}[Proof of Proposition~\ref{prop:distance-multi-bubbles}]
The first item follows from Lemma~\ref{lem:2.2} below and the triangle
inequality. The second item follows from Lemma~\ref{lem:2.4} and
Lemma~\ref{lem:2.5} below. 
\end{proof}
\begin{lem}[Distance between one-bubbles]
\label{lem:2.2}For any $(\iota,\lmb,z),(\iota',\lmb',z')\in\calP_{1}$,
we have 
\begin{equation}
\|\calW(\iota,\lmb,z)-\calW(\iota',\lmb',z')\|_{\dot{H}^{1}}\aeq d_{\calP}((\iota,\lmb,z),(\iota',\lmb',z'))\aeq\|\calW(\iota,\lmb,z)-\calW(\iota',\lmb',z')\|_{L^{2^{\ast}}}\label{eq:2.3}
\end{equation}
\end{lem}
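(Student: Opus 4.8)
The plan is to prove the two-sided equivalence in \eqref{eq:2.3} by a combination of scaling reduction and compactness. First, I would observe that both $\|\calW(\iota,\lmb,z)-\calW(\iota',\lmb',z')\|_{\dot H^1}$ and $d_{\calP}$ are invariant under simultaneously rescaling and translating both bubbles (and the $L^{2^{\ast}}$-norm is invariant under the scaling part as well), so after applying the symmetry $(\lmb,z)\mapsto(1,0)$ we may assume $\iota=+1$, $\lmb=1$, $z=0$, and the statement reduces to: for $W$ versus $\iota' W_{\lmb',z'}$,
\[
\|W-\iota'W_{\lmb',z'}\|_{\dot H^1}\aeq \min\Big\{1,\ |1-\iota'|+|\log\lmb'|+|z'|+\tfrac{|z'|}{\lmb'}\Big\}\aeq\|W-\iota'W_{\lmb',z'}\|_{L^{2^{\ast}}}.
\]

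Next I would split into two regimes. In the \emph{far regime}, where the right-hand quantity is $\aeq 1$ (i.e.\ $\iota'=-1$, or $\lmb'$ or $1/\lmb'$ large, or $|z'|$ or $|z'|/\lmb'$ large), one needs a uniform lower bound $\|W-\iota'W_{\lmb',z'}\|_{\dot H^1}\ageq c>0$, and the matching upper bound $\aleq 1$ is trivial from the triangle inequality and $\|W\|_{\dot H^1}=\|W_{\lmb',z'}\|_{\dot H^1}$. The lower bound is where a compactness/contradiction argument enters: if a sequence $(\iota'_n,\lmb'_n,z'_n)$ made $\|W-\iota'_nW_{\lmb'_n,z'_n}\|_{\dot H^1}\to 0$, then $\iota'_nW_{\lmb'_n,z'_n}\to W$ in $\dot H^1$, which by the profile decomposition / rigidity of the bubble (or by direct inner-product computations, cf.\ \eqref{eq:W1W2-Hdot1} with $\varepsilon=-2$ and scaling) forces $\iota'_n\to 1$, $\lmb'_n\to 1$, $z'_n\to 0$, $z'_n/\lmb'_n\to 0$, so the right-hand side tends to $0$ — contradicting that it was $\aeq 1$. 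The same argument works verbatim with $\dot H^1$ replaced by $L^{2^{\ast}}$ using $\|\nabla\cdot\|_{L^2}\gtrsim\|\cdot\|_{L^{2^{\ast}}}$ together with the fact that $L^{2^{\ast}}$-smallness of $W-\iota'W_{\lmb',z'}$ still forces convergence of parameters (since $W$ is a nonzero fixed profile and the translated-rescaled family is $L^{2^{\ast}}$-continuous and proper on compact parameter sets).

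In the \emph{near regime}, where $\delta:=|1-\iota'|+|\log\lmb'|+|z'|+|z'|/\lmb'$ is small (so $\iota'=1$), I would Taylor-expand in the parameters. Writing $W-W_{\lmb',z'}$ and differentiating, $\partial_{\lmb'}W_{\lmb',z'}|_{\lmb'=1,z'=0}=-\Lmb W$ and $\partial_{z'^a}W_{\lmb',z'}|_{\lmb'=1,z'=0}=-\partial_a W$, which span a subspace on which the quadratic form is nondegenerate by \eqref{eq:transversality} (the Gram matrix $\lan\calV_a,\calV_b\ran$ is diagonal and invertible). Hence for small $\delta$,
\[
\|W-W_{\lmb',z'}\|_{\dot H^1}^2=|\log\lmb'|^2\|\Lmb W\|_{L^2}^2+\sum_{a=1}^N\Big|\tfrac{z'^a}{\lmb'}\Big|^2\|\partial_aW\|_{L^2}^2+O(\delta^3)\aeq \delta^2,
\]
and similarly for $L^{2^{\ast}}$ using that $\Lmb W,\partial_aW\in L^{2^{\ast}}$ are linearly independent there; the remainder is controlled by the second-order Taylor estimate with uniform bounds on the second derivatives of $W_{\lmb',z'}$ for $(\lmb',z')$ near $(1,0)$. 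Note $|\log\lmb'|\aeq|1-\lmb'|$ and $|z'|\aeq|z'|/\lmb'$ in this regime, so the particular form of $d_{\calP}$ is recovered. The main obstacle is the lower bound in the far regime — making the compactness argument clean requires knowing that $\dot H^1$- (or $L^{2^{\ast}}$-) convergence $\iota'_nW_{\lmb'_n,z'_n}\to W$ genuinely forces all four parameter quantities to converge, including $z'_n/\lmb'_n\to 0$ (which is the scenario where $\lmb'_n\to 0$ and $z'_n\to 0$ but $z'_n/\lmb'_n\not\to 0$); this is handled by rescaling by $\lmb'_n$ and noting the rescaled bubble $W_{1,z'_n/\lmb'_n}$ cannot converge to the (rescaled, hence spread-out or shrunk) image of $W$ unless $z'_n/\lmb'_n$ stays bounded, and then a further subsequence argument pins down the limit. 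Everything else is routine Taylor expansion and the scaling reduction.
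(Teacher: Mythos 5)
The paper itself gives no argument here (the proof is declared straightforward and omitted), so there is no ``paper route'' to compare against; judged on its own, your plan is the right one and does prove the lemma: reduce by the scaling/translation/sign symmetries to $(\iota,\lmb,z)=(+1,1,0)$, obtain the far-regime lower bound from the fact that $\lan\nabla W,\nabla W_{\lmb',z'}\ran\le\|W\|_{\dot{H}^{1}}^{2}$ with equality only at $(\lmb',z')=(1,0)$ together with the decay of this pairing as the parameters degenerate (cf.\ \eqref{eq:W1W2-Hdot1} with $\eps=-2$), and Taylor-expand in the parameters in the near regime; the same scheme works for the $L^{2^{\ast}}$ version since strong $L^{2^{\ast}}$-convergence of $\iota' W_{\lmb',z'}$ to $W$ likewise pins down the parameters (e.g.\ by pairing against $f(W)\in L^{(2^{\ast})'}$).

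One step is written incorrectly, though the fix is immediate. In the near-regime display you expand $\|W-W_{\lmb',z'}\|_{\dot{H}^{1}}^{2}$ with coefficients $\|\Lmb W\|_{L^{2}}^{2}$ and $\|\rd_{a}W\|_{L^{2}}^{2}$, and you justify nondegeneracy by \eqref{eq:transversality}, which concerns the $L^{2}$ pairings $\lan\calV_{a},\calV_{b}\ran$. The relevant bilinear form is the $\dot{H}^{1}$ one (resp.\ the $L^{2^{\ast}}$-norm), not $L^{2}$: to leading order $W-W_{\lmb',z'}=(\log\lmb')\,\Lmb W+z'\cdot\nabla W+\calO(\dlt^{2})$, so the correct coefficients are $\|\Lmb W\|_{\dot{H}^{1}}^{2}$ and $\|\rd_{a}W\|_{\dot{H}^{1}}^{2}$; the cross terms vanish by parity, or one simply uses that $\Lmb W,\rd_{1}W,\dots,\rd_{N}W$ are linearly independent, so on their finite-dimensional span both the $\dot{H}^{1}$- and the $L^{2^{\ast}}$-norms of the combination are comparable to $|\log\lmb'|+|z'|$. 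With that replacement, and with the uniform $\dot{H}^{1}$/$L^{2^{\ast}}$ bounds on the second parameter-derivatives that you already invoke for the remainder, the near-regime equivalence $\aeq\dlt$ goes through and the lemma follows.
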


\begin{proof}
The proof is straightforward and thus omitted.
\end{proof}
\begin{lem}
\label{lem:2.4}Let $J\geq1$. There exists $\dlt>0$ such that if
$(\vec{\iota},\vec{\lmb},\vec{z}),(\vec{\iota}',\vec{\lmb}',\vec{z}')\in\calP_{J}(\dlt)$
satisfy $\|\calW(\vec{\iota},\vec{\lmb},\vec{z})-\calW(\vec{\iota}',\vec{\lmb}',\vec{z}')\|_{L^{2^{\ast}}}\leq\dlt$,
then there exists unique permutation $\pi:\setJ\to\setJ$ satisfying
$d_{\calP}((\vec{\iota},\vec{\lmb},\vec{z}),(\vec{\iota}'\circ\pi,\vec{\lmb}'\circ\pi,\vec{z}'\circ\pi))\leq\frac{1}{10}$.
\end{lem}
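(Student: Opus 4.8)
\emph{Strategy.} The plan is to argue by strong induction on $J$, peeling off the bubble of largest scale. For $J=1$ the statement is immediate from Lemma~\ref{lem:2.2}: the only permutation is the identity, and $\|\calW(\vec\iota,\vec\lmb,\vec z)-\calW(\vec\iota',\vec\lmb',\vec z')\|_{L^{2^{\ast}}}\leq\dlt$ forces $d_{\calP}\aleq\dlt$, which is $\leq\tfrac1{10}$ once $\dlt$ is small. In the inductive step I would prove existence of a matching permutation by a compactness contradiction and uniqueness directly, then take $\dlt$ to be the smaller of the two resulting thresholds (and also $\leq$ the threshold for $J-1$).

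\emph{Existence of the permutation.} Suppose the inductive step fails: there are $\dlt_n\to0$ and configurations $P_n=(\vec\iota_n,\vec\lmb_n,\vec z_n)$, $P_n'=(\vec\iota_n',\vec\lmb_n',\vec z_n')\in\calP_J(\dlt_n)$ with $\|\calW(P_n)-\calW(P_n')\|_{L^{2^{\ast}}}\to0$ but $d_{\calP}(P_n,P_n'\circ\pi)>\tfrac1{10}$ for every permutation $\pi$ and every $n$. Applying the scaling and translation normalising the largest-scale bubble of $P_n$ (this leaves $R$, $d_{\calP}$ and $\|\cdot\|_{L^{2^{\ast}}}$ invariant) and relabelling, I may assume $\lmb_{1,n}=\lmb_{\max,n}=1$, $z_{1,n}=0$, $\iota_{1,n}\equiv\iota_1$; then for $j\neq1$ the decoupling forces $R_{1j,n}=\lmb_{j,n}^{-1/2}\max\{1,|z_{j,n}|\}\to\infty$, so on a subsequence each remaining bubble of $P_n$ either concentrates ($\lmb_{j,n}\to0$) or escapes spatially ($|z_{j,n}|\to\infty$), whence $W_{;j,n}\weakto0$ in $\dot H^{1}$. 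Thus $\calW(P_n)\weakto\iota_1 W$ in $\dot H^{1}$, and since $\calW(P_n)-\calW(P_n')\to0$ in $L^{2^{\ast}}$ (hence weakly in $\dot H^{1}$), also $\calW(P_n')\weakto\iota_1 W$. Passing to a further subsequence, every bubble $W'_{;k,n}$ of $P_n'$ either has parameters converging to a limit with positive scale and finite centre, in which case it converges \emph{strongly} in $\dot H^{1}$ to a bubble, or degenerates and tends weakly to $0$; at most one can be of the first type, since two such would have $R'_{k_1k_2,n}$ converging to a finite limit, contradicting $R'_{k_1k_2,n}\geq R'>1/\dlt_n\to\infty$. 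As the weak limit $\iota_1 W$ is nonzero there is \emph{exactly} one, say $\pi(1)=k$, whose limit is $\iota_1 W$, forcing $\lmb'_{k,n}\to1$, $z'_{k,n}\to0$, $\iota'_{k,n}\equiv\iota_1$ eventually; in particular the index-$1$ component of $d_{\calP}(P_n,P_n'\circ\pi)$ tends to $0$. Since $W_{;1,n}=\iota_1 W=\lim W'_{;k,n}$ in $L^{2^{\ast}}$, the $(J-1)$-bubble configurations $P_n\setminus\{1\}$, $P_n'\setminus\{k\}$ — still in $\calP_{J-1}(\dlt_n)$ — satisfy $\|\calW(P_n\setminus\{1\})-\calW(P_n'\setminus\{k\})\|_{L^{2^{\ast}}}\to0$, so for large $n$ the inductive hypothesis supplies $\sigma_n$ with $d_{\calP}^{(J-1)}\leq\tfrac1{10}$. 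Combining $\pi(1)=k$ with $\sigma_n$ on the remaining indices gives $d_{\calP}(P_n,P_n'\circ\pi)=\max\{o(1),\,d_{\calP}^{(J-1)}(\cdots)\}\leq\tfrac1{10}$ for large $n$, contradicting the choice of $P_n,P_n'$.

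\emph{Uniqueness, and the main obstacle.} For uniqueness, if $\pi,\pi'$ both give $d_{\calP}\leq\tfrac1{10}$ and $\pi(i)\neq\pi'(i)$, then by Lemma~\ref{lem:2.2} the one-bubbles indexed by $\pi(i)$ and $\pi'(i)$ in $P_n'$ each lie within $\dot H^{1}$-distance $\aleq\tfrac1{10}$ of $(\iota_i,\lmb_i,z_i)$, hence within $\aleq\tfrac15$ of each other, so (Lemma~\ref{lem:2.2} again) their $d_{\calP}$-component is $\aleq\tfrac15$; a direct computation then bounds $R'_{\pi(i)\pi'(i)}\aleq1$, contradicting $R'_{\pi(i)\pi'(i)}\geq R'>1/\dlt$. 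I expect the genuinely delicate point to be the presence of \emph{bubble towers}: a bubble of $P_n$ may concentrate inside the core of the normalised bubble $1$, so $\calW(P_n)$ need \emph{not} converge in $L^{2^{\ast}}_{\mathrm{loc}}$ — only weakly in $\dot H^{1}$. This is precisely why the matched bubble must be isolated through the weak $\dot H^{1}$-limit (together with the convergent/degenerate dichotomy for every bubble of $P_n'$) rather than by a localised $L^{2^{\ast}}$ comparison, which would fail.
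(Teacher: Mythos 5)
Your argument is correct in substance, but the existence part takes a genuinely different route from the paper's. The paper's induction peels off the bubble of \emph{smallest} scale $\lmb_J=\min_i\lmb_i$ and is purely quantitative: on the core $\{|y_J|\leq R_0\}$ of that bubble no other bubble of the unprimed family can carry $L^{2^\ast}$ mass (their contributions are $\calO_{R_0}(\dlt^{N-2})$ precisely because $\lmb_J$ is minimal), so the mass of $W_{;J}$ there must be matched by some primed bubble; a second localisation then shows the two single bubbles are $L^{2^\ast}$-close, hence $d_{\calP}$-close by Lemma~\ref{lem:2.2}, after which the two $(J-1)$-families are compared and the inductive hypothesis is invoked. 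You instead peel off the \emph{largest} bubble and identify its partner by a compactness/weak-limit contradiction: the towers concentrating inside the top bubble go weakly to $0$, the separation $R'>1/\dlt_n\to\infty$ forbids two non-degenerate primed bubbles, and the strong convergence of the unique matched primed bubble lets you subtract it and apply the inductive hypothesis. This is a valid alternative; it trades the explicit localisation estimates (and an effective threshold $\dlt_J$) for a softer, non-effective argument, which is all the lemma asks for. Your closing remark is slightly off, though: a localised $L^{2^\ast}$ comparison does not fail per se — it fails at the \emph{largest} bubble, and the paper's proof works exactly because it localises at the \emph{smallest} one, inside whose core no other bubble (tower or not) can sit.

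One repairable slip in your uniqueness step: you pass through the $\dot H^1$ equivalence of Lemma~\ref{lem:2.2} twice, obtaining $d_{\calP}$ between the two primed single bubbles bounded by (product of the equivalence constants)$\times\tfrac15$. Since each component of $d_{\calP}$ is capped at $1$, this bound is vacuous unless that product is less than $5$, and then no bound on $R'_{\pi(i)\pi'(i)}$ follows from it. The fix is immediate and is what the paper does: since $d_{\calP}\bigl((\iota_i,\lmb_i,z_i),(\iota'_{\pi(i)},\lmb'_{\pi(i)},z'_{\pi(i)})\bigr)\leq\tfrac1{10}<1$ and likewise for $\pi'(i)$, the explicit parameter inequalities (or the triangle inequality for $d_{\calP}$ itself) give $|\log(\lmb'_{\pi(i)}/\lmb'_{\pi'(i)})|\leq\tfrac15$ and $|z'_{\pi(i)}-z'_{\pi'(i)}|\aleq\min\{\lmb'_{\pi(i)},\lmb'_{\pi'(i)}\}$, hence $R'_{\pi(i)\pi'(i)}\aleq1$, contradicting $R'>1/\dlt$. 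With that adjustment (and the routine bookkeeping of fixing the matched index $k$, the sign $\iota_{1,n}$, and the relabelling along the subsequence), your proof is complete.
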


\begin{proof}
We proceed by induction on $J$. Note that the $J=1$ case follows
from \eqref{eq:2.3}.

\uline{Inductive step}. Let $J\geq2$ and assume the $J-1$ case.
Let $\dlt_{J-1}>0$ be the constant found in the $J-1$ case. For
a small constant $0<\dlt_{J}\ll1$ to be chosen in the proof, assume
$(\vec{\iota},\vec{\lmb},\vec{z}),(\vec{\iota}',\vec{\lmb}',\vec{z}')\in\calP_{J}(\dlt_{J})$
satisfy $\|\calW(\vec{\iota},\vec{\lmb},\vec{z})-\calW(\vec{\iota}',\vec{\lmb}',\vec{z}')\|_{L^{2^{\ast}}}\leq\dlt_{J}$.
We may assume $\lmb_{J}=\min_{j\in\setJ}\lmb_{j}\leq\min_{j\in\setJ}\lmb_{j}'$. 

First, we claim that there exists $\pi(J)\in\setJ$ such that 
\begin{equation}
\frac{\lmb_{\pi(J)}'+|z_{\pi(J)}'-z_{J}|}{\sqrt{\lmb_{\pi(J)}'\lmb_{J}}}=\calO(1).\label{eq:2.7-1}
\end{equation}
Denote $y_{J}=(x-z_{J})/\lmb_{J}$. We begin with 
\begin{align*}
\|\chf_{|x|\leq10}W\|_{L^{2^{\ast}}} & =\|\chf_{|y_{J}|\leq10}\calW(\iota_{J},\lmb_{J},z_{J})\|_{L^{2^{\ast}}}\\
 & \leq\dlt_{J}+\tsum{j\neq J}{}\|\chf_{|y_{J}|\leq10}\calW(\iota_{j},\lmb_{j},z_{j})\|_{L^{2^{\ast}}}+\tsum{j\in\setJ}{}\|\chf_{|y_{J}|\leq10}\calW(\iota_{j}',\lmb_{j}',z_{j}')\|_{L^{2^{\ast}}}.
\end{align*}
Using the minimality of $\lmb_{J}$, the last two terms of the right
hand side can be estimated as 
\begin{align*}
\tsum{j\neq J}{}\|\chf_{|y_{J}|\leq10}\calW(\iota_{j},\lmb_{j},z_{j})\|_{L^{2^{\ast}}} & \aleq\tsum{j\neq J}{}R_{jJ}^{-(N-2)}\aleq\dlt_{J}^{N-2},\\
\tsum{j\in\setJ}{}\|\chf_{|y_{J}|\leq10}\calW(\iota_{j}',\lmb_{j}',z_{j}')\|_{L^{2^{\ast}}} & \aleq\tsum{j\in\setJ}{}\Big(\frac{\lmb_{j}'+|z_{j}'-z_{J}|}{\sqrt{\lmb_{j}'\lmb_{J}}}\Big)^{-(N-2)}.
\end{align*}
Since $\|\chf_{|x|\leq10}W\|_{L^{2^{\ast}}}\ageq1$ and $\dlt_{J}>0$
is small, the claim \eqref{eq:2.7-1} follows.

Next, introduce a large constant $R_{0}\gg1$ and assume $\dlt_{J}$
is small depending on $R_{0}$. We begin with 
\begin{align*}
 & \|\chf_{|y_{J}|\leq R_{0}}\{\calW(\iota_{J},\lmb_{J},z_{J})-\calW(\iota_{\pi(J)}',\lmb_{\pi(J)}',z_{\pi(J)}')\}\|_{L^{2^{\ast}}}\\
 & \leq\dlt_{J}+\tsum{j\neq J}{}\|\chf_{|y_{J}|\leq R_{0}}\calW(\iota_{j},\lmb_{j},z_{j})\|_{L^{2^{\ast}}}+\tsum{j\neq\pi(J)}{}\|\chf_{|y_{J}|\leq R_{0}}\calW(\iota_{j}',\lmb_{j}',z_{j}')\|_{L^{2^{\ast}}}.
\end{align*}
For the left hand side, by \eqref{eq:2.7-1}, we have 
\begin{align*}
 & \|\chf_{|y_{J}|\leq R_{0}}\{\calW(\iota_{J},\lmb_{J},z_{J})-\calW(\iota_{\pi(J)}',\lmb_{\pi(J)}',z_{\pi(J)}')\}\|_{L^{2^{\ast}}}\\
 & \geq\|\calW(\iota_{J},\lmb_{J},z_{J})-\calW(\iota_{\pi(J)}',\lmb_{\pi(J)}',z_{\pi(J)}')\|_{L^{2^{\ast}}}-\calO(R_{0}^{-(N-2)/2}).
\end{align*}
For the right hand side, we have 
\[
\tsum{j\neq J}{}\|\chf_{|y_{J}|\leq R_{0}}\calW(\iota_{j},\lmb_{j},z_{j})\|_{L^{2^{\ast}}}\leq\tsum{j\neq J}{}\calO_{R_{0}}(R_{jJ}^{-(N-2)})=\calO_{R_{0}}(\dlt_{J}^{N-2}),
\]
and for the last term we use \eqref{eq:2.7-1} to obtain 
\[
\tsum{j\neq\pi(J)}{}\|\chf_{|y_{J}|\leq R_{0}}\calW(\iota_{j}',\lmb_{j}',z_{j}')\|_{L^{2^{\ast}}}=\tsum{j\neq\pi(J)}{}\calO_{R_{0}}(R_{\pi(J)j}^{\prime-(N-2)})=\calO_{R_{0}}(\dlt_{J}^{N-2}).
\]
Gathering the previous three computations and using the fact that
$\dlt_{J}$ is small depending on $R_{0}$, we get 
\begin{equation}
\|\calW(\iota_{J},\lmb_{J},z_{J})-\calW(\iota_{\pi(J)}',\lmb_{\pi(J)}',z_{\pi(J)}')\|_{L^{2^{\ast}}}\aleq R_{0}^{-(N-2)/2}.\label{eq:2.7-2}
\end{equation}
Combining this with \eqref{eq:2.3}, we get 
\begin{equation}
d_{\calP}((\iota_{J},\lmb_{J},z_{J}),(\iota_{\pi(J)}',\lmb_{\pi(J)}',z_{\pi(J)}'))\leq\calO(R_{0}^{-(N-2)/2)})\leq\frac{1}{10}.\label{eq:2.7-3}
\end{equation}
Finally, we use \eqref{eq:2.7-2} to obtain 
\begin{align*}
 & \|\tsum{j\neq J}{}\calW(\iota_{j},\lmb_{j},z_{j})-\tsum{j\neq\pi(J)}{}\calW(\iota_{j}',\lmb_{j}',z_{j}')\|_{L^{2^{\ast}}}\\
 & \leq\|\calW(\vec{\iota},\vec{\lmb},\vec{z})-\calW(\vec{\iota}',\vec{\lmb}',\vec{z}')\|_{L^{2^{\ast}}}+\|\calW(\iota_{J},\lmb_{J},z_{J})-\calW(\iota_{\pi(J)}',\lmb_{\pi(J)}',z_{\pi(J)}')\|_{L^{2^{\ast}}}\\
 & \leq\dlt_{J}+\calO(R_{0}^{-(N-2)/2})\leq\dlt_{J-1}.
\end{align*}
Applying the $J-1$ case, we can find the remaining components $\pi(1),\dots,\pi(J-1)$
of the permutation satisfying 
\begin{equation}
\max_{j\neq J}d_{\calP}((\iota_{j},\lmb_{j},z_{j}),(\iota_{\pi(j)},\lmb_{\pi(j)},z_{\pi(j)}))\leq\frac{1}{10}.\label{eq:2.10}
\end{equation}
Combining \eqref{eq:2.7-3} and \eqref{eq:2.10}, we conclude $d_{\calP}((\vec{\iota},\vec{\lmb},\vec{z}),(\vec{\iota}'\circ\pi,\vec{\lmb}'\circ\pi,\vec{z}'\circ\pi))\leq\frac{1}{10}$
as desired.

It remains to show the uniqueness of $\pi$. If $\pi_{1}$ and $\pi_{2}$
are such permutations, then $d_{\calP}((\vec{\iota}'\circ\pi_{1},\vec{\lmb}'\circ\pi_{1},\vec{z}'\circ\pi_{1}),(\vec{\iota}'\circ\pi_{2},\vec{\lmb}'\circ\pi_{2},\vec{z}'\circ\pi_{2}))\leq\frac{1}{5}$.
If $\pi_{1}\neq\pi_{2}$, then there is $j_{0}\in\setJ$ such that
$j_{1}\coloneqq\pi_{1}(j_{0})$ and $j_{2}\coloneqq\pi_{2}(j_{0})$
are distinct so that $d_{\calP}((\iota_{j_{1}}',\lmb_{j_{1}}',z_{j_{1}}'),(\iota_{j_{2}}',\lmb_{j_{2}}',z_{j_{2}}'))\leq\frac{1}{5}$.
This contradicts to $(\vec{\iota}',\vec{\lmb}',\vec{z}')\in\calP_{J}(\dlt_{J})$
with $\dlt_{J}$ small. This completes the proof.
\end{proof}
\begin{lem}
\label{lem:2.5}Let $J\geq1$. There exist $\dlt>0$ and $C\geq1$
such that if $(\vec{\iota},\vec{\lmb},\vec{z}),(\vec{\iota}',\vec{\lmb}',\vec{z}')\in\calP_{J}(\dlt)$
are such that $d_{\calP}((\vec{\iota},\vec{\lmb},\vec{z}),(\vec{\iota}',\vec{\lmb}',\vec{z}'))\leq\frac{1}{10}$,
then 
\[
d_{\calP}((\vec{\iota},\vec{\lmb},\vec{z}),(\vec{\iota}',\vec{\lmb}',\vec{z}'))\leq C\|\calW(\vec{\iota},\vec{\lmb},\vec{z})-\calW(\vec{\iota}',\vec{\lmb}',\vec{z}')\|_{L^{2^{\ast}}}.
\]
\end{lem}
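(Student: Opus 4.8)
The plan is to reduce everything to the one-bubble estimate of Lemma~\ref{lem:2.2} using the asymptotic orthogonality of the parameters. Since $d_{\calP}((\vec{\iota},\vec{\lmb},\vec{z}),(\vec{\iota}',\vec{\lmb}',\vec{z}'))\leq\frac{1}{10}<1$, for each $i$ the coordinate distance $d_{i}\coloneqq d_{\calP}((\iota_{i},\lmb_{i},z_{i}),(\iota_{i}',\lmb_{i}',z_{i}'))$ equals $|\log(\lmb_{i}/\lmb_{i}')|+|z_{i}-z_{i}'|/\lmb_{i}+|z_{i}-z_{i}'|/\lmb_{i}'\leq\frac{1}{10}$; in particular $\iota_{i}=\iota_{i}'$, $\lmb_{i}\aeq\lmb_{i}'$, $|z_{i}-z_{i}'|\leq\lmb_{i}/10$, and $d_{\calP}((\vec{\iota},\vec{\lmb},\vec{z}),(\vec{\iota}',\vec{\lmb}',\vec{z}'))=\max_{i}d_{i}$. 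Thus it suffices to prove $\max_{i}d_{i}\aleq\|\calW(\vec{\iota},\vec{\lmb},\vec{z})-\calW(\vec{\iota}',\vec{\lmb}',\vec{z}')\|_{L^{2^{\ast}}}$.

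The first step is a pointwise bound $|W_{;i}-W'_{;i}|\aleq d_{i}|W_{;i}|$. This follows by writing $W_{;i}-W'_{;i}$ as the integral of $\frac{d}{ds}[\lmb(s)^{-D}W(\tfrac{\cdot-z(s)}{\lmb(s)})]$ along the path $s\mapsto(\lmb_{i}^{1-s}\lmb_{i}'^{s},(1-s)z_{i}+sz_{i}')$, using that $\lmb\rd_{\lmb}$ and $\rd_{z^{a}}$ of a bubble produce (up to powers of the scale) $\Lmb W$ and $\rd_{a}W$, that $|\Lmb W|+|\nabla W|\aleq W\aeq\langle\cdot\rangle^{-(N-2)}$, and that $\lmb(s)\aeq\lmb_{i}$ and $|z(s)-z_{i}|\leq\lmb_{i}/10$ along the path.

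The second step is to decompose $\calW(\vec{\iota},\vec{\lmb},\vec{z})-\calW(\vec{\iota}',\vec{\lmb}',\vec{z}')=\sum_{i}(W_{;i}-W'_{;i})$ and expand the $L^{2^{\ast}}$-norm. By the elementary inequality $\big||\sum_{i}a_{i}|^{2^{\ast}}-\sum_{i}|a_{i}|^{2^{\ast}}\big|\aleq\sum_{i\neq j}|a_{i}|^{2^{\ast}-1}|a_{j}|$ (used already in the proof of \eqref{eq:E(calW)-JE(W)-est}, recalling $2^{\ast}=p+1$),
\begin{equation}
\|\calW(\vec{\iota},\vec{\lmb},\vec{z})-\calW(\vec{\iota}',\vec{\lmb}',\vec{z}')\|_{L^{2^{\ast}}}^{2^{\ast}}\geq\sum_{i}\|W_{;i}-W'_{;i}\|_{L^{2^{\ast}}}^{2^{\ast}}-C\sum_{i\neq j}\int|W_{;i}-W'_{;i}|^{2^{\ast}-1}|W_{;j}-W'_{;j}|.
\end{equation}
For the diagonal terms, Lemma~\ref{lem:2.2} gives $\|W_{;i}-W'_{;i}\|_{L^{2^{\ast}}}\aeq d_{i}$. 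For the off-diagonal terms, the pointwise bound of the previous step followed by \eqref{eq:W1W2-Hdot1} (applied with $\eps=-2$ on $\{|W_{;i}|\geq|W_{;j}|\}$ and with $\eps=2$ on the complementary region) yields $\int|W_{;i}-W'_{;i}|^{2^{\ast}-1}|W_{;j}-W'_{;j}|\aleq d_{i}^{2^{\ast}-1}d_{j}\int|W_{;i}|^{p}|W_{;j}|\aleq d_{i}^{2^{\ast}-1}d_{j}R_{ij}^{-(N-2)}\aleq\dlt^{N-2}(d_{i}^{2^{\ast}}+d_{j}^{2^{\ast}})$, since $R_{ij}>R>\dlt^{-1}$ and $d_{i}^{2^{\ast}-1}d_{j}\leq\max\{d_{i},d_{j}\}^{2^{\ast}}$. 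Summing, $\|\calW-\calW'\|_{L^{2^{\ast}}}^{2^{\ast}}\geq(c_{1}-C\dlt^{N-2})\sum_{i}d_{i}^{2^{\ast}}\geq\tfrac{1}{2}c_{1}(\max_{i}d_{i})^{2^{\ast}}$ once $\dlt$ is small, which is the claim.

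The main obstacle is precisely the control of the off-diagonal terms: one must rule out that distant bubbles (partially) cancel in the $L^{2^{\ast}}$-norm. A crude Hölder estimate gives only $\int|W_{;i}-W'_{;i}|^{2^{\ast}-1}|W_{;j}-W'_{;j}|\aleq d_{i}d_{j}$, which is useless when the $d_{k}$ are tiny, since $2^{\ast}>2$ forbids absorbing $d_{i}d_{j}$ into $\sum_{k}d_{k}^{2^{\ast}}$. It is essential to keep the interaction decay $R_{ij}^{-(N-2)}$, and this is exactly what the pointwise estimate $|W_{;i}-W'_{;i}|\aleq d_{i}|W_{;i}|$ together with the interaction integral bound \eqref{eq:W1W2-Hdot1} provides; everything else is routine, and the deduction of \eqref{eq:2.5} in Proposition~\ref{prop:distance-multi-bubbles} follows by combining this lemma (applied to $(\vec{\iota},\vec{\lmb},\vec{z})$ and $(\vec{\iota}',\vec{\lmb}',\vec{z}')\circ\pi$) with the permutation $\pi$ furnished by Lemma~\ref{lem:2.4} and the permutation-invariance of $\calW$ and of $\calP_{J}(\dlt)$.
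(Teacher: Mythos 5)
Your proof is correct, and it takes a genuinely different route from the paper. The paper argues by contradiction and compactness: it normalizes the sequences, extracts the set $\calI$ of coordinates whose individual distance is comparable to the total distance, picks the smallest-scale bubble $j_{0}\in\calI$, and localizes the $L^{2^{\ast}}$-norm to $\{|y_{j_{0}}|\leq R_{0}\}$, killing the exterior tail and the other bubbles' contributions via the same pointwise bound $|W_{;j}-W_{;j}'|\aleq d_{j}W_{;j}$ and the decay $R_{jj_{0}}^{-(N-2)}$; this yields a non-effective constant and needs the $\calI$-extraction precisely to handle the degenerate situation in which some coordinate distances are negligible compared to others. You instead work directly and quantitatively: reduce to $\max_{i}d_{i}$, expand $\|\sum_{i}(W_{;i}-W_{;i}')\|_{L^{2^{\ast}}}^{2^{\ast}}$ with the elementary inequality already invoked in the proof of \eqref{eq:E(calW)-JE(W)-est}, lower-bound the diagonal via Lemma~\ref{lem:2.2}, and absorb the cross terms using the pointwise bound, the interaction estimate \eqref{eq:W1W2-Hdot1} (with $\eps=\mp2$ on the two regions), and the observation $d_{i}^{2^{\ast}-1}d_{j}\leq\max\{d_{i},d_{j}\}^{2^{\ast}}$, which is exactly what handles the degeneracy that the paper treats by compactness. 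What your approach buys is explicit constants ($\dlt$ small depending only on $N,J$, and an effective $C$), no contradiction argument, and no cutoff parameter $R_{0}$; what the paper's approach buys is that one never has to balance the exponents in the cross terms, since after localization each error is estimated crudely against $d_{\calP}$ itself. Both proofs ultimately rest on the same two ingredients: the one-bubble equivalence of Lemma~\ref{lem:2.2} and the pointwise comparison $|W_{;i}-W_{;i}'|\aleq d_{i}|W_{;i}|$ combined with bubble-interaction decay; your concluding remark on deducing \eqref{eq:2.5} by composing with the permutation from Lemma~\ref{lem:2.4} matches the paper's assembly of Proposition~\ref{prop:distance-multi-bubbles}.
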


\begin{proof}
Suppose not. Then, there exist sequences $(\vec{\iota}_{n},\vec{\lmb}_{n},\vec{z}_{n}),(\vec{\iota}_{n}',\vec{\lmb}_{n}',\vec{z}_{n}')\in\calP_{J}(1/n)$
such that $d_{\calP}((\vec{\iota}_{n},\vec{\lmb}_{n},\vec{z}_{n}),(\vec{\iota}_{n}',\vec{\lmb}_{n}',\vec{z}_{n}'))\leq\frac{1}{10}$
and 
\begin{equation}
\frac{\|\calW(\vec{\iota}_{n},\vec{\lmb}_{n},\vec{z}_{n})-\calW(\vec{\iota}_{n}',\vec{\lmb}_{n}',\vec{z}_{n}')\|_{L^{2^{\ast}}}}{d_{\calP}((\vec{\iota}_{n},\vec{\lmb}_{n},\vec{z}_{n}),(\vec{\iota}_{n}',\vec{\lmb}_{n}',\vec{z}_{n}'))}\to0.\label{eq:2.7}
\end{equation}
Note that $\vec{\iota}_{n}=\vec{\iota}_{n}'$ for all $n$ by $d_{\calP}((\vec{\iota}_{n},\vec{\lmb}_{n},\vec{z}_{n}),(\vec{\iota}_{n}',\vec{\lmb}_{n}',\vec{z}_{n}'))\leq\frac{1}{10}$;
we will omit $\vec{\iota}_{n}$ and $\vec{\iota}_{n}'$ when there
is no confusion. Taking a subsequence, we may assume for all $j\in\setJ$
\[
\frac{d_{\calP}((\lmb_{j,n},z_{j,n}),(\lmb_{j,n}',z_{j,n}'))}{d_{\calP}((\vec{\lmb}_{n},\vec{z}_{n}),(\vec{\lmb}_{n}',\vec{z}_{n}'))}\to\exists\ell_{j}\in(0,1]\quad\text{as }n\to\infty.
\]

Let $\calI\coloneqq\{j\in\setJ:\ell_{j}\neq0\}\neq\emptyset$. For
each $n$, we may choose $j_{0}=j_{0}(n)\in\calI$ such that $\lmb_{j_{0},n}=\min_{j\in\calI}\lmb_{j,n}\leq\min_{j\in\calI}\lmb_{j,n}'$
without loss of generality. Let $R_{0}\geq1$ be a constant to be
chosen later and denote $y_{j_{0}}=(x-z_{j_{0}})/\lmb_{j_{0}}$. Now
observe 
\begin{align*}
\|\calW & (\vec{\iota}_{n},\vec{\lmb}_{n},\vec{z}_{n})-\calW(\vec{\iota}_{n},\vec{\lmb}_{n}',\vec{z}_{n}')\|_{L^{2^{\ast}}}\geq\|\chf_{|y_{j_{0}}|\leq R_{0}}\{\calW(\vec{\iota}_{n},\vec{\lmb}_{n},\vec{z}_{n})-\calW(\vec{\iota}_{n},\vec{\lmb}_{n}',\vec{z}_{n}')\}\|_{L^{2^{\ast}}}\\
 & \geq\|\calW(\lmb_{j_{0},n},z_{j_{0},n})-\calW(\lmb_{j_{0},n}',z_{j_{0},n}')\|_{L^{2^{\ast}}}\\
 & \quad-\|\chf_{|y_{j_{0}}|>R_{0}}\{\calW(\lmb_{j_{0},n},z_{j_{0},n})-\calW(\lmb_{j_{0},n}',z_{j_{0},n}')\}\|_{L^{2^{\ast}}}\\
 & \quad-\tsum{j\notin\calI}{}\|\chf_{|y_{j_{0}}|\leq R_{0}}\{\calW(\lmb_{j,n},z_{j,n})-\calW(\lmb_{j,n}',z_{j,n}')\}\|_{L^{2^{\ast}}}\\
 & \quad-\tsum{j\in\calI\setminus\{j_{0}\}}{}\|\chf_{|y_{j_{0}}|\leq R_{0}}\{\calW(\lmb_{j,n},z_{j,n})-\calW(\lmb_{j,n}',z_{j,n}')\}\|_{L^{2^{\ast}}}.
\end{align*}
By \eqref{eq:2.3}, $j_{0}\in\calI$, and the definition of $\calI$,
we can estimate the first two terms of the above as 
\begin{align*}
\|\calW(\lmb_{j_{0},n},z_{j_{0},n})-\calW(\lmb_{j_{0},n}',z_{j_{0},n}')\|_{L^{2^{\ast}}} & \ageq d_{\calP}((\vec{\lmb}_{n},\vec{z}_{n}),(\vec{\lmb}_{n}',\vec{z}_{n}')),\\
\chf_{j\notin\calI}\|\calW(\lmb_{j,n},z_{j,n})-\calW(\lmb_{j,n}',z_{j,n}')\|_{L^{2^{\ast}}} & \aleq o_{n\to\infty}(1)\cdot d_{\calP}((\vec{\lmb}_{n},\vec{z}_{n}),(\vec{\lmb}_{n}',\vec{z}_{n}')).
\end{align*}
Next, as $d_{\calP}((\vec{\lmb}_{n},\vec{z}_{n}),(\vec{\lmb}_{n}',\vec{z}_{n}'))\leq\frac{1}{10}$,
we have a pointwise bound 
\[
|\calW(\lmb_{j,n},z_{j,n})-\calW(\lmb_{j,n}',z_{j,n}')|\aleq d_{\calP}((\lmb_{j,n},z_{j,n}),(\lmb_{j,n}',z_{j,n}'))\cdot\calW(\lmb_{j,n},z_{j,n}).
\]
Thus we have 
\begin{align*}
 & \|\chf_{|y_{j_{0}}|>R_{0}}\{\calW(\lmb_{j_{0},n},z_{j_{0},n})-\calW(\lmb_{j_{0},n}',z_{j_{0},n}')\}\|_{L^{2^{\ast}}}\\
 & \quad\aleq d_{\calP}((\lmb_{j_{0},n},z_{j_{0},n}),(\lmb_{j_{0},n}',z_{j_{0},n}'))\cdot\|\chf_{|y_{j_{0}}|>R_{0}}\calW(\lmb_{j_{0},n},z_{j_{0},n})\|_{L^{2^{\ast}}}\\
 & \quad\aleq d_{\calP}((\vec{\lmb}_{n},\vec{z}_{n}),(\vec{\lmb}_{n}',\vec{z}_{n}'))\cdot R_{0}^{-(N-2)/2}.
\end{align*}
By the minimality of $\lmb_{j_{0},n}$ and $(\vec{\iota}_{n},\vec{\lmb}_{n},\vec{z}_{n}),(\vec{\iota}_{n}',\vec{\lmb}_{n}',\vec{z}_{n}')\in\calP_{J}(1/n)$,
we also have 
\begin{align*}
 & \chf_{j\in\calI\setminus\{j_{0}\}}\|\chf_{|y_{j_{0}}|\leq R_{0}}\{\calW(\lmb_{j,n},z_{j,n})-\calW(\lmb_{j,n}',z_{j,n}')\}\|_{L^{2^{\ast}}}\\
 & \quad\aleq d_{\calP}((\lmb_{j,n},z_{j,n}),(\lmb_{j,n}',z_{j,n}'))\cdot\chf_{j\in\calI\setminus\{j_{0}\}}\|\chf_{|y_{j_{0}}|\leq R_{0}}\calW(\lmb_{j,n},z_{j,n})\|_{L^{2^{\ast}}}\\
 & \quad\aleq d_{\calP}((\lmb_{j,n},z_{j,n}),(\lmb_{j,n}',z_{j,n}'))\cdot\chf_{j\in\calI\setminus\{j_{0}\}}\calO_{R_{0}}(R_{jj_{0}}^{-(N-2)})\\
 & \quad\aleq d_{\calP}((\lmb_{j,n},z_{j,n}),(\lmb_{j,n}',z_{j,n}'))\cdot o_{n\to\infty}(1).
\end{align*}
Combining the previous displays, we have proved 
\[
\|\calW(\vec{\iota}_{n},\vec{\lmb}_{n},\vec{z}_{n})-\calW(\vec{\iota}_{n},\vec{\lmb}_{n}',\vec{z}_{n}')\|_{L^{2^{\ast}}}\geq d_{\calP}((\vec{\lmb}_{n},\vec{z}_{n}),(\vec{\lmb}_{n}',\vec{z}_{n}'))\cdot\{c-\calO(R_{0}^{-(N-2)/2})-o_{n\to\infty}(1)\}
\]
for some constant $c>0$. Taking $R_{0}$ large and then $n$ large,
we obtain 
\[
\|\calW(\vec{\iota}_{n},\vec{\lmb}_{n},\vec{z}_{n})-\calW(\vec{\iota}_{n},\vec{\lmb}_{n}',\vec{z}_{n}')\|_{L^{2^{\ast}}}\ageq d_{\calP}((\vec{\lmb}_{n},\vec{z}_{n}),(\vec{\lmb}_{n}',\vec{z}_{n}')),
\]
which contradicts to \eqref{eq:2.7}. This completes the proof.
\end{proof}
\bibliographystyle{abbrv}
\bibliography{References}

\end{document}